\newcommand{\diag}{\mathop{\mathrm {diag}}\nolimits}
\newcommand{\Ad}{\mathop{\mathrm {Ad}}\nolimits}
\newcommand{\adj}{\mathop{\mathrm {ad}}\nolimits}
\newcommand{\Hom}{\mathop{\mathrm {Hom}}\nolimits}
\newcommand{\sgn}{\mathop{\mathrm {sgn}}\nolimits}
\newcommand{\Sym}{\mathop{\mathrm {Sym}}\nolimits}
\newcommand{\id}{\mathop{\mathrm {id}}\nolimits}
\newcommand{\sI}{\sqrt{-1}}
\newcommand{\cC}{\mathcal{C}}
\newcommand{\cE}{\mathcal{E}}
\newcommand{\cI}{\mathcal{I}}
\newcommand{\cJ}{\mathcal{J}}
\newcommand{\cR}{\mathcal{R}}
\newcommand{\cT}{\mathcal{T}}
\newcommand{\rB}{\mathrm{B}}
\newcommand{\rI}{\mathrm{I}}
\newcommand{\rM}{\mathrm{M}}
\newcommand{\rP}{\mathrm{P}}
\newcommand{\rf}{\mathrm{f}}
\newcommand{\rk}{\mathrm{k}}
\newcommand{\rrq}{\mathrm{q}}
\newcommand{\bA}{\mathbb{A}}
\newcommand{\bC}{\mathbb{C}}
\newcommand{\bH}{\mathbb{H}}
\newcommand{\bQ}{\mathbb{Q}}
\newcommand{\bR}{\mathbb{R}}
\newcommand{\bZ}{\mathbb{Z}}
\newcommand{\mk}{\mathbf{k}}
\newcommand{\mn}{\mathbf{n}}
\newcommand{\mmp}{\mathbf{p}}
\newcommand{\gH}{\mathfrak{H}}
\newcommand{\gS}{\mathfrak{S}}
\newcommand{\ga}{\mathfrak{a}}
\newcommand{\g }{\mathfrak{g}}
\newcommand{\gk}{\mathfrak{k}}
\newcommand{\gl}{\mathfrak{l}}
\newcommand{\gn}{\mathfrak{n}}
\newcommand{\gp}{\mathfrak{p}}
\newcommand{\gs}{\mathfrak{s}}
\newtheorem{thm}{Theorem}[section]
\newtheorem{lem}[thm]{Lemma}
\newtheorem{prop}[thm]{Proposition}
\newtheorem{rem}[thm]{Remark}
\numberwithin{equation}{section}
\theoremstyle{remark}
\newcommand{\GR}{\Gamma_{\bR}}
\newcommand{\GC}{\Gamma_{\bC}}
\newcommand{\pa}{{\partial}}
\newcommand{\vp}{{\varphi}}
\newcommand{\s}{{\sigma}}
\newcommand{\varep}{{\varepsilon}}
\begin{document}
\allowdisplaybreaks


\title[Whittaker functions on $\mathrm{GL}(4,\bR )$]
{Whittaker functions on $\mathrm{GL}(4,\bR )$ and archimedean Bump-Friedberg integrals}

\author{Miki Hirano, Taku Ishii, and Tadashi Miyazaki}

\address{(M.H.) Department of Mathematics, Faculty of Science, Ehime University,
2-5, Bunkyo-cho, Matsuyama, Ehime, 790-8577, Japan}
\email{hirano.miki.mf@ehime-u.ac.jp}

\address{(T.I.) Faculty of Science and Technology,
Seikei University, 3-3-1 Kichijoji-Kitamachi,
Musashino,
Tokyo, 180-8633, Japan}
\email{ishii@st.seikei.ac.jp}

\address{(T.M.) Department of Mathematics, College of Liberal Arts and Sciences, Kitasato University, 1-15-1 Kitasato, Minamiku, Sagamihara, Kanagawa, 252-0373,
Japan}
\email{miyaza@kitasato-u.ac.jp}


\footnote{This work was supported by JSPS KAKENHI Grant Numbers JP18K03252, JP19K03450, JP19K03452.}

\maketitle 

\begin{abstract}
We give explicit formulas of Whittaker functions on ${\rm GL}(4,\bR)$ for all irreducible generic representations.
As an application, we determine test vectors which attain the associated $L$-factors for Bump-Friedberg integrals on ${\rm GL}(4,\bR)$. 
\end{abstract}

\section*{Introduction}

The study of Whittaker functions of irreducible generic representations of ${\rm GL}(n)$ are not only meaningful from a representation theoretical point of view but also important in advancing the theory of automorphic forms via their Fourier expansions. 
In our previous paper \cite{HIM}, the authors gave explicit formulas of archimedean Whittaker functions on ${\rm GL}(3) $ 
and applied them to archimedean theory of automorphic $L$-functions on $ {\rm GL}(3) \times {\rm GL}(2) $.
The present paper describes explicit formulas of Whittaker functions on ${\rm GL}(4,\bR) $ and their application 
to the standard and the exterior square $L$-functions on $ {\rm GL}(4) $. 
Our main results include the following.
\begin{itemize}
\item We give explicit formulas of moderate growth Whittaker functions on ${\rm GL}(4,\bR)$ 
for all irreducible generic representations at their minimal ${\rm O}(4) $-types (\S 5).
\item We determine the test vectors (see below) for Bump-Friedberg integrals on ${\rm GL}(4,\bR)$ (\S 6).
\end{itemize}

\medskip 

Let $ \Pi = \otimes'_v \Pi_v $ be a cuspidal automorphic representation of ${\rm GL}(n,{\bA}) $,
where $ \bA $ is the adele ring of $ \bQ $. 
As is well known (\cite{Shalika_001}), Fourier expansion of a cusp form in $\Pi $ 
can be written in terms of the global Whittaker function.
Then $ \Pi $ has global Whittaker model and thus each local component $ \Pi_v $ also has local Whittaker model,
that is, $ \Pi_v $ is generic.
According to the result of Vogan \cite{Vogan_001}, an irreducible generic representation 
$ \Pi_{\infty} $ of ${\rm GL}(n,\bR)$ is isomorphic to an irreducible generalized principal series representation induced from 
a parabolic subgroup corresponding to a partition $ (2,\ldots, 2, 1, \ldots, 1) $ of $n$.

Let $ (\tau, V_{\tau}) $ be a multiplicity one ${\rm O}(n)$-type of an irreducible generalized principal series representation $\Pi_{\infty} $ of ${\rm GL}(n,\bR)$. 
The local multiplicity one theorem asserts that there exists a unique (up to constant multiple) 
${\rm O}(n)$-embedding $ \varphi $ from $ V_{\tau} $ to the space 
 $ {\rm Wh}(\Pi_{\infty}, \psi)^{\rm mg} $ of moderate growth Whittaker functions with a character of the group of upper triangular matrices with diagonal entries equal to $1$. 
For this embedding $\varphi$ and each $v\in V_{\tau}$, the Whittaker function $ \varphi(v) $ can be regarded as a smooth function on $ (\bR_+)^{n-1} $. 
See \S \ref{subsec:Fn_def_whittaker} for the precise.
There are two ways to arrive at an explicit formula of Whittaker function: 
\begin{itemize}
\item  
Analysis of a system of partial differential equations satisfied by Whittaker functions.
\item 
Manipulation of Jacquet integrals of Whittaker functions. 
\end{itemize}
In either case, we may face the following fundamental problems. 
\begin{itemize}
\item[(a)] How to describe representation theory of $ {\rm O}(n) $?
\item[(b)] How to describe special functions of $(n-1)$ variables? 
\end{itemize}

\medskip 

When $ \Pi_{\infty} $ is the class one principal series representation with the minimal ${\rm O}(n)$-type $(\tau,V_{\tau})$, the first problem (a) can be ignored, since $(\tau,V_{\tau})$ is trivial and (scalar-valued) Whittaker function can be obtained as an eigen-function of Capelli elements. 
With regard to the second problem (b), 
Bump (\cite{Bump}, analysis of partial differential equations for $n=3$) and 
Stade (\cite{Stade_001}, \cite{Stade_002}, evaluation of Jacquet integrals for general $n$)
obtained Mellin-Barnes integral representations for Whittaker functions of the form
\begin{align*}
  \varphi(v) (y_1,\dotsc, y_{n-1})  
& = \frac{1}{(4\pi \sqrt{-1})^{n-1}} \int_{s_{n-1}} \!\! \cdots \int_{s_1} V_n(v; s_1,\dotsc,s_{n-1}) 
   \, y_1^{-s_1} \cdots y_{n-1}^{-s_{n-1}} \, ds_1\cdots ds_{n-1}.
\end{align*}
Bump expressed $ V_3(v;s_1,s_2) $ as a ratio of gamma functions. Stade's formula, which is a recursive integral formula between $V_{n}$ and $ V_{n-2} $,  
implies that the Mellin-Barnes kernel $ V_{n}$ can not be written as a ratio of gamma functions when $n \ge 4$.
Moreover, based on the result of \cite{Stade_002},  
Stade and the second author \cite{Ishii_Stade_001} found a recursive relation between $ V_{n} $ and $V_{n-1}$, 
and gave a fundamental solution to the system of partial differential equations 
satisfied by the class one principal series Whittaker functions.

Oda and the second author \cite{Ishii_Oda_001} extended the class one result of \cite{Ishii_Stade_001} 
to cases of principal series representations induced from the parabolic subgroup of type $(1,\dotsc,1)$. 
In addition to Capelli equations, so-called ``Dirac-Schmid equations'' are needed to characterize Whittaker functions. 
To derive Dirac-Schmid equations which are consequence of $ (\mathfrak{gl}(n,\bR), {\rm O}(n)) $-structures 
of $\Pi_{\infty} $ around the minimal ${\rm O}(n)$-type $ (\tau, V_{\tau}) $, 
we need matrix elements of $ (\tau, V_{\tau}) $ and
some Clebsch-Gordan coefficients of the tensor product. 
Since the minimal ${\rm O}(n)$-type of the principal series representation 
is $k$-th exterior product $ \wedge^k \bC^n $ of the standard $n$-dimensional representation for some $ 0 \le k \le n $, 
it is not hard to describe a system of partial differential equations for Whittaker functions 
by using a {\it basis} of $ \wedge^k \bC^n $. 
The fundamental solution of the system and Mellin-Barnes integral representations of 
moderate growth Whittaker functions are given in \cite{Ishii_Oda_001}.

Note that, in the recent paper \cite{Ishii_Miyazaki_001}, the second and the third authors noticed that the formulas in 
\cite{Ishii_Stade_001} and \cite{Ishii_Oda_001} can be derived from Jacquet integrals for Godement sections
of the principal series representations.

\medskip 

Leaving the principal series representations, we are faced with the problem (a).
Our idea is a continuation of the line taken in \cite{HIM}. 
When $ n= 4 $, the remaining generic representations are the generalized principal series induced 
from parabolic subgroups of types $ (2,1,1) $ and $(2,2)$. 
Though we may use the Gelfand-Tsetlin basis of $ {\rm O}(4) $, the matrix elements are very complicated
and Clebsch-Gordan coefficients are less understood (cf. \cite{Ishii_Miyazaki_001}). 
To overcome the difficulties, the most significant idea in this paper is not to use {\it basis}, but to use  {\it generators}.
This idea has already been used in the case $n=3$ in \cite{HIM}, but 
our construction of generators requires more effort for an irreducible representation of ${\rm O}(4)$. 
Despite the many relations among generators, 
we can enjoy great benefits from the simplified expressions of $ {\rm O}(4) $-actions via generators.
We note that our generator becomes a basis when $ \wedge^k \bC^4 $, and hence, there was no
 need for this idea in the principal series situations. 

After establishing representation theory of ${\rm O}(4) $ in \S \ref{sec:rep_K}, we determine 
$ (\mathfrak{gl}(4,\bR), {\rm O}(4))$-structures of (generalized) principal series representations 
around their minimal ${\rm O}(4)$-types in \S \ref{sec:gps}.
As a consequence, we can get a system of partial differential equations for Whittaker functions in Proposition \ref{prop:PDE2}.

\medskip

Since we concentrate on moderate growth Whittaker functions, 
our approach for the system of partial differential equations in this paper is different from \cite{Ishii_Oda_001}. 
As in \cite{HIM}, through the reduction of the system, with the aid of the general theory of Whittaker functions, 
we want to show the following including the case of principal series.
\begin{itemize}
\item For some specific vector $ v_0 \in V_{\tau} $, Whittaker function $\varphi(v_0) $ satisfies essentially the same 
system of partial differential equations as the class one principal series Whittaker functions.
\item 
We can determine $ \varphi(v) $ for all $ v \in V_{\tau} $ from the function $ \varphi(v_0) $,
and our system in Proposition \ref{prop:PDE2} characterizes Whittaker functions $ \{ \varphi(v) \mid v \in V_{\tau} \} $.
\end{itemize}
However, our approach via differential equations faces a new problem that did not arise in the case $n=3$.
In a few exceptional cases (cf. Proposition \ref{prop:PDE_reduction_classone}), our Dirac-Schmid equations can not distinguish between two representations
that are not isomorphic, that is, 
Whittaker functions satisfy the same system of partial differential equations for those representations.
This phenomenon did not occur in our previous work \cite{HIM} and
we overcome this new obstruction by using Jacquet integral (\S \ref{subsec:Jacquet}).

\medskip

We remark that our argument includes new proof for the principal series Whittaker functions  
and does not rely on the results of \cite{Stade_001} and \cite{Ishii_Oda_001}.
In view of explicit formulas of the class one principal series Whittaker functions in 
\cite{Stade_001} and \cite{Ishii_Stade_001}, we have two possibilities to express Mellin-Barnes kernel
(cf. Proposition \ref{prop:classone_MB}).
In this paper, we adopt the same expression as in \cite{Stade_001} which is a natural extension of the formulas given in \cite{HIM}.
Moreover, our integral representation here works well with application to the Bump-Friedberg integral.
See Theorems \ref{thm:EF1}, \ref{thm:EF2} and \ref{thm:EF3} for our first main results.

\medskip 

The last section of this paper is devoted to an application to archimedean zeta integrals. 
Thanks to Fourier expansions via Whittaker functions, various zeta integrals for automorphic $L$-functions on ${\rm GL}(n)$ unfold to Whittaker models, and the associated local zeta integrals become integral transforms of Whittaker functions. 
Thus, explicit formulas of Whittaker functions are essential in precisely evaluating the integral transformations. 
Also, we believe explicit formulas of Whittaker functions will serve as an important step toward number theoretic applications not only computations of archimedean zeta integrals (cf. \cite{Buttcane}, \cite{GSW}).

If the local zeta integral is equal to the expected local $L$-factor, 
such identity will be a strong tool for arithmetic properties of automorphic $L$-functions (cf. \cite{HMN}). 
But such coincidence can not be expected in general.  
For example, archimedean zeta integral for Rankin-Selberg $L$-function on $ {\rm GL}(n) \times {\rm GL}(m) $ 
is not expected to attain exactly the $L$-factor when $ |n-m| \ge 2 $. 
On the other hand, we can expect that such a coincidence occurs when $|n-m|=1 $ as in our previous paper \cite{HIM}. 
We call the local datum such as Whittaker functions attaining the expected $L$-factors {\it test vectors} for the local zeta integrals. 
It is widely open regarding archimedean test vectors even their existence (cf. \cite{HJ}).

Our target in this paper is the Bump-Friedberg integral. 
When $ n =2m$, the archimedean Bump-Friedberg integral is
\begin{align*}
 Z(s_1,s_2,W,\Phi) & = \int_{N_{m} \backslash {\rm GL}(m,\bR)} \int_{N_{m} \backslash {\rm GL}(m,\bR)}
 W( \tilde{\iota} (g_1,g_2) ) \Phi( (0,\dotsc,0,1) g_2) 
\\
& \times |\det g_1|^{s_1-1/2}  |\det g_2|^{-s_1+s_2+1/2} \,dg_1 dg_2,
\end{align*}
where $ N_m$ is the maximal unipotent subgroup of ${\rm GL}(m,\bR) $ consisting of upper triangular matrices, 
$ W $ is a Whittaker functions for $ \Pi_{\infty} $, 
$ \Phi $ is a Schwartz-Bruhat function on $\bR^{m} $,
and $  \tilde{\iota}  $ is an embedding $ {\rm GL}(m,\bR) \times {\rm GL}(m,\bR) \to {\rm GL}(n,\bR)$.
See \cite{Ishii_001} for the precise. 

By unramified computation \cite[Theorem 3]{Bump_Friedberg_001}, 
the archimedean zeta integral above is expected to be related to 
the the product $ L(s_1, \Pi_{\infty})  L(s_2,\Pi_{\infty},\wedge^2) $ of the standard and the exterior square $L$-factors 
for $\Pi_{\infty} $.
When $ \Pi_{\infty} $ is the class one principal series representation, Stade \cite[Theorem 3.3]{Stade_002} proved 
the identity
$$
 Z(s_1, s_2, W, \Phi) = L(s_1, \Pi_{\infty}) L(s_2, \Pi_{\infty}, \wedge^2)
$$
for the class one principal series Whittaker function $W$ and the Gaussian $\Phi$. 
Then we can expect the existence of test vectors $ (W, \Phi) $ for other generic representations. 
As an extension of \cite{Stade_002}, the second author \cite{Ishii_001} gave test vectors explicitly 
for the principal series representation of ${\rm GL}(n,\bR)$ by using the explicit formulas in \cite{Ishii_Oda_001}. 

\medskip 

For the generalized principal series representations, as in \cite{HIM} and \cite{Ishii_001}, 
finding test vectors is a much harder task than the principal series representations (cf. \cite{Lin}).
Similar to the study of explicit formulas, the calculus of archimedean zeta integrals requires the computation of 
two objects:  
integrations over compact groups, and Mellin transforms of Whittaker functions.

When $ n= 4$, our compact group is ${\rm O}(2) \times {\rm O}(2)$.
Unfortunately, as in \cite{HIM}, the integration over ${\rm O}(2) \times {\rm O}(2) $ vanishes 
for the minimal ${\rm O}(4)$-type Whittaker function $W$ in many cases. 
Then we move ${\rm O}(4)$-type of Whittaker functions by applying differential operators, 
so that the integration over ${\rm O}(2) \times {\rm O}(2)$ does not vanish. 
Further, for that Whittaker function $W$, we need to understand right $ {\rm O}(2) \times {\rm O}(2) $ 
translation of $W$ precisely.
Here we can again benefit from the use of generators of $ V_{\tau} $.
  
After the integration over $ {\rm O}(2) \times {\rm O}(2) $, 
the zeta integral $ Z(s_1,s_2,W,\Phi) $ is reduced to a linear combination of special values $ V_4(v; s_1,s_2,s_1+s_2) $
of Mellin transform of Whittaker functions. 
Based on our explicit formulas in \S \ref{sec:EF}, with the aid of Barnes' first and second lemmas, 
we can proceed with our computation.
But in general, the integral $ Z(s_1,s_2, W,\Phi) $ is expected to become 
$  P(s_1,s_2) L(s_1, \Pi_{\infty}) L(s_2, \Pi_{\infty}, \wedge^2) $ with some polynomial $ P(s_1,s_2) $. 
The test vector problem is nothing but to find $ (W, \Phi )$ so that $P(s_1,s_2) = 1 $, and  
as far as we know, there is no guiding principle for this problem. 
After trial and error, we reach test vectors for Bump-Friedberg integrals.   
See Theorems \ref{thm:BF1}, \ref{thm:BF2} and \ref{thm:BF3} for our second main results.  

\medskip 

Since various archimedean zeta integrals are evaluated 
for the class one principal series representations (cf. \cite{Stade_002}), there will be plenty of room for study.
We hope our example opens some windows to test vector problems for ${\rm GL}_n $-integrals. 
Especially, the use of generators, which is our important idea in this paper, 
will contribute to the further development of the archimedean theory of automorphic $L$-functions.

\section{Basic objects}
\label{sec:Fn_basic_objects}




\subsection{Groups and algebras}
\label{subsec:group_algebra}
Throughout this paper we write  $ 1_n $ the unit matrix of degree $n$.  
The zero matrix of size $m\times n$ is denoted by $ O_{m,n} $.
Let $G=\mathrm{GL}(4,\bR )$ be the general linear group 
of degree $4$ over $\bR$. 
Let $N$ be the group of 
upper triangular matrices in $G$ 
with diagonal entries equal to $1$. 
Let $A$ be the group 
of diagonal matrices in $G$ with positive diagonal entries. 
Let $K=\mathrm{O}(4)$ be the orthogonal group of degree $4$. 
Then $K$ is a maximal compact subgroup of $G$, 
and we have an Iwasawa decomposition $G=NAK$ of $G$. 
It is convenient to fix 
the coordinates on $N$ and $A$ as follows:
\begin{align*}
&x=\begin{pmatrix}
1&x_{1,2}&x_{1,3}&x_{1,4}\\
0&1&x_{2,3}&x_{2,4}\\
0&0&1&x_{3,4}\\
0&0&0&1
\end{pmatrix} \in N,&
&y=\diag (y_1y_2y_3y_4, y_2y_3y_4, y_3y_4, y_4)\in A,
\end{align*}
where $x_{i,j}\in \bR \ (1\le i<j\le 4)$ and 
$y_k\in \bR_+\ (1\leq k\leq 4)$. 
Here $\bR_+$ means the set of positive real numbers.

Let 
$G_1=\mathrm{GL}(1,\bR )=\bR^\times$ and 
$G_2=\mathrm{GL}(2,\bR )$. 
We fix an Iwasawa decomposition 
$G_2=N_2A_2K_2$ of $G_2$ with 
\begin{align*}
&N_2=\left\{
 \begin{pmatrix} 1&x_{1,2}\\ 0&1 \end{pmatrix} 
\biggl|  \, x_{1,2}\in \bR \right\},&
&A_2=\{\diag (a_1, a_2)\mid a_1,a_2\in \bR_+\},&
&K_2=\mathrm{O}(2).
\end{align*}
We define an embedding $\iota \colon G_2\times G_2\to G$ by 
\begin{align*}
&\iota (g_1,g_2)=
\begin{pmatrix}
g_1&O_{2,2}\\
O_{2,2}&g_2
\end{pmatrix}&
&(g_1,g_2\in G_2). 
\end{align*}
For $\theta ,\theta_1,\theta_2\in \bR$, 
we set 
\begin{align*}
&\rk_\theta^{(2)} =
\begin{pmatrix}
\cos \theta &\sin \theta \\
-\sin \theta &\cos \theta
\end{pmatrix} \in K_2,&
&\rk_{\theta_1,\theta_2}^{(2,2)}
=\iota (\rk_{\theta_1}^{(2)},\rk_{\theta_2}^{(2)} )
\in K.
\end{align*}

Let $\g$, $\gn$, $\ga$ and $\gk$ be 
the associated Lie algebras of $G$, $N$, $A$ and $K$, respectively. 
In this article, we identify the complexification $\g_{\bC}$ of $\g$ 
with $\g \gl (4,\bC)$. 
Let $\gp$ be the orthogonal complement of $\gk$ in $\g$ 
with respect to the Killing form, that is, 
$\gp =\{ X\in \g \mid {}^tX=X\}$. 
The complexification of $\gp$ is denoted by $\gp_{\bC}$. 
The adjoint action of $G$ on $\g_{\bC}$ and 
its differential are denoted by Ad and ad, respectively.

Let $L$ be a Lie subgroup of $G$, and $\gl$ 
the associated Lie algebra of $L$. 
Let $\gl_{\bC}$ be the complexification of $\gl$.
The universal enveloping algebra of $\gl_\bC$ and its center are 
denoted by $U(\gl_{\bC} )$ and $Z(\gl_{\bC})$, respectively. 
We identify $\gl$ with the space of left $L$-invariant 
vector fields on $L$ in the usual way, that is, 
\begin{align*}
(R(X)f)(g)& =\frac{d}{dt}\biggl|_{t=0} f(g\exp (tX)) & (g\in L)
\end{align*}
for $X\in \gl$ and a differentiable function $f$ on $L$. 
Then $U(\gl_{\bC} )$ is identified with the algebra of 
left $L$-invariant differential operators on $L$. 
Let $C^\infty (L)$ be 
the space of smooth functions on $L$. 
We equip the space $C^\infty (L)$ with the topology 
of uniform convergence on compact sets of a function 
and its derivatives. 

For $1\leq i,j\leq 4$, let
$E_{i,j}$ be the matrix in $\g$ 
with $1$ at the $(i,j)$-th entry and $0$ at other entries. 
We set 
\begin{align*}
&E^{{\gk}}_{i,j}=E_{i,j}-E_{j,i},&
&E^{{\gp}}_{i,j}=E_{i,j}+E_{j,i}&
&(1\leq i,j\leq 4).
\end{align*}
Then $\{E_{i,j}\mid 1\leq i,j\leq 4\}$, 
$\{E_{i,j}^{\gk}\mid 1\leq i<j\leq 4\}$ and 
$\{E_{i,j}^{\gp}\mid 1\leq i\leq j\leq 4\}$ are bases of 
$\g$, $\gk$ and $\gp$, respectively. 

We define a matrix $\cE=(\cE_{i,j})_{1\leq i,j\leq 4}$ of size $4$ 
with entries in $U(\g_\bC )$ by 
\begin{align*}
\cE_{i,j} 
& = \begin{cases} 
E_{i,i}-\tfrac{5-2i}{2}&\text{if $i=j $}, \\
E_{i,j}&\text{if $i\neq j$}.
\end{cases}
\end{align*}
We define the Capelli elements 
$\cC_1$, $\cC_2$, $\cC_3$, $\cC_4$ by 
the identity 
\begin{align*}
\mathrm{Det}(t1_4+\cE )
=t^4+\cC_1t^3+\cC_2t^2+\cC_3t+\cC_4
\end{align*}
in a variable $t$. Here $\mathrm{Det}$ means the vertical determinant 
defined by 
\begin{align*}
&\mathrm{Det}(X)=\sum_{w\in \gS_4}\sgn (w)
X_{1,w(1)}X_{2,w(2)}X_{3,w(3)}X_{4,w(4)},&
&X=(X_{i,j})_{1\leq i,j\leq 4}
\end{align*}
with the symmetric group $\gS_4$ of degree $4$. 
It is known that the Capelli elements $\cC_1$, $\cC_2$, $\cC_3$, $\cC_4$ 
generate $Z(\g_\bC )$ as a $\bC$-algebra 
(\textit{cf}. \cite[\S 11]{MR1116239}).

\subsection{Whittaker functions}
\label{subsec:Fn_def_whittaker}

We define the standard character $\psi_{\bR} \colon \bR \to \bC^\times$ by 
\begin{align*}
\psi_{\bR} (t) &=\exp (2\pi \sI t) 
& (t\in \bR),
\end{align*}
and a character $\psi_{(c_1,c_2,c_3)}$ of $N$ by 
\begin{align*}
&\psi_{(c_1,c_2,c_3)}(x)
=\psi_{\bR} (c_1x_{1,2}+c_2x_{2,3}+c_3x_{3,4})&
&(x=(x_{i,j})\in N) 
\end{align*}
for $(c_1,c_2,c_3)\in \bR^3$. 
Then unitary characters of $N$ are exhausted 
by the characters of this form. 
We say that $\psi_{(c_1,c_2,c_3)}$ is non-degenerate 
if $(c_1,c_2,c_3)\in (\bR^\times )^3$. 
For $c\in \bR$, 
the character $\psi_{(c ,c ,c )}$ is simply denoted by $\psi_c$.

We regard $C^\infty (G)$ as a $G$-module 
via the right translation. 
For a non-degenerate character $\psi$ of $N$, 
let $C^\infty (N\backslash G;\psi )$ be 
the subspace of $C^\infty (G)$ consisting of 
all functions $f$ satisfying 
\begin{align*}
f(xg)&=\psi (x)f(g)
&(x\in {N},\ g\in {G}).
\end{align*}
For an admissible representation $(\Pi ,H_{\Pi})$ of $G$, let 
\begin{align*}
&{\cI}_{\Pi,\psi }=\Hom_{(\g_\bC ,K)}
(H_{\Pi ,K} ,C^\infty (N\backslash G;\psi )_K).
\end{align*}
Here $H_{\Pi ,K}$ and $C^\infty (N\backslash G;\psi )_K$ are 
the subspaces of $H_{\Pi}$ and $C^\infty (N\backslash G;\psi )$ 
consisting of all $K$-finite vectors, respectively. 
We define the subspace ${\cI}_{\Pi,\psi }^{\mathrm{mg}}$ 
of ${\cI}_{\Pi,\psi }$ consisting of 
all homomorphisms $\Phi$ such that  
$\Phi (f)$ $(f\in H_{\Pi ,K})$ are 
moderate growth functions. 
We define the space ${\mathrm{Wh}}(\Pi ,\psi )$ of 
Whittaker functions for $(\Pi ,\psi )$ by 
\begin{align*}
&{\mathrm{Wh}}(\Pi ,\psi )=
\bC \textrm{-span}\{\Phi (f) \mid f\in H_{\Pi ,{K}},\ 
\Phi \in {\cI}_{\Pi,\psi }\}, 
\end{align*}
and define the subspace 
${\mathrm{Wh}}(\Pi ,\psi )^{\mathrm{mg}}$ of 
${\mathrm{Wh}}(\Pi ,\psi )$ by 
\begin{align*}
&{\mathrm{Wh}}(\Pi ,\psi )^{\mathrm{mg}}=
\bC \textrm{-span}\{\Phi (f) \mid f\in H_{\Pi ,{K}},\ 
\Phi \in {\cI}_{\Pi,\psi }^{\mathrm{mg}}\}.
\end{align*}

Let $\varphi \colon V_{\tau}\to {\mathrm{Wh}}(\Pi ,\psi )$ 
be a $K$-embedding with a $K$-type $(\tau ,V_\tau )$ of $\Pi$. 
By definition, we have 
\begin{align}
\label{eqn:Fn_whitt_ngk}
&\varphi (v)(xgk)=\psi (x)\varphi (\tau (k)v)(g)&
&(v\in V_\tau,\ x\in {N},\ g\in {G},\ k\in {K}).
\end{align}
Because of the Iwasawa decomposition ${G}={N}{A}{K}$, 
$\varphi $ is characterized by 
its restriction $v\mapsto \varphi (v)|_{{A}}$ to ${A}$. 
We call $v\mapsto \varphi (v)|_{{A}}$ 
the radial part of $\varphi$.

Assume that $\Pi$ is irreducible. 
Then the multiplicity one theorem 
(\textit{cf.} \cite{Shalika_001}, \cite{Wallach_001}) 
tells that the intertwining space 
${\cI}_{\Pi,\psi }^{\mathrm{mg}}$ 
is at most one dimensional. 
By the result of Matumoto 
\cite[Corollary 2.2.2, Theorem 6.2.1]{Matumoto_001} 
for $\mathrm{SL}(n,\bR )$, 
we know that ${\cI}_{\Pi,\psi }\neq 0$ if and only if 
$\Pi$ is large in the sense of Vogan \cite{Vogan_001}.

For $(c_1,c_2,c_3)\in (\bR^\times)^3$, 
there is a $G$-isomorphism 
\begin{align*}
\Xi_{(c_1,c_2,c_3)}\colon C^\infty (N\backslash G;\psi_1)
\to C^\infty (N\backslash G;\psi_{(c_1,c_2,c_3)})
\end{align*}
defined by $\Xi_{(c_1,c_2,c_3)}(f)(g)
=f ( \diag (c_1c_2c_3, c_2c_3, c_3, 1) g)$ ($g\in G$). 
Hence, it suffices to consider the case of $\psi_1$. 
In this paper, we give explicit formulas of the radial part 
of a $K$-embedding $\varphi \colon V_{\tau}\to 
{\mathrm{Wh}}(\Pi ,\psi_1)^{\mathrm{mg}}$ 
for an irreducible admissible 
large representation $\Pi$ of $G$ and the minimal $K$-type 
$(\tau ,V_\tau )$ of $\Pi$.

\subsection{Generalized principal series representations}
\label{subsec:Rn_def_gps}

We shall specify certain representations of 
$G_1$ and $G_2$ as follows. 
\begin{itemize}
\item 
For $\nu \in \bC$ and $\delta \in \{0,1\}$, 
we define a character $\chi_{(\nu ,\delta )}$ of $G_1$ by 
\begin{align*}
&\chi_{(\nu ,\delta )}(t)
=\sgn (t)^{\delta }|t|^{\nu }
\hspace{10mm}(t\in G_1).
\end{align*}
\item 
For $\nu \in \bC$ and $\kappa \in \bZ_{\geq 2}$, 
let $(D_{(\nu , \kappa )},\gH_{(\nu , \kappa )})$ be 
an irreducible Hilbert representation of $G_2$ 
such that 
$D_{(\nu ,\kappa )}(t1_2)=t^{2\nu }\ (t\in \bR_+)$ and 
$D_{(\nu ,\kappa )}\simeq D^+_{\kappa }\oplus D^-_{\kappa }$ 
as $(\gs \gl (2,\bR ),\mathrm{SO}(2))$-modules, where 
$D^\pm_{\kappa }$ is the discrete series representations of $\mathrm{SL}(2,\bR )$ with 
the minimal $\mathrm{SO}(2)$-type 
\[
\mathrm{SO}(2)\ni \rk_\theta^{(2)}
\mapsto e^{\pm \sI \kappa \theta }\in \bC^\times .
\]
Such representation $D_{(\nu , \kappa )}$ is unique 
up to infinitesimal equivalence. 
\end{itemize}

Let us define generalized principal series representations of 
$G=\mathrm{GL}(4,\bR)$. 
Let $\mn \in \{(1,1,1,1),(2,1,1),(2,2)\}$, and we associate 
the block upper triangular subgroup 
$P_{\mn}=N_{\mn}M_{\mn}$ of $G$, where 
\begin{align*}
M_{(1,1,1,1)} & =\{\diag (m_1,m_2,m_3,m_4)\mid 
m_1,m_2,m_3,m_4\in G_1\}
\simeq G_1\times G_1\times G_1\times G_1,\\
M_{(2,1,1)}& =\{
\iota (m_1,\diag (m_2,m_3))\mid 
m_1\in G_2,\  m_2,m_3\in G_1\}
\simeq G_2\times G_1\times G_1,\\
M_{(2,2)}& =\{
\iota (m_1,m_2) \mid 
m_1,m_2\in G_2\}
\simeq G_2\times G_2,
\end{align*}
and
\begin{gather*}
N_{(1,1,1,1)} =N, \quad 
 N_{(2,1,1)} =\{(x_{i,j})\in N\mid x_{1,2}=0\}, \quad 
N_{(2,2)} =\{(x_{i,j})\in N\mid x_{1,2}=x_{3,4}=0\}.
\end{gather*}
Let $(\sigma ,U_\sigma )$ be a Hilbert representation of 
$M_\mn $ of the following form: 
\begin{itemize}
\item When $\mn =(1,1,1,1)$, let 
$\sigma =\chi_{(\nu_1,\delta_1)}\boxtimes \chi_{(\nu_2,\delta_2)}
\boxtimes \chi_{(\nu_3,\delta_3)}\boxtimes \chi_{(\nu_4,\delta_4)}$ 
with $\nu_1,\nu_2,\nu_3,\nu_4\in \bC$, 
$\delta_1,\delta_2,\delta_3,\delta_4\in \{0,1\}$, that is, 
$U_\sigma =\bC$ and 
\begin{align*}
&\sigma (m)=\chi_{(\nu_1,\delta_1)}(m_1)\chi_{(\nu_2,\delta_2)}(m_2)
\chi_{(\nu_3,\delta_3)}(m_3)\chi_{(\nu_4,\delta_4)}(m_4)
\end{align*}
for $m=\diag (m_1,m_2,m_3,m_4)\in M_{(1,1,1,1)}$.  \smallskip 

\item When $\mn =(2,1,1)$, let 
$\sigma =D_{(\nu_1,\kappa_1)}\boxtimes \chi_{(\nu_2,\delta_2)}
\boxtimes \chi_{(\nu_3,\delta_3)}$ 
with $\nu_1,\nu_2,\nu_3\in \bC$, 
$\kappa_1\in \bZ_{\geq 2}$, 
$\delta_2,\delta_3\in \{0,1\}$, that is, 
$U_\sigma =\gH_{(\nu_1, \kappa_1)}$ and 
\begin{align*}
&\sigma (m)=\chi_{(\nu_2,\delta_2)}(m_2)
\chi_{(\nu_3,\delta_3)}(m_3)D_{(\nu_1,\kappa_1)}(m_1)
\end{align*}
for $m=\iota (m_1,\diag (m_2,m_3))\in M_{(2,1,1)}$. \smallskip 

\item When $\mn =(2,2)$, let 
$\sigma =D_{(\nu_1,\kappa_1)}\boxtimes D_{(\nu_2,\kappa_2)}$ 
with $\nu_1,\nu_2\in \bC$, 
$\kappa_1,\kappa_2\in \bZ_{\geq 2}$, that is, 
$U_\sigma =\gH_{(\nu_1, \kappa_1)}\boxtimes_{\bC} 
\gH_{(\nu_2, \kappa_2)}$ and 
\begin{align*}
&\sigma (m)=D_{(\nu_1,\kappa_1)}(m_1)\boxtimes 
D_{(\nu_2,\kappa_2)}(m_2)
\end{align*}
for $m=\iota (m_1,m_2)\in M_{(2,2)}$.
\end{itemize}
Moreover, we extend the representation $\sigma $ 
to $P_\mn =N_\mn M_\mn $ by 
\begin{align*}
&\sigma (xm)=\sigma (m)&(x\in N_{\mn},\ m\in M_\mn). 
\end{align*}
We define the function $\rho_{\mn} $ on $P_{\mn}$ by 
\begin{align*}
&\rho_{\mn} (p)=|\det (\Ad_{\gn_{\mn}} (p))|^{\frac{1}{2}}&&(p\in P_{\mn}), 
\end{align*}
where $\Ad_{\gn_{\mn}} $ means the adjoint action 
on the Lie algebra $\gn_{\mn}$ of $N_{\mn}$. 

Let $H(\sigma )^0$ be the space 
of $U_\sigma$-valued continuous functions $f$ on $K$ 
satisfying 
\begin{align*}
&f(mk)=\sigma (m)f(k)&
&(m\in {K}\cap M_{\mn},\ k\in {K}),
\end{align*}
on which ${G}$ acts by 
\begin{align*}
&(\Pi_\sigma (g)f)(k)=\rho_{\mn} (\mmp (kg))
\sigma (\mmp (kg))f(\mk (kg))&
&(g\in {G},\ k\in {K},\ f\in H(\sigma )^0).
\end{align*}
Here $kg=\mmp (kg)\mk (kg)$ is the decomposition of $kg$ 
with respect to the decomposition ${G}=P_{\mn}{K}$. 
We define a representation 
$(\Pi_\sigma ,H(\sigma ))$ of $G$ as the completion of 
$(\Pi_\sigma ,H(\sigma )^0)$ 
with respect to the inner product 
\begin{align*}
&\langle f_1,f_2\rangle_{H({\sigma})}
=\int_{{K}}\langle f_1(k),f_2(k)\rangle_{{\sigma}}dk&
&(f_1,f_2\in H(\sigma )^0),
\end{align*}
where $dk$ is the Haar measure on ${K}$, and 
$\langle \cdot ,\cdot \rangle_{{\sigma}}$ is the inner product 
on the Hilbert space $U_\sigma$. 
We call $(\Pi_\sigma ,H(\sigma ))$ a generalized principal 
series representation or a $P_\mn$-principal series representation. 
The subspace of $H(\sigma )$ 
consisting of all ${K}$-finite vectors is denoted by $H(\sigma)_K$.
We write the action of $\g_\bC$ on $H(\sigma)_K$ 
induced from $\Pi_\sigma$ by the same symbol $\Pi_\sigma$. 
If $\mn =(1,1,1,1)$, we call $(\Pi_\sigma ,H(\sigma ))$ 
a principal series representation. 

Because of Vogan's characterization \cite[Theorem 6.2 (f)]{Vogan_001} with 
\cite[Corollary 2.8]{Speh_Vogan_001}, 
any irreducible admissible large representation $\Pi$ of ${G}$ 
is infinitesimally equivalent 
to some $\Pi_\sigma $, which is induced from 
one of the following representations  
\begin{align*}
\sigma &=
\begin{cases}
 \chi_{(\nu_1,\delta_1)}\boxtimes \chi_{(\nu_2,\delta_2)}
\boxtimes \chi_{(\nu_3,\delta_3)}\boxtimes \chi_{(\nu_4,\delta_4)}
& \text{$(\delta_1\geq \delta_2\geq \delta_3\geq \delta_4)$},\\
  D_{(\nu_1,\kappa_1)}\boxtimes \chi_{(\nu_2,\delta_2)}
\boxtimes \chi_{(\nu_3,\delta_3)}
& \text{$(\delta_2\geq \delta_3)$},\\
 D_{(\nu_1,\kappa_1)}\boxtimes D_{(\nu_2,\kappa_2)}
&\text{$(\kappa_1\geq \kappa_2)$}.
\end{cases}
\end{align*}

Any generalized principal series representation of $G$ 
can be regarded as a subrepresentation 
of a principal series representation as 
(\ref{eqn:P211_embed}) and (\ref{eqn:P22_embed}) 
in \S \ref{sec:gps}, 
and the quotient representation 
is not large in the sense of Vogan. 
Hence, by the results of Kostant 
\cite[Theorems 5.5 and 6.6.2]{Kostant_001} and 
Matumoto \cite[Corollary 2.2.2, Theorem 6.1.6]{Matumoto_001} 
with the standard arguments, we have
\begin{align}
 \label{eqn:dimWh_gps}
&\dim_\bC  {\cI}_{\Pi_\sigma ,\psi_1}=4!,&
&\dim_\bC  {\cI}_{\Pi_\sigma ,\psi_1}^{\rm mg}=1.
\end{align}

\subsection{The gamma functions and Mellin-Barnes integrals}
\label{sec:Barnes}

We recall some basic facts of the gamma functions and Mellin-Barnes integrals.

The gamma function $\Gamma(s) $ is holomorphic on $ \bC\backslash \{ 0,-1,-2,\dotsc, \} $ and has a simple pole 
at $ s = m $ for any $ m \in \bZ_{\le 0} $.  
As usual, we set 
\begin{align*}
 \Gamma_\bR (s) & =\pi^{-s/2}\Gamma (s/2), 
&\Gamma_\bC (s)=2(2\pi )^{-s}\Gamma (s).
\end{align*}  
The functional equation $ \Gamma(s+1) = s \Gamma(s) $ implies that 
\begin{align}
\label{eqn:gamma_FE}
  \GR(s+2) &=(2\pi)^{-1}s\GR(s), &
  \GC(s+1) & = (2\pi)^{-1} s\GC(s).
\end{align}
The duplication formula $ \Gamma(s)\Gamma(s+\tfrac12) = 2^{1-2s} \sqrt{\pi }\Gamma(2s) $ implies that 
\begin{align}
\label{eqn:gamma_dup}
 \GR(s) \GR(s+1) = \GC(s).
\end{align}
For $ a \in \bC $ and $ i \in \bZ $, we introduce the Pochhammer symbol $ (a)_i $ by 
\begin{align*}
 (a)_i = \frac{\Gamma(a+i)}{\Gamma(a)}.
\end{align*}

In this paper, we treat Mellin-Barnes integrals. 
Let $ s_1,s_2,s_3 \in \bC $.
For $ a_i, b_j, c_k  \in \bC$ $(1 \le i,j \le 6, 1\le k \le 2) $, 
we assume that 
$ {\rm Re}(s_i+b_j+b_5),  {\rm Re}(s_i+b_j+b_6)  > 0 $ for $ (i,j) = (1,1), (2, 2), (2, 3), (3,4) $.  
Let
\begin{align*}
 V(s_1,s_2,s_3) 
& = \GR(s_1+a_1) \GR(s_1+a_2) \GR(s_2+a_3) \GR(s_2+a_4) \GR(s_3+a_5) \GR(s_3+a_6) 
\\
& \quad \times 
 \frac{1}{4\pi \sqrt{-1}} \int_q
  \frac{ \GR(s_1-q+b_1) \GR(s_2-q+b_2) \GR(s_2-q+b_3) \GR(s_3-q+b_4)  } 
   {\GR(s_1+s_2-q+c_1) \GR(s_2+s_3-q+c_2)} \\
& \quad  \times \GR(q+b_5) \GR(q+b_6) \,dq. 
\end{align*}
Here the path $ \int_{q} $ is a vertical line from 
$ {\rm Re}(q)-\sqrt{-1}\infty $ to $ {\rm Re}(q) + \sqrt{-1} \infty $ 
with the real part
$$ \max \{ -{\rm Re}(b_5), -{\rm Re}(b_6) \} < {\rm Re}(q) 
< \min \{ {\rm Re} (s_1+b_1), {\rm Re} (s_2+b_2), {\rm Re} (s_2+b_3), {\rm Re} (s_3+b_4) \}. $$
By using $ V(s_1,s_2,s_3) $, we consider a function $ f_V(y_1,y_2, y_3) $ on $(\bR_+)^3 $ by
\begin{align*}
 f_V(y_1,y_2,y_3) =\frac{1}{ (4\pi \sqrt{-1})^3} \int_{s_3} \int_{s_2} \int_{s_1} V(s_1,s_2,s_3) \, y_1^{-s_1} y_2^{-s_2} y_3^{-s_3} 
\,ds_1ds_2ds_3.
\end{align*}
Here the paths $ \int_{s_i} $ $(i=1,2,3)$ are vertical lines from 
$ {\rm Re}(s_i)-\sqrt{-1}\infty $ to $ {\rm Re}(s_i) + \sqrt{-1} \infty $ with sufficiently large real parts. 
More precisely, 
$ {\rm Re}(s_i+a_j)> 0 $ for $ (i,j)=(1,1)$, $(1,2)$, $(2,3)$, $(2,4)$, $(3,5)$, $(3,6) $
and 
$ {\rm Re}(s_i+b_j+b_k) > 0 $ for $ (i,j,k) =(1,1,5)$, $(1,1,6)$, $(2,2,5)$, $(2,2,6)$, $(2,3,5)$, $(2,3,6)$, $(3,4,5)$, $(3,4,6). $

We will describe $ \varphi(v)|_A $ as a linear combination of $f_V $. 
Hereafter we sometimes omit to mention the paths of integrations. 

\medskip

The following statement known to Barnes' (first) lemma plays important role in our 
calculation of explicit formulas of Whittaker functions and archimedean zeta integrals.

\begin{lem}[{\cite[Lemmas 8.5]{HIM}}]
\label{lem:Barnes1st}

For $ a_1,a_2, b_1,b_2 \in \bC $ such that  ${\rm Re}(a_i+b_j) > 0 $ $ (1 \le i,j \le 2) $, it holds that 
\begin{align*}
& \frac{1}{4\pi \sqrt{-1}} \int_z 
 \GR(z+a_1)\GR(z+a_2)\GR(-z+b_1)\GR(-z+b_2) \,dz
\\
& = \frac{\GR(a_1+b_1)\GR(a_1+b_2)\GR(a_2+b_1)\GR(a_2+b_2)}{\GR(a_1+a_2+b_1+b_2)}. 
\end{align*}
Here the path of integration $ \int_z $ is the vertical line from
$ {\rm Re}(z) - \sqrt{-1} \infty $ to $ {\rm Re}(z) + \sqrt{-1} \infty $ with the real part
$$ \max \{ -{\rm Re}(a_1), -{\rm Re}(a_2) \} < {\rm Re}(z) < \min \{ {\rm Re}(b_1), {\rm Re}(b_2) \}. $$
\end{lem}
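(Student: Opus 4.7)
The plan is to deduce this from the classical (complex) Barnes' first lemma
$$\frac{1}{2\pi\sqrt{-1}}\int \Gamma(w+\alpha_1)\Gamma(w+\alpha_2)\Gamma(-w+\beta_1)\Gamma(-w+\beta_2)\,dw = \frac{\Gamma(\alpha_1+\beta_1)\Gamma(\alpha_1+\beta_2)\Gamma(\alpha_2+\beta_1)\Gamma(\alpha_2+\beta_2)}{\Gamma(\alpha_1+\alpha_2+\beta_1+\beta_2)}$$
by a direct change of variables, using only the definition $\Gamma_{\bR}(s)=\pi^{-s/2}\Gamma(s/2)$.

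First I would expand each $\Gamma_{\bR}$ factor in the integrand on the LHS. Writing out the four factors, the powers of $\pi$ collect into a single overall constant:
$$\pi^{-(z+a_1)/2}\pi^{-(z+a_2)/2}\pi^{-(-z+b_1)/2}\pi^{-(-z+b_2)/2}=\pi^{-(a_1+a_2+b_1+b_2)/2},$$
so the $z$-dependence drops out of the $\pi$ contribution. The integrand is thereby reduced to
$$\pi^{-(a_1+a_2+b_1+b_2)/2}\,\Gamma\!\bigl(\tfrac{z+a_1}{2}\bigr)\Gamma\!\bigl(\tfrac{z+a_2}{2}\bigr)\Gamma\!\bigl(\tfrac{-z+b_1}{2}\bigr)\Gamma\!\bigl(\tfrac{-z+b_2}{2}\bigr).$$

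Next, I would substitute $w=z/2$, so $dz=2\,dw$ and the vertical contour in $z$ transforms into a vertical contour in $w$. The hypothesis $\mathrm{Re}(a_i+b_j)>0$ guarantees that the shifted contour still separates the ascending poles of $\Gamma(w+a_i/2)$ from the descending poles of $\Gamma(-w+b_j/2)$, so the classical Barnes lemma applies with $\alpha_i=a_i/2$, $\beta_j=b_j/2$. The factor of $2$ from $dz=2dw$ converts $\frac{1}{4\pi\sqrt{-1}}$ into $\frac{1}{2\pi\sqrt{-1}}$, matching the normalization in the classical form.

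Finally, I would apply the classical lemma and recollect the $\pi$ factors. The product $\pi^{-(a_1+a_2+b_1+b_2)/2}$ together with $\prod_{i,j}\Gamma((a_i+b_j)/2)/\Gamma((a_1+a_2+b_1+b_2)/2)$ is precisely
$$\frac{\Gamma_{\bR}(a_1+b_1)\Gamma_{\bR}(a_1+b_2)\Gamma_{\bR}(a_2+b_1)\Gamma_{\bR}(a_2+b_2)}{\Gamma_{\bR}(a_1+a_2+b_1+b_2)},$$
since the $\pi$-exponent in the numerator ($-\tfrac12\sum_{i,j}(a_i+b_j)=-(a_1+a_2+b_1+b_2)$) combined with that in the denominator ($+(a_1+a_2+b_1+b_2)/2$) reproduces the collected prefactor. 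There is no real obstacle here beyond bookkeeping of the $\pi$-powers and checking the contour condition; the whole content is that $\Gamma_{\bR}$ differs from $\Gamma$ only by a factor of $\pi^{-s/2}$ and a rescaling of the argument, and Barnes' lemma is invariant under this change.
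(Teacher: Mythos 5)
Your proof is correct: the rescaling $w=z/2$ together with the observation that the $\pi$-powers in the integrand collapse to the constant $\pi^{-(a_1+a_2+b_1+b_2)/2}$ reduces the statement exactly to the classical Barnes first lemma, and the contour/convergence condition ${\rm Re}(a_i+b_j)>0$ transfers correctly. The paper gives no proof of its own (it cites \cite[Lemma 8.5]{HIM}), and your argument is precisely the standard bookkeeping that the cited lemma rests on, so there is nothing further to compare.
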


The following lemma called  Barnes' second lemma is also useful.

\begin{lem}[{\cite[Lemmas 8.7]{HIM}}] 
\label{lem:Barnes2nd}
For $ a_1,a_2, b_1,b_2,b_3 \in \bC $ such that  ${\rm Re}(a_i+b_j) > 0 $ $ (1 \le i \le 2, 1 \le j \le 3) $, it holds that 
\begin{align*}
& \frac{1}{4\pi \sqrt{-1}} \int_z 
 \frac{\GR(z+a_1)\GR(z+a_2)\GR(-z+b_1)\GR(-z+b_2)\GR(-z+b_3)}
  { \GR(-z+a_1+a_2+b_1+b_2+b_3)} \,dz
\\
& = \frac{\GR(a_1+b_1)\GR(a_1+b_2)\GR(a_1+b_3) 
   \GR(a_2+b_1)\GR(a_2+b_2)\GR(a_2+b_3)}
  {\GR(a_1+a_2+b_1+b_2)\GR(a_1+a_2+b_1+b_3)\GR(a_1+a_2+b_2+b_3)}. 
\end{align*}
Here the path of integration $ \int_z $ is the vertical line from
$ {\rm Re}(z) - \sqrt{-1} \infty $ to $ {\rm Re}(z) + \sqrt{-1} \infty $ with the real part
$$
\max \{ -{\rm Re}(a_1), -{\rm Re}(a_2) \} < {\rm Re}(z) < \min \{ {\rm Re}(b_1), {\rm Re}(b_2), {\rm Re}(b_3) \}. $$
\end{lem}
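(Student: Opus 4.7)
My plan is to derive this identity by reducing it to Barnes' first lemma (Lemma \ref{lem:Barnes1st}), which is already available. First, using $\GR(s)=\pi^{-s/2}\Gamma(s/2)$ together with the substitution $z\mapsto 2z$, one checks that all powers of $\pi$ cancel on both sides (the exponents of $\pi$ in the numerator and denominator of the right-hand side sum to zero, as do those coming from the integrand on the left-hand side). Hence it suffices to establish the classical Barnes' second lemma, namely the same identity with every $\GR$ replaced by $\Gamma$ and the prefactor $\tfrac{1}{4\pi\sqrt{-1}}$ replaced by $\tfrac{1}{2\pi\sqrt{-1}}$.

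For the classical form I would run an ``insert-and-swap'' argument built entirely on Barnes' first lemma. Set $A:=a_1+a_2+b_1+b_2+b_3$. The idea is to rewrite the quotient $\Gamma(-z+b_3)/\Gamma(-z+A)$ as a one-variable Mellin--Barnes integral in an auxiliary variable $w$; such a representation is an immediate consequence of Barnes' first lemma read in reverse. Substituting this expression into the integrand and interchanging the order of integration---which is legitimate because the resulting integrand decays exponentially in both $z$ and $w$ on vertical contours, by Stirling's formula---the inner $z$-integral takes the shape
\[
 \frac{1}{2\pi\sqrt{-1}}\int_{z} \Gamma(z+a_1)\,\Gamma(z+a_2)\,\Gamma(-z+\alpha(w))\,\Gamma(-z+\beta(w))\,dz
\]
for suitable affine functions $\alpha,\beta$ of $w$. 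By Barnes' first lemma this collapses to a ratio of four gamma functions in $w$. The remaining single $w$-integral again has the shape of Barnes' first lemma, and one further application yields the product on the right-hand side.

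The main obstacle is bookkeeping: one must choose the auxiliary ``insert'' so that the arguments $\alpha(w),\beta(w)$ emerging above pair cleanly with the two factors $\Gamma(z+a_1),\Gamma(z+a_2)$ already present, and at the same time the vertical contours for $z$ and $w$ must be placed in strips simultaneously compatible with both invocations of Barnes' first lemma. The positivity hypothesis $\mathrm{Re}(a_i+b_j)>0$ is exactly what guarantees that such a joint choice of contours exists. As a back-up plan I could instead close the $z$-contour to the right and pick up the residues at $z=b_j+n$ for $j=1,2,3$ and $n\in\bZ_{\geq 0}$; each of the three resulting series is a Saalschützian ${}_3F_2(1)$ (the sum of the denominator parameters exceeds that of the numerator parameters by $1$), summable in closed form by Saalschütz's theorem, and the three pieces combine into the claimed right-hand side. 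I would nevertheless prefer the contour route above, since it stays within the framework of Lemma \ref{lem:Barnes1st} and avoids invoking Saalschütz as an external input.
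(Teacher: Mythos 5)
Your overall strategy---reduce to the classical statement and then derive it from Lemma \ref{lem:Barnes1st} by an insert-and-swap---is the standard derivation and can be made to work (the paper itself gives no argument here, quoting the lemma from \cite{HIM}), and your preliminary reduction is fine: the powers of $\pi$ do cancel, because the sum of the numerator arguments minus the denominator argument is identically zero in $z$ and the same balance holds on the right-hand side; in fact you may skip this step altogether and argue directly with $\GR$, since Lemma \ref{lem:Barnes1st} is already stated in that normalization. The genuine gap is the insert you name. The quotient $\GR(-z+b_3)/\GR(-z+A)$, with $A=a_1+a_2+b_1+b_2+b_3$, is \emph{not} an immediate consequence of Lemma \ref{lem:Barnes1st} read in reverse: the reverse reading produces $\GR(\alpha+\gamma)\GR(\alpha+\delta)\GR(\beta+\gamma)\GR(\beta+\delta)/\GR(\alpha+\beta+\gamma+\delta)$, and since the two pairs $\{\alpha+\gamma,\beta+\delta\}$ and $\{\alpha+\delta,\beta+\gamma\}$ each sum to the denominator argument, a $z$-dependent denominator forces at least two $z$-dependent numerator factors; a one-gamma-over-one-gamma ratio in $z$ can never arise this way. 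If instead you represent $\GR(-z+b_3)/\GR(-z+A)$ by some other integral (Euler's beta integral, say), the surviving factors $\GR(-z+b_1)\GR(-z+b_2)$ together with the $z$-dependence the insert must still carry leave an inner $z$-integral with five gamma factors (or an extra power of the auxiliary variable), not the four-gamma shape you assert, so the step as written fails.

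The repair is exactly what your claimed inner shape presupposes: insert for the two-over-one ratio. By Lemma \ref{lem:Barnes1st} in reverse, $\GR(-z+b_2)\GR(-z+b_3)/\GR(-z+A)$ equals $\{\GR(a_1+a_2+b_1+b_2)\GR(a_1+a_2+b_1+b_3)\}^{-1}$ times $\tfrac{1}{4\pi\sqrt{-1}}\int_w \GR(-z+w)\GR(a_1+a_2+b_1+w)\GR(b_2-w)\GR(b_3-w)\,dw$. After interchanging the integrals (justified by Stirling, a joint choice of contours existing because ${\rm Re}(a_i+b_j)>0$), the $z$-integral is $\tfrac{1}{4\pi\sqrt{-1}}\int_z \GR(z+a_1)\GR(z+a_2)\GR(-z+b_1)\GR(-z+w)\,dz=\GR(a_1+b_1)\GR(a_2+b_1)\GR(a_1+w)\GR(a_2+w)/\GR(a_1+a_2+b_1+w)$; the denominator cancels the inserted factor $\GR(a_1+a_2+b_1+w)$, and one more application of Lemma \ref{lem:Barnes1st} in $w$ gives exactly the stated right-hand side. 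Finally, your backup plan is not sound as stated: closing the contour to the right picks up the poles at $z=b_j+2n$ and yields three balanced ${}_3F_2(1)$ series that are \emph{non-terminating}, whereas Saalsch\"utz's theorem (cf. Lemma \ref{lem:Saal}) applies only to terminating balanced series; what is needed there is the non-terminating extension of Saalsch\"utz, which is essentially equivalent to Barnes' second lemma itself, so that route is circular without further input.
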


\section{Representations of $K=\mathrm{O}(4)$}
\label{sec:rep_K}

\subsection{The parametrization}
\label{subsec:par_rep_K}

In this subsection, we will show that irreducible representations of $K$ 
is parametrized by 
\[
\Lambda_K =
\{\lambda =(\lambda_1,\lambda_2,\lambda_3)\in \bZ^2\times \{0,1\}
\mid \lambda_1\geq \lambda_2\geq 0,\ \lambda_2\lambda_3=0\}. 
\]
For $\gamma =(\gamma_1,\gamma_2,\dotsc,\gamma_n),
\gamma' =(\gamma_1',\gamma_2',\dotsc,\gamma_n')\in \bR^n$, 
we write $\gamma <_{\mathrm{lex}}\gamma'$ if and only if 
there is $1\leq m\leq n$ such that $\gamma_i=\gamma_i'$ ($1\leq i\leq m-1$) 
and $\gamma_m<\gamma_m'$. 
For $\gamma ,\gamma' \in \bR^n$, we write 
$\gamma \leq_{\mathrm{lex}}\gamma' $ 
if and only if either 
$\gamma <_{\mathrm{lex}}\gamma'$ or $\gamma = \gamma'$ holds. 
Then $\leq_{\mathrm{lex}}$ is a total order on $\bR^n$, 
and we call it the lexicographical order. 

The identity component of $K$ is 
$\mathrm{SO}(4)=\{k\in K \mid \det k=1\}$. 
Let us recall the highest weight theory \cite[Theorem 4.28]{Knapp_002} 
for $\mathrm{SO}(4)$. 
Let $(\tau ,V_\tau )$ be a finite-dimensional representation of 
$\mathrm{SO}(4)$. 
For $\gamma =(\gamma_1,\gamma_2)\in \bZ^2$, 
we define a subspace $V_\tau (\gamma )$ of $V_\tau$ by 
\begin{align*}
&V_\tau (\gamma )=\{v\in V_\tau \mid 
\tau (E_{1,2}^{\gk})v=\sI \gamma_1v,\ \ 
\tau (E_{3,4}^{\gk})v=\sI \gamma_2v\}.
\end{align*} 
Here the differential of $\tau$ is denoted again by $\tau$. 
We call $\gamma \in \bZ^2$ a weight of $(\tau,V_\tau )$ if and only if 
the corresponding subspace $V_\tau (\gamma )$ has nonzero elements. 
For a weight $\gamma$ of $(\tau,V_\tau )$, 
nonzero vectors in $V_\tau (\gamma )$ are called weight vectors 
with weight $\gamma$. 
We call $\lambda_\tau \in \bZ^2$ the highest weight of $(\tau,V_\tau )$ 
if and only if $\lambda_\tau $ is the weight of $(\tau,V_\tau )$ satisfying 
$\gamma \leq_{\mathrm{lex}}\lambda_\tau $ 
for any weight $\gamma $ of $(\tau,V_\tau )$. 
It is known that every irreducible 
representation  of $\mathrm{SO}(4)$ 
is finite dimensional, 
and $\tau \mapsto \lambda_\tau$ defines a bijection from 
the set of equivalence classes of irreducible representations 
of $\mathrm{SO}(4)$ to the set 
\[
\Lambda_{\mathrm{SO}(4)}=\{(\lambda_1,\lambda_2)\in \bZ^2\mid 
\lambda_1\geq |\lambda_2|\}.
\]
If $\tau$ is irreducible, then $V_\tau (\lambda_\tau )$ is one dimensional,
and any weight $ \gamma $ of $ (\tau, V_{\tau}) $ satisfies 
\[
 \gamma \in \{ \lambda_{\tau} -(m_1+m_2, m_1-m_2) \mid m_1,m_2 \in \bZ_{\ge 0} \} .
\]
Moreover, it is known that any finite dimensional representations of 
compact groups are completely reducible.

For $(\lambda_1 ,\lambda_2)\in \Lambda_{\mathrm{SO}(4)}$, 
let $(\tau_{\mathrm{SO}(4),(\lambda_1 ,\lambda_2)},
V_{\mathrm{SO}(4),(\lambda_1 ,\lambda_2)})$ be 
an irreducible representation of $\mathrm{SO}(4)$ 
with highest weight $(\lambda_1 ,\lambda_2)$. 
By Weyl's dimension formula \cite[Theorem 4.48]{Knapp_002}, 
for $(\lambda_1 ,\lambda_2)\in \Lambda_{\mathrm{SO}(4)}$, 
we have 
\begin{align}
\label{eqn:dim_SO4_rep}
\dim V_{\mathrm{SO}(4),(\lambda_1 ,\lambda_2)}
=(\lambda_1+\lambda_2+1)(\lambda_1-\lambda_2+1).
\end{align}
Let $k_0=\diag (1,1,1,-1)$. We have $k_0^2=1_4$ and 
$K=\mathrm{SO}(4)\sqcup \mathrm{SO}(4)k_0$. 
Hence, 
a representation of $K$ is characterized by the actions of $\mathrm{SO}(4)$ 
and $k_0$. Since 
\begin{align*}
&\Ad (k_0)E_{1,2}^{\gk}
=E_{1,2}^{\gk},&
&\Ad (k_0)E_{3,4}^{\gk}
=-E_{3,4}^{\gk},
\end{align*}
we know that, 
for $(\lambda_1,\lambda_2)\in \Lambda_{\mathrm{SO}(4)}$, 
the composite of $\tau_{\mathrm{SO}(4),(\lambda_1,\lambda_2)}$ and 
\[
\mathrm{SO}(4)\ni h\mapsto 
k_0hk_0
\in \mathrm{SO}(4)
\]
defines an irreducible representation of $\mathrm{SO}(4)$ 
with highest weight $(\lambda_1,-\lambda_2)$. 
By these facts, we obtain the following lemma.

\begin{lem}
\label{lem:par_Krep}
Let $(\tau,V_\tau)$ be an irreducible representation of $K$. 
Let $(\lambda_1,\lambda_2)$ be the highest weight of 
$\tau |_{\mathrm{SO(4)}}$. Then we have $\lambda_2\geq 0$ and 
\[
V_\tau \simeq 
\begin{cases}
V_{\mathrm{SO}(4),(\lambda_1,0)}&\text{if $\lambda_2=0$},\\
V_{\mathrm{SO}(4),(\lambda_1,\lambda_2)}\oplus 
V_{\mathrm{SO}(4),(\lambda_1,-\lambda_2)}
&\text{if $\lambda_2>0$}
\end{cases}
\]
as $\mathrm{SO}(4)$-modules. 
Let $\lambda_\tau =(\lambda_1,\lambda_2,\lambda_3)\in \Lambda_K$ with 
\[
\lambda_3=\begin{cases}
1&\text{if $\lambda_2=0$ and $\tau (k_0)|_{V_\tau ((\lambda_1,0))}=-1$},\\
0&\text{otherwise}. 
\end{cases}
\]
Then $\tau \mapsto \lambda_\tau$ defines a bijection from 
the set of equivalence classes of irreducible representations 
of $K$ to $\Lambda_K$. 
\end{lem}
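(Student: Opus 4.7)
The plan is to exploit the fact that $\mathrm{SO}(4)$ is a normal subgroup of $K$ of index $2$ with coset representative $k_0$, so this is a Clifford-theoretic reduction. First I would decompose $(\tau, V_\tau)|_{\mathrm{SO}(4)}$ into irreducibles (possible by complete reducibility of compact group representations, already noted in the excerpt). The central computation is the action of $\tau(k_0)$ on weight spaces: from $\Ad(k_0) E_{1,2}^{\gk} = E_{1,2}^{\gk}$ and $\Ad(k_0) E_{3,4}^{\gk} = -E_{3,4}^{\gk}$, one checks directly that if $v \in V_\tau(\gamma_1, \gamma_2)$ then $\tau(k_0) v \in V_\tau(\gamma_1, -\gamma_2)$. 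Hence $\tau(k_0)$ swaps the $(\lambda_1, \lambda_2)$- and $(\lambda_1, -\lambda_2)$-isotypic components of $V_\tau|_{\mathrm{SO}(4)}$.

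Next I would split into two cases according to whether the highest weight $(\lambda_1, \lambda_2)$ satisfies $\lambda_2 = 0$ or $\lambda_2 > 0$. Without loss of generality we may assume $\lambda_2 \ge 0$, since otherwise the weight $(\lambda_1, -\lambda_2)$ also appears (as $\tau(k_0)$ sends the highest weight vector to a vector of this weight) and is lexicographically larger. If $\lambda_2 = 0$, then $\tau(k_0)$ preserves $V_\tau((\lambda_1, 0))$; I would argue that $V_\tau$ is a single $\mathrm{SO}(4)$-irreducible summand (otherwise the $\mathrm{SO}(4)$-span of a single irreducible containing a highest weight vector, being $k_0$-stable, would be a proper $K$-subrepresentation), and $\tau(k_0)$ acts on the one-dimensional space $V_\tau((\lambda_1, 0))$ by a scalar, which is $\pm 1$ since $k_0^2 = 1_4$. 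This supplies the extra parameter $\lambda_3 \in \{0, 1\}$. If $\lambda_2 > 0$, then $(\lambda_1, \lambda_2) \neq (\lambda_1, -\lambda_2)$, so the two $\mathrm{SO}(4)$-isotypic components are distinct; $\tau(k_0)$ interchanges them, and their direct sum is $K$-stable, hence equals $V_\tau$ by $K$-irreducibility, yielding the claimed decomposition via \eqref{eqn:dim_SO4_rep}.

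For the bijection, I would construct a candidate irreducible representation for each $\lambda = (\lambda_1, \lambda_2, \lambda_3) \in \Lambda_K$: for $\lambda_2 = 0$, take $V_{\mathrm{SO}(4), (\lambda_1, 0)}$ and set $\tau(k_0) = (-1)^{\lambda_3} \cdot \id$; for $\lambda_2 > 0$, realize $\tau$ as the induced representation $\mathrm{Ind}_{\mathrm{SO}(4)}^K \tau_{\mathrm{SO}(4), (\lambda_1, \lambda_2)}$, which by Frobenius reciprocity and the Mackey criterion is irreducible precisely because the outer automorphism of $\mathrm{SO}(4)$ by $k_0$-conjugation sends the highest weight $(\lambda_1, \lambda_2)$ to $(\lambda_1, -\lambda_2) \neq (\lambda_1, \lambda_2)$. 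Distinct $\lambda$'s yield non-isomorphic $K$-representations since the $\mathrm{SO}(4)$-decomposition already separates the pairs $(\lambda_1, \lambda_2)$, and when $\lambda_2 = 0$ the sign $(-1)^{\lambda_3}$ of $\tau(k_0)$ on $V_\tau((\lambda_1, 0))$ is an isomorphism invariant. The main obstacle I anticipate is the well-definedness check in the $\lambda_2 > 0$ case: one must fix a normalization of the intertwiner $V_{\mathrm{SO}(4), (\lambda_1, \lambda_2)} \to V_{\mathrm{SO}(4), (\lambda_1, -\lambda_2)}$ implementing $k_0$-conjugation so that the resulting operator squares to the identity, but the induced-representation viewpoint sidesteps this by construction.
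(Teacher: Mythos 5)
Your overall strategy is the same one the paper uses (and mostly leaves implicit): Clifford theory for the index-two pair $\mathrm{SO}(4)\subset K$, the key computation being that $\tau(k_0)$ carries $V_\tau((\gamma_1,\gamma_2))$ to $V_\tau((\gamma_1,-\gamma_2))$ because $\Ad(k_0)E_{1,2}^{\gk}=E_{1,2}^{\gk}$ and $\Ad(k_0)E_{3,4}^{\gk}=-E_{3,4}^{\gk}$. However, your surjectivity construction in the case $\lambda_2=0$ contains a genuine flaw: you ``take $V_{\mathrm{SO}(4),(\lambda_1,0)}$ and set $\tau(k_0)=(-1)^{\lambda_3}\id$''. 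For this to define a homomorphism on $K=\mathrm{SO}(4)\sqcup \mathrm{SO}(4)k_0$ you need $\tau(k_0)\,\tau_{\mathrm{SO}(4),(\lambda_1,0)}(h)\,\tau(k_0)^{-1}=\tau_{\mathrm{SO}(4),(\lambda_1,0)}(k_0hk_0)$ for all $h$, and with $\tau(k_0)$ a scalar this forces $\tau_{\mathrm{SO}(4),(\lambda_1,0)}\circ c_{k_0}=\tau_{\mathrm{SO}(4),(\lambda_1,0)}$ as maps, not merely up to equivalence. That fails for $\lambda_1\geq 1$: conjugation by $k_0$ is an outer automorphism of $\mathrm{SO}(4)$ (it fixes $E_{1,2}^{\gk}$ and negates $E_{3,4}^{\gk}$, hence swaps the two $\mathfrak{su}(2)$ factors), and by Schur's lemma the identity intertwines a representation with its $k_0$-twist only if the operator implementing the twist is a scalar, which it is not (in the harmonic-polynomial model it is the substitution $x_4\mapsto -x_4$). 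The correct construction is exactly the intertwiner-normalization step you flagged but assigned to the wrong case: choose an $\mathrm{SO}(4)$-intertwiner $T$ from $\tau_{\mathrm{SO}(4),(\lambda_1,0)}$ to its $k_0$-twist, observe $T^2$ is a scalar since $c_{k_0}^{2}=\id$, rescale so that $T^2=\id$, and set $\tau(k_0)=(-1)^{\lambda_3}T$. The induced-representation device only disposes of the case $\lambda_2>0$, where no extension exists; for $\lambda_2=0$ the normalized $T$ (or a concrete model on which $k_0$ genuinely acts, such as the spaces $V_{(\lambda_1,0,\lambda_3)}$ constructed in \S \ref{subsec:irred_Krep}) is indispensable.

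A second, smaller, gap occurs in your proof that $\tau|_{\mathrm{SO}(4)}$ is irreducible when $\lambda_2=0$: you assert that an irreducible $\mathrm{SO}(4)$-subspace $W$ containing a highest weight vector is $k_0$-stable, but a priori $\tau(k_0)W$ is a different subspace, only isomorphic to the $k_0$-twist of $W$. What is automatic is that $W+\tau(k_0)W$ is $K$-stable and hence equals $V_\tau$; to exclude the possibility $V_\tau=W\oplus\tau(k_0)W$ with both summands of highest weight $(\lambda_1,0)$ one again needs the self-conjugate analysis (with the normalized $T$ above one sees that $\mathrm{Ind}_{\mathrm{SO}(4)}^{K}W$ splits into the two extensions, so any irreducible $K$-constituent restricts irreducibly to $\mathrm{SO}(4)$). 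Both gaps are repaired by the same normalized intertwiner; with that supplied, your argument coincides in substance with the paper's, which records only the weight computation and the resulting twist $(\lambda_1,\lambda_2)\mapsto(\lambda_1,-\lambda_2)$ of highest weights and treats the remaining Clifford-theoretic bookkeeping as standard.
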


\subsection{Constructions of irreducible representations of $K$}
\label{subsec:irred_Krep}

In this subsection, we give concrete realizations of 
irreducible representations of $K$. 

We define a representation 
$(\tau_{\mathrm{st}},V_{\mathrm{st}})$ of $K$ by 
$V_{\mathrm{st}}=\rM_{4,1}(\bC)\simeq \bC^4$ and 
\begin{align*}
&\tau_{\mathrm{st}}(h)v=hv&
&(h\in K,\ v\in V_{\mathrm{st}}).
\end{align*}
Here $hv$ is the ordinal product of matrices $h$ and $v$. 
The differential of $\tau_{\mathrm{st}}$ is denoted again 
by $\tau_{\mathrm{st}}$. 
Then we have $\tau_{\mathrm{st}}(X)v=Xv$ 
$(X \in \gk_{\bC},\ v\in V_{\mathrm{st}})$. 
For $1\leq i\leq 4$, $\xi_i$ denotes the matrix unit 
in $V_{\mathrm{st}}=\rM_{4,1}(\bC )$ 
with $1$ at $(i,1)$-th entry and $0$ at other entries. 
Moreover, we set 
\begin{align*}
&\zeta_{1}=\xi_1-\sI \xi_2,&
&\zeta_{2}=\xi_1+\sI \xi_2,&
&\zeta_{3}=\xi_3-\sI \xi_4,&
&\zeta_{4}=\xi_3+\sI \xi_4.&
\end{align*}
For $1\leq i,j\leq 4$, we define $\xi_{ij},\zeta_{ij}\in 
V_{\mathrm{st}}\wedge_\bC V_{\mathrm{st}}$ by 
$\xi_{ij}=\xi_i\wedge \xi_j$ and $\zeta_{ij}=\zeta_i\wedge \zeta_j$. 
Here we note $\xi_{ij}=-\xi_{ji}$, $\zeta_{ij}=-\zeta_{ji}$ and 
$\xi_{ii}=\zeta_{ii}=0$ for $1\leq i,j\leq 4$.

We define the graded $\bC$-algebra 
${\cR}=\bigoplus_{\lambda_1\geq \lambda_2\geq 0}
{\cR}_{(\lambda_1,\lambda_2)}$ by 
\begin{align*}
{\cR} &=\Sym (V_{\mathrm{st}})\otimes_\bC 
\Sym (V_{\mathrm{st}}\wedge_\bC V_{\mathrm{st}}),\\ 
{\cR}_{\lambda }
& =\Sym^{\lambda_1-\lambda_2} (V_{\mathrm{st}})\otimes_\bC 
\Sym^{\lambda_2} (V_{\mathrm{st}}\wedge_\bC V_{\mathrm{st}}). 
\end{align*}
Here $\Sym (V)=\bigoplus_{m\geq 0}\Sym^m(V)$ is the symmetric algebra on $V$ 
with the usual grading for a $\bC$-vector space $V$. 
We regard ${\cR}$ as a $K$-module via the action $\cT$ 
which is induced from $\tau_{\mathrm{st}}$. 
Then ${\cR}_{(\lambda_1,\lambda_2)}$ is a $K$-submodule of ${\cR}$. 

For $v\in V_{\rm st}$ and $v'\in V_{\mathrm{st}}\wedge_\bC V_{\mathrm{st}}$, 
we denote the elements $v\otimes 1$ and $1\otimes v'$ of $\cR$ 
simply by $v$ and $v'$, respectively. 
Then we note that 
\begin{align*}
&\{\xi_i\mid 1\leq i\leq 4\}\cup \{\xi_{jk}\mid 1\leq j<k\leq 4\},&
&\{\zeta_i\mid 1\leq i\leq 4\}\cup \{\zeta_{jk}\mid 1\leq j<k\leq 4\}
\end{align*}
are two systems of generators of $\cR$ as a $\bC$-algebra. 
By direct computation, for $ 1 \le a,b,i,j \le 4 $ and $ k \in \{1,3\} $, we have 
\begin{align*}
 & \cT(E_{a,b}^{\gk}) \xi_i = \delta_{b,i} \xi_a - \delta_{a,i} \xi_b, & 
 & \cT(E_{a,b}^{\gk}) \xi_{ij} = \delta_{b,i} \xi_{aj}+\delta_{b,j}\xi_{ia}-\delta_{a,i}\xi_{bj}-\delta_{a,j}\xi_{ib}, 
\end{align*}
and
\begin{align*}
 \cT(E_{k,k+1}^{\gk}) \zeta_i 
 & = \begin{cases} (-1)^i \sqrt{-1} \zeta_i & \text{if $ i \in \{k, k+1\}$}, \\
   0 & \text{otherwise},
\end{cases}
\\
 \cT(E_{k,k+1}^{\gk}) \zeta_{ij} 
 & = \begin{cases}
    ((-1)^i + (-1)^j)  \sqrt{-1} \zeta_{ij} & \text{if $ i \in \{k, k+1\}$ and $j \in \{k,k+1\} $}, \\
    (-1)^i  \sqrt{-1} \zeta_{ij} & \text{if $ i \in \{k, k+1\}$ and $j \notin \{k,k+1\} $}, \\
    (-1)^j  \sqrt{-1} \zeta_{ij} & \text{if $ i \notin \{k, k+1\}$ and $j \in \{k,k+1\} $}, \\
    0 & \text{if $ i \notin \{k, k+1\}$ and $j \notin \{k,k+1\} $}.
\end{cases}
\end{align*}

For $1\leq i,j,k\leq 4$, we define elements 
$\widehat{\xi}$, 
$\widehat{\xi}_i$, $\widehat{\xi}_{ijk}$, 
$\widehat{\xi}_{ij}$, $\widehat{\xi}_{1234}$ of $\cR$ by 
\begin{align*}
\widehat{\xi} &=(\xi_1)^2+(\xi_2)^2+(\xi_3)^2+(\xi_4)^2,\\
\widehat{\xi}_i &=\xi_1\xi_{i1}+\xi_2\xi_{i2}+\xi_3\xi_{i3}+\xi_4\xi_{i4},&
\widehat{\xi}_{ijk}
& =\xi_{i}\xi_{jk}-\xi_{j}\xi_{ik}+\xi_{k}\xi_{ij},\\
\widehat{\xi}_{ij}
& =\xi_{i1}\xi_{j1}+\xi_{i2}\xi_{j2}
+\xi_{i3}\xi_{j3}+\xi_{i4}\xi_{j4},&
\widehat{\xi}_{1234}
& =\xi_{12}\xi_{34}-\xi_{13}\xi_{24}+\xi_{14}\xi_{23}.
\end{align*}
Here we note 
$\widehat{\xi}_{ijk}=\widehat{\xi}_{jki}=\widehat{\xi}_{kij}
=-\widehat{\xi}_{ikj}=-\widehat{\xi}_{jik}=-\widehat{\xi}_{kji}$, 
$\widehat{\xi}_{iik}=\widehat{\xi}_{iji}=\widehat{\xi}_{ijj}=0$, 
and $\widehat{\xi}_{ij}=\widehat{\xi}_{ji}$ for $1\leq i,j,k\leq 4$. 
Let $I_{{\cR}}$ be the ideal of ${\cR}$ generated by 
\begin{align*}
&\{\widehat{\xi},\widehat{\xi}_{1234}\}\cup 
\{\widehat{\xi}_i\mid 1\leq i\leq 4\}\cup 
\{\widehat{\xi}_{ij}\mid 1\leq i\leq j\leq 4\}\cup 
\{\widehat{\xi}_{ijk}\mid 1\leq i<j<k\leq 4\}.
\end{align*}
\begin{lem}
\label{lem:Krep_ideal}
The ideal $I_{{\cR}}$ is $K$-invariant. 
Let $(\lambda_1,\lambda_2)\in \bZ^2$ such that 
$\lambda_1\geq \lambda_2\geq 0$. 
If $\lambda_1\leq 1$, we have 
$\cR_{(\lambda_1,\lambda_2)}\cap I_{{\cR}}=\{0\}$. 
If $\lambda_1>1$, the highest weight of 
$\mathrm{SO}(4)$-module 
$\cR_{(\lambda_1,\lambda_2)}\cap I_{{\cR}}$ 
is lower than $(\lambda_1,\lambda_2)$ in the lexicographical order. 
\end{lem}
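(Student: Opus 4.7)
My plan is to verify the three assertions in order, with the main difficulty concentrated in part (iii).

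For the $K$-invariance of $I_{\cR}$: since $\cT$ is induced from $\tau_{\mathrm{st}}$ on the symmetric algebra $\cR$, it acts by $\bC$-algebra automorphisms, so it suffices to show that the $\bC$-span of each family of generators is $K$-stable. The point is that every generator arises from an $O(4)$-invariant contraction via $\sum_k k_{ai}k_{ci} = \delta_{a,c}$. A direct computation yields $\cT(k)\widehat{\xi} = \widehat{\xi}$, $\cT(k)\widehat{\xi}_i = \sum_b k_{bi}\widehat{\xi}_b$, and $\cT(k)\widehat{\xi}_{ij} = \sum_{a,b} k_{ai}k_{bj}\widehat{\xi}_{ab}$; the span of $\{\widehat{\xi}_{ijk}\}$ transforms as $\wedge^3 V_{\mathrm{st}}$, and $\widehat{\xi}_{1234}$, being a Pfaffian-type element, is fixed up to the sign $\det(k)$. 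Hence each generator lies in a $K$-stable subspace, and therefore so does $I_{\cR}$.

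Next I exploit the bi-grading. Each listed generator lies in a graded piece with $\lambda_1 = 2$: $\widehat{\xi} \in \cR_{(2,0)}$; $\widehat{\xi}_i$ and $\widehat{\xi}_{ijk}$ in $\cR_{(2,1)}$; and $\widehat{\xi}_{ij}$ and $\widehat{\xi}_{1234}$ in $\cR_{(2,2)}$. Since multiplication by any element of $\cR$ only raises the $\lambda_1$ index, every nonzero homogeneous element of $I_{\cR}$ lies in some $\cR_{(\lambda_1, \lambda_2)}$ with $\lambda_1 \geq 2$, which proves part (ii). For part (iii), I first pin down the top weight of $\cR_{(\lambda_1, \lambda_2)}$. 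Using the weights of $\zeta_i$ and $\zeta_{ij}$ recorded in the excerpt, a routine combinatorial check shows that the unique (up to scalar) lex-maximal weight vector of $\Sym^{\lambda_1 - \lambda_2}(V_{\mathrm{st}})$ is $\zeta_2^{\lambda_1 - \lambda_2}$ of weight $(\lambda_1 - \lambda_2, 0)$, and of $\Sym^{\lambda_2}(V_{\mathrm{st}}\wedge_\bC V_{\mathrm{st}})$ is $\zeta_{24}^{\lambda_2}$ of weight $(\lambda_2, \lambda_2)$. Thus $\zeta_2^{\lambda_1-\lambda_2}\zeta_{24}^{\lambda_2}$ is, up to scalar, the unique weight-$(\lambda_1, \lambda_2)$ vector of $\cR_{(\lambda_1, \lambda_2)}$, while every other weight is strictly lex-smaller. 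Since any $\mathrm{SO}(4)$-submodule has highest weight equal to the lex-maximum of its occurring weights, part (iii) reduces to showing $\zeta_2^{\lambda_1-\lambda_2}\zeta_{24}^{\lambda_2} \notin I_{\cR}$.

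This non-membership is the main obstacle, and I will establish it by an evaluation argument. View $\cR$ as the ring of polynomial functions on $V_{\mathrm{st}} \times (V_{\mathrm{st}}\wedge_\bC V_{\mathrm{st}})$ and evaluate at the point $(\vec{a}, \vec{a}\wedge\vec{b})$ with $\vec{a} = (1, -\sI, 0, 0)$ and $\vec{b} = (0, 0, 1, -\sI)$. Both $\vec{a}$ and $\vec{b}$ are isotropic and mutually orthogonal with respect to the standard form $\sum_i \xi_i^2$, which forces $\widehat{\xi}$, $\widehat{\xi}_i$, and $\widehat{\xi}_{ij}$ to vanish at this point; the Plücker-type identity $a_iA_{jk} - a_jA_{ik} + a_kA_{ij} = 0$ holds algebraically when $A = \vec{a}\wedge\vec{b}$, killing all $\widehat{\xi}_{ijk}$; and $\widehat{\xi}_{1234} = \mathrm{Pf}(A) = 0$ since $A$ has rank at most $2$. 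A short evaluation then gives $\zeta_2(\vec{a}) = 2$ and $\zeta_{24}(A) = 4$, so the test vector takes the value $2^{\lambda_1-\lambda_2}\cdot 4^{\lambda_2} \neq 0$ at a common zero of $I_{\cR}$, contradicting any membership in $I_{\cR}$ and completing the proof.
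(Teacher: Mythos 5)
Your proof is correct, and its decisive step takes a genuinely different route from the paper's. The $K$-invariance check and the bigrading argument for $\lambda_1\leq 1$ are essentially the paper's (the paper verifies invariance infinitesimally via $\cT(E_{a,b}^{\gk})$ together with $\cT(k_0)$, you verify it globally via matrix entries and orthogonality; both rest on the generators spanning $K$-stable subspaces and on all of them sitting in bidegree $\lambda_1=2$). For $\lambda_1>1$, both arguments come down to the fact that the weight $(\lambda_1,\lambda_2)$ occurs in $\cR_{(\lambda_1,\lambda_2)}$ with multiplicity one, on $(\zeta_2)^{\lambda_1-\lambda_2}(\zeta_{24})^{\lambda_2}$, so one must show this vector is not in $I_{\cR}$; but the mechanisms differ. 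The paper regroups the generators into explicit weight vectors $\widehat{\zeta}_i^{(k)}$, $\widehat{\zeta}_{ij}$ and excludes the top weight by weight-versus-bidegree bookkeeping in products, whereas you evaluate at the common zero $(\vec{a},\vec{a}\wedge\vec{b})$ of all the generators (with $\vec{a},\vec{b}$ isotropic and mutually orthogonal, so that $\widehat{\xi}$, $\widehat{\xi}_i$, $\widehat{\xi}_{ij}$ vanish, the Pl\"ucker relation kills $\widehat{\xi}_{ijk}$, and the Pfaffian kills $\widehat{\xi}_{1234}$) and observe that the test vector takes the nonzero value $2^{\lambda_1-\lambda_2}\cdot 4^{\lambda_2}$ there. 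Your evaluation argument is an elementary and robust alternative: it exhibits an explicit point of the zero locus of $I_{\cR}$ and makes the nonvanishing of $\rrq_{\cR}\bigl((\zeta_2)^{\lambda_1-\lambda_2}(\zeta_{24})^{\lambda_2}\bigr)$, which the paper later uses in Proposition \ref{prop:irred_Krep}, completely transparent; the paper's weight computation instead stays within the representation-theoretic bookkeeping it reuses throughout the section. One cosmetic remark: $\Sym(W)$ is literally the ring of polynomial functions on the dual space $W^{*}$, so your evaluation should be read as the pairing $\xi_i\mapsto a_i$, $\xi_{ij}\mapsto(\vec{a}\wedge\vec{b})_{ij}$ under the standard-basis identification, which is exactly what your computation does, so this does not affect the argument.
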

\begin{proof}
By direct computation, we have 
\begin{align*}
&\cT (E_{a,b}^{\gk})\widehat{\xi}=0,\hspace{50mm}
\cT (E_{a,b}^{\gk})\widehat{\xi}_i
=\delta_{b,i}\widehat{\xi}_a-\delta_{a,i}\widehat{\xi}_b,\\
&\cT (E_{a,b}^{\gk})\widehat{\xi}_{ijk}
=\delta_{b,i}\widehat{\xi}_{ajk}
+\delta_{b,j}\widehat{\xi}_{iak}
+\delta_{b,k}\widehat{\xi}_{ija}
-\delta_{a,i}\widehat{\xi}_{bjk}
-\delta_{a,j}\widehat{\xi}_{ibk}
-\delta_{a,k}\widehat{\xi}_{ijb},\\
&\cT (E_{a,b}^{\gk})\widehat{\xi}_{ij}
=\delta_{b,i}\widehat{\xi}_{aj}
+\delta_{b,j}\widehat{\xi}_{ia}
-\delta_{a,i}\widehat{\xi}_{bj}
-\delta_{a,j}\widehat{\xi}_{ib},\hspace{15mm}
\cT (E_{a,b}^{\gk})\widehat{\xi}_{1234}=0
\end{align*}
and 
\begin{align*}
&\cT (k_0)
\widehat{\xi}=\widehat{\xi},\hspace{1cm}
\cT (k_0)\widehat{\xi}_i
=(-1)^{\delta_{i,4}}\widehat{\xi}_i,\hspace{1cm}
\cT (k_0)
\widehat{\xi}_{ijk}
=(-1)^{\delta_{i,4}+\delta_{j,4}+\delta_{k,4}}\widehat{\xi}_{ijk},\\[2mm]
&\cT (k_0)
\widehat{\xi}_{ij}
=(-1)^{\delta_{i,4}+\delta_{j,4}}\widehat{\xi}_{ij},
\hspace{3cm}
\cT (k_0)
\widehat{\xi}_{1234}
=-\widehat{\xi}_{1234}
\end{align*}
for $1\leq a,b,i,j,k\leq 4$. 
Hence, $I_{{\cR}}$ is $K$-invariant. 

For $i\in \{1,2\}$ and $j\in \{3,4\}$, 
we define $\widehat{\zeta}_{i}^{(k)},\widehat{\zeta}_{j}^{(k)}
\in \cR_{(2,1)}$ ($k=1,2$) by 
\begin{align*}
&\widehat{\zeta}_{i}^{(1)}=\widehat{\xi}_1+(-1)^i\sI \widehat{\xi}_2,&
&\widehat{\zeta}_{j}^{(1)}=\widehat{\xi}_3+(-1)^j\sI \widehat{\xi}_4,\\
&\widehat{\zeta}_{i}^{(2)}=\widehat{\xi}_{134}+(-1)^i\sI \widehat{\xi}_{234},&
&\widehat{\zeta}_{j}^{(2)}=\widehat{\xi}_{123}+(-1)^j\sI \widehat{\xi}_{124}.&
\end{align*}
Then, for $i\in \{1,3\}$, $1\leq j\leq 4$ and $k\in \{1,2\}$, we have 
\begin{align*}
&\cT (E_{i,i+1}^{\gk})\widehat{\zeta}_{j}^{(k)}
=\begin{cases}
(-1)^j\sI \widehat{\zeta}_{j}^{(k)}&\text{if $j\in \{i,i+1\}$},\\
0&\text{otherwise}.
\end{cases}
\end{align*}
For $1\leq i\leq j\leq 4$, we define 
$\widehat{\zeta}_{ij}\in \cR_{(2,2)}$ by 
\begin{align*}
&\widehat{\zeta}_{ij}=
\begin{cases}
\widehat{\xi}_{11}
+(-1)^i\sI \widehat{\xi}_{21}+(-1)^j\sI \widehat{\xi}_{12}
-(-1)^{i+j}\widehat{\xi}_{22}
&\text{if $i,j\in \{1,2\}$},\\
\widehat{\xi}_{33}
+(-1)^i\sI \widehat{\xi}_{43}+(-1)^j\sI \widehat{\xi}_{34}
-(-1)^{i+j}\widehat{\xi}_{44}
&\text{if $i,j\in \{3,4\}$},\\
\widehat{\xi}_{13}
+(-1)^i\sI \widehat{\xi}_{23}+(-1)^j\sI \widehat{\xi}_{14}
-(-1)^{i+j}\widehat{\xi}_{24}
&\text{otherwise}.
\end{cases}
\end{align*}
Then, for $i\in \{1,3\}$ and $1\leq j\leq k\leq 4$, we have 
\begin{align*}
&\cT (E_{i,i+1}^{\gk})\widehat{\zeta}_{jk}
= \begin{cases}
((-1)^j+(-1)^k)\sI \widehat{\zeta}_{jk}&
\text{if $j\in \{i,i+1\}$ and $k\in \{i,i+1\}$},\\[1mm]
(-1)^j\sI \widehat{\zeta}_{jk}&
\text{if $j\in \{i,i+1\}$ and $k\not\in \{i,i+1\}$},\\[1mm]
(-1)^k\sI \widehat{\zeta}_{jk}&
\text{if $j\not\in \{i,i+1\}$ and $k\in \{i,i+1\}$},\\[1mm]
0&\text{if $j\not\in \{i,i+1\}$ and $k\not\in \{i,i+1\}$}.
\end{cases}
\end{align*}
Since the ideal $I_{{\cR}}$ is generated by 
\begin{align*}
&\{\widehat{\xi},\widehat{\xi}_{1234}\}\cup 
\{\widehat{\zeta}_i^{(j)}\mid 1\leq i\leq 4,\ j\in \{1,2\}\}\cup 
\{\widehat{\zeta}_{ij}\mid 1\leq i\leq j\leq 4\},
\end{align*}
we obtain the latter part of the assertion by the above equalities. 
\end{proof}

Let $\widehat{\cT}$ be the action of $K$ on 
$\cR /I_{\cR }$ induced from $\cT$. 
Let $\rrq_\cR \colon \cR \ni r\mapsto r+I_{\cR}\in \cR /I_{\cR }$ be the 
natural surjection. 
For $\lambda =(\lambda_1,\lambda_2,\lambda_3)\in \Lambda_K$,  
we define a representation $(\tau_\lambda,V_\lambda )$ of $K$ by 
\begin{align*}
&\tau_\lambda (k)=(\det k)^{\lambda_3}\widehat{\cT}(k)
\quad (k\in K),&
&V_\lambda =\rrq_{\cR}(\cR_{(\lambda_1,\lambda_2)}). 
\end{align*}
The differential of $\tau_{\lambda}$ is denoted again 
by $\tau_{\lambda}$. 
Later, we will show that $(\tau_\lambda,V_\lambda )$ 
is an irreducible representation of $K$ corresponding to $\lambda $ 
 (Proposition \ref{prop:irred_Krep}).

Let $S_{\lambda}$ be the set of 
$l=(l_1,l_2,l_3,l_4,l_{12},l_{13},l_{14},l_{23},l_{24},l_{34})
\in (\bZ_{\geq 0})^{10}$ satisfying 
\begin{align*}
&l_1+l_2+l_3+l_4=\lambda_1-\lambda_2,&
&l_{12}+l_{13}+l_{14}+l_{23}+l_{24}+l_{34}=\lambda_2.
\end{align*}
For $l=(l_1,l_2,l_3,l_4,l_{12},l_{13},l_{14},l_{23},l_{24},l_{34})
\in S_\lambda$, 
we set 
\begin{align*}
&u_l=\rrq_\cR \!\left(\prod_{1\leq i\leq 4}(\xi_i)^{l_i}
\prod_{1\leq j<k\leq 4}(\xi_{jk})^{l_{jk}}\right),&
&v_l=\rrq_\cR \!\left(\prod_{1\leq i\leq 4}(\zeta_i)^{l_i}
\prod_{1\leq j<k\leq 4}(\zeta_{jk})^{l_{jk}}\right).
\end{align*}
We note that $\{u_l\}_{l\in S_\lambda }$ and $\{v_l\}_{l\in S_\lambda }$ 
form two systems of generators of $V_\lambda $ as 
a $\bC$-vector space. It is convenient to set $u_{l}=v_{l}=0$ 
if $l\not\in (\bZ_{\geq 0})^{10}$. 
We set $ {\bf 0} = (0,0,0,0,0,0,0,0,0,0) $ and 
\begin{align*}
&e_1=(1,0,0,0,0,0,0,0,0,0),&
&e_2=(0,1,0,0,0,0,0,0,0,0),\\
&e_3=(0,0,1,0,0,0,0,0,0,0),&
&e_4=(0,0,0,1,0,0,0,0,0,0),\\
&e_{12}=e_{21}=(0,0,0,0,1,0,0,0,0,0),&
&e_{13}=e_{31}=(0,0,0,0,0,1,0,0,0,0),\\
&e_{14}=e_{41}=(0,0,0,0,0,0,1,0,0,0),&
&e_{23}=e_{32}=(0,0,0,0,0,0,0,1,0,0),\\ 
&e_{24}=e_{42}=(0,0,0,0,0,0,0,0,1,0),&
&e_{34}=e_{43}=(0,0,0,0,0,0,0,0,0,1).
\end{align*}

\begin{lem}
\label{lem:rel_ul}
Let $\lambda =(\lambda_1,\lambda_2,\lambda_3)\in \Lambda_K$. 

\noindent 
(i) When $\lambda_1-\lambda_2\geq 2$, 
for $l\in S_{\lambda -(2,0,0)}$, we have 
\begin{align*}
&u_{l+2e_1}+u_{l+2e_2}+u_{l+2e_3}+u_{l+2e_4}=0.
\end{align*}
(ii) When $\lambda_1>\lambda_2>0$, 
for $l\in S_{\lambda -(2,1,0)}$, 
we have 
\begin{align*}
&\sum_{1\leq j\leq 4,\, j\neq i}
\sgn (j-i)u_{l+e_j+e_{ij}}=0&
&(1\leq i\leq 4), 
\end{align*}
that is, 
\begin{align*}
& u_{l+e_2+e_{12}} + u_{i+e_3+e_{13}} + u_{l+ e_4+e_{14}} = 0 , 
& -u_{l+e_1+e_{12}} + u_{i+e_3+e_{23}} + u_{l+ e_4+e_{24}} = 0, 
\\
& -u_{l+e_1+e_{13}} - u_{i+e_2+e_{23}} + u_{l+ e_4+e_{34}} = 0 , 
& -u_{l+e_1+e_{14}} - u_{i+e_2+e_{24}} - u_{l+ e_3+e_{34}} = 0, 
\end{align*}
and we also have
\begin{align*}
& u_{l+e_i+e_{jk}}-u_{l+e_j+e_{ik}}+u_{l+e_k+e_{ij}}=0 &  
(1\leq i<j<k\leq 4).
\end{align*}
\noindent 
(iii) When $\lambda_2\geq 2$, 
for $l\in S_{\lambda -(2,2,0)}$, we have 
\begin{align*}
&\sum_{1\leq k\leq 4,\, k\not\in \{i,j\}}
\sgn ((k-i)(k-j))u_{l+e_{ik}+e_{jk}}=0&& 
(1\leq i,j\leq 4), 
\end{align*}
that is, 
\begin{align*}
&  u_{l+2e_{12}} +  u_{l+2e_{13}} + u_{l+2e_{14}} = 0, 
&  u_{l+2e_{12}} +  u_{l+2e_{23}} + u_{l+2e_{24}} = 0,
\\
&  u_{l+2e_{13}} +  u_{l+2e_{23}} + u_{l+2e_{34}} = 0, 
&  u_{l+2e_{14}} +  u_{l+2e_{24}} + u_{l+2e_{34}} = 0,
\end{align*}
\begin{align*}
& u_{l+e_{13}+e_{23}} + u_{l+e_{14} + e_{24}} = 0, & 
& u_{l+e_{12}+e_{13}} + u_{l+e_{24} + e_{34}} = 0, & 
& u_{l+e_{13}+e_{14}} + u_{l+e_{23} + e_{24}} = 0, 
\\
& -u_{l+e_{12}+e_{23}} + u_{l+e_{14} + e_{34}} = 0, & 
& u_{l+e_{12}+e_{14}} - u_{l+e_{23} + e_{34}} = 0, & 
& -u_{l+e_{12}+e_{24}} - u_{l+e_{13} + e_{34}} = 0, 
\end{align*}
and $$u_{l+e_{12}+e_{34}}-u_{l+e_{13}+e_{24}}+u_{l+e_{14}+e_{23}}=0. $$
\end{lem}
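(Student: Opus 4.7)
The plan is to observe that each of the listed relations is nothing but the image under $\rrq_{\cR}$ of a product of a generator of $I_{\cR}$ with an appropriate monomial of total bidegree $(\lambda_1-\lambda_2,\lambda_2) - (\text{bidegree of the generator})$, so that the product lies in $\cR_{(\lambda_1,\lambda_2)} \cap I_{\cR}$ and hence maps to $0$. The only care needed is the sign bookkeeping arising from the antisymmetry $\xi_{ij}=-\xi_{ji}$, since the vectors $u_l$ are indexed by ordered pairs $(j,k)$ with $j<k$.

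For (i), fix $l\in S_{\lambda-(2,0,0)}$ and multiply the generator
$\widehat{\xi}=(\xi_1)^2+(\xi_2)^2+(\xi_3)^2+(\xi_4)^2$
by the monomial $M_l := \prod_{i}(\xi_i)^{l_i}\prod_{j<k}(\xi_{jk})^{l_{jk}}$. Applying $\rrq_{\cR}$ to $\widehat{\xi}\cdot M_l\in I_{\cR}$ yields exactly $u_{l+2e_1}+u_{l+2e_2}+u_{l+2e_3}+u_{l+2e_4}=0$.

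For (ii), the two families come from the two types of cubic generators. Given $l\in S_{\lambda-(2,1,0)}$ and $1\le i\le 4$, write
\[
\widehat{\xi}_i=\sum_{j\neq i}\xi_j\xi_{ij}=\sum_{j\neq i}\sgn(j-i)\,\xi_j\xi_{\min(i,j),\max(i,j)},
\]
using $\xi_{ij}=-\xi_{ji}$; multiplying by $M_l$ and projecting gives the first displayed identity and, substituting $i=1,2,3,4$ in turn, produces the four expanded equations. Similarly, for $1\le i<j<k\le 4$ and $l\in S_{\lambda-(2,1,0)}$, multiplying $\widehat{\xi}_{ijk}=\xi_i\xi_{jk}-\xi_j\xi_{ik}+\xi_k\xi_{ij}$ by $M_l$ and projecting yields the three-term identity.

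For (iii), fix $l\in S_{\lambda-(2,2,0)}$. The diagonal generators $\widehat{\xi}_{ii}=\sum_{k\neq i}(\xi_{ik})^2$ multiplied by $M_l$ give the four sums $u_{l+2e_{12}}+u_{l+2e_{13}}+u_{l+2e_{14}}=0$ and the analogues for $i=2,3,4$, after rewriting $(\xi_{ki})^2=(\xi_{ik})^2$. For $i\ne j$, expand
\[
\widehat{\xi}_{ij}=\sum_{k\notin\{i,j\}}\xi_{ik}\xi_{jk}
=\sum_{k\notin\{i,j\}}\sgn((k-i)(k-j))\,\xi_{\min(i,k),\max(i,k)}\,\xi_{\min(j,k),\max(j,k)},
\]
multiply by $M_l$, and project; running over the six unordered pairs $\{i,j\}$ produces the six two-term identities. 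Finally, multiplying $\widehat{\xi}_{1234}=\xi_{12}\xi_{34}-\xi_{13}\xi_{24}+\xi_{14}\xi_{23}$ by $M_l$ and projecting gives the last identity.

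The argument is thus entirely mechanical once the generators of $I_{\cR}$ are recognized on the right-hand sides, and the only genuine obstacle is matching the sign conventions (which $j<k$ ordering is used in $e_{jk}$, together with $\xi_{ij}=-\xi_{ji}$). Enumerating the cases $i\in\{1,2,3,4\}$ for (ii) and the ten pairs $(i,j)$ with $1\le i\le j\le 4$ for (iii) delivers all the displayed equations and completes the proof.
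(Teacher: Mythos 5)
Your proof is correct and is exactly the paper's argument: the paper disposes of this lemma with the single remark that it "follows immediately from the definition of $I_{\cR}$", and your write-up just makes explicit which generator ($\widehat{\xi}$, $\widehat{\xi}_i$, $\widehat{\xi}_{ijk}$, $\widehat{\xi}_{ij}$, $\widehat{\xi}_{1234}$) is multiplied by which monomial, together with the sign bookkeeping from $\xi_{ij}=-\xi_{ji}$. Nothing further is needed.
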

\begin{proof}
The assertion follows immediately from the definition of $I_{\cR}$. 
\end{proof}

\begin{lem}
\label{lem:Kact_ul_vl}
Let $\lambda =(\lambda_1,\lambda_2,\lambda_3)\in \Lambda_K$, and 
\[
l=(l_1,l_2,l_3,l_4,l_{12},l_{13},l_{14},l_{23},l_{24},l_{34})
\in S_{\lambda}. 
\]
(i) For 
$\varepsilon_1,\varepsilon_2,\varepsilon_3,\varepsilon_4\in \{\pm 1\}$ 
and $1\leq i<j\leq 4$, 
we have 
\begin{align*}
\tau_\lambda  (\diag (\varepsilon_1,\varepsilon_2,\varepsilon_3,\varepsilon_4))u_l
& =\varepsilon_1^{l_1+l_{12}+l_{13}+l_{14}+\lambda_3}
 \varepsilon_2^{l_2+l_{12}+l_{23}+l_{24}+\lambda_3}
 \varepsilon_3^{l_3+l_{13}+l_{23}+l_{34}+\lambda_3}
 \varepsilon_4^{l_4+l_{14}+l_{24}+l_{34}+\lambda_3}u_l,
\\
\tau_\lambda (E_{i,j}^\gk )u_l
& =l_ju_{l-e_j+e_i}-l_iu_{l-e_i+e_j} \\
& 
   +\sum_{1\leq k\leq 4,\, k\not\in \{ i,j\}} 
\sgn ((k-i)(k-j))
(l_{kj}u_{l-e_{kj}+e_{ki}}-l_{ki}u_{l-e_{ki}+e_{kj}}).
\end{align*}
Here we put $l_{ji}=l_{ij}$ $(1\leq i<j\leq 4)$. 

\noindent (ii) For 
$\varepsilon_1,\varepsilon_2,\varepsilon_3,\varepsilon_4\in \{\pm 1\}$ 
and $\theta_1,\theta_2\in \bR$, 
we have 
\begin{align*}
\tau_\lambda (\diag (\varepsilon_1,\varepsilon_2,\varepsilon_3,\varepsilon_4))v_l
&=\varepsilon_1^{l_1+l_2+l_{12}+l_{13}+l_{14}+l_{23}+l_{24}+\lambda_3}
 \varepsilon_2^{l_{12}+\lambda_3}
 \varepsilon_3^{l_3+l_4+l_{13}+l_{14}+l_{23}+l_{24}+l_{34}+\lambda_3}
 \varepsilon_4^{l_{34}+\lambda_3}\\
&\quad \times 
\begin{cases}
v_{l}&
\text{if $\varepsilon_1\varepsilon_2=1$ and $\varepsilon_3\varepsilon_4=1$},\\
v_{(l_2,l_1,l_3,l_4,l_{12},l_{23},l_{24},l_{13},l_{14},l_{34})}&
\text{if $\varepsilon_1\varepsilon_2=-1$ and $\varepsilon_3\varepsilon_4=1$},\\
v_{(l_1,l_2,l_4,l_3,l_{12},l_{14},l_{13},l_{24},l_{23},l_{34})}&
\text{if $\varepsilon_1\varepsilon_2=1$ and $\varepsilon_3\varepsilon_4=-1$},\\
v_{(l_2,l_1,l_4,l_3,l_{12},l_{24},l_{23},l_{14},l_{13},l_{34})}&
\text{if $\varepsilon_1\varepsilon_2=-1$ 
and $\varepsilon_3\varepsilon_4=-1$}, \smallskip
\end{cases}
\\
\tau_{\lambda}(\rk_{\theta_1,\theta_2}^{(2,2)})v_l
& =e^{\sI (l_2+l_{24}+l_{23}-l_1-l_{13}-l_{14})\theta_1
+\sI (l_4+l_{24}+l_{14}-l_3-l_{13}-l_{23})\theta_2}v_l.
\end{align*}
\end{lem}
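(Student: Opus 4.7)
The plan is to compute everything directly from the definitions by expanding in the given generators and using the derivation property of $\cT$. Recall that $\cT$ is induced from $\tau_{\rm st}$, so on products it acts by the Leibniz rule; moreover $I_{\cR}$ is $K$-invariant (Lemma \ref{lem:Krep_ideal}), so $\widehat{\cT}$ descends to $V_\lambda$, and $\tau_\lambda = (\det)^{\lambda_3}\widehat{\cT}$ by definition. The statements in (i) and (ii) will then follow by substituting the explicit actions on the generators $\xi_i,\xi_{ij}$ (respectively $\zeta_i,\zeta_{ij}$) that were recorded just before Lemma \ref{lem:Krep_ideal}.

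For part (i), consider $k=\diag(\varepsilon_1,\ldots,\varepsilon_4)$. Since $k\cdot\xi_m=\varepsilon_m\xi_m$ and $k\cdot\xi_{mn}=\varepsilon_m\varepsilon_n\xi_{mn}$, the product $\prod_m\xi_m^{l_m}\prod_{m<n}\xi_{mn}^{l_{mn}}$ scales by the monomial whose $\varepsilon_i$-exponent equals the number of factors containing the index $i$; this is exactly the formula claimed, once we add the contribution $(\det k)^{\lambda_3}=(\varepsilon_1\varepsilon_2\varepsilon_3\varepsilon_4)^{\lambda_3}$ from the $\lambda_3$-twist. For $E_{i,j}^{\gk}$, applying the Leibniz rule, only factors $\xi_i,\xi_j$ and $\xi_{ki},\xi_{kj}$ (for $k\notin\{i,j\}$) survive: the singleton piece uses $\cT(E_{i,j}^{\gk})\xi_j=\xi_i$ and $\cT(E_{i,j}^{\gk})\xi_i=-\xi_j$, yielding $l_j u_{l-e_j+e_i}-l_i u_{l-e_i+e_j}$; for each $k$, a short case analysis on whether $k<j<i$, $j<k<i$, $j<i<k$ etc.\ converts $\cT(E_{i,j}^{\gk})\xi_{kj}$ into $\pm\xi_{k'i'}$ after restoring the canonical (increasing) ordering, and one checks that the sign obtained is precisely $\sgn((k-i)(k-j))$.

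For part (ii), I first compute the diagonal action on the $\zeta_i$-basis. A direct calculation shows $\diag(\varepsilon_1,\ldots,\varepsilon_4)\zeta_1=\varepsilon_1\zeta_1$ when $\varepsilon_1=\varepsilon_2$ and $\varepsilon_1\zeta_2$ when $\varepsilon_1=-\varepsilon_2$, and analogously for $\zeta_2,\zeta_3,\zeta_4$. This gives four cases indexed by $(\varepsilon_1\varepsilon_2,\varepsilon_3\varepsilon_4)\in\{\pm1\}^2$: whenever a sign is $-1$, the pair $\zeta_1\leftrightarrow\zeta_2$ (resp.\ $\zeta_3\leftrightarrow\zeta_4$) is swapped, which produces the indicated permutation of the coordinates of $l$; the factor $\zeta_{12}=\zeta_1\wedge\zeta_2$ (resp.\ $\zeta_{34}$) picks up a sign $\zeta_2\wedge\zeta_1=-\zeta_{12}$, accounting for the isolated $\varepsilon_2^{l_{12}}$ (resp.\ $\varepsilon_4^{l_{34}}$) exponents in the statement. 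Multiplying by $(\det k)^{\lambda_3}$ produces the uniform $\lambda_3$ contribution to all four $\varepsilon_i$-exponents. For $\rk^{(2,2)}_{\theta_1,\theta_2}$, plugging the matrix form of $\rk_{\theta}^{(2)}$ into $\zeta_1=\xi_1-\sI\xi_2$ and $\zeta_2=\xi_1+\sI\xi_2$ gives $\zeta_1\mapsto e^{-\sI\theta_1}\zeta_1$ and $\zeta_2\mapsto e^{\sI\theta_1}\zeta_2$, and similarly $\zeta_3\mapsto e^{-\sI\theta_2}\zeta_3$, $\zeta_4\mapsto e^{\sI\theta_2}\zeta_4$. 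Hence each $\zeta_{jk}$ is a simultaneous eigenvector with eigenvalue equal to the product; summing the exponents coming from $v_l=\prod\zeta_i^{l_i}\prod\zeta_{jk}^{l_{jk}}$ gives the claimed character, and no $\lambda_3$-twist enters because $\det \rk^{(2,2)}_{\theta_1,\theta_2}=1$.

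The computation is pure bookkeeping; the only place requiring real care is the sign analysis in the Lie algebra part of (i), coming from the antisymmetry $\xi_{mn}=-\xi_{nm}$, since after the derivation one must restore the canonical ordering to identify the image with a basis vector of the form $u_{l-e_{kj}+e_{ki}}$. The compact $\sgn((k-i)(k-j))$ prefactor is exactly the output of this case analysis; once these signs are verified in the six subcases ($k$ less than, between, or greater than $i$ and $j$), the rest of the lemma is a direct application of the Leibniz rule.
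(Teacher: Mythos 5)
Your proposal is correct and is essentially the paper's argument: the paper proves this lemma by "direct computation," and your expansion — Leibniz rule for $\cT$ on the monomial generators, the recorded actions of $E_{a,b}^{\gk}$ on $\xi_i,\xi_{ij}$ and of the torus elements on $\zeta_i,\zeta_{ij}$, the $(\det k)^{\lambda_3}$ twist, and descent to $V_\lambda$ via the $K$-invariance of $I_{\cR}$ — is exactly that computation carried out, with the sign bookkeeping (antisymmetry of $\xi_{mn}$ giving $\sgn((k-i)(k-j))$, and the swap $\zeta_1\leftrightarrow\zeta_2$, $\zeta_3\leftrightarrow\zeta_4$ producing the permuted indices and the $(\varepsilon_1\varepsilon_2)^{l_{12}}$, $(\varepsilon_3\varepsilon_4)^{l_{34}}$ signs) handled correctly.
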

\begin{proof}
Direct computation. 
\end{proof}

\begin{prop}
\label{prop:irred_Krep}
(i) The correspondence $\lambda \leftrightarrow \tau_\lambda$ gives 
a bijection between $\Lambda_K$ and the set of equivalence classes of 
irreducible representations of $K$. 

\noindent (ii) Let $\lambda =(\lambda_1,\lambda_2,\lambda_3)\in \Lambda_K$. 
If $\lambda_2>0$, let $S^\circ_{\lambda }$ be the subset of $S_{\lambda }$ 
consisting of all 
$l=(l_1,l_2,l_3,l_4,l_{12},l_{13},l_{14},l_{23},l_{24},l_{34})$ satisfying 
\begin{align*}
&(l_3>0,\ l_4=l_{12}=l_{34}=0,\ l_{14}+l_{23}+l_{24}\leq 1)\\
\text{or}\ \ 
&(l_3=l_4=l_{24}=0,\ l_{12}>0,\ l_{14}+l_{23}+l_{34}\leq 1)\\ 
\text{or}\ \ 
&(l_3=l_4=l_{12}=0,\ l_{14}+l_{23}+l_{24}+l_{34}\leq 1).
\end{align*}
If $\lambda_2=0$, let $S^\circ_{\lambda }$ be the subset of $S_{\lambda }$ 
consisting of all 
$l=(l_1,l_2,l_3,l_4,0,0,0,0,0,0)$ satisfying $l_4\leq 1$. 
Then $\{u_l\}_{l\in S^\circ_{\lambda }}$ is a basis of $V_\lambda $. 
\end{prop}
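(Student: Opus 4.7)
The plan is to prove (i) and (ii) simultaneously by a sandwich argument: exhibit an irreducible $K$-submodule $W\subseteq V_\lambda$ of known dimension $N_\lambda$, show that $\{u_l\}_{l\in S_\lambda^\circ}$ spans $V_\lambda$, then verify $|S_\lambda^\circ|=N_\lambda$ by a direct count.

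First I will identify a highest-weight vector for $\mathrm{SO}(4)$. From the weight formulas in Section \ref{subsec:irred_Krep}, the generator $\zeta_2$ has weight $(1,0)$, $\zeta_{24}$ has weight $(1,1)$, and the other generators $\zeta_i$, $\zeta_{jk}$ carry weights drawn from $\{(\pm 1,0),(0,\pm 1),(\pm 1,\pm 1),(0,0)\}$ with none strictly greater than $(1,1)$ in the lexicographical order. A monomial-level enumeration then shows that $\zeta_2^{\lambda_1-\lambda_2}\zeta_{24}^{\lambda_2}$ is, up to scalar, the unique element of $\cR_{(\lambda_1,\lambda_2)}$ of weight $(\lambda_1,\lambda_2)$, and no strictly greater weight appears in $\cR_{(\lambda_1,\lambda_2)}$. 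By Lemma \ref{lem:Krep_ideal}, the image $v_\lambda^{\mathrm{hw}}:=\rrq_\cR(\zeta_2^{\lambda_1-\lambda_2}\zeta_{24}^{\lambda_2})$ is nonzero in $V_\lambda$ and $(\lambda_1,\lambda_2)$ remains the unique highest $\mathrm{SO}(4)$-weight. Using $k_0\zeta_2=\zeta_2$ and $k_0\zeta_4=\zeta_3$, a direct computation yields: when $\lambda_2=0$, $\widehat{\cT}(k_0)$ fixes $v_\lambda^{\mathrm{hw}}$, so the twist by $(\det k)^{\lambda_3}$ in $\tau_\lambda$ produces the eigenvalue $(-1)^{\lambda_3}$ on the highest-weight line, matching the labelling in Lemma \ref{lem:par_Krep}; when $\lambda_2>0$ (so $\lambda_3=0$), $\tau_\lambda(k_0)v_\lambda^{\mathrm{hw}}$ has weight $(\lambda_1,-\lambda_2)$, supplying the second $\mathrm{SO}(4)$-component. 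Invoking Lemma \ref{lem:par_Krep}, the $K$-submodule $W\subseteq V_\lambda$ generated by $v_\lambda^{\mathrm{hw}}$ is irreducible of label $\lambda$, and by \eqref{eqn:dim_SO4_rep} its dimension is
\[
N_\lambda \;=\; \begin{cases} (\lambda_1+1)^2 & \text{if }\lambda_2=0,\\ 2(\lambda_1+\lambda_2+1)(\lambda_1-\lambda_2+1) & \text{if }\lambda_2>0.\end{cases}
\]

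Next I will use Lemma \ref{lem:rel_ul} to show $\{u_l\}_{l\in S_\lambda^\circ}$ spans $V_\lambda$ by an explicit reduction. Relation (i) reduces the exponent $l_4$ modulo $2$, which handles the case $\lambda_2=0$ immediately since $S_\lambda^\circ$ then consists exactly of multi-indices with $l_4\le 1$ and all $l_{jk}=0$. When $\lambda_2>0$, the quadratic relations in (iii) eliminate the high powers and the various mixed products of the $e_{jk}$, while the mixed relations in (ii) — in particular the Pl\"ucker identity $u_{l+e_i+e_{jk}}-u_{l+e_j+e_{ik}}+u_{l+e_k+e_{ij}}=0$ together with the four linear syzygies among the $u_{l+e_j+e_{ij}}$ — permit replacement of any $u_l$ carrying $l_4+l_{34}>0$ (and analogously any $u_l$ with $l_3+l_{24}>0$ subject to $l_4=l_{12}=0$) by combinations fitting one of Cases A, B, C. Organizing these eliminations iteratively produces precisely the trichotomy defining $S_\lambda^\circ$.

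Finally a direct combinatorial count gives $|S_\lambda^\circ|=N_\lambda$, so that the inequalities $N_\lambda=\dim W\le \dim V_\lambda\le |S_\lambda^\circ|=N_\lambda$ collapse to equalities. This forces $V_\lambda=W$ to be irreducible of label $\lambda$ and $\{u_l\}_{l\in S_\lambda^\circ}$ to be a basis, proving (ii). For (i), $\lambda\mapsto[\tau_\lambda]$ is injective because distinct $\lambda$ yield distinct highest $\mathrm{SO}(4)$-weights or distinct $k_0$-actions, and surjective by Lemma \ref{lem:par_Krep}. The main obstacle will be the spanning step when $\lambda_2>0$: one must organize the large collection of relations in Lemma \ref{lem:rel_ul} so that the reduction terminates in the three disjoint subcases A, B, C of $S_\lambda^\circ$ without introducing hidden linear dependencies among the retained generators; the eventual dimension count confirms that no such hidden dependency exists.
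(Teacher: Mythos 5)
Your overall strategy coincides with the paper's: exhibit $\rrq_\cR\bigl((\zeta_2)^{\lambda_1-\lambda_2}(\zeta_{24})^{\lambda_2}\bigr)$ as a nonzero highest-weight vector (nonzero by the latter part of Lemma \ref{lem:Krep_ideal}), use Lemma \ref{lem:par_Krep} together with \eqref{eqn:dim_SO4_rep} and the $k_0$-action to produce an irreducible $K$-constituent of $V_\lambda$ of label $\lambda$ and dimension $\#S^\circ_\lambda$, show that $\{u_l\}_{l\in S^\circ_\lambda}$ spans $V_\lambda$, and close the sandwich $\#S^\circ_\lambda\le\dim V_\lambda\le\#S^\circ_\lambda$; this is exactly how the paper argues. (One small overstatement along the way: the cyclic $K$-submodule generated by the highest-weight vector need not be irreducible a priori; what you actually need, and what complete reducibility plus the absence of higher weights gives, is an irreducible constituent of $V_\lambda$ containing a weight-$(\lambda_1,\lambda_2)$ vector, hence of label $\lambda$ and dimension $\#S^\circ_\lambda$.)

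The genuine gap is the spanning step for $\lambda_2>0$, which you flag as the main obstacle but do not carry out: the sentence ``organizing these eliminations iteratively produces precisely the trichotomy'' is an assertion, not an argument, and your description of what gets eliminated is partly off (for instance, indices with $l_3>0$, $l_{24}=1$, $l_4=l_{12}=l_{34}=0$, $l_{14}=l_{23}=0$ lie in $S^\circ_\lambda$ and must be kept, so ``any $u_l$ with $l_3+l_{24}>0$'' cannot be uniformly eliminated). What is needed is a terminating reduction scheme, which the paper supplies in three moves: first, the syzygies and Pl\"ucker identities of Lemma \ref{lem:rel_ul} (ii) rewrite every $u_l$ as a combination of $u_{l'}$ with $l_4'=0$; second, if $l_3>0$ and $l_{12}+l_{34}>0$, the same relations express $u_l$ through $u_{l'}$ with $l_3'=l_3-1$, $l_{12}'+l_{34}'=l_{12}+l_{34}-1$ and $l_{13}'+l_{14}'+l_{23}'+l_{24}'$ increased by one; third, if $l_{14}+l_{23}+l_{24}+l_{34}>1$, the quadratic relations of Lemma \ref{lem:rel_ul} (iii) strictly decrease that sum. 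After these reductions one has $l_4=0$, $l_3(l_{12}+l_{34})=0$ and $l_{14}+l_{23}+l_{24}+l_{34}\le 1$, and the one remaining bad configuration ($l_3=0$, $l_{12}>0$, $l_{24}=1$) is removed by the relation $u_{l+e_{12}+e_{24}}=-u_{l+e_{13}+e_{34}}$ from Lemma \ref{lem:rel_ul} (iii). Without an explicit procedure of this kind, with a quantity that strictly decreases so the process terminates in the three cases defining $S^\circ_\lambda$, part (ii) — and with it the irreducibility claim underlying part (i) — is not yet proved; the dimension count cannot repair a spanning statement that has not been established.
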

\begin{proof}
Let $\lambda =(\lambda_1,\lambda_2,\lambda_3)\in \Lambda_K$. 
The cardinality $\# S^\circ_{\lambda }$ of $S^\circ_{\lambda }$ 
is given by 
\[
\# S^\circ_{\lambda }= \begin{cases}
2(\lambda_1-\lambda_2+1)(\lambda_1+\lambda_2+1)&\text{if $\lambda_2>0$},\\
(\lambda_1+1)^2&\text{if $\lambda_2=0$}, 
\end{cases}
\]
and it coincides with the dimension of an irreducible 
representation of $K$ corresponding to $\lambda$ by (\ref{eqn:dim_SO4_rep}) 
and Lemma \ref{lem:par_Krep}. 

We note that the highest weight of $\mathrm{SO}(4)$-module 
$\cR_{(\lambda_1,\lambda_2)}$ is $(\lambda_1,\lambda_2)$, 
and the corresponding weight space 
is given by $\bC (\zeta_2)^{\lambda_1-\lambda_2}(\zeta_{24})^{\lambda_2}$. 
By the latter part of Lemma \ref{lem:Krep_ideal}, 
we have $v_{(\lambda_1-\lambda_2)e_{2}+\lambda_2e_{24}}=
\rrq_{\cR}((\zeta_2)^{\lambda_1-\lambda_2}(\zeta_{24})^{\lambda_2})\neq 0$ 
and know that the highest weight of 
$\tau_{\lambda}|_{\mathrm{SO}(4)}$ is 
also $(\lambda_1,\lambda_2)$, 
and the corresponding weight space is given by 
$\bC v_{(\lambda_1-\lambda_2)e_{2}+\lambda_2e_{24}}$. 
Moreover, if $\lambda_2=0$, we have 
$\tau_\lambda (k_0)v_{\lambda_1e_{2}}
=(-1)^{\lambda_3}v_{\lambda_1e_{2}}$. 
Therefore, by Lemma \ref{lem:par_Krep}, 
$\tau_\lambda$ has an irreducible subrepresentation of $K$ 
corresponding to $\lambda$ whose dimension is 
$\# S^\circ_{\lambda }$. 
Hence, in order to complete the proof, 
it suffices to show that 
$\{u_l\}_{l\in S^\circ_{\lambda }}$ generates $V_\lambda $ 
as a $\bC$-vector space. 

Our task is to show that, 
for any $l\in S_\lambda$, 
the vector $u_l$ can be expressed as 
a linear combination of the vectors $u_{l'}$ $(l'\in S_\lambda^\circ )$. 
In the case of $\lambda_2=0$, 
it follows immediately from Lemma \ref{lem:rel_ul} (i). 
Let us consider the case of $\lambda_2>0$. For 
\[
l=(l_1,l_2,l_3,l_4,l_{12},l_{13},l_{14},l_{23},l_{24},l_{34})\in S_\lambda,
\]
we have the following assertions by Lemma \ref{lem:rel_ul}:
\begin{enumerate}
\item[(1)]By Lemma \ref{lem:rel_ul} (ii), 
the vector $u_l$ can be expressed as 
a linear combination of the vectors $u_{l'}$ with 
$l'=(l_1',l_2',l_3',0,
l_{12}',l_{13}',l_{14}',l_{23}',l_{24}',l_{34}')\in S_\lambda $. 

\item[(2)]By Lemma \ref{lem:rel_ul} (ii), if 
$l_3>0$ and $l_{12}+l_{34}>0$, then 
the vector $u_l$ can be expressed as 
a linear combination of the vectors $u_{l'}$ with 
\[
l'=(l_1',l_2',l_3-1,l_4,
l_{12}',l_{13}',l_{14}',l_{23}',l_{24}',l_{34}')\in S_\lambda 
\] 
satisfying  
$l_{12}'+l_{34}'=l_{12}+l_{34}-1$ and 
$l_{13}'+l_{14}'+l_{23}'+l_{24}'=l_{13}+l_{14}+l_{23}+l_{24}+1$. 

\item[(3)]By Lemma \ref{lem:rel_ul} (iii), if 
$l_{14}+l_{23}+l_{24}+l_{34}>1$, then 
the vector $u_l$ can be expressed as 
a linear combination of the vectors $u_{l'}$ with 
\[
l'=(l_1,l_2,l_3,l_4,
l_{12}',l_{13}',l_{14}',l_{23}',l_{24}',l_{34}')\in S_\lambda 
\] 
satisfying  
$l_{14}'+l_{23}'+l_{24}'+l_{34}'<l_{14}+l_{23}+l_{24}+l_{34}$. 
\end{enumerate}
By these assertions, we know that, for any $l\in S_\lambda$,   
the vector $u_l$ can be expressed as 
a linear combination of the vectors $u_{l'}$ with 
\[
l'=(l_1',l_2',l_3',0,
l_{12}',l_{13}',l_{14}',l_{23}',l_{24}',l_{34}')\in S_\lambda 
\] 
satisfying $l_3'(l_{12}'+l_{34}')=0$ and 
$l_{14}'+l_{23}'+l_{24}'+l_{34}'\leq 1$. 
Hence, the proof is completed 
by the relation $u_{l+e_{12}+e_{24}}=-u_{l+e_{13}+e_{34}}$ 
($l\in S_{\lambda -(2,2,0)}$)
in Lemma \ref{lem:rel_ul} (iii). 
\end{proof}

\subsection{Some lemmas for tensor products}
\label{subsec:lem_tensor}

We regard $\gp_\bC$ as a $K$-module via the adjoint action $\Ad$. 
For later use, we prepare the following lemmas.

\begin{lem}
\label{lem:tensor1}
Let $\lambda =(\lambda_1,\lambda_2,\lambda_3),
\lambda'=(\lambda_1',\lambda_2',\lambda_3')\in \Lambda_K$ and 
$\mu_1,\mu_2\in \bZ$ 
such that $\mu_1\geq \mu_2\geq 0$ and 
$\lambda -(\mu_1,\mu_2,0)\in \Lambda_K$. 
Let $\rB_{\lambda }^{(\mu_1,\mu_2)}\colon 
\cR_{(\mu_1,\mu_2)}\otimes_{\bC}V_{\lambda -(\mu_1,\mu_2,0)}\to 
V_{\lambda }$ be a $\bC$-linear map defined by  
\begin{align*}
&\rB_{\lambda }^{(\mu_1,\mu_2)}(v\otimes \rrq_{\cR} (v'))
=\rrq_{\cR} (vv')&
&(v\in \cR_{(\mu_1,\mu_2)},\ v'\in 
\cR_{(\lambda_1-\mu_1,\lambda_2-\mu_2)}).
\end{align*}
Then $\rB_{\lambda }^{(\mu_1,\mu_2)}$ is a surjective 
$K$-homomorphism, and we have 
\[
\Hom_K(\cR_{(\mu_1,\mu_2)}\otimes_{\bC}V_{\lambda -(\mu_1,\mu_2,0)}, 
V_{\lambda'})
= \begin{cases}
\bC \,\rB_{\lambda }^{(\mu_1,\mu_2)}&\text{if $\lambda'=\lambda $},\\
\{0\}&\text{if $\lambda <_{\mathrm{lex}}\lambda'$}.
\end{cases}
\]
\end{lem}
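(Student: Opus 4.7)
My approach is to verify well-definedness, $K$-equivariance, and surjectivity directly, and then establish the $\Hom_K$ statement via a multiplicity argument on weight spaces.

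First, since $I_{\cR}$ is an ideal of $\cR$ (and is $K$-invariant by Lemma \ref{lem:Krep_ideal}), the assignment $v \otimes \rrq_{\cR}(v') \mapsto \rrq_{\cR}(vv')$ descends to a well-defined linear map. Because $\cT$ acts on $\cR$ by algebra automorphisms (being induced from the representation $\tau_{\mathrm{st}}$), multiplication in $\cR$ is $\cT$-equivariant; combined with the matching $(\det k)^{\lambda_3}$ twist on the second tensor factor and on the target, this shows $\rB_{\lambda}^{(\mu_1,\mu_2)}$ is a $K$-homomorphism. For surjectivity, the multiplication maps $\Sym^{\mu_1-\mu_2}(V_{\mathrm{st}}) \otimes \Sym^{\lambda_1-\lambda_2-\mu_1+\mu_2}(V_{\mathrm{st}}) \to \Sym^{\lambda_1-\lambda_2}(V_{\mathrm{st}})$ and $\Sym^{\mu_2}(V_{\mathrm{st}} \wedge V_{\mathrm{st}}) \otimes \Sym^{\lambda_2-\mu_2}(V_{\mathrm{st}} \wedge V_{\mathrm{st}}) \to \Sym^{\lambda_2}(V_{\mathrm{st}} \wedge V_{\mathrm{st}})$ are both surjective, so $\cR_{(\mu_1,\mu_2)} \cdot \cR_{(\lambda_1-\mu_1,\lambda_2-\mu_2)} = \cR_{(\lambda_1,\lambda_2)}$, and applying $\rrq_{\cR}$ yields $V_\lambda$.

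The crux is the $\Hom_K$ computation, which I plan to handle via the standard identity $\dim \Hom_K(U, V_{\lambda'}) = \mathrm{mult}(V_{\lambda'}, U)$ for finite-dimensional $U$. The strategy is weight-space analysis with respect to the torus spanned by $E^\gk_{1,2}$ and $E^\gk_{3,4}$, using the explicit action on the $\zeta$-monomials recorded in \S \ref{subsec:irred_Krep}. One checks that every weight $(\gamma_1,\gamma_2)$ of $\cR_{(\mu_1,\mu_2)}$ satisfies $|\gamma_1| \leq \mu_1$, and the maximum $\gamma_1 = \mu_1$ is attained exactly by monomials in $\zeta_2, \zeta_{23}, \zeta_{24}$, of weight $(\mu_1, l_{24}-l_{23})$ with $l_{23}+l_{24}=\mu_2$. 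On the other factor, weights of $V_{\lambda-(\mu_1,\mu_2,0)}$ with first coordinate equal to $\lambda_1-\mu_1$ are only the $\mathrm{SO}(4)$-highest weights of its irreducible summands, namely $(\lambda_1-\mu_1, \pm(\lambda_2-\mu_2))$. Summing, every weight of the tensor product with first coordinate $\lambda_1$ has second coordinate at most $\lambda_2$, and the $(\lambda_1,\lambda_2)$-weight space is one-dimensional, spanned by $\zeta_2^{\mu_1-\mu_2} \zeta_{24}^{\mu_2} \otimes v_{(\lambda_1-\mu_1-\lambda_2+\mu_2)e_2 + (\lambda_2-\mu_2)e_{24}}$.

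This immediately forces the multiplicity of $V_{\lambda'}$ to vanish whenever $\lambda_1' > \lambda_1$, or $\lambda_1' = \lambda_1$ and $\lambda_2' > \lambda_2$. In the remaining boundary case $\lambda_1' = \lambda_1$, $\lambda_2' = \lambda_2 = 0$, $\lambda_3' = 1 > 0 = \lambda_3$, the unique highest-weight vector is $\zeta_2^{\mu_1} \otimes v_{(\lambda_1-\mu_1)e_2}$; since $k_0 \zeta_2 = \zeta_2$ and the proof of Proposition \ref{prop:irred_Krep} gives $\tau_{(\lambda_1-\mu_1,0,0)}(k_0) v_{(\lambda_1-\mu_1)e_2} = v_{(\lambda_1-\mu_1)e_2}$, the $k_0$-eigenvalue is $+1$, so the irreducible $K$-submodule generated is $V_{(\lambda_1,0,0)} = V_\lambda$, not $V_{\lambda'}$. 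For $\lambda' = \lambda$, the surjection $\rB_{\lambda}^{(\mu_1,\mu_2)}$ itself exhibits a nonzero $K$-homomorphism, so the multiplicity is exactly one and $\Hom_K(\cdots, V_\lambda) = \bC \, \rB_{\lambda}^{(\mu_1,\mu_2)}$. The main obstacle I anticipate is the careful bookkeeping of weight spaces—isolating which $\zeta$-monomials attain the extremal first weight and controlling how the weights add in the tensor product; once those extremal monomials are identified, the remaining arguments are formal.
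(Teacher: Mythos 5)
Your proposal is correct and follows essentially the same route as the paper's proof: the paper likewise reduces the $\Hom_K$ computation to showing that the joint $(E_{1,2}^{\gk},E_{3,4}^{\gk})$-weight space of weight $(\lambda_1',\lambda_2')$ in $\cR_{(\mu_1,\mu_2)}\otimes V_{\lambda-(\mu_1,\mu_2,0)}$ is one-dimensional (spanned by the $\zeta_2^{\mu_1-\mu_2}\zeta_{24}^{\mu_2}\otimes v$ vector) when $(\lambda_1',\lambda_2')=(\lambda_1,\lambda_2)$ and zero when $(\lambda_1,\lambda_2)<_{\mathrm{lex}}(\lambda_1',\lambda_2')$, and then uses the $k_0$-eigenvalue $(-1)^{\lambda_3}$ on that line to settle the case $\lambda_2=0$, $\lambda_3\neq\lambda_3'$. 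You merely supply more detail on well-definedness, $K$-equivariance and surjectivity, which the paper dispatches with "by definition."
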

\begin{proof}
By definition, we know that 
$\rB_{\lambda }^{(\mu_1,\mu_2)}$ is a surjective $K$-homomorphism. 
The subspace of 
$\cR_{(\mu_1,\mu_2)}\otimes_{\bC}V_{\lambda -(\mu_1,\mu_2,0)}$ 
consisting of all vectors $v$ satisfying 
\begin{align*}
&(\cT \otimes \tau_{\lambda -(\mu_1,\mu_2,0)})
\bigl(E_{2i-1,2i}^{\gk}\bigr)v=\sI \lambda_i'v&
& (i \in \{1,2\})
\end{align*} 
is equal to 
\begin{align*}
\begin{cases}
\bC \,(\xi_2)^{\mu_1-\mu_2} (\xi_{24})^{\mu_2}\otimes 
v_{(\lambda_1+\lambda_2-\mu_1-\mu_2)e_2+(\lambda_2-\mu_2)e_{24}}&
\text{if}\ (\lambda_1',\lambda_2')=(\lambda_1,\lambda_2),\\
\{0\}&\text{if}\ (\lambda_1,\lambda_2)<_{\mathrm{lex}}(\lambda_1',\lambda_2').
\end{cases}
\end{align*}
Moreover, if $\lambda_2=\mu_2=0$, we have 
\begin{align*}
(\cT \otimes \tau_{\lambda -(\mu_1,0,0)})(k_0)
 (\xi_2)^{\mu_1}\otimes v_{(\lambda_1-\mu_1)e_2}
=(-1)^{\lambda_3}
 (\xi_2)^{\mu_1}\otimes v_{(\lambda_1-\mu_1)e_2}. 
\end{align*}
Therefore, we obtain the assertion. 
\end{proof}

\begin{lem}
\label{lem:tensor2}
We define $\bC$-linear maps 
$\rI^{\gp}_{(1,\delta )}\colon \cR_{(1,\delta )}\to 
\gp_{\bC}\otimes_{\bC}\cR_{(1,\delta )}$ $(\delta \in \{0,1\})$ by 
\begin{align*}
&\mathrm{I}_{(1,0)}^{\gp}(\xi_i)=
\sum_{k=1}^4
E_{i,k}^{\gp}\otimes \xi_k,&
&\mathrm{I}_{(1,1)}^{\gp}(\xi_{ij})=\sum_{k=1}^4
(E_{i,k}^{\gp}\otimes \xi_{kj}+E_{j,k}^{\gp}\otimes \xi_{ik})&
\end{align*}
for $1\leq i,j\leq 4$. 
Then $\mathrm{I}_{(1,0)}^{\gp}$ and $\mathrm{I}_{(1,1)}^{\gp}$ are 
$K$-homomorphisms. 
\end{lem}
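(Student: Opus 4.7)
The plan is to reduce $K$-equivariance to two separate checks. Since $K = \mathrm{SO}(4) \sqcup \mathrm{SO}(4) k_0$ with $\mathrm{SO}(4)$ connected, a $\bC$-linear map between finite-dimensional $K$-modules is $K$-equivariant if and only if it (i) intertwines the infinitesimal actions of $\gk$ and (ii) commutes with the action of $k_0$. I will verify both conditions for $\mathrm{I}_{(1,0)}^{\gp}$ and $\mathrm{I}_{(1,1)}^{\gp}$ by direct computation on the generating sets $\{\xi_i\}$ and $\{\xi_{ij}\}$.

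For step (i), the essential input is the matrix-unit bracket identity
\[
  [E_{a,b}^{\gk}, E_{i,k}^{\gp}] = \delta_{b,i} E_{a,k}^{\gp} + \delta_{b,k} E_{a,i}^{\gp} - \delta_{a,i} E_{b,k}^{\gp} - \delta_{a,k} E_{b,i}^{\gp},
\]
which follows from the standard commutators applied to $E_{a,b}^{\gk} = E_{a,b} - E_{b,a}$ and $E_{i,k}^{\gp} = E_{i,k} + E_{k,i}$. Expanding $(\Ad \otimes \cT)(E_{a,b}^{\gk}) \mathrm{I}_{(1,0)}^{\gp}(\xi_i)$ via Leibniz and inserting this bracket together with the given action $\cT(E_{a,b}^{\gk}) \xi_k = \delta_{b,k} \xi_a - \delta_{a,k} \xi_b$, several intermediate terms cancel using $E_{p,q}^{\gp} = E_{q,p}^{\gp}$, and the survivors combine into $\delta_{b,i} \mathrm{I}_{(1,0)}^{\gp}(\xi_a) - \delta_{a,i} \mathrm{I}_{(1,0)}^{\gp}(\xi_b) = \mathrm{I}_{(1,0)}^{\gp}(\cT(E_{a,b}^{\gk}) \xi_i)$. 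The same argument is then applied term by term to the two summands of $\mathrm{I}_{(1,1)}^{\gp}(\xi_{ij})$, using the tabulated action $\cT(E_{a,b}^{\gk}) \xi_{ij}$ from \S\ref{subsec:irred_Krep} and the antisymmetry $\xi_{pq} = -\xi_{qp}$; the result repackages into $\delta_{b,i} \mathrm{I}_{(1,1)}^{\gp}(\xi_{aj}) + \delta_{b,j} \mathrm{I}_{(1,1)}^{\gp}(\xi_{ia}) - \delta_{a,i} \mathrm{I}_{(1,1)}^{\gp}(\xi_{bj}) - \delta_{a,j} \mathrm{I}_{(1,1)}^{\gp}(\xi_{ib}) = \mathrm{I}_{(1,1)}^{\gp}(\cT(E_{a,b}^{\gk}) \xi_{ij})$.

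For step (ii), with $k_0 = \diag(1,1,1,-1)$ one has $\Ad(k_0) E_{i,k}^{\gp} = (-1)^{\delta_{i,4}+\delta_{k,4}} E_{i,k}^{\gp}$, while $\cT(k_0)$ acts as $(-1)^{\delta_{k,4}}$ on $\xi_k$ and as $(-1)^{\delta_{k,4}+\delta_{j,4}}$ on $\xi_{kj}$. In each summand the factor $(-1)^{2\delta_{k,4}} = 1$ collapses, and the overall signs $(-1)^{\delta_{i,4}}$ and $(-1)^{\delta_{i,4}+\delta_{j,4}}$ match $\cT(k_0) \xi_i$ and $\cT(k_0) \xi_{ij}$ exactly. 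I expect no conceptual obstacle; the only real work is the combinatorial bookkeeping in step (i) for $\mathrm{I}_{(1,1)}^{\gp}$, which produces roughly twice as many Kronecker-delta terms and demands careful use of the antisymmetries $E_{p,q}^{\gp} = E_{q,p}^{\gp}$ and $\xi_{pq} = -\xi_{qp}$ to secure the cancellations.
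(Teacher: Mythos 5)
Your proof is correct, and it follows the same overall structure as the paper's — split $K$-equivariance into equivariance for the identity component $\mathrm{SO}(4)$ plus compatibility with $k_0$ — but it differs in how the first half is handled: the paper simply cites \cite[Proposition 1.3]{Ishii_Oda_001} for the fact that $\mathrm{I}_{(1,0)}^{\gp}$ and $\mathrm{I}_{(1,1)}^{\gp}$ are $\mathrm{SO}(4)$-homomorphisms and only carries out the $k_0$-computation, whereas you prove the $\mathrm{SO}(4)$ part from scratch via the infinitesimal action, using the bracket identity $[E_{a,b}^{\gk},E_{i,k}^{\gp}]=\delta_{b,i}E_{a,k}^{\gp}+\delta_{b,k}E_{a,i}^{\gp}-\delta_{a,i}E_{b,k}^{\gp}-\delta_{a,k}E_{b,i}^{\gp}$ and the Leibniz rule on $\gp_{\bC}\otimes_{\bC}\cR_{(1,\delta)}$. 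That identity is correct, the cancellation of the cross terms via $E_{p,q}^{\gp}=E_{q,p}^{\gp}$ and $\xi_{pq}=-\xi_{qp}$ goes through as you describe (I checked both the $(1,0)$ and $(1,1)$ cases), and the reduction of $K$-equivariance to $\gk$-equivariance plus $k_0$-equivariance is legitimate because $\mathrm{SO}(4)$ is connected and the modules involved are finite dimensional; your $k_0$-check, via $\Ad(k_0)E_{i,k}^{\gp}=(-1)^{\delta_{i,4}+\delta_{k,4}}E_{i,k}^{\gp}$, coincides with the paper's. The trade-off is the expected one: the paper's argument is shorter but relies on an external reference, while yours is self-contained at the cost of the combinatorial bookkeeping in the $(1,1)$ case (implicitly one should also note, as both arguments tacitly do, that the formula for $\mathrm{I}_{(1,1)}^{\gp}$ is consistent with $\xi_{ij}=-\xi_{ji}$ and $\xi_{ii}=0$, which is immediate from the symmetry of $E^{\gp}$).
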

\begin{proof}
We know that 
$\mathrm{I}_{(1,0)}^{\gp}$ and $\mathrm{I}_{(1,1)}^{\gp}$ are 
$\mathrm{SO}(4)$-homomorphisms 
by \cite[Proposition 1.3]{Ishii_Oda_001}. 
By direct computation, we can confirm that 
\begin{align*}
(\adj \otimes \cT )(k_0)\mathrm{I}_{(1,0)}^{\gp}(\xi_i)
&=\mathrm{I}_{(1,0)}^{\gp}(\cT (k_0)\xi_i), & 
(\adj \otimes \cT )(k_0)\mathrm{I}_{(1,1)}^{\gp}(\xi_{ij})
&=\mathrm{I}_{(1,1)}^{\gp}(\cT (k_0)\xi_{ij})&
\end{align*}
hold for $1\leq i,j\leq 4$. 
Therefore, we obtain the assertion. 
\end{proof}

\section{The minimal $K$-types of generalized principal series representations}
\label{sec:gps}

\subsection{The realization of $D_{(\nu , \kappa )}$}

In this subsection, we introduce a realization of 
$(D_{(\nu , \kappa )},\gH_{(\nu , \kappa )})$, which is 
a subrepresentation of some principal series representation of $G_2$. 
See \cite[Chapter 3]{HIM} for details. 

Let $P_{(1,1)}=N_2M_{(1,1)}$ be 
the upper triangular subgroup of $G_2$ with 
\[
M_{(1,1)}=\{m=\diag (m_1,m_2)\mid m_1,m_2\in \bR^\times \}.
\]
Let $\nu_1,\nu_2\in \bC$ and $\delta_1,\delta_2\in \{0,1\}$. 
We set $\sigma =\chi_{(\nu_1,\delta_1)}\boxtimes \chi_{(\nu_2,\delta_2)}$. 
We regard $\sigma $ as character of $P_{(1,1)}$ and 
define a character $\rho_{(1,1)}$ of $P_{(1,1)}$ by 
\begin{align*}
&\sigma (xm)=\chi_{(\nu_1,\delta_1)}(m_1)\chi_{(\nu_2,\delta_2)}(m_2),&
&\rho_{(1,1)} (xm)=\left|\frac{m_1}{m_2}\right|^{\frac{1}{2}}&
\end{align*}
for $x\in N_2$ and $m=\diag (m_1,m_2)\in M_{(1,1)}$.
Let $H(\sigma )^0$ be the space 
of continuous functions $f$ on $K_2$ 
satisfying 
\begin{align*}
&f(\diag (\varepsilon_1,\varepsilon_2)k)
=\varepsilon_1^{\delta_1}\varepsilon_2^{\delta_2}f(k)&
&(\varepsilon_1,\varepsilon_2\in \{\pm 1\},\ k\in K_2),
\end{align*}
on which $G_2$ acts by 
\begin{align*}
&(\Pi_\sigma (g)f)(k)=\rho_{(1,1)} (\mmp (kg))
\sigma (\mmp (kg))f(\mk (kg))&
&(g\in G_2,\ k\in K_2,\ f\in H(\sigma )^0).
\end{align*}
Here $kg=\mmp (kg)\mk (kg)$ is the decomposition of $kg$ 
with respect to the decomposition $G_2=P_{(1,1)}K_2$. 
We define a representation 
$(\Pi_\sigma ,H(\sigma ))$ of $G_2$ as the completion of 
$(\Pi_\sigma ,H(\sigma )^0)$ 
with respect to the $L^2$-inner product on $K_2$. 
We call $(\Pi_\sigma ,H(\sigma ))$ 
a principal series representation of $G_2$. 
Let $H(\sigma )_{K_2}$ be the subspace of $H(\sigma )$ 
consisting of all $K_2$-finite vectors, 
and take a basis 
$\{{\rf}_{(\sigma,q)}\}_{q \in \delta_1-\delta_2+2\bZ}$ 
of $H(\sigma )_{K_2}$ by 
\begin{align*}
&{\rf}_{(\sigma,q)}(\diag (\varepsilon_1,\varepsilon_2)\rk_\theta^{(2)})
=\varepsilon_1^{\delta_1}\varepsilon_2^{\delta_2}e^{\sI q\theta}&
&(\varepsilon_1,\varepsilon_2\in \{\pm 1\},\ \theta \in \bR).
\end{align*}
Then, for $\theta \in \bR$ and $\varepsilon_1,\varepsilon_2\in \{\pm 1\}$, 
we have 
\begin{align}
\label{eqn:GL2ps_Kact1}
&\Pi_{\sigma}(\rk_\theta^{(2)} ){\rf}_{(\sigma ,q)}
=e^{\sI q\theta }{\rf}_{(\sigma ,q)},&
&\Pi_\sigma 
(\diag (\varepsilon_1,\varepsilon_2)){\rf}_{(\sigma ,q)}
=
\varepsilon_1^{\delta_1} \varepsilon_2^{\delta_2} 
{\rf}_{(\sigma ,\varepsilon_1\varepsilon_2q)}.
\end{align}

Let $\nu \in \bC$ and $\kappa \in \bZ_{\geq 1}$. 
We set $\widehat{\sigma}=\chi_{(\nu +(\kappa -1)/2,\delta )}
\boxtimes \chi_{(\nu -(\kappa -1)/2,0)}$ 
with $\delta \in \{0,1\}$ such that 
$\delta \equiv \kappa \bmod 2$. 
Then the subrepresentation of $\Pi_{\widehat{\sigma}}$ 
on the closure of 
\begin{align}
\label{eqn:R2_ds_in_ps}
&\bigoplus_{q\in \kappa +2\bZ_{\geq 0}}
\{\bC\,{\rf}_{(\widehat{\sigma},q)}
+\bC\,{\rf}_{(\widehat{\sigma},-q)}\}
\end{align}
satisfies the definition 
of $(D_{(\nu ,\kappa )},\gH_{(\nu ,\kappa )})$ 
in \S \ref{subsec:Rn_def_gps}. 
Hereafter, we regard $(D_{(\nu ,\kappa )},\gH_{(\nu ,\kappa )})$ as 
this subrepresentation of $\Pi_{\widehat{\sigma}}$. 
We note that the $K_2$-finite part $\gH_{(\nu ,\kappa ),K_2}$ 
of $\gH_{(\nu ,\kappa )}$ coincides with 
the space (\ref{eqn:R2_ds_in_ps}).

\subsection{$P_{(1,1,1,1)}$-principal series representations}
\label{subsec:P_1111_ps}

Let $\sigma = \chi_{(\nu_1,\delta_1)}
\boxtimes \chi_{(\nu_2,\delta_2)} \boxtimes \chi_{(\nu_3,\delta_3)}
\boxtimes \chi_{(\nu_4,\delta_4)} $ 
with $\nu_1,\nu_2,\nu_3,\nu_4 \in \bC$, 
$\delta_1,\delta_2,\delta_3,\delta_4 \in \{0,1\}$ 
such that $\delta_1 \geq \delta_2 \geq \delta_3\geq \delta_4$. 
The group $K\cap M_{(1,1,1,1)}$ consists of the elements
\begin{align*}
\diag (\varepsilon_1,\varepsilon_2,\varepsilon_3,\varepsilon_4)\qquad 
(\varepsilon_1,\varepsilon_2,\varepsilon_3,\varepsilon_4
\in \{\pm 1\}).
\end{align*}
Because of Lemma \ref{lem:Kact_ul_vl} (i), for $ \lambda \in \Lambda_K $, we have
\begin{align*}
\Hom_{K\cap M_{(1,1,1,1)}}(V_{\lambda },U_{\sigma ,K\cap M_{(1,1,1,1)}})
& = \begin{cases}
\bC\,\eta_{\sigma}
&\text{if  $\lambda =(\delta_1-\delta_4,\delta_2-\delta_3,\delta_3)$},\\
\{0\}&\text{if $\lambda<_{\rm lex} (\delta_1-\delta_4, \delta_2-\delta_3, \delta_3)$},
\end{cases}
\end{align*}
where $\eta_{\sigma}\colon 
V_{(\delta_1-\delta_4,\delta_2-\delta_3,\delta_3)}\to U_{\sigma, K \cap M_{(1,1,1,1)}} $ 
is a $\bC$-linear map defined by 
\begin{align*}
\eta_{\sigma}(u_{l})= 
\begin{cases} 
1 &  \mbox{if $ l = (\delta_1-\delta_2)e_1+(\delta_3-\delta_4)e_4+(\delta_2-\delta_3)e_{12}$}, \\
0 &  \mbox{otherwise}
\end{cases}
\end{align*}
for $l\in S_{(\delta_1-\delta_4,\delta_2-\delta_3,\delta_3)}$. 
By the Frobenius reciprocity law \cite[Theorem 1.14]{Knapp_002}, we have 
\begin{align}
\label{eqn:P1111_minKtype}
&\Hom_{K}(V_{\lambda },H(\sigma )_K) 
 = \begin{cases}
\displaystyle 
\bC\,\hat{\eta}_{\sigma}
&\text{if $\lambda =(\delta_1-\delta_4,\delta_2-\delta_3,\delta_3)$},\\
\{0\}
&\text{if $\lambda <_{\rm lex} (\delta_1-\delta_4, \delta_2-\delta_3, \delta_3)$}
\end{cases} & 
&(\lambda \in \Lambda_K),
\end{align}
where 
$\hat{\eta}_{\sigma}(v)(k)
=\eta_{\sigma}(\tau_{(\delta_1-\delta_4,\delta_2-\delta_3,\delta_3)}(k)v)$ 
for $v\in V_{(\delta_1-\delta_4,\delta_2-\delta_3,\delta_3)}$ and $k\in {K}$. 
We call $\tau_{(\delta_1-\delta_4,\delta_2-\delta_3,\delta_3)}$ 
the minimal $K$-type of $\Pi_\sigma$.

\begin{prop}[{cf. \cite[Lemma 2.5]{Ishii_Oda_001}}]
\label{prop:P1111_DS}
Retain the notation. 

\noindent (i) For $f\in H (\sigma )_K$, we have 
\begin{align*}
&\Pi_\sigma (\cC_1)f=
(\nu_1+\nu_2+\nu_3+\nu_4)f,\\
&\Pi_\sigma (\cC_2)f=
(\nu_1\nu_2+\nu_1\nu_3+\nu_1\nu_4+\nu_2\nu_3+\nu_2\nu_4+\nu_3\nu_4)f,\\
&\Pi_\sigma (\cC_3)f=
(\nu_1\nu_2\nu_3+\nu_1\nu_2\nu_4+\nu_1\nu_3\nu_4+\nu_2\nu_3\nu_4)f,\\
&\Pi_\sigma (\cC_4)f=
\nu_1\nu_2\nu_3\nu_4f.
\end{align*}
\noindent (ii) Assume $ (\delta_1,\delta_2,\delta_3,\delta_4) = (1,0,0,0) $ or  $(1,1,1,0) $.
For $1\leq i\leq 4$, we have 
\begin{align*}
2 \{ (\delta_1-\delta_2)\nu_1+(\delta_3-\delta_4) \nu_4 \} \,\hat{\eta}_{\sigma }(u_{e_i})
=\sum_{k=1}^4
\Pi_\sigma (E_{i,k}^{\gp}) 
\hat{\eta}_{\sigma }(u_{e_k}).
\end{align*}
\noindent (iii) Assume $ (\delta_1,\delta_2,\delta_3,\delta_4) = (1,1,0,0). $
For $1\leq i<j\leq 4$, we have 
\begin{align*}
&2(\nu_1+\nu_2)\,\hat{\eta}_{\sigma }(u_{e_{ij}})
=\Pi_\sigma (E_{i,i}^{\gp}+E_{j,j}^{\gp})\hat{\eta}_{\sigma }(u_{e_{ij}})\\
&\hspace{2mm}
+\sum_{1\leq k\leq 4,\, k\not\in \{i,j\}}
\{\sgn (j-k)\Pi_\sigma (E_{i,k}^{\gp})\hat{\eta}_{\sigma }(u_{e_{kj}})
+\sgn (k-i)\Pi_\sigma (E_{j,k}^{\gp})\hat{\eta}_{\sigma }(u_{e_{ik}})\}.
\end{align*}
\end{prop}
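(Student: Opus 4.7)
The plan is to treat part (i) and the Dirac--Schmid parts (ii), (iii) separately. For (i), the Capelli elements $\cC_1, \ldots, \cC_4$ generate $Z(\g_\bC)$ and hence act on the irreducible Harish--Chandra module $H(\sigma)_K$ by scalars. The shift $\cE_{i,i} = E_{i,i} - (5-2i)/2$ encodes precisely the half-sum of positive roots for $\mathrm{GL}(4,\bR)$, and the Harish--Chandra homomorphism sends $\mathrm{Det}(t\, 1_4 + \cE)$ to $\prod_{i=1}^{4}(t + \nu_i)$ on the $P_{(1,1,1,1)}$-principal series with parameters $(\nu_1, \ldots, \nu_4)$ (\textit{cf.}~\cite[\S 11]{MR1116239}). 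Expanding the product in $t$ identifies $\cC_k$ with the $k$-th elementary symmetric polynomial in $\nu_1, \nu_2, \nu_3, \nu_4$. Alternatively, one may verify this directly by evaluating $\cC_k$ on a lowest-$\gn$-weight vector of $H(\sigma)_K$: the off-diagonal entries $\cE_{i,j}$ with $i<j$ annihilate it, so that $\mathrm{Det}(t 1_4 + \cE)$ collapses to $\prod_i(t + \nu_i)$.

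For (ii) and (iii) we use a $K$-equivariance argument. Set $\lambda = (\delta_1-\delta_4, \delta_2-\delta_3, \delta_3)$, so that $\lambda_2 = \delta_2 - \delta_3 \in \{0,1\}$. The right-hand side of the claimed identity is the image of $u_0 \in V_\lambda$ (where $u_0 = u_{e_i}$ in (ii) and $u_0 = u_{e_{ij}}$ in (iii)) under the composition
\[
V_\lambda \xrightarrow{\mathrm{I}^{\gp}_{(1,\lambda_2)}} \gp_\bC \otimes V_\lambda \xrightarrow{\mathrm{id} \otimes \hat{\eta}_\sigma} \gp_\bC \otimes H(\sigma)_K \xrightarrow{\Pi_\sigma} H(\sigma)_K,
\]
where $\mathrm{I}^{\gp}_{(1,\lambda_2)}$ is the $K$-homomorphism of Lemma \ref{lem:tensor2}. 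Each arrow is $K$-equivariant: the first by Lemma \ref{lem:tensor2}, the second by the $K$-intertwining property of $\hat{\eta}_\sigma$, and the third because $\Pi_\sigma(\Ad(k)X) = \Pi_\sigma(k) \Pi_\sigma(X) \Pi_\sigma(k)^{-1}$. Hence the composite is a $K$-homomorphism $V_\lambda \to H(\sigma)_K$. Since $V_\lambda$ is irreducible and $\Hom_K(V_\lambda, H(\sigma)_K) = \bC\,\hat{\eta}_\sigma$ by \eqref{eqn:P1111_minKtype}, the composite must equal $c \cdot \hat{\eta}_\sigma$ for some scalar $c$.

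To identify $c$, evaluate both sides at $g = 1_4$ and apply $\eta_\sigma$ to a distinguished vector $u_{l_0}$ with $\eta_\sigma(u_{l_0}) = 1$: take $i = 1$, $l_0 = e_1$ for $(\delta_1,\ldots,\delta_4) = (1,0,0,0)$; $i = 4$, $l_0 = e_4$ for $(1,1,1,0)$; and $(i,j) = (1,2)$, $l_0 = e_{12}$ for $(1,1,0,0)$. The Iwasawa decomposition in $\g_\bC$ gives $E^{\gp}_{i,j} = 2 E_{i,j} - E^{\gk}_{i,j}$ for $i < j$ and $E^{\gp}_{i,i} = 2 E_{i,i} \in \ga$. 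Since $\rho_{(1,1,1,1)}$ and $\sigma$ are trivial on $N_{(1,1,1,1)}$, a routine differentiation of the defining formula of $\Pi_\sigma$ yields
\[
\bigl(\Pi_\sigma(X) \hat{\eta}_\sigma(v)\bigr)(1_4) = \bigl(d\rho_{(1,1,1,1)} + d\sigma\bigr)(X_\ga)\, \eta_\sigma(v) + \eta_\sigma\bigl(\tau_\lambda(X_\gk) v\bigr),
\]
with $d\rho_{(1,1,1,1)}(2 E_{i,i}) = 5 - 2i$ and $d\sigma(2 E_{i,i}) = 2\nu_i$. Expanding the sum on the right-hand side of (ii) or (iii) and computing $\tau_\lambda(E^{\gk}_{a,b}) u_l$ via Lemma \ref{lem:Kact_ul_vl}(i), the diagonal $\rho$-shift (namely $5 - 2i$ at the active index, or $(5-2i) + (5-2j)$ for case (iii)) cancels precisely against the $\gk$-contributions coming from the $-E^{\gk}_{a,b}$ terms, leaving the asserted scalars $2\nu_1$, $2\nu_4$, or $2(\nu_1 + \nu_2)$ respectively.

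The main technical work is the combinatorial sign-and-index bookkeeping in the $\tau_\lambda(E^{\gk}_{a,b})$-computations, where one must track the antisymmetry conventions $\xi_{ij} = -\xi_{ji}$ and the identification $e_{ij} = e_{ji}$ carefully; otherwise the $K$-equivariance argument reduces the entire claim to a single one-point evaluation and no analytic difficulties arise.
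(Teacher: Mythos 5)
Your argument for (ii) and (iii) is essentially the paper's proof: you reduce the identity to a single scalar via the one-dimensionality of the relevant space of $K$-homomorphisms (the paper phrases this through $\hat{\eta}_{\sigma}\circ\rB_{\lambda}^{(1,\delta)}$ on $\cR_{(1,\delta)}\otimes_{\bC}V_{\lambda-(1,\delta,0)}$ using Lemma \ref{lem:tensor1}, while you use $\Hom_K(V_\lambda,H(\sigma)_K)=\bC\,\hat{\eta}_{\sigma}$ directly, which is legitimate here because $\rrq_{\cR}$ is injective on $\cR_{(1,\lambda_2)}$ and the $\det$-twist in the case $(1,1,1,0)$ cancels on both sides), and you then pin down the scalar by evaluating at $1_4$ with $E_{i,j}^{\gp}=2E_{i,j}-E_{i,j}^{\gk}$, exactly as in the paper, with the same test vectors $u_{e_1}$, $u_{e_4}$, $u_{e_{12}}$ and the same cancellation of the $\rho$-shifts yielding $2\nu_1$, $2\nu_4$, $2(\nu_1+\nu_2)$. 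For (i) the paper simply cites \cite[Proposition 2.2]{HIM}, and your Harish--Chandra-homomorphism computation is the standard argument behind that citation; just note that irreducibility of $\Pi_\sigma$ is neither assumed nor needed (the scalar action of $Z(\g_\bC)$ holds for any principal series through its infinitesimal character), and the ``collapse'' of the column determinant should be carried out on values at $1_4$, where $(\Pi_\sigma(E_{i,j})f)(1_4)=0$ for $i<j$, rather than on a vector literally annihilated by $\gn$, with the usual care about the ordering of the noncommuting entries.
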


\begin{proof}
The first statement (i) is \cite[Proposition 2.2]{HIM}. 
By definition, for $1\leq i,j\leq 4$ and $f\in H({\sigma})_K$, 
we have 
\begin{align}
\label{eqn:pf_P1111DS_001}
&2(\Pi_{{\sigma}}(E_{i,j})f)(1_4)=
\begin{cases}
2\nu_i+5-2i&\text{if $i=j$}, \\
0&\text{if $i<j$}.
\end{cases}
\end{align}

Assume $ \delta_1-\delta_4 = 1$. 
Let $\lambda =(1,\delta_2-\delta_3,\delta_3)$.
We have 
\[
\Hom_K(\cR_{(1, \delta_2-\delta_3 )}\otimes_{\bC}
V_{\lambda -(1, \delta_2-\delta_3 ,0)}, 
H(\sigma)_K)=\bC\,\hat{\eta}_{\sigma }\circ 
\rB_{\lambda}^{(1,\delta_2-\delta_3 )}
\]
by Lemma \ref{lem:tensor1} and (\ref{eqn:P1111_minKtype}). 
We define a $K$-homomorphism 
$\rP_\sigma \colon \gp_{\bC}\otimes_{\bC}H(\sigma)_K
\to H(\sigma)_K$ by $X\otimes f\mapsto \Pi_{\sigma}(X)f$. 
Then there is a constant $c_{\delta_2-\delta_3}$ such that  
\begin{align}
\label{eqn:pre_dirac0}
c_{\delta_2-\delta_3} \,\hat{\eta}_{\sigma }\circ 
\rB_{\lambda}^{(1,\delta_2-\delta_3 )}=
\rP_\sigma \circ (\id_{\gp_{\bC}}\otimes (\hat{\eta}_{\sigma }
\circ \rB_{\lambda}^{(1,\delta_2-\delta_3)}))\circ 
(\rI^\gp_{(1,\delta_2-\delta_3 )}\otimes 
\id_{V_{\lambda -(1,\delta_2-\delta_3 ,0)}}),
\end{align}
since the right hand side is an element of 
$\Hom_K(\cR_{(1,\delta_2-\delta_3 )}\otimes_{\bC}
V_{\lambda -(1,\delta_2-\delta_3 ,0)}, H(\sigma)_K)$. 

Let us consider the case of $\delta_2-\delta_3=0$. 
Considering the image of $\xi_i\otimes u_{{\bf 0}}$ 
under the both sides of (\ref{eqn:pre_dirac0}), we have 
\begin{align*}
c_0\,\hat{\eta}_{\sigma }(u_{e_i})
=\sum_{k=1}^4
\Pi_\sigma (E_{i,k}^{\gp}) 
\hat{\eta}_{\sigma }(u_{e_k})&
&(1\leq i\leq 4).
\end{align*}
Hence, in order to prove the statement (ii), 
it suffices to show 
$c_{0} = 2\{(\delta_1-\delta_2)\nu_1 + (\delta_3-\delta_4)\nu_4 \} $. 
When $ (\delta_1,\delta_2,\delta_3,\delta_4) = (1,0,0,0) $,
using Lemma \ref{lem:Kact_ul_vl} (i) and 
(\ref{eqn:pf_P1111DS_001}),  
we have 
\begin{align*}
c_0
&= c_{0}\,\hat{\eta}_{\sigma }(u_{e_{1}})(1_4) \\
& = \sum_{k=1}^4(\Pi_{\sigma}(E_{1,k}^{\gp}) 
\hat{\eta}_{\sigma }(u_{e_k}))(1_4)\\
& 
=2 ( \Pi_{{\sigma}} (E_{1,1}) \hat{\eta}_{\sigma}(u_{e_1}))(1_4)
+\sum_{k=2}^4 \{ 2(
\Pi_{{\sigma}}(E_{1,k})
 \hat{\eta}_{\sigma}(u_{e_k})) (1_4)
- \hat{\eta}_{\sigma}(
\tau_{\lambda }(E_{1,k}^{\gk})u_{e_k})(1_4) \} \\
&=2\nu_1.
\end{align*}
Similarly, when $ (\delta_1,\delta_2,\delta_3,\delta_4) = (1,1,1,0) $, we know $ c_0 = 2\nu_4 $
to obtain the statement (ii).

Assume $(\delta_1,\delta_2,\delta_3,\delta_4)=(1,1,0,0)$. 
Considering the image of $\xi_{ij}\otimes u_{{\bf 0}}$ 
under the both sides of (\ref{eqn:pre_dirac0}), we have 
\begin{align*}
c_1\,\hat{\eta}_{\sigma }(u_{e_{ij}})
& =\Pi_\sigma (E_{i,i}^{\gp}+E_{j,j}^{\gp})\hat{\eta}_{\sigma }(u_{e_{ij}})\\
&
+\sum_{1\leq k\leq 4,\, k\not\in \{i,j\}}
\{\sgn (j-k)\Pi_\sigma (E_{i,k}^{\gp})\hat{\eta}_{\sigma }(u_{e_{kj}})
+\sgn (k-i)\Pi_\sigma (E_{j,k}^{\gp})\hat{\eta}_{\sigma }(u_{e_{ik}})\}
\end{align*}
for $1\leq i<j\leq 4$. 
Hence, in order to prove the statement (iii), 
it suffices to show $c_{1}=2(\nu_1+\nu_2)$. 
Using Lemma \ref{lem:Kact_ul_vl} (i) and 
(\ref{eqn:pf_P1111DS_001}), we have 
\begin{align*}
c_1
& =  c_{1}\,\hat{\eta}_{\sigma }(u_{e_{12}})(1_4)\\
&=
(\Pi_\sigma (E_{1,1}^{\gp}+E_{2,2}^{\gp})
\hat{\eta}_{\sigma }(u_{e_{12}}))(1_4) 
-\bigl( \Pi_\sigma (E_{1,3}^{\gp})\hat{\eta}_{\sigma }(u_{e_{23}})
 +\Pi_\sigma (E_{1,4}^{\gp})\hat{\eta}_{\sigma }(u_{e_{24}}) \bigr) (1_4)\\
&\phantom{=}
+\bigl( \Pi_\sigma (E_{2,3}^{\gp})\hat{\eta}_{\sigma }(u_{e_{13}})
+\Pi_\sigma (E_{2,4}^{\gp})\hat{\eta}_{\sigma }(u_{e_{14}})
\bigr) (1_4)\\
&=2 \bigl(\Pi_\sigma (E_{1,1}^{}+E_{2,2}^{}) \hat{\eta}_{\sigma }(u_{e_{12}}) \bigr)(1_4) \\
&\phantom{=}
- 2\bigl(\Pi_{{\sigma}}(E_{1,3})
  \hat{\eta}_{\sigma}(u_{e_{23}}) \bigr)(1_4)
+ \hat{\eta}_{\sigma}( \tau_{\lambda}(E_{1,3}^{\gk})u_{e_{23}})(1_4)
- 2\bigl(\Pi_{{\sigma}}(E_{1,4})
  \hat{\eta}_{\sigma}(u_{e_{24}}) \bigr)(1_4)
+ \hat{\eta}_{\sigma}( \tau_{\lambda}(E_{1,4}^{\gk})u_{e_{24}})(1_4) \\
&\phantom{=}
+ 2\bigl(\Pi_{{\sigma}}(E_{2,3})
  \hat{\eta}_{\sigma}(u_{e_{13}}) \bigr)(1_4)
- \hat{\eta}_{\sigma}( \tau_{\lambda}(E_{2,3}^{\gk})u_{e_{13}})(1_4)
+ 2\bigl(\Pi_{{\sigma}}(E_{2,4})
  \hat{\eta}_{\sigma}(u_{e_{14}}) \bigr)(1_4)
- \hat{\eta}_{\sigma}( \tau_{\lambda}(E_{2,4}^{\gk})u_{e_{14}})(1_4)\\
&=2(\nu_1+\nu_2),
\end{align*}
as desired.
\end{proof}


\subsection{$P_{(2,1,1)}$-principal series representations}
\label{subsec:P_211_gps}

Let $\sigma =D_{(\nu_1,\kappa_1)}\boxtimes \chi_{(\nu_2,\delta_2)}
\boxtimes \chi_{(\nu_3,\delta_3)}$ 
with $\nu_1,\nu_2,\nu_3\in \bC$, 
$\kappa_1\in \bZ_{\geq 2}$, 
$\delta_2,\delta_3\in \{0,1\}$ such that $\delta_2\geq \delta_3$. 
We set 
$\widehat{\sigma}_1=\chi_{(\nu_1+(\kappa_1-1)/2,\delta_1)}
\boxtimes \chi_{(\nu_1-(\kappa_1-1)/2,0)}$ 
with $\delta_1\in \{0,1\}$ such that 
$\delta_1\equiv \kappa_1\bmod 2$. 
The group $K\cap M_{(2,1,1)}$ is generated by the elements 
\begin{align*}
&\rk_{\theta_1,0}^{(2,2)}\qquad (\theta_1\in \bR),&
&\diag (\varepsilon_1,\varepsilon_2,\varepsilon_3,\varepsilon_4)\qquad 
(\varepsilon_1,\varepsilon_2,\varepsilon_3,\varepsilon_4
\in \{\pm 1\}).
\end{align*}
Because of (\ref{eqn:GL2ps_Kact1}), 
these elements act on $U_{\sigma , K\cap M_{(2,1,1)}}
=\gH_{(\nu_1,\kappa_1),K_2}$ by 
\begin{align*} 
&\sigma (\rk_{\theta_1,0}^{(2,2)})
{\rf}_{(\widehat{\sigma}_1,q)}
=e^{\sI q\theta_1 }
{\rf}_{(\widehat{\sigma}_1,q)},
&\sigma (\diag (\varepsilon_1,\varepsilon_2,\varepsilon_3,\varepsilon_4))
{\rf}_{(\widehat{\sigma}_1,q)}
=\varepsilon_1^{\kappa_1}\varepsilon_3^{\delta_2}\varepsilon_4^{\delta_3}
{\rf}_{(\widehat{\sigma}_1,\varepsilon_1\varepsilon_2q)}
\end{align*}
for $q\in \kappa_1+2\bZ$ such that $|q|\geq \kappa_1$.
By these equalities and Lemma \ref{lem:Kact_ul_vl} (ii), 
for $\lambda \in \Lambda_K$, we have 
\begin{align*}
&\Hom_{K\cap M_{(2,1,1)}}(V_{\lambda },U_{\sigma ,K\cap M_{(2,1,1)}})
= \begin{cases}
\displaystyle 
\bC\,\eta_{\sigma}
&\text{ if $\lambda =(\kappa_1,\delta_2-\delta_3,\delta_3)$},\\
\{0\}&\text{ if $ \lambda <_{\mathrm{lex}}
(\kappa_1,\delta_2-\delta_3,\delta_3)$},
\end{cases}
\end{align*}
where $\eta_{\sigma}\colon 
V_{(\kappa_1,\delta_2-\delta_3,\delta_3)}\to U_{\sigma, K\cap M_{(2,1,1)}} $ 
is a $\bC$-linear map defined by 
\begin{align*}
\eta_{\sigma}(v_{l})
= \begin{cases}
{\rf}_{(\widehat{\sigma}_1,\kappa_1)}
&\text{ if  $l=(\kappa_1-\delta_2+\delta_3)e_{2}+(\delta_2-\delta_3)e_{24}
\text{ or } l=(\kappa_1-\delta_2+\delta_3)e_{2}+(\delta_2-\delta_3)e_{23}$},\\
(-1)^{\delta_3}{\rf}_{(\widehat{\sigma}_1,-\kappa_1)}
&\text{ if $ l=(\kappa_1-\delta_2+\delta_3)e_{1}+(\delta_2-\delta_3)e_{14} $
 or $l=(\kappa_1-\delta_2+\delta_3)e_{1}+(\delta_2-\delta_3)e_{13}$},\\
0&\text{ otherwise}
\end{cases}
\end{align*}
for $l\in S_{(\kappa_1,\delta_2-\delta_3,\delta_3)}$. 
By the Frobenius reciprocity law \cite[Theorem 1.14]{Knapp_002}, we have 
\begin{align}
\label{eqn:P211_minKtype}
&\Hom_{K}(V_{\lambda },H(\sigma )_K)
=\begin{cases}
\displaystyle 
\bC\,\hat{\eta}_{\sigma}
&\text{if $ \lambda =(\kappa_1,\delta_2-\delta_3,\delta_3)$},\\
\{0\}
&\text{if $ \lambda <_{\mathrm{lex}}(\kappa_1,\delta_2-\delta_3,\delta_3) $}
\end{cases} &
&(\lambda \in \Lambda_K),
\end{align}
where 
$\hat{\eta}_{\sigma}(v)(k)
=\eta_{\sigma}(\tau_{(\kappa_1,\delta_2-\delta_3,\delta_3)}(k)v)$ 
for $v\in V_{(\kappa_1,\delta_2-\delta_3,\delta_3)}$ and $k\in {K}$. 
We call $\tau_{(\kappa_1,\delta_2-\delta_3,\delta_3)}$ 
the minimal $K$-type of $\Pi_\sigma$. 

We define $\iota_\sigma \colon U_\sigma \to \bC$ 
by $\iota_\sigma  (f)=f(1_2)$, and let $\widehat{\sigma}=\widehat{\sigma}_1\boxtimes \chi_{(\nu_2,\delta_2)}\boxtimes \chi_{(\nu_3,\delta_3)}$. 
Then $\Pi_\sigma $ is embedded into a principal series representation 
$\Pi_{\widehat{\sigma}}$ via 
\begin{equation}
\label{eqn:P211_embed}
\rI_\sigma \colon 
H(\sigma)\ni f\mapsto \iota_\sigma \circ f\in H(\widehat{\sigma}). 
\end{equation}

\begin{prop}
\label{prop:P211_DS}
Retain the notation.

\noindent (i) For $f\in H (\sigma )_K$, we have 
\begin{align*}
&\Pi_\sigma (\cC_1)f=(2\nu_1+\nu_2+\nu_3)f,\\
&\Pi_\sigma (\cC_2)f
=\bigl(\nu_1^2+2\nu_1\nu_2+2\nu_1\nu_3
+\nu_2\nu_3-\tfrac{(\kappa_1-1)^2}{4}\bigr)f,\\
&\Pi_\sigma (\cC_3)f
=\bigl(\nu_1^2\nu_2+\nu_1^2\nu_3
+2\nu_1\nu_2\nu_3-\tfrac{(\kappa_1-1)^2}{4}\nu_2
-\tfrac{(\kappa_1-1)^2}{4}\nu_3\bigr)f,\\
&\Pi_\sigma (\cC_4)f
=\bigl(\nu_1^2-\tfrac{(\kappa_1-1)^2}{4}\bigr)\nu_2\nu_3f.
\end{align*}
\noindent (ii)  
For $l\in S_{(\kappa_1-1,\delta_2-\delta_3,\delta_3)}$ and 
$1\leq i\leq 4$, we have 
\begin{align*}
2\nu_1\,\hat{\eta}_{\sigma }(u_{l+e_i})
=\sum_{k=1}^4
\Pi_\sigma (E_{i,k}^{\gp}) 
\hat{\eta}_{\sigma }(u_{l+e_k}).
\end{align*}
\noindent (iii) Assume $(\delta_2,\delta_3)=(1,0)$. 
For $l\in S_{(\kappa_1-1,0,0)}$ and 
$1\leq i<j\leq 4$, we have 
\begin{align*}
&2(\nu_1+\nu_2)\,\hat{\eta}_{\sigma }(u_{l+e_{ij}})
=\Pi_\sigma (E_{i,i}^{\gp}+E_{j,j}^{\gp})\hat{\eta}_{\sigma }(u_{l+e_{ij}})\\
&\hspace{2mm}
+\sum_{1\leq k\leq 4,\, k\not\in \{i,j\}}
\{\sgn (j-k)\Pi_\sigma (E_{i,k}^{\gp})\hat{\eta}_{\sigma }(u_{l+e_{kj}})
+\sgn (k-i)\Pi_\sigma (E_{j,k}^{\gp})\hat{\eta}_{\sigma }(u_{l+e_{ik}})\}.
\end{align*}
\end{prop}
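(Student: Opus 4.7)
The plan is to follow the blueprint of Proposition \ref{prop:P1111_DS}: part (i) reduces to Proposition \ref{prop:P1111_DS}(i) via the embedding $\rI_\sigma$ of (\ref{eqn:P211_embed}), while parts (ii)--(iii) are obtained by mimicking the tensor-product argument used for Proposition \ref{prop:P1111_DS}(ii)--(iii).

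For (i), since $\rI_\sigma\colon H(\sigma)_K\hookrightarrow H(\widehat\sigma)_K$ is $(\g_\bC,K)$-equivariant, it intertwines the action of $Z(\g_\bC)$. With $\widehat\sigma=\chi_{(\nu_1+(\kappa_1-1)/2,\delta_1)}\boxtimes\chi_{(\nu_1-(\kappa_1-1)/2,0)}\boxtimes\chi_{(\nu_2,\delta_2)}\boxtimes\chi_{(\nu_3,\delta_3)}$, I would substitute the quadruple $(\nu_1+(\kappa_1-1)/2,\ \nu_1-(\kappa_1-1)/2,\ \nu_2,\ \nu_3)$ into Proposition \ref{prop:P1111_DS}(i); the first two parameters sum to $2\nu_1$ and multiply to $\nu_1^2-(\kappa_1-1)^2/4$, so the elementary symmetric functions collapse to the stated values of $\Pi_\sigma(\cC_j)$ for $j=1,2,3,4$.

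For (ii) and (iii), set $\lambda=(\kappa_1,\delta_2-\delta_3,\delta_3)$ and take $(\mu_1,\mu_2)=(1,0)$ in (ii), $(\mu_1,\mu_2)=(1,1)$ in (iii); then $\lambda-(\mu_1,\mu_2,0)\in\Lambda_K$. Decomposing $H(\sigma)_K$ into $K$-isotypic pieces and combining Lemma \ref{lem:tensor1} with (\ref{eqn:P211_minKtype}) gives
\[
\Hom_K\bigl(\cR_{(\mu_1,\mu_2)}\otimes_\bC V_{\lambda-(\mu_1,\mu_2,0)},\ H(\sigma)_K\bigr)=\bC\cdot\bigl(\hat\eta_\sigma\circ\rB_\lambda^{(\mu_1,\mu_2)}\bigr),
\]
since the multiplicity in $H(\sigma)_K$ of any $\lambda'<_{\mathrm{lex}}\lambda$ vanishes by (\ref{eqn:P211_minKtype}) and the $\lambda'\geq_{\mathrm{lex}}\lambda$ contributions are controlled by Lemma \ref{lem:tensor1}. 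Using the $K$-homomorphism $\rI^\gp_{(\mu_1,\mu_2)}$ of Lemma \ref{lem:tensor2} and the map $\rP_\sigma(X\otimes f)=\Pi_\sigma(X)f$, the composite
\[
\rP_\sigma\circ\bigl(\id_{\gp_\bC}\otimes(\hat\eta_\sigma\circ\rB_\lambda^{(\mu_1,\mu_2)})\bigr)\circ\bigl(\rI^\gp_{(\mu_1,\mu_2)}\otimes\id_{V_{\lambda-(\mu_1,\mu_2,0)}}\bigr)
\]
lies in this one-dimensional space and hence equals $c\cdot\hat\eta_\sigma\circ\rB_\lambda^{(\mu_1,\mu_2)}$ for some constant $c$. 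Evaluating on $\xi_i\otimes u_l$ (resp.\ $\xi_{ij}\otimes u_l$) and unwinding $\rI^\gp_{(1,0)}$ (resp.\ $\rI^\gp_{(1,1)}$) then produces the stated relations.

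The main obstacle is determining that $c=2\nu_1$ for (ii) and $c=2(\nu_1+\nu_2)$ for (iii). My approach is to evaluate both sides at $1_4$ on a highest-weight-type vector, for instance $l=(\kappa_1-1-\delta_2+\delta_3)e_2+(\delta_2-\delta_3)e_{24}$ together with $i=2$ in (ii), or $(i,j)=(1,2)$ with an analogous $l$ in (iii). Because $\eta_\sigma$ is prescribed on the $v$-spanning set rather than on the $u_l$, I would first re-expand $u_{l+e_2}$ (resp.\ $u_{l+e_{12}}$) in the $v$-basis using $\xi_1=(\zeta_1+\zeta_2)/2$, $\xi_2=(\zeta_2-\zeta_1)/(2\sqrt{-1})$ and the analogous identities for $\xi_3,\xi_4$; most terms drop because $\eta_\sigma(v_{l'})=0$ outside the extremal $K_2$-weights $\pm\kappa_1$ of $\widehat\sigma_1$. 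For the right-hand side, I would write $E^\gp_{i,k}=2E_{i,k}-E^\gk_{i,k}$ and note that at $1_4$ the contributions of $E_{i,k}$ with $i<k$ and $(i,k)\neq(1,2)$ vanish (since $E_{i,k}\in\gn_{(2,1,1)}$), while the diagonal terms produce $2\nu_1$ via the central character $t\mapsto t^{2\nu_1}$ of $D_{(\nu_1,\kappa_1)}$ on the $\mathrm{GL}(2)$-block, supplemented by an additional $2\nu_2$ from $\chi_{(\nu_2,\delta_2)}$ in case (iii). The $(i,k)=(1,2)$ term and the compact corrections from $E^\gk_{i,k}$ combine via Lemma \ref{lem:Kact_ul_vl}(i) in the same bookkeeping manner as the closing calculation in the proof of Proposition \ref{prop:P1111_DS}, yielding the asserted constants.
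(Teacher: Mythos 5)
Your overall architecture coincides with the paper's: part (i) by pushing through the embedding (\ref{eqn:P211_embed}) and quoting Proposition \ref{prop:P1111_DS} (i), parts (ii)--(iii) by the one-dimensionality of $\Hom_K(\cR_{(1,\delta)}\otimes_\bC V_{\lambda-(1,\delta,0)},H(\sigma)_K)$ via Lemma \ref{lem:tensor1} and (\ref{eqn:P211_minKtype}), and then pinning down the proportionality constant by evaluating at $1_4$ on a vector where $\iota_\sigma\circ\eta_\sigma$ does not vanish. The gap is in the last step: the specific test vectors you propose are degenerate, so the constants $c_0,c_1$ are not determined by your computation. For (iii) you take $(i,j)=(1,2)$, but \emph{every} vector $u_{l+e_{12}}$ with $l\in S_{(\kappa_1-1,0,0)}$ is killed by $\eta_\sigma$: indeed $\xi_{12}=\tfrac{1}{2\sqrt{-1}}\zeta_{12}$ and $\xi_1,\xi_2$ expand in $\zeta_1,\zeta_2$ only, so the $\zeta$-expansion of $u_{l+e_{12}}$ involves only wedge factor $\zeta_{12}$, while $\eta_\sigma(v_{l'})\neq 0$ forces the wedge part of $l'$ to be $e_{13},e_{14},e_{23}$ or $e_{24}$. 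Hence your evaluation reads $c_1\cdot 0=0$ and gives no information; the paper must use $(i,j)=(1,3)$ and the vector $u_{(\kappa_1-1)e_1+e_{13}}$, whose value under $\iota_\sigma\circ\eta_\sigma$ is $2^{-\kappa_1+1}\neq 0$. Similarly, in (ii) your choice $l+e_2=(\kappa_1-\delta_2+\delta_3)e_2+(\delta_2-\delta_3)e_{24}$ fails in a range of cases: when $(\delta_2,\delta_3)=(1,0)$ the expansion of $\xi_2^{\kappa_1-1}\xi_{24}$ pairs off with opposite signs on the support of $\eta_\sigma$, so $\eta_\sigma(u_{(\kappa_1-1)e_2+e_{24}})=0$; and when $\delta_2=\delta_3$ one finds $\iota_\sigma(\eta_\sigma(u_{\kappa_1 e_2}))=(2\sqrt{-1})^{-\kappa_1}\bigl(1+(-1)^{\kappa_1+\delta_3}\bigr)$, which vanishes whenever $\kappa_1+\delta_3$ is odd (e.g.\ $\kappa_1$ odd, $\delta_2=\delta_3=0$). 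The paper avoids committing to a single monomial here: it works with $i=1$ and either uses $u_{(\kappa_1-1)e_1+e_{13}}$ (mixed parity) or observes that $\iota_\sigma(\eta_\sigma(u_{\kappa_1e_1}))+\sqrt{-1}\,\iota_\sigma(\eta_\sigma(u_{(\kappa_1-1)e_1+e_2}))=2^{-\kappa_1+1}$, so at least one of the two relevant values is nonzero.

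A second, smaller imprecision: you assert that the diagonal contributions produce $2\nu_1$ ``via the central character'' of $D_{(\nu_1,\kappa_1)}$. A single $E_{i,i}$ is not central in the $\gl(2)$-block, so it does not act by a scalar on $U_\sigma$; to evaluate $\Pi_\sigma(E_{i,i})$, $\Pi_\sigma(E_{1,2})$ and the compact corrections at $1_4$ one really needs the realization of $D_{(\nu_1,\kappa_1)}$ inside the $\mathrm{GL}(2)$ principal series, i.e.\ one must push everything through $\rI_\sigma$ into $\Pi_{\widehat{\sigma}}$ and use the identity evaluation (\ref{eqn:pf_P211DS_001}), which is exactly what the paper's bookkeeping does. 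With corrected test vectors and this evaluation the rest of your argument goes through as planned.
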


\begin{proof}
The statement (i) follows immediately from 
Proposition \ref{prop:P1111_DS} (i) and (\ref{eqn:P211_embed}). 
By definition, for $1\leq i,j\leq 4$ and $f\in H(\widehat{\sigma})_K$, 
we have 
\begin{align}
\label{eqn:pf_P211DS_001}
&2(\Pi_{\widehat{\sigma}}(E_{i,j})f)(1_4)= \begin{cases}
2\nu_1+\kappa_1+2&\text{if $i=j=1$},\\
2\nu_2-1&\text{if $i=j=3$},\\
0&\text{if $i<j$}.
\end{cases}
\end{align}

Let $\lambda =(\kappa_1,\delta_2-\delta_3,\delta_3)$ and 
$\delta \in \{0,1\}$ such that $ \delta_2-\delta_3 \ge \delta$. 
We have 
\[
\Hom_K(\cR_{(1,\delta )}\otimes_{\bC}
V_{\lambda -(1,\delta ,0)}, 
H(\sigma)_K)=\bC\,\hat{\eta}_{\sigma }\circ 
\rB_{\lambda}^{(1,\delta )}
\]
by Lemma \ref{lem:tensor1} and (\ref{eqn:P211_minKtype}). 
We define a $K$-homomorphism 
$\rP_\sigma \colon \gp_{\bC}\otimes_{\bC}H(\sigma)_K
\to H(\sigma)_K$ by $X\otimes f\mapsto \Pi_{\sigma}(X)f$. 
Then there is a constant $c_\delta$ such that  
\begin{align}
\label{eqn:pre_dirac2}
c_\delta \,\hat{\eta}_{\sigma }\circ 
\rB_{\lambda}^{(1,\delta )}=
\rP_\sigma \circ (\id_{\gp_{\bC}}\otimes (\hat{\eta}_{\sigma }
\circ \rB_{\lambda}^{(1,\delta )}))\circ 
(\rI^\gp_{(1,\delta )}\otimes 
\id_{V_{\lambda -(1,\delta ,0)}}),
\end{align}
since the right hand side is an element of 
$\Hom_K(\cR_{(1,\delta )}\otimes_{\bC}
V_{\lambda -(1,\delta ,0)}, H(\sigma)_K)$. 

Let us consider the case of $\delta =0$. 
Considering the image of $\xi_i\otimes u_l$ under the both sides of (\ref{eqn:pre_dirac2}), we have 
\begin{align*}
&c_0\,\hat{\eta}_{\sigma }(u_{l+e_i})
=\sum_{k=1}^4
\Pi_\sigma (E_{i,k}^{\gp}) 
\hat{\eta}_{\sigma }(u_{l+e_k})&
&(l\in S_{\lambda -(1,0,0)},\ 1\leq i\leq 4).
\end{align*}
Hence, in order to prove the statement (ii), 
it suffices to show $c_{0}=2\nu_1$. 
Using Lemmas \ref{lem:rel_ul}, \ref{lem:Kact_ul_vl} and 
(\ref{eqn:pf_P211DS_001}), for $l\in S_{\lambda -(1,0,0)}$, 
we have 
\begin{align*}
 c_{0}\iota_\sigma ({\eta}_{\sigma }(u_{l+e_{1}}))
& =\rI_{\sigma}(c_{0}\,\hat{\eta}_{\sigma }(u_{l+e_{1}}))(1_4) \\
& =\mathrm{I}_{\sigma}\!\left(\sum_{k=1}^4\Pi_{\sigma}(E_{1,k}^{\gp}) 
\hat{\eta}_{\sigma }(u_{l+e_k})\right)(1_4)\\
&=2\bigl(
\Pi_{\widehat{\sigma}} (E_{1,1})
\mathrm{I}_{\sigma}(\hat{\eta}_{\sigma}(u_{l+e_1}))
\bigr)(1_4)\\
& \quad 
+\sum_{k=2}^4\bigl\{2\bigl(
\Pi_{\widehat{\sigma}}(E_{1,k})
\mathrm{I}_{\sigma}(\hat{\eta}_{\sigma}(u_{l+e_k}))
\bigr)(1_4)
-\mathrm{I}_{\sigma}(\hat{\eta}_{\sigma}(
\tau_{\lambda }(E_{1,k}^{\gk})u_{l+e_k}))(1_4)\bigr\}\\
&=2\nu_1\iota_\sigma (\eta_{\sigma}(u_{l+e_1})).
\end{align*}
Our task is to show the existence of $l\in S_{\lambda -(1,0,0)}$ 
such that $\iota_\sigma (\eta_{\sigma}(u_{l+e_1}))\neq 0$. 
When $\delta_2=\delta_3$, since 
\begin{align*}
&u_{\kappa_1e_1}+\sI u_{(\kappa_1-1)e_1+e_2}
=\rrq_{\cR}(\zeta_2 (\xi_1)^{\kappa_1-1})
=2^{-\kappa_1+1}
\rrq_{\cR}(\zeta_2(\zeta_2+\zeta_1)^{\kappa_1-1}),
\end{align*}
we have $\iota_\sigma (\eta_{\sigma}(u_{\kappa_1e_1}))
+\sI \iota_\sigma (\eta_{\sigma}(u_{(\kappa_1-1)e_1+e_2}))
=2^{-\kappa_1+1}$. 
When $(\delta_2,\delta_3)=(1,0)$, since  
\begin{align*}
u_{(\kappa_1-1)e_1+e_{13}}&=\rrq_{\cR}( (\xi_1)^{\kappa_1-1} \xi_{13})
=2^{-\kappa_1-1}
\rrq_{\cR} ((\zeta_2+\zeta_1)^{\kappa_1-1}
(\zeta_{23}+\zeta_{24}+\zeta_{13}+\zeta_{14})),
\end{align*}
we have $\iota_\sigma (\eta_{\sigma}(u_{(\kappa_1-1)e_1+e_{13}}))
=2^{-\kappa_1+1}$. 
Therefore, we obtain the statement (ii).

Assume $ (\delta_2,\delta_3) = (1,0) $.
Let us consider the case of $\delta =1$. 
Considering the image of $\xi_{ij}\otimes u_l$ 
under the both sides of (\ref{eqn:pre_dirac2}), we have 
\begin{align*}
&c_1\,\hat{\eta}_{\sigma }(u_{l+e_{ij}})
=\Pi_\sigma (E_{i,i}^{\gp}+E_{j,j}^{\gp})\hat{\eta}_{\sigma }(u_{l+e_{ij}})\\
&\hspace{2mm}
+\sum_{1\leq k\leq 4,\, k\not\in \{i,j\}}
\{\sgn (j-k)\Pi_\sigma (E_{i,k}^{\gp})\hat{\eta}_{\sigma }(u_{l+e_{kj}})
+\sgn (k-i)\Pi_\sigma (E_{j,k}^{\gp})\hat{\eta}_{\sigma }(u_{l+e_{ik}})\}
\end{align*}
for $l\in S_{\lambda -(1,1,0)}$ and 
$1\leq i<j\leq 4$. 
Hence, in order to prove the statement (iii), 
it suffices to show $c_{1}=2(\nu_1+\nu_2)$. 
Using Lemmas \ref{lem:rel_ul}, \ref{lem:Kact_ul_vl} and 
(\ref{eqn:pf_P211DS_001}), we have 
\begin{align*}
& c_{1}\,\iota_\sigma ({\eta}_{\sigma }(u_{(\kappa_1-1)e_1+e_{13}})) \\
&=\mathrm{I}_{\sigma}(c_{1}\,\hat{\eta}_{\sigma }(u_{(\kappa_1-1)e_1+e_{13}}))(1_4)\\
&=\mathrm{I}_{\sigma}\!\bigl(
\Pi_\sigma (E_{1,1}^{\gp}+E_{3,3}^{\gp})
\hat{\eta}_{\sigma }(u_{(\kappa_1-1)e_1+e_{13}})\\
&\phantom{=}
+\Pi_\sigma (E_{1,2}^{\gp})\hat{\eta}_{\sigma }(u_{(\kappa_1-1)e_1+e_{23}})
+\Pi_\sigma (E_{2,3}^{\gp})\hat{\eta}_{\sigma }(u_{(\kappa_1-1)e_1+e_{12}})\\
&\phantom{=}
+\Pi_\sigma (E_{3,4}^{\gp})\hat{\eta}_{\sigma }(u_{(\kappa_1-1)e_1+e_{14}})
-\Pi_\sigma (E_{1,4}^{\gp})\hat{\eta}_{\sigma }(u_{(\kappa_1-1)e_1+e_{34}})
\bigr)\!(1_4)\\
&=2\bigl(\Pi_{\widehat{\sigma}}(E_{1,1}+E_{3,3})
\mathrm{I}_{\sigma}(\hat{\eta}_{\sigma }(u_{(\kappa_1-1)e_1+e_{13}})\bigr)(1_4)\\
&\phantom{=}
+2\bigl(\Pi_{\widehat{\sigma}}(E_{1,2})
\mathrm{I}_{\sigma}(\hat{\eta}_{\sigma}(u_{(\kappa_1-1)e_1+e_{23}}))\bigr)(1_4)
-\mathrm{I}_{\sigma}(\hat{\eta}_{\sigma}(
\tau_{\lambda}(E_{1,2}^{\gk})u_{(\kappa_1-1)e_1+e_{23}}))(1_4)\\
&\phantom{=}
+2\bigl(\Pi_{\widehat{\sigma}}(E_{2,3})
\mathrm{I}_{\sigma}(\hat{\eta}_{\sigma}(u_{(\kappa_1-1)e_1+e_{12}}))\bigr)(1_4)
-\mathrm{I}_{\sigma}(\hat{\eta}_{\sigma}(
\tau_{\lambda}(E_{2,3}^{\gk})u_{(\kappa_1-1)e_1+e_{12}}))(1_4)\\
&\phantom{=}
+2\bigl(\Pi_{\widehat{\sigma}}(E_{3,4})
\mathrm{I}_{\sigma}(\hat{\eta}_{\sigma}(u_{(\kappa_1-1)e_1+e_{14}}))\bigr)(1_4)
-\mathrm{I}_{\sigma}(\hat{\eta}_{\sigma}(
\tau_{\lambda}(E_{3,4}^{\gk})u_{(\kappa_1-1)e_1+e_{14}}))(1_4)\\
&\phantom{=}
-2\bigl(\Pi_{\widehat{\sigma}}(E_{1,4})
\mathrm{I}_{\sigma}(\hat{\eta}_{\sigma}(u_{(\kappa_1-1)e_1+e_{34}}))\bigr)(1_4)
+\mathrm{I}_{\sigma}(\hat{\eta}_{\sigma}(
\tau_{\lambda}(E_{1,4}^{\gk})u_{(\kappa_1-1)e_1+e_{34}}))(1_4)\\
&=2(\nu_1+\nu_2)\iota_\sigma (\eta_{\sigma}(u_{(\kappa_1-1)e_1+e_{13}})).
\end{align*}
Since $\iota_\sigma (\eta_{\sigma}(u_{(\kappa_1-1)e_1+e_{13}}))
=2^{-\kappa_1+1}\neq 0$, we obtain the statement (iii). 
\end{proof}

\subsection{$P_{(2,2)}$-principal series representations}
\label{subsec:P22}

Let $\sigma =D_{(\nu_1,\kappa_1)}\boxtimes D_{(\nu_2,\kappa_2)}$ 
with $\nu_1,\nu_2\in \bC$ and 
$\kappa_1,\kappa_2\in \bZ_{\geq 2}$ such that $\kappa_1\geq \kappa_2$. 
For $i\in \{1,2\}$, we set 
$\widehat{\sigma}_i=\chi_{(\nu_i+(\kappa_i-1)/2,\delta_i)}
\boxtimes \chi_{(\nu_i-(\kappa_i-1)/2,0)}$ 
with $\delta_i\in \{0,1\}$ such that 
$\delta_i\equiv \kappa_i\bmod 2$. 
The group $K\cap M_{(2,2)}$ is generated by the elements 
\begin{align*}
&\rk_{\theta_1,\theta_2}^{(2,2)}\qquad (\theta_1,\theta_2\in \bR),&
&\diag (\varepsilon_1,\varepsilon_2,\varepsilon_3,\varepsilon_4)\qquad 
(\varepsilon_1,\varepsilon_2,\varepsilon_3,\varepsilon_4
\in \{\pm 1\}).
\end{align*}
Because of (\ref{eqn:GL2ps_Kact1}), 
these elements act on $U_{\sigma ,K\cap M_{(2,2)}}
=\gH_{(\nu_1,\kappa_1),K_2}\boxtimes_\bC \gH_{(\nu_2,\kappa_2),K_2}$ by 
\begin{align*} 
&\sigma (\rk_{\theta_1,\theta_2}^{(2,2)})
{\rf}_{(\widehat{\sigma}_1,q_1)}\boxtimes 
{\rf}_{(\widehat{\sigma}_2,q_2)}
=e^{\sI (q_1\theta_1+q_2\theta_2) }
{\rf}_{(\widehat{\sigma}_1,q_1)}\boxtimes 
{\rf}_{(\widehat{\sigma}_2,q_2)},\\
&\sigma (\diag (\varepsilon_1,\varepsilon_2,\varepsilon_3,\varepsilon_4))
{\rf}_{(\widehat{\sigma}_1,q_1)}\boxtimes 
{\rf}_{(\widehat{\sigma}_2,q_2)}
=\varepsilon_1^{\kappa_1}\varepsilon_3^{\kappa_2}
{\rf}_{(\widehat{\sigma}_1,\varepsilon_1\varepsilon_2q_1)}\boxtimes 
{\rf}_{(\widehat{\sigma}_2,\varepsilon_3\varepsilon_4q_2)}
\end{align*}
for $q_1\in \kappa_1+2\bZ$ and $q_2\in \kappa_2+2\bZ$ 
such that $|q_1|\geq \kappa_1$ and $|q_2|\geq \kappa_2$.
By these equalities and Lemma \ref{lem:Kact_ul_vl} (ii), 
for $\lambda \in \Lambda_K$, we have 
\begin{align*}
&\Hom_{K\cap M_{(2,2)}}(V_{\lambda },U_{\sigma ,K\cap M_{(2,2)}})
=\begin{cases}
\displaystyle 
\bC\,\eta_{\sigma}
&\text{if $ \lambda =(\kappa_1,\kappa_2,0)$},\\
\{0\}&\text{if $ \lambda <_{\mathrm{lex}} 
(\kappa_1,\kappa_2,0)$},
\end{cases}
\end{align*}
where $\eta_{\sigma}\colon 
V_{(\kappa_1,\kappa_2,0)}\to U_{\sigma, K\cap M_{(2,2)}} $ 
is a $\bC$-linear map defined by 
\begin{align*}
&\eta_{\sigma}(v_{l})
=\begin{cases}
{\rf}_{(\widehat{\sigma}_1,\kappa_1)}\boxtimes 
{\rf}_{(\widehat{\sigma}_2,\kappa_2)}
&\text{if $ l=(\kappa_1-\kappa_2)e_{2}+\kappa_2e_{24}$},\\
{\rf}_{(\widehat{\sigma}_1,\kappa_1)}\boxtimes 
{\rf}_{(\widehat{\sigma}_2,-\kappa_2)}
&\text{if $ l=(\kappa_1-\kappa_2)e_{2}+\kappa_2e_{23}$},\\
{\rf}_{(\widehat{\sigma}_1,-\kappa_1)}\boxtimes 
{\rf}_{(\widehat{\sigma}_2,\kappa_2)}
&\text{if $ l=(\kappa_1-\kappa_2)e_{1}+\kappa_2e_{14}$},\\
{\rf}_{(\widehat{\sigma}_1,-\kappa_1)}\boxtimes 
{\rf}_{(\widehat{\sigma}_2,-\kappa_2)}
&\text{if $l=(\kappa_1-\kappa_2)e_{1}+\kappa_2e_{13}$},\\
0&\text{otherwise}
\end{cases}
\end{align*}
for $l\in S_{(\kappa_1,\kappa_2,0)}$. 
By the Frobenius reciprocity law \cite[Theorem 1.14]{Knapp_002}, we have 
\begin{align}
\label{eqn:P22_minKtype}
&\Hom_{K}(V_{\lambda },H(\sigma )_K)
= \begin{cases}
\displaystyle 
\bC\,\hat{\eta}_{\sigma}
&\text{if $ \lambda =(\kappa_1,\kappa_2,0)$},\\
\{0\}
&\text{if $ \lambda <_{\mathrm{lex}}(\kappa_1,\kappa_2,0) $}
\end{cases} &
&(\lambda \in \Lambda_K),
\end{align}
where 
$\hat{\eta}_{\sigma}(v)(k)
=\eta_{\sigma}(\tau_{(\kappa_1,\kappa_2,0)}(k)v)$ 
for $v\in V_{(\kappa_1,\kappa_2,0)}$ and $k\in {K}$. 
We call $\tau_{(\kappa_1,\kappa_2,0)}$ 
the minimal $K$-type of $\Pi_\sigma$. 

Let $\widehat{\sigma}=\widehat{\sigma}_1\boxtimes \widehat{\sigma}_2$. 
We define a $\bC$-linear form $\iota_\sigma \colon U_\sigma \to \bC$ 
by 
\begin{align*}
&\iota_\sigma  (f_1\boxtimes f_2)=f_1(1_2)f_2(1_2)&
&(f_1\in \gH_{(\nu_1,\kappa_1)},\ f_2\in  \gH_{(\nu_2,\kappa_2)}).
\end{align*}
Then $\Pi_\sigma $ is embedded into a principal series representation 
$\Pi_{\widehat{\sigma}}$ via 
\begin{equation}
\label{eqn:P22_embed}
\rI_\sigma \colon 
H(\sigma)\ni f\mapsto \iota_\sigma \circ f\in H(\widehat{\sigma}). 
\end{equation}

\begin{prop}
\label{prop:P22_DS}
Retain the notation. 

\noindent (i) For $f\in H (\sigma )_K$, we have 
\begin{align*}
&\Pi_\sigma (\cC_1)f=(2\nu_1+2\nu_2)f,\\
&\Pi_\sigma (\cC_2)f
=\bigl(\nu_1^2+4\nu_1\nu_2+\nu_2^2
-\tfrac{(\kappa_1-1)^2+(\kappa_2-1)^2}{4}\bigr)f,\\
&\Pi_\sigma (\cC_3)f
=\bigl(2\nu_1^2\nu_2+2\nu_1\nu_2^2
-\tfrac{(\kappa_2-1)^2\nu_1+(\kappa_1-1)^2\nu_2}{2}\bigr)f,\\
&\Pi_\sigma (\cC_4)f
=\bigl(\nu_1^2-\tfrac{(\kappa_1-1)^2}{4}\bigr)
\bigl(\nu_2^2-\tfrac{(\kappa_2-1)^2}{4}\bigr)f.
\end{align*}
\noindent (ii) Assume $\kappa_1>\kappa_2$. 
 For $l\in S_{(\kappa_1-1,\kappa_2,0)}$ and 
$1\leq i\leq 4$, we have 
\begin{align*}
2\nu_1\,\hat{\eta}_{\sigma }(u_{l+e_i})
=\sum_{k=1}^4
\Pi_\sigma (E_{i,k}^{\gp}) 
\hat{\eta}_{\sigma }(u_{l+e_k}).
\end{align*}
\noindent (iii) 
For $l\in S_{(\kappa_1-1,\kappa_2-1,0)}$ and 
$1\leq i<j\leq 4$, we have 
\begin{align*}
&2(\nu_1+\nu_2)\,\hat{\eta}_{\sigma }(u_{l+e_{ij}})
=\Pi_\sigma (E_{i,i}^{\gp}+E_{j,j}^{\gp})\hat{\eta}_{\sigma }(u_{l+e_{ij}})\\
&\hspace{2mm}
+\sum_{1\leq k\leq 4,\ k\not\in \{i,j\}}
\{\sgn (j-k)\Pi_\sigma (E_{i,k}^{\gp})\hat{\eta}_{\sigma }(u_{l+e_{kj}})
+\sgn (k-i)\Pi_\sigma (E_{j,k}^{\gp})\hat{\eta}_{\sigma }(u_{l+e_{ik}})\}.
\end{align*}
\end{prop}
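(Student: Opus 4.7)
The proposal is to parallel the arguments already used in Propositions~\ref{prop:P1111_DS} and \ref{prop:P211_DS}, combined with the embedding $\rI_\sigma$ of (\ref{eqn:P22_embed}).

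For part (i), I would transport the Capelli identities from the principal series representation $\Pi_{\widehat{\sigma}}$ via $\rI_\sigma$. Since $\cC_1,\cC_2,\cC_3,\cC_4\in Z(\g_\bC)$, they commute with the $G$-action, so $\rI_\sigma$ intertwines them. Therefore it suffices to substitute the four characters of $\widehat\sigma$ (namely $\nu_1+(\kappa_1-1)/2$, $\nu_1-(\kappa_1-1)/2$, $\nu_2+(\kappa_2-1)/2$, $\nu_2-(\kappa_2-1)/2$) into the formulas of Proposition~\ref{prop:P1111_DS}~(i) and simplify. For instance, the pair $(\nu_1+(\kappa_1-1)/2)(\nu_1-(\kappa_1-1)/2)=\nu_1^2-(\kappa_1-1)^2/4$ and its $\nu_2$-analog appear naturally in $\cC_2$ and $\cC_4$; the cross terms collapse because $(a+b)(c+d)=4\nu_1\nu_2$ when $a+b=2\nu_1$ and $c+d=2\nu_2$.

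For (ii) and (iii), set $\lambda=(\kappa_1,\kappa_2,0)$ and $\delta\in\{0,1\}$ (with the additional requirement $\kappa_1>\kappa_2$ in the case $\delta=0$, ensuring $\lambda-(1,0,0)\in\Lambda_K$). By Lemma~\ref{lem:tensor1} combined with (\ref{eqn:P22_minKtype}), the space $\Hom_K(\cR_{(1,\delta)}\otimes_\bC V_{\lambda-(1,\delta,0)},H(\sigma)_K)$ is one-dimensional and spanned by $\hat\eta_\sigma\circ\rB_\lambda^{(1,\delta)}$. Defining the $K$-homomorphism $\rP_\sigma\colon\gp_\bC\otimes H(\sigma)_K\to H(\sigma)_K$ by $X\otimes f\mapsto\Pi_\sigma(X)f$ and invoking Lemma~\ref{lem:tensor2}, we obtain a scalar $c_\delta$ satisfying
\begin{align*}
c_\delta\,\hat\eta_\sigma\circ\rB_\lambda^{(1,\delta)}
=\rP_\sigma\circ(\id_{\gp_\bC}\otimes(\hat\eta_\sigma\circ\rB_\lambda^{(1,\delta)}))
\circ(\rI^\gp_{(1,\delta)}\otimes\id_{V_{\lambda-(1,\delta,0)}}).
\end{align*}
Evaluating both sides on $\xi_i\otimes u_l$ (for (ii)) and $\xi_{ij}\otimes u_l$ (for (iii)) exactly reproduces the desired identities with coefficients $c_0$ and $c_1$ in place of $2\nu_1$ and $2(\nu_1+\nu_2)$.

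The remaining task is to pin down $c_0=2\nu_1$ and $c_1=2(\nu_1+\nu_2)$. I would use the embedding $\rI_\sigma$ to transfer the computation to $H(\widehat\sigma)_K$, where the analog of (\ref{eqn:pf_P211DS_001}) now reads $2(\Pi_{\widehat\sigma}(E_{i,i})f)(1_4)=2(\nu_1\pm(\kappa_1-1)/2)+5-2i$ for $i\in\{1,2\}$ and $2(\nu_2\pm(\kappa_2-1)/2)+5-2i$ for $i\in\{3,4\}$, with $(\Pi_{\widehat\sigma}(E_{i,j})f)(1_4)=0$ for $i<j$. For (ii) I would pick $l\in S_{\lambda-(1,0,0)}$ and $i=1$ so that $\iota_\sigma(\eta_\sigma(u_{l+e_1}))\neq 0$; expanding $u_{l+e_1}=\rrq_\cR(\xi_1^{\cdots}\xi_3^{\cdots}\cdots)$ in the $\zeta$-generators (via $\xi_1=(\zeta_1+\zeta_2)/2$, $\xi_3=(\zeta_3+\zeta_4)/2$, etc.) and extracting the component matching $v_{(\kappa_1-\kappa_2)e_1+\kappa_2 e_{13}}$ produces a nonzero value, exactly as in the proof of Proposition~\ref{prop:P211_DS}~(ii). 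The sum over $k$ then telescopes: the $E_{1,1}$ and the $E_{3,3}$ contributions cancel the spurious $5-2i$ shifts and the $\kappa_2$-pieces drop out, leaving $c_0=2\nu_1$. The argument for (iii) is analogous with index pair $(1,3)$, and the $\kappa_1,\kappa_2$ contributions again cancel, isolating $c_1=2(\nu_1+\nu_2)$. The main obstacle is the bookkeeping of the change of basis between the $u_l$- and $v_l$-generators (on which $\eta_\sigma$ is explicitly defined) when evaluating $\iota_\sigma\circ\eta_\sigma$ at the chosen multi-index; once a single nonvanishing evaluation is secured, the cancellations forced by Lemma~\ref{lem:rel_ul} and the $E_{i,j}^\gk$-actions of Lemma~\ref{lem:Kact_ul_vl} (i) drive the rest of the computation mechanically.
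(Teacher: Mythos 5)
Your proposal is correct and follows essentially the same route as the paper: part (i) by transporting the Capelli eigenvalues through the embedding (\ref{eqn:P22_embed}), and parts (ii)--(iii) by the one-dimensionality of $\Hom_K(\cR_{(1,\delta)}\otimes V_{\lambda-(1,\delta,0)},H(\sigma)_K)$ from Lemma \ref{lem:tensor1}, the map $\rP_\sigma$ with Lemma \ref{lem:tensor2}, and evaluation at $1_4$ in the ambient principal series to identify $c_0,c_1$. The only (harmless) deviation is in securing a nonvanishing evaluation: the paper works with the pair $(1,4)$ and the mixed combination (\ref{eqn:pf_P22DS_002}), while you mimic Proposition \ref{prop:P211_DS} using $\xi_{13}$-type monomials such as $u_{(\kappa_1-\kappa_2)e_1+\kappa_2 e_{13}}$, whose value under $\iota_\sigma\circ\eta_\sigma$ is indeed $2^{-\kappa_1-\kappa_2+2}\neq 0$ (all four detected $\zeta$-monomials contribute with the same sign), so this variant also closes the argument.
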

\begin{proof}
The proof of this proposition is similar to that of 
Proposition \ref{prop:P211_DS}. 
The statement (i) follows immediately from 
Proposition \ref{prop:P1111_DS} (i) and (\ref{eqn:P22_embed}). 
By definition, for $1\leq i,j\leq 4$ and $f\in H(\widehat{\sigma})_K$, we have 
\begin{align}
\label{eqn:pf_P22DS_001}
&2(\Pi_{\widehat{\sigma}}(E_{i,j})f)(1_4)=\begin{cases}
2\nu_1+\kappa_1+2&\text{if $i=j=1$},\\
2\nu_2-\kappa_2-2&\text{if $i=j=4$},\\
0&\text{if $i<j$}.
\end{cases}
\end{align}

Let $\lambda =(\kappa_1,\kappa_2,0)$ and 
$\delta \in \{0,1\}$ such that 
$\kappa_1-1\geq \kappa_2-\delta $. 
We have 
\[
\Hom_K(\cR_{(1,\delta )}\otimes_{\bC}
V_{\lambda -(1,\delta ,0)}, 
H(\sigma)_K)=\bC\,\hat{\eta}_{\sigma }\circ 
\rB_{\lambda}^{(1,\delta )}
\]
by Lemma \ref{lem:tensor1} and (\ref{eqn:P22_minKtype}). 
We define a $K$-homomorphism 
$\rP_\sigma \colon \gp_{\bC}\otimes_{\bC}H(\sigma)_K
\to H(\sigma)_K$ by $X\otimes f\mapsto \Pi_{\sigma}(X)f$. 
Then there is a constant $c_\delta$ such that  
\begin{align}
\label{eqn:pre_dirac1}
c_\delta \,\hat{\eta}_{\sigma }\circ 
\rB_{\lambda}^{(1,\delta )}=
\rP_\sigma \circ (\id_{\gp_{\bC}}\otimes (\hat{\eta}_{\sigma }
\circ \rB_{\lambda}^{(1,\delta )}))\circ 
(\rI^\gp_{(1,\delta )}\otimes 
\id_{V_{\lambda -(1,\delta ,0)}}),
\end{align}
since the right hand side is an element of 
$\Hom_K(\cR_{(1,\delta )}\otimes_{\bC}
V_{\lambda -(1,\delta ,0)}, H(\sigma)_K)$.

Let us consider the case of $\delta =1$. 
Considering the image of $\xi_{ij}\otimes u_l$ 
under the both sides of (\ref{eqn:pre_dirac1}), we have 
\begin{align*}
&c_1\,\hat{\eta}_{\sigma }(u_{l+e_{ij}})
=\Pi_\sigma (E_{i,i}^{\gp}+E_{j,j}^{\gp})\hat{\eta}_{\sigma }(u_{l+e_{ij}})\\
&\hspace{2mm}
+\sum_{1\leq k\leq 4,\, k\not\in \{i,j\}}
\{\sgn (j-k)\Pi_\sigma (E_{i,k}^{\gp})\hat{\eta}_{\sigma }(u_{l+e_{kj}})
+\sgn (k-i)\Pi_\sigma (E_{j,k}^{\gp})\hat{\eta}_{\sigma }(u_{l+e_{ik}})\}
\end{align*}
for $l\in S_{\lambda -(1,1,0)}$ and 
$1\leq i<j\leq 4$. 
Hence, in order to prove the statement (iii), 
it suffices to show $c_{1}=2(\nu_1+\nu_2)$. 
Let 
\[
l'=(\kappa_1-\kappa_2,0,0,0,0,l_{13}',l_{14}',l_{23}',l_{24}',0)\in 
S_{\lambda -(1,1,0)}.
\]
Using Lemmas \ref{lem:rel_ul}, \ref{lem:Kact_ul_vl} and 
(\ref{eqn:P22_embed}), 
we have 
\begin{align*}
&c_{1}\,\iota_\sigma ({\eta}_{\sigma }(u_{l'+e_{14}}))\\
&=\mathrm{I}_{\sigma}(c_{1}\,\hat{\eta}_{\sigma }(u_{l'+e_{14}}))(1_4)\\
&=\mathrm{I}_{\sigma}\!\bigl(
\Pi_\sigma (E_{1,1}^{\gp}+E_{4,4}^{\gp})\hat{\eta}_{\sigma }(u_{l'+e_{14}})
+\Pi_\sigma (E_{1,2}^{\gp})\hat{\eta}_{\sigma }(u_{l'+e_{24}})
+\Pi_\sigma (E_{3,4}^{\gp})\hat{\eta}_{\sigma }(u_{l'+e_{13}})\\
&\phantom{=}
+\Pi_\sigma (E_{1,3}^{\gp})\hat{\eta}_{\sigma }(u_{l'+e_{34}})
+\Pi_\sigma (E_{2,4}^{\gp})\hat{\eta}_{\sigma }(u_{l'+e_{12}})
\bigr)\!(1_4)\\
&=2\bigl(\Pi_{\widehat{\sigma}} (E_{1,1}+E_{4,4})
\mathrm{I}_{\sigma}(\hat{\eta}_{\sigma}(u_{l'+e_{14}}))
\bigr)(1_4)\\
&\phantom{=}
+2\bigl(
\Pi_{\widehat{\sigma}}(E_{1,2})
\mathrm{I}_{\sigma}(\hat{\eta}_{\sigma}(u_{l'+e_{24}})) \bigr)(1_4)
-\mathrm{I}_{\sigma}(\hat{\eta}_{\sigma}(
\tau_{(\kappa_1,\kappa_2,0)}(E_{1,2}^{\gk})u_{l'+e_{24}}))(1_4)\\
&\phantom{=}
+2\bigl(
\Pi_{\widehat{\sigma}}(E_{3,4})
\mathrm{I}_{\sigma}(\hat{\eta}_{\sigma}(u_{l'+e_{13}}))
\bigr)(1_4)
-\mathrm{I}_{\sigma}(\hat{\eta}_{\sigma}(
\tau_{(\kappa_1,\kappa_2,0)}(E_{3,4}^{\gk})u_{l'+e_{13}}))(1_4)\\
&\phantom{=}
+2\bigl(\Pi_{\widehat{\sigma}}(E_{1,3})
\mathrm{I}_{\sigma}(\hat{\eta}_{\sigma}(u_{l'+e_{34}}))
\bigr)(1_4)
-\mathrm{I}_{\sigma}(\hat{\eta}_{\sigma}(
\tau_{(\kappa_1,\kappa_2,0)}(E_{1,3}^{\gk})u_{l'+e_{34}}))(1_4)\\
&\phantom{=}
+2\bigl(\Pi_{\widehat{\sigma}}(E_{2,4})
\mathrm{I}_{\sigma}(\hat{\eta}_{\sigma}(u_{l'+e_{12}}))
\bigr)(1_4)
-\mathrm{I}_{\sigma}(\hat{\eta}_{\sigma}(
\tau_{(\kappa_1,\kappa_2,0)}(E_{2,4}^{\gk})u_{l'+e_{12}}))(1_4)\\
&=2(\nu_1+\nu_2)\iota_\sigma (\eta_{\sigma}(u_{l'+e_{14}})).
\end{align*}
Our task is to show the existence of $l'$ 
such that $\iota_\sigma (\eta_{\sigma}(u_{l'+e_{14}}))\neq 0$ if $\kappa_2 \ge 2$.
Since 
\begin{align*}
&\sum_{i=1}^2\sum_{j=3}^4  (\sI )^{i+j}u_{(\kappa_1-\kappa_2)e_1+(\kappa_2-1)e_{14}+e_{ij}} \\
& =\rrq_{\cR}(\xi_1^{\kappa_1-\kappa_2}\xi_{14}^{\kappa_2-1}\zeta_{24})\\
&=2^{-\kappa_1-\kappa_2+2}(\sI )^{-\kappa_2+1}
\rrq_{\cR}((\zeta_2+\zeta_1)^{\kappa_1-\kappa_2}
(\zeta_{24}-\zeta_{23}-\zeta_{13}+\zeta_{14})^{\kappa_2-1}
\zeta_{24}),
\end{align*}
we have 
\begin{align}
\label{eqn:pf_P22DS_002}
&\sum_{i=1}^2\sum_{j=3}^4(\sI )^{i+j}
\iota_\sigma (\eta_{\sigma}(
u_{(\kappa_1-\kappa_2)e_1+(\kappa_2-1)e_{14}+e_{ij}}))
=2^{-\kappa_1-\kappa_2+2}(\sI )^{-\kappa_2+1}
\end{align}
and complete the proof of the statement (iii).

Assume $\kappa_1>\kappa_2$. 
Let us consider the case of $\delta =0$. 
Considering the image of $\xi_i\otimes u_l$ 
under the both sides of (\ref{eqn:pre_dirac1}), we have 
\begin{align*}
&c_0\,\hat{\eta}_{\sigma }(u_{l+e_i})
=\sum_{k=1}^4
\Pi_\sigma (E_{i,k}^{\gp}) 
\hat{\eta}_{\sigma }(u_{l+e_k})&
&(l\in S_{\lambda -(1,0,0)},\ 1\leq i\leq 4). 
\end{align*}
Hence, in order to prove the statement (ii), 
it suffices to show $c_{0}=2\nu_1$. 
Using Lemmas \ref{lem:rel_ul}, \ref{lem:Kact_ul_vl} and 
(\ref{eqn:pf_P22DS_001}), for $l\in S_{\lambda -(1,0,0)}$, we have 
\begin{align*}
c_{0}\iota_\sigma ({\eta}_{\sigma }(u_{l+e_{1}})) 
& =\rI_{\sigma}(c_{0}\,\hat{\eta}_{\sigma }(u_{l+e_{1}}))(1_4) \\
& =\mathrm{I}_{\sigma}\!\left(\sum_{k=1}^4\Pi_{\sigma}(E_{1,k}^{\gp}) 
\hat{\eta}_{\sigma }(u_{l+e_k})\right)(1_4)\\
&=2\bigl(
\Pi_{\widehat{\sigma}} (E_{1,1})
\mathrm{I}_{\sigma}(\hat{\eta}_{\sigma}(u_{l+e_1}))
\bigr)(1_4)\\
& \quad 
+\sum_{k=2}^4\bigl\{2\bigl(
\Pi_{\widehat{\sigma}}(E_{1,k})
\mathrm{I}_{\sigma}(\hat{\eta}_{\sigma}(u_{l+e_k}))
\bigr)(1_4)
-\mathrm{I}_{\sigma}(\hat{\eta}_{\sigma}(
\tau_{\lambda }(E_{1,k}^{\gk})u_{l+e_k}))(1_4)\bigr\}\\
&=2\nu_1\iota_\sigma (\eta_{\sigma}(u_{l+e_1})).
\end{align*}
By (\ref{eqn:pf_P22DS_002}), we know that 
there is $l\in S_{\lambda -(1,0,0)}$ 
such that $\iota_\sigma (\eta_{\sigma}(u_{l+e_1}))\neq 0$. 
Therefore, we obtain the statement (ii). 
\end{proof}


\section{Partial differential equations for Whittaker functions}
\label{sec:PDE}

\subsection{System of partial differential equations}
\label{subsec;system_PDE}

In this subsection we give a system of partial differential equations 
satisfied by radial parts of Whittaker functions using 
Propositions \ref{prop:P1111_DS}, \ref{prop:P211_DS} and \ref{prop:P22_DS}.
Hereafter we say that we are in {\bf case 1}, {\bf case 2} and {\bf case 3} if $ \Pi_{\sigma} $ are 
$ P_{(1,1,1,1)} $, $ P_{(2,1,1)} $ and $ P_{(2,2)} $-principal series representations, respectively. 
We divide the cases 1 and 2 into subclasses according as $ \delta_i  $.
\begin{itemize}
\item Case 1:  
$ \sigma = \chi_{(\nu_1,\delta_1)} \boxtimes \chi_{(\nu_2,\delta_2)} \boxtimes 
\chi_{(\nu_3,\delta_3)} \boxtimes \chi_{(\nu_4,\delta_4)}  $ with $ \delta_1 \ge \delta_2 \ge \delta_3 \ge \delta_4 $.
\begin{itemize}
\item Case 1-(i): $ (\delta_1,\delta_2,\delta_3,\delta_4) = (0,0,0,0),(1,1,1,1). $
\item Case 1-(ii):  $ (\delta_1,\delta_2,\delta_3,\delta_4) = (1,0,0,0). $
\item Case 1-(iii): $ (\delta_1,\delta_2,\delta_3,\delta_4) = (1,1,0,0). $
\item Case 1-(iv): $ (\delta_1,\delta_2,\delta_3,\delta_4) = (1,1,1,0). $
\end{itemize}
\item Case 2: 
$ \sigma = D_{(\nu_1,\kappa_1)} \boxtimes \chi_{(\nu_2,\delta_2)} \boxtimes \chi_{(\nu_3,\delta_3)} $ 
with $ \delta_2 \ge \delta_3 $.
\begin{itemize}
\item Case 2-(i): $ (\delta_2,\delta_3) = (0,0), (1,1).  $
\item Case 2-(ii): $ (\delta_2,\delta_3) = (1,0). $
\end{itemize}
\item Case 3:
$ \sigma = D_{(\nu_1,\kappa_1)} \boxtimes D_{(\nu_2,\kappa_2)} $ with $ \kappa_1 \ge \kappa_2 $.
\end{itemize} 
We introduce the following notation to discuss three kinds of the generalized principal series representations simultaneously.
\begin{itemize}
\item 
Case 1: 
$ \kappa_1 := \delta_1-\delta_4 $, $ \kappa_2 :=  \delta_2-\delta_3 $ and 
$ \nu_1' := \begin{cases} \nu_1&  \text{case 1-(i),(ii),(iii)}, \\ \nu_4 & \text{case 1-(iv)}. \end{cases} $ \smallskip 
\item
Case 2: 
$ \delta_1 := \begin{cases} 0 & \text{if $ \kappa_1 $ is even}, \\ 1 & \text{if $ \kappa_1 $ is odd}, \end{cases} $ 
$ \kappa_2:= \delta_2-\delta_3 $ and $ \nu_1' := \nu_1 $. \smallskip 
\item 
Case 3: 
$ \delta_1:= \begin{cases} 0 & \text{if $ \kappa_1 $ is even}, \\ 1 & \text{if $ \kappa_1 $ is odd}, \end{cases} $
$ \delta_2:= \begin{cases} 0 & \text{if $ \kappa_2 $ is even}, \\ 1 & \text{if $ \kappa_2 $ is odd}, \end{cases} $
$ \delta_3:= 0 $ and $ \nu_1' := \nu_1. $
\end{itemize}
For $ 1 \le i \le 4 $, we set 
\begin{align*}
\gamma_i = \begin{cases} 
 \s_i(\nu_1,\nu_2,\nu_3,\nu_4) & \text{case 1}, \\
 \s_i(\nu_1+\tfrac{\kappa_1-1}{2},\nu_1-\tfrac{\kappa_1-1}{2}, \nu_2,\nu_3) &\text{case 2}, \\
 \s_i(\nu_1+\tfrac{\kappa_1-1}{2},\nu_1-\tfrac{\kappa_1-1}{2}, \nu_2+\tfrac{\kappa_2-1}{2}, \nu_2-\tfrac{\kappa_2-1}{2}) 
 & \text{case 3},
\end{cases}
\end{align*}
where 
\[ \s_i(a_1,a_2,a_3,a_4) = \sum_{ 1 \le k_1 <  k_2 < \dots < k_i \le 4} a_{k_1} a_{k_2} \cdots a_{k_i} \]
is the $ i $-th elementary symmetric polynomial.
Then $ \tau_{(\kappa_1,\kappa_2,\delta_3)} $ is the minimal $K$-type of $ \Pi_{\sigma} $,
and the results of Propositions \ref{prop:P1111_DS}, \ref{prop:P211_DS} and \ref{prop:P22_DS} 
are summarized as follows: 

\begin{itemize}
\item For $f \in H(\sigma)_K$ and $ 1 \le i \le 4 $, we have 
\begin{align} \label{Cap_eigen}
 \Pi_{\sigma}( \cC_i) f = \gamma_i f.
\end{align}
\item 
Assume $\kappa_1>\kappa_2$. 
For $l\in S_{(\kappa_1-1,\kappa_2,\delta_3)}$ and 
$1\leq i\leq 4$, we have 
\begin{align} \label{DiracSchmid1}
2\nu_1' \hat{\eta}_{\sigma }(u_{l+e_i})
=\sum_{k=1}^4
\Pi_\sigma (E_{i,k}^{\gp}) 
\hat{\eta}_{\sigma }(u_{l+e_k}).
\end{align}
\item 
Assume $ \kappa_2 \ge 1 $.
For $l\in S_{(\kappa_1-1,\kappa_2-1,0)}$ and 
$1\leq i<j\leq 4$, we have 
\begin{align} \label{DiracSchmid2}
\begin{split}
&2(\nu_1+\nu_2)\,\hat{\eta}_{\sigma }(u_{l+e_{ij}}) 
 =\Pi_\sigma (E_{i,i}^{\gp}+E_{j,j}^{\gp})\hat{\eta}_{\sigma }(u_{l+e_{ij}})\\
&\hspace{2mm}
+\sum_{1\leq k\leq 4, \, k\not\in \{i,j\}}
\{\sgn (j-k)\Pi_\sigma (E_{i,k}^{\gp})\hat{\eta}_{\sigma }(u_{l+e_{kj}})
+\sgn (k-i)\Pi_\sigma (E_{j,k}^{\gp})\hat{\eta}_{\sigma }(u_{l+e_{ik}})\}.
\end{split}
\end{align}
\end{itemize}

\begin{lem}[{\cite[Lemma 2.1]{HIM}}] 
\label{lem:Eij_radial} 
Let $f$ be a function in $ C^{\infty}(N\backslash G; \psi_1) $. 
Then, for $ 1 \le i\le j \le 4 $ and $ y = {\rm diag}(y_1y_2y_3y_4,y_2y_3y_4,y_3y_4,y_4) \in A $, we have
\begin{align*}
 (R(E_{i,j}) f)(y) = \begin{cases} 
   (-\pa_{i-1}+\pa_i) f(y) & \mbox{ if } j=i, \\
   2 \pi \sqrt{-1} y_i f(y) & \mbox{ if } j=i+1, \\
   0 & \mbox{ otherwise}, \end{cases}
\end{align*}
where $ \pa_0 = 0 $ and $ \pa_i = y_i \dfrac{\pa}{\pa y_i} $ $(1 \le i \le 4) $.
\end{lem}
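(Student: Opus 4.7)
The plan is to compute $(R(E_{i,j})f)(y) = \frac{d}{dt}\bigl|_{t=0}f(y\exp(tE_{i,j}))$ directly, splitting into three cases according to whether $i=j$, $j=i+1$, or $j>i+1$ (with the remaining off-diagonal entries with $i>j$ handled as usual). Write $y = \diag(a_1,a_2,a_3,a_4)$ with $a_1=y_1y_2y_3y_4$, $a_2=y_2y_3y_4$, $a_3=y_3y_4$, $a_4=y_4$, so that $a_i/a_{i+1}=y_i$ for $i=1,2,3$.

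For $i<j$, the element $\exp(tE_{i,j})$ lies in $N$, so I would conjugate it past $y$ via the identity
\[
y\exp(tE_{i,j}) = \exp\!\bigl(t\,yE_{i,j}y^{-1}\bigr)\,y = \exp\!\Bigl(t\,\tfrac{a_i}{a_j}E_{i,j}\Bigr)\,y,
\]
and then apply the left $\psi_1$-equivariance to get $f(y\exp(tE_{i,j})) = \psi_1\!\bigl(\exp(t\tfrac{a_i}{a_j}E_{i,j})\bigr)f(y)$. Since $\psi_1$ depends only on the $(1,2)$, $(2,3)$ and $(3,4)$ entries, this exponential is trivial under $\psi_1$ when $j>i+1$, giving $(R(E_{i,j})f)(y)=0$; and when $j=i+1$ it equals $\psi_{\bR}(t\tfrac{a_i}{a_{i+1}}) = \psi_{\bR}(ty_i)$, whose $t$-derivative at $0$ is $2\pi\sqrt{-1}\,y_i$, yielding the claimed formula.

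For the diagonal case $i=j$, note $\exp(tE_{i,i})$ is already in $A$, multiplying $a_i$ by $e^t$ and leaving the other $a_k$ unchanged. The remaining step is to translate this into the $(y_1,y_2,y_3,y_4)$ coordinates via $y_k = a_k/a_{k+1}$ (with $y_4=a_4$): scaling $a_i\mapsto e^t a_i$ scales $y_{i-1}\mapsto e^{-t}y_{i-1}$ and $y_i\mapsto e^{t}y_i$ (omitting $y_0$). Differentiating $f$ at $t=0$ then gives exactly $(-\partial_{i-1}+\partial_i)f(y)$ with the convention $\partial_0=0$.

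The only substantive point is the bookkeeping in the diagonal case, which follows by the chain rule once the effect of $a_i\mapsto e^t a_i$ on each $y_k$ is recorded; the off-diagonal cases are immediate from the conjugation identity and the explicit form of $\psi_1$. No genuine obstacle is expected.
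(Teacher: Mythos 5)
Your computation is correct, and it is the standard direct argument: for $i<j$ conjugate $\exp(tE_{i,j})$ across $y$ and use the $\psi_1$-equivariance, and for $i=j$ track how scaling $a_i$ acts on the coordinates $y_{i-1},y_i$. The paper itself gives no proof here — it quotes the result from \cite[Lemma 2.1]{HIM} — and your argument matches the expected one, so there is nothing further to add.
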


\begin{lem} \label{lem:Capelli}
For $ X, Y \in U(\g_{\bC}) $, we write $ X \equiv Y $ if $ X- Y \in (\bC E_{1,3}+\bC E_{1,4}+\bC E_{2,4}) U(\g_{\bC}) $.
Then we have the following:
\begin{gather}
\label{eqn:redC1}
\begin{split}
 \cC_1 & \equiv E_{1,1} + E_{2,2} + E_{3,3} + E_{4,4}, 
\end{split}
\\
\label{eqn:redC2}
\begin{split}
 \cC_2  
& \equiv  \sum_{1 \le i<j \le 4} (E_{i,i} + i-\tfrac52 )(E_{j,j} + j -  \tfrac52) 
   - (E_{1,2})^2 - (E_{2,3})^2 - (E_{3,4})^2 
    + E_{1,2} E_{1,2}^{\gk} + E_{2,3} E_{2,3}^{\gk} + E_{3,4}E_{3,4}^{\gk}, 
\end{split}
\\
\label{eqn:redC3}
\begin{split}
 \cC_3 
& \equiv (E_{1,1}-\tfrac32)(E_{2,2}-\tfrac12)(E_{3,3}+\tfrac12) + (E_{1,1}-\tfrac32)(E_{2,2}-\tfrac12)(E_{4,4}+\tfrac32) \\
& \quad 
+ (E_{1,1}-\tfrac32)(E_{3,3}+\tfrac12)(E_{4,4}+\tfrac32) +(E_{2,2}-\tfrac12) (E_{3,3}+\tfrac12)(E_{4,4}+\tfrac32) \\
&  \quad 
- (E_{1,2})^2 (E_{3,3}+E_{4,4}+2) + E_{1,2}(E_{3,3}+E_{4,4}+2) E_{1,2}^{\gk}  \\
& \quad - (E_{2,3})^2 (E_{1,1}+E_{4,4})  + E_{2,3} (E_{1,1}+E_{4,4}) E_{2,3}^{\gk} \\
& \quad
- (E_{3,4})^2 (E_{1,1}+E_{2,2}-2)  + E_{3,4}(E_{1,1}+E_{2,2}-2) E_{3,4}^{\gk} 
- E_{1,2}E_{2,3} E_{1,3}^{\gk} - E_{2,3}E_{3,4}E_{2,4}^{\gk},
\end{split}
\end{gather}
\begin{align}
\label{eqn:redC4}
\begin{split}
 \cC_4
& \equiv (E_{1,1}-\tfrac32)(E_{2,2}-\tfrac12)(E_{3,3}+\tfrac12)(E_{4,4}+\tfrac32) \\
& \quad  - (E_{1,2})^2 (E_{3,3}+\tfrac12) (E_{4,4}+\tfrac32) + E_{1,2}(E_{3,3}+\tfrac12)(E_{4,4}+\tfrac32) E_{1,2}^{\gk} \\ 
& \quad
 - (E_{2,3})^2 (E_{1,1}-\tfrac32)(E_{4,4}+\tfrac32)  + E_{2,3}(E_{1,1}-\tfrac32)(E_{4,4}+\tfrac32) E_{2,3}^{\gk} \\
&\quad - (E_{3,4})^2 (E_{1,1}-\tfrac32) (E_{2,2}-\tfrac12) + E_{3,4}(E_{1,1}-\tfrac32)(E_{2,2}-\tfrac12) E_{3,4}^{\gk} \\
& \quad
 - E_{1,2} E_{2,3} (E_{4,4}+\tfrac12) E_{1,3}^{\gk} - E_{2,3} E_{3,4} (E_{1,1}-\tfrac32) E_{2,4}^{\gk} 
  + E_{1,2}E_{2,3} E_{3,4} E_{1,4}^{\gk} \\
& \quad
  + (E_{1,2})^2(E_{3,4})^2 - (E_{1,2})^2 E_{3,4} E_{3,4}^{\gk} 
  -E_{1,2} (E_{3,4})^2 E_{1,2}^{\gk} + E_{1,2} E_{3,4} E_{1,2}^{\gk} E_{3,4}^{\gk}. 
\end{split}
\end{align}
\end{lem}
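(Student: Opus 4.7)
The plan is to expand the vertical determinant $\mathrm{Det}(t 1_4+\cE)$ directly from its defining sum over $\gS_4$, identify each $\cC_k$ as the coefficient of $t^{4-k}$, and then systematically reduce modulo the left ideal $\cI := (\bC E_{1,3}+\bC E_{1,4}+\bC E_{2,4})U(\g_{\bC})$. Because $\mathrm{Det}(t 1_4+\cE) = \sum_{w \in \gS_4} \sgn(w) \prod_{i=1}^{4} (t\delta_{i,w(i)}+\cE_{i,w(i)})$ with the product taken in the fixed order $i=1,2,3,4$, the coefficient of $t^{4-k}$ becomes $\sum_w \sgn(w) \sum_{S \subseteq \mathrm{Fix}(w),\,|S|=4-k} \prod_{i \notin S} \cE_{i,w(i)}$. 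Writing $\cE_{i,i}=E_{i,i}+i-\tfrac{5}{2}$ and $\cE_{i,j}=E_{i,j}$ for $i \ne j$, the case $k=1$ gives \eqref{eqn:redC1} at once; for $k=2$ the only nontrivial cross contributions come from transpositions $w=(i,j)$, yielding $-\cE_{i,j}\cE_{j,i}$ together with the product of the two diagonal fixed-point factors, and the pairs $(i,j) \in \{(1,3),(1,4),(2,4)\}$ have leftmost factor in $\cI$ and drop, so only the adjacent pairs survive; expanding $E_{i,i+1}E_{i+1,i} = (E_{i,i+1})^2 - E_{i,i+1}E_{i,i+1}^{\gk}$ then yields \eqref{eqn:redC2}.

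For $\cC_3$ and $\cC_4$ I would apply two reduction rules repeatedly: (a) any product whose leftmost non-scalar factor belongs to $\{E_{1,3},E_{1,4},E_{2,4}\}$ lies in $\cI$ and drops; (b) each occurrence of $E_{3,1}, E_{4,1}, E_{4,2}$ is rewritten as $E_{j,i}=E_{i,j}-E_{i,j}^{\gk}$ with $(i,j) \in \{(1,3),(1,4),(2,4)\}$, and the $E_{i,j}$-summand is then pushed to the leftmost position via repeated application of the commutator formula $[E_{a,b},E_{c,d}]=\delta_{b,c}E_{a,d}-\delta_{d,a}E_{c,b}$ and dropped, while each commutator produces either a scalar (to be absorbed into the diagonal factors) or a new matrix unit that is treated recursively. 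A pass through the conjugacy classes of $\gS_4$ then shows that for $\cC_3$ the transpositions $(1,3),(1,4),(2,4)$ together with the 3-cycles whose support contains any of these pairs disappear modulo $\cI$; the adjacent transpositions $(1,2),(2,3),(3,4)$ give the $(E_{i,i+1})^2$ and $E_{i,i+1}(\cdots)E_{i,i+1}^{\gk}$ pieces of \eqref{eqn:redC3}; and the remaining 3-cycles $(123)$ and $(234)$ produce the cross terms $-E_{1,2}E_{2,3}E_{1,3}^{\gk}$ and $-E_{2,3}E_{3,4}E_{2,4}^{\gk}$.

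The computation for $\cC_4$ proceeds analogously but is significantly longer: the identity yields the diagonal quartic; each adjacent transposition contributes a dressed $(E_{i,i+1})^2$ piece and its $E_{i,i+1}^{\gk}$-corrected counterpart; the double transposition $(1,2)(3,4)$ produces both $(E_{1,2})^2(E_{3,4})^2$ and the mixed correction $E_{1,2}E_{3,4}E_{1,2}^{\gk}E_{3,4}^{\gk}$ after simultaneously expanding $E_{2,1}$ and $E_{4,3}$; the 3-cycles $(123)$ and $(234)$ generate the diagonal-dressed cross terms with $E_{1,3}^{\gk}$ and $E_{2,4}^{\gk}$; the 4-cycle $(1234)$ is responsible for the long cross term $E_{1,2}E_{2,3}E_{3,4}E_{1,4}^{\gk}$; and every remaining permutation vanishes modulo $\cI$ by rule (a) or (b). The main obstacle is precisely this bookkeeping for $\cC_4$: several intermediate commutations generate secondary $E_{i,j}$ generators with $j - i \ge 2$ that must be shown either to land back in $\cI$ or to cancel with contributions from other permutations, and the scalar coefficient of each diagonal factor in the $E_{i,j}^{\gk}$-type terms requires careful accumulation of the shifts $i-\tfrac{5}{2}$ coming from the $\cE_{i,i}$ together with every scalar produced while pushing the bad generators to the left.
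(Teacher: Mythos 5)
Your proposal is correct and follows essentially the same route as the paper's own proof: the paper likewise starts from the minor-sum form of $\cC_p$ (equivalent to your extraction of the coefficient of $t^{4-p}$ from $\mathrm{Det}(t1_4+\cE)$), groups the terms by cycle type, discards every term whose factor $E_{1,3}$, $E_{1,4}$ or $E_{2,4}$ can be commuted to the far left, and substitutes $E_{j,i}=E_{i,j}-E_{i,j}^{\gk}$ for the remaining subdiagonal entries, and your inventory of surviving contributions (adjacent transpositions, the double transposition $(1\,2)(3\,4)$, the $3$-cycles $(1\,2\,3)$, $(2\,3\,4)$, and the $4$-cycle $(1\,2\,3\,4)$) matches the paper's bookkeeping term by term, with no cancellation between different permutations ever needed since each discarded term lands in the ideal on its own. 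One caution: executing the $(1\,2\,3)$-cycle reduction in $\cC_4$ yields the factor $(E_{4,4}+\tfrac32)$, exactly as in the paper's proof of the lemma, so the $(E_{4,4}+\tfrac12)$ printed in \eqref{eqn:redC4} appears to be a misprint that your computation will not (and should not) reproduce.
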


\begin{proof}
Recall that the Capelli elements $ \cC_p $ $ (1\le p \le 4) $ are given by
\begin{align*} 
   \cC_{p} = \sum_{\scriptstyle  1\le i_1 <i_2 < \cdots < i_p \le 4, \atop \scriptstyle  w \in {\mathfrak S}_p}
  {\rm sgn}(w)  \cE_{i_1, i_{w(1)}} \cE_{i_2, i_{w(2)}} \cdots \cE_{i_p, i_{w(p)}}.  
\end{align*}
In view of 
\begin{align*}
 \cC_1& = \sum_{1 \le i \le 4} (E_{i,i}+i-\tfrac52) =  \sum_{1 \le i \le 4} E_{i,i},
&
 \cC_2 
& = \sum_{1 \le i<j \le 4} \{ (E_{i,i} + i-\tfrac52 )(E_{j,j} + j -  \tfrac52)  - E_{i,j} E_{j,i}\}
\end{align*}
and $ E_{j,i} = E_{i,j} -E_{i,j}^{\gk} $, we get (\ref{eqn:redC1}) and (\ref{eqn:redC2}).

To consider $ \cC_3 $ and $ \cC_4 $, we define subsets 
$ {\mathfrak S}_{3,q} $ $ (1 \le q \le 3) $ of $  {\mathfrak S}_{3} $ and
$ {\mathfrak S}_{4,q} $ $ (1 \le q \le 5) $ of $  {\mathfrak S}_{4} $ by
\begin{align*}
  {\mathfrak S}_{p,q} & = \{ w \in {\mathfrak S}_p \mid \text{$w$ is a cyclic permutations of length $q$} \}
  & ( 1 \le q \le p), \\
   \mathfrak{S}_{4,5} &= \{ (1\ 2)(3\ 4), (1\ 3)(2\ 4), (1\ 4)(2\ 3) \}.
\end{align*}
We set
\begin{align*} 
   \cC_{p,q} = \sum_{\scriptstyle  1\le i_1 <i_2 < \cdots < i_p \le 4, \atop \scriptstyle  w \in {\mathfrak S}_{p,q}}
  {\rm sgn}(w)  \cE_{i_1, i_{w(1)}} \cE_{i_2, i_{w(2)}} \cdots \cE_{i_p, i_{w(p)}}.
\end{align*}
Then we can write $  \cC_3 = \cC_{3,1} + \cC_{3,2} + \cC_{3,3} $ with 
\begin{align*}
 \cC_{3,1} 
& = \sum_{1 \le i<j<k \le 4} (E_{i,i} + i-\tfrac52)(E_{j,j} + j -  \tfrac52)(E_{k,k} + k-\tfrac52),
\\
 \cC_{3,2} 
& = -\sum_{\scriptstyle 1 \le i<j \le 4,  \,  k \notin \{i,j \} } E_{i,j} E_{j,i} (E_{k,k} + k-\tfrac52), 
\\
 \cC_{3,3} 
& = \sum_{1 \le i<j<k \le 4} (E_{i,j} E_{j,k} E_{k,i} + E_{i,k} E_{j,i} E_{k,j} ).
\end{align*} 
Using   
\begin{align*}
 E_{i,j} E_{j,i} (E_{k,k} + k-\tfrac52)
& = E_{i,j} (E_{i,j} - E_{i,j}^{\gk}) (E_{k,k} + k-\tfrac52) 
   = (E_{i,j})^2  (E_{k,k} + k-\tfrac52) - E_{i,j}  (E_{k,k} + k-\tfrac52) E_{i,j}^{\gk}
\end{align*}
for $ 1 \le i< j \le 4 $ and $ k \notin \{i,j\} $, we know 
\begin{align*}
 \cC_{3,2} 
& \equiv 
- \sum_{\scriptstyle 1 \le i \le 3, \,  k \notin \{i,i+1\} } 
   \{ (E_{i,i+1})^2  (E_{k,k} + k-\tfrac52) - E_{i,i+1}  (E_{k,k} + k-\tfrac52) E_{i,i+1}^{\gk} \}.
\end{align*}
Similarly, for $ 1 \le i<j<k \le 4 $, we know
\begin{align*}
 E_{i,j} E_{j,k} E_{k,i} 
& = E_{i,j} E_{j,k} E_{i,k} - E_{i,j} E_{j,k} E_{i,k}^{\gk}
  \equiv - E_{i,j} E_{j,k} E_{i,k}^{\gk}, 
&  E_{i,k} E_{j,i} E_{k,j}  \equiv 0.
\end{align*}
Here we used $ E_{i,j}^{\gk} E_{j,k} = E_{j,k} E_{i,j}^{\gk} + E_{i,k} $.
Then we have   
\begin{align*}
 \cC_{3,3} 
& \equiv -E_{1,2} E_{2,3} E_{1,3}^{\gk} - E_{2,3} E_{3,4} E_{2,4}^{\gk}.
\end{align*} 
Next we treat $ \cC_4$. 
We can write $ \cC_4 = \cC_{4,1} + \cC_{4,2} + \cC_{4,3} + \cC_{4,4} + \cC_{4,5} $ with 
\begin{align*}
 \cC_{4,1}
& = (E_{1,1}-\tfrac32)(E_{2,2}-\tfrac12)(E_{3,3}+\tfrac12)(E_{4,4}+\tfrac32), 
\\
 \cC_{4,2}
& = -\sum_{ 1 \le i<j \le 4 } E_{i,j} E_{j,i} \sum_{k,l \notin \{ i,j\}, \,k<l } (E_{k,k}+k-\tfrac52)(E_{l,l}+l-\tfrac52),
\\
 \cC_{4,3}
& = \sum_{\scriptstyle 1 \le i<j<k \le 4, \,  \scriptstyle l \notin \{ i,j,k \}} 
     (E_{i,j}E_{j,k}E_{k,i} + E_{i,k}E_{j,i}E_{k,j})(E_{l,l}+l-\tfrac52), 
\\
 \cC_{4,4}
& = - E_{1,2} E_{2,3} E_{3,4} E_{4,1} - E_{1,2} E_{2,4} E_{3,1} E_{4,3} - E_{1,3} E_{2,4} E_{3,2} E_{4,1} \\
& \phantom{= \ }
   -E_{1,3} E_{2,1} E_{3,4} E_{4,2} - E_{1,4} E_{2,3} E_{3,1} E_{4,2}  -E_{1,4} E_{2,1} E_{3,2} E_{4,3},
\\
 \cC_{4,5}
& = E_{1,2} E_{2,1} E_{3,4} E_{4,3} + E_{1,3} E_{3,1} E_{2,4} E_{4,2} + E_{1,4} E_{4,1} E_{2,3} E_{3,2}.
\end{align*}
In the same way as $ \cC_{3,2} $ and $\cC_{3,3} $, we have 
\begin{align*}
 \cC_{4,2}
& \equiv 
    -(E_{1,2})^2  (E_{3,3}+\tfrac12)(E_{4,4}+\tfrac32) + E_{1,2} (E_{3,3}+\tfrac12)(E_{4,4}+\tfrac32)  E_{1,2}^{\gk} \\ 
& \phantom{= \ } 
   - (E_{2,3})^2 (E_{1,1}-\tfrac32)(E_{4,4}+\tfrac32) + E_{2,3} (E_{1,1}-\tfrac32)(E_{4,4}+\tfrac32) E_{2,3}^{\gk} \\
 & \phantom{= \ }
   - (E_{3,4})^2 (E_{1,1}-\tfrac32)(E_{2,2}-\tfrac12)  + E_{3,4} (E_{1,1}-\tfrac32)(E_{2,2}-\tfrac12) E_{3,4}^{\gk},
\\
 \cC_{4,3}
& \equiv -E_{1,2} E_{2,3} (E_{4,4}+\tfrac32) E_{1,3}^{\gk} - E_{2,3} E_{3,4} (E_{1,1}-\tfrac32) E_{2,4}^{\gk}.
\end{align*}
Since  
$ E_{1,2} E_{2,3} E_{3,4} E_{4,1} \equiv -E_{1,2} E_{2,3} E_{3,4} E_{1,4}^{\gk} $ 
and 
\begin{align*}
& E_{1,2} E_{2,4} E_{3,1} E_{4,3} \equiv E_{1,3} E_{2,4} E_{3,2} E_{4,1} 
 \equiv E_{1,3} E_{2,1} E_{3,4} E_{4,2}   \equiv E_{1,4} E_{2,3} E_{3,1} E_{4,2}  
 \equiv  E_{1,4} E_{2,1} E_{3,2} E_{4,3}  
 \equiv 0,  
\end{align*}  we find
$$ \cC_{4,4} \equiv E_{1,2} E_{2,3} E_{3,4} E_{1,4}^{\gk}. $$
As for $ \cC_{4,5} $, we have 
\begin{align*}
  E_{i,j} E_{j,i} E_{k,l} E_{l,k} 
& = E_{i,j} (E_{i,j}-E_{i,j}^{\gk}) E_{k,l} (E_{k,l}-E_{k,l}^{\gk}) \\
& = (E_{i,j})^2 (E_{k,l})^2 - (E_{i,j})^2 E_{k,l} E_{k,l}^{\gk} -E_{i,j} (E_{k,l})^2 E_{i,j}^{\gk} + E_{i,j}E_{k,l}E_{i,j}^{\gk}E_{k,l}^{\gk}
\end{align*}
for $ \{1,2,3,4\} = \{ i,j,k,l \} $ with $ i<j $ and $ k<l$, 
to get 
\begin{align*}
 \cC_{4,5} \equiv  
  (E_{1,2})^2 (E_{3,4})^2 - (E_{1,2})^2 E_{3,4} E_{3,4}^{\gk} 
 -E_{1,2}(E_{3,4})^2 E_{1,2}^{\gk} + E_{1,2} E_{3,4} E_{1,2}^{\gk} E_{3,4}^{\gk}.
\end{align*}
Thus we are done.
\end{proof}

\begin{lem}
\label{lem:PDE1} Retain the notation in Lemma \ref{lem:Eij_radial} and 
let $ \varphi: V_{(\kappa_1,\kappa_2,\delta_3)} \to {\rm Wh}(\Pi_{\sigma}, \psi_1) $ be a $K$-homomorphism.

\noindent (i)  
For $ l = (l_1,l_2,l_3,l_4,l_{12},l_{13},l_{14}, l_{23}, l_{24}, l_{34}) \in S_{(\kappa_1,\kappa_2,\delta_3)} $, we have 
\begin{align} \label{PDE1_gamma1}
& (\pa_4- \gamma_1)\vp(u_l)(y) = 0.
\end{align}

\noindent (ii)
Assume $ \kappa_1 > \kappa_2 $. 
For $l  = (l_1,l_2,l_3,l_4,l_{12},l_{13},l_{14}, l_{23}, l_{24}, l_{34}) \in S_{(\kappa_1-1, \kappa_2, \delta_3)}$, we have 
\begin{equation*}
\begin{split}
& (\pa_1-\nu_1'-\tfrac{\kappa_1}{2}-1)\vp(u_{l+e_1})(y) + 2\pi \sqrt{-1}y_1 \vp(u_{l+e_2})(y) = 0, 
\end{split} 
\end{equation*}
\begin{equation*}
\begin{split}
& (-\pa_1+\pa_2-\nu_1'-\tfrac{\kappa_1}{2}+ l_1) \vp(u_{l+e_2})(y)   
 + 2\pi \sqrt{-1}y_1 \vp(u_{l+e_1})(y) + 2\pi \sqrt{-1}y_2 \vp(u_{l+e_3})(y) \\
& -l_2 \vp(u_{l-e_2+2e_1})(y) + l_{13} \vp(u_{l+e_1-e_{13}+e_{23}})(y) + l_{14} \vp(u_{l+e_1-e_{14}+e_{24}})(y) \\
& - l_{23} \vp(u_{l+e_1-e_{23}+e_{13}})(y) - l_{24} \vp(u_{l+e_1-e_{24}+e_{14}})(y) = 0, 
\end{split}
\end{equation*}
\begin{equation*}
\begin{split}
& (-\pa_2+\pa_3-\nu_1' +\tfrac{\kappa_1}{2}-l_4) \vp(u_{l+e_3})(y)  
 + 2\pi \sqrt{-1}y_2 \vp(u_{l+e_2})(y) + 2\pi \sqrt{-1}y_3 \vp(u_{l+e_4})(y) \\
&  +l_3 \vp(u_{l-e_3+2e_4})(y) + l_{13} \vp(u_{l+e_4-e_{13}+e_{14}})(y) - l_{14} \vp(u_{l+e_4-e_{14}+e_{13}})(y) \\
& + l_{23} \vp(u_{l+e_4-e_{23}+e_{24}})(y) - l_{24} \vp(u_{l+e_4-e_{24}+e_{23}})(y) = 0, 
\end{split}
\end{equation*}
\begin{equation*}
 (-\pa_3+\pa_4-\nu_1'+\tfrac{\kappa_1}{2}+1) \vp(u_{l+e_4})(y) + 2\pi \sqrt{-1}y_3 \vp(u_{l+e_3})(y) = 0.
\end{equation*}

\noindent (iii)  
Assume $ \kappa_2 \ge 1 $.  
For $ l  = (l_1,l_2,l_3,l_4,l_{12},l_{13},l_{14}, l_{23}, l_{24}, l_{34})  \in S_{(\kappa_1-1, \kappa_2-1, 0)} $, we have 
\begin{equation*} 
 (\pa_2-\nu_1-\nu_2-\tfrac{\kappa_1+\kappa_2}{2}-1) \vp(u_{l+e_{12}})(y) + 2 \pi \sqrt{-1} y_2 \vp(u_{l+e_{13}})(y) = 0,
\end{equation*}
\begin{equation*} 
\begin{split}
& (\pa_1-\pa_2+\pa_3-\nu_1-\nu_2-\tfrac{\kappa_1-\kappa_2}{2}-l_{13}-l_{14}-l_{23}-l_{24}-1) \vp(u_{l+e_{13}})(y) \\
&  +2\pi \sqrt{-1} y_1 \vp(u_{l+e_{23}})(y) + 2\pi \sqrt{-1} y_2 \vp(u_{l+e_{12}})(y) + 2\pi \sqrt{-1} y_3 \vp(u_{l+e_{14}})(y) \\
& +l_2 \vp(u_{l-e_2+e_3+e_{12}})(y) - l_3 \vp(u_{l-e_3+e_2+e_{12}})(y) 
  -l_{13} \vp(u_{l-e_{13}+2e_{12}})(y) + l_{24} \vp(u_{l-e_{24}+e_{12}+e_{34}})(y) = 0,
\end{split}
\end{equation*}
\begin{equation*} 
\begin{split}
& (\pa_1-\pa_3+\pa_4-\nu_1-\nu_2-\tfrac{\kappa_1-\kappa_2}{2}+l_4)\vp(u_{l+e_{14}})(y) 
  + 2 \pi \sqrt{-1} y_1 \vp(u_{l+e_{24}})(y) \\
& + 2\pi \sqrt{-1} y_3 \vp(u_{l+e_{13}})(y)   +l_2 \vp(u_{l-e_2+e_4+e_{12}})(y) + l_3 \vp(u_{l-e_3+e_4+e_{13}})(y) = 0,
\end{split}
\end{equation*}
\begin{equation*} 
\begin{split}
& (-\pa_1+\pa_3-\nu_1-\nu_2+\tfrac{\kappa_1-\kappa_2}{2}-l_4)\vp(u_{l+e_{23}})(y) + 2 \pi \sqrt{-1} y_1 \vp(u_{l+e_{13}})(y) \\
&  + 2\pi \sqrt{-1} y_3 \vp(u_{l+e_{24}})(y)  
  -l_2 \vp(u_{l-e_2+e_4+e_{34}})(y) + l_3 \vp(u_{l-e_3+e_4+e_{24}})(y) = 0,
\end{split}
\end{equation*}
\begin{equation*} 
\begin{split}
& (-\pa_1+\pa_2-\pa_3+\pa_4-\nu_1-\nu_2+\tfrac{\kappa_1-\kappa_2}{2}+l_{13}+l_{14}+l_{23}+l_{24}+1) \vp(u_{l+e_{24}})(y) \\
& +2\pi \sqrt{-1} y_1 \vp(u_{l+e_{14}})(y) + 2\pi \sqrt{-1} y_2 \vp(u_{l+e_{34}})(y) + 2\pi \sqrt{-1} y_3 \vp(u_{l+e_{23}})(y) \\
& +l_2 \vp(u_{l-e_2+e_3+e_{34}})(y) - l_3 \vp(u_{l-e_3+e_2+e_{34}})(y) 
  -l_{13} \vp(u_{l-e_{13}+e_{12}+e_{34}})(y) + l_{24} \vp(u_{l-e_{24}+2e_{34}})(y) = 0,
\end{split}
\end{equation*}
\begin{equation*}
(-\pa_2+\pa_4-\nu_1-\nu_2+\tfrac{\kappa_1+\kappa_2}{2}+1) \vp(u_{l+e_{34}})(y) + 2 \pi \sqrt{-1} y_2 \vp(u_{l+e_{24}})(y) = 0.
\end{equation*}
\end{lem}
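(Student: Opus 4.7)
The unifying strategy is to apply $\vp$ to each algebraic identity supplied by (\ref{Cap_eigen}), (\ref{DiracSchmid1}), and (\ref{DiracSchmid2}), translate the induced action of $U(\g_{\bC})$ into the right-translation action $R$ on $C^{\infty}(N\backslash G;\psi_1)$, and finally restrict to $y\in A$. Three tools are used throughout: Lemma \ref{lem:Eij_radial}, which describes $R(E_{i,j})$ on $A$ (producing $-\pa_{i-1}+\pa_i$ when $j=i$, $2\pi\sqrt{-1}\,y_i$ when $j=i+1$, and zero otherwise); the Whittaker-equivariance (\ref{eqn:Fn_whitt_ngk}), which yields $R(X)\vp(v)=\vp(\tau_{\lambda}(X)v)$ for $X\in\gk_{\bC}$; and Lemma \ref{lem:Kact_ul_vl}(i), which gives the explicit expansion of $\tau_{\lambda}(E_{i,j}^{\gk})u_m$. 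To separate the differential and $K$-type parts of $R(E_{i,j}^{\gp})$, I will use $E_{i,i}^{\gp}=2E_{i,i}$ and, for $i\ne j$, the identity $E_{i,j}^{\gp}=2E_{p,q}-E_{p,q}^{\gk}$ with $(p,q)=(\min(i,j),\max(i,j))$.

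Part (i) is immediate: Lemma \ref{lem:Capelli} gives $\cC_1\equiv E_{1,1}+E_{2,2}+E_{3,3}+E_{4,4}$ modulo operators killing $C^{\infty}(N\backslash G;\psi_1)$ on $A$, and the telescoping sum $\sum_{i}(-\pa_{i-1}+\pa_i)=\pa_4$ provided by Lemma \ref{lem:Eij_radial} combined with (\ref{Cap_eigen}) applied to $\vp(u_l)$ yields (\ref{PDE1_gamma1}).

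For (ii) I apply $\vp(\cdot)(y)$ to (\ref{DiracSchmid1}). The $k=i$ summand contributes $2(-\pa_{i-1}+\pa_i)\vp(u_{l+e_i})(y)$; for $k\ne i$, writing $E_{i,k}^{\gp}=2E_{p,q}-E_{p,q}^{\gk}$ and invoking Lemma \ref{lem:Eij_radial} converts the $E_{p,q}$ part to $4\pi\sqrt{-1}\,y_p\vp(u_{l+e_k})(y)$ when $|i-k|=1$ and to $0$ otherwise, while the $E_{p,q}^{\gk}$ part contributes $-\vp(\tau_{\lambda}(E_{p,q}^{\gk})u_{l+e_k})(y)$, expanded by Lemma \ref{lem:Kact_ul_vl}(i). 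Dividing by $2$, it remains to simplify the $K$-type sum. In the model case $i=1$: the diagonal contributions of $\tau_{\lambda}(E_{1,k}^{\gk})u_{l+e_k}$ $(k=2,3,4)$ collapse via Lemma \ref{lem:rel_ul}(i) applied to $l'=l-e_1$ into $(l_1+l_2+l_3+l_4+3)u_{l+e_1}=(\kappa_1-\kappa_2+2)u_{l+e_1}$, while the off-diagonal contributions indexed by $l_{ab}$ collapse in pairs via the Pl\"ucker-type identities of Lemma \ref{lem:rel_ul}(ii) into $\kappa_2\,u_{l+e_1}$, producing the net shift $(\kappa_1+2)u_{l+e_1}$ and hence the constant $-\tfrac{\kappa_1}{2}-1$ in the first displayed equation. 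The remaining three equations ($i=2,3,4$) follow by the analogous procedure; the residual $\vp(u_\cdot)$ summands listed in the second and third equations are exactly the ones for which no further identity of Lemma \ref{lem:rel_ul} applies.

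Part (iii) runs along the same template starting from (\ref{DiracSchmid2}): the diagonal piece $R(E_{i,i}^{\gp}+E_{j,j}^{\gp})$ yields $2(-\pa_{i-1}+\pa_i-\pa_{j-1}+\pa_j)\vp(u_{l+e_{ij}})(y)$, adjacent off-diagonals produce the stated $2\pi\sqrt{-1}\,y_\bullet$ terms, and the $K$-type remainders are reduced using both parts (ii) and (iii) of Lemma \ref{lem:rel_ul}. The main obstacle throughout is purely combinatorial bookkeeping: one must carefully track the many index shifts $-e_{ab}+e_{cd}$ produced by Lemma \ref{lem:Kact_ul_vl} and recognize the resulting expressions as linear combinations of the vanishing sums $u_{l'+e_i+e_{jk}}-u_{l'+e_j+e_{ik}}+u_{l'+e_k+e_{ij}}$ and $\sum_{j\ne i}\sgn(j-i)u_{l'+e_j+e_{ij}}$ from Lemma \ref{lem:rel_ul}. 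This step is mechanical but lengthy, and once carried out it produces exactly the constants and residual $\vp(u_\cdot)$ terms listed in the statement.
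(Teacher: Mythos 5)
Your proposal is correct and follows essentially the same route as the paper's proof: transfer (\ref{Cap_eigen}), (\ref{DiracSchmid1}) and (\ref{DiracSchmid2}) to $\varphi$, split $E_{i,j}^{\gp}=2E_{\min,\max}-E_{\min,\max}^{\gk}$, apply Lemmas \ref{lem:Eij_radial} and \ref{lem:Kact_ul_vl}, and collapse the surplus $K$-type terms with Lemma \ref{lem:rel_ul}, and your bookkeeping in the model case $i=1$ reproduces the paper's constant $-\tfrac{\kappa_1}{2}-1$. The only cosmetic remark is that the pairwise cancellation of the $l_{ab}$-terms uses both families of relations in Lemma \ref{lem:rel_ul}(ii) (the signed sums $\sum_{j\neq i}\sgn(j-i)u_{l'+e_j+e_{ij}}=0$ as well as the Pl\"ucker identities), which you do acknowledge correctly in your closing paragraph.
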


\begin{proof}
The equation (\ref{PDE1_gamma1}) is immediate from (\ref{Cap_eigen}) with $ i=1 $, (\ref{eqn:redC1}) and 
Lemma \ref{lem:Eij_radial}.
Let us show the first equation in (ii). 
By (\ref{DiracSchmid1}) 
and 
$ E_{1,k}^{\gp} = 2 E_{1,k}-E_{1,k}^{\gk} $, we have 
\begin{align*}
\sum_{k=1}^4 (R (2E_{1,k}- E_{1,k}^{\gk})  \vp(u_{l+e_k}))(y) - 2\nu_1' \vp(u_{l+e_1})(y) = 0
\end{align*}
for $ l \in S_{(\kappa_1-1, \kappa_2, \delta_3)}. $
Applying Lemmas \ref{lem:Kact_ul_vl} (i) and \ref{lem:Eij_radial}, we find that 
\begin{align} \label{eqn:PDE1-2-1}
\begin{split}
& (2\pa_1-2\nu_1') \vp(u_{l+e_1})(y) + 4\pi \sqrt{-1}y_1 \vp(u_{l+e_2})(y) 
\\
& -(l_2+1)\vp(u_{l+e_1})(y) + l_1 \vp(u_{l-e_1+2e_2})(y) 
   -l_{23}\vp(u_{l+e_2-e_{23}+e_{13}})(y)  \\
& -l_{24}\vp(u_{l+e_2-e_{24}+e_{14}})(y)
   +l_{13}\vp(u_{l+e_2-e_{13}+e_{23}})(y)+l_{14} \vp(u_{l+e_2-e_{14}+e_{24}})(y) 
\\
& -(l_3+1)\vp(u_{l+e_1})(y) + l_1 \vp(u_{l-e_1+2e_3})(y) 
   +l_{23} \vp(u_{l+e_3-e_{23}+e_{12}})(y)  \\
& -l_{34}\vp(u_{l+e_3-e_{34}+e_{14}})(y)
   -l_{12}\vp(u_{l+e_3-e_{12}+e_{23}})(y)+l_{14}\vp(u_{l+e_3-e_{14}+e_{34}})(y) 
\\
& -(l_4+1)\vp(u_{l+e_1})(y) + l_1 \vp(u_{l-e_1+2e_4})(y) 
   +l_{24} \vp(u_{l+e_4-e_{24}+e_{12}})(y)  \\
& -l_{34}\vp(u_{l+e_4-e_{34}+e_{13}})(y)
   -l_{12}\vp(u_{l+e_4-e_{12}+e_{24}})(y)-l_{13}\vp(u_{l+e_4-e_{13}+e_{34}})(y) = 0.
\end{split}
\end{align}
By Lemma \ref{lem:rel_ul} (i), we know 
\begin{align*}
 l_1 \{  \vp(u_{l-e_1+2e_2})(y) +\vp(u_{l-e_1+2e_3})(y) +\vp(u_{l-e_1+2e_4})(y) \}
  = -l_1 \vp(u_{l+e_1})(y). 
\end{align*}
Similarly, Lemma \ref{lem:rel_ul} (ii) implies that 
\begin{align*}
 l_{23} \{ -\vp(u_{l+e_2-e_{23}+e_{13}})(y) + \vp(u_{l+e_3-e_{23}+e_{12}})(y) \} & = -l_{23} \vp(u_{l+e_1})(y), \\
 l_{24} \{ -\vp(u_{l+e_2-e_{24}+e_{14}})(y) + \vp(u_{l+e_4-e_{24}+e_{12}})(y) \} & = -l_{24} \vp(u_{l+e_1})(y), \\
 l_{12} \{ -\vp(u_{l+e_3-e_{12}+e_{23}})(y) - \vp(u_{l+e_4-e_{12}+e_{24}})(y) \} & = -l_{12} \vp(u_{l+e_1})(y), \\
 l_{13} \{ \vp(u_{l+e_2-e_{13}+e_{23}})(y) - \vp(u_{l+e_4-e_{13}+e_{34}})(y)  \} & = -l_{13} \vp(u_{l+e_1})(y), \\
 l_{14} \{ \vp(u_{l+e_2-e_{14}+e_{24}})(y) + \vp(u_{l+e_3-e_{14}+e_{34}})(y) \} & = -l_{14} \vp(u_{l+e_1})(y), \\
 l_{34} \{ -\vp(u_{l+e_3-e_{34}+e_{14}})(y) - \vp(u_{l+e_4-e_{34}+e_{13}})(y) \} & = -l_{34} \vp(u_{l+e_1})(y).
\end{align*}
Then (\ref{eqn:PDE1-2-1}) can be written as 
\begin{align*}
& \{ 2\pa_1-2\nu_1' -(l_1+l_2+l_3+l_4+3) -(l_{12}+l_{13}+l_{14}+l_{23}+l_{24}+l_{34}) \} \vp(u_{l+e_1})(y) 
   + 4\pi \sqrt{-1}y_1 \vp(u_{l+e_2})(y) = 0
\end{align*}
as desired.
The other equations can be similarly confirmed.
\end{proof}

By (\ref{PDE1_gamma1}), 
we can define a function $\hat{\varphi}_l $ on $ (\bR_+)^3 $ by 
\begin{align} \label{eqn:hat_varphi}
 \vp(u_l)(y) = 
  (\sqrt{-1})^{-l_1+l_3-l_{13}+l_{24}} (-1)^{l_2+l_{14}+l_{23}} y_1^{3/2} y_2^2 y_3^{3/2-\kappa_2} y_4^{\gamma_1} \hat{\varphi}_l(y_1,y_2,y_3)
\end{align}
for $l \in S_{(\kappa_1,\kappa_2,\delta_3)}$.
We understand $\hat{\varphi}_l = 0 $ if $ l  \notin S_{(\kappa_1,\kappa_2,\delta_3)} $.
Here is a system of partial differential equations for $ \hat{\vp}_l $.


\begin{prop}
\label{prop:PDE2}
Retain the notation. 

\noindent (i) 
For $ l = (l_1,l_2,l_3,l_4,l_{12},l_{13},l_{14}, l_{23}, l_{24}, l_{34}) \in S_{(\kappa_1,\kappa_2,\delta_3)} $, we have 
\begin{equation} \label{PDE:C2} 
\begin{split}
& \{ \Delta_2 -(2\pi y_1) \mathfrak{K}_{12} - (2\pi y_2) \mathfrak{K}_{23} - (2\pi y_3) \mathfrak{K}_{34} \} \hat{\varphi}_l = 0,
\end{split} 
\end{equation}
\begin{equation} 
 \label{PDE:C3} 
\begin{split}
& \{ \Delta_3 
 + (2\pi y_1) (\pa_2-\gamma_1) \mathfrak{K}_{12}
 + (2\pi y_2) (-\pa_1+\pa_3-\gamma_1-\kappa_2) \mathfrak{K}_{23}
\\
& 
 - (2\pi y_3) \pa_2 \mathfrak{K}_{34}
 + (2\pi y_1)(2\pi y_2) \mathfrak{K}_{13} 
 + (2\pi y_2)(2\pi y_3) \mathfrak{K}_{24} 
  \} \hat{\vp}_l = 0
\end{split}
\end{equation}
and 
\begin{equation} \label{PDE:C4} 
\begin{split}
& [ \Delta_4  
 - (2 \pi y_1) \{ (-\pa_2+\pa_3-\kappa_2)(-\pa_3+\gamma_1+\kappa_2) + (2\pi y_3)^2 \} \mathfrak{K}_{12} 
\\
& 
 - (2\pi  y_2) \pa_1(-\pa_3+\gamma_1+\kappa_2) \mathfrak{K}_{23}
 - (2\pi y_3) \{ \pa_1(-\pa_1+\pa_2) + (2\pi y_1)^2 \} \mathfrak{K}_{34}
\\
& + (2\pi y_1)(2\pi y_2)  (-\pa_3+\gamma_1+\kappa_2) \mathfrak{K}_{13} 
   + (2\pi y_2)(2\pi y_3) \pa_1  \mathfrak{K}_{24} 
\\
& + (2\pi y_1)(2\pi y_2)(2\pi y_3) \mathfrak{K}_{14} 
   + (2\pi y_1)(2\pi y_3) \mathfrak{K}_{12,34}] \hat{\vp}_l = 0,
\end{split}
\end{equation}
where $ \Delta_2 $, $\Delta_3 $ and $\Delta_4 $ are the differential operators defined by 
\begin{equation*}
\begin{split}
 \Delta_2 
&= -\pa_1^2-\pa_2^2-(\pa_3-\kappa_2)^2 + \pa_1\pa_2 + \pa_2(\pa_3-\kappa_2) + \gamma_1(\pa_3-\kappa_2) 
   + (2\pi y_1)^2 + (2\pi y_2)^2 +(2\pi y_3)^2   -\gamma_2,
\end{split}
\end{equation*}
\begin{equation*}
\begin{split}
 \Delta_3 
& = \pa_2(\pa_1-\pa_3+\kappa_2)(\pa_1-\pa_2+\pa_3-\kappa_2)
  -\gamma_1(\pa_1^2+\pa_2^2-\pa_1\pa_2-\pa_2 \pa_3 + \kappa_2 \pa_2) \\
& \phantom{=}  
 + (2\pi y_1)^2(-\pa_2+\gamma_1)
 +(2\pi y_2)^2(\pa_1-\pa_3+\gamma_1+\kappa_2)
 +(2\pi y_3)^2 \pa_2 -\gamma_3
\end{split}
\end{equation*}
and 
\begin{equation*}
\begin{split}
 \Delta_4 
& = \pa_1(\pa_2-\pa_1)(\pa_3-\pa_2-\kappa_2)(-\pa_3+\gamma_1+\kappa_2) 
 + (2\pi y_1)^2(\pa_3-\pa_2-\kappa_2)(-\pa_3+\gamma_1+\kappa_2) 
\\
& \phantom{=}  
  + (2\pi y_2)^2 \pa_1(-\pa_3+\gamma_1+\kappa_2) 
 + (2\pi y_3)^2 \pa_1(\pa_2-\pa_1) 
 + (2\pi y_1)^2(2\pi y_3)^2 - \gamma_4,
\end{split}
\end{equation*}
respectively. 
Here we set 
\begin{align*}
\mathfrak{K}_{12} \hat{\vp}_l
& = l_1 \hat{\varphi}_{l-e_1+e_2} + l_2 \hat{\varphi}_{l-e_2+e_1}  
   + l_{13} \hat{\varphi}_{l-e_{13}+e_{23}} +l_{14} \hat{\varphi}_{l-e_{14}+e_{24}} 
   + l_{23} \hat{\varphi}_{l-e_{23}+e_{13}} +l_{24} \hat{\varphi}_{l-e_{24}+e_{14}},
\\
\mathfrak{K}_{23} \hat{\vp}_l
& = l_2 \hat{\varphi}_{l-e_2+e_3} + l_3 \hat{\varphi}_{l-e_3+e_2}   
    + l_{12} \hat{\varphi}_{l-e_{12}+e_{13}} + l_{13} \hat{\varphi}_{l-e_{13}+e_{12}} 
    + l_{24} \hat{\varphi}_{l-e_{24}+e_{34}} + l_{34} \hat{\varphi}_{l-e_{34}+e_{24}},
\\
\mathfrak{K}_{34} \hat{\vp}_l
& = l_3\hat{\varphi}_{l-e_3+e_4} + l_4 \hat{\varphi}_{l-e_4+e_3}   
     + l_{13} \hat{\varphi}_{l-e_{13}+e_{14}} + l_{14} \hat{\varphi}_{l-e_{14}+e_{13}} 
     + l_{23} \hat{\varphi}_{l-e_{23}+e_{24}} + l_{24} \hat{\varphi}_{l-e_{24}+e_{23}},
\\
\mathfrak{K}_{13} \hat{\vp}_l
& = l_1 \hat{\varphi}_{l-e_1+e_3} + l_3 \hat{\varphi}_{l-e_3+e_1} 
   -l_{12} \hat{\varphi}_{l-e_{12}+e_{23}} + l_{14} \hat{\varphi}_{l-e_{14}+e_{34}}
  +l_{23} \hat{\varphi}_{l-e_{23}+e_{12}} - l_{34} \hat{\varphi}_{l-e_{34}+e_{14}},
\\
\mathfrak{K}_{24} \hat{\vp}_l
& =  l_2 \hat{\varphi}_{l-e_2+e_4} - l_4 \hat{\varphi}_{l-e_4+e_2}  
      + l_{12} \hat{\varphi}_{l-e_{12}+e_{14}}  - l_{14} \hat{\varphi}_{l-e_{14}+e_{12}} 
      - l_{23} \hat{\varphi}_{l-e_{23}+e_{34}} + l_{34} \hat{\varphi}_{l-e_{34}+e_{23}},
\\
\mathfrak{K}_{14} \hat{\vp}_l
& = - l_1\hat{\varphi}_{l-e_1+e_4} -l_4 \hat{\varphi}_{l-e_4+e_1}  
    +l_{12} \hat{\varphi}_{l-e_{12}+e_{24}} + l_{13} \hat{\varphi}_{l-e_{13}+e_{34}}
    +l_{24} \hat{\varphi}_{l-e_{24}+e_{12}} + l_{34} \hat{\varphi}_{l-e_{34}+e_{13}},
\\
 \mathfrak{K}_{12,34} \hat{\vp}_l
& = l_1 l_3 \hat{\varphi}_{l-e_1-e_3+e_2+e_4} + l_1 l_4 \hat{\varphi}_{l-e_1-e_4+e_2+e_3}  
   + l_2 l_3 \hat{\varphi}_{l-e_2-e_3+e_1+e_4} + l_2 l_4 \hat{\varphi}_{l-e_2-e_4+e_1+e_3}  
\\
& \phantom{=} 
  + l_1 (l_{13} \hat{\varphi}_{l-e_1+e_2-e_{13}+e_{14}} + l_{14} \hat{\varphi}_{l-e_1+e_2-e_{14}+e_{13}} 
   + l_{23} \hat{\varphi}_{l-e_1+e_2-e_{23}+e_{24}} +  l_{24} \hat{\varphi}_{l-e_1+e_2-e_{24}+e_{23}} )
\\
& \phantom{=} 
  + l_2 (l_{13} \hat{\varphi}_{l-e_2+e_1-e_{13}+e_{14}} + l_{14} \hat{\varphi}_{l-e_2+e_{23}-e_{14}+e_{13}} 
   + l_{23} \hat{\varphi}_{l-e_2+e_1-e_{23}+e_{24}} + l_{24} \hat{\varphi}_{l-e_2+e_1-e_{24}+e_{23}} )
\\
& \phantom{=} 
   + l_3 (l_{13} \hat{\varphi}_{l-e_3+e_4-e_{13}+e_{23}} + l_{14} \hat{\varphi}_{l-e_3+e_4-e_{14}+e_{24}} 
   + l_{23} \hat{\varphi}_{l-e_3+e_4-e_{23}+e_{13}} + l_{24} \hat{\varphi}_{l-e_3+e_4-e_{24}+e_{14}} )
\\
& \phantom{=} 
  + l_4 (l_{13} \hat{\varphi}_{l-e_4+e_3-e_{13}+e_{23}} + l_{14} \hat{\varphi}_{l-e_4+e_3-e_{14}+e_{24}}    
    + l_{23} \hat{\varphi}_{l-e_4+e_3-e_{23}+e_{13}} + l_{24} \hat{\varphi}_{l-e_4+e_3-e_{24}+e_{14}} )
\\
&  \phantom{=} 
  + l_{13}(l_{14}+l_{23}+1) \hat{\varphi}_{l-e_{13}+e_{24}} 
  +  l_{24}(l_{14}+l_{23}+1) \hat{\varphi}_{l-e_{24}+e_{13}} 
\\
& \phantom{=}  
   + l_{14}(l_{13}+l_{24}+1) \hat{\varphi}_{l-e_{14}+e_{23}} 
   + l_{23} (l_{13}+l_{24}+1) \hat{\varphi}_{l-e_{23}+e_{14}} 
\\
& \phantom{=}  
  + l_{13} l_{24} ( \hat{\varphi}_{l-e_{13}-e_{24}+2e_{14}} + \hat{\varphi}_{l-e_{13}-e_{24}+2e_{23}} )  
  + l_{14} l_{23} ( \hat{\varphi}_{l-e_{14}-e_{23}+2e_{13}} + \hat{\varphi}_{l-e_{14}-e_{23}+2e_{24}} ) 
\\
& \phantom{=} 
 + l_{13}(l_{13}-1) \hat{\varphi}_{l-2e_{13}+e_{14}+e_{23}} + l_{24}(l_{24}-1)\hat{\varphi}_{l-2e_{24}+e_{14}+e_{23}} 
\\
& \phantom{=} 
 + l_{14}(l_{14}-1) \hat{\varphi}_{l-2e_{14}+e_{13}+e_{24}} + l_{23} (l_{23}-1) \hat{\varphi}_{l-2e_{23}+e_{13}+e_{24}}.
\end{align*}

\noindent (ii)
Assume $ \kappa_1 > \kappa_2 $. 
For $l = (l_1,l_2,l_3,l_4,l_{12},l_{13},l_{14}, l_{23}, l_{24}, l_{34}) \in S_{(\kappa_1-1, \kappa_2, \delta_3)}$, we have 
\begin{equation} \label{PDE:DS1}
\begin{split} 
& (\pa_1-\nu_1'-\tfrac{\kappa_1-1}{2})\hat{\vp}_{l+e_1} + (2\pi y_1) \hat{\vp}_{l+e_2} = 0, 
\end{split} 
\end{equation}
\begin{equation} \label{PDE:DS2} 
\begin{split} & (-\pa_1+\pa_2-\nu_1'-\tfrac{\kappa_1-1}{2}+ l_1) \hat{\vp}_{l+e_2}  
   - (2\pi y_1) \hat{\vp}_{l+e_1} + (2\pi y_2) \hat{\vp}_{l+e_3} + l_2 \hat{\vp}_{l-e_2+2e_1} \\
&   + l_{13} \hat{\vp}_{l+e_1-e_{13}+e_{23}} + l_{14} \hat{\vp}_{l+e_1-e_{14}+e_{24}} 
     + l_{23} \hat{\vp}_{l+e_1-e_{23}+e_{13}} + l_{24} \hat{\vp}_{l+e_1-e_{24}+e_{14}} = 0, 
\end{split}
\end{equation}
\begin{equation} \label{PDE:DS3}
\begin{split}
& (-\pa_2+\pa_3-\nu_1' +\tfrac{\kappa_1-1}{2}-\kappa_2-l_4) \hat{\vp}_{l+e_3}
   - (2\pi y_2) \hat{\vp}_{l+e_2}  + (2\pi y_3) \hat{\vp}_{l+e_4} - l_3 \hat{\vp}_{l-e_3+2e_4} \\
&   - l_{13} \hat{\vp}_{l+e_4-e_{13}+e_{14}} - l_{14} \hat{\vp}_{l+e_4-e_{14}+e_{13}} 
   - l_{23} \hat{\vp}_{l+e_4-e_{23}+e_{24}} - l_{24} \hat{\vp}_{l+e_4-e_{24}+e_{23}} = 0, 
\end{split}
\end{equation}
\begin{equation} \label{PDE:DS4}
 (-\pa_3+\gamma_1-\nu_1'+\tfrac{\kappa_1-1}{2}+\kappa_2) \hat{\vp}_{l+e_4} - (2\pi y_3) \hat{\vp}_{l+e_3} = 0.
\end{equation}

\noindent (iii)  
Assume $ \kappa_2 \ge 1 $.  
For $ l = (l_1,l_2,l_3,l_4,l_{12},l_{13},l_{14}, l_{23}, l_{24}, l_{34}) \in S_{(\kappa_1-1, \kappa_2-1, 0)} $, we have 
\begin{equation} \label{PDE:DS12} 
 (\pa_2-\nu_1-\nu_2-\tfrac{\kappa_1+\kappa_2}{2} +1) \hat{\vp}_{l+e_{12}} + (2 \pi y_2) \hat{\vp}_{l+e_{13}}= 0,
\end{equation}
\begin{equation} \label{PDE:DS13} 
\begin{split}
& (\pa_1-\pa_2+\pa_3-\nu_1-\nu_2-\tfrac{\kappa_1+\kappa_2}{2}-l_{13}-l_{14}-l_{23}-l_{24}) \hat{\vp}_{l+e_{13}}  \\
& + (2\pi y_1) \hat{\vp}_{l+e_{23}} - (2\pi y_2) \hat{\vp}_{l+e_{12}}  + (2\pi y_3) \hat{\vp}_{l+e_{14}} \\
& + l_2 \hat{\vp}_{l-e_2+e_3+e_{12}} + l_3 \hat{\vp}_{l-e_3+e_2+e_{12}} 
    +l_{13} \hat{\vp}_{l-e_{13}+2e_{12}} + l_{24} \hat{\vp}_{l-e_{24}+e_{12}+e_{34}} = 0,
\end{split}
\end{equation}
\begin{equation} \label{PDE:DS14} 
\begin{split}
& (\pa_1-\pa_3+\gamma_1-\nu_1-\nu_2-\tfrac{\kappa_1-3\kappa_2}{2}+l_4) \hat{\vp}_{l+e_{14}}  \\
& + (2 \pi y_1) \hat{\vp}_{l+e_{24}} - (2\pi y_3) \hat{\vp}_{l+e_{13}}  
     +l_2 \hat{\vp}_{l-e_2+e_4+e_{12}}  + l_3 \hat{\vp}_{l-e_3+e_4+e_{13}} = 0,
\end{split}
\end{equation}
\begin{equation} \label{PDE:DS23} 
\begin{split}
& (-\pa_1+\pa_3-\nu_1-\nu_2+\tfrac{\kappa_1-3\kappa_2}{2}-l_4) \hat{\vp}_{l+e_{23}} \\
& - (2 \pi y_1) \hat{\vp}_{l+e_{13}} + (2\pi  y_3) \hat{\vp}_{l+e_{24}}  
   -l_2 \hat{\vp}_{l-e_2+e_4+e_{34}} - l_3 \hat{\vp}_{l-e_3+e_4+e_{24}} = 0,
\end{split}
\end{equation}
\begin{equation} \label{PDE:DS24} 
\begin{split}
& (-\pa_1+\pa_2-\pa_3+\gamma_1-\nu_1-\nu_2+\tfrac{\kappa_1+\kappa_2}{2}+l_{13}+l_{14}+l_{23}+l_{24}) \hat{\vp}_{l+e_{24}} \\
& -(2\pi  y_1) \hat{\vp}_{l+e_{14}}  + (2\pi y_2) \hat{\vp}_{l+e_{34}} - (2\pi y_3) \hat{\vp}_{l+e_{23}} \\
&   - l_2 \hat{\vp}_{l-e_2+e_3+e_{34}} - l_3 \hat{\vp}_{l-e_3+e_2+e_{34}} 
     - l_{13} \hat{\vp}_{l-e_{13}+e_{12}+e_{34}} - l_{24} \hat{\vp}_{l-e_{24}+2e_{34}} = 0,
\end{split}
\end{equation}
\begin{equation} \label{PDE:DS34}
(-\pa_2+\gamma_1-\nu_1-\nu_2+\tfrac{\kappa_1+\kappa_2}{2}-1) \hat{\vp}_{l+e_{34}} 
  - (2 \pi y_2) \hat{\vp}_{l+e_{24}} = 0.
\end{equation}
\end{prop}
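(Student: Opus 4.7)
The proof splits into three blocks corresponding to (i), (ii), (iii).  Parts (ii) and (iii) are essentially rewriting Lemma~\ref{lem:PDE1}(ii)--(iii) under the normalization (\ref{eqn:hat_varphi}).  Writing
\[
\vp(u_{l'})(y) = c_{l'}\, y_1^{3/2} y_2^{2} y_3^{3/2-\kappa_2} y_4^{\gamma_1}\, \hat{\vp}_{l'}(y_1,y_2,y_3)
\]
with $c_{l'} = (\sqrt{-1})^{-l_1'+l_3'-l_{13}'+l_{24}'}(-1)^{l_2'+l_{14}'+l_{23}'}$, each $\pa_i$ picks up an additive shift from differentiating the prefactor, $\pa_4$ is replaced by the constant $\gamma_1$ thanks to (\ref{PDE1_gamma1}), and each summand contributes a fourth-root-of-unity ratio $c_{l'}/c_{l_0}$ which combines with the $\sqrt{-1}$ in $2\pi\sqrt{-1}\,y_i$ to produce the real coefficients shown in (\ref{PDE:DS1})--(\ref{PDE:DS4}) and (\ref{PDE:DS12})--(\ref{PDE:DS34}).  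This step is routine bookkeeping.

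For part (i), we begin with the Capelli eigenvalue equations $R(\cC_p)\vp(u_l)=\gamma_p\vp(u_l)$ from (\ref{Cap_eigen}), $p=2,3,4$, and restrict both sides to $y\in A$.  The key reduction is that, for any $f\in C^{\infty}(N\backslash G;\psi_1)$, one has $(R(E_{1,3})f)(y)=(R(E_{1,4})f)(y)=(R(E_{2,4})f)(y)=0$ on $A$, since for $y\in A$ the conjugation $y\exp(tE_{i,j})y^{-1}\in N$ while $\psi_1$ is trivial on $\exp(tE_{i,j})$ for these non-simple-root indices.  Hence Lemma~\ref{lem:Capelli} allows us to replace $R(\cC_p)$ by its explicit representative there.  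Then Lemma~\ref{lem:Eij_radial} turns each $(E_{i,i}+i-\tfrac52)$ into $\pa_i-\pa_{i-1}$ and each $E_{i,i+1}$ into multiplication by $2\pi\sqrt{-1}\,y_i$, while every factor $E_{i,j}^{\gk}\in\gk$ acting on $\vp(u_l)$ becomes $\vp(\tau(E_{i,j}^{\gk})u_l)$, expanded via Lemma~\ref{lem:Kact_ul_vl}(i) as a linear combination of terms with shifted $l$-indices.  Substituting (\ref{eqn:hat_varphi}) and collecting coefficients produces (\ref{PDE:C2}), (\ref{PDE:C3}), (\ref{PDE:C4}); the $\Delta_p$-part comes from diagonal factors and the pure multiplications, while the $\mathfrak{K}_{ij}$-part comes from the single $K$-shifts.

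The main obstacle lies in (\ref{PDE:C4}).  The representative (\ref{eqn:redC4}) of $\cC_4$ contains the quadratic products $(E_{1,2})^{2}E_{3,4}E_{3,4}^{\gk}$, $E_{1,2}(E_{3,4})^{2}E_{1,2}^{\gk}$, $E_{1,2}E_{3,4}E_{1,2}^{\gk}E_{3,4}^{\gk}$, together with the triple shift $E_{1,2}E_{2,3}E_{3,4}E_{1,4}^{\gk}$.  Iterating Lemma~\ref{lem:Kact_ul_vl}(i) to expand $\tau(E_{1,2}^{\gk}E_{3,4}^{\gk})u_l$ produces a double sum over simultaneous shifts of $l$, and after tracking the phase $c_l$ through double shifts such as $l\mapsto l-e_{13}-e_{24}+2e_{14}$, invoking the relations of Lemma~\ref{lem:rel_ul} where needed, and combining with the pure multiplications $(2\pi\sqrt{-1}\,y_i)$, all the second-order cross-shifts that appear in $\mathfrak{K}_{12,34}$ fall out with real coefficients.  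Verifying this term-by-term matching is the delicate calculation, and it is exactly what the specific exponents of $\sqrt{-1}$ and $-1$ in (\ref{eqn:hat_varphi}) are designed to accommodate.
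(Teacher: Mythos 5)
Your proposal is correct and follows essentially the same route as the paper: parts (ii) and (iii) are obtained by rewriting Lemma \ref{lem:PDE1} through the normalization (\ref{eqn:hat_varphi}), and part (i) comes from the Capelli eigenvalue equations (\ref{Cap_eigen}) combined with the representatives of Lemma \ref{lem:Capelli} modulo the left ideal $(\bC E_{1,3}+\bC E_{1,4}+\bC E_{2,4})U(\g_\bC)$ (which vanishes on restriction to $A$, as you correctly justify), Lemma \ref{lem:Eij_radial} for the radial parts, and Lemma \ref{lem:Kact_ul_vl}(i) for the single and double $\gk$-shifts producing the $\mathfrak{K}$-operators.
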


\begin{proof}
The statement (i) follows from (\ref{Cap_eigen}), Lemmas \ref{lem:Kact_ul_vl}, \ref{lem:Eij_radial} and \ref{lem:Capelli}.
The equations in (ii) and (iii) follow from Lemma \ref{lem:PDE1}.
\end{proof}



\subsection{Reduction of the system of partial differential equations} 
\label{subsec:red_PDE}

In this subsection we give some difference-differential equations which will be used later.

\begin{lem} \label{lem:DS1_l-e1_and_l-e4}
Assume $ \kappa_1 >\kappa_2 $.
For $ l = l_1 e_1 +l_4 e_4 + l_{12} e_{12} + l_{34} e_{34} \in S_{(\kappa_1,\kappa_2, \delta_3)} $, 
we have the following:
\begin{itemize}
\item If $ l_4 \ge 1 $ then we have
\begin{align} 
\label{eqn:DS1:l-e_4+e_3}
 (2\pi y_3) \hat{\vp}_{l-e_4+e_3}
 = (-\pa_3+\gamma_1-\nu_1'+\tfrac{\kappa_1-1}{2}+\kappa_2) \hat{\vp}_{l},
\end{align}
\begin{align}
\label{eqn:DS1:l-e_4+e_2}
\begin{split}
&  (2\pi y_2)(2\pi y_3) \hat{\vp}_{l -e_4+e_2} 
   = \{ (-\pa_2+\pa_3-\nu_1' - \tfrac{\kappa_1 +1}{2} + l_1 )  
      (-\pa_3+\gamma_1-\nu_1'+\tfrac{\kappa_1-1}{2} + \kappa_2) + (2\pi y_3)^2 \}  \hat{\vp}_{l},
\end{split}
\end{align}
\begin{align}
\label{eqn:DS1:l-e_4+e_1}
\begin{split}
&  (2\pi y_1)(2\pi y_2)(2\pi y_3) \hat{\vp}_{l-e_4+e_1} \\
& = \{ (-\pa_1+\pa_2-\nu_1'-\tfrac{\kappa_1+1}{2} +l_1) (-\pa_2+\pa_3-\nu_1' - \tfrac{\kappa_1 +1}{2} +l_1)  
    (-\pa_3+\gamma_1-\nu_1'+\tfrac{\kappa_1 -1}{2}+\kappa_2 ) \\
& \quad  
 + (2\pi y_2)^2(-\pa_3+\gamma_1-\nu_1'+\tfrac{\kappa_1-1}{2} + \kappa_2) 
  + (2\pi y_3)^2  (-\pa_1+\pa_2-\nu_1'-\tfrac{\kappa_1 + 1}{2}  +l_1)\} \hat{\vp}_{l}.
\end{split}
\end{align}
\item If $ l_1 \ge 1 $ then we have
\begin{align}
\label{eqn:DS1:l-e_1+e_2}
 (2\pi y_1) \hat{\vp}_{l-e_1+e_2}= (-\pa_1 + \nu_1' + \tfrac{\kappa_1-1}{2})\hat{\vp}_{l},
\end{align}
\begin{align}
\label{eqn:DS1:l-e_1+e_3}
\begin{split}
& (2\pi y_1) (2\pi y_2) \hat{\vp}_{l  -e_1+e_3} 
 =   \{  (-\pa_1+\pa_2-\nu_1'-\tfrac{\kappa_1-1}{2}+l_1) (\pa_1 - \nu_1' - \tfrac{\kappa_1-1}{2}) + (2\pi y_1)^2 \} 
\hat{\vp}_{l},
\end{split}
\end{align}
\begin{align}
\label{eqn:DS1:l-e_1+e_4}
\begin{split} 
&  (2\pi y_1)(2\pi y_2) (2\pi y_3) \hat{\vp}_{l-e_1+e_4}  \\
& = \{ (-\pa_2+\pa_3-\nu_1' -\tfrac{\kappa_1-1}{2} + l_1) 
    (-\pa_1+\pa_2-\nu_1'-\tfrac{\kappa_1-1}{2}+l_1) (-\pa_1 + \nu_1' + \tfrac{\kappa_1-1}{2}) \\
& \quad - (2\pi y_1)^2 (-\pa_2+\pa_3-\nu_1' -\tfrac{\kappa_1-1}{2} + l_1) 
  + (2\pi y_2)^2 (-\pa_1 + \nu_1' + \tfrac{\kappa_1-1}{2}) \} \hat{\vp}_{l}.
\end{split}
\end{align}
\end{itemize}
\end{lem}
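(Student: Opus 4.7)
The plan is to derive all six identities by iteratively applying the Dirac--Schmid equations (\ref{PDE:DS1})--(\ref{PDE:DS4}) of Proposition~\ref{prop:PDE2}~(ii) with the index $l$ shifted by $-e_1$ or $-e_4$, exploiting the fact that the given $l=l_1 e_1+l_4 e_4+l_{12} e_{12}+l_{34} e_{34}$ has vanishing components in $e_2, e_3, e_{13}, e_{14}, e_{23}, e_{24}$. This makes almost every ``extra'' term on the right-hand sides of (\ref{PDE:DS2}) and (\ref{PDE:DS3}) drop out, so that each invocation of a Dirac--Schmid relation reduces to a three-term equation.

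First I will establish (\ref{eqn:DS1:l-e_1+e_2}) by replacing $l$ with $l-e_1$ (valid since $l_1\ge 1$) in (\ref{PDE:DS1}); the identity follows at once. The identity (\ref{eqn:DS1:l-e_4+e_3}) is obtained analogously from (\ref{PDE:DS4}) with $l\mapsto l-e_4$. Next, to obtain the second-order identities (\ref{eqn:DS1:l-e_1+e_3}) and (\ref{eqn:DS1:l-e_4+e_2}), I will apply (\ref{PDE:DS2}) with $l\mapsto l-e_1$ and (\ref{PDE:DS3}) with $l\mapsto l-e_4$ respectively. In each case the three surviving terms let me solve for $\hat{\vp}_{l-e_1+e_3}$ (resp.\ $\hat{\vp}_{l-e_4+e_2}$); multiplying by $2\pi y_1$ (resp.\ $2\pi y_3$) and then substituting the already-established first-order relation produces the claim.

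Finally, the third-order identities (\ref{eqn:DS1:l-e_1+e_4}) and (\ref{eqn:DS1:l-e_4+e_1}) are obtained by applying (\ref{PDE:DS3}) with $l\mapsto l-e_1$ and (\ref{PDE:DS2}) with $l\mapsto l-e_4$, isolating $\hat{\vp}_{l-e_1+e_4}$ (resp.\ $\hat{\vp}_{l-e_4+e_1}$), multiplying by $(2\pi y_1)(2\pi y_2)$ (resp.\ $(2\pi y_2)(2\pi y_3)$), and substituting the second-order identities together with (\ref{eqn:DS1:l-e_4+e_3}) (resp.\ (\ref{eqn:DS1:l-e_1+e_2})).

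The main technical point, and the only source of potential error, is the bookkeeping for the commutators. Since $[\pa_i,y_j]=\delta_{ij}y_j$, one has $(2\pi y_i)\pa_i=\pa_i(2\pi y_i)-(2\pi y_i)$ and $(2\pi y_i)\pa_j=\pa_j(2\pi y_i)$ for $i\neq j$; moving each $2\pi y_i$ past a first-order operator produces an integer shift in the constant term (for instance, a coefficient $l_1-1$ becomes $l_1$ after one such commutation), and these shifts are precisely what reproduce the coefficients stated in the lemma. A further simplification uses the constraint $l_1+l_4=\kappa_1-\kappa_2$, which converts expressions such as $-\kappa_2-l_4$ into $-\kappa_1+l_1$ so that the final coefficients take the symmetric form displayed. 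No step is conceptually deep; the third-order case is the most intricate, but it is mechanical once the commutation rules above are applied systematically.
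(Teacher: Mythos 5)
Your proposal is correct and takes essentially the same route as the paper's proof: shift $l$ by $-e_4$ (resp.\ $-e_1$) in (\ref{PDE:DS4}), (\ref{PDE:DS3}), (\ref{PDE:DS2}) (resp.\ (\ref{PDE:DS1}), (\ref{PDE:DS2}), (\ref{PDE:DS3})), note that the extra $l_2,l_3,l_{13},l_{14},l_{23},l_{24}$ terms vanish, multiply by the appropriate $2\pi y_i$ factors, commute them past the Euler operators, and substitute the lower-order identities, with $l_1+l_4=\kappa_1-\kappa_2$ producing the stated constants. The only blemish is a swapped ``resp.''\ in your final sentence: the derivation of (\ref{eqn:DS1:l-e_1+e_4}) uses the first-order relation (\ref{eqn:DS1:l-e_1+e_2}), while that of (\ref{eqn:DS1:l-e_4+e_1}) uses (\ref{eqn:DS1:l-e_4+e_3}).
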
 

\begin{proof}
Let $ l = l_1 e_1+l_4 e_4 + l_{12} e_{12} + l_{13} e_{13} + l_{14} e_{14} + l_{23} e_{23} + l_{24} e_{24} + l_{34} e_{34} 
 \in S_{(\kappa_1,\kappa_2,\delta_3)}$.
We apply the equations (\ref{PDE:DS2}), (\ref{PDE:DS3}), (\ref{PDE:DS4}) with   
$ l-e_4\in S_{(\kappa_1-1, \kappa_2, \delta_3)}$ to get 
\begin{align}  \label{PDE:DS2q}
\begin{split}
& (-\pa_1+\pa_2-\nu_1'-\tfrac{\kappa_1-1}{2}+ l_1) \hat{\vp}_{l-e_4+e_2}  
   - (2\pi y_1) \hat{\vp}_{l-e_4+e_1} + (2\pi y_2) \hat{\vp}_{l-e_4+e_3} \\
&  + l_{13} \hat{\vp}_{l-e_4+e_1-e_{13}+e_{23}} + l_{14} \hat{\vp}_{l-e_4+e_1-e_{14}+e_{24}} 
     + l_{23} \hat{\vp}_{l-e_4+e_1-e_{23}+e_{13}} + l_{24} \hat{\vp}_{l-e_4+e_1-e_{24}+e_{14}} = 0, 
\end{split}
\end{align}
\begin{align}  \label{PDE:DS3q}
\begin{split}
& (-\pa_2+\pa_3-\nu_1' +\tfrac{\kappa_1-1}{2}-\kappa_2-(l_4-1)) \hat{\vp}_{l-e_4+e_3}
   - (2\pi y_2) \hat{\vp}_{l-e_4+e_2}  + (2\pi y_3) \hat{\vp}_{l} \\
& - l_{13} \hat{\vp}_{l-e_{13}+e_{14}} - l_{14} \hat{\vp}_{l-e_{14}+e_{13}}   - l_{23} \hat{\vp}_{l-e_{23}+e_{24}} - l_{24} \hat{\vp}_{l-e_{24}+e_{23}} = 0, 
\end{split}
\end{align}
\begin{align}  \label{PDE:DS4q}
 (-\pa_3+\gamma_1-\nu_1'+\tfrac{\kappa_1-1}{2}+\kappa_2) \hat{\vp}_{l} - (2\pi y_3) \hat{\vp}_{l-e_4+e_3} = 0.
\end{align}
The equation (\ref{eqn:DS1:l-e_4+e_3}) is immediate from (\ref{PDE:DS4q}). Since $ (2\pi y_3) (\ref{PDE:DS3q}) $ is equivalent to 
\begin{align*}
& (-\pa_2+\pa_3-\nu_1' - \tfrac{\kappa_1+1}{2} +l_1 ) (2\pi y_3)\hat{\vp}_{l-e_4+e_3} 
   - (2\pi y_2)(2\pi y_3) \hat{\vp}_{l-e_4+e_2}  + (2\pi y_3)^2 \hat{\vp}_{l} \\
& - l_{13} (2\pi y_3) \hat{\vp}_{l-e_{13}+e_{14}} - l_{14}(2\pi y_3) \hat{\vp}_{l-e_{14}+e_{13}} 
   - l_{23} (2\pi y_3) \hat{\vp}_{l-e_{23}+e_{24}} - l_{24} (2\pi y_3)\hat{\vp}_{l-e_{24}+e_{23}} = 0, 
\end{align*}
we get (\ref{eqn:DS1:l-e_4+e_2}) from (\ref{eqn:DS1:l-e_4+e_3}).

Similarly, since the equation $ (2\pi y_2) (2\pi y_3) (\ref{PDE:DS2q}) $ is equivalent to
\begin{align*}
& (-\pa_1+\pa_2-\nu_1'-\tfrac{\kappa_1+1}{2}+ l_1) (2\pi y_2) (2\pi y_3)  \hat{\vp}_{l-e_4+e_2}  
     - (2\pi y_1)(2\pi y_2) (2\pi y_3) \hat{\vp}_{l-e_4+e_1} + (2\pi y_2)^2 (2\pi y_3) \hat{\vp}_{l-e_4+e_3} \\
&  + l_{13} (2\pi y_2) (2\pi y_3)\hat{\vp}_{l-e_4+e_1-e_{13}+e_{23}} 
    + l_{14} (2\pi y_2) (2\pi y_3)\hat{\vp}_{l-e_4+e_1-e_{14}+e_{24}} \\
&  + l_{23} (2\pi y_2) (2\pi y_3)\hat{\vp}_{l-e_4+e_1-e_{23}+e_{13}} 
    + l_{24} (2\pi y_2) (2\pi y_3)\hat{\vp}_{l-e_4+e_1-e_{24}+e_{14}} = 0,
\end{align*}
we can get (\ref{eqn:DS1:l-e_4+e_1}).
The equations (\ref{eqn:DS1:l-e_1+e_2}), (\ref{eqn:DS1:l-e_1+e_3}), (\ref{eqn:DS1:l-e_1+e_4})
can be similarly obtained by the equations (\ref{PDE:DS1}), (\ref{PDE:DS2}), (\ref{PDE:DS3}). 
\end{proof}

\begin{lem} \label{lem:DS2_l-e12_and_l-e34}
Assume $ \kappa_2 \ge 1 $. Set $ c = \nu_1+\nu_2- \tfrac12 \gamma_1 $.
For $ \varepsilon \in \{\pm 1\} $ and $ t \in \bC $, we put
\begin{align*}
 X_{\varepsilon}(t)  
& = (\pa_2-\tfrac{\gamma_1+\kappa_1+\kappa_2}{2} +\varep t) 
     (\pa_1-\pa_2+\pa_3 - \tfrac{\gamma_1+\kappa_1+\kappa_2}{2} + \varep t)
     (\pa_1-\pa_3 + \tfrac{\gamma_1+\kappa_1+\kappa_2}{2}+t )  \\
& \quad      
     - \{ (2\pi y_1)^2 - (2\pi y_3)^2 \}  (\pa_2-\tfrac{\gamma_1+\kappa_1+\kappa_2}{2} +\varep t)   
     + (2\pi y_2)^2 (\pa_1-\pa_3 + \tfrac{\gamma_1+\kappa_1+\kappa_2}{2}+t ).
\end{align*}
For $ l = (\kappa_1-\kappa_2) e_4 + l_{12} e_{12} + l_{34} e_{34} \in S_{(\kappa_1,\kappa_2, \delta_3)} $, we have the following:
\smallskip  

\noindent
(i) If $ l_{12} \ge 1 $ then we have
\begin{align} 
\label{eqn:l-e12+e_13a}
\begin{split}
 (2\pi y_2)  \hat{\vp}_{l-e_{12}+e_{13} } & = -(\pa_2-\tfrac{\gamma_1+\kappa_1+\kappa_2}{2}-c+1) \hat{\vp}_l, 
\end{split}
\\
\label{eqn:l-e12+e_14a}
\begin{split} 
 2(c-1) (2\pi y_2)(2 \pi y_3) \hat{\vp}_{l-e_{12}+e_{14}}  & = X_{-1}(c-1) \hat{\vp}_l, 
\end{split}
\\
\label{eqn:l-e12+e_23a}
\begin{split}
 2(c-1) (2\pi y_1)(2\pi y_2) \hat{\vp}_{l-e_{12}+e_{23}}
& = - X_{1}(-c+1)  \hat{\vp}_l
\end{split}
\end{align}
and
\begin{align} 
\label{eqn:l-e12+e_34a}
\begin{split}
& 2(c-1) (2\pi y_1)(2\pi y_2)^2 (2\pi y_3) \hat{\vp}_{l-e_{12}+e_{34}} \\
& = - \{ (\pa_1-\pa_2+\pa_3 - \tfrac{\gamma_1+\kappa_1+\kappa_2}{2}+c-1)
    (\pa_1-\pa_3 + \tfrac{\gamma_1+\kappa_1+\kappa_2}{2}+c-1) + (2\pi y_3)^2 \} X_{1}(-c+1) \hat{\vp}_l \\
& \quad + (2\pi y_1)^2 \{ X_{-1}(c-1) -2(c-1) 
  (\pa_2-\tfrac{\gamma_1+\kappa_1+\kappa_2}{2}-c+ 1)
  (\pa_1-\pa_2+\pa_3   - \tfrac{\gamma_1+\kappa_1+\kappa_2}{2}+c+1)  \} \hat{\vp}_l.
\end{split}
\end{align}
(ii)
If $ l_{34} \ge 1 $ then we have
\begin{align}
\label{eqn:l-e34+e_24a}
\begin{split}
 (2\pi y_2)  \hat{\vp}_{l-e_{34}+e_{24} } 
& = -(\pa_2-\tfrac{\gamma_1+\kappa_1+\kappa_2}{2}+c+1) \hat{\vp}_l, 
\end{split}
\\
\label{eqn:l-e34+e_23a}
\begin{split} 
2(c+1) (2\pi y_2)(2 \pi y_3) \hat{\vp}_{l-e_{34}+e_{23}} 
& =  - X_{-1}(-c-1) \hat{\vp}_l,
\end{split}
\\
\label{eqn:l-e34+e_14a}
\begin{split}
2 (c+1) (2\pi y_1)(2\pi y_2) \hat{\vp}_{l-e_{34}+e_{14}} 
& =  X_1(c+1)  \hat{\vp}_l
\end{split}
\end{align}
and 
\begin{align}
\label{eqn:l-e34+e_12a}
\begin{split}
& 2(c+1) (2\pi y_1)(2\pi y_2)^2 (2\pi y_3) \hat{\vp}_{l-e_{34}+e_{12}} \\
& = \{ (\pa_1-\pa_2+\pa_3 - \tfrac{\gamma_1+\kappa_1+\kappa_2}{2}-c-1)
     ( \pa_1-\pa_3 + \tfrac{\gamma_1+\kappa_1+\kappa_2}{2}  -c- 1 ) + (2\pi y_3)^2 \} X_{1}(c+1) \hat{\vp}_l \\
& \quad 
  -  (2\pi y_1)^2 \{ X_{-1}(-c-1)  + 2(c+1) (\pa_2-\tfrac{\gamma_1+\kappa_1+\kappa_2}{2}+c+1)
     (\pa_1-\pa_2+\pa_3 - \tfrac{\gamma_1+\kappa_1+\kappa_2}{2}-c+1) \} \hat{\vp}_l.
\end{split}
\end{align}
\end{lem}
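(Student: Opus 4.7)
The plan is to parallel the argument of Lemma \ref{lem:DS1_l-e1_and_l-e4}, using the ``second'' family of Dirac--Schmid relations (\ref{PDE:DS12})--(\ref{PDE:DS34}) in Proposition \ref{prop:PDE2} in place of (\ref{PDE:DS1})--(\ref{PDE:DS4}). For part (i), I would set $l^{\ast}=l-e_{12}$ and check that $l^{\ast}\in S_{(\kappa_1-1,\kappa_2-1,0)}$: indeed $l^{\ast}_1+\dots+l^{\ast}_4=\kappa_1-\kappa_2$ and $l^{\ast}_{12}+\dots+l^{\ast}_{34}=\kappa_2-1$. The key combinatorial simplification is that for this $l^{\ast}$ one has $l^{\ast}_2=l^{\ast}_3=0$ and $l^{\ast}_{13}=l^{\ast}_{14}=l^{\ast}_{23}=l^{\ast}_{24}=0$, so almost every auxiliary summand in (\ref{PDE:DS12})--(\ref{PDE:DS34}) disappears. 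Writing $t'=\tfrac{\gamma_1+\kappa_1+\kappa_2}{2}$ and using $c=\nu_1+\nu_2-\tfrac12\gamma_1$, the six surviving relations take the clean form $A_{ij}\,\hat{\vp}_{l^{\ast}+e_{ij}}+\sum(2\pi y_k)\hat{\vp}_{l^{\ast}+e_{i'j'}}=0$, with first-order operators $A_{12}=\pa_2-t'-c+1$, $A_{13}=\pa_1-\pa_2+\pa_3-t'-c$, $A_{14}=\pa_1-\pa_3+t'-c$, $A_{23}=-\pa_1+\pa_3-t'-c$, $A_{24}=-\pa_1+\pa_2-\pa_3+t'-c$, $A_{34}=-\pa_2+t'-c-1$. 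The identity (\ref{eqn:l-e12+e_13a}) is then literally DS12.

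Next, I would multiply DS13, DS14 and DS23 by $2\pi y_2$, track each commutator $[A_{ij},2\pi y_k]=\pm2\pi y_k$ needed to move $2\pi y_2$ past the $A_{ij}$'s, and then substitute (\ref{eqn:l-e12+e_13a}) to eliminate $\hat{\vp}_{l^{\ast}+e_{13}}$. This produces three equations in the three products $bc'\hat{\vp}_{l^{\ast}+e_{14}}$, $ab\hat{\vp}_{l^{\ast}+e_{23}}$, $abc'\hat{\vp}_{l^{\ast}+e_{24}}$ (with $a=2\pi y_1$, $b=2\pi y_2$, $c'=2\pi y_3$). Eliminating $\hat{\vp}_{l^{\ast}+e_{24}}$ between $c'\cdot\text{DS14}$ and $a\cdot\text{DS23}$, and then using the DS13 relation to eliminate $ab\hat{\vp}_{l^{\ast}+e_{23}}$, collapses the system into a single equation for $bc'\hat{\vp}_{l^{\ast}+e_{14}}$ whose scalar coefficient on the left is precisely $A_{14}+A_{23}+2=-2(c-1)$; rearranging yields (\ref{eqn:l-e12+e_14a}) with right-hand side equal to $X_{-1}(c-1)\hat{\vp}_l$ (after recognising $A_{12}=\pa_2-t'-(c-1)$, $A_{13}+1=\pa_1-\pa_2+\pa_3-t'-(c-1)$ and $-(A_{23}+1)=\pa_1-\pa_3+t'+(c-1)$ as the three factors in $X_{-1}(c-1)$). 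The dual elimination eliminating $bc'\hat{\vp}_{l^{\ast}+e_{14}}$ instead produces (\ref{eqn:l-e12+e_23a}); the coefficient $X_{1}(-c+1)$ emerges in exactly the same way with the roles of the ``$+$'' and ``$-$'' discrete series weights swapped.

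For (\ref{eqn:l-e12+e_34a}), I would multiply DS24 by $(2\pi y_2)(2\pi y_3)$ and substitute DS34 (to express $\hat{\vp}_{l^{\ast}+e_{34}}$ via $\hat{\vp}_{l^{\ast}+e_{24}}$) together with the just-derived formulas (\ref{eqn:l-e12+e_14a}) and (\ref{eqn:l-e12+e_23a}). Multiplying through by $(2\pi y_1)(2\pi y_2)\cdot2(c-1)$ to clear denominators and repeatedly commuting $a,b,c'$ past the $A_{ij}$'s yields precisely the stated formula; the two combinations $X_{-1}(c-1)$ and $X_{1}(-c+1)$ naturally reassemble into the structure displayed in the lemma, with the ``$(2\pi y_1)^2$'' term isolating the contribution that is not already produced by the DS24 composition.

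Part (ii) is handled symmetrically by setting $l^{\ast}=l-e_{34}$; the analogous vanishings ($l^{\ast}_2=l^{\ast}_3=0$, $l^{\ast}_{13}=l^{\ast}_{14}=l^{\ast}_{23}=l^{\ast}_{24}=0$, $l^{\ast}_{34}=l_{34}-1$) make the same equations applicable, but the roles of DS12 and DS34 are interchanged. The net effect is the substitution $c\mapsto-c$ and a reflection $(e_{12},e_{13},e_{14})\leftrightarrow(e_{34},e_{24},e_{23})$, which produces the sign pattern in (\ref{eqn:l-e34+e_24a})--(\ref{eqn:l-e34+e_12a}). The main obstacle is purely bookkeeping: every time a factor $2\pi y_k$ is pushed past an $A_{ij}$ it generates a unit shift in the constant term of $A_{ij}$, and these shifts must be accumulated consistently so that the final operator matches $X_{\pm1}(t)$ on the nose. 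Once this bookkeeping is organised (for instance by recording at each step which of $a,b,c'$ stands on the left), the proof reduces to the linear-algebraic elimination above.
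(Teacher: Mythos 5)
Your overall strategy is exactly the paper's: specialize the Dirac--Schmid equations (\ref{PDE:DS12})--(\ref{PDE:DS34}) at $l^{\ast}=l-e_{12}$ (resp.\ $l-e_{34}$), where all auxiliary terms vanish and the coefficient operators become $D_{12}-c+1$, $D_{13}-c$, $\pm(D_{14}\mp c)$, $-D_{13}-c$, $-D_{12}-c-1$ with $D_{12}=\pa_2-\tfrac{\gamma_1+\kappa_1+\kappa_2}{2}$, etc.; identity (\ref{eqn:l-e12+e_13a}) is then DS12 itself, and (\ref{eqn:l-e12+e_14a}), (\ref{eqn:l-e12+e_23a}) follow by the linear elimination you describe, in which the operator coefficients differ only by the scalar $2(c-1)$ (the paper packages the same elimination by applying $(D_{14}+t)(2\pi y_2)$ to DS13 and setting $t=\pm(c-1)$, which is your computation in disguise). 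Your identification of the unit shifts from commuting $2\pi y_k$ past the $D_{ij}$'s, and of the factors of $X_{\pm 1}$, is correct, and part (ii) is indeed the mirror image with $c\mapsto -c$.

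The one step that would fail as written is your route to (\ref{eqn:l-e12+e_34a}). You propose to multiply DS24 by $(2\pi y_2)(2\pi y_3)$ and ``substitute DS34 to express $\hat{\vp}_{l^{\ast}+e_{34}}$ via $\hat{\vp}_{l^{\ast}+e_{24}}$.'' But DS34 at $l^{\ast}$ reads $(-D_{12}-c-1)\hat{\vp}_{l-e_{12}+e_{34}}=(2\pi y_2)\hat{\vp}_{l-e_{12}+e_{24}}$: used in the direction you state it eliminates the very unknown $\hat{\vp}_{l-e_{12}+e_{34}}$ you want, and used in the other direction it places a differential operator in front of $\hat{\vp}_{l-e_{12}+e_{34}}$, so in either case you cannot arrive at (\ref{eqn:l-e12+e_34a}), whose left-hand side is the pure multiplication operator $2(c-1)(2\pi y_1)(2\pi y_2)^2(2\pi y_3)$. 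The missing ingredient is an expression for $\hat{\vp}_{l-e_{12}+e_{24}}$ in terms of $\hat{\vp}_l$: as in the paper, multiply DS23 (or DS14) by $2(c-1)(2\pi y_1)(2\pi y_2)$ and substitute (\ref{eqn:l-e12+e_13a}) and (\ref{eqn:l-e12+e_23a}) to get
\begin{align*}
 2(c-1)(2\pi y_1)(2\pi y_2)(2\pi y_3)\hat{\vp}_{l-e_{12}+e_{24}}
 = \{-(D_{14}+c-1)X_{1}(-c+1)-2(c-1)(2\pi y_1)^2(D_{12}-c+1)\}\hat{\vp}_l,
\end{align*}
and only then feed this, together with (\ref{eqn:l-e12+e_14a}) and (\ref{eqn:l-e12+e_23a}), into DS24 multiplied by $2(c-1)(2\pi y_1)(2\pi y_2)(2\pi y_3)$, since DS24 already contains $(2\pi y_2)\hat{\vp}_{l-e_{12}+e_{34}}$ as a multiplication term; DS34 plays no role in part (i) (and, symmetrically, DS12 plays no role in part (ii)). With that local correction the rest of your bookkeeping goes through and reproduces the paper's proof.
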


\begin{proof}
We abbreviate 
\begin{align*}
 D_{12} & = \pa_2-\tfrac{\gamma_1+\kappa_1+\kappa_2}{2}, &
 D_{13} & = \pa_1-\pa_2+\pa_3 - \tfrac{\gamma_1+\kappa_1+\kappa_2}{2}, &
 D_{14} & = \pa_1-\pa_3 + \tfrac{\gamma_1+\kappa_1+\kappa_2}{2}. 
\end{align*}
Assume $ l_{12} \ge1 $. 
From the equations
(\ref{PDE:DS12}), (\ref{PDE:DS13}), (\ref{PDE:DS14}), (\ref{PDE:DS23}), (\ref{PDE:DS24}), 
for $ l = (\kappa_1-\kappa_2) e_4 + l_{12} e_{12} + l_{34} e_{34} \in S_{(\kappa_1,\kappa_2, \delta_3)}  $, we have 
\begin{align}
\label{PDE:DS12q} 
& (D_{12}-c+1) \hat{\vp}_{l} + (2 \pi y_2) \hat{\vp}_{l-e_{12}+e_{13}}= 0, \\
\label{PDE:DS13q} 
\begin{split}
& (D_{13}-c)  \hat{\vp}_{l-e_{12}+e_{13}}  
 + (2\pi y_1) \hat{\vp}_{l-e_{12} +e_{23}} - (2\pi y_2) \hat{\vp}_{l}  + (2\pi y_3) \hat{\vp}_{l-e_{12}+e_{14}} = 0,
\end{split} 
\\
\label{PDE:DS14q}  
& (D_{14}-c)  \hat{\vp}_{l-e_{12}+e_{14}}
+ (2 \pi y_1) \hat{\vp}_{l-e_{12}+e_{24}} - (2\pi y_3) \hat{\vp}_{l-e_{12}+e_{13}}  = 0,
\\
\label{PDE:DS23q} 
& (-D_{14}-c)  \hat{\vp}_{l-e_{12}+e_{23}} 
 - (2 \pi y_1) \hat{\vp}_{l-e_{12}+e_{13}} + ( 2\pi  y_3) \hat{\vp}_{l-e_{12}+e_{24}}  = 0,
\\
\label{PDE:DS24q} 
& (-D_{13}-c)  \hat{\vp}_{l-e_{12}+e_{24}} 
 - (2 \pi y_1) \hat{\vp}_{l-e_{12}+e_{14}} + ( 2\pi  y_2) \hat{\vp}_{l-e_{12}+e_{34}}  
 - ( 2\pi  y_3) \hat{\vp}_{l-e_{12}+e_{23}}  = 0. 
\end{align}
We apply the operator
$ (D_{14} + t )(2\pi y_2) $ to $ (\ref{PDE:DS13q}) $:
\begin{align*}
&  (D_{13}-c +1)(D_{14}+t) (2\pi y_2) \hat{\vp}_{l-e_{12}+e_{13}} 
  + (2\pi y_1)(2\pi y_2) (D_{14} + t +1) \hat{\vp}_{l-e_{12}+e_{23}} \\
& - (2\pi y_2)^2  (D_{14} + t) \hat{\vp}_{l}
 + (2\pi y_2)(2\pi y_3)  (D_{14} + t-1) \hat{\vp}_{l-e_{12}+e_{14}} = 0.
\end{align*}
By using 
(\ref{PDE:DS14q}), (\ref{PDE:DS23q}) and (\ref{PDE:DS12q}), we  have 
\begin{align*}
& \{ -(D_{12}-c+1) (D_{13}-c +1)(D_{14}+t)  + (2\pi y_1)^2(D_{12}-c+1) - (2\pi y_2)^2  (D_{14}+ t)  - (2\pi y_3)^2 (D_{12}-c+1) \}  \hat{\vp}_l 
 \\
& + (2\pi y_1)(2\pi y_2) (-c +t+1) \hat{\vp}_{l-e_{12}+e_{23}} +   (2\pi y_2)(2\pi y_3) (c+ t-1) \hat{\vp}_{l-e_{12}+e_{14}} 
= 0.
\end{align*}
Substitution $ t = \pm (c-1) $ implies the equations (\ref{eqn:l-e12+e_14a}) and (\ref{eqn:l-e12+e_23a}).
From (\ref{PDE:DS23q}) together with (\ref{eqn:l-e12+e_13a}) and (\ref{eqn:l-e12+e_14a}) we have 
\begin{align*}
 2(c-1)(2 \pi y_1)(2\pi y_2)(2\pi y_3) \hat{\vp}_{l-e_{12}+e_{24}} 
& = \{ -(D_{14}+c-1) X_{1}(-c+1) -2(c-1) (2\pi y_1)^2 (D_{12}-c+1)\} \hat{\vp}_l.
\end{align*}
Since (\ref{PDE:DS24q}) implies 
\begin{align*}
&  2(c-1)  (2\pi y_1)(2\pi y_2)^2(2\pi y_3) \hat{\vp}_{l-e_{12}+e_{34}}\\
& = 2(c-1) \{  (D_{13}+c-1) (2\pi y_1)(2\pi y_2)(2\pi y_3) \hat{\vp}_{l-e_{12}+e_{24}}  \\
& \quad    +  (2\pi y_1)^2 (2\pi y_2)(2\pi y_3) \hat{\vp}_{l-e_{12}+e_{14}} 
    +   (2\pi y_1)(2\pi y_2)(2\pi y_3)^2 \hat{\vp}_{l-e_{12}+e_{23}} \}
\\
& = (D_{13}+c-1) \{ -(D_{14}+c-1) X_{1}(-c+1) -2(c-1) (2\pi y_1)^2 (D_{12}-c+1)\}  \\
& \quad + (2\pi y_1)^2  X_{-1} (c-1) - (2\pi y_3)^2 X_{1} (-c+1),
\end{align*}
we get (\ref{eqn:l-e12+e_34a}).
The case of $ l_{34} \ge 1 $ can be similarly done.
\end{proof}

\begin{prop} \label{prop:PDE_reduction_classone}
Set $ c = \nu_1+\nu_2-\tfrac12 \gamma_1 $.
We assume that 
\begin{align} \label{assumption}
  \begin{cases} 
   \nu_1+\nu_2 \neq \nu_3+\nu_4 & \text{case 1-(iii)}, \\
   \nu_2 \neq \nu_3 & \text{case 2-(ii)}.
  \end{cases}
\end{align}
(i) 
For $ \mu = (\mu_1,\mu_2,\mu_3,\mu_4) \in \bC^4 $, we set 
\begin{align*}
 \mathcal{D}_{2}(\mu) 
& = -(\pa_1^2+\pa_2^2+\pa_3^2)+\pa_1\pa_2+\pa_2\pa_3  + \sigma_1(\mu) \pa_3 
  -\sigma_2(\mu) + (2\pi y_1)^2 + (2\pi y_2)^2 + (2\pi y_3)^2,
\\ 
 \mathcal{D}_{3}(\mu)
& = \pa_2(\pa_1-\pa_3)(\pa_1-\pa_2+\pa_3) - \sigma_1(\mu)(\pa_1^2+\pa_2^2-\pa_1\pa_2-\pa_2\pa_3) -\sigma_3(\mu) \\
& \quad  + (2\pi y_1)^2(-\pa_2+\sigma_1(\mu)) + (2\pi y_2)^2 (\pa_1-\pa_3+\sigma_1(\mu)) + (2\pi y_3)^2 \pa_2,
\\
 \mathcal{D}_{4}(\mu) 
& = \pa_1(\pa_2-\pa_1)(\pa_3-\pa_2)(-\pa_3+\sigma_1(\mu))   
  - \sigma_4(\mu)
 + (2\pi y_1)^2 (\pa_3-\pa_2)(-\pa_3+\sigma_1(\mu)) \\
& \quad + (2\pi y_2)^2 \pa_1(-\pa_3+\sigma_1(\mu))  + (2\pi y_3)^2 \pa_1(\pa_2-\pa_1) + (2\pi y_1)^2 (2\pi y_3)^2.
\end{align*}
Then, for $ l = (\kappa_1-\kappa_2) e_4+l_{12} e_{12} + l_{34} e_{34} \in S_{(\kappa_1,\kappa_2,\delta_3)} $, we have 
\begin{align} \label{eqn:Capelli_reduction}
  \mathcal{D}_i (r) \hat{\vp}_{l} = 0 \quad (i=2,3,4), 
\end{align}
where
\begin{align*}
r = \begin{cases}
   (\nu_1, \nu_2, \nu_3, \nu_4) & \text{case 1-(i)}, \\
   (\nu_1+1, \nu_2, \nu_3, \nu_4) & \text{case 1-(ii)}, \\
   (\nu_1+l_{34}, \nu_2+l_{34}, \nu_3+l_{12}, \nu_4+l_{12}) & \text{case 1-(iii)}, \\  
   (\nu_4+1, \nu_2, \nu_3, \nu_1) & \text{case 1-(iv)}, \\
   (\nu_1+\tfrac{\kappa_1-1}{2}, \nu_1+\tfrac{\kappa_1+1}{2}, \nu_2+l_{34},\nu_3+l_{12}) & \text{case 2}, \\
   (\nu_1+\tfrac{\kappa_1-1}{2}, \nu_1+\tfrac{\kappa_1+1}{2}, \nu_2+\tfrac{\kappa_2-1}{2}, \nu_2+\tfrac{\kappa_2+1}{2})
  & \text{case 3}. \end{cases} 
\end{align*}
(ii) Assume $ \kappa_2 \ge 1 $.
For $ (c_1,c_2) \in \bC^2 $ and $i =1,3 $, we set 
\begin{align*}
  \mathcal{E}_i(c_1,c_2) = (\pa_i-c_1)(\pa_i- c_2) - (2\pi y_i)^2. 
\end{align*}
\begin{itemize}
\item If $ c \neq 1 $ then, 
for $ l =  (\kappa_1-\kappa_2) e_4+l_{12} e_{12} + l_{34} e_{34} \in S_{(\kappa_1-1,\kappa_2-1,0)} $, we have
\begin{align}  
\label{eqn:e12_vs_e14}
 (2\pi y_2)(2\pi y_3)  \hat{\vp}_{l+e_{14}} & = \mathcal{E}_3(a_1,a_2) \hat{\vp}_{l+e_{12}}, 
\\
\label{eqn:e12_vs_e23}
  (2\pi y_1)(2\pi y_2) \hat{\vp}_{l+ e_{23}} & = \mathcal{E}_1(b_1,b_2) \hat{\vp}_{l+e_{12}}
\end{align}
and 
\begin{align} \label{eqn:e12_vs_e34}
\begin{split}
 & (2\pi y_1)( 2\pi y_2)^2 (2\pi y_3) \hat{\vp}_{l+e_{34}}
\\
& = [  \{ (\pa_1-\pa_2+\pa_3-\tfrac{\gamma_1+\kappa_1+\kappa_2}{2}+c-1)
    (\pa_1-\pa_3+\tfrac{\gamma_1+\kappa_1+\kappa_2}{2}+c-1) + (2\pi y_3)^2 \} \mathcal{E}_1(b_1,b_2) 
\\
& \quad + (2\pi y_1)^2 \{ -(\pa_2-\tfrac{\gamma_1+\kappa_1+\kappa_2}{2}-c+1)
    (\pa_1-\pa_2+\pa_3-\tfrac{\gamma_1+\kappa_1+\kappa_2}{2}+c-1) + \mathcal{E}_3(a_1,a_2) \} ] \hat{\vp}_{l+e_{12}}.
\end{split}
\end{align}
\item If $ c \neq -1 $ then, 
for $ l =  (\kappa_1-\kappa_2) e_4+l_{12} e_{12} + l_{34} e_{34} \in S_{(\kappa_1-1,\kappa_2-1,0)} $, we have
\begin{align}  \label{eqn:e34_vs_e14}
 (2\pi y_1)(2\pi y_2)  \hat{\vp}_{l+e_{14}} &= \mathcal{E}_1(a_3,a_4) \hat{\vp}_{l+e_{34}},  
\\  \label{eqn:e34_vs_e23}
 (2\pi y_2)(2\pi y_3)  \hat{\vp}_{l+e_{23}} &= \mathcal{E}_3(b_3,b_4) \hat{\vp}_{l+e_{34}}
\end{align}
and
\begin{align}  \label{eqn:e34_vs_e12}
\begin{split}
& (2\pi y_1)(2\pi y_2)^2 (2\pi y_3) \hat{\vp}_{l+e_{12}} \\
& = [  \{ (\pa_1-\pa_2+\pa_3 - \tfrac{\gamma_1+\kappa_1+\kappa_2}{2}-c-1)
     ( \pa_1-\pa_3 + \tfrac{\gamma_1+\kappa_1+\kappa_2}{2}  -c- 1 ) + (2\pi y_3)^2 \} \mathcal{E}_1(a_3,a_4)  
\\
& \quad  + (2\pi y_1)^2  \{  \mathcal{E}_3(b_3,b_4) -  (\pa_2-\tfrac{\gamma_1+\kappa_1+\kappa_2}{2}+c+1)
     (\pa_1-\pa_2+\pa_3 - \tfrac{\gamma_1+\kappa_1+\kappa_2}{2}-c+1) \} ] \hat{\vp}_{l+e_{34}}.
\end{split}
\end{align}
\end{itemize}
Here
\begin{align*}
 (a_1,a_2,a_3,a_4) & = \begin{cases} 
   (\nu_1+\nu_2+\nu_3+1, \nu_1+\nu_2+\nu_4+1,  \nu_3, \nu_4) & \text{case 1-(iii)}, \\
   (2\nu_1+\nu_2+\kappa_1, \nu_1+\nu_2+\nu_3+\tfrac{\kappa_1+1}{2},  \nu_1+\tfrac{\kappa_1-1}{2}, \nu_3 ) 
  & \text{case 2-(ii)}, \\
   (2\nu_1+\nu_2+\kappa_1+\tfrac{\kappa_2-1}{2}, \nu_1+2\nu_2+\tfrac{\kappa_1-1}{2}+\kappa_2, 
   \nu_1+\tfrac{\kappa_1-1}{2}, \nu_2+\tfrac{\kappa_2-1}{2}) & \text{case 3},
   \end{cases}
\\
 (b_1,b_2,b_3,b_4) & = \begin{cases} 
   (\nu_1,\nu_2, \nu_1+\nu_3+\nu_4+1, \nu_2+\nu_3+\nu_4+1) & \text{case 1-(iii)}, \\
   (\nu_1+\tfrac{\kappa_1-1}{2}, \nu_2,  2\nu_1+\nu_3+\kappa_1 , \nu_1+\nu_2+\nu_3+ \tfrac{\kappa_1+1}{2}) 
  & \text{case 2-(ii)}, \\
   (\nu_1+\tfrac{\kappa_1-1}{2}, \nu_2+\tfrac{\kappa_2-1}{2} ,
   2 \nu_1+\nu_2 +\kappa_1+ \tfrac{\kappa_2-1}{2}, \nu_1+2\nu_2+\tfrac{\kappa_1-1}{2}+\kappa_2 ) & \text{case 3}.
   \end{cases}
\end{align*}
\end{prop}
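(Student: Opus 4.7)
\emph{Strategy.}  The plan is to reduce the coupled system of Proposition \ref{prop:PDE2} for the collection $\{\hat\vp_{l'}\}$ to a self-contained scalar system for the distinguished component $\hat\vp_l$ with $l = (\kappa_1-\kappa_2)e_4 + l_{12}e_{12} + l_{34}e_{34}$.  The guiding principle is that this $\hat\vp_l$ behaves like a class one Whittaker function of a principal series with suitably shifted parameters $r$; the shifts come from the ``diagonal'' contributions produced by the Dirac--Schmid reductions in Lemmas \ref{lem:DS1_l-e1_and_l-e4} and \ref{lem:DS2_l-e12_and_l-e34}.  Part (i) performs this reduction on the Capelli equations; part (ii) is a further application of the same Dirac--Schmid identities after part (i) is in hand.

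\emph{Part (i).}  I would first inspect, for the specific $l$ in the statement, which generators $u_{l'}$ enter the right-hand sides of $\mathfrak{K}_{12}$, $\mathfrak{K}_{23}$, $\mathfrak{K}_{34}$, $\mathfrak{K}_{13}$, $\mathfrak{K}_{24}$, $\mathfrak{K}_{14}$, $\mathfrak{K}_{12,34}$.  Since $l$ has no components along $e_1, e_2, e_3, e_{13}, e_{14}, e_{23}, e_{24}$, the only surviving shifts have the form $l - e_4 + e_i$ ($i=1,2,3$), $l - e_{12} + e_{jk}$ with $(jk) \in \{13,14,23,24,34\}$, $l - e_{34} + e_{jk}$ with $(jk) \in \{12,13,14,23,24\}$, plus the second-order cross shifts in $\mathfrak{K}_{12,34}$ obtained by iterating $-e_{12}$ and $-e_{34}$.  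Each such shifted component is rewritten as a differential operator applied to $\hat\vp_l$ by substituting the formulas of Lemmas \ref{lem:DS1_l-e1_and_l-e4} and \ref{lem:DS2_l-e12_and_l-e34}.  After substitution and collection of like terms, equations (\ref{PDE:C2})--(\ref{PDE:C4}) become scalar partial differential equations on $\hat\vp_l$, and a case-by-case check against the explicit shape of $\mathcal{D}_2(r), \mathcal{D}_3(r), \mathcal{D}_4(r)$ identifies the claimed parameters $r$ in each of cases 1-(i)--1-(iv), 2, 3.  The shifts $\nu_1 \mapsto \nu_1 + \tfrac{\kappa_1 \mp 1}{2}$ in cases 2, 3 and the additive shifts by $l_{12}, l_{34}$ in cases 1-(iii), 2-(ii), 3 arise precisely from the constants $\gamma_1 - \nu_1' + \tfrac{\kappa_1 - 1}{2} + \kappa_2$ and $\tfrac{\gamma_1+\kappa_1+\kappa_2}{2} \pm c \pm 1$ appearing on the right-hand sides of Lemmas \ref{lem:DS1_l-e1_and_l-e4} and \ref{lem:DS2_l-e12_and_l-e34}.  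The factors $c \mp 1$ that appear in those lemmas cancel against matching factors in the Capelli equations, so no hypothesis on $c$ is needed for (i).

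\emph{Part (ii) and the main obstacle.}  For the relations (\ref{eqn:e12_vs_e14})--(\ref{eqn:e12_vs_e34}), I would apply Lemma \ref{lem:DS2_l-e12_and_l-e34} with $l$ replaced by $l + e_{12}$ (so that the $l_{12}$ of the lemma is $\geq 1$), obtaining for instance
\[
 2(c-1)(2\pi y_2)(2\pi y_3)\,\hat\vp_{l+e_{14}} \;=\; X_{-1}(c-1)\,\hat\vp_{l+e_{12}},
\]
and the analogous identities with $X_1(-c+1)$, $X_{-1}(-c-1)$, $X_1(c+1)$.  Then, by part (i) applied to $l+e_{12}$, the vector $\hat\vp_{l+e_{12}}$ is annihilated by the class one operators $\mathcal{D}_2(r), \mathcal{D}_3(r), \mathcal{D}_4(r)$ with the appropriate shifted $r$; I would use these to reduce the cubic operator $X_{-1}(c-1)$ modulo this ideal and show that it collapses to $2(c-1)\mathcal{E}_3(a_1,a_2)$ with the specific $(a_1, a_2)$ in the statement.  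The relations (\ref{eqn:e34_vs_e14})--(\ref{eqn:e34_vs_e12}) follow by the symmetric procedure starting from $l + e_{34}$.  The main obstacle is this last algebraic identification:  the symbol of $X_{-1}(c-1)$ involves all three of $\pa_1, \pa_2, \pa_3$ and $y_1, y_2, y_3$, whereas $\mathcal{E}_3(a_1,a_2)$ involves only $\pa_3$ and $y_3$.  The elimination of $\pa_1, \pa_2, y_1, y_2$ must combine $\mathcal{D}_2(r)$ and $\mathcal{D}_3(r)$ in a nontrivial way, and the specific values $a_1 = \nu_1+\nu_2+\nu_3+1$, $a_2 = \nu_1+\nu_2+\nu_4+1$ in case 1-(iii) (and the analogous formulas in cases 2-(ii), 3) emerge as the roots of the quadratic in $\pa_3$ obtained after this elimination; checking that the roots come out correctly, while keeping track of the overall factor $2(c-1)$ that determines the $c \neq 1$ hypothesis, is the heart of the calculation.
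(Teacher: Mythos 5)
Your part (ii) is essentially the paper's own route (the ``elimination'' you describe is packaged there as the single identity (\ref{eqn:DS_vs_C3_1}), which writes $X_{\varepsilon}(t)$ as $\mathcal{D}_3(\mu)+(-\tfrac{\sigma_1(\mu)}{2}+t)\mathcal{D}_2(\mu)+\mathcal{R}(t,\mu)$ plus $2t\,\mathcal{E}_1$ or $2t\,\mathcal{E}_3$, with $\mu$ a permutation of the shifted parameter chosen so that $\mathcal{R}(t,\mu)=0$), but part (i) as you have set it up has a genuine gap, and it is located exactly at your sentence ``the factors $c\mp 1$ \dots cancel against matching factors in the Capelli equations, so no hypothesis on $c$ is needed for (i).'' No such cancellation occurs: substituting Lemma \ref{lem:DS2_l-e12_and_l-e34} into the reduced $\cC_3$-equation leaves the denominators intact, and after collecting terms one does not get $\mathcal{D}_3(r)\hat{\vp}_l=0$ but $D_3\hat{\vp}_l=0$ with $D_3=\bigl(1+\tfrac{l_{12}}{c-1}-\tfrac{l_{34}}{c+1}\bigr)\mathcal{D}_3(r)+(\text{scalar})\,\mathcal{D}_2(r)$. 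One may divide out only if the leading coefficient is nonzero, and this is precisely the content of (\ref{assumption}): in case 1-(iii) ($l_{12}+l_{34}=\kappa_2=1$) the coefficient is $c/(c-1)$ or $c/(c+1)$ with $c=\tfrac12(\nu_1+\nu_2-\nu_3-\nu_4)$, in case 2-(ii) $c=\tfrac12(\nu_2-\nu_3)$, while in case 3 one has $c=0$ and the coefficient $1-\kappa_2\neq 0$ since $\kappa_2\geq 2$. Moreover your reduction scheme fails outright at $c=\pm 1$ (which the hypothesis allows): there Lemma \ref{lem:DS2_l-e12_and_l-e34} cannot be solved for the shifted components at all, and one must instead use the degenerate identity directly, e.g.\ for $c=1$ equation (\ref{eqn:l-e12+e_14a}) reads $X_{-1}(0)\hat{\vp}_l=0$, which via $8\sigma_3(\mu)-4\sigma_1(\mu)\sigma_2(\mu)+\sigma_1(\mu)^3=(\mu_1+\mu_2-\mu_3-\mu_4)(\mu_1-\mu_2+\mu_3-\mu_4)(\mu_1-\mu_2-\mu_3+\mu_4)$ gives $\{\mathcal{D}_3(r)-\tfrac{\sigma_1(r)}{2}\mathcal{D}_2(r)\}\hat{\vp}_l=0$, and then the already proved $\mathcal{D}_2(r)\hat{\vp}_l=0$ finishes. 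Without these two points your ``case-by-case check against the explicit shape of $\mathcal{D}_i(r)$'' is not justified.

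A smaller omission of the same kind concerns $\mathcal{D}_4$: the substitution does not match $\mathcal{D}_4(r)$ directly either; one must first combine the $\cC_4$-equation with the $\cC_3$-equation and with the extra first-order relations (\ref{PDE:DS13q}), (\ref{PDE:DS14q}), (\ref{PDE:DS23r}), (\ref{PDE:DS24r}) before applying the two lemmas, and the outcome is $\mathcal{D}_4(r)$ only modulo $\mathcal{D}_3(r)$ and $\mathcal{D}_2(r)$, so the $i=4$ case rests on the $i=2,3$ cases rather than on a direct identification. Once part (i) is repaired along these lines, your part (ii) argument (apply the lemma at $l+e_{12}$, resp.\ $l+e_{34}$, and reduce $X_{\mp 1}$ modulo the annihilating operators, keeping track of the factor $2(c\mp 1)$) goes through as in the paper.
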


\begin{proof}Let us show (i). 
For $ l = (\kappa_1-\kappa_2) e_1+l_4  e_4+l_{12} e_{12} + l_{34} e_{34} \in S_{(\kappa_1,\kappa_2,\delta_3)} $, we have 
\begin{align} \label{eqn:K-action_for_edge}
\begin{split}
 \mathfrak{K}_{12} \hat{\vp}_{l} & = 0, 
\\
\mathfrak{K}_{23} \hat{\vp}_{l}
& = l_{12} \hat{\varphi}_{l-e_{12}+e_{13}}  + l_{34} \hat{\varphi}_{l-e_{34}+e_{24}},
\\
\mathfrak{K}_{34} \hat{\vp}_{l} 
& = (\kappa_1-\kappa_2) \hat{\varphi}_{l-e_4+e_3}   
\\
\mathfrak{K}_{13} \hat{\vp}_{l}
& =   -l_{12} \hat{\varphi}_{l-e_{12}+e_{23}} - l_{34} \hat{\varphi}_{l-e_{34}+e_{14}},
\\
\mathfrak{K}_{24} \hat{\vp}_{l}
& =  - (\kappa_1-\kappa_2) \hat{\varphi}_{l-e_4+e_2} 
      + l_{12} \hat{\varphi}_{l-e_{12}+e_{14}} + l_{34} \hat{\varphi}_{l-e_{34}+e_{23}},
\\
\mathfrak{K}_{14} \hat{\vp}_{l}
& =   -(\kappa_1-\kappa_2) \hat{\varphi}_{l-e_4+e_1}  
 +l_{12} \hat{\varphi}_{l-e_{12}+e_{24}}  + l_{34} \hat{\varphi}_{l-e_{34}+e_{13}}, \\
\mathfrak{K}_{12, 34} \hat{\vp}_{l} & = 0.
\end{split}
\end{align}
Then the equation (\ref{PDE:C2}) and Lemmas \ref{lem:DS1_l-e1_and_l-e4} and 
\ref{lem:DS2_l-e12_and_l-e34} tell us that $ D_2 \hat{\vp}  = 0 $ where 
\begin{align*}
 D_{2}
& = \Delta_2 -(\kappa_1-\kappa_2) ( -\pa_3+\gamma_1-\nu_1'+\tfrac{\kappa_1-1}{2}+\kappa_2) \\
& \quad +  l_{12} (\pa_2 - \tfrac{\gamma_1+\kappa_1+\kappa_2}{2}-c+1) 
   + l_{34}   ( \pa_2 - \tfrac{\gamma_1+\kappa_1+\kappa_2}{2} +c+1).
\end{align*}
In view of $ \gamma_1+\kappa_1+\kappa_2 =\sigma_1(r) $, we can see that $ D_2 = \mathcal{D}_{2}(r) $.

\medskip

Let us show $ \mathcal{D}_3(r) \hat{\vp}_l= 0 $. 
When $ c=1$, (\ref{eqn:l-e12+e_14a}) implies that 
\begin{align*} 
&  \{ (\pa_2-\tfrac{\gamma_1+\kappa_1+\kappa_2}{2}) 
    (\pa_1-\pa_2+\pa_3-\tfrac{\gamma_1+\kappa_1+\kappa_2}{2})(\pa_1-\pa_3+\tfrac{\gamma_1+\kappa_1+\kappa_2}{2})  \\
& - (2\pi y_1)^2(\pa_2-\tfrac{\gamma_1+\kappa_1+\kappa_2}{2}) 
    + (2\pi y_2)^2  (\pa_1-\pa_3+\tfrac{\gamma_1+\kappa_1+\kappa_2}{2})  
   + (2\pi y_3)^2 (\pa_2-\tfrac{\gamma_1+\kappa_1+\kappa_2}{2}) \}  \hat{\vp}_l = 0. 
\end{align*}
Because of the identity 
$$
  8\sigma_3(\mu) - 4 \sigma_1(\mu) \sigma_2(\mu) + (\sigma_1(\mu))^3
= (\mu_1+\mu_2-\mu_3-\mu_4)(\mu_1-\mu_2+\mu_3-\mu_4)(\mu_1-\mu_2-\mu_3+\mu_4),
$$
we know that the above differential equation is equivalent to 
$ \{ \mathcal{D}_3(r) - \tfrac{\gamma_1+\kappa_1+\kappa_2}{2} \mathcal{D}_2(r) \} \hat{\vp}_l = 0 $.
Then $ \mathcal{D}_2(r) \hat{\vp}_l = 0 $ implies $ \mathcal{D}_3(r) \hat{\vp}_l = 0 $. The case of $ c = -1 $ is similar.

\medskip 

Let $ c \neq \pm  1 $.
From (\ref{PDE:C3}) and (\ref{eqn:K-action_for_edge}) we have
\begin{align} \label{eqn:C3_red_1}
\begin{split}
& \Delta_3 \hat{\vp}_l
 - (2\pi y_3) \pa_2 (\kappa_1-\kappa_2) \hat{\vp}_{l-e_4+e_3} - (2\pi y_2)(2\pi y_3) (\kappa_1-\kappa_2) \hat{\vp}_{l-e_4+e_2}
\\
& + (2\pi y_2) (-\pa_1+\pa_3-\gamma_1-\kappa_2)  ( l_{12} \hat{\vp}_{l-e_{12}+e_{13}} + l_{34} \hat{\vp}_{l-e_{34}+e_{24}} ) 
\\
&  + l_{12} \{ -(2\pi y_1)(2\pi y_2) \hat{\vp}_{l-e_{12}+e_{23}} + (2\pi y_2)(2\pi y_3)  \hat{\vp}_{l-e_{12}+e_{14}} \}
\\
&  + l_{34} \{ -(2\pi y_1)(2\pi y_2) \hat{\vp}_{l-e_{34}+e_{14}}  + (2\pi y_2)(2\pi y_3)   l_{34} \hat{\vp}_{l-e_{34}+e_{23}} \} = 0. 
\end{split}
\end{align}
Then Lemmas \ref{lem:DS1_l-e1_and_l-e4} and 
\ref{lem:DS2_l-e12_and_l-e34} imply that 
$ D_3 \hat{\vp}_l = 0  $ where 
\begin{align*}
D_3 & =  
 \Delta_3 
 - (\kappa_1-\kappa_2) \pa_2 (-\pa_3+ \sigma_1(r)-\nu_1'-\tfrac{\kappa_1+1}{2} )
\\
&\quad  -  (\kappa_1-\kappa_2) \{  (-\pa_2+\pa_3-\nu_1' - \tfrac{\kappa_1 +1}{2}  )  
      (-\pa_3+ \sigma_1(r)-\nu_1'-\tfrac{\kappa_1+1}{2} ) + (2\pi y_3)^2 \} 
\\
& \quad  
+ (-\pa_1+\pa_3-\sigma_1(r) +\kappa_1) \{ l_{12} (-\pa_2 + \tfrac{\sigma_1(r)}{2}+c-1) +l_{34} 
     ( -\pa_2 + \tfrac{\sigma_1(r)}{2}-c-1) \} 
\\
& \quad + \tfrac{l_{12}}{c-1} 
       \{ (\pa_2-\tfrac{\sigma_1(r)}{2}-c+1) 
    (\pa_1-\pa_2+\pa_3- \tfrac{\sigma_1(r)}{2}-c+1)
   (\pa_1-\pa_3+ \tfrac{\sigma_1(r)}{2}) \\
& \qquad  - (2\pi y_1)^2 (\pa_2- \tfrac{\sigma_1(r)}{2} -c+1) 
    - (2\pi y_2)^2  (\pa_1-\pa_3+ \tfrac{\sigma_1(r)}{2}) 
    - (2\pi y_3)^2 (\pa_2- \tfrac{\sigma_1(r)}{2}-c+1) \}  
\\
& \quad +\tfrac{l_{34}}{c+1} 
    \{ -(\pa_2- \tfrac{\sigma_1(r)}{2} +c+1)
     (\pa_1-\pa_2+\pa_3- \tfrac{\sigma_1(r)}{2} -c+1) 
     (\pa_1-\pa_3+ \tfrac{\sigma_1(r)}{2}) \\ 
& \qquad  + (2 \pi y_1)^2 (\pa_2- \tfrac{\sigma_1(r)}{2} +c+1)
     - (2\pi y_2)^2 (\pa_1-\pa_2+\pa_3-\tfrac{\sigma_1(r)}{2} +c+1) 
- (2\pi y_3)^2 (\pa_2-\tfrac{\sigma_1(r)}{2}+c+1) \}.
\end{align*}
Here we used $ \gamma_1+\kappa_1+\kappa_2 =\sigma_1(r) $. 
Direct computation tells us that 
\begin{align*}
 D_3 & = ( 1+ \tfrac{l_{12}}{c-1}- \tfrac{l_{34}}{c+1} ) \mathcal{D}_3(r) + 
   \{ - \kappa_1 - \tfrac12 \sigma_1(r) ( \tfrac{l_{12}}{c-1}- \tfrac{l_{34}}{c+1}  ) \} \mathcal{D}_2(r).
\end{align*}
Because of the assumption (\ref{assumption}), we know $  1+ \tfrac{l_{12}}{c-1}- \tfrac{l_{34}}{c+1}  \neq 0 $.
Hence we get 
$ \mathcal{D}_3(r) \hat{\vp}_l = 0 $. 

\medskip 

Let us show $ \mathcal{D}_4(r) \hat{\vp}_l = 0$.
From (\ref{PDE:C4}) and (\ref{eqn:K-action_for_edge}) we have
\begin{align} \label{eqn:C4_red_1}
\begin{split}
& [ \Delta_4  \hat{\vp}_l
 - (2\pi  y_2) \pa_1(-\pa_3+\gamma_1+\kappa_2) ( l_{12} \hat{\vp}_{l-e_{12}+e_{13}} + l_{34} \hat{\vp}_{l-e_{34}+e_{24}} ) 
\\
&  - (2\pi y_3) \{ \pa_1(-\pa_1+\pa_2) + (2\pi y_1)^2 \} (\kappa_1-\kappa_2) \hat{\vp}_{l-e_4+e_3}
\\
& + (2\pi y_1)(2\pi y_2)  (-\pa_3+\gamma_1+\kappa_2)( -l_{12} \hat{\vp}_{l-e_{12}+e_{23}} - l_{34} \hat{\vp}_{l-e_{34}+e_{14}} )
\\
&  - (2\pi y_2)(2\pi y_3) \pa_1 (\kappa_1-\kappa_2) \hat{\vp}_{l-e_4+e_2} 
     + (2\pi y_2)(2\pi y_3) \pa_1 (l_{12}\hat{\vp}_{l-e_{12}+e_{14}} + l_{34} \hat{\vp}_{l-e_{34}+e_{23}} ) \\ 
& - (2\pi y_1)(2\pi y_2)(2\pi y_3) (\kappa_1-\kappa_2) \hat{\vp}_{l-e_4+e_1} 
    + (2\pi y_1)(2\pi y_2)(2\pi y_3) ( l_{12} \hat{\vp}_{l-e_{12}+e_{24}} + l_{34} \hat{\vp}_{l-e_{34}+e_{13}} ) = 0. 
\end{split}
\end{align}
From the equations (\ref{PDE:DS23}) and (\ref{PDE:DS24}), we know
\begin{align}
\label{PDE:DS23r}
& (-D_{14}-c)  \hat{\vp}_{l-e_{34}+e_{23}} 
 - (2 \pi y_1) \hat{\vp}_{l-e_{34}+e_{13}} + ( 2\pi  y_3) \hat{\vp}_{l-e_{34}+e_{24}}  = 0,
\\
\label{PDE:DS24r}
& ( -D_{13}  -c) \hat{\vp}_{l-e_{34}+e_{24}} - (2\pi y_1) \hat{\vp}_{l-e_{34}+e_{14}}
  + (2\pi y_2) \hat{\vp}_{l} - (2\pi y_3) \hat{\vp}_{l-e_{34}+e_{23}} = 0.
\end{align}
By computing 
\begin{align*}
& (\ref{eqn:C4_red_1}) - \tfrac{\gamma_1-\kappa_1+\kappa_2}{4} (\ref{eqn:C3_red_1})
+  (2\pi y_2)(2\pi y_3) (-l_{12} (\ref{PDE:DS14q}) + l_{34} (\ref{PDE:DS23r}) )  \\
&  + (2\pi y_2)  (\pa_3-\tfrac{3\gamma_1+\kappa_1+3\kappa_2}{4}) (-l_{12} (\ref{PDE:DS13q}) + l_{34} (\ref{PDE:DS24r})),
\end{align*}
we arrive at the equation 
\begin{align*}
& \{  \Delta_4  - \tfrac{\gamma_1-\kappa_1+\kappa_2}{4}   \Delta_3 + \kappa_2 (2\pi y_2)^2 (\pa_3- \tfrac{3\gamma_1+\kappa_1+3\kappa_2}{4}) \} \hat{\vp}_l 
\\
& + (\kappa_1-\kappa_2) \{ -\pa_1(-\pa_1+\pa_2) - (2\pi y_1)^2  + \tfrac{\gamma_1-\kappa_1+\kappa_2}{4}  \pa_2 \} 
      (2\pi y_3) \hat{\vp}_{l-e_4+e_3} \\
& + (\kappa_1-\kappa_2) (  -\pa_1 + \tfrac{\gamma_1-\kappa_1+\kappa_2}{4}  ) (2\pi y_2)(2\pi y_3)\hat{\vp}_{l-e_4+e_2} 
   - (\kappa_1-\kappa_2) (2\pi y_1)(2\pi y_2)(2\pi y_3)\hat{\vp}_{l-e_4+e_1} 
\\
&  + l_{12} (c-1)(2\pi y_2)(2\pi y_3) \hat{\vp}_{l-e_{12}+e_{14}} - l_{34} (c+1) (2\pi y_2)(2\pi y_3) \hat{\vp}_{l-e_{34}+e_{23}} 
\\
& + l_{12} \{ -\pa_1(-\pa_3+\gamma_1+\kappa_2) - \tfrac{\gamma_1-\kappa_1+\kappa_2}{4} 
       (-\pa_1+\pa_3-\gamma_1-\kappa_2) + (2\pi y_3)^2 \\
&  \quad  -(\pa_3- \tfrac{3\gamma_1+\kappa_1+3\kappa_2}{4})
       (\pa_1-\pa_2+\pa_3-\tfrac{\gamma_1+\kappa_1+\kappa_2}{2}-c+1) \} (2\pi y_2) 
                 \hat{\vp}_{l-e_{12}+e_{13}} \\
& + l_{34} \{ -\pa_1(-\pa_3+\gamma_1+\kappa_2) -\tfrac{\gamma_1-\kappa_1+\kappa_2}{4}  
     (-\pa_1+\pa_3-\gamma_1-\kappa_2) + (2\pi y_3)^2 \\
&  \quad  -(\pa_3- \tfrac{3\gamma_1+\kappa_1+3\kappa_2}{4})
       (\pa_1-\pa_2+\pa_3-\tfrac{\gamma_1+\kappa_1+\kappa_2}{2}+c+1) \} (2\pi y_2) 
                \hat{\vp}_{l-e_{34}+e_{24}}  = 0.
\end{align*}
In view of Lemmas \ref{lem:DS1_l-e1_and_l-e4} and 
\ref{lem:DS2_l-e12_and_l-e34}, we know $ D_4 \hat{\vp}_l  = 0 $ where
\begin{align*}
 D_4
& = \Delta_4  - \tfrac{\sigma_1(r)-2\kappa_1}{4}  \Delta_3 + 
    \kappa_2 (2\pi y_2)^2 (\pa_3- \tfrac{3\sigma_1(r) -2 \kappa_1}{4}) 
\\
& \quad + (\kappa_1-\kappa_2) \{ -\pa_1(-\pa_1+\pa_2) - (2\pi y_1)^2  + \tfrac{\sigma_1(r)-2\kappa_1}{4}  \pa_2 \} 
    (-\pa_3+\sigma_1(r) -\nu_1' -\tfrac{\kappa_1+1}{2})\\
& \quad + (\kappa_1-\kappa_2) (  -\pa_1 + \tfrac{\sigma_1(r)-2\kappa_1}{4}   ) \{ (-\pa_2+\pa_3-\nu_1' - \tfrac{\kappa_1 +1}{2}  )  
      (-\pa_3+\sigma_1(r) -\nu_1' -\tfrac{\kappa_1+1}{2} ) + (2\pi y_3)^2 \}
\\
& \quad  - (\kappa_1-\kappa_2)
   \{ (-\pa_1+\pa_2-\nu_1'-\tfrac{\kappa_1+1}{2} ) (-\pa_2+\pa_3-\nu_1' - \tfrac{\kappa_1 +1}{2} )  
      (-\pa_3+ \sigma_1(r) -\nu_1' -\tfrac{\kappa_1+1}{2}) \\
& \qquad  
 + (2\pi y_2)^2(-\pa_3+ \sigma_1(r) -\nu_1' -\tfrac{\kappa_1+1}{2}) 
 + (2\pi y_3)^2  (-\pa_1+\pa_2-\nu_1'-\tfrac{\kappa_1 + 1}{2}  )\}
\\
&  \quad + \tfrac12 l_{12} \{ (\pa_2-\tfrac{\sigma_1(r)}{2} -c+1)(\pa_1-\pa_2+\pa_3-\tfrac{\sigma_1(r)}{2} -c+1)(\pa_1-\pa_3+\tfrac{\sigma_1(r)}{2}+c-1)
\\
& \qquad -(2\pi y_1)^2 (\pa_2-\tfrac{\sigma_1(r)}{2} -c+1) 
  +(2\pi y_2)^2 (\pa_1-\pa_3+\tfrac{\sigma_1(r)}{2}+c-1)
   + (2\pi y_3)^2 (\pa_2-\tfrac{\sigma_1(r)}{2} -c+1) \}\\
&
 \quad + \tfrac12 l_{34}  \{ (\pa_2-\tfrac{\sigma_1(r)}{2} +c+1)
  (\pa_1-\pa_2+\pa_3-\tfrac{\sigma_1(r)}{2} +c+1)
  (\pa_1-\pa_3+\tfrac{\sigma_1(r)}{2}-c-1) \\
& \qquad - (2\pi y_1)^2 (\pa_2-\tfrac{\sigma_1(r)}{2} +c+1) 
   + (2\pi y_2)^2 (\pa_1-\pa_3+\tfrac{\sigma_1(r)}{2}-c-1)
   + (2\pi y_3)^2  (\pa_2-\tfrac{\sigma_1(r)}{2} +c+1) \} 
\\
& \quad + l_{12} \{ -\pa_1(-\pa_3+\sigma_1(r) - \kappa_1) - \tfrac{\sigma_1(r)-2\kappa_1}{4} 
      (-\pa_1+\pa_3 -\sigma_1(r) + \kappa_1) + (2\pi y_3)^2 \\
& \qquad   -(\pa_3- \tfrac{3\sigma_1(r) -2 \kappa_1}{4})(\pa_1-\pa_2+\pa_3-\tfrac{\sigma_1(r)}{2}-c+1) \} 
     ( -\pa_2 + \tfrac{\sigma_1(r)}{2} +c-1) \\
& \quad + l_{34} \{ -\pa_1(-\pa_3+\sigma_1(r)-\kappa_1) - \tfrac{\sigma_1(r)-2\kappa_1}{4}  (-\pa_1+\pa_3 -\sigma_1(r) + \kappa_1) + (2\pi y_3)^2 \\
&  \qquad  -(\pa_3- \tfrac{3\sigma_1(r) -2 \kappa_1}{4})(\pa_1-\pa_2+\pa_3-\tfrac{\sigma_1(r)}{2}+c+1) \} 
       ( -\pa_2 + \tfrac{\sigma_1(r)}{2} -c-1),
\end{align*}
Since we can confirm the identity
\begin{align*}
 D_4 & = \mathcal{D}_4(r) - \tfrac{ \sigma_1(r) +2 \kappa_1-2\kappa_2}{4} \mathcal{D}_3(r)
   + \{ \tfrac12(c-1) l_{12} - \tfrac12(c+1) l_{34} 
     + (\kappa_1-\kappa_2)(\nu_1' + \tfrac12) + \tfrac14 \kappa_1 \sigma_1(r)  \} \mathcal{D}_2(r)
\end{align*}
by case by case argument, 
$ {\mathcal D}_3(r) \hat{\vp}_l =  {\mathcal D}_2(r) \hat{\vp}_l = 0 $ implies $  {\mathcal D}_4(r) \hat{\vp}_l = 0 $
as desired.

\medskip

Let us prove the equations in (ii). 
For $ \varep \in \{ \pm 1\} $, $ t \in \bC $ and $ \mu= (\mu_1,\mu_2,\mu_3, \mu_4) \in \bC^4$, 
the following identity holds:
\begin{align} \label{eqn:DS_vs_C3_1}
\begin{split}
& (\pa_2-\tfrac{\sigma_1(\mu)}{2}+ \varep t)(\pa_1-\pa_2+\pa_3-\tfrac{\sigma_1(\mu)}{2}+ \varep  t)(\pa_1-\pa_3+\tfrac{\sigma_1(\mu)}{2}+t) \\
& - (2\pi y_1)^2 (\pa_2-\tfrac{\sigma_1(\mu)}{2} - \varep t) + (2\pi y_2)^2 (\pa_1-\pa_3+\tfrac{\sigma_1(\mu)}{2}+t) 
    + (2\pi y_3)^2 (\pa_2-\tfrac{\sigma_1(\mu)}{2} -\varep  t) 
\\
& = \mathcal{D}_3(\mu) + (-\tfrac{\sigma_1(\mu)}{2}+t) \mathcal{D}_2(\mu)
+ \mathcal{R}(t,\mu)   +
  \begin{cases}
   2t \, \mathcal{E}_1(\mu_1, \tfrac{\sigma_1(\mu)}{2} -t-\mu_1) & \text{if $\varep = 1$}, \\
   2t \, \mathcal{E}_3(\sigma_1(\mu)-\mu_1, \tfrac{\sigma_1(\mu)}{2} +t+\mu_1) 
   & \text{if $\varep = -1$}
  \end{cases}
\end{split}
\end{align}
with 
\begin{align*}
 \mathcal{R}(t,\mu) = (t+\tfrac{ \mu_1+ \mu_2-\mu_3-\mu_4}{2})  (t+\tfrac{ \mu_1- \mu_2+\mu_3-\mu_4}{2})
      (t+\tfrac{ \mu_1- \mu_2-\mu_3+\mu_4}{2}).
\end{align*}
Let $ l = (\kappa_1-\kappa_2)e_4+l_{12} e_{12} + l_{34} e_{34} \in S_{(\kappa_1,\kappa_2,\delta_3)} $
with $ l_{12} \ge 1$. 
We use  
(\ref{eqn:DS_vs_C3_1}) with $ (t,\varep) = (c-1, -1) $ and 
\begin{align*}
 \mu= \begin{cases}
   (\nu_3+1,\nu_4+1,\nu_1,\nu_2) & \text{case 1-(iii)}, \\
   (\nu_1+\tfrac{\kappa_1+1}{2}, \nu_1+\tfrac{\kappa_1-1}{2}, \nu_2,\nu_3+1) & \text{case 2-(ii)}, \\
   (\nu_1+\tfrac{\kappa_1+1}{2}, \nu_1+\tfrac{\kappa_1-1}{2}, \nu_2+\tfrac{\kappa_2+1}{2}, \nu_2+\tfrac{\kappa_2-1}{2}) &
  \text{case 3}.
\end{cases}
\end{align*}
Since $ \mathcal{R}(t,\mu) = 0 $ and 
$ \mathcal{D }_2(\mu) \hat{\vp}_{l}= \mathcal{D}_{3}(\mu)\hat{\vp}_l = 0 $, we know
$$ X_{-1}(c-1) \hat{\vp}_l =  2(c-1) \mathcal{E}_3(a_1,a_2)  \hat{\vp}_{l}. $$ 
Thus the equation (\ref{eqn:l-e12+e_14a}) implies (\ref{eqn:e12_vs_e14}).
Similarly we can show 
$$ X_1(-c+1) \hat{\vp}_l = 2(-c+1) \mathcal{E}_1(b_1,b_2) \hat{\vp}_{l} $$ to get 
(\ref{eqn:e12_vs_e23}) and (\ref{eqn:e12_vs_e34}) 
from (\ref{eqn:l-e12+e_23a}) and  (\ref{eqn:l-e12+e_34a}), respectively.
We can similarly show 
(\ref{eqn:e34_vs_e14}), (\ref{eqn:e34_vs_e23}) and (\ref{eqn:e34_vs_e12}).
\end{proof}

\begin{rem} \label{rem:PDE_does_not_characterize}
Let us explain the assumption (\ref{assumption}) for the case 1-(iii).
For $ \sigma = \chi_{(\nu_1,1)} \boxtimes \chi_{(\nu_2,1)} \boxtimes \chi_{(\nu_3,0)} \boxtimes \chi_{(\nu_4,0)} $
and 
$ \sigma' = \chi_{(\nu_3,1)} \boxtimes \chi_{(\nu_4,1)} \boxtimes \chi_{(\nu_1,0)} \boxtimes \chi_{(\nu_2,0)}$,
two $P_0$-principal series representations ${\Pi}_{\sigma} $ and $ \Pi_{\sigma'} $ 
have the same minimal $K$-type $ \tau_{(1,1,0)} $.
When $ \nu_1+\nu_2  = \nu_3 + \nu_4 $, we know that the equations in Proposition \ref{prop:P1111_DS} (i), (iii) 
for $ \Pi_{\sigma} $ and $\Pi_{\sigma'} $ are the same. 
This means that 
the system in Proposition \ref{prop:PDE2} can not characterize Whittaker functions
in case 1-(ii) with $ \nu_1+\nu_2 = \nu_3+\nu_4$. 
\end{rem}

\begin{prop} \label{lem:dim_PDE_kappa_2neq1} {\rm (}\cite[Theorem 3.3]{Hashizume}{\rm )}
Retain the notation in Proposition \ref{prop:PDE_reduction_classone}.
Let $ {\rm Sol}(\mu) $ be the space of smooth functions $f$ on $ (\bR_+)^3 $ satisfying 
\begin{align*}
 \mathcal{D}_i(\mu) f(y_1,y_2,y_3) =  0 \ \ \ (i=2,3,4).
\end{align*}
Then we have $ \dim_{\bC} {\rm Sol}(\mu) \le 24 $.
\end{prop}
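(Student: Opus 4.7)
The plan is to identify the system $\{\mathcal{D}_i(\mu) f = 0\}_{i=2,3,4}$ as the radial Capelli system governing class one (spherical) Whittaker functions on $G = \mathrm{GL}(4,\bR)$ with infinitesimal character $\mu$, and then apply the dimension bound of Hashizume. For this I would consider the $P_{(1,1,1,1)}$-principal series representation $\Pi_\sigma$ with $\sigma = \chi_{(\mu_1,0)} \boxtimes \chi_{(\mu_2,0)} \boxtimes \chi_{(\mu_3,0)} \boxtimes \chi_{(\mu_4,0)}$, whose minimal $K$-type is the trivial representation $\tau_{(0,0,0)}$. We are then in case 1-(i) with $\kappa_1 = \kappa_2 = \delta_3 = 0$ and only one multi-index $l = \mathbf{0}$. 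In this specialization all terms $\mathfrak{K}_{ab} \hat{\vp}_{\mathbf{0}}$ in Proposition \ref{prop:PDE2} (i) vanish, and direct inspection of the formulas for $\Delta_2, \Delta_3, \Delta_4$ (using $\gamma_i = \sigma_i(\mu)$ in this case) shows that they coincide with $\mathcal{D}_2(\mu), \mathcal{D}_3(\mu), \mathcal{D}_4(\mu)$.

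Conversely, given $f \in \mathrm{Sol}(\mu)$, I would define a function $W_f$ on $G$ using the Iwasawa diffeomorphism $N \times A \times K \cong G$ by
$$
 W_f(xyk) = \psi_1(x)\, y_1^{3/2} y_2^2 y_3^{3/2} y_4^{\sigma_1(\mu)} f(y_1, y_2, y_3) \qquad (x \in N,\ y \in A,\ k \in K).
$$
Then $W_f \in C^\infty(N \backslash G; \psi_1)^K$, and reversing the gauge computation (equivalently, applying Lemmas \ref{lem:Eij_radial} and \ref{lem:Capelli} on the trivial $K$-type) shows that $W_f$ is a simultaneous eigenfunction of the Capelli elements $\cC_1, \cC_2, \cC_3, \cC_4$ with eigenvalues $\sigma_i(\mu)$, hence a $Z(\g_\bC)$-eigenfunction with the infinitesimal character of $\Pi_\sigma$.

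The map $f \mapsto W_f$ thus embeds $\mathrm{Sol}(\mu)$ into the space of smooth spherical Whittaker functions on $N \backslash G$ with character $\psi_1$ and the prescribed infinitesimal character. The desired bound $\dim_\bC \mathrm{Sol}(\mu) \le 24$ then follows from Hashizume's theorem \cite[Theorem 3.3]{Hashizume}, which bounds the dimension of this target space above by $|W_{\mathrm{GL}(4)}| = 4! = 24$. The main technical point is the bookkeeping in the first paragraph identifying $\Delta_i$ at $l = \mathbf{0}$, $\kappa_2 = 0$ with $\mathcal{D}_i(\mu)$; once this is in hand, the statement reduces to the classical Kostant--Hashizume bound on class one Whittaker multiplicities.
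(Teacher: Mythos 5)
Your proposal is correct in outline but takes a genuinely different route from the paper. The paper proves the bound directly and elementarily: for $f\in{\rm Sol}(\mu)$ it sets $f_{j_1+4j_2+12j_3}=\pa_1^{j_1}\pa_2^{j_2}\pa_3^{j_3}f$ ($0\le j_1\le 3$, $0\le j_2\le 2$, $0\le j_3\le 1$), observes that $\mathcal{D}_2(\mu)$, $\mathcal{D}_3(\mu)-\pa_2\mathcal{D}_2(\mu)$ and $\mathcal{D}_4(\mu)-\pa_1\mathcal{D}_3(\mu)+\pa_1^2\mathcal{D}_2(\mu)$ have leading terms $-\pa_3^2+\cdots$, $\pa_2^3+\cdots$, $-\pa_1^4+\cdots$, so that the system converts into a first-order linear system of rank $24$ with polynomial coefficients, and then applies the uniqueness theorem for such systems (\cite[Theorems B.8 and B.9]{Knapp_002}); no Whittaker interpretation is needed. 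You instead re-embed ${\rm Sol}(\mu)$ into the class-one Whittaker picture: your identification of $\Delta_i$ at $\kappa_2=0$, trivial $K$-type and vanishing $\mathfrak{K}$-terms with $\mathcal{D}_i(\mu)$ is correct, and since the passage from the Capelli eigen-equations to the equations of Proposition \ref{prop:PDE2} (i) is an operator identity for functions of the prescribed Iwasawa form (the $y_4$-dependence $y_4^{\sigma_1(\mu)}$ builds the $\cC_1$-equation into $W_f$), the computation does reverse: $W_f$ is annihilated by $R(\cC_i)-\sigma_i(\mu)$ for all $i$, hence, because $\cC_1,\dots,\cC_4$ generate $Z(\g_\bC)$, is a $Z(\g_\bC)$-eigenfunction, and $f\mapsto W_f$ injects ${\rm Sol}(\mu)$ into the space governed by Hashizume's theorem. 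What your route buys is conceptual economy: the $24$ is visibly $|\gS_4|$ and the analytic content is outsourced to the cited result, consistent with the paper's own attribution of the proposition to Hashizume. What it costs is exactly the bookkeeping you flag — one must check that the reversal of the gauge computation is an identity and not merely a one-way consequence (so that ${\rm Sol}(\mu)$ lands inside, not merely contains, the Capelli eigenspace), and one must match conventions with Hashizume's semisimple setting, the central direction being absorbed by the fixed $y_4$-power — steps the paper's direct holonomic-rank argument avoids entirely.
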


\begin{proof} (cf. \cite[Lemma 1.1]{HIM})
For $ f \in  {\rm Sol}(\mu)  $, we define the functions $ f_i $  $(0 \le i \le 23) $ by 
\begin{align*}
 f_{j_1+4j_2+ 12j_3}(y_1,y_2,y_3) & = \pa_{1}^{j_1} \pa_2^{j_2} \pa_3^{j_3}   f(y_1,y_2,y_3)
 & 
\end{align*}
with $0 \le j_1 \le 3$, $0 \le j_2 \le 2$,  $ 0\le j_3 \le 1$. 
Since 
\begin{align*}
\mathcal{D}_{2}(\mu) 
&= -\pa_3^2+\pa_2\pa_3-\pa_2^2+\pa_1\pa_2-\pa_1^2  
+[\text{differential operators of order lower than $2$}],\\
\mathcal{D}_{3}(\mu)-\pa_2\mathcal{D}_{2}(\mu)
&= 
\pa_2^3
-2\pa_1\pa_2^2
+2\pa_1^2\pa_2
+[\text{differential operators of order lower than $3$}],\\
\mathcal{D}_{4}(\mu)-\pa_1\mathcal{D}_{3}(\mu)+\pa_1^2\mathcal{D}_{2}(\mu)
&=
-\pa_1^4+[\text{differential operators of order lower than $4$}],
\end{align*}
the equalities $\mathcal{D}_{2}(\mu)f=0$, $(\mathcal{D}_{3}(\mu)-\pa_2\mathcal{D}_{2}(\mu))f=0$ 
and $(\mathcal{D}_{4}(\mu)-\pa_1\mathcal{D}_{3}(\mu)+\pa_1^2\mathcal{D}_{2}(\mu))f=0$ imply that 
there exists some polynomial functions $ m_{i,j}^k (y_1,y_2,y_3) $
$ (0 \le i,j \le 23, 1 \le k \le 3) $ such that 
\begin{align*}
 \pa_k f_{i} (y_1,y_2,y_3) & =  \sum_{0 \le j \le 23} m_{i,j}^k(y_1,y_2,y_3) f_j (y_1,y_2,y_3)  \quad (k=1,2,3).
\end{align*}
Applying \cite[Theorems B.8 and B.9]{Knapp_002}, we have  $ \dim_{\bC} {\rm Sol}(\mu) \le 24 $.
\end{proof}


\section{Explicit formulas of Whittaker functions} 
\label{sec:EF}

In this section we give explicit formulas of moderate growth Whittaker functions.
As in our previous work \cite{HIM}, we give Mellin-Barnes integral representations of Whittaker functions.
We give a moderate growth solution of the system in Proposition \ref{prop:PDE2}
under the assumption (\ref{assumption}).
In \S \ref{subsec:Jacquet}, we remove the assumption (\ref{assumption}) by considering the relation
between our solutions and Jacquet integrals.

Let us explain the outline of our approach to the system in Proposition \ref{prop:PDE2}.
We have shown the function 
$ \hat{\varphi}_{(\kappa_1-\kappa_2)e_4+ l_{12} e_{12} + l_{34} e_{34}}  $ belongs to the space $ {\rm Sol}(r) $. 
In Proposition \ref{prop:classone_MB}, we describe the space ${\rm Sol}^{\rm mg}(\mu) $ 
consisting of moderate growth functions in $ {\rm Sol}(\mu) $.   
Since $ {\rm Sol}^{\rm mg}(\mu) $ is the space of class one Whittaker functions, 
Proposition \ref{prop:classone_MB} is a rephrase of the results in \cite{Ishii_Stade_001} and \cite{Stade_002}. 
But we give more direct proof here. 
With the aid of the equations (\ref{eqn:e12_vs_e34}) and (\ref{eqn:e34_vs_e12}), we can get explicit formula of 
$ \hat{\varphi}_{(\kappa_1-\kappa_2)e_4+ l_{12} e_{12} + l_{34} e_{34}}  $ in Lemma \ref{lem:EF_step1}. 

To determine $ \hat{\varphi}_l $ for all $l$, 
we use difference-differential equations given in Propositions \ref{prop:PDE2} and \ref{prop:PDE_reduction_classone} (ii), 
and relations among $ \hat{\varphi}_l $ coming from the relations between the generators 
$ \{ u_l \mid l \in S_{(\kappa_1,\kappa_2,\delta_3)} \} $. 
From Lemma \ref{lem:rel_ul}, we know the following:
\begin{itemize}
\item 
When $ \kappa_1 > \kappa_2 > 0 $, for $ l \in S_{(\kappa_1-2,\kappa_2-1,\delta_3)} $, we have 
\begin{gather} \label{eqn:vp_gen_relation_2}
 \sum_{1 \le j \le 4, \, j \neq i} (-1)^{j}\hat{\vp}_{l+e_j+e_{ij}} = 0 \qquad (1 \le i \le 4), 
\\  
 \label{eqn:vp_gen_relation_3}
 \hat{\vp}_{l+e_i+e_{jk}}- \hat{\vp}_{l+e_j+e_{ik}} + \hat{\vp}_{l+e_k+e_{ij}} = 0 \qquad (1 \le i<j<k \le 4). 
\end{gather}
\item 
When $ \kappa_2 \ge 2 $, for $ l \in S_{(\kappa_1-2,\kappa_2-2,\delta_3)} $, we have
\begin{gather} \label{eqn:vp_gen_relation_4}
 \hat{\vp}_{l+2e_{12}} - \hat{\vp}_{l+2e_{13}} + \hat{\vp}_{l+2e_{14}} = 0, 
 \hat{\vp}_{l+2e_{13}} - \hat{\vp}_{l+2e_{23}} - \hat{\vp}_{l+2e_{34}} = 0, 
\\
\label{eqn:vp_gen_relation_5}
  \hat{\vp}_{l+e_{12}+e_{34}} - \hat{\vp}_{l+e_{13}+e_{24}} + \hat{\vp}_{l+e_{14}+e_{23}} = 0.
\end{gather}
\end{itemize}

\medskip

Here is more precise strategy for each case.

\begin{itemize}
\item Case 1-(i) ($ \kappa_1 = \kappa_2 = 0 $):  Straightforward by Proposition \ref{prop:classone_MB}. 
\item Cases 1-(ii), (iv) and 2-(i) ($ \kappa_1 > \kappa_2 = 0 $):
\begin{itemize} 
\item Determine $ \hat{\varphi}_{\kappa_1 e_4} $ by Proposition \ref{prop:classone_MB}.
\item Determine $ \hat{\varphi}_{l_1 e_1 + l_4 e_4} $ by the equation (\ref{eqn:DS1:l-e_4+e_1}).
\item Determine $ \hat{\varphi}_{l} $ by the equations (\ref{PDE:DS1}) and (\ref{PDE:DS4}).
\end{itemize}  
\item Cases 1-(iii)  ($ \kappa_1 = \kappa_2 = 1 $):
\begin{itemize}
\item Determine $ \hat{\varphi}_{ l_{12} }$ and $ \hat{\vp}_ { e_{34} } $ by 
Proposition \ref{prop:classone_MB} and the equations (\ref{eqn:e12_vs_e34}) and (\ref{eqn:e34_vs_e12}).
\item Determine $ \hat{\vp}_{e_{14}} $ and $ \hat{\vp}_{e_{23}} $ by the equations 
(\ref{eqn:e12_vs_e14}), (\ref{eqn:e12_vs_e23}), 
(\ref{eqn:e34_vs_e14}) and (\ref{eqn:e34_vs_e23}).
\item Determine $ \hat{\vp}_{l_{13}} $ and  $\hat{\vp}_{e_{24}} $ by the equations (\ref{PDE:DS12}) and (\ref{PDE:DS34}).
\end{itemize}
\item Cases 2-(i) ($ \kappa_1 > \kappa_2 =1 $):
\begin{itemize}
\item Determine $ \hat{\varphi}_{ (\kappa_1-1) e_4 + e_{12} }$ and  $ \hat{\varphi}_{ (\kappa_1-1) e_4 + e_{34} }$ 
by 
Proposition \ref{prop:classone_MB} and the equations (\ref{eqn:e12_vs_e34}) and (\ref{eqn:e34_vs_e12}).
\item Determine $ \hat{\varphi}_{l_1 e_1 + l_4 e_4 + l_{12} e_{12} + l_{34} e_{34}} $ by the equation (\ref{eqn:DS1:l-e_4+e_1}).
\item Determine $ \hat{\varphi}_{l_1 e_1 + l_2 e_2 +l_3 e_3 + l_4 e_4 + l_{12} e_{12} + l_{13} e_{13} + l_{24} e_{24} + l_{34} e_{34}} $ by the equations (\ref{PDE:DS1}), (\ref{PDE:DS4}), (\ref{PDE:DS12}) and (\ref{PDE:DS34}).
\item Determine $ \hat{\varphi}_{l} $ by the relations (\ref{eqn:vp_gen_relation_2}) and (\ref{eqn:vp_gen_relation_3}).
\end{itemize}
\item Case 3 with $ \kappa_1 > \kappa_2 \ge  2 $:
\begin{itemize}
\item Determine $ \hat{\varphi}_{ (\kappa_1-\kappa_2) e_4 + l_{12} e_{12} + l_{34} e_{34} } $ by Proposition \ref{prop:classone_MB}  and the equations (\ref{eqn:e12_vs_e34}) and (\ref{eqn:e34_vs_e12}).
\item Determine  $ \hat{\varphi}_{l_1 e_1 + l_4 e_4 + l_{12} e_{12} + l_{34} e_{34} } $ by the equation (\ref{eqn:DS1:l-e_4+e_1}).
\item Determine  $ \hat{\varphi}_{l_1 e_1 + l_2 e_2 +l_3 e_3 + l_4 e_4 + l_{12} e_{12} + l_{13} e_{13} + l_{24} e_{24} + l_{34} e_{34}} $ by the equations (\ref{PDE:DS1}), (\ref{PDE:DS4}), (\ref{PDE:DS12}) and (\ref{PDE:DS34}).
\item Determine $ \hat{\varphi}_l $ by the relations (\ref{eqn:vp_gen_relation_2}) and (\ref{eqn:vp_gen_relation_3}).
\end{itemize}
\item Case 3 with $ \kappa_1 = \kappa_2 \ge 2  $:
\begin{itemize}
\item Determine  $ \hat{\varphi}_{ l_{12} e_{12} + l_{34} e_{34} } $ by Proposition \ref{prop:classone_MB} and the equations
(\ref{eqn:e12_vs_e34}) and (\ref{eqn:e34_vs_e12}).
\item Determine  $ \hat{\varphi}_{ l_{12} e_{12} + e_{14}+ l_{34} e_{34} } $ and $ \hat{\varphi}_{ l_{12} e_{12} + e_{23} + l_{34} e_{34} } $ 
by the equations (\ref{eqn:e12_vs_e14}) and (\ref{eqn:e34_vs_e23}).
\item Determine  $ \hat{\varphi}_{ l_{12} e_{12} +l_{13} e_{13} + l_{24} e_{24} +  l_{34} e_{34} } $, 
$ \hat{\varphi}_{ l_{12} e_{12} +l_{13} e_{13} + e_{14}+ l_{24} e_{24} +  l_{34} e_{34} } $ and 
$ \hat{\varphi}_{ l_{12} e_{12} +l_{13} e_{13} + e_{23} + l_{24} e_{24} + l_{34} e_{34} } $ by 
the equations  (\ref{PDE:DS12}) and (\ref{PDE:DS34}).
\item Determine  $ \hat{\varphi}_l $ by the relations (\ref{eqn:vp_gen_relation_4}) and (\ref{eqn:vp_gen_relation_5}).
\end{itemize}
\end{itemize}


\medskip 

Retain the notation in Propositions \ref{prop:PDE_reduction_classone} and \ref{lem:dim_PDE_kappa_2neq1}.
We define a homomorphism 
\begin{align}  \label{eqn:inj_hom_Sol}
 {\rm Hom}_K( V_{(\kappa_1,\kappa_2,\delta_3)}, {\rm Wh}(\Pi_{\sigma}, \psi_1)) \ni \varphi \mapsto
 \begin{cases}
   \hat{\varphi}_{(\kappa_1-\kappa_2)e_4+ \kappa_2 e_{12} } \in {\rm Sol}(r)  & \text{if $c \neq 1 $}, \\
   \hat{\varphi}_{(\kappa_1-\kappa_2)e_4+ \kappa_2 e_{34} } \in {\rm Sol}(r)  & \text{if $c = 1 $} \\
 \end{cases}
\end{align}
of $ \bC $-vector spaces by (\ref{eqn:hat_varphi}).
Because we can determine $ \hat{\vp}_l $ from 
$ \hat{\varphi}_{(\kappa_1-\kappa_2)e_4+ \kappa_2 e_{12} } $ ($c \neq 1$) or 
$ \hat{\varphi}_{(\kappa_1-\kappa_2)e_4+ \kappa_2 e_{34} } $ ($c \neq -1$),  
the map (\ref{eqn:inj_hom_Sol}) is injective.
Since $ \dim_{\bC} {\rm Hom}_K (V_{(\kappa_1,\kappa_2,\delta_3)}, H(\sigma)_K) = 1 $ and $ \Pi_{\sigma} $ is irreducible,
we have 
$$
 \dim_{\bC} {\rm Hom}_K(V_{(\kappa_1,\kappa_2,\delta_3)}, {\rm Wh}(\Pi_{\sigma}, \psi_1) ) = 
  \dim_{\bC} \mathcal{I}_{\Pi_{\sigma}, \psi_1} = 24 \ge \dim_{\bC}  {\rm Sol}(r).
$$
Here the last inequality follows from Proposition \ref{lem:dim_PDE_kappa_2neq1}. 
Then we know the map (\ref{eqn:inj_hom_Sol}) is isomorphism, and hence
the map
\begin{align} 
\label{eqn:inj_sol}
 {\rm Hom}_K (V_{(\kappa_1,\kappa_2,\delta_3)}, {\rm Wh}(\Pi_{\sigma}, \psi_1)^{\rm mg}) 
 \ni \varphi \mapsto 
 \begin{cases}
   \hat{\varphi}_{(\kappa_1-\kappa_2)e_4+ \kappa_2 e_{12} } \in {\rm Sol}^{\rm mg} (r)  & \text{if $c \neq 1 $}, \\
   \hat{\varphi}_{(\kappa_1-\kappa_2)e_4+ \kappa_2 e_{34} } \in {\rm Sol}^{\rm mg} (r)  & \text{if $c = 1 $} \\
 \end{cases} 
\end{align}
induced from (\ref{eqn:inj_hom_Sol}) is also isomorphism.
This argument means that our system in Proposition \ref{prop:PDE2} together with relations of generators given in 
Lemma \ref{lem:rel_ul} characterize Whittaker functions. 
More precisely we have the following:

\begin{thm}  \label{thm:whittaker_isom}
Assume (\ref{assumption}).
Let $ {\rm Sol}(\Pi_{\sigma}, \psi_1) $ be the space smooth solution $\{ \hat{\vp}_l \mid l \in  S_{(\kappa_1,\kappa_2,\delta_3)} \} $
of the system in Proposition \ref{prop:PDE2} satisfying the relations 
(\ref{eqn:vp_gen_relation_2}), (\ref{eqn:vp_gen_relation_3}), (\ref{eqn:vp_gen_relation_4}) and (\ref{eqn:vp_gen_relation_5}).
For $ \varphi \in {\rm Hom}_K (V_{(\kappa_1,\kappa_2,\delta_3)}, {\rm Wh}(\Pi_{\sigma}, \psi_1))  $, 
the map
$$ \varphi \mapsto \{  (\sqrt{-1})^{l_1-l_3+l_{13}-l_{24}} (-1)^{l_2+l_{14}+l_{23}} y_1^{-3/2} y_2^{-2} y_3^{-3/2+\kappa_2} y_4^{-\gamma_1}  \varphi(u_l)(y) \mid l \in S_{(\kappa_1,\kappa_2, \delta_3)} \} $$
gives the isomorphisms of $ \bC $-vector spaces
\begin{align*}
 {\rm Hom}_K (V_{(\kappa_1,\kappa_2,\delta_3)}, {\rm Wh}(\Pi_{\sigma}, \psi_1))  & \cong {\rm Sol}(\Pi_{\sigma}, \psi_1),
\\
 {\rm Hom}_K (V_{(\kappa_1,\kappa_2,\delta_3)}, {\rm Wh}(\Pi_{\sigma}, \psi_1)^{\rm mg}) 
&  \cong {\rm Sol}(\Pi_{\sigma}, \psi_1)^{\rm mg},
\end{align*}
where 
$ y = {\rm diag}(y_1y_2y_3y_4, y_2y_3y_4, y_3y_4, y_4) \in A$ and 
${\rm Sol}(\Pi_{\sigma}, \psi_1)^{\rm mg} $ is the subspace of $ {\rm Sol}(\Pi_{\sigma}, \psi_1)$  
consisting of moderate growth functions.
\end{thm}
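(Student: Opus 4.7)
The plan is to mirror the strategy used just before the theorem statement, where the authors established an isomorphism ${\rm Hom}_K(V_{(\kappa_1,\kappa_2,\delta_3)}, {\rm Wh}(\Pi_\sigma, \psi_1)) \cong {\rm Sol}(r)$ via the ``extraction'' map (\ref{eqn:inj_sol}). The idea is to factor this extraction map as a composition
\[
 {\rm Hom}_K(V_{(\kappa_1,\kappa_2,\delta_3)}, {\rm Wh}(\Pi_\sigma, \psi_1)) \xrightarrow{\Phi} {\rm Sol}(\Pi_\sigma, \psi_1) \xrightarrow{p} {\rm Sol}(r),
\]
where $\Phi$ is the map described in the theorem (sending $\varphi$ to the full collection $\{\hat{\vp}_l\}_{l \in S_{(\kappa_1,\kappa_2,\delta_3)}}$) and $p$ is projection to the specified component $(\kappa_1-\kappa_2)e_4 + \kappa_2 e_{12}$ (resp.\ $+\kappa_2 e_{34}$ when $c=1$). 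Showing both $\Phi$ and $p$ are isomorphisms then yields the theorem.

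First I would verify that $\Phi$ is well defined: for $\varphi \in {\rm Hom}_K(V_{(\kappa_1,\kappa_2,\delta_3)}, {\rm Wh}(\Pi_\sigma, \psi_1))$, the collection $\{\hat{\vp}_l\}$ defined by (\ref{eqn:hat_varphi}) satisfies the system of Proposition \ref{prop:PDE2} (this was established there), and the generator relations (\ref{eqn:vp_gen_relation_2})--(\ref{eqn:vp_gen_relation_5}) follow directly from the relations among $\{u_l\}$ given in Lemma \ref{lem:rel_ul}, since $\varphi$ respects these relations. Injectivity of $\Phi$ is immediate: if every $\hat{\vp}_l=0$ then $\varphi(u_l)=0$ for every $l \in S_{(\kappa_1,\kappa_2,\delta_3)}$, and since $\{u_l\}$ generates $V_{(\kappa_1,\kappa_2,\delta_3)}$ as a $\bC$-vector space and $\varphi$ is determined by its radial part on $A$, this forces $\varphi=0$.

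For surjectivity of $\Phi$, the essential observation is that $p\circ\Phi$ coincides with the isomorphism (\ref{eqn:inj_sol}). Given any $\{\hat{\vp}_l\} \in {\rm Sol}(\Pi_\sigma,\psi_1)$, its image under $p$ lies in ${\rm Sol}(r)$ (this is precisely the content of Proposition \ref{prop:PDE_reduction_classone}(i)), so there exists a unique $\varphi \in {\rm Hom}_K(V_{(\kappa_1,\kappa_2,\delta_3)}, {\rm Wh}(\Pi_\sigma, \psi_1))$ with $p(\Phi(\varphi)) = p(\{\hat{\vp}_l\})$. Then $\Phi(\varphi)$ and $\{\hat{\vp}_l\}$ are two elements of ${\rm Sol}(\Pi_\sigma,\psi_1)$ agreeing on the specified component. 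Because the case-by-case reduction sketched in the outline before the theorem (using Propositions \ref{prop:PDE2} and \ref{prop:PDE_reduction_classone}(ii) together with the generator relations) shows that under assumption (\ref{assumption}) every $\hat{\vp}_l$ can be recovered from the specified one by finitely many applications of the difference-differential equations, these two collections must coincide. Hence $\Phi(\varphi) = \{\hat{\vp}_l\}$, establishing surjectivity.

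For the moderate growth variant, the point is that $\Phi$ restricts to $\Phi^{\rm mg}$ on moderate growth Whittaker functions, and the image lands in ${\rm Sol}(\Pi_\sigma,\psi_1)^{\rm mg}$. Conversely, since the reconstruction of all $\hat{\vp}_l$ from a single component is carried out via difference-differential equations with polynomial coefficients, moderate growth of the specified component propagates to all components; this gives that $p$ maps ${\rm Sol}(\Pi_\sigma,\psi_1)^{\rm mg}$ into the moderate-growth part of ${\rm Sol}(r)$. Combined with (\ref{eqn:dimWh_gps}), which gives $\dim_\bC{\rm Hom}_K(V_{(\kappa_1,\kappa_2,\delta_3)}, {\rm Wh}(\Pi_\sigma,\psi_1)^{\rm mg}) = 1$, a dimension count finishes the argument. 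The main technical obstacle—the case-by-case verification that the specified component $\hat{\vp}_{(\kappa_1-\kappa_2)e_4+\kappa_2 e_{12}}$ (or the $e_{34}$ variant) determines all other $\hat{\vp}_l$ via the PDE system and relations—has already been handled in the discussion preceding the theorem, and the remaining work is packaging this into the two isomorphisms as above.
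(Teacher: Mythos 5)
Your proposal is correct and follows essentially the same route as the paper: the paper proves this theorem precisely by the discussion preceding it (injectivity of the single-component extraction map because all $\hat{\vp}_l$ are determined from $\hat{\vp}_{(\kappa_1-\kappa_2)e_4+\kappa_2 e_{12}}$ or $\hat{\vp}_{(\kappa_1-\kappa_2)e_4+\kappa_2 e_{34}}$ under (\ref{assumption}), the dimension count $\dim_\bC\mathcal{I}_{\Pi_\sigma,\psi_1}=24\ge \dim_\bC{\rm Sol}(r)$ from Proposition \ref{lem:dim_PDE_kappa_2neq1}, and the induced statement for the moderate growth subspaces), and your factorization of this through the full-collection map $\Phi$ followed by the projection $p$ is just that argument made explicit. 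The only blemishes are the mislabelled citation ((\ref{eqn:inj_sol}) where (\ref{eqn:inj_hom_Sol}) is meant for the non--moderate-growth isomorphism) and the somewhat loose moderate-growth paragraph (the needed point is that a $K$-homomorphism all of whose minimal $K$-type radial components are of moderate growth lands in ${\rm Wh}(\Pi_\sigma,\psi_1)^{\rm mg}$, rather than "propagation" plus a dimension count), but this is no less precise than the paper's own one-line treatment of the same step.
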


\subsection{The space $ {\rm Sol}^{\rm mg}(\mu) $}

We give Mellin-Barnes integral representations of moderate growth functions in ${\rm Sol}(\mu) $. 
As in \cite{Ishii_Stade_001}, 
we first express our solutions in terms of Mellin-Barnes kernel of the class one principal series Whittaker functions on 
$ {\rm GL}(3,\bR)$. 
For $ t_1,t_2 \in \bC $ 
and $\mu = (\mu_1,\mu_2,\mu_3,\mu_4) \in \bC^4 $, set 
\begin{align*}
 V'(t_1,t_2)
& =     
  \frac{ \GR(t_1+\mu_2)\GR(t_1+\mu_3)\GR(t_1+\mu_4) 
  \GR(t_2+\mu_3+\mu_4) \GR(t_2+\mu_2+\mu_4) \GR(t_2+\mu_2+\mu_3)}
 {\GR(t_1+t_2+\mu_2+\mu_3+\mu_4)}.
\end{align*}
For $ s_1, s_2, s_3, \alpha_1,\alpha_2 \in \bC $ satisfying 
$ {\rm Re}(s_1+\mu_1) > 0 $,
$ {\rm Re}(s_1+\mu_i + \alpha_1) > 0 $ $(2 \le i \le 4) $, 
$ {\rm Re}(s_2+\mu_1+\mu_i+\alpha_1) > 0$ $(2 \le i \le 4) $,
$ {\rm Re}(s_2+\mu_i +\mu_j+\alpha_2) > 0 $ $(2 \le i \le j \le 4)$,
$ {\rm Re}(s_3+\mu_1+\mu_i+\mu_j + \alpha_2) > 0 $ $(2 \le i \le j \le 4)$, 
$ {\rm Re}(s_3+\mu_2+\mu_3+\mu_4) > 0 $,
and a polynomial $ P = P(s_1,s_2,s_3,t_1,t_2) $ on $ \bC^5 $, we define 
\begin{align} \label{eqn:def_V}
\begin{split}
& V(s_1,s_2,s_3; \alpha_1,\alpha_2;  P) 
\\
& =  \frac{1}{(4\pi \sqrt{-1})^2} \int_{t_2} \int_{t_1}
     \GR(s_1+\mu_1) \GR(s_1-t_1+\alpha_1)   \GR(s_2-t_1+\mu_1+\alpha_1) \GR(s_2-t_2+\alpha_2)
\\
& \times 
\GR(s_3-t_2+\mu_1+\alpha_2) \GR(s_3+\mu_2+\mu_3+\mu_4) 
   P (s_1,s_2,s_3,t_1,t_2) V'(t_1,t_2) \,dt_1 dt_2. 
\end{split}
\end{align}
Here the path $ \int_{t_i} $ $(i=1,2)$ is the vertical line from $ {\rm Re}(t_i) - \sqrt{-1} \infty $ to
$ {\rm Re}(t_i) + \sqrt{-1} \infty $ with the real part
\begin{gather*}
 \max \{  -{\rm Re}(\mu_2), -{\rm Re}(\mu_3), -{\rm Re}(\mu_4) \} 
 < {\rm Re}(t_1) < \min \{ {\rm Re}(s_1+\alpha_1), {\rm Re}(s_2+\mu_1+\alpha_1) \},
\\
 \max \{  -{\rm Re}(\mu_3+\mu_4), -{\rm Re}(\mu_2+\mu_4), -{\rm Re}(\mu_2+\mu_3) \} 
 < {\rm Re}(t_2) < \min \{ {\rm Re}(s_2+\alpha_2), {\rm Re}(s_3+\mu_1+\alpha_2) \}.
\end{gather*}

\medskip 

Let 
\begin{align*}
  Q_2' (t_1,t_2) &= -t_1^2-t_2^2+t_1t_2 - (\mu_2+\mu_3+\mu_4) t_2- (\mu_2\mu_3+\mu_3 \mu_4+\mu_4\mu_2),
\\
 Q_3'(t_1,t_2) & = -t_1 (t_2+\mu_2+\mu_3+\mu_4) (t_1-t_2) - \mu_2 \mu_3\mu_4.
\end{align*}
Then, from the formula (\ref{eqn:gamma_FE}),  we have 
\begin{align} 
\label{eqn:GL3_PDE_compati_C2}
&  Q_2'(t_1,t_2) V'(t_1,t_2) + (2\pi)^2 \{ V'(t_1+2,t_2) + V'(t_1,t_2+2) \} = 0,
\\
\label{eqn:GL3_PDE_compati_C3}
& Q_3'(t_1,t_2) V'(t_1,t_2)  + (2\pi)^2 \{ (t_2+\mu_2+\mu_3+\mu_4) V'(t_1+2,t_2) -t_1 V'(t_1,t_2+2) \} = 0. 
\end{align}
These relations are nothing but the compatibility with the system of partial differential equations 
satisfied by class one Whittaker functions on ${\rm GL}(3,\bR)$.

To express our solution in terms ${\rm GL}(2,\bR) $-Whittaker functions as in \cite{Stade_002}, 
for $ m \in \bZ $, we put
\begin{align} \label{eqn:def_Um}
\begin{split}
 U_m(s_1,s_2,s_3;\mu) 
& = \GR(s_1+\mu_1) \GR(s_1+\mu_2) \GR(s_2 + \mu_1+\mu_2- m) \GR(s_2+\mu_3+\mu_4+m) \\
& \quad \times 
  \GR(s_3+\mu_1+\mu_3+\mu_4) \GR(s_3+\mu_2+\mu_3+\mu_4) \\
& \quad \times \frac{1}{4\pi \sqrt{-1}} \int_q  
  \frac{\GR(s_1-q+m) \GR(s_2-q+\mu_1) \GR(s_2-q+\mu_2)}
  { \GR(s_1+s_2-q+\mu_1+\mu_2) \GR(s_2+s_3-q+\mu_1+\mu_2+\mu_3+\mu_4)} 
\\
& \quad  \times \GR(s_3-q+\mu_1+\mu_2-m) \GR(q+\mu_3) \GR(q+\mu_4) \,dq.
\end{split}
\end{align} 
See \S \ref{sec:Barnes} for the assumption for $ s_i, \mu_i, m $ and the path of integration.

\begin{prop} $($\cite[Theorem 12]{Ishii_Stade_001}, \cite[Theorem 3.1]{Stade_002}$)$
\label{prop:classone_MB}
Retain the notation in Proposition \ref{prop:PDE_reduction_classone}.  \smallskip 

\noindent
(i) Let 
\begin{align*}
 f^{\rm mg}(y_1,y_2,y_3) 
& = \frac{1}{(4\pi \sqrt{-1})^3} \int_{s_3}\int_{s_2}\int_{s_1} V_0(s_1,s_2, s_3) 
 \, y_1^{-s_1} y_2^{-s_2} y_3^{-s_3}  \, ds_1 ds_2 ds_3
\end{align*}
with $ V_0(s_1,s_2, s_3) = V(s_1,s_2,s_3; 0,0; 1) $. 
Here the path $ \int_{s_i} $ $(i=1,2,3)$ is the vertical line from $ {\rm Re}(s_i) - \sqrt{-1} \infty $ to
$ {\rm Re}(s_i) + \sqrt{-1} \infty $ with the sufficiently large real part.
More precisely, 
$ {\rm Re}(s_1+\mu_i) > 0 $ $(1\le i \le 4) $, 
$ {\rm Re}(s_2+\mu_i+\mu_j)>0 $ $(1 \le i<j \le 4) $
and 
$ {\rm Re}(s_3+\mu_i+\mu_j+\mu_k)> 0 $ $(1 \le i<j <k \le 4) $.
Then $ f_{}^{\rm mg} $ is a moderate growth function in the space $ {\rm Sol}(\mu) $.  \smallskip 

\noindent
(ii)
We have 
\begin{align} \label{eqn:V0=U0}
 V(s_1,s_2,s_3; 0,0,1)  = U_0(s_1,s_2,s_3;\mu_1,\mu_2,\mu_3,\mu_4) 
\end{align}
and
\begin{align} \label{eqn:U0_Weyl}
 U_0(s_1,s_2,s_3; \mu_{w(1)}, \mu_{w(2)},  \mu_{w(3)},  \mu_{w(4)})
 = U_0(s_1,s_2,s_3; \mu_1,\mu_2,\mu_3,\mu_4) \qquad ( w \in {\mathfrak S}_4 ). 
\end{align} 
\end{prop}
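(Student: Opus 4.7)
For part (i), moderate growth of $f^{\mathrm{mg}}$ follows by shifting each contour $\int_{s_i}$ sufficiently far to the right and applying Stirling's asymptotics to the $\GR$-factors in $V_0$; the integrand then decays exponentially in the imaginary directions of the $s_i$, giving $f^{\mathrm{mg}}(y)=O(y_i^{-N})$ for arbitrary $N$. To verify $f^{\mathrm{mg}}\in\mathrm{Sol}(\mu)$, I would work on the Mellin side: $\pa_i$ acts as multiplication by $-s_i$ on the Mellin kernel, while $(2\pi y_i)^2$ corresponds to $V_0(s)\mapsto(2\pi)^2 V_0(s+2e_i)$. Via the functional equation \eqref{eqn:gamma_FE}, the shifted kernel $V_0(s+2e_i)$ differs from $V_0(s)$ by insertion of a quadratic polynomial in $s_j$ and $t_k$ inside the $(t_1,t_2)$-integrand, so each equation $\mathcal{D}_i(\mu) f^{\mathrm{mg}}=0$ reduces to a polynomial identity on the integrand. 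Cross-terms in $t_1,t_2$ that are not absorbed by the $\GR$-functional equation are killed by the $\mathrm{GL}(3,\bR)$ compatibility relations \eqref{eqn:GL3_PDE_compati_C2} and \eqref{eqn:GL3_PDE_compati_C3} for $V'(t_1,t_2)$, which play the role of the class one Capelli equations on $\mathrm{GL}(3,\bR)$ and exactly account for the residual couplings.

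For part (ii), the identity $V_0=U_0$ requires collapsing the double Mellin--Barnes integral defining $V_0$ into the single $q$-integral of $U_0$. The plan is a change of variables in $(t_1,t_2)$, for example $(t_1,t_2)\mapsto(q,p-q)$ with $p=t_1+t_2$, that isolates the coupling $\GR(t_1+t_2+\mu_2+\mu_3+\mu_4)=\GR(p+\mu_2+\mu_3+\mu_4)$ into the $p$-integration and leaves the $q$-dependence in a configuration matching Barnes' second lemma (Lemma \ref{lem:Barnes2nd}); the evaluated integral produces the six $\GR$-prefactors of $U_0$ in \eqref{eqn:def_Um}, and the remaining variable becomes the $q$-integration of $U_0$ after relabeling. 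For the $\mathfrak{S}_4$-symmetry of $U_0$, the transpositions $(1\,2)$ and $(3\,4)$ are manifest from \eqref{eqn:def_Um}, and since these together with $(2\,3)$ generate $\mathfrak{S}_4$ it suffices to prove $\mu_2\leftrightarrow\mu_3$ invariance, which follows by rewriting the $q$-integrand of $U_0$ via Barnes' second lemma so that $\mu_2$ and $\mu_3$ appear symmetrically.

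The hardest step will be the identity $V_0=U_0$. The denominator $\GR(t_1+t_2+\mu_2+\mu_3+\mu_4)$ in $V'$ couples the two integration variables and prevents a direct single-step application of either Barnes' lemma; the key will be identifying the correct change of variables that, combined with the $\GR$-functional equations in \eqref{eqn:gamma_FE}--\eqref{eqn:gamma_dup}, recasts one of the $t$-integrations in exactly the form demanded by Barnes' second lemma so that its evaluation produces prefactors matching \eqref{eqn:def_Um}. A similarly delicate Barnes manipulation is needed for the non-obvious $\mu_2\leftrightarrow\mu_3$ symmetry.
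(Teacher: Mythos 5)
Your plan for part (i) coincides with the paper's own proof: moderate growth via Stirling, and membership in $ {\rm Sol}(\mu) $ by passing to the Mellin kernel, turning the shifts $ V_0(s_1+2,s_2,s_3) $ etc.\ into polynomial insertions via (\ref{eqn:gamma_FE}), and reducing the resulting polynomial identities to the $ {\rm GL}(3) $ compatibility relations (\ref{eqn:GL3_PDE_compati_C2}) and (\ref{eqn:GL3_PDE_compati_C3}); in the paper this is carried out through the explicit identities $ P_2 = P_2' $, $ P_3 = P_3' + \mu_1 P_2' $, $ P_4 = \mu_1 P_3' $. So (i) is fine.

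Part (ii) is where the genuine gap lies. Your substitution $ (t_1,t_2) \mapsto (q, p-q) $ does isolate the coupling factor $ \GR(t_1+t_2+\mu_2+\mu_3+\mu_4) = \GR(p+\mu_2+\mu_3+\mu_4) $, but it does not leave the $q$-integration in a shape to which Lemma \ref{lem:Barnes2nd} applies: after the substitution the $q$-integrand is $ \GR(q+\mu_2)\GR(q+\mu_3)\GR(q+\mu_4)\GR(s_2-p+q)\GR(s_3-p+q+\mu_1) $ against $ \GR(s_1-q)\GR(s_2-q+\mu_1)\GR(p-q+\mu_3+\mu_4)\GR(p-q+\mu_2+\mu_4)\GR(p-q+\mu_2+\mu_3) $, i.e.\ five ascending and five descending gamma factors with no compensating denominator in $q$ — far beyond the $2+3$-over-$1$ Saalsch\"utzian configuration of Barnes' second lemma, and there is no evident further reduction from this starting point. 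The missing idea is to apply Barnes' \emph{first} lemma in the reverse direction to $ V'(t_1,t_2) $ itself: the ratio of six $\GR$'s over $ \GR(t_1+t_2+\mu_2+\mu_3+\mu_4) $ is exactly the right-hand side of Lemma \ref{lem:Barnes1st}, so $ V'(t_1,t_2) = \GR(t_1+\mu_2)\GR(t_2+\mu_3+\mu_4) \cdot \tfrac{1}{4\pi\sqrt{-1}} \int_q \GR(t_1-q)\GR(t_2-q+\mu_2)\GR(q+\mu_3)\GR(q+\mu_4)\,dq $. This removes the coupling denominator, and then the $t_1$- and $t_2$-integrations are each an immediate forward application of Barnes' first lemma, yielding precisely $ U_0(s_1,s_2,s_3;\mu) $; Barnes' second lemma is never needed here. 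Your symmetry argument is also over-engineered: once $ V_0 = U_0 $ is known, the manifest invariance of $ V_0 $ under permutations of $ (\mu_2,\mu_3,\mu_4) $ together with the manifest $ \mu_1 \leftrightarrow \mu_2 $ invariance of $ U_0 $ already generates $ \mathfrak{S}_4 $, so no separate Barnes manipulation for $ \mu_2 \leftrightarrow \mu_3 $ is required — and it is not clear that the second-lemma symmetrization you sketch would actually go through.
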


\begin{proof} 
Since the Stirling's formula implies that $ f^{\rm mg} $ is of moderate growth,  
it is enough to show $ {f}^{\rm mg} \in {\rm Sol}(\mu) $. 
We set 
\begin{align*}
 Q_2(s_1,s_2,s_3) 
& = -s_1^2-s_2^2-s_3^2 + s_1 s_2  + s_2 s_3  -  \s_1(\mu)  s_3  - \s_2(\mu),
\\
 Q_3(s_1,s_2,s_3)
& = (-s_2)(-s_1+s_3)(-s_1+s_2-s_3)   -\s_1(\mu)(s_1^2+s_2^2-s_1s_2-s_2s_3)-\s_3(\mu),
\\
 Q_4(s_1,s_2,s_3) 
& = (-s_1)(s_1-s_2)(s_2-s_3)(s_3+\s_1(\mu)) -\s_4(\mu)
\end{align*}
and define functions $ X_iV_0 $ ($i=2,3,4$) on $ \bC^3 $ by 
$$  (X_i V_0)(s_1,s_2,s_3)  =  Q_i(s_1,s_2,s_3) V_0(s_1,s_2,s_3)  + (Z_i V_0)(s_1,s_2,s_3), $$
where $ Z_i V_0 =  (Z_i V_0)(s_1,s_2,s_3) $ is given by 
\begin{align*}
Z_2 V_0 & =  (2\pi)^2 \{ V_0( s_1+2,s_2,s_3 ) + V_0( s_1,s_2+2,s_3 ) + V_0( s_1,s_2,s_3+2 ) \},
\\
Z_3 V_0 & = (2\pi)^2 \{ (s_2+ \s_1(\mu))  V_{0}(s_1+2,s_2,s_3) +  (-s_1+s_3+\s_1(\mu)) V_{0}(s_1,s_2+2,s_3) 
   + (-s_2) V_{0}(s_1,s_2,s_3+2) \},   
\\
Z_4 V_0 & = 
    (2\pi)^2 \{ (s_2-s_3)(s_3+\s_1(\mu)) V_0(s_1+2,s_2,s_3) + (-s_1)(s_3 + \s_1(\mu))  V_0(s_1,s_2+2,s_3) \\
  & \quad   +  (-s_1)(s_1-s_2) V_0(s_1,s_2,s_3+2) \} 
  + (2\pi)^4 V_0(s_1+2,s_2,s_3+2).
\end{align*}
Then our task is to confirm that the Mellin-Barnes kernel of $ {D}_i(\mu) f^{\rm mg} $ vanishes, that is, 
$$
 (X_i V_0)(s_1,s_2,s_3) = 0 \quad (i=2, 3,4).
$$
In view of (\ref{eqn:gamma_FE}) we know
\begin{align*}
 (X_i V_0)(s_1,s_2,s_3) & = V(s_1,s_2,s_3; 0,0;  P_i),
\end{align*}
where $ P_i = P_i(s_1,s_2,s_3,t_1,t_2) $ is given by
\begin{align*}
 P_2 
&= Q_2(s_1,s_2,s_3) +  (s_1+\mu_1)(s_1-t_1) + (s_2-t_1+\mu_1)(s_2-t_2) 
      + (s_3-t_2+\mu_1)(s_3+\mu_2+\mu_3+\mu_4),
\\
 P_3 & =  Q_3(s_1,s_2,s_3) +
 (s_2+\s_1(\mu)) (s_1+\mu_1)(s_1-t_1)   + (-s_1+s_3+\s_1(\mu)) (s_2-t_1+\mu_1)(s_2-t_2) \\
& \quad + (-s_2) (s_3-t_2+\mu_1)(s_3+\mu_2+\mu_3+\mu_4),
\\
 P_4
& =  Q_4(s_1,s_2,s_3) + (s_2-s_3)(s_3+\s_1(\mu)) (s_1+\mu_1)(s_1-t_1)  + (-s_1)(s_3 + \s_1(\mu))(s_2-t_1+\mu_1)(s_2-t_2) \\
& \quad + (-s_1)(s_1-s_2)  (s_3-t_2+\mu_1)(s_3+\mu_2+\mu_3+\mu_4)  + (s_1+\mu_1)(s_1-t_1) (s_3-t_2+\mu_1)(s_3+\mu_2+\mu_3+\mu_4).
\end{align*}
Define $ P_i' = P_i'(s_1,s_2,s_3,t_1,t_2) $ $ (i=2,3) $ by 
\begin{align*}
 P_2'
& = Q_2'(t_1,t_2) + (s_1-t_1) (s_2-t_1+\mu_1) + (s_2-t_2)(s_3-t_2+\mu_1), 
\\
 P_3'   
& = Q_3'(t_1,t_2) 
 + (t_2+\mu_2+\mu_3+\mu_4)(s_1-t_1 ) (s_2-t_1+\mu_1)  -t_1 (s_2-t_2)(s_3-t_2+\mu_1).
\end{align*}
Because of the identity $ P_2  =P_2' $ and (\ref{eqn:gamma_FE}), we have 
\begin{align*}
\begin{split}
 V(s_1,s_2,s_3;  0,0; P_2) & = V(s_1,s_2,s_3; 0,0; Q_2')  + (2\pi)^2 V(s_1,s_2,s_3;  2,0 ; 1)  + (2\pi)^2 V(s_1,s_2,s_3;  0,2; 1). 
\end{split}
\end{align*}
If we substitute $ t_1 \to t_1 +2 $ and $ t_2 \to t_2+2 $ in the second and the third terms, respectively, 
then (\ref{eqn:GL3_PDE_compati_C2}) implies that $ X_2 V_0  = 0 $.
Similarly, the identities 
$ P_3 = P_3' + \mu_1 P_2' $ and $ P_4  = \mu_1 P_3' $ 
together with (\ref{eqn:GL3_PDE_compati_C2}) and (\ref{eqn:GL3_PDE_compati_C3})
lead $ X_3 V_0 = X_4 V_0 = 0$ as desired.

\medskip 

Let us show (ii).
By Lemma \ref{lem:Barnes1st}, $ V'(t_1,t_2) $ can be written as  
\begin{align*}
&  \GR(t_1+\mu_2 ) \GR(t_2+\mu_3+\mu_4 )  \cdot
 \frac{1}{4\pi \sqrt{-1}} \int_q \GR(t_1-q) \GR(t_2-q+\mu_2) \GR(q+\mu_3) \GR(q+\mu_4) \,dq.
\end{align*}
We substitute the above expression for $ V'(t_1,t_2) $ into (\ref{eqn:def_V}) and 
use Lemma \ref{lem:Barnes1st} for the inegrations $ \int_{t_1}$ and $ \int_{t_2} $ to get (\ref{eqn:V0=U0}).
Since $ V_0(s_1,s_2,s_3) $ is invariant under the change of $ \mu_2,  \mu_3 $ and $ \mu_4 $, and 
$ U_0(s_1,s_2,s_3; \mu_1,\mu_2,\mu_3,\mu_4) = U_0(s_1,s_2,s_3; \mu_2,\mu_1,\mu_3,\mu_4) $ 
from the definition, 
we know that  
(\ref{eqn:V0=U0}) implies (\ref{eqn:U0_Weyl}).
\end{proof}

\subsection{Auxiliary Lemma}
\label{subsec:aux_lem_U}

The following lemma will be used to determine $ \hat{\vp}_l $ 
by using the differential equations in Lemma \ref{lem:DS1_l-e1_and_l-e4} and Proposition \ref{prop:PDE_reduction_classone} (ii).

\begin{lem} \label{lem:VvsU}
Retain the notation in Propositions \ref{prop:classone_MB}.
For $ \mu = (\mu_1,\mu_2,\mu_3, \mu_4) \in \bC^4 $, let
\begin{align*}
\begin{split}
 {U}'(s_1,s_2,s_3; \mu) 
&= \GR(s_1+\mu_1) \GR(s_1+\mu_2) \GR (s_2+\mu_1+\mu_2-1)  \GR(s_2+\mu_3+\mu_4+1) 
\\
& \quad \times  \GR(s_3+\mu_1+\mu_3+\mu_4+1) \GR(s_3+\mu_2+\mu_3+\mu_4+1)
\\
& \quad  \times \frac{1}{4\pi \sqrt{-1}} \int_q 
  \frac{ \GR(s_1-q) \GR(s_2-q+\mu_1-1) \GR(s_2-q+\mu_2-1) }{ \GR(s_1+s_2-q+\mu_1+\mu_2-1) \GR(s_2+s_3-q+\mu_1+\mu_2+\mu_3+\mu_4) } 
\\
& \quad  \times \GR(s_3-q+\mu_1+\mu_2-1) \GR(q+\mu_3) \GR(q+\mu_4) \,dq.
\end{split}
\end{align*}
(i) For $ (c_1,c_2) \in \bC^2$ and $ i =1,3 $, we set 
\begin{align*}
 E_i(c_1,c_2) U_0(s_1,s_2,s_3; \mu) & =
 \begin{cases} (s_1+c_1)(s_1+c_2) U_0(s_1,s_2,s_3; \mu) - U_0(s_1+2,s_2,s_3; \mu) & \text{if $ i = 1$}, \\
   (s_3+c_1)(s_3+c_2) U_0(s_1,s_2,s_3; \mu) - U_0(s_1,s_2,s_3+2; \mu) & \text{if $ i = 3.$} \end{cases}
\end{align*}
Let $ \mu' = (\mu_3+1,\mu_4+1, \mu_1,\mu_2) $ and $ \mu'' = (\mu_1+1,\mu_2+1,\mu_3,\mu_4) $.
We have
\begin{align}  
\label{eqn:Mellin_12vs23} 
\begin{split}
&  E_1(\mu_1,\mu_2) U_0(s_1,s_2,s_3; \mu')
  = U'(s_1+1,s_2+1,s_3,\mu''), 
\end{split}
\\
\label{eqn:Mellin_34vs23} 
\begin{split}
&  E_3(\mu_1+\mu_3+\mu_4+1,\mu_2+\mu_3+\mu_4+1) U_0(s_1,s_2,s_3; \mu'')
 = U'(s_1,s_2+1,s_3+1,\mu''), 
\end{split}
\\
\label{eqn:Mellin_12vs14}
\begin{split} 
&E_3(\mu_1+\mu_2+\mu_3+1, \mu_1+\mu_2+\mu_4+1) U_0(s_1,s_2,s_3; \mu')
 = U'(s_1,s_2+1,s_3+1; \mu'),  
\end{split}
\\
\label{eqn:Mellin_34vs14}
\begin{split} 
& E_1(\mu_3, \mu_4) U_0(s_1,s_2,s_3; \mu'')
 = U'(s_1+1,s_2+1,s_3, \mu'), 
\end{split}
\end{align}
\begin{align} \label{eqn:Mellin_12vs34}
\begin{split}
& (2\pi)^{-2} (-s_1+s_2-s_3-\mu_3-\mu_4-2)(-s_1+s_3+\mu_1+\mu_2) E_1(\mu_1,\mu_2) U_0(s_1,s_2,s_3; \mu')
\\
&+ (2\pi)^{-2} (-s_1+s_2-s_3-\mu_3-\mu_4-2)(s_2+\mu_1+\mu_2)  U_0(s_1+2,s_2,s_3; \mu' )
\\
& + E_1(\mu_1,\mu_2) U_0(s_1,s_2,s_3+2; \mu' )
  + E_3(\mu_1+\mu_2+\mu_3+1, \mu_1+\mu_2+\mu_4+1) U_0(s_1+2,s_2,s_3; \mu' )
\\
& = U_0(s_1+1,s_2+2,s_3+1; \mu''), 
\end{split}
\end{align}
\begin{align} \label{eqn:Mellin_34vs12}
\begin{split}
& (2\pi)^{-2} (-s_1+s_2-s_3-\mu_1-\mu_2-2)(-s_1+s_3+\mu_3+\mu_4) E_1(\mu_3,\mu_4) U_0(s_1,s_2,s_3; \mu'')
\\
&+ (2\pi)^{-2} (-s_1+s_2-s_3-\mu_1-\mu_2-2)(s_2+\mu_3+\mu_4)  U_0(s_1+2,s_2,s_3; \mu'' )
\\
& + E_1(\mu_3,\mu_4) U_0(s_1,s_2,s_3+2; \mu' )
  + E_3(\mu_1+\mu_3+\mu_4+1, \mu_2+\mu_3+\mu_4+1) U_0(s_1+2,s_2,s_3; \mu'' )
\\
& = U_0(s_1+1,s_2+2,s_3+1; \mu').
\end{split}
\end{align}
(ii) We have 
\begin{align} \label{eqn:Um3}
\begin{split}
&  (2\pi)^{-3} (s_1-s_2-\mu_1+m-1) (s_2-s_3-\mu_1+m-1) (s_3+\mu_2+\mu_3+\mu_4) U_{m-1}(s_1,s_2,s_3;\mu) \\
& +(2\pi)^{-1} (s_3+\mu_2+\mu_3+\mu_4) U_{m-1}(s_1,s_2+2,s_3;\mu) \\
&   +(2\pi )^{-1} (s_1-s_2-\mu_1+m-1) U_{m-1}(s_1,s_2,s_3+2;\mu) \\
& = U_m(s_1+1, s_2+1, s_3+1; \mu_1-1, \mu_2+1, \mu_3, \mu_4). 
\end{split}
\end{align}
\end{lem}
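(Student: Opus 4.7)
The plan is to prove each identity by direct manipulation of the Mellin–Barnes integrals, relying on the gamma-function functional equation $\GR(s+2) = (2\pi)^{-1} s\GR(s)$ and, where convenient, on Barnes' lemmas (Lemmas \ref{lem:Barnes1st} and \ref{lem:Barnes2nd}). For the four simpler identities (\ref{eqn:Mellin_12vs23})--(\ref{eqn:Mellin_34vs14}), I would start by expanding $E_i(c_1,c_2)U_0(s_1,s_2,s_3;\mu')$ using the definition of $U_0$ in (\ref{eqn:def_Um}), pulling the polynomial prefactor $(s_i+c_1)(s_i+c_2)$ inside the $q$-integral. By $\GR(s_i+c_j+2)=(2\pi)^{-1}(s_i+c_j)\GR(s_i+c_j)$, the quadratic factor can be absorbed into the base gamma factors of the shifted term $U_0(s_i+2,\ldots;\mu')$ (after a suitable shift $q\mapsto q\pm 1$ in the inner integration). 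Once both terms share a common integrand kernel, their difference reduces to a Mellin–Barnes integral matching the RHS kernel of $U'(\cdot;\mu')$ or $U'(\cdot;\mu'')$ written out from the definition.

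For (\ref{eqn:Mellin_12vs34}) and (\ref{eqn:Mellin_34vs12}), the strategy is the same but involves four Mellin–Barnes terms on the LHS. After rewriting each term so they share a common integrand (up to shifts of $q$ absorbed by the functional equation), I would collect them over a common gamma-factor denominator. The resulting numerator is a polynomial in $q, s_1, s_2, s_3$ which, after routine algebraic simplification using the parameter-shift relations for $U_0$, must factor into the precise gamma factors appearing in $U_0(s_1+1,s_2+2,s_3+1;\mu'')$ on the RHS. The role of the specific polynomial prefactors such as $(-s_1+s_2-s_3-\mu_3-\mu_4-2)$ and $(-s_1+s_3+\mu_1+\mu_2)$ is to produce exactly the cancellations needed to match this RHS kernel.

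Part (ii), the identity (\ref{eqn:Um3}), is handled in the same spirit: I would expand each of the three $U_{m-1}$ terms on the LHS using (\ref{eqn:def_Um}), use the functional equation to rewrite $U_{m-1}(s_1,s_2+2,s_3;\mu)$ and $U_{m-1}(s_1,s_2,s_3+2;\mu)$ so that every term shares common base gamma factors matching those of $U_m(s_1+1,s_2+1,s_3+1;\mu_1-1,\mu_2+1,\mu_3,\mu_4)$ on the RHS. After combining the three integrands inside a single $q$-integral (with appropriate shifts of $q$), the resulting numerator polynomial in $q, s_1, s_2, s_3$ must collapse to exactly the one appearing in the RHS kernel. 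The compatibility of the prefactors $(s_1-s_2-\mu_1+m-1)$, $(s_2-s_3-\mu_1+m-1)$ and $(s_3+\mu_2+\mu_3+\mu_4)$ with the index shift $m-1 \to m$ is what makes this work.

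The main obstacle is purely algebraic bookkeeping rather than any conceptual difficulty: in (\ref{eqn:Mellin_12vs34}), (\ref{eqn:Mellin_34vs12}) and (\ref{eqn:Um3}) one must carefully track several integrals with mismatched gamma shifts and polynomial prefactors, align them via the functional equation and contour shifts $q \mapsto q\pm 1$, and verify that the combined numerator simplifies to match the target integrand. No new idea is required beyond systematic use of $\GR(s+2)=(2\pi)^{-1}s\GR(s)$; the risk is simply in the volume of symbolic manipulation needed to confirm that all polynomial factors cancel correctly.
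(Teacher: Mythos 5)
Your overall toolkit (the functional equation (\ref{eqn:gamma_FE}), contour shifts $q\mapsto q\pm 1$, and a final polynomial factorization inside the $q$-integral) is indeed what the paper uses, and for part (ii) and for the identities (\ref{eqn:Mellin_34vs23}), (\ref{eqn:Mellin_12vs14}) your plan works as stated: there the quadratic prefactor matches the $s_3$-base gamma factors of $U_0$ literally, everything stays in one Barnes family, and the bracket factorizes (for (\ref{eqn:Um3}) the paper's proof is exactly your plan, hinging on the factorization of the combined numerator into $(s_1+\mu_2)(s_1-q+m-1)(s_2-q+\mu_2)(s_2+\mu_3+\mu_4+m-1)$). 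However, you have omitted an ingredient that is not cosmetic: the $\mathfrak{S}_4$-invariance (\ref{eqn:U0_Weyl}) of $U_0$. In (\ref{eqn:Mellin_12vs23}) and (\ref{eqn:Mellin_34vs14}) the operator $E_1$ carries the parameters sitting in the \emph{third and fourth} slots of $\mu'$ (resp.\ $\mu''$); the base factors of $U_0(s_1,s_2,s_3;\mu')$ are $\GR(s_1+\mu_3+1)\GR(s_1+\mu_4+1)$, so $(s_1+\mu_1)(s_1+\mu_2)$ cannot be "absorbed into the base gamma factors" as you describe. The definition (\ref{eqn:def_Um}) is only manifestly symmetric within the pairs $\{\mu_1,\mu_2\}$ and $\{\mu_3,\mu_4\}$; swapping the pairs is the nontrivial symmetry (\ref{eqn:U0_Weyl}), proved via (\ref{eqn:V0=U0}), and the paper invokes it first, after which your absorption step is exactly what it does.

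For the two long identities (\ref{eqn:Mellin_12vs34}) and (\ref{eqn:Mellin_34vs12}) the gap is more serious. The four LHS terms are Barnes integrals whose kernels carry $\GR(q+\mu_1)\GR(q+\mu_2)$ (the $\mu'$ ordering), while the target $U_0(s_1+1,s_2+2,s_3+1;\mu'')$ carries $\GR(q+\mu_3)\GR(q+\mu_4)$; these are not related by any combination of functional equations and $q$-shifts within a single integral, so "collecting over a common gamma-factor denominator and factoring the numerator" cannot be carried out directly. What is needed is a re-pairing of gamma factors: the paper re-expands $U'(s_1,s_2+1,s_3+1;\mu')$ as a triple Mellin--Barnes integral and applies Barnes' first lemma (Lemma \ref{lem:Barnes1st}) several times in both directions -- the same mechanism underlying (\ref{eqn:V0=U0}) -- before the final polynomial identity produces $U_0(s_1+1,s_2+2,s_3+1;\mu'')$; and (\ref{eqn:Mellin_34vs12}) is then deduced from (\ref{eqn:Mellin_12vs34}) by another application of (\ref{eqn:U0_Weyl}) rather than by repeating the computation. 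Your proposal mentions Barnes' lemmas only "where convenient" and gives no indication of this re-pairing step, which is the actual content of the hard cases; as written, the plan would stall there.
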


\begin{proof}
Let us show the identities in (i). 
For $ \alpha_i,\beta_j,\gamma_k \in \bC $ $(1 \le i,j \le 6, 1 \le k \le 2)$, we put 
\begin{align*}
& U(s_1,s_2,s_3; \mu_1,\mu_2,\mu_3,\mu_4; \alpha_1, \alpha_2, \alpha_3, \alpha_4, \alpha_5, \alpha_6;  \beta_1,\beta_2,
 \beta_3,\beta_4,\beta_5,\beta_6; \gamma_1, \gamma_2) \\
& = \GR(s_1+\mu_1+\alpha_1) \GR(s_1+\mu_2+\alpha_2) 
    \GR(s_2+\mu_1+\mu_2+\alpha_3) \\
& \quad \times  \GR(s_2+\mu_3+\mu_4+\alpha_4)
 \GR(s_3+\mu_1+\mu_3+\mu_4+ \alpha_5) \GR(s_3+\mu_2+\mu_3+\mu_4+\alpha_6) 
\\
& \quad \times \frac{1}{4 \pi \sqrt{-1}} \int_q
  \frac{\GR(s_1-q+\beta_1) \GR(s_2-q+\mu_1+\beta_2) \GR(s_2-q+\mu_2+\beta_3) }{ 
  \GR(s_1+s_2-q+\mu_1+\mu_2+\gamma_1)}
\\
& \quad \times \frac{ \GR(s_3-q+\mu_1+\mu_2+\beta_4) \GR(q+\mu_3+\beta_5) \GR(q+\mu_4+\beta_6) }{ 
   \GR(s_2+s_3+\mu_1+\mu_2+\mu_3+\mu_4+ \gamma_2) }\,dq.
\end{align*}
Let us show (\ref{eqn:Mellin_12vs23}). 
In view of (\ref{eqn:U0_Weyl}) we know
 $ E_1(\mu_1,\mu_2) U_0(s_1,s_2,s_3; \mu') = 
 E_1(\mu_1,\mu_2) U_0(s_1,s_2,s_3; \mu_1,\mu_2,\mu_3+1,\mu_4+1)  $. 
Then (\ref{eqn:gamma_FE}) implies that  
\begin{align*}
 E_1(\mu_1,\mu_2) U_0(s_1,s_2,s_3; \mu')
& = U(s_1,s_2,s_3; \mu_1,\mu_2,\mu_3,\mu_4; 2,2,0,2,2,2; 0,0,0,0,1,1; 0,2) \\
& \quad - U(s_1,s_2,s_3; \mu_1,\mu_2,\mu_3,\mu_4; 2,2,0,2,2,2; 2,0,0,0,1,1; 2,2) \\
& = U(s_1,s_2,s_3; \mu_1,\mu_2,\mu_3,\mu_4; 2,2,2,2,2,2; 0,0,0,0,1,1; 2,2) \\
& = U(s_1,s_2,s_3; \mu_1,\mu_2,\mu_3,\mu_4; 2,2,2,2,2,2; 1.1,1,1,0,0; 3,3) 
\end{align*}
as desired. 
We can similarly show
(\ref{eqn:Mellin_34vs23}), (\ref{eqn:Mellin_12vs14}) and (\ref{eqn:Mellin_34vs14}).

\medskip 

To show (\ref{eqn:Mellin_12vs34}), we rewrite (\ref{eqn:Mellin_12vs14}).
By Lemma \ref{lem:Barnes1st}, we know that  $U'(s_1,s_2+1,s_3+1; \mu' )$ can be written as  
\begin{align*}
&  \frac{1}{(4\pi \sqrt{-1})^3} \int_{t_2} \int_{t_1} \int_q 
   \GR(s_1+\mu_3+1)  \GR(s_1-t_1) \GR(s_2-t_1+\mu_3+1) \GR(s_2-t_2) 
\\
& \times  
  \GR(s_3-t_2+\mu_3+1) \GR(s_3+\mu_1+\mu_2+\mu_4+3)  \GR(t_1+\mu_4+1)
   \GR(t_2+\mu_1+\mu_2+2)  \\
& \times 
   \GR(t_1-q)  \GR(t_2-q+\mu_4+1) \GR(q+\mu_1) \GR(q+\mu_2) \,dq dt_1dt_2.
\end{align*}
In view of Lemma \ref{lem:Barnes1st} and (\ref{eqn:gamma_FE}), we have
\begin{align*}
&  \GR(t_1+\mu_4+1) \GR(t_2+\mu_1+\mu_2+2)  \cdot  \frac{1}{4\pi \sqrt{-1}} \int_q
    \GR(t_1-q)  \GR(t_2-q+\mu_4+1) \GR(q+\mu_1) \GR(q+\mu_2) \,dq 
\\
& = \GR(t_1+\mu_1) \GR(t_2+\mu_2+\mu_4+1) \\
&  \quad \times \sum_{i=0,1} 
   \frac{1}{4\pi \sqrt{-1}} \int_q
    \GR(t_1-q)  \GR(t_2-q+\mu_1+2-2i) \GR(q+\mu_2+2i) \GR(q+\mu_4+1) \,dq.
\end{align*}
Then we find 
\begin{align*}
 U'(s_1,s_2+1,s_3+1; \mu' )
= 
 \sum_{i=0}^1
  U(s_1,s_2,s_3; \mu_1,\mu_3,\mu_2,\mu_4; 0,1,1,1,3,2; 0,2-2i,1, 3-2i,  2i,1; 1, 4-2i)
\end{align*}
from Lemma \ref{lem:Barnes1st}. 
Further, we use 
\begin{align*}
& \frac{\GR(s_1+\mu_3+1)\GR(s_2+\mu_1+\mu_3+1)\GR(s_1-q)\GR(s_2-q+\mu_1+2-2i)}{ \GR(s_1+s_2-q+\mu_1+\mu_3+1)}
\\
& = \sum_{j_1=0}^{1-i} \frac{1}{4\pi \sqrt{-1}} \int_{t_1} 
  \GR(s_1-t_1) \GR(s_2-t_1+\mu_1+2-2i-2j_1) \GR(t_1-q+2j_1) \GR(t_1+\mu_3+1) \,dt_1
\end{align*}
and 
\begin{align*}
& \frac{ \GR(s_2+\mu_2+\mu_4+1) \GR(s_3+\mu_1+\mu_2+\mu_4+3) \GR(s_2-q+\mu_3+1) \GR(s_3-q+\mu_1+\mu_3+3-2i)}{ 
 \GR(s_2+s_3+\mu_1+\mu_2+\mu_3+\mu_4+4-2i) } 
\\
& = \sum_{j_2=0}^{i} 
  \frac{1}{4\pi \sqrt{-1}} \int_{t_2} \GR(s_2-t_2) \GR(s_3-t_2+\mu_1+2-2j_2 ) \GR(t_2-q+\mu_3+1) \GR(t_2+\mu_2+\mu_4+1+2j_2) \,dt_2.
\end{align*}
Then $ U'(s_1,s_2+1,s_3+1; \mu') $ becomes
\begin{align*}
& \GR(s_1+\mu_1) \GR(s_3+\mu_2+\mu_3+\mu_4+2) 
    \sum_{i=0}^1 \sum_{j_1=0}^{1-i} \sum_{j_2=0}^{i}
\\
& \times  \frac{1}{(4\pi \sqrt{-1})^3} \int_{t_2} \int_{t_1} \int_q
   \GR(s_1-t_1) \GR(s_2-t_1+\mu_1+2-2i-2j_1) \GR(t_1-q+2j_1) \GR(t_1+\mu_3+1)
\\
& \times 
 \GR(s_2-t_2) \GR(s_3-t_2+\mu_1+2-2j_2 ) \GR(t_2-q+\mu_3+1) \GR(t_2+\mu_2+\mu_4+1+2j_2) 
\\
& \times \GR(q+\mu_2+2i) \GR(q+\mu_4+1) \,dq dt_1dt_2.
\end{align*}
We can collect three terms $ (i,j_1,j_2)= (0,0,0), (0,1,0), (1,0,0) $. Then the above is expressed as
\begin{align*}
& \GR(s_1+\mu_1) \GR(s_3+\mu_2+\mu_3+\mu_4+2)    
\cdot  \frac{1}{(4\pi \sqrt{-1})^3} \int_{t_2} \int_{t_1} \int_q
   \GR(s_1-t_1) \GR(s_2-t_1+\mu_1)  \\
& \times \GR(s_2-t_2) \GR(t_1-q) \GR(t_2-q+\mu_3+1) \GR(t_1+\mu_3+1) \GR(q+\mu_4+1)
\\
& \times 
 \{ (2\pi)^{-1} (s_2+\mu_1+\mu_2)  \GR(s_3-t_2+\mu_1+2) \GR(t_2+\mu_2+\mu_4+1) \GR(q+\mu_2) 
\\
& \quad + \GR(s_3-t_2+\mu_1) \GR(t_2+\mu_2+\mu_4+3) \GR(q+\mu_2+2) \}
  \,dq dt_1dt_2.
\end{align*}
Since 
\begin{align*}
& \GR(t_1+\mu_3+1) \GR(t_2+\mu_2+\mu_4+1+2i) \\
& \times \frac{1}{4\pi \sqrt{-1}} \int_q \GR(t_1-q)\GR(t_2-q+\mu_3+1) \GR(q+\mu_2+2i) \GR(q+\mu_4+1) \,dq
\\
& = \GR(t_1+\mu_2+2i) \GR(t_2+\mu_3+\mu_4+2) 
\\
& \times \frac{1}{4\pi \sqrt{-1}} \int_q \GR(t_1-q+1) \GR(t_2-q+\mu_2+1+2i) \GR(q+\mu_3) \GR(q+\mu_4) \,dq
\end{align*}
for $ i=0,1 $, after the integration over $ t_1 $ and $ t_2$, we reach the expression 
\begin{align*}
 U'(s_1,s_2+1,s_3+1; \mu') 
& = \sum_{i=0}^1 U(s_1,s_2,s_3; \mu_1,\mu_2,\mu_3,\mu_4; 0,2i, 2,2,4-2i,2; 1,1,1+2i,3,0,0; 1+2i, 5).
\end{align*}
Hence the left hand side of (\ref{eqn:Mellin_12vs34}) becomes
\begin{align*} 
& (2\pi)^{-1} (-s_1+s_2-s_3-\mu_3-\mu_4-2) \\
& \times \{ (2\pi)^{-1} (-s_1+s_3+\mu_1+\mu_2) U(s_1,s_2,s_3; \mu_1,\mu_2,\mu_3,\mu_4; 2,2,2,2,2,2; 1.1,1,1,0,0; 3,3)
\\
& \quad + (2\pi)^{-1} (s_2+\mu_1+\mu_2) U(s_1,s_2,s_3; \mu_1,\mu_2,\mu_3,\mu_4; 2,2,0,2,2,2; 3.1,1,1,0,0; 3,3) \}
\\
& + U(s_1,s_2,s_3; \mu_1,\mu_2,\mu_3,\mu_4; 2,2,2,2,4,4; 1.1,1,3,0,0; 3,5) \\
& + \sum_{i=0}^1 U(s_1,s_2,s_3; \mu_1,\mu_2,\mu_3,\mu_4; 0,2i, 2,2,4-2i,2; 1,1,1+2i,3,0,0; 1+2i, 5).
\end{align*}
The terms in the bracket $ \{ \cdots \} $ can be written as
\begin{align*}
& (2\pi)^{-1} (-s_1+s_3+\mu_1+\mu_2) U(s_1,s_2,s_3; \mu_1,\mu_2,\mu_3,\mu_4; 2,2,2,2,2,2; 1.1,1,1,0,0; 3,3)
\\
&  + (2\pi)^{-1} (s_2+\mu_1+\mu_2) U(s_1,s_2,s_3; \mu_1,\mu_2,\mu_3,\mu_4; 2,2,0,2,2,2; 3.1,1,1,0,0; 3,3)
\\
& = (2\pi)^{-1} \{ (-s_1+s_3+\mu_1+\mu_2) + (s_1-q) \} U(s_1,s_2,s_3; \mu_1,\mu_2,\mu_3,\mu_4; 2,2,2,2,2,2; 1.1,1,1,0,0; 3,3)
\\
& = U(s_1,s_2,s_3; \mu_1,\mu_2,\mu_3,\mu_4; 2,2,2,2,2,2; 1.1,1,3,0,0; 3,3).
\end{align*}
Then the identity 
\begin{align*}
& (-s_1+s_2-s_3-\mu_3-\mu_4-2) (s_1+s_2-q+\mu_1+\mu_2+3)(s_2+s_3-q+\mu_1+\mu_2+\mu_3+\mu_4+3)
\\
& + (s_3+\mu_1+\mu_3+\mu_4+2)(s_3+\mu_2+\mu_3+\mu_4+2)(s_1+s_2-q+\mu_1+\mu_2+3) 
\\
& + (s_3+\mu_1+\mu_3+\mu_4+2)(s_1-q+1)(s_2-q+\mu_1+1) 
\\
& + (s_1+\mu_2+2)(s_2+s_3-q+\mu_1+\mu_2+\mu_3+\mu_4+3)(s_1-q+1) 
\\
& = (s_2-q+\mu_1+1)(s_2-q+\mu_2+1)(s_2+\mu_1+\mu_2+2)  
\end{align*}
implies that the left hand side of (\ref{eqn:Mellin_12vs34}) becomes
\begin{align*}
  U(s_1,s_2,s_3; \mu_1,\mu_2,\mu_3,\mu_4; 2,2,4,2,2,2; 1.3,3,3,0,0; 5,5) 
  = U_0(s_1+1,s_2+2,s_3+1; \mu'') 
\end{align*}
as desired.
The identity (\ref{eqn:Mellin_34vs12}) follows from (\ref{eqn:Mellin_12vs34}) and (\ref{eqn:U0_Weyl}).

\medskip

We show (\ref{eqn:Um3}).
By the definition of $ U_m(s_1,s_2,s_3;\mu) $,  
the left hand side of (\ref{eqn:Um3}) can be written as 
\begin{align*}
& \GR(s_1+\mu_1) \GR(s_1+\mu_2) \GR(s_2 + \mu_1+\mu_2- m+1) \GR(s_2+\mu_3+\mu_4+m-1) \\
& \times \GR(s_3+\mu_1+\mu_3+\mu_4) \GR(s_3+\mu_2+\mu_3+\mu_4+2) \\
& \times \frac{1}{4\pi \sqrt{-1}} \int_q  
  \frac{\GR(s_1-q+m-1) \GR(s_2-q+\mu_1) \GR(s_2-q+\mu_2)}
  { \GR(s_1+s_2-q+\mu_1+\mu_2+2) } 
\\
& \times 
   \frac{ \GR(s_3-q+\mu_1+\mu_2-m+1) \GR(q+\mu_3) \GR(q+\mu_4) }
   { \GR(s_2+s_3-q+\mu_1+\mu_2+\mu_3+\mu_4+2) } \cdot (2\pi)^{-4}
\\
& \times 
 \{ (s_1-s_2-\mu_1+m-1)(s_2-s_3-\mu_1 + m-1) 
   (s_1+s_2-q+\mu_1+\mu_2) (s_2+s_3-q+\mu_1+\mu_2+\mu_3+\mu_4) 
\\
& \quad +  (s_2+\mu_1+\mu_2-m+1)(s_2+\mu_3+\mu_4+m-1) (s_2-q+\mu_1)(s_2-q+\mu_2)
\\
& \quad +(s_1-s_2-\mu_1+m-1) (s_3+\mu_1+\mu_3+\mu_4) 
  (s_3-q+\mu_1+\mu_2-m+1)(s_1+s_2-q+\mu_1+\mu_2) \} \,dq.
\end{align*}
Since the term in the bracket $\{ \cdots \} $ is factorized as 
$$ 
(s_1+\mu_2) (s_1-q+m-1)(s_2-q+\mu_2)(s_2+\mu_3+\mu_4+m-1), 
$$
the left hand side of (\ref{eqn:Um3})  becomes $ U(s_1,s_2,s_3; \mu_1,\mu_2, \mu_3,\mu_4; 0,2,-m+1,m+1,0,2; 1,0,0,-m+1,0,0; 2,2). $
Thus we are done.
\end{proof}


\subsection{Explicit formulas of $ \hat{\vp}_{l_1 e_1+ l_4 e_4+ l_{12} e_{12} + l_{34} e_{34}} $}
\label{subsec:EF_step1}

In this subsection we determine $ \hat{\vp}_{l_1 e_1 + l_4e_4+ l_{12}e_{12} + l_{34} e_{34}} $.

\begin{lem} \label{lem:EF_step1}
Retain the notation in Propositions \ref{prop:PDE_reduction_classone} and \ref{prop:classone_MB}.
For $ l = (\kappa_1-\kappa_2) e_4 + l_{12} e_{12} + l_{34} e_{34} \in S_{(\kappa_1,\kappa_2,\delta_3)}$, we set
\begin{align*}
 \hat{\varphi}_{l}^{\rm mg} (y_1,y_2,y_3)
 & = \frac{1}{(4\pi \sqrt{-1})^3} \int_{s_3} \int_{s_2} \int_{s_1} \widehat{V}_l(s_1,s_2,s_3) 
    \, y_1^{-s_1} y_2^{-s_2} y_3^{-s_3} \,ds_1ds_2ds_3,
\end{align*}
where
\begin{align*}
 \widehat{V}_l (s_1,s_2,s_3) = U_0(s_1,s_2,s_3; r).
\end{align*}
Then $ \hat{\varphi}_{l}^{\rm mg} $ is a moderate growth function satisfying 
(\ref{eqn:Capelli_reduction}), (\ref{eqn:e12_vs_e34}) and (\ref{eqn:e34_vs_e12}).
\end{lem}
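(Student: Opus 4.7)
The plan is to verify each of the three assertions in turn, relying on the Mellin--Barnes representation of $\hat{\varphi}_l^{\mathrm{mg}}$ and on the identities already established in Lemma \ref{lem:VvsU}.

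First, moderate growth of $\hat{\varphi}_l^{\mathrm{mg}}$ on $(\bR_+)^3$ is a standard consequence of Stirling's asymptotic formula applied to $\widehat{V}_l(s_1,s_2,s_3)=U_0(s_1,s_2,s_3;r)$: along vertical contours the kernel decays exponentially in the imaginary directions of $s_i$ and $q$, so the triple integral defining $\hat{\varphi}_l^{\mathrm{mg}}$ converges absolutely and inherits moderate growth bounds in $(y_1,y_2,y_3)$.

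Next, the equations (\ref{eqn:Capelli_reduction}) are immediate from Proposition \ref{prop:classone_MB}(i). Indeed, the identity (\ref{eqn:V0=U0}) gives $\widehat{V}_l=V(\cdot,\cdot,\cdot;0,0;1)$ with parameter $\mu=r$, so $\hat{\varphi}_l^{\mathrm{mg}}$ coincides with the function $f^{\mathrm{mg}}$ of that proposition with $\mu=r$ and therefore satisfies $\mathcal{D}_i(r)\hat{\varphi}_l^{\mathrm{mg}}=0$ for $i=2,3,4$.

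The heart of the proof is the verification of the intertwiners (\ref{eqn:e12_vs_e34}) and (\ref{eqn:e34_vs_e12}). The idea is to transfer each to the Mellin side under the standard dictionary $\partial_i\leftrightarrow -s_i$ and $(2\pi y_i)\leftrightarrow$ the shift $s_i\mapsto s_i-1$ (together with a factor of $2\pi$). Under this dictionary the differential operator $\mathcal{E}_i(c_1,c_2)=(\partial_i-c_1)(\partial_i-c_2)-(2\pi y_i)^2$ becomes precisely the Mellin operator $E_i(c_1,c_2)$ of Lemma \ref{lem:VvsU} (up to a factor of $(2\pi)^2$), and the prefactor $(2\pi y_1)(2\pi y_2)^2(2\pi y_3)$ on the left-hand sides becomes the shift $(s_1,s_2,s_3)\mapsto (s_1+1,s_2+2,s_3+1)$ with factor $(2\pi)^4$. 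Identifying the parameters of Lemma \ref{lem:VvsU} as $\mu=r_{+e_{34}}-(1,1,0,0)$ so that $\mu''$ coincides with $r_{+e_{34}}$, and using the Weyl-symmetry (\ref{eqn:U0_Weyl}) to identify $U_0(\cdot;\mu')=U_0(\cdot;r_{+e_{12}})$, the polynomial identities (\ref{eqn:Mellin_12vs34}) and (\ref{eqn:Mellin_34vs12}) become exactly the Mellin-side form of (\ref{eqn:e12_vs_e34}) and (\ref{eqn:e34_vs_e12}).

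The main obstacle is bookkeeping across the three subcases 1-(iii), 2-(ii), and 3. In each case one must check that the shifts $l_{12}\mapsto l_{12}+1$ and $l_{34}\mapsto l_{34}+1$ translate into precisely the parameter shifts $\mu\mapsto \mu''$ and $\mu\mapsto\mu'$, and that the constants $a_i,b_j$ and $c=\nu_1+\nu_2-\tfrac12\gamma_1$ appearing in the operators of Proposition \ref{prop:PDE_reduction_classone}(ii) match the combinations of $\mu_i$ that show up in (\ref{eqn:Mellin_12vs34}) and (\ref{eqn:Mellin_34vs12}). Once this matching is done, the proof reduces to quoting Lemma \ref{lem:VvsU} and the factor $(2\pi)^4$ balances correctly on both sides.
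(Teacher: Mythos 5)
Your overall route is the one the paper takes: moderate growth and (\ref{eqn:Capelli_reduction}) follow from Proposition \ref{prop:classone_MB} with $\mu=r$ (via (\ref{eqn:V0=U0})), and the relations (\ref{eqn:e12_vs_e34}), (\ref{eqn:e34_vs_e12}) are transferred to the Mellin side and reduced to the identities (\ref{eqn:Mellin_12vs34}) and (\ref{eqn:Mellin_34vs12}) of Lemma \ref{lem:VvsU}. (The paper packages this slightly differently: it writes the genuine Whittaker kernels at the corner indices as $C_{l_{12}}U_0(\cdot;r)$ and uses the same two identities to force $C_0=\cdots=C_{\kappa_2}$; the computational content is identical to your direct verification.)

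There is, however, a concrete error in the step you yourself single out as the main obstacle, namely the parameter matching. You take $\mu=r(l+e_{34})-(1,1,0,0)$ ``so that $\mu''=r(l+e_{34})$'' and invoke (\ref{eqn:U0_Weyl}) to identify $U_0(\cdot;\mu')$ with $U_0(\cdot;r(l+e_{12}))$. Since $U_0$ is symmetric in its parameters, the requirement that $\mu''$ be a permutation of $r(l+e_{34})$ does not determine $\mu$, while the identity (\ref{eqn:Mellin_12vs34}) is \emph{not} symmetric: it singles out the pair $(\mu_1,\mu_2)$ through $E_1(\mu_1,\mu_2)$, $E_3(\mu_1+\mu_2+\mu_3+1,\mu_1+\mu_2+\mu_4+1)$ and the factors $-s_1+s_2-s_3-\mu_3-\mu_4-2$, $-s_1+s_3+\mu_1+\mu_2$, $s_2+\mu_1+\mu_2$. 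Matching with (\ref{eqn:e12_vs_e34}) therefore forces $\{\mu_1,\mu_2\}=\{b_1,b_2\}$ (so $\mu_1+\mu_2=\tfrac{\gamma_1+\kappa_1+\kappa_2}{2}+c-1$) and $\mu_{2+i}=a_i-\mu_1-\mu_2-1$. With $r$ ordered as printed in Proposition \ref{prop:PDE_reduction_classone}, subtracting $(1,1,0,0)$ achieves this only in case 1-(iii). For instance, in case 2-(ii) your choice gives $\{\mu_1,\mu_2\}=\{\nu_1+\tfrac{\kappa_1-3}{2},\nu_1+\tfrac{\kappa_1-1}{2}\}$, whereas $(b_1,b_2)=(\nu_1+\tfrac{\kappa_1-1}{2},\nu_2)$, and then $\mu'$ is not a permutation of $r(l+e_{12})$; in case 3 the correct choice is $\mu=(\nu_1+\tfrac{\kappa_1-1}{2},\nu_2+\tfrac{\kappa_2-1}{2},\nu_1+\tfrac{\kappa_1-1}{2},\nu_2+\tfrac{\kappa_2-1}{2})$, not $r-(1,1,0,0)$. (A minor further caution: under the dictionary, only the shifted term of $\mathcal{E}_i$ acquires the factor $(2\pi)^2$, so $\mathcal{E}_i$ is not ``$E_i$ up to an overall $(2\pi)^2$''; the explicit powers of $2\pi$ in (\ref{eqn:Mellin_12vs34})--(\ref{eqn:Mellin_34vs12}) are placed to absorb exactly this.) With the corrected matching your argument goes through and coincides with the paper's proof.
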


\begin{proof}
From (\ref{eqn:Capelli_reduction}) and Proposition \ref{prop:classone_MB}, 
we have 
\begin{align*}
& \widehat{V}_{ (\kappa_1-\kappa_2) e_4 + l_{12} e_{12} + l_{34} e_{34}} (s_1,s_2,s_3) \\
& = 
\begin{cases}
 C_{l_{12}} U_0(s_1,s_2,s_3; \nu_1+l_{34} ,\nu_2+l_{34} ,\nu_3+l_{12},\nu_4+l_{12}) & \text{case 1-(ii)}, \\
 C_{l_{12}} U_0(s_1,s_2,s_3; \nu_1+\tfrac{\kappa_1-1}{2}, \nu_1+\tfrac{\kappa_1+1}{2}, \nu_2+l_{34}, \nu_3+l_{12}) 
   & \text{case 2-(ii)}, \\
 C_{l_{12}} U_0(s_1,s_2,s_3; \nu_1+\tfrac{\kappa_1-1}{2},  \nu_1+\tfrac{\kappa_1+1}{2},
   \nu_2+\tfrac{\kappa_2-1}{2},  \nu_2+\tfrac{\kappa_2+1}{2})
  & \text{case 3}
\end{cases}
\end{align*}
for some constants $ C_{l_{12}} $ $(0 \le l_{12} \le \kappa_2) $ depending on $l_{12}$. 
In view of the identities (\ref{eqn:Mellin_12vs34}) and (\ref{eqn:Mellin_34vs12}), 
the equations (\ref{eqn:e12_vs_e34}) and (\ref{eqn:e34_vs_e12}) imply that 
$ C_{0} = \cdots = C_{\kappa_2} $.  
\end{proof}

\begin{lem} \label{lem:EF_step2}
For $ l = l_1 e_1+ l_4 e_4 + l_{12} e_{12} + l_{34} e_{34} \in S_{(\kappa_1,\kappa_2,\delta_3)}$
let $ \hat{\vp}_{l} $ be the function determined by the function 
$ \hat{\vp}_{ l' }= 
  \hat{\vp}_{l' }^{\rm mg} $ 
$( l' =  (\kappa_1-\kappa_2)e_4+   l_{12} e_{12} + l_{34} e_{34} \in S_{(\kappa_1,\kappa_2,\delta_3)} )$
in Lemmas \ref{lem:EF_step1}
and  by the equation (\ref{eqn:DS1:l-e_4+e_1}) when $ \kappa_1 > \kappa_2 $.
Then we have  
\begin{align*}
 \hat{\vp}_l (y_1,y_2,y_3)  
& = \frac{1}{(4\pi \sqrt{-1})^3} \int_{s_3} \int_{s_2} \int_{s_1} \widehat{V}_l(s_1,s_2,s_3) 
    \, y_1^{-s_1} y_2^{-s_2} y_3^{-s_3} \,ds_1ds_2ds_3,
\end{align*}
where
\begin{align*}
& \widehat{V}_{l}(s_1,s_2,s_3)  = \begin{cases}
   U_{l_1}(s_1,s_2,s_3; \nu_1+l_4+l_{34}, \nu_2+l_1+l_{34}, \nu_3+l_{12}, \nu_4+l_{12}) & \text{cases 1-(i),(ii),(iii)}, \\
   U_{l_1}(s_1,s_2,s_3; \nu_4+l_4, \nu_2+l_1, \nu_3, \nu_1) & \text{case 1-(iv)}, \\
   U_{l_1}(s_1,s_2,s_3; \nu_1+\tfrac{\kappa_1-1}{2}, \nu_1+\tfrac{\kappa_1+1}{2}, \nu_2+l_{34}, \nu_3+l_{12}) & \text{case 2}, \\
  U_{l_1}(s_1,s_2,s_3; \nu_1+\tfrac{\kappa_1-1}{2}, \nu_1+\tfrac{\kappa_1+1}{2}, 
         \nu_2+\tfrac{\kappa_2-1}{2}, \nu_2+\tfrac{\kappa_2+1}{2}) & \text{case 3}.
 \end{cases}
\end{align*}
\end{lem}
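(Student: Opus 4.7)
The plan is to induct on $l_1 \in \{0, 1, \ldots, \kappa_1 - \kappa_2\}$, with $l_4 = \kappa_1 - \kappa_2 - l_1$ forced by $l \in S_{(\kappa_1, \kappa_2, \delta_3)}$ together with the vanishing of $l_2, l_3, l_{13}, l_{14}, l_{23}, l_{24}$. The base case $l_1 = 0$ is immediate from Lemma \ref{lem:EF_step1}: a routine case-by-case comparison shows that $U_0(s_1, s_2, s_3; r)$, with $r$ as in Proposition \ref{prop:PDE_reduction_classone}, agrees with the formula of Lemma \ref{lem:EF_step2} specialized to $l_1 = 0$ in each of the cases 1-(i)--(iv), 2, and 3. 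When $\kappa_1 = \kappa_2$ there is nothing further to do, so from now on I assume $\kappa_1 > \kappa_2$ and carry out the inductive step.

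For the inductive step, I would translate equation (\ref{eqn:DS1:l-e_4+e_1}) of Lemma \ref{lem:DS1_l-e1_and_l-e4} into Mellin space. Acting on the integrand $y_1^{-s_1} y_2^{-s_2} y_3^{-s_3}$, the operator $\pa_i$ corresponds to multiplication by $-s_i$, and $(2\pi y_i)^2$ corresponds to the shift $s_i \mapsto s_i + 2$ together with a factor of $(2\pi)^2$; the factor $(2\pi y_1)(2\pi y_2)(2\pi y_3)$ on the left-hand side produces $(2\pi)^3 \widehat{V}_{l - e_4 + e_1}(s_1 + 1, s_2 + 1, s_3 + 1)$ after a change of variables. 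Equation (\ref{eqn:DS1:l-e_4+e_1}) therefore becomes
\begin{align*}
(2\pi)^3 \widehat{V}_{l - e_4 + e_1}(s_1+1,s_2+1,s_3+1)
& = A B C \cdot \widehat{V}_l(s_1,s_2,s_3) + (2\pi)^2 C \cdot \widehat{V}_l(s_1, s_2+2, s_3) \\
& \quad + (2\pi)^2 A \cdot \widehat{V}_l(s_1, s_2, s_3+2),
\end{align*}
with $A = s_1 - s_2 - \nu_1' - \tfrac{\kappa_1 + 1}{2} + l_1$, $B = s_2 - s_3 - \nu_1' - \tfrac{\kappa_1 + 1}{2} + l_1$, and $C = s_3 + \gamma_1 - \nu_1' + \tfrac{\kappa_1 - 1}{2} + \kappa_2$. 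This has exactly the shape of identity (\ref{eqn:Um3}) of Lemma \ref{lem:VvsU}(ii) with $m = l_1 + 1$. Invoking the inductive hypothesis $\widehat{V}_l = U_{l_1}(s_1,s_2,s_3; \mu)$, and using the manifest symmetry $U_{l_1}(s_1,s_2,s_3; \mu_1, \mu_2, \mu_3, \mu_4) = U_{l_1}(s_1,s_2,s_3; \mu_2, \mu_1, \mu_3, \mu_4)$ visible from (\ref{eqn:def_Um}), identity (\ref{eqn:Um3}) yields the claimed Mellin-Barnes form of $\widehat{V}_{l - e_4 + e_1}$ after shifting $s_i \to s_i - 1$.

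The main obstacle is the case-by-case matching of the coefficients $A, B, C$ with the corresponding coefficients in (\ref{eqn:Um3}). Concretely, in each of cases 1-(ii), 1-(iv), 2, and 3 with $\kappa_1 > \kappa_2$ one must check that, possibly after the swap $\mu_1 \leftrightarrow \mu_2$ (which is needed in cases 2 and 3 but not in cases 1-(ii), 1-(iv)), the identities $s_1 - s_2 - \mu_1 + l_1 = A$, $s_2 - s_3 - \mu_1 + l_1 = B$, and $s_3 + \mu_2 + \mu_3 + \mu_4 = C$ hold for the $\mu$ attached to $\widehat{V}_l$ in Lemma \ref{lem:EF_step2}. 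This verification uses the explicit values of $\nu_1'$ and $\gamma_1$ in each case, together with the constraints $l_1 + l_4 = \kappa_1 - \kappa_2$ and $l_{12} + l_{34} = \kappa_2$; the work is essentially bookkeeping rather than substantive, since all the analytic content has been absorbed into Lemma \ref{lem:VvsU}(ii).
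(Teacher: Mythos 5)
Your proposal is correct and follows essentially the same route as the paper: the base case $l_1=0$ is Lemma \ref{lem:EF_step1}, the inductive step is the Mellin-space translation of (\ref{eqn:DS1:l-e_4+e_1}) with the same coefficients $A$, $B$, $C$, and the conclusion follows from (\ref{eqn:Um3}) together with the symmetry $U_m(s_1,s_2,s_3;\mu_1,\mu_2,\mu_3,\mu_4)=U_m(s_1,s_2,s_3;\mu_2,\mu_1,\mu_3,\mu_4)$ and induction on $l_1$. Your bookkeeping of which cases require the swap $\mu_1\leftrightarrow\mu_2$ (cases 2 and 3, not 1-(ii), 1-(iv)) is also accurate and matches the paper's implicit use of that symmetry.
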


\begin{proof}
Since the case $ l_1 = 0$ is done in Lemma \ref{lem:EF_step1}.
let $ l_1 \ge  1$.
The equation (\ref{eqn:DS1:l-e_4+e_1}) tells us that  
\begin{align*}
 \widehat{V}_{l}(s_1+1,s_2+1,s_3+1) 
& = (2\pi)^{-3} (s_1 -s_2-\nu_1'-\tfrac{\kappa_1+1}{2} +l_1-1) (s_2-s_3-\nu_1' - \tfrac{\kappa_1 +1}{2} +l_1-1)  
\\
& \quad \times 
      (s_3+\gamma_1-\nu_1'+\tfrac{\kappa_1 -1}{2}+\kappa_2 ) \widehat{V}_{l-e_1+e_4} (s_1,s_2,s_3)
\\
& \quad 
 + (2\pi)^{-1} (s_3+\gamma_1-\nu_1'+\tfrac{\kappa_1-1}{2} + \kappa_2)  \widehat{V}_{l-e_1+e_4}(s_1,s_2+2,s_3)
\\
& \quad 
  + (2\pi)^{-1}  (s_1-s_2-\nu_1'-\tfrac{\kappa_1 + 1}{2}  +l_1-1)  \widehat{V}_{l-e_1+e_4}(s_1,s_2,s_3+2).
\end{align*}
If we notice $ U_{m}(s_1,s_2,s_3; \mu_1, \mu_2, \mu_3,\mu_4) =  U_{m}(s_1,s_2,s_3; \mu_2, \mu_1, \mu_3,\mu_4) $, 
then our claim is a consequence of (\ref{eqn:Um3}) and induction on $l_1$.
\end{proof}

\begin{rem} \label{rem:EFstep1}
Using the duplication formula (\ref{eqn:gamma_dup}), we can rewrite 
our formulas in Lemma \ref{lem:EF_step2} for cases 2 and 3 as follows.
\begin{itemize}
\item Case 2: 
For $ l = l_1 e_1 + l_4 e_4 + l_{12} e_{12} + l_{34} e_{34} \in S_{(\kappa_1, \kappa_2, \delta_3)} $, we have
\begin{align} \label{eqn:step1_case2_dup}  
\begin{split} 
 \widehat{V}_{l} (s_1,s_2,s_3) 
& =  \GC(s_1+\nu_1+\tfrac{\kappa_1-1}{2}) \GR(s_2+2\nu_1+\kappa_1-l_1) 
\\
& \quad \times  \GR(s_2+\nu_2+\nu_3+\kappa_2+l_1) \GC(s_3+\nu_1+\nu_2+\nu_3+\tfrac{\kappa_1-1}{2}+\kappa_2) 
\\
& \quad \times \frac{1}{4\pi \sqrt{-1}} \int_q 
  \frac{ \GR(s_1-q+l_1) \GC(s_2-q+\nu_1+\tfrac{\kappa_1-1}{2}) 
 }{ \GR(s_1+s_2-q+2\nu_1+\kappa_1) }
\\
& \quad \times 
  \frac{  \GR(s_3-q+2\nu_1+\kappa_1-l_1) \GR(q+\nu_2+l_{34}) \GR(q+\nu_3+l_{12} )}{ \GR(s_2+s_3-q+2\nu_1+\nu_2+\nu_3+\kappa_1+\kappa_2) } \,dq.
\end{split}
\end{align}
\item Case 3:
For $ l = l_1 e_1 + l_4 e_4 + l_{12} e_{12} + l_{34} e_{34} \in S_{(\kappa_1, \kappa_2, \delta_3)} $, we have
\begin{align} \label{eqn:step1_case3_dup} 
\begin{split}
& \widehat{V}_l(s_1,s_2,s_3) \\
& = \GC(s_1+\nu_1+\tfrac{\kappa_1-1}{2}) \GR(s_2+2\nu_1+\kappa_1-l_1)  
 \GR(s_2+2\nu_2+\kappa_2+l_1) \GC(s_3+\nu_1+2\nu_2+\tfrac{\kappa_1-1}{2}+\kappa_2) 
\\
& \quad \times \frac{1}{4\pi \sqrt{-1}} \int_q 
  \frac{ \GR(s_1-q+l_1) \GC(s_2-q+\nu_1+\tfrac{\kappa_1-1}{2}) 
    \GR(s_3-q+2\nu_1+\kappa_1-l_1) \GC(q+\nu_2+\tfrac{\kappa_2-1}{2})}{ \GR(s_1+s_2-q+2\nu_1+\kappa_1)
   \GR(s_2+s_3-q+2\nu_1+2\nu_2+\kappa_1+\kappa_2) }\,dq.
\end{split}
\end{align}
\end{itemize}
\end{rem}

\subsection{Explicit formulas of $ \hat{\vp}_{l} $}
\label{subsec:EF_step3}

In this subsection we determine $ \hat{\vp}_l $ for all $ l \in S_{(\kappa_1,\kappa_2,\delta_3)} $. 
If the moderate growth solution of the system in Proposition \ref{prop:PDE2} is given by 
\begin{align*}
 \hat{\varphi}_l(y_1,y_2,y_3) 
 = \frac{1}{(4\pi \sqrt{-1})^3} \int_{s_3} \int_{s_2} \int_{s_1} \widehat{V}_l(s_1,s_2,s_3) y_1^{-s_1} y_2^{-s_2} y_3^{-s_3} 
 \,ds_1ds_2ds_3
\end{align*}
then the equations (\ref{PDE:DS1}), (\ref{PDE:DS4}), (\ref{PDE:DS12}), (\ref{PDE:DS34}) imply that 
\begin{align*}
 \widehat{V}_{l}(s_1,s_2,s_3) &=
 \begin{cases}
   (2\pi)^{-1} (s_1+\nu_1'+\tfrac{\kappa_1-1}{2}-1) \widehat{V}_{l-e_2+e_1}(s_1-1,s_2,s_3) & \text{if $ l_2 \ge 1 $}, \\
   (2\pi)^{-1} (s_3 +\gamma_1-\nu_1'+\tfrac{\kappa_1-1}{2}+\kappa_2-1) 
          \widehat{V}_{l-e_3+e_4}(s_1,s_2,s_3-1) & \text{if $ l_3 \ge 1 $}, \\
  (2\pi)^{-1} (s_2+ \nu_1+\nu_2+\tfrac{\kappa_1+\kappa_2}{2}-2) 
          \widehat{V}_{l-e_{13}+e_{12}}(s_1,s_2-1,s_3) & \text{if $l_{13} \ge 1 $}, \\
  (2\pi)^{-1} (s_2 + \gamma_1 - \nu_1- \nu_2+\tfrac{\kappa_1+\kappa_2}{2}-2) 
          \widehat{V}_{l-e_{24}+e_{34}}(s_1,s_2-1,s_3)  & \text{if $l_{24} \ge 1 $}
\end{cases}
\end{align*}
for $ l  = (l_1,l_2,l_3,l_4, l_{12},l_{13},l_{14},l_{23},l_{24},l_{34} )  \in S_{(\kappa_1,\kappa_2,\delta_3)} $. 
Therefore we know
\begin{align} \label{eqn:hatV_DS}
\begin{split}
  \widehat{V}_{l}(s_1,s_2,s_3) 
& = (2\pi)^{-l_2-l_3-l_{13}-l_{24}} (s_1+\nu_1'+\tfrac{\kappa_1-1}{2}-l_2)_{l_2} 
     (s_3+\gamma_1-\nu_1'+\tfrac{\kappa_1-1}{2}+\kappa_2-l_3)_{l_3}
\\
& \quad \times
   (s_2+\nu_1+\nu_2+\tfrac{\kappa_1+\kappa_2}{2}-1-l_{13}-l_{24})_{l_{13}} 
   (s_2 +\gamma_1 -\nu_1 - \nu_2+\tfrac{\kappa_1+\kappa_2}{2}-1-l_{24})_{l_{24}} 
\\
& \quad \times \widehat{V}_{(l_1+l_2,0,0,l_3+l_4, l_{12}+l_{13},0, l_{14},l_{23},0, l_{24}+l_{34})}(s_1-l_2,s_2-l_{13}-l_{24} ,s_3-l_3).
\end{split}
\end{align}
Thus we are done in the case of $ \kappa_2 = 0 $ (cases 1-(ii), (iv), 2-(i)). 
In the following we consider the remaing cases.

\bigskip

$\bullet $ {\bf Case 1-(iii) $(\kappa_1  = \kappa_2 =1) $}.

\begin{lem} \label{lem:EF_kappa1=kappa2=1} (Case1-(iii))
For $  l = (0,0,0,0, l_{12}, l_{13}, l_{14},l_{23}, l_{24}, l_{34} )  \in S_{(1,1, 0)} $
let $ \hat{\vp}_l $ be the functions determined by the functions $ \hat{\vp}_{l' } $
$(l' =  l_{12} e_{12} + l_{34} e_{34} \in S_{(1,1,0)}) $
in Lemma \ref{lem:EF_step2}, and 
by the equations (\ref{PDE:DS12}), (\ref{PDE:DS34}), 
(\ref{eqn:e12_vs_e14}), (\ref{eqn:e12_vs_e23}), 
(\ref{eqn:e34_vs_e14}) and (\ref{eqn:e34_vs_e23}).
Then we have
\begin{align*}
 \hat{\vp}_l (y_1,y_2,y_3)  
& = \frac{1}{(4\pi \sqrt{-1})^3} \int_{s_3} \int_{s_2} \int_{s_1} \widehat{V}_l(s_1,s_2,s_3) 
    \, y_1^{-s_1} y_2^{-s_2} y_3^{-s_3} \,ds_1ds_2ds_3,
\end{align*}
where
\begin{align*}
& \widehat{V}_{ (0,0,0,0,l_{12},l_{13},0,l_{23}, 0,0) }(s_1,s_2,s_3) \\
& = \GR(s_1+\nu_1+l_{23}) \GR(s_1+\nu_2+l_{23}) 
\GR(s_2 + \nu_1+\nu_2+l_{13}+l_{23}) \GR(s_2+\nu_3+\nu_4+l_{12}+1) \\
& \quad \times 
   \GR(s_3+\nu_1+\nu_3+\nu_4 +2) \GR(s_3+\nu_2+\nu_3+\nu_4 +2) \\
& \quad \times \frac{1}{4\pi \sqrt{-1}} \int_q  
  \frac{\GR(s_1-q+l_{12}+l_{13}) \GR(s_2-q+\nu_1+l_{12}) }
  { \GR(s_1+s_2-q+\nu_1+\nu_2+l_{12}+l_{23}) } \\
& \quad \times 
  \frac{ \GR(s_2-q+\nu_2+l_{12}) \GR(s_3-q+\nu_1+\nu_2+1) \GR(q+\nu_3) \GR(q+\nu_4) }
  {\GR(s_2+s_3-q+\nu_1+\nu_2+\nu_3+\nu_4+2+l_{12})} \,dq, 
\\
& \widehat{V}_{(0,0,0,0, 0,0,l_{14}, 0, l_{24}, l_{34})}(s_1,s_2,s_3) \\
& = \GR(s_1+\nu_3+l_{14}) \GR(s_1+\nu_4+l_{14}) 
 \GR(s_2 + \nu_3+\nu_4+l_{14}+l_{24}) \GR(s_2+\nu_1+\nu_2+l_{34}+1) \\
& \quad \times 
   \GR(s_3+\nu_1+\nu_2+\nu_3 +2) \GR(s_3+\nu_1+\nu_2+\nu_4 +2) \\
& \quad \times \frac{1}{4\pi \sqrt{-1}} \int_q  
  \frac{\GR(s_1-q+l_{24}+l_{34}) \GR(s_2-q+\nu_3+l_{34}) }
  { \GR(s_1+s_2-q+\nu_3+\nu_4+l_{14}+l_{34}) } 
\\
& \quad \times 
  \frac{ \GR(s_2-q+\nu_4+l_{34}) \GR(s_3-q+\nu_3+\nu_4+1) \GR(q+\nu_1) \GR(q+\nu_2) }
  {\GR(s_2+s_3-q+\nu_1+\nu_2+\nu_3+\nu_4+2+l_{34})}  \,dq.
\end{align*}
\end{lem}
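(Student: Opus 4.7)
The plan is to derive the six Mellin--Barnes kernels $\widehat{V}_{e_{ij}}$ ($1\le i<j\le 4$) from the two $U_0$-type kernels $\widehat{V}_{e_{12}}$ and $\widehat{V}_{e_{34}}$ already supplied by Lemma \ref{lem:EF_step2}, propagating the information to the remaining four via the difference-differential equations of Section \ref{sec:PDE} combined with the Mellin--Barnes identities collected in Lemma \ref{lem:VvsU}. The two compact formulas in the statement parametrize six special cases of $l\in S_{(1,1,0)}$, one for each pair $(i,j)$: exactly one of $l_{12},l_{13},l_{23}$ or $l_{14},l_{24},l_{34}$ is $1$ and the others vanish.

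First I would check by the substitution $q\mapsto q\pm 1$ in the Mellin--Barnes integral that the claimed formulas at $l=e_{12}$ and $l=e_{34}$ coincide respectively with $U_0(s_1,s_2,s_3;\nu_1,\nu_2,\nu_3+1,\nu_4+1)$ and $U_0(s_1,s_2,s_3;\nu_1+1,\nu_2+1,\nu_3,\nu_4)$, which is exactly what Lemma \ref{lem:EF_step2} produces. Next I would exploit the four second-order equations (\ref{eqn:e12_vs_e14}), (\ref{eqn:e12_vs_e23}), (\ref{eqn:e34_vs_e14}), (\ref{eqn:e34_vs_e23}) of Proposition \ref{prop:PDE_reduction_classone}(ii) at $l=0$; for case 1-(iii) these specialize with $(a_1,a_2,a_3,a_4)=(\nu_1+\nu_2+\nu_3+1,\nu_1+\nu_2+\nu_4+1,\nu_3,\nu_4)$ and the corresponding $b_i$. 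On the Mellin side, multiplication by $(2\pi y_i)(2\pi y_j)$ is a double shift of $(s_i,s_j)$, while the action of $\mathcal{E}_i(c_1,c_2)$ matches the operator $E_i(c_1,c_2)$ of Lemma \ref{lem:VvsU}. The four identities (\ref{eqn:Mellin_12vs23}), (\ref{eqn:Mellin_12vs14}), (\ref{eqn:Mellin_34vs14}), (\ref{eqn:Mellin_34vs23}) therefore identify each right-hand side directly as a $U'$-type kernel, producing the expressions for $\widehat{V}_{e_{14}}$ and $\widehat{V}_{e_{23}}$. A consistency check is needed because each of these two kernels is reached by two independent routes; it reduces to the $\mathfrak{S}_4$-symmetry (\ref{eqn:U0_Weyl}) of $U_0$ in its four arguments.

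Finally, the first-order PDEs (\ref{PDE:DS12}) and (\ref{PDE:DS34}) at $l=0$ express $\widehat{V}_{e_{13}}$ and $\widehat{V}_{e_{24}}$ as single $s_2$-shifts of $\widehat{V}_{e_{12}}$ and $\widehat{V}_{e_{34}}$, and the functional equation $\GR(s+2)=(2\pi)^{-1}s\GR(s)$ converts these shifts into the gamma-factor form asserted in the statement.

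The main obstacle will be the bookkeeping needed to recast the $U'$-kernels delivered by Lemma \ref{lem:VvsU} into the single uniform Mellin--Barnes integrand displayed in the lemma. This requires tracking the $(2\pi)^{\pm 1}$ factors arising from Mellin-transforming $(2\pi y_i)^2$ against the shift identities for $\GR$, together with a final change of integration variable $q\mapsto q+\text{const}$ in the Barnes integral to align the prefactors. The two-route compatibility in the second step is nontrivial, but it is precisely what Lemma \ref{lem:VvsU} is designed to guarantee.
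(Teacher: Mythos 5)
Your proposal is correct and follows essentially the same route as the paper's own proof: the values at $e_{12}$ and $e_{34}$ come from Lemma \ref{lem:EF_step2}, the kernels $\widehat{V}_{e_{14}}$ and $\widehat{V}_{e_{23}}$ are obtained (and their two-route consistency checked) by matching the equations (\ref{eqn:e12_vs_e14}), (\ref{eqn:e12_vs_e23}), (\ref{eqn:e34_vs_e14}), (\ref{eqn:e34_vs_e23}) against the Mellin identities (\ref{eqn:Mellin_12vs14}), (\ref{eqn:Mellin_12vs23}), (\ref{eqn:Mellin_34vs14}), (\ref{eqn:Mellin_34vs23}) of Lemma \ref{lem:VvsU}, and $\widehat{V}_{e_{13}}$, $\widehat{V}_{e_{24}}$ follow from the first-order shifts coming from (\ref{PDE:DS12}) and (\ref{PDE:DS34}) (i.e.\ (\ref{eqn:hatV_DS})). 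The only differences are presentational: you make explicit the $q\mapsto q\pm1$ identification of the displayed kernels with $U_0$ and $U'$, which the paper leaves implicit.
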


\begin{proof}
As for $ \widehat{V}_{e_{14}} $ and $ \widehat{V}_{e_{23}} $, 
compatibilities with the equations (\ref{eqn:e12_vs_e14}), (\ref{eqn:e12_vs_e23}), 
(\ref{eqn:e34_vs_e14}) and (\ref{eqn:e34_vs_e23}) follow from 
the identities (\ref{eqn:Mellin_12vs14}), (\ref{eqn:Mellin_12vs23}), (\ref{eqn:Mellin_34vs14})
and (\ref{eqn:Mellin_34vs23}), respectively. 
(\ref{eqn:hatV_DS}) implies our formulas for $ \widehat{V}_{e_{13}} $ and $ \widehat{V}_{e_{24}} $.
\end{proof}

\medskip

$\bullet $ {\bf  Case 2-(ii) $(\kappa_1  > \kappa_2 =1)$}.

\begin{lem} \label{lem:EF_kappa1>kappa2=1} (Case 2-(ii))
For $  l = (l_1, l_2, l_3, l_4, l_{12}, l_{13}, l_{14},l_{23}, l_{24}, l_{34} )  \in S_{(\kappa_1,1, 0)} $
let $ \hat{\vp}_l $ be the functions determined by the functions $ \hat{\vp}_{l' } $
$(l' = l_1 e_1 + l_4 e_4 + l_{12} e_{12} + l_{34} e_{34} \in S_{(\kappa_1,1,0)}) $
in Lemma \ref{lem:EF_step2}, and 
by the equations (\ref{PDE:DS1}), (\ref{PDE:DS4}), (\ref{PDE:DS12}), (\ref{PDE:DS34}), 
(\ref{eqn:vp_gen_relation_2}) and (\ref{eqn:vp_gen_relation_3}).
Then we have
\begin{align*}
 \hat{\vp}_l (y_1,y_2,y_3)  
& = \frac{1}{(4\pi \sqrt{-1})^3} \int_{s_3} \int_{s_2} \int_{s_1} \widehat{V}_l(s_1,s_2,s_3) 
    \, y_1^{-s_1} y_2^{-s_2} y_3^{-s_3} \,ds_1ds_2ds_3,
\end{align*}
where
\begin{align} \label{eqn:EF_kappa1>kappa2=1}
\begin{split}
& \widehat{V}_{ l }(s_1,s_2,s_3) \\
& =  (2\pi)^{-l_{13}-l_{24}} 
   (s_2+\nu_1+\nu_2+\tfrac{\kappa_1-1}{2}-l_{13}-l_{24} )_{l_{13}} 
   (s_2 +\nu_1+\nu_3 + \tfrac{\kappa_1-1}{2}-l_{24})_{l_{24}}
\\
& \quad \times 
\Gamma_{\bC}(s_1+\nu_1+\tfrac{\kappa_1-1}{2}) 
      \Gamma_{\bR}(s_2+2\nu_1+ l_3+l_4 + l_{12} + l_{34} ) \\
& \quad \times 
      \Gamma_{\bR}(s_2+\nu_2+\nu_3+ l_1+l_2+ l_{12} + l_{34}  ) 
      \Gamma_{\bC}(s_3+\nu_1+\nu_2+\nu_3+\tfrac{\kappa_1+1}{2}) 
\\
& \quad  \times \sum_{ i_{14} =0}^{ l_{14} } \sum_{i_{23} = 0}^{  l_{23} }  
 \frac{1}{4\pi \sqrt{-1}} \int_q 
  \frac{\Gamma_{\bR}(s_1-q+l_1 + i_{14} + i_{23} )\Gamma_{\bC}(s_2-q+\nu_1+\tfrac{\kappa_1-1}{2} -l_{13}-l_{24} ) 
   }{\Gamma_{\bR}(s_1+s_2-q+2\nu_1+l_1+l_3 + l_4  + l_{12} + l_{34} + i_{14} + i_{23} )   } 
\\
& \quad \times \frac{ \Gamma_{\bR}(s_3-q+2\nu_1+ 1+ l_4 + l_{14} + l_{23} - i_{14}- i_{23}  )  }{
  \Gamma_{\bR}(s_2+s_3-q+2\nu_1+\nu_2+\nu_3+1+ l_1+l_2+l_4 + l_{12} +l_{14} + l_{23} + l_{34}  -i_{14} -i_{23} )}\\
& \quad  \times 
   \Gamma_{\bR}(q+\nu_2 + l_{23} + l_{24} + l_{34} +i_{14} -i_{23} )  \Gamma_{\bR}(q+\nu_3 + l_{12} + l_{13}+ l_{14}-i_{14} +i_{23} ) \,dq. 
\end{split}
\end{align}
\end{lem}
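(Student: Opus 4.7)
The plan is to derive the explicit formula (\ref{eqn:EF_kappa1>kappa2=1}) in three stages: reduce via the Dirac-Schmid equations to the case $l_2 = l_3 = l_{13} = l_{24} = 0$; dispatch the base situation $l_{14} = l_{23} = 0$ via Lemma \ref{lem:EF_step2}; and settle the remaining cases $l_{14} = 1$ and $l_{23} = 1$ by invoking the generator relations.

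For the first stage, I would apply the Mellin-side consequence (\ref{eqn:hatV_DS}) of (\ref{PDE:DS1}), (\ref{PDE:DS4}), (\ref{PDE:DS12}), (\ref{PDE:DS34}), which expresses $\widehat{V}_l$ as a product of four Pochhammer factors and $\widehat{V}_{\tilde l}(s_1-l_2, s_2-l_{13}-l_{24}, s_3-l_3)$, where $\tilde{l} = (l_1+l_2) e_1 + (l_3+l_4) e_4 + (l_{12}+l_{13}) e_{12} + l_{14} e_{14} + l_{23} e_{23} + (l_{24}+l_{34}) e_{34}$. For case 2-(ii) the specializations $\gamma_1 - \nu_1 = \nu_1+\nu_2+\nu_3$ and $\gamma_1 - \nu_1 - \nu_2 = \nu_1+\nu_3$ turn the two $s_2$-Pochhammers of (\ref{eqn:hatV_DS}) into exactly those appearing explicitly in (\ref{eqn:EF_kappa1>kappa2=1}), while the $s_1$- and $s_3$-Pochhammers, together with the factor $(2\pi)^{-l_2-l_3}$, absorb into the outer $\Gamma_{\bC}$ factors of (\ref{eqn:EF_kappa1>kappa2=1}) via the recursion $\Gamma_{\bC}(s+n) = (2\pi)^{-n}(s)_n \Gamma_{\bC}(s)$. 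A direct substitution then shows that the remaining gamma factors and the $q$-integrand of $\widehat{V}_{\tilde l}$ reproduce (\ref{eqn:EF_kappa1>kappa2=1}) under the shifts $(l_1, l_4, l_{12}, l_{34}) \mapsto (l_1+l_2, l_3+l_4, l_{12}+l_{13}, l_{24}+l_{34})$. Thus the problem reduces to $l = l_1 e_1 + l_4 e_4 + l_{12} e_{12} + l_{14} e_{14} + l_{23} e_{23} + l_{34} e_{34}$.

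In this reduced setting, the constraint $l_{12}+l_{14}+l_{23}+l_{34}=1$ leaves four subcases: exactly one of $l_{12}, l_{14}, l_{23}, l_{34}$ equals $1$. For $l_{14}=l_{23}=0$ (so $l_{12}=1$ or $l_{34}=1$), the sum over $(i_{14},i_{23})$ in (\ref{eqn:EF_kappa1>kappa2=1}) collapses to $(0,0)$ and matches the formula (\ref{eqn:step1_case2_dup}) from Lemma \ref{lem:EF_step2}. For $l_{14}=1$ (so $l_{12}=l_{23}=l_{34}=0$), I would apply (\ref{eqn:vp_gen_relation_3}) with $(i,j,k)=(1,3,4)$ at $l' = (l_1-1)e_1+(l_4-1)e_4 \in S_{(\kappa_1-2, 0, 0)}$, yielding
$$\hat{\vp}_{l'+e_3+e_{14}} = \hat{\vp}_{l'+e_1+e_{34}} + \hat{\vp}_{l'+e_4+e_{13}}.$$
Both right-hand terms are base-type (the $e_{13}$ term reducing to $l_{12}=1$ via (\ref{eqn:hatV_DS})), so their Mellin kernels are furnished by (\ref{eqn:step1_case2_dup}). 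Applying (\ref{eqn:hatV_DS}) to the left-hand side absorbs the $e_3$ component, yielding $\widehat{V}_{(l_1-1)e_1+l_4 e_4+e_{14}}(s_1,s_2,s_3-1)$ up to a gamma-shift factor; propagation in $l_1$ through the same Dirac-Schmid chain used in Lemma \ref{lem:EF_step2} then delivers $\widehat{V}_{l_1 e_1 + l_4 e_4 + e_{14}}$. The boundary cases $l_1 = 0$ or $l_4 = 0$ are dealt with by choosing a different $(i,j,k)$ in (\ref{eqn:vp_gen_relation_3}) or by invoking (\ref{eqn:vp_gen_relation_2}); the subcase $l_{23}=1$ is analogous, via (\ref{eqn:vp_gen_relation_3}) with $(i,j,k)=(1,2,4)$. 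Moderate growth is immediate from Stirling's estimate on the explicit kernel.

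The main obstacle is to recognize the sum of the two Mellin kernels coming from $\hat{\vp}_{l'+e_1+e_{34}}$ and $\hat{\vp}_{l'+e_4+e_{13}}$ as precisely the two-term sum $\sum_{i_{14}=0}^1$ appearing in (\ref{eqn:EF_kappa1>kappa2=1}). After normalizing by the outer gamma factors, the two summands present themselves as $U_{l_1-1}$-type $q$-integrals with differently shifted $\nu_2, \nu_3$; merging them into a single $q$-contour with integrand indexed by $i_{14} \in \{0,1\}$ amounts to a gamma-function identity of the Barnes-lemma flavor, along the lines of the derivations of (\ref{eqn:Mellin_12vs14}) and (\ref{eqn:Mellin_12vs34}) in Lemma \ref{lem:VvsU}. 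Carrying out this rearrangement for arbitrary $l_1, l_4$ (and tracking the boundary cases in both generator relations) will be the bulk of the technical work.
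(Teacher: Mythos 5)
Your skeleton is essentially the paper's: settle the indices with $l_{14}=l_{23}=0$ by Lemma \ref{lem:EF_step2} together with (\ref{eqn:hatV_DS}), then express the functions with $l_{14}=1$ or $l_{23}=1$ through one generator relation as a sum of already-known kernels, and merge that sum into the two-term $q$-integral by gamma manipulations. The differences are micro-choices, but two of them deserve comment. First, the paper applies (\ref{eqn:vp_gen_relation_2}) with $i=1$ (resp.\ (\ref{eqn:vp_gen_relation_3}) with $(i,j,k)=(2,3,4)$) at an index with $l_4\ge1$, which lands directly on the target $\cdots+e_4+e_{14}$ with both right-hand terms already covered; your choice $(i,j,k)=(1,3,4)$ reaches the target only with a parasitic $e_3$, so you must additionally invert (\ref{PDE:DS4}), i.e.\ divide the combined kernel by a linear factor in $s_3$ and check the division is exact --- legitimate, since the determined family satisfies (\ref{PDE:DS4}), but avoidable extra work. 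Second, the announced ``propagation in $l_1$ through the Dirac--Schmid chain of Lemma \ref{lem:EF_step2}'' is both unnecessary (letting $l'$ range over $S_{(\kappa_1-2,0,0)}$ already yields every target with $l_4\ge1$, and $l_4=0$ is the boundary case you treat separately) and, as stated, unjustified: the recursion (\ref{eqn:DS1:l-e_4+e_1}) is established only for indices of the form $l_1e_1+l_4e_4+l_{12}e_{12}+l_{34}e_{34}$ and is not among this lemma's determining equations, so it may not be applied to indices containing $e_{14}$. Finally, be aware that the merging step you defer is not a Barnes-lemma identity but the elementary computation that constitutes virtually all of the paper's proof: one explicit polynomial factorization combined with (\ref{eqn:gamma_FE}), as in the paper's treatment of $\widehat{V}_{l_1e_1+l_2e_2+l_3e_3+l_4e_4+e_{14}}$; until that identity (and the compatibility with the remaining relations in (\ref{eqn:vp_gen_relation_2}) and (\ref{eqn:vp_gen_relation_3})) is carried out, the proposal is a sound plan rather than a complete proof.
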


\begin{proof}
By (\ref{eqn:step1_case2_dup}) and (\ref{eqn:hatV_DS}) we know that (\ref{eqn:EF_kappa1>kappa2=1}) is 
true for $ l = (l_1, l_2, l_3, l_4, l_{12}, l_{13},0,0,l_{24},l_{34}) $.
Let us determine $ \widehat{V}_{l_1e_1+l_2 e_2+l_3 e_3+l_4 e_4+ e_{14} }(s_1,s_2,s_3)  $. 
Since $ \kappa_1 > \kappa_2  $ we know $ l_i \ge 1 $ for some $ 1 \le i \le 4 $.
Assume $ l_4 \ge 1 $. 
The equation (\ref{eqn:vp_gen_relation_2}) with $ i=1 $ implies 
$ \hat{\vp}_{l_1e_1+l_2e_2+l_3e_3+l_4e_4+e_{14}} 
  =  - \hat{\vp}_{l_1e_1+(l_2+1)e_2+l_3e_3+(l_4-1)e_4+e_{12}} 
  + \hat{\vp}_{l_1e_1+ l_2 e_2+(l_3+1)e_3+(l_4-1)e_4+e_{13}}. 
$
Then we have  
\begin{align*}
& \widehat{V}_{l_1e_1+l_2e_2+l_3e_3+l_4e_4+e_{14}}(s_1,s_2,s_3) \\
& =  -\widehat{V}_{l_1e_1+(l_2+1)e_2+l_3e_3+(l_4-1)e_4+e_{12}}(s_1,s_2,s_3) 
  + \widehat{V}_{l_1e_1+ l_2 e_2+(l_3+1)e_3+(l_4-1)e_4+e_{13}}(s_1,s_2,s_3)
\\
& = -
      \Gamma_{\bC}(s_1+\nu_1+\tfrac{\kappa_1-1}{2}) 
      \Gamma_{\bR}(s_2+2\nu_1+l_3+l_4) 
      \Gamma_{\bR}(s_2+\nu_2+\nu_3+l_1+l_2+2) 
      \Gamma_{\bC}(s_3+\nu_1+\nu_2+\nu_3+\tfrac{\kappa_1+1}{2}) 
\\
& \quad \times \frac{1}{4\pi \sqrt{-1}} \int_q 
  \frac{\Gamma_{\bR}(s_1-q+l_1)\Gamma_{\bC}(s_2-q+\nu_1+\tfrac{\kappa_1-1}{2})  \Gamma_{\bR}(s_3-q+2\nu_1+\kappa_1 +l_4) 
   \Gamma_{\bR}(q+\nu_2 ) \Gamma_{\bR}(q+\nu_3+1 )
   }{ \Gamma_{\bR}(s_1+s_2-q+2\nu_1+l_1+l_3+l_4)   
  \Gamma_{\bR}(s_2+s_3-q+2\nu_1+\nu_2+\nu_3+l_1+l_2+l_4+2)} \,dq
\\
& \quad + (2\pi)^{-1} 
   (s_2+\nu_1+\nu_2+\tfrac{\kappa_1-3}{2})
      \Gamma_{\bC}(s_1+\nu_1+\tfrac{\kappa_1-1}{2}) 
      \Gamma_{\bR}(s_2+2\nu_1+l_3+l_4) \\
& \quad \times 
      \Gamma_{\bR}(s_2+\nu_2+\nu_3+l_1+l_2) 
      \Gamma_{\bC}(s_3+\nu_1+\nu_2+\nu_3+\tfrac{\kappa_1+1}{2}) 
\\
& \quad \times \frac{1}{4\pi \sqrt{-1}} \int_q 
  \frac{\Gamma_{\bR}(s_1-q+l_1)\Gamma_{\bC}(s_2-q+\nu_1+\tfrac{\kappa_1-3}{2}) \Gamma_{\bR}(s_3-q+2\nu_1+\kappa_1+l_4) 
  \Gamma_{\bR}(q+\nu_2 ) \Gamma_{\bR}(q+\nu_3+1 )
   }{\Gamma_{\bR}(s_1+s_2-q+2\nu_1+l_1+l_3+l_4)  \Gamma_{\bR}(s_2+s_3-q+2\nu_1+\nu_2+\nu_3+l_1+l_2+l_4)  } \,dq.
\end{align*}
In view of (\ref{eqn:gamma_FE}) and the identity
\begin{align*}
& -(s_2+\nu_2+\nu_3+l_1+l_2)(s_2-q+\nu_1+\tfrac{\kappa_1-3}{2}) \\
&  +(s_2+\nu_1+\nu_2+\tfrac{\kappa_1-3}{2}) (s_2+s_3-q+2\nu_1+\nu_2+\nu_3+l_1+l_2+l_4)
\\
& = (s_2-q+\nu_1+\tfrac{\kappa_1-3}{2})(s_3-q+2\nu_1+l_4) + (q+\nu_2)(s_2+s_3-q+2\nu_1+\nu_2+\nu_3+l_1+l_2+l_4), 
\end{align*}
we get 
\begin{align*}
& \widehat{V}_{l_1e_1+l_2e_2+l_3e_3+l_4e_4+e_{14}}(s_1,s_2,s_3) \\
& =
\Gamma_{\bC}(s_1+\nu_1+\tfrac{\kappa_1-1}{2}) 
      \Gamma_{\bR}(s_2+2\nu_1+l_3+l_4) 
      \Gamma_{\bR}(s_2+\nu_2+\nu_3+l_1+l_2) 
      \Gamma_{\bC}(s_3+\nu_1+\nu_2+\nu_3+\tfrac{\kappa_1+1}{2}) 
\\
& \quad \times \frac{1}{4\pi \sqrt{-1}} \int_q 
  \biggl\{ 
  \frac{\Gamma_{\bR}(s_1-q+l_1)\Gamma_{\bC}(s_2-q+\nu_1+\tfrac{\kappa_1-1}{2}) \Gamma_{\bR}(s_3-q+2\nu_1+l_4+2) 
   \Gamma_{\bR}(q+\nu_2 ) \Gamma_{\bR}(q+\nu_3+1 )
   }{\Gamma_{\bR}(s_1+s_2-q+2\nu_1+l_1+l_3+l_4)   \Gamma_{\bR}(s_2+s_3-q+2\nu_1+\nu_2+\nu_3+l_1+l_2+l_4+2) } 
\\
& \quad + 
  \frac{\Gamma_{\bR}(s_1-q+l_1)\Gamma_{\bC}(s_2-q+\nu_1+\tfrac{\kappa_1-3}{2}) \Gamma_{\bR}(s_3-q+2\nu_1+l_4) 
  \Gamma_{\bR}(q+\nu_2+2 ) \Gamma_{\bR}(q+\nu_3+1 )
   }{\Gamma_{\bR}(s_1+s_2-q+2\nu_1+l_1+l_3+l_4)  \Gamma_{\bR}(s_2+s_3-q+2\nu_1+\nu_2+\nu_3+l_1+l_2+l_4) }
 \biggr\} \,dq
\\
& = 
\Gamma_{\bC}(s_1+\nu_1+\tfrac{\kappa_1-1}{2}) 
      \Gamma_{\bR}(s_2+2\nu_1+l_3+l_4)
      \Gamma_{\bR}(s_2+\nu_2+\nu_3+l_1+l_2) 
      \Gamma_{\bC}(s_3+\nu_1+\nu_2+\nu_3+\tfrac{\kappa_1+1}{2}) 
\\
& \quad \times \sum_{i=0}^1  \frac{1}{4\pi \sqrt{-1}} \int_q 
  \frac{\Gamma_{\bR}(s_1-q+l_1+i)\Gamma_{\bC}(s_2-q+\nu_1+\tfrac{\kappa_1-1}{2}) 
   }{\Gamma_{\bR}(s_1+s_2-q+2\nu_1+l_1+l_3+l_4 + i )  }  
\\
& \quad \times \frac{ 
  \Gamma_{\bR}(s_3-q+2\nu_1+l_4+2-i ) 
   \Gamma_{\bR}(q+\nu_2+i ) \Gamma_{\bR}(q+\nu_3+1-i )
   }{ \Gamma_{\bR}(s_2+s_3-q+2\nu_1+\nu_2+\nu_3+l_1+l_2+l_4+2-i) } \,dq
\end{align*}
as desired.
In the same way we can determine $ \widehat{V}_{l_1e_1+l_2 e_2+l_3 e_3+l_4 e_4+ e_{23} }(s_1,s_2,s_3) $
from (\ref{eqn:vp_gen_relation_3}) with $ (i,j,k) =(2,3,4) $. 
We can similarly show that 
(\ref{eqn:EF_kappa1>kappa2=1}) is compatible with other relations in (\ref{eqn:vp_gen_relation_2})  
and (\ref{eqn:vp_gen_relation_3}). 
\end{proof}

\medskip

$\bullet $ {\bf Case 3 with $ \kappa_1  > \kappa_2  \ge 2 $}.

\begin{lem} \label{lem:EF_kappa1>kappa2>1} (Case 3) 
Assume $ \kappa_1 > \kappa_2 \ge 2 $.
For $  l = (l_1, l_2, l_3, l_4, l_{12}, l_{13}, l_{14},l_{23}, l_{24}, l_{34} )  \in S_{(\kappa_1,\kappa_2, 0)} $
let $ \hat{\vp}_l $ be the functions determined from the functions $ \hat{\vp}_{l' } $
$(l' = l_1 e_1 + l_4 e_4 + l_{12} e_{12} + l_{34} e_{34} \in S_{(\kappa_1,\kappa_2 ,0)}) $
in Lemma \ref{lem:EF_step2}, and 
by the equations (\ref{PDE:DS1}), (\ref{PDE:DS4}), (\ref{PDE:DS12}), (\ref{PDE:DS34}), 
(\ref{eqn:vp_gen_relation_2}) and (\ref{eqn:vp_gen_relation_3}).
Then we have
\begin{align*}
 \hat{\vp}_l (y_1,y_2,y_3)  
& = \frac{1}{(4\pi \sqrt{-1})^3} \int_{s_3} \int_{s_2} \int_{s_1} \widehat{V}_l(s_1,s_2,s_3) 
    \, y_1^{-s_1} y_2^{-s_2} y_3^{-s_3} \,ds_1ds_2ds_3,
\end{align*}
where
\begin{align} \label{eqn:EF_kappa1>kappa2>1}
\begin{split}
 \widehat{V}_{ l } (s_1,s_2,s_3) 
& = (2\pi)^{-l_{13}-l_{24}} 
   (s_2+\nu_1+\nu_2+\tfrac{\kappa_1+\kappa_2}{2}-1-l_{13}-l_{24} )_{l_{13} + l_{24} } \\
& \quad \times 
   \GC (s_1+\nu_1+\tfrac{\kappa_1-1}{2})
   \GR (s_2+2\nu_1+ l_3+l_4+l_{12}+l_{34} )  \\
& \quad \times 
   \GR (s_2+2\nu_2 +l_1+l_2 +l_{12}+l_{34} ) 
       \GC(s_3+\nu_1+2\nu_2+\tfrac{\kappa_1-1}{2} + \kappa_2 ) \\
& \quad \times   \sum_{i = 0}^{ l_{14}+l_{23} } \binom{l_{14}+l_{23}}{i}  
  \frac{1}{4\pi \sqrt{-1}} \int_q 
  \frac{\GR(s_1-q+l_1+i) \GC (s_2-q+\nu_1+\tfrac{\kappa_1-1}{2}-l_{13}-l_{24} )}{ 
  \GR(s_1+s_2-q+2\nu_1+ l_1+l_3+l_4 + l_{12} + l_{34} + i) } 
\\
& \quad  \times
 \frac{ \GR(s_3-q+2\nu_1+\kappa_2 + l_4  + l_{14}+l_{23}-i) \GC(q+\nu_2+\tfrac{\kappa_2-1}{2} )}
{ \GR(s_2+s_3-q+2\nu_1+2\nu_2+\kappa_2 +l_1+l_2+l_4 +l_{12}+  l_{14}+l_{23} +l_{34} -i )} \,dq.
\end{split}
\end{align}
\end{lem}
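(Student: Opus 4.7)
The plan is to carry out the four-step strategy outlined at the start of this section. First, start from the explicit formula in Lemma \ref{lem:EF_step2}, rewritten via the duplication formula as in (\ref{eqn:step1_case3_dup}), which already gives $\hat{\vp}_l$ for $l = l_1e_1+l_4e_4+l_{12}e_{12}+l_{34}e_{34}$. Second, apply the relation (\ref{eqn:hatV_DS}) (itself a direct consequence of the four differential equations (\ref{PDE:DS1}), (\ref{PDE:DS4}), (\ref{PDE:DS12}), (\ref{PDE:DS34})) to extend to all $l$ with $l_{14}=l_{23}=0$. Third, use the generator relations (\ref{eqn:vp_gen_relation_2})--(\ref{eqn:vp_gen_relation_3}) to cover the remaining case $l_{14}+l_{23}>0$. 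Fourth, since ${\rm Sol}(\Pi_\sigma,\psi_1)^{\rm mg}$ is one-dimensional by Theorem \ref{thm:whittaker_isom}, it suffices to check that the candidate (\ref{eqn:EF_kappa1>kappa2>1}) satisfies the system in Proposition \ref{prop:PDE2} together with (\ref{eqn:vp_gen_relation_2})--(\ref{eqn:vp_gen_relation_5}); moderate growth then follows from Stirling.

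The key arithmetic input to the second step is the identity $\gamma_1 - \nu_1 - \nu_2 = \nu_1+\nu_2$, valid in case 3 because $\gamma_1 = 2(\nu_1+\nu_2)$. As a result, the two Pochhammer symbols in $s_2$ coming from the $l_{13}$ and $l_{24}$ shifts in (\ref{eqn:hatV_DS}) telescope:
\[
(s_2+\nu_1+\nu_2+\tfrac{\kappa_1+\kappa_2}{2}-1-l_{13}-l_{24})_{l_{13}}\,(s_2+\nu_1+\nu_2+\tfrac{\kappa_1+\kappa_2}{2}-1-l_{24})_{l_{24}} = (s_2+\nu_1+\nu_2+\tfrac{\kappa_1+\kappa_2}{2}-1-l_{13}-l_{24})_{l_{13}+l_{24}}.
\]
Substituting the Mellin--Barnes kernel (\ref{eqn:step1_case3_dup}) with the shifted indices prescribed by (\ref{eqn:hatV_DS}) and using the relation $\kappa_1 = l_1+l_2+l_3+l_4+l_{12}+l_{13}+l_{24}+l_{34}$ (valid when $l_{14}=l_{23}=0$) to rewrite the parameters recovers (\ref{eqn:EF_kappa1>kappa2>1}) with $l_{14}=l_{23}=0$ and the inner sum reduced to the unique term $i=0$.

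For the third step I would induct on $l_{14}+l_{23}$. Since $\kappa_1>\kappa_2$ forces $l_1+l_2+l_3+l_4\ge 1$, at least one of $l_1,\ldots,l_4$ is positive; depending on which, one applies (\ref{eqn:vp_gen_relation_2}) with $i=1$ or $i=4$, or (\ref{eqn:vp_gen_relation_3}) with an appropriate triple $(i,j,k)\in\{(1,2,3),(1,2,4),(1,3,4),(2,3,4)\}$, to express $\hat{\vp}_l$ as a signed combination of two $\hat{\vp}_{l^{\ast}}$'s with $l^{\ast}_{14}+l^{\ast}_{23} = l_{14}+l_{23}-1$. Iterating $l_{14}+l_{23}$ times reduces every term to the case handled in step two. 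A combinatorial count of which reductions raise the ``upper-block'' gammas $\GR(s_1-q+\cdots)$, $\GR(s_1+s_2-q+\cdots)$ versus the ``lower-block'' gammas $\GR(s_3-q+\cdots)$, $\GR(s_2+s_3-q+\cdots)$ is what eventually produces the weight $\binom{l_{14}+l_{23}}{i}$ in (\ref{eqn:EF_kappa1>kappa2>1}).

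The main obstacle is showing that this induction really collapses into the single symmetric binomial sum $\sum_{i=0}^{l_{14}+l_{23}}\binom{l_{14}+l_{23}}{i}$, and not into the double sum $\sum_{i_{14}}\sum_{i_{23}}$ that appeared in case 2-(ii) (Lemma \ref{lem:EF_kappa1>kappa2=1}). The mechanism for this collapse is the duplication identity (\ref{eqn:gamma_dup}): in case 3 the two $q$-dependent real gamma factors $\GR(q+\nu_2+\tfrac{\kappa_2-1}{2})$ and $\GR(q+\nu_2+\tfrac{\kappa_2+1}{2})$ fuse into a single $\GC(q+\nu_2+\tfrac{\kappa_2-1}{2})$, symmetric in its two implicit shifted arguments. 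Via the Vandermonde identity $\sum_{j}\binom{l_{14}}{j}\binom{l_{23}}{i-j}=\binom{l_{14}+l_{23}}{i}$, the double sum that arises naturally from iterating the generator relations reorganizes into the claimed binomial sum. The verification that (\ref{eqn:EF_kappa1>kappa2>1}) is consistent with each differential equation in Proposition \ref{prop:PDE2} and with each of the relations (\ref{eqn:vp_gen_relation_2})--(\ref{eqn:vp_gen_relation_5}) then reduces to telescoping identities on gamma ratios of the same flavor as those appearing in the proof of Lemma \ref{lem:EF_kappa1>kappa2=1}, combining (\ref{eqn:gamma_FE}), (\ref{eqn:gamma_dup}), and Pascal's rule.
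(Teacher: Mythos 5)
Your proposal is correct and follows essentially the same route as the paper: the base case $l_{14}=l_{23}=0$ from (\ref{eqn:step1_case3_dup}) and (\ref{eqn:hatV_DS}) (with the Pochhammer telescoping you note, valid because $\gamma_1=2\nu_1+2\nu_2$ in case 3), then induction on $l_{14}+l_{23}$ via (\ref{eqn:vp_gen_relation_2}) and (\ref{eqn:vp_gen_relation_3}) together with Pascal's rule, the single binomial sum arising precisely because the two $q$-dependent $\GR$-factors fuse into $\GC(q+\nu_2+\tfrac{\kappa_2-1}{2})$, as you say. The only difference is presentational: the paper carries out the reduction explicitly for $l_4\ge 1$ with a concrete gamma-ratio identity and the shift $(q,i)\to(q-1,i-1)$, checking the remaining relations ``similarly,'' rather than phrasing the collapse via Vandermonde or appealing to the one-dimensionality of moderate-growth solutions.
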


\begin{proof}
Our proof is similar to Lemma \ref{lem:EF_kappa1>kappa2=1}.
By (\ref{eqn:step1_case3_dup}) and (\ref{eqn:hatV_DS}) we know that the formula (\ref{eqn:EF_kappa1>kappa2>1}) is true 
for $ l = (l_1,l_2,l_3,l_4,l_{12}, l_{13}, 0,0, l_{24}, l_{34}) $. 
By using (\ref{eqn:vp_gen_relation_2}) and (\ref{eqn:vp_gen_relation_3}), 
we show (\ref{eqn:EF_kappa1>kappa2>1}) by induction on $ l_{14} + l_{23} $.
Let $ l_{14} + l_{23} \ge 1 $.
As in the proof of Lemma \ref{lem:EF_kappa1>kappa2=1}, if $ l_4 \ge 1 $, 
(\ref{eqn:vp_gen_relation_2}) with $ i=1 $ and (\ref{eqn:vp_gen_relation_3}) with $ (i,j,k) = (2,3,4) $
tell us 
\begin{align*}
 \widehat{V}_l(s_1,s_2, s_3) 
 = \begin{cases}
  -\widehat{V}_{l-e_4-e_{14} +e_2 +e_{12}} (s_1,s_2, s_3) + \widehat{V}_{l-e_4-e_{14} +e_3 +e_{13}} (s_1,s_2, s_3)  
  & \text{if $l_{14} \ge 1$}, \\
  -\widehat{V}_{l-e_4-e_{23} +e_2 +e_{34}} (s_1,s_2, s_3) + \widehat{V}_{l-e_4-e_{23} +e_3 +e_{24}} (s_1,s_2, s_3)  
  & \text{if $l_{23} \ge 1$}.
\end{cases}
\end{align*}
The hypothesis of induction implies that the above can be written as 
\begin{align} \label{eqn:kappa1>kappa2=1_step1}
\begin{split}
& -(2\pi)^{-l_{13}-l_{24}} 
   (s_2+\nu_1+\nu_2+\tfrac{\kappa_1+\kappa_2}{2}-l_{13}-l_{24}-1 )_{l_{13} + l_{24} } \\
& \quad \times 
  \GC(s_1+\nu_1+\tfrac{\kappa_1-1}{2}) \GR (s_2+2\nu_1+ l_3+l_4+l_{12}+l_{34} )  \\
& \quad \times \GR(s_2+2\nu_2 +l_1+l_2 +l_{12}+l_{34}+2 )  \GC (s_3+\nu_1+2\nu_2+\tfrac{\kappa_1-1}{2} + \kappa_2 ) \\
& \quad  \times  
  \sum_{i = 0}^{  l_{14}+l_{23}-1} \binom{l_{14}+l_{23}-1 }{i} 
  \frac{1}{4\pi \sqrt{-1}} \int_q 
  \frac{ \GR(s_1-q+l_1+i) \GC(s_2-q+\nu_1+\tfrac{\kappa_1-1}{2}-l_{13}-l_{24} )  }{  
       \GR(s_1+s_2-q+2\nu_1+ l_1+l_3+l_4 + l_{12} + l_{34} + i) } 
\\
& \quad \times
 \frac{ \GR(s_3-q+2\nu_1+\kappa_2 + l_4 + l_{14}+l_{23}-i-2) \GC(q+\nu_2+\tfrac{\kappa_2-1}{2} )}
{ \GR(s_2+s_3-q+2\nu_1+2\nu_2+\kappa_2 +l_1+l_2+l_4 +l_{12} + l_{14}+l_{23} + l_{34}-i )} \,dq
\\
& + (2\pi)^{-l_{13}-l_{24}-1} 
   (s_2+\nu_1+\nu_2+\tfrac{\kappa_1+\kappa_2}{2}-l_{13}-l_{24} -2)_{l_{13} + l_{24}+1 } \\
& \quad  \times 
  \GC(s_1+\nu_1+\tfrac{\kappa_1-1}{2}) \GR (s_2+2\nu_1+ l_3+l_4+l_{12}+l_{34} )  \\
& \quad  \times \GR(s_2+2\nu_2 +l_1+l_2 +l_{12}+l_{34} ) \GC (s_3+\nu_1+2\nu_2+\tfrac{\kappa_1-1}{2} + \kappa_2 ) \\
& \quad \times    \sum_{i = 0}^{  l_{14}+l_{23}-1} \binom{l_{14}+l_{23}-1 }{i}  
  \frac{1}{4\pi \sqrt{-1}} \int_q 
  \frac{ \GR (s_1-q+l_1+i) \GC (s_2-q+\nu_1+\tfrac{\kappa_1-1}{2}-l_{13}-l_{24} -1)
    }{  \GR(s_1+s_2-q+2\nu_1+ l_1+l_3+l_4 + l_{12} + l_{34} + i) } 
\\
& \quad  \times
 \frac{ \GR (s_3-q+2\nu_1+\kappa_2 + l_4  + l_{14}+l_{23}-i-2)   \GC(q+\nu_2+\tfrac{\kappa_2-1}{2} )}
{ \GR (s_2+s_3-q+2\nu_1+2\nu_2+\kappa_2 +l_1+l_2+l_4 +l_{12} + l_{14}+l_{23} + l_{34} -i-2)} \,dq.
\end{split}
\end{align}
In view of (\ref{eqn:gamma_FE}) and the identity 
\begin{align*}
& -(s_2+2\nu_2+ l_1+l_2+l_{12}+l_{34} ) (s_2-q+\nu_1+\tfrac{\kappa_1-1}{2}-l_{13}-l_{24} -1)
\\
&  + ( s_2+\nu_1+\nu_2+\tfrac{\kappa_1+\kappa_2}{2}-l_{13}-l_{24} -2) \\
& \times 
 (s_2+s_3-q+2\nu_1+2\nu_2+\kappa_2 +l_1+l_2+l_4 +l_{12}+ l_{14}+l_{23}+ l_{34} - i-2)
\\
& = ( s_2-q+\nu_1 + \tfrac{\kappa_1-1}{2} -l_{13}-l_{24}-1 ) 
     ( s_3 - q + 2\nu_1  + \kappa_2+l_4 + l_{14}+l_{23}-i-2) 
\\
& \quad +  ( q+\nu_2+\tfrac{\kappa_2-1}{2} ) 
 (s_2+s_3-q+2\nu_1+2\nu_2+\kappa_2 +l_1+l_2+l_4 +l_{12} + l_{14}+l_{23}+ l_{34}-i-2),
\end{align*}
we know that (\ref{eqn:kappa1>kappa2=1_step1}) becomes
\begin{align*}
& (2\pi)^{-l_{13}-l_{24}} 
   (s_2+\nu_1+\nu_2+\tfrac{\kappa_1+\kappa_2}{2}-l_{13}-l_{24}-1 )_{l_{13} + l_{24} } 
  \GC(s_1+\nu_1+\tfrac{\kappa_1-1}{2})
  \\
& \times 
  \GR (s_2+2\nu_1+ l_3+l_4+l_{12}+l_{34} ) 
  \GR(s_2+2\nu_2 +l_1+l_2 +l_{12}+l_{34} ) 
 \GC(s_3+\nu_1+2\nu_2+\tfrac{\kappa_1-1}{2} + \kappa_2 ) \\
& \times   \sum_{i=0}^{  l_{14}+l_{23}-1} \binom{l_{14}+l_{23}-1}{i} 
  \frac{1}{4\pi \sqrt{-1}} \int_q 
  \biggl\{ 
  \frac{\GR(s_1-q+l_1+i) \GC(s_2-q+\nu_1+\tfrac{\kappa_1-1}{2}-l_{13}-l_{24} )
    }{  \GR(s_1+s_2-q+2\nu_1+ l_1+l_3+l_4 + l_{12} + l_{34} + i) } 
\\
& \times
 \frac{ \GR(s_3-q+2\nu_1+\kappa_2 + l_4  +l_{14}+l_{23} -i)   \GC(q+\nu_2+\tfrac{\kappa_2-1}{2} )}
{ \GR(s_2+s_3-q+2\nu_1+2\nu_2+\kappa_2 +l_1+l_2+l_4 +l_{12} +l_{14}+l_{23} + l_{34} -i)} \\
& + 
  \frac{\GR(s_1-q+l_1+i) \GC(s_2-q+\nu_1+\tfrac{\kappa_1-1}{2}-l_{13}-l_{24} -1)
    }{  \GR(s_1+s_2-q+2\nu_1+ l_1+l_3+l_4 + l_{12} + l_{34} + i) } 
\\
& \times
 \frac{ \GR(s_3-q+2\nu_1+\kappa_2 + l_4 + l_{14}+l_{23}-i-2)   \GC(q+\nu_2+\tfrac{\kappa_2+1}{2})}
{ \GR(s_2+s_3-q+2\nu_1+2\nu_2+\kappa_2 +l_1+l_2+l_4 +l_{12} +l_{14}+l_{23} +  l_{34} -i-2)} \biggr\} \,dq.
\end{align*}
We substitute 
$ (q,i) \to (q,i) $ and 
$ (q,i) \to (q-1, i-1) $ in the first and the second terms in the bracket $\{ \cdots \} $, respectively. 
Then the formula $ \binom{l_{14}+l_{23}-1}{i} + \binom{l_{14}+l_{23}-1}{i-1} = \binom{l_{14}+l_{23}}{i} $ implies our assertions.
We can similarly show that 
(\ref{eqn:EF_kappa1>kappa2>1}) is compatible with other relations in (\ref{eqn:vp_gen_relation_2})  
and (\ref{eqn:vp_gen_relation_3}). 
\end{proof}

\medskip

$\bullet $ {\bf Case 3 with $ \kappa_1  = \kappa_2 \ge 2 $}.

\begin{lem}  \label{lem:kappa1=kappa_2_ge_2} (Case 3)
Assume $ \kappa_1 = \kappa_2 \ge 2 $.
For $  l = (0, 0, 0, 0, l_{12}, l_{13}, l_{14},l_{23}, l_{24}, l_{34} )  \in S_{(\kappa_2,\kappa_2, 0)} $
let $ \hat{\vp}_l $ be the functions determined from the functions $ \hat{\vp}_{l' } $
$(l' =  l_{12} e_{12} + l_{34} e_{34} \in S_{(\kappa_2,\kappa_2 ,0)}) $
in Lemma \ref{lem:EF_step2}, and 
by the equations (\ref{PDE:DS12}), (\ref{PDE:DS34}), (\ref{eqn:e12_vs_e14}), (\ref{eqn:e34_vs_e23}), 
(\ref{eqn:vp_gen_relation_4}) and (\ref{eqn:vp_gen_relation_5}).
Then we have
\begin{align*}
 \hat{\vp}_l (y_1,y_2,y_3)  
& = \frac{1}{(4\pi \sqrt{-1})^3} \int_{s_3} \int_{s_2} \int_{s_1} \widehat{V}_l(s_1,s_2,s_3) 
    \, y_1^{-s_1} y_2^{-s_2} y_3^{-s_3} \,ds_1ds_2ds_3,
\end{align*}
where
\begin{align} \label{eqn:Vl_kappa1=kappa2}
\begin{split}
\widehat{V}_{ l } (s_1,s_2,s_3) 
& = (2\pi)^{-l_{13}-l_{24}} 
   (s_2+\nu_1+\nu_2+\kappa_2-l_{13}-l_{24}-1 )_{l_{13} + l_{24} } \Gamma_{\bC}(s_1+\nu_1+\tfrac{\kappa_1-1}{2})
\\
& \quad  \times 
    \Gamma_{\bR} (s_2+2\nu_1+l_{12}+l_{34} )  
 \Gamma_{\bR}(s_2+2\nu_2 +l_{12}+l_{34} ) 
 \Gamma_{\bC}(s_3+\nu_1+2\nu_2+\tfrac{\kappa_1-1}{2} + \kappa_2 ) \\
& \quad \times    \sum_{i=0}^{ l_{14}+l_{23}} \binom{l_{14}+l_{23}}{i} 
  \frac{1}{4\pi \sqrt{-1}} \int_q 
  \frac{\Gamma_{\bR}(s_1-q+i)\Gamma_{\bC}(s_2-q+\nu_1+\tfrac{\kappa_1-1}{2}-l_{13}-l_{24} ) }{  
  \Gamma_{\bR}(s_1+s_2-q+2\nu_1 + l_{12} + l_{34} + i) } 
\\
& \quad \times
 \frac{ \Gamma_{\bR}(s_3-q+2\nu_1+\kappa_2  + l_{14}+l_{23}-i)   \Gamma_{\bC}(q+\nu_2+\tfrac{\kappa_2-1}{2} ) }
{ \Gamma_{\bR}(s_2+s_3-q+2\nu_1+2\nu_2+\kappa_2  +l_{12} + l_{14}+ l_{23} +  l_{34} -i )} \,dq.
\end{split}
\end{align}
\end{lem}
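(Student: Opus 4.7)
The plan is to mimic the inductive scheme from the proof of Lemma~\ref{lem:EF_kappa1>kappa2>1}, adjusted for the fact that $l_1=l_2=l_3=l_4=0$ here forces us to replace the linear generator relations (\ref{eqn:vp_gen_relation_2}) and (\ref{eqn:vp_gen_relation_3}) by the quadratic relations (\ref{eqn:vp_gen_relation_4}) and (\ref{eqn:vp_gen_relation_5}).

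First I would treat the base case $l_{13}=l_{14}=l_{23}=l_{24}=0$, which is immediate from Lemma~\ref{lem:EF_step2}: for $l_1=l_4=0$ and $\kappa_1=\kappa_2$, formula (\ref{eqn:step1_case3_dup}) reduces exactly to (\ref{eqn:Vl_kappa1=kappa2}) with only the $i=0$ summand and trivial Pochhammer prefactor. Next, still keeping $l_{13}=l_{24}=0$, I would extend to nonzero $l_{14}$ or $l_{23}$ by induction on $l_{14}+l_{23}$: the initial step $l_{14}+l_{23}=1$ follows from (\ref{eqn:e12_vs_e14}) and (\ref{eqn:e34_vs_e23}) combined with the Mellin transforms (\ref{eqn:Mellin_12vs14}) and (\ref{eqn:Mellin_34vs23}) of Lemma~\ref{lem:VvsU}; the induction step then uses (\ref{eqn:vp_gen_relation_5}) to rewrite $\hat{\vp}_{l+e_{14}+e_{23}}$ in terms of $\hat{\vp}_{l+e_{13}+e_{24}}$ and $\hat{\vp}_{l+e_{12}+e_{34}}$, producing the binomial $i$-sum in (\ref{eqn:Vl_kappa1=kappa2}) via the same gamma-function manipulation and binomial identity $\binom{m-1}{i}+\binom{m-1}{i-1}=\binom{m}{i}$ that drove the induction in Lemma~\ref{lem:EF_kappa1>kappa2>1}. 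Finally, for $l_{13}\ge 1$ or $l_{24}\ge 1$, I would apply the differential equations (\ref{PDE:DS12}) and (\ref{PDE:DS34}) through (\ref{eqn:hatV_DS}); each iteration shifts $s_2$ by $-1$ and pulls out a linear factor, accumulating into the Pochhammer prefactor $(s_2+\nu_1+\nu_2+\kappa_2-l_{13}-l_{24}-1)_{l_{13}+l_{24}}$ in (\ref{eqn:Vl_kappa1=kappa2}).

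The hard part will be the consistency check for (\ref{eqn:vp_gen_relation_5}) once both pairs $(l_{13},l_{24})$ and $(l_{14},l_{23})$ are nonzero. This relation couples three Mellin-Barnes kernels that differ in both the contour variable $q$ and the external shifts $s_2,s_3$, so the required identity does not reduce to a single substitution. I expect to handle it by a suitable contour shift followed by repeated use of (\ref{eqn:gamma_FE}), reducing the claim to an integrand-level polynomial identity in $q,s_2,s_3$ analogous to, but strictly more intricate than, the one underlying Lemma~\ref{lem:EF_kappa1>kappa2>1}. The analogous compatibility with the Pl\"ucker-type quadratic relations (\ref{eqn:vp_gen_relation_4}) should then follow either directly from (\ref{eqn:vp_gen_relation_5}) together with the identities already verified, or by an entirely parallel gamma-function argument.
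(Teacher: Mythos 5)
Your overall strategy coincides with the paper's: base case from Lemma \ref{lem:EF_step2}, the level $l_{14}+l_{23}=1$ from (\ref{eqn:e12_vs_e14}) and (\ref{eqn:e34_vs_e23}), induction on $l_{14}+l_{23}$ via the quadratic generator relations, and the Pochhammer prefactor from (\ref{PDE:DS12}), (\ref{PDE:DS34}) through (\ref{eqn:hatV_DS}). However, as organized the plan has two genuine gaps. First, you cannot run the induction with $l_{13}=l_{24}=0$ fixed and postpone $l_{13},l_{24}\geq 1$ to the end: the relation (\ref{eqn:vp_gen_relation_5}) expresses $\hat{\vp}_{l+e_{14}+e_{23}}$ through $\hat{\vp}_{l+e_{13}+e_{24}}$, whose indices $l_{13}+1$, $l_{24}+1$ are nonzero, so your stated induction hypothesis does not cover that term. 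The paper avoids this by inducting on $l_{14}+l_{23}$ with all remaining indices arbitrary (the $l_{13},l_{24}$-dependence having been absorbed into the Pochhammer factor from the start), so the hypothesis applies to the $e_{13}+e_{24}$ term; your ordering would have to be interleaved accordingly. Second, (\ref{eqn:vp_gen_relation_4}) is not merely a compatibility to be checked afterwards, nor a consequence of (\ref{eqn:vp_gen_relation_5}): when $l_{14}\geq 2$ and $l_{23}=0$ (or symmetrically), $l-e_{14}-e_{23}$ has a negative entry, so (\ref{eqn:vp_gen_relation_5}) cannot be invoked and (\ref{eqn:vp_gen_relation_4}) is the determination step itself. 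The paper treats the three cases ($l_{14},l_{23}\geq 1$; $l_{14}\geq 2$; $l_{23}\geq 2$) inside one induction by the parallel computation you only mention as a fallback.

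A further sign the inductive step has not been worked through: since $\kappa_1=\kappa_2$, the linear relations (\ref{eqn:vp_gen_relation_2})--(\ref{eqn:vp_gen_relation_3}) are unavailable and the induction necessarily descends by two in $l_{14}+l_{23}$, so the relevant identity is $\binom{n-2}{i}+2\binom{n-2}{i-1}+\binom{n-2}{i-2}=\binom{n}{i}$, not the Pascal rule $\binom{m-1}{i}+\binom{m-1}{i-1}=\binom{m}{i}$ you quote from Lemma \ref{lem:EF_kappa1>kappa2>1}; the accompanying integrand manipulation (the paper's splitting into two groups of terms and the $\GR$/$\GC$ rearrangements) is the core of the proof and remains to be executed. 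Also, your route to the $l_{14}+l_{23}=1$ case via (\ref{eqn:Mellin_12vs14}) and (\ref{eqn:Mellin_34vs23}) can be made to work after choosing $\mu$ so that $U_0(\cdot\,;\mu')$ is the case-3 kernel, but it still requires an extra Barnes-lemma rewriting of $U'$ to reach the two-term $\GC$-form of (\ref{eqn:Vl_kappa1=kappa2}); the paper reaches it more directly from (\ref{eqn:step1_case3_dup}) by a short factorization of the integrand.
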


\begin{proof}
We first confirm (\ref{eqn:Vl_kappa1=kappa2}) for 
$ l = l_{12} e_{12} + e_{14} + l_{34} e_{34}, l_{12} e_{12} + e_{23} + l_{34} e_{34} \in S_{(\kappa_2,\kappa_2,0)} $.
From the equations (\ref{eqn:e12_vs_e14}) and (\ref{eqn:e34_vs_e23}) we have 
\begin{align*}
& \widehat{V}_{  l_{12}  e_{12} + e_{14} + l_{34} e_{34}}(s_1,s_2,s_3)
 = \widehat{V}_{l_{12} e_{12} + e_{23} + l_{34} e_{34}} (s_1,s_2,s_3) 
\\
& = (2\pi)^{-2} (s_3+2\nu_1+\nu_2+ \tfrac{3\kappa_2-3}{2} )(s_3+\nu_1+2\nu_2+\tfrac{3\kappa_2-3}{2})
   \widehat{V}_{ (l_{12}+1)  e_{12} + l_{34} e_{34}} (s_1,s_2-1,s_3-1) 
\\
& \quad  - \widehat{V}_{  (l_{12}+1) e_{12} + l_{34} e_{34}} (s_1,s_2-1,s_3+1).
\end{align*}
In view of the expression (\ref{eqn:step1_case3_dup}) the above can be written as
\begin{align} \label{eqn:Vl_kappa1=kappa2_step1}
\begin{split}
& \GC(s_1+\nu_1+\tfrac{\kappa_2-1}{2})  \GR (s_2+2\nu_1+\kappa_2-1) 
 \GR (s_2+2\nu_2+\kappa_2-1) \GC(s_3+\nu_1+2\nu_2+\tfrac{3\kappa_2-1}{2} )
\\
& \times \frac{1}{4\pi \sqrt{-1}} \int_q
  \frac{ \GR(s_1-q) \GC(s_2-q+\nu_1+\frac{\kappa_2-3}{2}) \GR(s_3-q+2\nu_1+\kappa_2-1) \GC(q+\nu_2+\frac{\kappa_2-1}{2}) }
 {\GR(s_1+s_2-q+2\nu_1+\kappa_2-1) \GR(s_2+s_3-q+2\nu_1+2\nu_2+2\kappa_2) }
\\
& \times \{ (s_3+2\nu_1+\nu_2+\tfrac{3\kappa_2-3}{2})(s_2+s_3-q+2\nu_1+2\nu_2+2\kappa_2-2) \\
& \quad 
 - (s_3+\nu_1+2\nu_2+\tfrac{3\kappa_2-1}{2}) (s_3-q+2\nu_1+\kappa_2-1)
   \} \, dq.
\end{split}
\end{align}
Since the bracket $ \{ \cdots \} $ in the above can be written as 
\begin{align*}
 (s_3-q+2\nu_1+\kappa_2-1)(s_2-q+\nu_1+ \tfrac{\kappa_2-3}{2}) 
+ (q+\nu_2+\tfrac{\kappa_2-1}{2})(s_2+s_3-q+2\nu_1+2\nu_2+2\kappa_2-2),
\end{align*}
we know (\ref{eqn:Vl_kappa1=kappa2_step1}) becomes
\begin{align*}
& \GC(s_1+\nu_1+\tfrac{\kappa_2-1}{2})  \GR (s_2+2\nu_1+\kappa_2-1) 
 \GR (s_2+2\nu_2+\kappa_2-1) \GC(s_3+\nu_1+2\nu_2+\tfrac{3\kappa_2-1}{2} )
\\
& \times \biggl\{ \frac{1}{4\pi \sqrt{-1}} \int_q
  \frac{ \GR(s_1-q) \GC(s_2-q+\nu_1+\frac{\kappa_2-1}{2}) \GR(s_3-q+2\nu_1+\kappa_2+1) \GC(q+\nu_2+\frac{\kappa_2-1}{2}) }
 {\GR(s_1+s_2-q+2\nu_1+\kappa_2-1) \GR(s_2+s_3-q+2\nu_1+2\nu_2+2\kappa_2) } \,dq
\\
& + \frac{1}{4\pi \sqrt{-1}} \int_q
  \frac{ \GR(s_1-q) \GC(s_2-q+\nu_1+\frac{\kappa_2-3}{2}) \GR(s_3-q+2\nu_1+\kappa_2-1) \GC(q+\nu_2+\frac{\kappa_2+1}{2}) }
 {\GR(s_1+s_2-q+2\nu_1+\kappa_2-1) \GR(s_2+s_3-q+2\nu_1+2\nu_2+2\kappa_2-2) } \,dq \biggr\}
\end{align*}
as desired.
Then, by (\ref{eqn:hatV_DS}), we know that (\ref{eqn:Vl_kappa1=kappa2}) holds when $ l_{14}+ l_{23} = 0,1 $.

Let us show (\ref{eqn:Vl_kappa1=kappa2}) by induction on $ l_{14}+l_{23} $.
Assume $ l_{14} + l_{23} \ge 2 $. 
From (\ref{eqn:vp_gen_relation_4}) and (\ref{eqn:vp_gen_relation_5}), we know
\begin{align*}
 \hat{\vp}_l
& = \begin{cases}
 -\hat{\vp}_{ l +e_{12} -e_{14} - e_{23} + e_{34}} + \hat{\vp}_{l+e_{13}-e_{14}-e_{23}+e_{24} } 
   & \text{if $ l_{14} \ge1 $ and $l_{23}  \ge 1 $}, \\
 -\hat{\vp}_{ l +2e_{12} -2e_{14} } + \hat{\vp}_{l+2e_{13}-2e_{14}  }   & \text{if $ l_{14} \ge 2 $}, \\
 -\hat{\vp}_{ l  - 2e_{23} + 2e_{34}} + \hat{\vp}_{l+ 2e_{13}-2e_{23} }  & \text{if $ l_{23}  \ge 2 $} 
\end{cases}
\end{align*}
for $ l = (0,0,0,0,l_{12}, l_{13}, l_{14}, l_{23}, l_{24}, l_{34}) \in S_{(\kappa_2,\kappa_2,0)} $.
Then the hypothesis of induction imply that 
$$
  \widehat{V}_{l}(s_1,s_2,s_3) = \Gamma_{\bC}(s_1+\nu_1+\tfrac{\kappa_2-1}{2})  
  \Gamma_{\bC}(s_3+\nu_1+2\nu_2+\tfrac{3\kappa_2-1}{2} ) \cdot  (V_0 + V_1) 
$$
with  
\begin{align*}
V_p & 
= (-1)^{1-p} (2\pi)^{-l_{13}-l_{24} -2p } 
   (s_2+\nu_1+\nu_2+\kappa_2-l_{13}-l_{24}-1-2p )_{l_{13} + l_{24}+2p }
\\
& \quad \times  \Gamma_{\bR} (s_2+2\nu_1+l_{12}+l_{34}+2-2p ) \Gamma_{\bR}(s_2+2\nu_2 +l_{12}+l_{34}+2-2p )\\
& \quad \times \sum_{i=0}^{l_{14}+l_{23}-2} \binom{l_{14}+l_{23}-2}{i} 
  \frac{1}{4\pi \sqrt{-1}} \int_q 
  \frac{\Gamma_{\bR}(s_1-q+i)\Gamma_{\bC}(s_2-q+\nu_1+\tfrac{\kappa_2-1}{2}-l_{13}-l_{24}-2p )
    }{  \Gamma_{\bR}(s_1+s_2-q+2\nu_1 + l_{12} + l_{34} +i+2-2p) } 
\\
& \quad \times 
 \frac{ \Gamma_{\bR}(s_3-q+2\nu_1+\kappa_2  + l_{14}+l_{23}-i-2)   \Gamma_{\bC}(q+\nu_2+\tfrac{\kappa_2-1}{2} )}
{ \Gamma_{\bR}(s_2+s_3-q+2\nu_1+2\nu_2+\kappa_2  +l_{12} + l_{14} + l_{23} + l_{34} -i-2p)} \,dq
\end{align*}
for $ p=0,1 $.
In view of the identities
\begin{align*}
&  \GR(s_2+2\nu_1+l_{12}+l_{34}+2) \GR(s_2+2\nu_2 +l_{12}+l_{34} +2) \\
& = (2\pi)^{-2}  \GR(s_2+2\nu_1+l_{12}+l_{34}) \GR(s_2+2\nu_2 +l_{12}+l_{34} ) \\
& \times 
\{ (s_1+s_2-q+2\nu_1+l_{12}+l_{34}+i) (s_2+s_3-q+2\nu_1+2\nu_2+\kappa_2+l_{12} + l_{14}+l_{23}+ l_{34}-i-2  )\\
& \quad  -(s_1+s_2-q+2\nu_1+l_{12}+l_{34}+i)  (s_3-q+2\nu_1+\kappa_2+ l_{14}+l_{23}- i -2) \\
& \quad   - (s_1-q+i) (s_2+s_3-q+2\nu_1+2\nu_2+\kappa_2+l_{12} + l_{14}+l_{23}+l_{34} -i-2 )  \\
&\quad  +  (s_1-q+i)(s_3-q+2\nu_1+\kappa_2+ l_{14}+l_{23} -i-2) \}
\end{align*}
and 
\begin{align*}
& (2\pi)^{-l_{13}-l_{24}-2}    (s_2+\nu_1+\nu_2+\kappa_2-l_{13}-l_{24} -3)_{l_{13} + l_{24}+2 } 
\\
& = (2\pi)^{-l_{13}-l_{24}}    (s_2+\nu_1+\nu_2+\kappa_2-l_{13}-l_{24}-1 )_{l_{13} + l_{24}} \\
& \quad  \times \sum_{k=0}^2 \binom{2}{k} 
   \frac{ \GC(s_2-q+\nu_1+\tfrac{\kappa_2-1}{2}-l_{13}-l_{24}-k) }{ \GC(s_2-q+\nu_1+\tfrac{\kappa_2-1}{2}-l_{13}-l_{24}-2) }
   \frac{ \GC(q+\nu_2 + \tfrac{\kappa_2-1}{2} +k) }{ \GC(q+\nu_2 + \tfrac{\kappa_2-1}{2} -2)},
\end{align*}
we find that 
\begin{align*}
V_0 + V_1
&= (2\pi)^{-l_{13}-l_{24}} 
   (s_2+\nu_1+\nu_2+\kappa_2-l_{13}-l_{24}-1 )_{l_{13} + l_{24} } \\
& \quad \times  \Gamma_{\bR} (s_2+2\nu_1+l_{12}+l_{34})   \Gamma_{\bR}(s_2+2\nu_2 +l_{12}+l_{34} ) 
\\
&  \quad  \times (V_{0; 0,0}+V_{0;  0,1}+V_{0; 1,0}+ V_{0; 1,1} + V_{1; 0} + V_{1; 1} + V_{1; 2}),
\end{align*}
where
\begin{align*}
  V_{0;  k_1,k_2}
& = (-1)^{k_1+k_2+1} \sum_{i=0}^{l_{14}+l_{23}-2} \binom{l_{14}+l_{23}-2}{i} 
\\
& \quad \times 
  \frac{1}{4\pi \sqrt{-1}} \int_q 
  \frac{\Gamma_{\bR}(s_1-q+i +2k_1 )\Gamma_{\bC}(s_2-q+\nu_1+\tfrac{\kappa_2-1}{2}-l_{13}-l_{24} )
    }{  \Gamma_{\bR}(s_1+s_2-q+2\nu_1 + l_{12} + l_{34} +i +2k_1) } 
\\
& \quad \times
 \frac{ \Gamma_{\bR}(s_3-q+2\nu_1+\kappa_2  + l_{14}+l_{23}-i-2 + 2k_2) \Gamma_{\bC}(q+\nu_2+\tfrac{\kappa_2-1}{2} )}
{ \Gamma_{\bR}(s_2+s_3-q+2\nu_1+2\nu_2+\kappa_2  +l_{12} +l_{14}+ l_{23}+ l_{34} -i-2+ 2k_2 )} \,dq
\end{align*}
for $ 0 \le k_1, k_2 \le 1 $, and 
\begin{align*}
V_{1; k}  & = \binom{2}{k}
 \sum_{i=0}^{ l_{14}+l_{23}-2} \binom{l_{14}+l_{23}-2}{i} 
  \cdot   \frac{1}{4\pi \sqrt{-1}} \int_q 
  \frac{\Gamma_{\bR}(s_1-q+i)\Gamma_{\bC}(s_2-q+\nu_1+\tfrac{\kappa_2-1}{2}-l_{13}-l_{24}-k)
    }{  \Gamma_{\bR}(s_1+s_2-q+2\nu_1 + l_{12} + l_{34} + i) } 
\\
& \quad \times
 \frac{ \Gamma_{\bR}(s_3-q+2\nu_1+\kappa_2  + l_{14}+l_{23}-i-2) \Gamma_{\bC}(q+\nu_2+\tfrac{\kappa_2-1}{2}+k )}
{ \Gamma_{\bR}(s_2+s_3-q+2\nu_1+2\nu_2+\kappa_2  +l_{12} + l_{14}+l_{23}+ l_{34}-i-2 )} \,dq
\end{align*}
for $ 0 \le k \le 2 $. 
Then we know $ V_{0; 0,0} + V_{1; 0} = 0 $ and $ V_{0; 1,1} +V_{1; 2} = 0 $.
We substitute $ i  \to i-2 $ and $ (q,i) \to (q-1,i-1) $ in $ V_{0;1,0} $ and $ V_{1;1} $, respectively. 
Therefore  
the formula $ \binom{n-2}{i} + 2\binom{n-2}{i-1} + \binom{n-2}{i-2} = \binom{n}{i} $ leads our assertion.
\end{proof}

\subsection{Relations with the Jacquet integrals}
\label{subsec:Jacquet}

We discuss here the relation between the Jacquet integral and 
the moderate growth solution $\hat{\varphi}_l$ in the previous subsections. 
As a result, we can remove the assumption (\ref{assumption}) from our main theorems.

First, we recall the definition of the Jacquet integral and its properties.  
Let 
\begin{align*}
&\widehat{\sigma} = \chi_{(\hat{\nu}_1,\hat{\delta}_1)}\boxtimes \chi_{(\hat{\nu}_2,\hat{\delta}_2)} 
\boxtimes \chi_{(\hat{\nu}_3,\hat{\delta}_3)}\boxtimes \chi_{(\hat{\nu}_4,\hat{\delta}_4)}&
&\text{with}\quad  \hat{\nu}=(\hat{\nu}_1,\hat{\nu}_2,\hat{\nu}_3,\hat{\nu}_4)\in \bC^4,\quad 
\hat{\delta}=(\hat{\delta}_1,\hat{\delta}_2,\hat{\delta}_3,\hat{\delta}_4) \in \{0,1\}^4, 
\end{align*}
and consider the principal series representation $(\Pi_{\widehat{\sigma}},H(\widehat{\sigma}))$ of $G$. 
Then we may regard $H(\widehat{\sigma})_K$ as the space of $K$-finite smooth functions $f$ on $K$ satisfying 
\begin{align*}
&f(mk)=m_1^{\hat{\delta}_1}m_2^{\hat{\delta}_2}m_3^{\hat{\delta}_3}m_4^{\hat{\delta}_4}f(k)&
&(m=\diag (m_1,m_2,m_3,m_4)\in {K}\cap M_{(1,1,1,1)},\ k\in {K}),
\end{align*}
and we note that the space $H(\widehat{\sigma})_K$ does not depend on $\hat{\nu}$. 
If $\hat{\nu}$ satisfies $\mathrm{Re}(\hat{\nu}_1)>\mathrm{Re}(\hat{\nu}_2)>\mathrm{Re}(\hat{\nu}_3)>\mathrm{Re}(\hat{\nu}_4)$, 
for $f\in H(\widehat{\sigma})_K$, 
we define the Jacquet integral $\cJ_{\widehat{\sigma}}(f)$ by the convergent integral 
\begin{align*}
&\cJ_{\widehat{\sigma}} (f)(g):=\int_{N}f_{\hat{\nu}} (wxg)\psi_1(x)^{-1}dx\quad (g\in G)&\text{with}\quad 
& w = \begin{pmatrix} & & & 1 \\ & & 1 & \\ & 1 &  & \\ 1 & & & \end{pmatrix},
\end{align*}
where $f_{\hat{\nu}}$ is a smooth function on $G$ defined by 
\begin{align*}
&f_{\hat{\nu}} (xyk)=y_1^{\hat{\nu}_1+\frac{3}{2}}y_2^{\hat{\nu}_1+\hat{\nu}_2+2}
y_3^{\hat{\nu}_1+\hat{\nu}_2+\hat{\nu}_3+\frac{3}{2}}y_4^{\hat{\nu}_1+\hat{\nu}_2+\hat{\nu}_3+\hat{\nu}_4}f(k)\\
&(x\in N,\ y=\diag (y_1y_2y_3y_4,y_2y_3y_4,y_3y_4,y_4)\in A,\ k\in K)
\end{align*}
with the Iwasawa decomposition $G=NAK$. 
By \cite[Theorem 15.4.1]{Wallach_003}, we know that 
$\cJ_{\widehat{\sigma}}(f)(g)$ has the holomorphic continuation 
to whole $\hat{\nu} \in \bC^4$ for every $f\in H(\widehat{\sigma})_K$ and $g\in G$. 
Furthermore, this extends $\cJ_{\widehat{\sigma}}$ 
to all $\hat{\nu} \in \bC^4$ as a nonzero $G$-homomorphism in 
${\cI}_{\Pi_{\widehat{\sigma}} ,\psi_1 }^{\mathrm{mg}}$.

In this subsection, we do not always assume that $\Pi_\sigma$ is irreducible. 
For each cases 1, 2 and 3 introduced in \S \ref{subsec;system_PDE}, 
we define the symbols ${p_0}$, $\nu$ and specify the parameters $\hat{\nu}$, $\hat{\delta}$ of $\widehat{\sigma}$ as follows: 
\begin{itemize}
\item Case 1: ${p_0}=4$, $\nu =(\nu_1,\nu_2,\nu_3,\nu_4)$, $\hat{\nu}=\nu$, and $\hat{\delta}=(\delta_1,\delta_2,\delta_3,\delta_4)$.
\smallskip 
\item Case 2: ${p_0}=3$, $\nu =(\nu_1,\nu_2,\nu_3)$, $\hat{\nu}=(\nu_1+\tfrac{\kappa_1-1}{2},\nu_1-\tfrac{\kappa_1-1}{2},\nu_2,\nu_3)$, 
 and $\hat{\delta}=(\delta_1,0,\delta_2,\delta_3)$.
\smallskip 
\item Case 3: ${p_0}=2$, $\nu =(\nu_1,\nu_2)$, 
$\hat{\nu}=(\nu_1+\tfrac{\kappa_1-1}{2},\nu_1-\tfrac{\kappa_1-1}{2},\nu_2+\tfrac{\kappa_2-1}{2},\nu_2-\tfrac{\kappa_2-1}{2})$, 
 and $\hat{\delta}=(\delta_1,0,\delta_2,0)$.
\smallskip 
\end{itemize} 
Then we can define $\cJ_{\sigma}\in {\cI}_{\Pi_\sigma ,\psi_1 }^{\mathrm{mg}}$ by 
$\cJ_{\sigma}:=\cJ_{\widehat{\sigma}}\circ \rI_\sigma$, 
where $\rI_\sigma \colon H(\sigma)\to H(\widehat{\sigma})$ is the embedding defined by 
$\rI_\sigma =\id_{H(\sigma)}$, (\ref{eqn:P211_embed}) and (\ref{eqn:P22_embed}) 
respectively for cases 1, 2 and 3. 
Let $\hat{\eta}_\sigma \colon V_{(\kappa_1,\kappa_2,\delta_3)}\to H(\sigma)_K$ be the $K$-embedding 
defined in \S \ref{subsec:P_1111_ps}, \S \ref{subsec:P_211_gps} and \S \ref{subsec:P22} 
respectively for cases 1, 2 and 3. 
Here we note that $H(\sigma)_K$ and $\hat{\eta}_\sigma$ do not depend on $\nu$. 
By the properties of the Jacquet integral $\cJ_{\widehat{\sigma}}$, we obtain the following lemma. 

\begin{lem}\label{lem:Jacquet_property}
Retain the notation. \smallskip

\noindent (i) The function $\cJ_{\sigma}(f)(g)$ of $\nu \in \bC^{p_0}$ is entire 
for every $f\in H(\sigma)_K$ and $g\in G$. 
In particular, 
the function $\cJ_{\sigma}(\hat{\eta}_\sigma (v))(g)$ of $\nu \in \bC^{p_0}$ is entire 
for every $v\in V_{(\kappa_1,\kappa_2,\delta_3)}$ and $g\in G$. \smallskip 

\noindent (ii) We have $\cJ_{\sigma}\neq 0$. In particular, we have $\cJ_{\sigma}\circ \hat{\eta}_\sigma \neq 0$ if $\Pi_\sigma$ is irreducible. 
\smallskip

\noindent (iii) We have ${\cI}_{\Pi_{\sigma} ,\psi_1 }^{\mathrm{mg}}=\bC \cJ_{\sigma}$. In particular, we have 
$\Hom_{K}(V_{(\kappa_1,\kappa_2,\delta_3)},{\mathrm{Wh}}(\Pi_\sigma ,\psi_1 )^{\mathrm{mg}})=\bC \cJ_{\sigma}\circ \hat{\eta}_\sigma $. 
\end{lem}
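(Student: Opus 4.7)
The plan is to reduce each of the three statements to corresponding properties of the principal-series Jacquet integral $\cJ_{\widehat{\sigma}}$ on $H(\widehat{\sigma})$, using the embedding $\rI_{\sigma}$ that interpolates between $\Pi_{\sigma}$ and $\Pi_{\widehat{\sigma}}$, and then to invoke Wallach's holomorphy/nonvanishing theorem \cite[Theorem 15.4.1]{Wallach_003} together with Matumoto's Whittaker dimension theorem.

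For part (i), I would first observe that everything in sight is independent of $\nu$ on the $K$-finite side. Indeed, $H(\sigma)_K$ and $H(\widehat{\sigma})_K$ are cut out by $K$-transformation rules that do not involve $\nu$, the embedding $\rI_{\sigma}$ is either the identity (case 1) or the $\nu$-independent evaluation $\iota_{\sigma}$ at $1_2$ (cases 2 and 3), and the $K$-embedding $\hat{\eta}_{\sigma}$ is defined purely in terms of the minimal $K$-type. Consequently $\cJ_{\sigma}(f)(g)=\cJ_{\widehat{\sigma}}(\rI_{\sigma}(f))(g)$ is the composition of the affine map $\nu\mapsto \hat{\nu}$ with the entire function $\hat{\nu}\mapsto \cJ_{\widehat{\sigma}}(\rI_{\sigma}(f))(g)$, and is therefore itself entire in $\nu$.

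For part (ii), I would argue by contradiction. Suppose $\cJ_{\sigma}=0$ for some $\nu$; then $\cJ_{\widehat{\sigma}}$ vanishes on the $(\g_{\bC},K)$-submodule $\rI_{\sigma}(H(\sigma)_K)\subseteq H(\widehat{\sigma})_K$, hence factors through the quotient $H(\widehat{\sigma})_K/\rI_{\sigma}(H(\sigma)_K)$. According to the paragraph preceding \eqref{eqn:dimWh_gps}, this quotient is \emph{not} large in the sense of Vogan, so by Matumoto \cite[Corollary 2.2.2, Theorem 6.1.6]{Matumoto_001} it supports no moderate growth Whittaker functional. This would force $\cJ_{\widehat{\sigma}}=0$, contradicting the nonvanishing statement of \cite[Theorem 15.4.1]{Wallach_003}. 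The ``in particular'' assertion for irreducible $\Pi_{\sigma}$ is then immediate: the image of $\hat{\eta}_{\sigma}$ is the minimal $K$-type subspace of $H(\sigma)_K$, which generates $H(\sigma)_K$ as a $(\g_{\bC},K)$-module by irreducibility, so $\cJ_{\sigma}\circ\hat{\eta}_{\sigma}=0$ would propagate under the $\g_{\bC}$-action to $\cJ_{\sigma}=0$.

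Part (iii) is then a routine consequence. Combining the nonvanishing in (ii) with $\dim_{\bC}\cI_{\Pi_{\sigma},\psi_1}^{\mathrm{mg}}=1$ from \eqref{eqn:dimWh_gps} gives $\cI_{\Pi_{\sigma},\psi_1}^{\mathrm{mg}}=\bC\cJ_{\sigma}$. For the ``in particular'' refinement, when $\Pi_{\sigma}$ is irreducible any element of $\Hom_{K}(V_{(\kappa_1,\kappa_2,\delta_3)},\mathrm{Wh}(\Pi_{\sigma},\psi_1)^{\mathrm{mg}})$ extends, by irreducibility and generation by the minimal $K$-type, to a unique element of $\cI_{\Pi_{\sigma},\psi_1}^{\mathrm{mg}}$ whose precomposition with $\hat{\eta}_{\sigma}$ recovers it, and this extension is a scalar multiple of $\cJ_{\sigma}$. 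The main obstacle in the whole argument is the reducibility step in (ii): one must be sure that the cokernel of $\rI_{\sigma}$ is really not large, which relies on the structure of the quotient $\gH_{(\nu_i,\kappa_i)}\backslash H(\widehat{\sigma}_i)$ as a finite-dimensional $\mathrm{GL}(2,\bR)$-module induced up to $G$; fortunately this non-largeness has already been used in the derivation of \eqref{eqn:dimWh_gps}, so no fresh analysis is needed.
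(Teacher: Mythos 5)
Your proposal is correct and follows essentially the same route as the paper: (i) reduces to Wallach's entireness of $\cJ_{\widehat{\sigma}}$ because $H(\sigma)_K$, $\rI_\sigma$ and $\hat{\eta}_\sigma$ are $\nu$-independent, (ii) uses the non-largeness of $H(\widehat{\sigma})_K/\rI_\sigma(H(\sigma)_K)$ and Matumoto's vanishing theorem to transfer $\cJ_{\widehat{\sigma}}\neq 0$ to $\cJ_\sigma\neq 0$, and (iii) combines this with $\dim_\bC{\cI}_{\Pi_\sigma,\psi_1}^{\mathrm{mg}}=1$ from (\ref{eqn:dimWh_gps}). The only (harmless) deviations are that you argue (ii) by contradiction rather than directly and justify the ``in particular'' of (iii) only under irreducibility, whereas it also holds unconditionally by multiplicity one of the minimal $K$-type in $H(\sigma)_K$.
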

\begin{proof}
The statement (i) follows immediately from the entireness of the function 
$\cJ_{\widehat{\sigma}}(f)(g)$ of $\hat{\nu} \in \bC^4$ for every $f\in H(\widehat{\sigma})_K$ and $g\in G$. 
Since the quotient $H(\widehat{\sigma})_K/\rI_\sigma (H(\sigma)_K)$ is not large in the sense of Vogan \cite{Vogan_001}, 
we have 
\begin{align*}
&\Hom_{(\g_\bC ,K)}\bigl(H(\widehat{\sigma})_K/\rI_\sigma (H(\sigma)_K) ,C^\infty (N\backslash G;\psi_1)_K\bigr)=\{0\}&
\end{align*}
by the result of Matumoto \cite[Corollary 2.2.2, Theorem 6.2.1]{Matumoto_001}. 
Hence, $\cJ_{\widehat{\sigma}}\neq 0$ implies $\cJ_{\sigma}=\cJ_{\widehat{\sigma}}\circ \rI_\sigma \neq 0$, 
and we obtain the statement (ii). By $0\neq \cJ_{\sigma}\in {\cI}_{\Pi_{\sigma} ,\psi_1 }^{\mathrm{mg}}$ and (\ref{eqn:dimWh_gps}), 
we obtain the statement (iii). 
\end{proof}

We define an open dense subset $\Omega_0$ of $\bC^{p_0}$ by 
\begin{align*}
&\Omega_0:=\begin{cases}
\{s=(s_1,s_2,s_3,s_4)\in \bC^4\mid s_i-s_j\not\in \frac{1}{2}\bZ \ \ (1\leq i<j\leq 4),\quad s_1+s_2\neq s_3+s_4\}& 
\text{case 1}, \\
\{s=(s_1,s_2,s_3)\in \bC^3\mid s_i-s_j\not\in \frac{1}{2}\bZ \ \ (1\leq i<j\leq 3)\}& \text{case 2}, \\
\{s=(s_1,s_2)\in \bC^3\mid s_1-s_2\not\in \frac{1}{2}\bZ \}& \text{case 3}.
\end{cases}
\end{align*}
By defintion, we note that (\ref{assumption}) holds if $\nu \in \Omega_0$. 
Furthermore, by the result of Speh \cite[\S 2]{Speh_001} (see \cite{Speh_Vogan_001} for more general result), 
we know that $\Pi_\sigma$ is irreducible if $\nu \in \Omega_0$. 
Therefore, if $\nu \in \Omega_0$, our system of partial differential equations characterize 
Whittaker functions for $(\Pi_\sigma ,\psi_1 )$ at the minimal $K$-type.

Based on the Iwasawa decomposition $G=NAK$, for general $\nu \in \bC^{p_0}$, 
we define a $K$-homomorphism $\varphi_\sigma \colon V_{(\kappa_1,\kappa_2,\delta_3)}\to C^\infty (N\backslash G;\psi_1)$ by 
the equalities 
\begin{align*}
&\varphi_\sigma (v)(xyk)=\psi_1 (x)\varphi_\sigma (\tau_{(\kappa_1,\kappa_2,\delta_3)} (k)v)(y)&
&(v\in V_{(\kappa_1,\kappa_2,\delta_3)},\ x\in {N},\ y\in {A},\ k\in {K})
\end{align*}
and 
\begin{align}
\label{eqn:def_varphi_sigma}
\begin{aligned}
\varphi_\sigma (u_l)(y) = 
&(\sqrt{-1})^{-l_1+l_3-l_{13}+l_{24}} (-1)^{l_2+l_{14}+l_{23}} y_1^{3/2} y_2^2 y_3^{3/2-\kappa_2} y_4^{\gamma_1}\\
&\times \frac{1}{(4\pi \sqrt{-1})^3} \int_{s_3} \int_{s_2} \int_{s_1} \widehat{V}_l(s_1,s_2,s_3) y_1^{-s_1} y_2^{-s_2} y_3^{-s_3} 
 \,ds_1ds_2ds_3
\end{aligned}
\end{align}
for $y=\diag (y_1y_2y_3y_4,y_2y_3y_4,y_3y_4,y_4)\in A$ and 
$l=(l_1,l_2,l_3,l_4, l_{12},l_{13},l_{14},l_{23},l_{24},l_{34}) \in S_{(\kappa_1,\kappa_2,\delta_3)}$. 
Here $\widehat{V}_l(s_1,s_2,s_3)$ is the Mellin-Barnes kernel of $\hat{\varphi}_l$ determined in the previous subsections. 
Then, if $\nu \in \Omega_0$, the arguments in the previous subsections imply that $\varphi_\sigma$ is a unique element of 
$\Hom_K(V_{(\kappa_1,\kappa_2,\delta_3)},{\rm Wh}(\Pi_{\sigma}, \psi_1)^{\mathrm{mg}})$ up to scalar multiple.

For the well-definedness of $\varphi_\sigma$ for general $\nu \in \bC^{p_0}$, 
we need to confirm that the definition 
(\ref{eqn:def_varphi_sigma}) is compatible with the relations of $u_l$ ($l\in S_{(\kappa_1,\kappa_2,\delta_3)}$) in 
Lemma \ref{lem:rel_ul}, that is, the functions $\varphi_\sigma (u_l)(y)$ ($l\in S_{(\kappa_1,\kappa_2,\delta_3)}$) of $y\in A$ defined 
by (\ref{eqn:def_varphi_sigma}) satisfy the following relations: 
\begin{itemize}
\item When $\kappa_1-\kappa_2\geq 2$, 
for $y\in A$ and $l\in S_{(\kappa_1-2,\kappa_2,\delta_3)}$, we have 
\begin{align*}
&\varphi_\sigma (u_{l+2e_1})(y)+\varphi_\sigma (u_{l+2e_2})(y)+\varphi_\sigma (u_{l+2e_3})(y)+\varphi_\sigma (u_{l+2e_4})(y)=0.
\end{align*}

\item When $\kappa_1>\kappa_2>0$, for $y\in A$ and $l\in S_{(\kappa_1-2,\kappa_2-1,\delta_3)}$, we have 
\begin{align*}
&\sum_{1\leq j\leq 4,\, j\neq i}
\sgn (j-i)\varphi_\sigma (u_{l+e_j+e_{ij}})(y)=0&
&(1\leq i\leq 4),
\end{align*}
and 
\begin{align*}
& \varphi_\sigma (u_{l+e_i+e_{jk}})(y)-\varphi_\sigma (u_{l+e_j+e_{ik}})(y)+\varphi_\sigma (u_{l+e_k+e_{ij}})(y)=0 &  
&(1\leq i<j<k\leq 4).
\end{align*}

\item When $\kappa_2\geq 2$, 
for $y\in A$ and $l\in S_{(\kappa_1-2,\kappa_2-2,\delta_3)}$, we have 
\begin{align*}
&\sum_{1\leq k\leq 4,\, k\not\in \{i,j\}}
\sgn ((k-i)(k-j))\varphi_\sigma (u_{l+e_{ik}+e_{jk}})(y)=0&& 
(1\leq i,j\leq 4),
\end{align*}
and 
\begin{align*}
&\varphi_\sigma (u_{l+e_{12}+e_{34}})(y)-\varphi_\sigma (u_{l+e_{13}+e_{24}})(y)+\varphi_\sigma (u_{l+e_{14}+e_{23}})(y)=0. 
\end{align*}

\end{itemize}
By the definition (\ref{eqn:def_varphi_sigma}), we note that $\varphi_\sigma (u_l)(y)$ is an entire function of $\nu \in \bC^{p_0}$ 
for every $l\in S_{(\kappa_1,\kappa_2,\delta_3)}$ and $y\in A$. 
Since the above relations hold for $\nu \in \Omega_0$, they also hold for all $\nu \in \bC^{p_0}$ by the analytic continuation. 
Hence, $\varphi_\sigma$ is well-defined for all $\nu \in \bC^{p_0}$.

\begin{lem}\label{lem:varphi_property}
Retain the notation.\smallskip 

\noindent (i) The function $\varphi_\sigma (v)(g)$ of $\nu \in \bC^{p_0}$ is entire 
for every $v\in V_{(\kappa_1,\kappa_2,\delta_3)}$ and $g\in G$. \smallskip 

\noindent (ii) We have $\varphi_\sigma (u_{(\kappa_1-\kappa_2)e_4+ \kappa_2 e_{34} })|_A \neq 0$. 
\smallskip 

\noindent (iii) If $\nu \in \Omega_0$, we have 
$\Hom_{K}(V_{(\kappa_1,\kappa_2,\delta_3)},{\mathrm{Wh}}(\Pi_\sigma ,\psi_1 )^{\mathrm{mg}})=\bC \varphi_\sigma$. 
\end{lem}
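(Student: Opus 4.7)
The plan is to handle (ii), (i), (iii) in that order, all three drawing on the explicit Mellin--Barnes formulas from \S \ref{sec:EF} together with Lemma \ref{lem:Jacquet_property}.

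First I would dispatch (ii) directly from the explicit formula. Set $l_0 := (\kappa_1-\kappa_2)e_4+\kappa_2 e_{34}$. This is the case $l_{12}=0$, $l_{34}=\kappa_2$ of Lemma \ref{lem:EF_step1}, so the Mellin--Barnes kernel coincides with the class-one kernel $\widehat{V}_{l_0}(s_1,s_2,s_3) = U_0(s_1,s_2,s_3;r)$, with $r$ specified in Proposition \ref{prop:PDE_reduction_classone}. The prefactor of $U_0$ is a nonvanishing product of $\Gamma_\bR$- and $\Gamma_\bC$-functions at generic $s_i$ with large real part, so $\widehat{V}_{l_0}$ is a nontrivial meromorphic function of $(s_1,s_2,s_3)$. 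Mellin inversion then forces $\varphi_\sigma(u_{l_0})|_A$ not to vanish identically, yielding (ii).

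Next I would prove (i). Fix $y = \diag(y_1y_2y_3y_4,y_2y_3y_4,y_3y_4,y_4) \in A$ and $l \in S_{(\kappa_1,\kappa_2,\delta_3)}$. By the Iwasawa decomposition and the $K$-equivariance rule (\ref{eqn:Fn_whitt_ngk}) it is enough to show that $\nu \mapsto \varphi_\sigma(u_l)(y)$ is entire. The kernels $\widehat{V}_l$ constructed in \S \ref{subsec:EF_step3} are finite sums of triple Mellin--Barnes integrals whose integrands are polynomials in $(s_i,q,\nu)$ times ratios of $\Gamma_\bR$ and $\Gamma_\bC$ with arguments affine in $(s_i,q,\nu)$. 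For $\nu$ in a given compact set $K_0 \subset \bC^{p_0}$, Stirling's formula furnishes Gaussian decay in $|\mathrm{Im}(s_i)|$ and $|\mathrm{Im}(q)|$, and the only remaining issue is to position the contours so that the ascending and descending families of poles are separated uniformly for $\nu \in K_0$. This is achievable after finitely many contour deformations; each deformation contributes a residue that is itself a Mellin--Barnes integral of lower order and of the same type, so depends holomorphically on $\nu$. Absolute convergence plus Morera's theorem then give holomorphy on $K_0$, and since $K_0$ was arbitrary, the function is entire on $\bC^{p_0}$.

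Finally (iii) follows in a single line: for $\nu \in \Omega_0$ the condition (\ref{assumption}) holds and $\Pi_\sigma$ is irreducible by Speh \cite{Speh_001}, so Theorem \ref{thm:whittaker_isom} places $\varphi_\sigma$ in $\Hom_K(V_{(\kappa_1,\kappa_2,\delta_3)},\mathrm{Wh}(\Pi_\sigma,\psi_1)^{\mathrm{mg}})$, which Lemma \ref{lem:Jacquet_property} (iii) identifies as the one-dimensional line $\bC\cJ_\sigma \circ \hat{\eta}_\sigma$; part (ii) rules out $\varphi_\sigma = 0$, so $\bC\varphi_\sigma$ coincides with that line. The hard part will be the contour bookkeeping in (i): the precise pole structure varies between cases 1, 2 and 3 (and their subclasses), so one has to inspect case by case that the finitely many residues picked up during the deformation remain of the claimed Mellin--Barnes form. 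Everything else reduces to elementary consequences of results already established earlier in the paper.
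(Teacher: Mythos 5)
Your parts (i) and (iii) are in line with the paper: (i) is, as you say, just a matter of fixing contours with sufficiently large real parts uniformly over a compact set of $\nu$ and applying Morera (the paper treats it as immediate from the definition \eqref{eqn:def_varphi_sigma}; your extra bookkeeping of residues picked up under contour deformation is unnecessary, since no deformation is needed once the real parts are taken large relative to the compact set, but it is not wrong), and (iii) is exactly the paper's argument: for $\nu\in\Omega_0$ the assumption \eqref{assumption} holds and $\Pi_\sigma$ is irreducible, Theorem \ref{thm:whittaker_isom} identifies the constructed solution with an element of $\Hom_K(V_{(\kappa_1,\kappa_2,\delta_3)},\mathrm{Wh}(\Pi_\sigma,\psi_1)^{\mathrm{mg}})$, this space is one-dimensional, and nonvanishing from (ii) finishes it.

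The genuine gap is in (ii). You conclude that $\widehat{V}_{l_0}$ is ``a nontrivial meromorphic function'' solely because the gamma-factor prefactor of $U_0(s_1,s_2,s_3;r)$ is nonvanishing; but $U_0$ also contains the Barnes-type integral $\frac{1}{4\pi\sqrt{-1}}\int_q(\cdots)\,dq$, and nothing in your argument rules out that this factor vanishes identically in $(s_1,s_2,s_3)$. Since gamma functions never vanish, the whole issue of nontriviality is concentrated in that $q$-integral, and some computation is required. The paper closes this gap by evaluating the Mellin transform of $\varphi_\sigma(u_{l_0})|_A$ at the special point $s_3=s_1+s_2$: there the factor $\GR(s_3-q+\mu_1+\mu_2)$ cancels the denominator $\GR(s_1+s_2-q+\mu_1+\mu_2)$, the $q$-integral becomes exactly of Barnes' second lemma type (Lemma \ref{lem:Barnes2nd}), and one obtains the explicit nonzero value
\[
U_0(s_1,s_2,s_1+s_2;r)=\frac{\bigl(\prod_{i=1}^4\GR(s_1+r_i)\bigr)\bigl(\prod_{1\leq i<j\leq 4}\GR(s_2+r_i+r_j)\bigr)}{\GR(2s_2+r_1+r_2+r_3+r_4)},
\]
which immediately gives $\varphi_\sigma(u_{l_0})|_A\neq 0$ by Mellin inversion. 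Your last step (injectivity of the Mellin transform) is fine, so the proof is repaired by inserting such an explicit evaluation (or any other argument, e.g. a pole-pinching analysis, showing the $q$-integral is not identically zero); as written, however, the key nonvanishing is asserted rather than proved.
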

\begin{proof}
The statement (i) follows from the definition of $\varphi_\sigma$. 
By the Mellin inversion formula (see \cite[Lemma 8.4]{HIM}), Lemma \ref{lem:EF_step1} and 
Barnes' second lemma (Lemma \ref{lem:Barnes2nd}), 
for $s_1,s_2\in \bC$ with the sufficiently large real parts, we have 
\begin{equation}
\label{eqn:pf_varphi_property_001}
\begin{aligned}
&\int_{0}^{\infty} \! \int_{0}^{\infty} \! \int_{0}^{\infty} 
\varphi_\sigma (u_{(\kappa_1-\kappa_2)e_4+ \kappa_2 e_{34} })(\hat{y})
\, y_1^{s_1-\frac32} y_2^{s_2-2} y_3^{s_1+s_2+\kappa_2-\frac32}
\frac{dy_1}{y_1} \frac{dy_2}{y_2} \frac{dy_3}{y_3}\\
&=\widehat{V}_{(\kappa_1-\kappa_2)e_4+ \kappa_2 e_{34} }(s_1,s_2,s_1+s_2)=U_0(s_1,s_2,s_1+s_2;r_1,r_2,r_3,r_4)\\
& =\frac{\bigl(\prod_{i=1}^4\GR(s_1+r_i)\bigr)\bigl(\prod_{1\leq i<j\leq 4}\GR(s_2+r_i+r_j)\bigr)}
{\GR(2s_2+r_1+r_2+r_3+r_4)},
\end{aligned}
\end{equation}
where $\hat{y} = {\rm diag}(y_1y_2y_3, y_2y_3, y_3,1) \in A$ and 
\begin{align*}
(r_1,r_2,r_3,r_4) = \begin{cases}
   (\nu_1+\kappa_1, \nu_2+\kappa_2, \nu_3, \nu_4) & \text{cases 1-(i), (ii), (iii)}, \\
   (\nu_4+1, \nu_2, \nu_3, \nu_1) & \text{case 1-(iv)}, \\
   (\nu_1+\tfrac{\kappa_1-1}{2}, \nu_1+\tfrac{\kappa_1+1}{2}, \nu_2+\kappa_2,\nu_3) & \text{case 2}, \\
   (\nu_1+\tfrac{\kappa_1-1}{2}, \nu_1+\tfrac{\kappa_1+1}{2}, \nu_2+\tfrac{\kappa_2-1}{2}, \nu_2+\tfrac{\kappa_2+1}{2})
  & \text{case 3}. \end{cases} 
\end{align*}
Hence, the integrand of the left hand side of (\ref{eqn:pf_varphi_property_001}) is not the zero function, 
and we obtain the statement (ii). 
The statement (iii) has already been proved above. 
\end{proof}

By Lemma \ref{lem:varphi_property} (ii), (iii), 
if $\nu \in \Omega_0$, there is a unique $C(\sigma )\in \bC$ such that 
$\cJ_{\sigma}\circ \hat{\eta}_\sigma =C(\sigma )\varphi_\sigma$. 
The following proposition allows us to remove the assumption (\ref{assumption}) 
from our main theorems.

\begin{prop}
\label{prop:rel_Jacquet_varphi}
Retain the notation. 
Then $C(\sigma )$ is extended to whole $\nu \in \bC^{p_0}$ as an entire function of $\nu$, 
and the equality $\cJ_{\sigma}\circ \hat{\eta}_\sigma =C(\sigma )\varphi_\sigma$ holds for all $\nu \in \bC^{p_0}$. 
Furthermore, if $\Pi_\sigma$ is irreducible, we have $C(\sigma )\neq 0$ and 
$\Hom_{K}(V_{(\kappa_1,\kappa_2,\delta_3)},{\mathrm{Wh}}(\Pi_\sigma ,\psi_1 )^{\mathrm{mg}})=\bC \varphi_\sigma$. 
\end{prop}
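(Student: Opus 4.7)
The plan is to fix one convenient test vector, exploit the explicit Mellin--Barnes formula already available for $\varphi_\sigma$, compute the corresponding Jacquet integral, and read off $C(\sigma)$ as an explicit product of $\GR$-factors that is manifestly entire in $\nu$. By Lemmas \ref{lem:Jacquet_property}(i) and \ref{lem:varphi_property}(i), both $\cJ_\sigma(\hat{\eta}_\sigma(v))(g)$ and $\varphi_\sigma(v)(g)$ are entire in $\nu\in\bC^{p_0}$ for every fixed $v$ and $g$, and by Lemmas \ref{lem:Jacquet_property}(iii) and \ref{lem:varphi_property}(iii) they are proportional on the open dense subset $\Omega_0$ with scalar $C(\sigma)$. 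Once $C(\sigma)$ is shown to extend entirely in $\nu$, the identity $\cJ_\sigma\circ\hat{\eta}_\sigma=C(\sigma)\varphi_\sigma$ promotes from $\Omega_0$ to all of $\bC^{p_0}$ by the identity principle.

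I would take the test vector $v_0=u_{(\kappa_1-\kappa_2)e_4+\kappa_2 e_{34}}$. Formula (\ref{eqn:pf_varphi_property_001}) in the proof of Lemma \ref{lem:varphi_property} already gives
\[
\int_{(\bR_+)^3}\varphi_\sigma(v_0)(\hat{y})\,y_1^{s_1-\frac32}y_2^{s_2-2}y_3^{s_1+s_2+\kappa_2-\frac32}\,\frac{dy_1\,dy_2\,dy_3}{y_1y_2y_3}=U_0(s_1,s_2,s_1+s_2;r_1,r_2,r_3,r_4),
\]
so it remains to evaluate the same Mellin transform of $\cJ_\sigma(\hat{\eta}_\sigma(v_0))|_{A}$. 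Unfolding $\cJ_\sigma=\cJ_{\widehat{\sigma}}\circ\rI_\sigma$ and inserting the Iwasawa decomposition of $wxy$ inside the Jacquet integral, one obtains an iterated integral over the four parameters of $N$. The evaluation map $\iota_\sigma$ built into $\rI_\sigma$ pins down the $K$-finite contribution of $\hat{\eta}_\sigma(v_0)$ at $1_4$, and the resulting integrand is formally identical to the class-one Jacquet integrand for $\Pi_{\widehat\sigma}$ with the parameters $\hat\nu$ shifted as in \S\ref{subsec:P_211_gps} and \S\ref{subsec:P22}. The computation then parallels that of Proposition \ref{prop:classone_MB}: the four unipotent integrations produce a chain of $\GR$-factors, and the residual $q$-integration closes via Barnes' first lemma (Lemma \ref{lem:Barnes1st}), yielding an expression of the form $C(\sigma)\cdot U_0(s_1,s_2,s_1+s_2;r)$ with $C(\sigma)$ a finite product of $\GR$-factors depending only on $\nu$.

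Granting this explicit evaluation, $C(\sigma)$ is entire, and the equality $\cJ_\sigma\circ\hat{\eta}_\sigma=C(\sigma)\varphi_\sigma$ extends from $\Omega_0$ to all of $\bC^{p_0}$ by analytic continuation of both sides. When $\Pi_\sigma$ is irreducible, Lemma \ref{lem:Jacquet_property}(ii) yields $\cJ_\sigma\circ\hat{\eta}_\sigma\ne 0$, forcing $C(\sigma)\ne 0$; then $\varphi_\sigma=C(\sigma)^{-1}\cJ_\sigma\circ\hat{\eta}_\sigma$ is a nonzero element of $\Hom_K(V_{(\kappa_1,\kappa_2,\delta_3)},\mathrm{Wh}(\Pi_\sigma,\psi_1)^{\mathrm{mg}})$, which by Lemma \ref{lem:Jacquet_property}(iii) and (\ref{eqn:dimWh_gps}) is one-dimensional, so the last assertion follows.

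The main obstacle is the explicit Jacquet-integral evaluation for the non-class-one generalized principal series (cases 2 and 3): there $\hat{\eta}_\sigma(v_0)$ is not the spherical vector of $H(\widehat{\sigma})_K$ but a specific $K$-finite vector encoded by $\eta_\sigma$ in \S\ref{subsec:P_211_gps}--\S\ref{subsec:P22}, and one must carefully track how the parameter shifts $\hat{\nu}_i\pm(\kappa_i-1)/2$ filter through the Barnes-lemma cancellations. This is what guarantees that $C(\sigma)$ emerges as a \emph{product} of $\GR$-factors rather than a ratio, which is precisely the entireness needed to close the argument.
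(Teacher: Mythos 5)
Your overall architecture (entireness of both sides in $\nu$, proportionality on $\Omega_0$, analytic continuation of the identity, and the final irreducibility argument via Lemma \ref{lem:Jacquet_property}) is sound, but the step that carries the entire content of the proposition — the entireness of $C(\sigma)$ — is not proved. You reduce it to an explicit evaluation of the Jacquet integral $\cJ_{\sigma}(\hat{\eta}_{\sigma}(v_0))$ for $v_0=u_{(\kappa_1-\kappa_2)e_4+\kappa_2 e_{34}}$, assert that the unipotent integrations and Barnes' first lemma will produce $C(\sigma)\cdot U_0(s_1,s_2,s_1+s_2;r)$ with $C(\sigma)$ a \emph{product} of $\GR$-factors, and then say ``granting this explicit evaluation.'' That evaluation is precisely the hard point: in cases 2 and 3 (and already in cases 1-(ii)--(iv)) the vector $\rI_\sigma(\hat{\eta}_\sigma(v_0))$ is not the spherical vector of $H(\widehat{\sigma})_K$, so the integrand involves a nontrivial $K$-finite function evaluated along the Iwasawa $K$-part of $wxy$, and nothing in the paper (nor in your sketch) shows that the computation of Proposition \ref{prop:classone_MB} carries over; moreover the claim that the proportionality factor is a product rather than a ratio of gamma factors is exactly what would need proof to conclude entireness. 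As written, the proof has a gap at its central claim, and you yourself flag this step as ``the main obstacle'' without closing it.

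The paper avoids any explicit evaluation by a soft local argument: for each $s\in\bC^{p_0}$ it uses Lemma \ref{lem:varphi_property} (ii) to choose $y_s\in A$ with $\varphi_{\sigma_s}(v_0)(y_s)\neq 0$, hence $\varphi_{\sigma_\nu}(v_0)(y_s)\neq 0$ on a neighborhood $\Omega(s,y_s)$ by continuity in $\nu$, and defines $C_s(\sigma_\nu)$ as the ratio $\cJ_{\sigma_\nu}(\hat{\eta}_{\sigma_\nu}(v_0))(y_s)/\varphi_{\sigma_\nu}(v_0)(y_s)$, which is holomorphic there by Lemmas \ref{lem:Jacquet_property} (i) and \ref{lem:varphi_property} (i) and agrees with $C(\sigma_\nu)$ on $\Omega_0\cap\Omega(s,y_s)$; since $\Omega_0$ is dense, these local ratios glue to an entire extension, and the identity $\cJ_\sigma\circ\hat{\eta}_\sigma=C(\sigma)\varphi_\sigma$ then continues analytically. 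If you want to rescue your route, you must either actually carry out the Jacquet-integral computation at the relevant $K$-type (essentially the content of the separate work on Jacquet integrals with recurrence relations cited as \cite{Ishii_Miyazaki_001}), or replace the explicit evaluation by a local nonvanishing-plus-ratio argument of the above kind; only the latter is available within the present paper's toolkit.
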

\begin{proof}
As a notation in this proof only, for $\nu \in \bC^{p_0}$, the symbol $\sigma_\nu$ denotes $\sigma$ corresponding to $\nu $. 
Take $s\in \bC^{p_0}$ arbitrarily. 
By Lemma \ref{lem:varphi_property} (ii), 
we can choose $y_s\in A$ so that $\varphi_{\sigma_s} (u_{(\kappa_1-\kappa_2)e_4+ \kappa_2 e_{34} })(y_s) \neq 0$. 
Since $\varphi_{\sigma_\nu} (u_{(\kappa_1-\kappa_2)e_4+ \kappa_2 e_{34} })(y_s)$ is a continuous function of $\nu$, 
we can also choose a open neighborhood $\Omega (s,y_s)$ of $s$ so that 
$\varphi_{\sigma_\nu}(u_{(\kappa_1-\kappa_2)e_4+ \kappa_2 e_{34} })(y_s)\neq 0$ ($\nu \in \Omega (s,y_s)$). 
For $\nu \in \Omega (s,y_s)$, we set 
\begin{align*}
&C_s(\sigma_\nu ):=\frac{\cJ_{\sigma_\nu }(\hat{\eta}_{\sigma_\nu}(u_{(\kappa_1-\kappa_2)e_4+ \kappa_2 e_{34} }))(y_s)}
{\varphi_{\sigma_\nu}(u_{(\kappa_1-\kappa_2)e_4+ \kappa_2 e_{34} })(y_s)}. 
\end{align*}
By Lemmas \ref{lem:Jacquet_property} (i) and \ref{lem:varphi_property} (i), 
we note that $C_s(\sigma_\nu )$ is holomorphic on $\Omega (s,y_s)$ as a function of $\nu$. 
Since $\cJ_{\sigma_\nu}\circ \hat{\eta}_{\sigma_\nu}=C(\sigma_\nu )\varphi_{\sigma_\nu}$ for $\nu \in \Omega_0$, we have 
\begin{align}
\label{eqn:pf_rel_Jacquet_varphi_001}
&\cJ_{\sigma_\nu}(\hat{\eta}_{\sigma_\nu}(v))(g)=C(\sigma_\nu )\varphi_{\sigma_\nu}(v)(g)&
&(v\in V_{(\kappa_1,\kappa_2,\delta_3)},\ g\in G,\ \nu \in \Omega_0)
\end{align}
and 
\begin{align}
\label{eqn:pf_rel_Jacquet_varphi_002}
&C_s(\sigma_\nu )=C(\sigma_\nu )&&(\nu \in \Omega_0\cap \Omega (s,y_s)). 
\end{align}
Since we can take $s\in \bC^{p_0}$ arbitrarily and $\Omega_0$ is an open dense subset of $\bC^{p_0}$, 
the equality (\ref{eqn:pf_rel_Jacquet_varphi_002}) extends $C(\sigma_\nu)$ to whole $\nu \in \bC^{p_0}$ as an entire function of $\nu$. 
By the analytic continuation of the both sides of (\ref{eqn:pf_rel_Jacquet_varphi_001}), we know that 
$\cJ_{\sigma_\nu}\circ \hat{\eta}_{\sigma_\nu}=C(\sigma_\nu )\varphi_{\sigma_\nu}$ holds for all $\nu \in \bC^{p_0}$. 
Furthermore, if $\Pi_{\sigma_\nu}$ is irreducible, Lemma \ref{lem:Jacquet_property} (ii), (iii) imply that $C(\sigma_\nu )\neq 0$ and 
$\Hom_{K}(V_{(\kappa_1,\kappa_2,\delta_3)},{\mathrm{Wh}}(\Pi_{\sigma_\nu } ,\psi_1 )^{\mathrm{mg}})=\bC \varphi_{\sigma_\nu }$. 
\end{proof}


\subsection{Explicit formulas of the minimal $K$-type Whittaker functions}
\label{subsec:EF_main}

Thanks to the argument of previous subsections, we arrive at explicit formulas of Whittaker functions which is a main result of this paper. 
As in Theorem \ref{thm:whittaker_isom}, let 
$$ y = {\rm diag}(y_1y_2y_3y_4, y_2y_3y_4, y_3y_4, y_4) \in A. $$ 
See \S \ref{sec:Barnes} for the paths of integrations.


\begin{thm} \label{thm:EF1}
Let $ \sigma = \chi_{(\nu_1,\delta_1)} \boxtimes \chi_{(\nu_2,\delta_2)} 
  \boxtimes \chi_{(\nu_3,\delta_3)} \boxtimes \chi_{(\nu_4,\delta_4)} $ 
with $ \nu_1,\nu_2,\nu_3,\nu_4 \in \bC $, $ \delta_1,\delta_2,\delta_3,\delta_4 \in \{0,1\} $ 
and $ \delta_1 \ge \delta_2 \ge \delta_3 \ge \delta_4 $ 
such that $ \Pi_{\sigma} $ is irreducible.

\noindent
(i) When  $ \delta_1 = \delta_2 = \delta_3 = \delta_4 $, there exists a $K$-homomorphism 
$$ \vp_{\sigma}: V_{(0,0,\delta_1)} \to {\rm Wh}(\Pi_{\sigma}, \psi_1)^{\rm mg} $$
whose radial part is given by 
\begin{align*}
\varphi_{\sigma}(u_{\bf 0})(y) 
& = y_1^{3/2} y_2^2 y_3^{3/2} y_4^{\nu_1+\nu_2+\nu_3+\nu_4} \\
& \quad \times
  \frac{1}{(4\pi \sqrt{-1})^3} \int_{s_3} \int_{s_2} \int_{s_1} 
   V_{\sigma, \bf 0}(s_1,s_2,s_3) \, y_1^{-s_1} y_2^{-s_2} y_3^{-s_3} 
  \,ds_1ds_2ds_3
\end{align*}
with
\begin{align*}
& V_{\sigma,  \bf 0}(s_1,s_2,s_3) \\
& = 
   \GR(s_1+\nu_1) \GR(s_1+\nu_2) \GR(s_2 + \nu_1+\nu_2) \GR(s_2+\nu_3+\nu_4) 
   \GR(s_3+\nu_1+\nu_3+\nu_4) \GR(s_3+\nu_2+\nu_3+\nu_4) \\
& \quad \times \frac{1}{4\pi \sqrt{-1}} \int_q  
  \frac{\GR(s_1-q) \GR(s_2-q+\nu_1) \GR(s_2-q+\nu_2) \GR(s_3-q+\nu_1+\nu_2) \GR(q+\nu_3) \GR(q+\nu_4)}
  { \GR(s_1+s_2-q+\nu_1+\nu_2)  \GR(s_2+s_3-q+\nu_1+\nu_2+\nu_3+\nu_4) } \,dq.
\end{align*} 
\noindent (ii)
When $ (\delta_1,\delta_2,\delta_3,\delta_4) = (1,0,0,0) $, 
there exists a $K$-homomorphism 
$$ \vp_{\sigma}: V_{(1,0,0)} \to {\rm Wh}(\Pi_{\sigma}, \psi_1)^{\rm mg} $$
whose radial part is given by 
\begin{align*}
\varphi_{\sigma}(u_{l})(y) 
& = y_1^{3/2} y_2^2 y_3^{3/2} y_4^{\nu_1+\nu_2+\nu_3+\nu_4} \cdot (\sqrt{-1})^{-l_1+l_3} (-1)^{l_2} 
\\
&\quad  \times \frac{1}{(4\pi \sqrt{-1})^3} \int_{s_3} \int_{s_2} \int_{s_1}
   V_{\sigma,  l}(s_1,s_2,s_3) \, y_1^{-s_1} y_2^{-s_2} y_3^{-s_3} 
  \,ds_1ds_2ds_3
\end{align*}
with $  l = (l_1, l_2, l_3, l_4, 0,0,0,0,0,0) \in S_{(1,0,0)} $. Here 
\begin{align*}
& 
V_{\sigma,  l}(s_1,s_2,s_3) \\
& = 
  \GR(s_1+\nu_1+l_2+l_3+l_4) \GR(s_1+\nu_2+l_1) \GR(s_2 + \nu_1+\nu_2+l_3+l_4)\GR(s_2+\nu_3+\nu_4+l_1+l_2)
\\
&\quad  \times   
   \GR(s_3+\nu_1+\nu_3+\nu_4 +l_4) \GR(s_3+\nu_2+\nu_3+\nu_4 +l_1+l_2+l_3) \\
&\quad  \times \frac{1}{4\pi \sqrt{-1}} \int_q  
  \frac{\GR(s_1-q+l_1) \GR(s_2-q+\nu_1+l_3+l_4) }
  { \GR(s_1+s_2-q+\nu_1+\nu_2+l_1+l_3+l_4) } 
\\
&\quad  \times 
  \frac{ \GR(s_2-q+\nu_2+l_1+l_2)\GR(s_3-q+\nu_1+\nu_2+l_4)  \GR(q+\nu_3) \GR(q+\nu_4)  }
  {\GR(s_2+s_3-q+\nu_1+\nu_2+\nu_3+\nu_4+l_1+l_2+l_4)} \,dq.
\end{align*} 
\noindent (iii)
When $ (\delta_1,\delta_2,\delta_3,\delta_4) = (1,1,0,0) $, 
there exists a $K$-homomorphism 
$$ \vp_{\sigma}: V_{(1,1,0)} \to {\rm Wh}(\Pi_{\sigma}, \psi_1)^{\rm mg} $$
whose radial part is given by 
\begin{align*}
\varphi_{\sigma}(u_{l})(y) 
& = y_1^{3/2} y_2^2 y_3^{3/2} y_4^{\nu_1+\nu_2+\nu_3+\nu_4} 
   \cdot (\sqrt{-1})^{-l_{13}+l_{24}} (-1)^{l_{14}+l_{23}} 
\\
& \quad \times \frac{1}{(4\pi \sqrt{-1})^3} \int_{s_3} \int_{s_2} \int_{s_1}
   V_{\sigma,  l}(s_1,s_2,s_3) \, y_1^{-s_1} y_2^{-s_2} y_3^{-s_3} 
  \,ds_1ds_2ds_3
\end{align*}
with $  l = (0,0,0,0, l_{12},l_{13},l_{14},l_{23},l_{24},l_{34}) \in S_{(1,1,0)} $.
Here  
\begin{align*}
& V_{\sigma, (0,0,0,0, l_{12}, l_{13}, 0, l_{23},0,0)  }(s_1,s_2,s_3) \\
& = \GR(s_1+\nu_1+l_{23}) \GR(s_1+\nu_2+l_{23}) 
\GR(s_2 + \nu_1+\nu_2+l_{13}+l_{23}) \GR(s_2+\nu_3+\nu_4+l_{12}+1) \\
& \quad \times 
   \GR(s_3+\nu_1+\nu_3+\nu_4 +1) \GR(s_3+\nu_2+\nu_3+\nu_4 +1) \\
& \quad \times \frac{1}{4\pi \sqrt{-1}} \int_q  
  \frac{\GR(s_1-q+l_{12}+l_{13}) \GR(s_2-q+\nu_1+l_{12}) }
  { \GR(s_1+s_2-q+\nu_1+\nu_2+l_{12}+l_{23}) } \\
& \quad \times 
  \frac{ \GR(s_2-q+\nu_2+l_{12}) \GR(s_3-q+\nu_1+\nu_2) \GR(q+\nu_3) \GR(q+\nu_4) }
  {\GR(s_2+s_3-q+\nu_1+\nu_2+\nu_3+\nu_4+l_{12} +1)} \,dq, 
\\
& V_{\sigma, (0,0,0,0, 0,0,l_{14}, 0, l_{24}, l_{34})}(s_1,s_2,s_3) \\
& = \GR(s_1+\nu_3+l_{14}) \GR(s_1+\nu_4+l_{14}) 
 \GR(s_2 + \nu_3+\nu_4+l_{14}+l_{24}) \GR(s_2+\nu_1+\nu_2+l_{34}+1) \\
& \quad \times 
   \GR(s_3+\nu_1+\nu_2+\nu_3 +1) \GR(s_3+\nu_1+\nu_2+\nu_4 +1) \\
& \quad \times \frac{1}{4\pi \sqrt{-1}} \int_q  
  \frac{\GR(s_1-q+l_{24}+l_{34}) \GR(s_2-q+\nu_3+l_{34}) }
  { \GR(s_1+s_2-q+\nu_3+\nu_4+l_{14}+l_{34}) } 
\\
& \quad \times 
  \frac{ \GR(s_2-q+\nu_4+l_{34}) \GR(s_3-q+\nu_3+\nu_4) \GR(q+\nu_1) \GR(q+\nu_2) }
  {\GR(s_2+s_3-q+\nu_1+\nu_2+\nu_3+\nu_4+l_{34} +1)}  \,dq.
\end{align*} 
\noindent (iv)
When $ (\delta_1,\delta_2,\delta_3,\delta_4) = (1,1,1,0) $, there exists a $K$-homomorphism 
$$ \vp_{\sigma}: V_{(1,0,1)} \to {\rm Wh}(\Pi_{\sigma}, \psi_1)^{\rm mg} $$
whose radial part is given by 
\begin{align*}
\varphi_{\sigma}(u_{l})(y) 
& = y_1^{3/2} y_2^2 y_3^{3/2} y_4^{\nu_1+\nu_2+\nu_3+\nu_4} \cdot (\sqrt{-1})^{-l_1+l_3} (-1)^{l_2} 
\\
& \quad \times \frac{1}{(4\pi \sqrt{-1})^3} \int_{s_3} \int_{s_2} \int_{s_1}
   V_{\sigma,  l}(s_1,s_2,s_3) \, y_1^{-s_1} y_2^{-s_2} y_3^{-s_3} 
   \,ds_1ds_2ds_3
\end{align*}
with $  l = (l_1, l_2, l_3, l_4, 0,0,0,0,0,0) \in S_{(1,0,1)} $. Here
\begin{align*}
& V_{\sigma, l}(s_1,s_2,s_3) \\
& = \GR(s_1+\nu_4+l_2+l_3+l_4) \GR(s_1+\nu_2+l_1) 
  \GR(s_2 + \nu_2+\nu_4+l_3+l_4) \GR(s_2+\nu_1+\nu_3+l_1+l_2) \\
& \quad \times 
   \GR(s_3+\nu_1+\nu_3+\nu_4 +l_4) \GR(s_3+\nu_1+\nu_2+\nu_3 +l_1+l_2+l_3) \\
& \quad \times \frac{1}{4\pi \sqrt{-1}} \int_q  
  \frac{\GR(s_1-q+l_1) \GR(s_2-q+\nu_4+l_3+l_4) }
  { \GR(s_1+s_2-q+\nu_2+\nu_4+l_1+l_3+l_4) } 
\\
& \quad \times 
  \frac{ \GR(s_2-q+\nu_2+l_1+l_2) \GR(s_3-q+\nu_2+\nu_4+l_4)\GR(q+\nu_1) \GR(q+\nu_3)  }
  {\GR(s_2+s_3-q+\nu_1+\nu_2+\nu_3+\nu_4+l_1+l_2+l_4)}  \,dq.
\end{align*} 
\end{thm}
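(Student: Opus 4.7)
The theorem is a transcription of the construction carried out in Section \ref{sec:EF}, so my plan is to define $\varphi_{\sigma}$ by the formula (\ref{eqn:def_varphi_sigma}) with Mellin-Barnes kernels $\widehat{V}_l := V_{\sigma,l}$ as stated, and then identify these kernels with the ones already produced in Lemmas \ref{lem:EF_step1}, \ref{lem:EF_step2} and \ref{lem:EF_kappa1=kappa2=1}. The global $K$-homomorphism property is then delivered by Proposition \ref{prop:rel_Jacquet_varphi}.

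In case (i), where $\kappa_1 = \kappa_2 = 0$, only $l = \mathbf{0}$ contributes; the stated $V_{\sigma,\mathbf{0}}$ coincides with $U_0(s_1,s_2,s_3;\nu_1,\nu_2,\nu_3,\nu_4)$, which by Proposition \ref{prop:classone_MB} (or equivalently Lemma \ref{lem:EF_step1} with $l_{12}=l_{34}=0$ and $r=(\nu_1,\nu_2,\nu_3,\nu_4)$) is a moderate growth element of $\mathrm{Sol}(r)$. In cases (ii) and (iv), where $\kappa_1=1$, $\kappa_2=0$, Lemma \ref{lem:EF_step2} gives $\widehat{V}_l = U_{l_1}(s_1,s_2,s_3;\mu)$ with $\mu = (\nu_1+l_4,\nu_2+l_1,\nu_3,\nu_4)$ in case (ii) and $\mu=(\nu_4+l_4,\nu_2+l_1,\nu_3,\nu_1)$ in case (iv); expanding the definition (\ref{eqn:def_Um}) of $U_m$ produces exactly the six outer and six inner $\Gamma_{\bR}$-factors written in the theorem, after (if needed) rearranging the outer factors using the $\mathfrak{S}_4$-symmetry (\ref{eqn:U0_Weyl}) applied to the residual $\mathfrak{S}_2$ acting on the last two coordinates of $\mu$ (the first two carry $l_1$ and $l_4$ and are fixed). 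Case (iii), where $\kappa_1=\kappa_2=1$, is the content of Lemma \ref{lem:EF_kappa1=kappa2=1}: the formula for $V_{\sigma,l}$ on the $l_{12}$-family is derived from the edge case via (\ref{eqn:e12_vs_e14})--(\ref{eqn:e12_vs_e23}) and the Dirac--Schmid equation (\ref{PDE:DS12}), the formula on the $l_{34}$-family analogously via (\ref{eqn:e34_vs_e14})--(\ref{eqn:e34_vs_e23}) and (\ref{PDE:DS34}), and the compatibility of the two families is encoded in the shared prefactor $(\sqrt{-1})^{-l_{13}+l_{24}}(-1)^{l_{14}+l_{23}}$ and the relations of Lemma \ref{lem:rel_ul}.

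With the kernels identified, the remainder is automatic. The map $\varphi_{\sigma}$ is $K$-equivariant by construction, and the verification that the functions $\varphi_{\sigma}(u_l)$ respect the defining relations in $V_{(\kappa_1,\kappa_2,\delta_3)}$ was done for $\nu \in \Omega_0$ in the paragraph preceding Lemma \ref{lem:varphi_property} and then extended to all $\nu \in \bC^4$ by analytic continuation, since each $\varphi_{\sigma}(u_l)(g)$ is entire in $\nu$. Proposition \ref{prop:rel_Jacquet_varphi} then yields $\mathcal{J}_{\sigma} \circ \hat{\eta}_{\sigma} = C(\sigma)\varphi_{\sigma}$ with $C(\sigma)$ entire, and when $\Pi_{\sigma}$ is irreducible Lemma \ref{lem:Jacquet_property}(ii) combined with Lemma \ref{lem:varphi_property}(ii) forces $C(\sigma)\neq 0$, so $\varphi_{\sigma}$ spans the one-dimensional space $\mathrm{Hom}_K(V_{(\kappa_1,\kappa_2,\delta_3)},\mathrm{Wh}(\Pi_{\sigma},\psi_1)^{\mathrm{mg}})$.

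The main obstacle is not in the present theorem but upstream in Lemma \ref{lem:EF_kappa1=kappa2=1}: closing case (iii) requires showing that the $l_{12}$-branch and the $l_{34}$-branch of kernels, each built from Proposition \ref{prop:classone_MB} via one of (\ref{eqn:e12_vs_e34}) or (\ref{eqn:e34_vs_e12}), are mutually consistent along the overlap dictated by $\tau_{(1,1,0)}$. In the present assembly this is already secured, so the proof reduces to the bookkeeping sketched above.
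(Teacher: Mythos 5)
Your overall route is the paper's own assembly: build the kernels in \S\ref{sec:EF}, define $\varphi_\sigma$ by (\ref{eqn:def_varphi_sigma}), check the relations of Lemma \ref{lem:rel_ul} for $\nu$ in $\Omega_0$ and extend by analytic continuation, and then use Lemma \ref{lem:varphi_property} and Proposition \ref{prop:rel_Jacquet_varphi} (irreducibility forcing $C(\sigma)\neq 0$) to get a nonzero $K$-homomorphism into ${\rm Wh}(\Pi_\sigma,\psi_1)^{\rm mg}$ without the assumption (\ref{assumption}). Your treatment of (i) and (iii), and the endgame, match the paper, modulo the bookkeeping shift $V_{\sigma,l}(s_1,s_2,s_3)=\widehat V_l(s_1,s_2,s_3-\kappa_2)$ coming from the factor $y_3^{3/2-\kappa_2}$ in (\ref{eqn:hat_varphi}) versus $y_3^{3/2}$ in the theorem.

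There is, however, a concrete gap in your identification for cases (ii) and (iv). Lemma \ref{lem:EF_step2} only covers $l=l_1e_1+l_4e_4+l_{12}e_{12}+l_{34}e_{34}$, so in $S_{(1,0,0)}$ (resp.\ $S_{(1,0,1)}$) it produces the kernels for $u_{e_1}$ and $u_{e_4}$ only. For $u_{e_2}$ and $u_{e_3}$ your claim that $V_{\sigma,l}=U_{l_1}(s_1,s_2,s_3;\nu_1+l_4,\nu_2+l_1,\nu_3,\nu_4)$ (with the analogous $\mu$ in case (iv)) is false: that would give $U_0(s_1,s_2,s_3;\nu)$, whereas the stated $V_{\sigma,e_2}$ carries the shifts $l_2$ (e.g.\ $\GR(s_1+\nu_1+1)$, $\GR(s_2-q+\nu_2+1)$, and the denominator $\GR(s_2+s_3-q+\nu_1+\nu_2+\nu_3+\nu_4+1)$), and no $\gS_4$-symmetry of $U_0$ as in (\ref{eqn:U0_Weyl}) can produce these, since matching the two denominators forces $\mu_3+\mu_4=\nu_3+\nu_4+1$ while the inner factors force $\{\mu_3,\mu_4\}=\{\nu_3,\nu_4\}$. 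The missing step is exactly the one the paper performs at the start of \S\ref{subsec:EF_step3}: the first-order Dirac--Schmid relations (\ref{PDE:DS1}) and (\ref{PDE:DS4}) yield (\ref{eqn:hatV_DS}), i.e.\ $\widehat V_{e_2}(s_1,s_2,s_3)=(2\pi)^{-1}(s_1+\nu_1'-\tfrac{\kappa_1+1}{2}+1)\widehat V_{e_1}(s_1-1,s_2,s_3)$ and similarly for $e_3$ via $\widehat V_{e_4}$, which after $\GR(s+2)=(2\pi)^{-1}s\,\GR(s)$ gives precisely the displayed formulas. With that citation inserted (and the same remark that your appeal to the residual symmetry should be dropped), the identification is complete and the rest of your argument goes through as in the paper.
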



\begin{thm} \label{thm:EF2}
Let $ \sigma = D_{(\nu_1,\kappa_1)} \boxtimes \chi_{(\nu_2,\delta_2)} 
  \boxtimes \chi_{(\nu_3,\delta_3)}  $ 
with $ \nu_1,\nu_2,\nu_3\in \bC $, $ \kappa_1 \in \bZ_{\ge 2} $, 
$ \delta_2, \delta_3 \in \{0,1\} $ and 
$ \delta_2 \ge \delta_3 $ 
such that $ \Pi_{\sigma} $ is irreducible.

\noindent (i)
When $ \delta_2 = \delta_3 $,
there exists a $K$-homomorphism 
$$ \vp_{\sigma}: V_{(\kappa_1,0,\delta_3)} \to {\rm Wh}(\Pi_{\sigma}, \psi_1)^{\rm mg} $$
whose radial part is given by  
\begin{align*}
 \varphi_{\sigma}(u_{l})(y) 
& = y_1^{3/2} y_2^2 y_3^{3/2} y_4^{2\nu_1+\nu_2+\nu_3} \cdot (\sqrt{-1})^{-l_1+l_3} (-1)^{l_2} 
\\
& \quad \times \frac{1}{(4\pi \sqrt{-1})^3} \int_{s_3} \int_{s_2} \int_{s_1}
   V_{\sigma, l}(s_1,s_2,s_3) \, y_1^{-s_1} y_2^{-s_2} y_3^{-s_3} 
  \,ds_1ds_2ds_3
\end{align*}
with $  l = (l_1, l_2, l_3, l_4, 0,0,0,0,0,0) \in S_{(\kappa_1,0,\delta_3)} $. Here 
\begin{align*}
&  V_{\sigma, l}(s_1,s_2,s_3) \\
&
 =  \GC(s_1+\nu_1+\tfrac{\kappa_1-1}{2})  \GR(s_2 +2\nu_1+l_3+l_4)  
  \GR(s_2+\nu_2+\nu_3+l_1+l_2)
   \GC(s_3+\nu_1+\nu_2+\nu_3 + \tfrac{\kappa_1-1}{2})  \\
& \quad  \times \frac{1}{4\pi \sqrt{-1}} \int_q  
  \frac{\GR(s_1-q+l_1) \GC(s_2-q+\nu_1+\tfrac{\kappa_1-1}{2})\GR(s_3-q+2\nu_1+l_4) \GR(q+\nu_2) \GR(q+\nu_3)  }
  { \GR(s_1+s_2-q+2\nu_1+l_1+l_3+l_4) \GR(s_2+s_3-q+2\nu_1+\nu_2+\nu_3+l_1+l_2+l_4)} \,dq.
\end{align*} 
\noindent (ii)
When $ (\delta_2, \delta_3)=(1,0) $,
there exists a $K$-homomorphism 
$$ \vp_{\sigma}: V_{(\kappa_1,1,0)} \to {\rm Wh}(\Pi_{\sigma}, \psi_1)^{\rm mg} $$
whose radial part is given by  
\begin{align*}
 \varphi_{\sigma}(u_{l})(y) 
& = y_1^{3/2} y_2^2 y_3^{3/2} y_4^{2\nu_1+\nu_2+\nu_3} \cdot (\sqrt{-1})^{-l_1+l_3-l_{13}+l_{24}} (-1)^{l_2+l_{14}+l_{23}} 
\\
& \quad  \times \frac{1}{(4\pi \sqrt{-1})^3} \int_{s_3} \int_{s_2} \int_{s_1}
   V_{\sigma, l}(s_1,s_2,s_3) \, y_1^{-s_1} y_2^{-s_2} y_3^{-s_3} 
  \,ds_1ds_2ds_3
\end{align*}
with $  l = (l_1, l_2, l_3, l_4, l_{12},l_{13},l_{14},l_{23},l_{24},l_{34}) \in S_{(\kappa_1,1,0)} $. 
Here 
\begin{align*}
&  V_{\sigma, l}(s_1,s_2,s_3) \\
& = (2\pi)^{-l_{13}-l_{24}} (s_2+\nu_1+\nu_2+\tfrac{\kappa_1-1}{2} -l_{13}-l_{24})_{l_{13}} 
   (s_2+\nu_1+\nu_3+\tfrac{\kappa_1-1}{2}-l_{24})_{l_{24}} 
\\
& \quad \times 
   \GC(s_1+\nu_1+\tfrac{\kappa_1-1}{2}) \GR(s_2+2\nu_1+l_3+l_4+l_{12}+l_{34} ) 
\\
& \quad  \times \GR(s_2+\nu_2+\nu_3+l_1+l_2+l_{12}+l_{34}) \GC(s_3+\nu_1+\nu_2+\nu_3+\tfrac{\kappa_1-1}{2})   \\
& \quad  \times \sum_{i_{14}=0}^{l_{14}} \sum_{i_{23}=0}^{l_{23}}  
   \frac{1}{4\pi \sqrt{-1}} \int_q  \frac{\GR(s_1-q+l_1+i_{14}+i_{23}  ) \GC(s_2-q+\nu_1+\tfrac{\kappa_1-1}{2} -l_{13}-l_{24})  }
   { \GR(s_1+s_2-q+2\nu_1+l_1+l_3+l_4+l_{12}+l_{34}+i_{14}+i_{23} ) } \\
& \quad  \times \frac{ \GR(s_3-q+2\nu_1+l_4+l_{14}+l_{23}-i_{14}-i_{23} )}{ \GR(s_2+s_3-q+2\nu_1+\nu_2+\nu_3 + l_1+l_2+l_4+l_{12}+l_{14}+l_{23}+l_{34}-i_{14}-i_{23} ) } 
\\
&  \quad  \times \GR(q+\nu_2+ l_{23}+l_{24}+l_{34}+i_{14}-i_{23} )
     \GR(q+\nu_3+l_{12}+l_{13}+l_{14}-i_{14}+i_{23} ) \,dq.
\end{align*}
\end{thm}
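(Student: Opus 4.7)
The plan is to assemble the explicit formulas from the machinery already built up in \S\ref{sec:PDE} and \S\ref{sec:EF}, and then invoke Proposition \ref{prop:rel_Jacquet_varphi} to remove the genericity hypothesis on the parameters. Throughout, write $\varphi_\sigma$ for the $K$-homomorphism defined by (\ref{eqn:def_varphi_sigma}); by Proposition \ref{prop:rel_Jacquet_varphi}, once $\Pi_\sigma$ is irreducible we have $\Hom_K(V_{(\kappa_1,\kappa_2,\delta_3)},\mathrm{Wh}(\Pi_\sigma,\psi_1)^{\mathrm{mg}}) = \bC\,\varphi_\sigma$, so it suffices to identify the Mellin-Barnes kernels $\widehat{V}_l$ appearing in (\ref{eqn:def_varphi_sigma}) with the explicit expressions $V_{\sigma,l}$ claimed in the statement.

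For part (i), where $\delta_2=\delta_3$, we have $\kappa_2=0$ and hence $l_{12}=l_{34}=0$ for every $l\in S_{(\kappa_1,0,\delta_3)}$. Thus $l$ reduces to $l_1 e_1+l_2 e_2+l_3 e_3+l_4 e_4$, and I would apply Lemma \ref{lem:EF_step2} (Case 2) combined with the reformulation (\ref{eqn:step1_case2_dup}) of Remark \ref{rem:EFstep1}, together with the reduction (\ref{eqn:hatV_DS}) that eliminates the $e_2$ and $e_3$ components using the Dirac--Schmid equations (\ref{PDE:DS1}) and (\ref{PDE:DS4}). After substituting the Pochhammer shifts and using the duplication formula (\ref{eqn:gamma_dup}) to collapse the $\GR$-products into $\GC$-factors where appropriate, one arrives at $V_{\sigma,l}(s_1,s_2,s_3)$ written in the form of the theorem. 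The factor $y_1^{3/2}y_2^2 y_3^{3/2}y_4^{2\nu_1+\nu_2+\nu_3}$ and the sign $(\sqrt{-1})^{-l_1+l_3}(-1)^{l_2}$ come from the normalization (\ref{eqn:hat_varphi}) with $\kappa_2=0$ and $\gamma_1=2\nu_1+\nu_2+\nu_3$.

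For part (ii), where $(\delta_2,\delta_3)=(1,0)$ and hence $\kappa_2=1$, Lemma \ref{lem:EF_kappa1>kappa2=1} already gives the Mellin-Barnes expression (\ref{eqn:EF_kappa1>kappa2=1}) for $\widehat{V}_l$ for every $l\in S_{(\kappa_1,1,0)}$, under the assumption $\nu_2\neq\nu_3$ required by Proposition \ref{prop:PDE_reduction_classone}. Matching (\ref{eqn:EF_kappa1>kappa2=1}) against the claimed $V_{\sigma,l}(s_1,s_2,s_3)$ is then a direct comparison: the sum over $i_{14},i_{23}$, the Pochhammer factors involving $l_{13},l_{24}$, and the $\GC$-factors at positions $s_1,s_2,s_3$ all match term by term. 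The prefactor $(\sqrt{-1})^{-l_1+l_3-l_{13}+l_{24}}(-1)^{l_2+l_{14}+l_{23}}$ and the power of $y_4$ again come from (\ref{eqn:hat_varphi}) with $\kappa_2=1$ and $\gamma_1=2\nu_1+\nu_2+\nu_3$.

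The main obstacle, and the reason the formal argument requires a genuine step, is that the derivation of the kernel $\widehat{V}_l$ via Lemmas \ref{lem:EF_step1}, \ref{lem:EF_step2}, and \ref{lem:EF_kappa1>kappa2=1} rests on Proposition \ref{prop:PDE_reduction_classone}, which in Case 2-(ii) requires the genericity assumption $\nu_2\neq\nu_3$ from (\ref{assumption}). To cover the excluded locus while keeping $\Pi_\sigma$ irreducible, I invoke Proposition \ref{prop:rel_Jacquet_varphi}: the function $\varphi_\sigma(u_l)(g)$ built from the explicit $V_{\sigma,l}$ is entire in $(\nu_1,\nu_2,\nu_3)\in\bC^3$ by Lemma \ref{lem:varphi_property}(i), the Jacquet integral $\cJ_\sigma\circ\hat{\eta}_\sigma$ is likewise entire by Lemma \ref{lem:Jacquet_property}(i), and the two coincide up to the scalar $C(\sigma)$ on the dense open set $\Omega_0$. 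Analytic continuation then yields $\cJ_\sigma\circ\hat{\eta}_\sigma=C(\sigma)\varphi_\sigma$ on all of $\bC^3$, and irreducibility of $\Pi_\sigma$ gives $C(\sigma)\neq 0$, so $\varphi_\sigma$ itself lies in the one-dimensional space $\Hom_K(V_{(\kappa_1,\kappa_2,\delta_3)},\mathrm{Wh}(\Pi_\sigma,\psi_1)^{\mathrm{mg}})$, completing the proof.
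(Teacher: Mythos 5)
Your proposal is correct and follows essentially the same route as the paper: the kernels are assembled from Lemma~\ref{lem:EF_step2} (with Remark~\ref{rem:EFstep1} and the reduction (\ref{eqn:hatV_DS})) for part (i) and from Lemma~\ref{lem:EF_kappa1>kappa2=1} for part (ii), and the genericity condition $\nu_2\neq\nu_3$ of (\ref{assumption}) is removed exactly as in \S\ref{subsec:Jacquet} via Proposition~\ref{prop:rel_Jacquet_varphi}. The only bookkeeping point worth making explicit is that matching (\ref{eqn:EF_kappa1>kappa2=1}) with the stated $V_{\sigma,l}$ is not literally term by term: the normalization (\ref{eqn:hat_varphi}) carries $y_3^{3/2-\kappa_2}$ while the theorem uses $y_3^{3/2}$, so one must shift $s_3\mapsto s_3-\kappa_2$ (here $\kappa_2=1$) in the Mellin--Barnes kernel, after which the two expressions coincide.
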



\begin{thm} \label{thm:EF3}
Let $ \sigma = D_{(\nu_1,\kappa_1)} \boxtimes D_{(\nu_2,\kappa_2)} $ 
with $ \nu_1,\nu_2  \in \bC $, $\kappa_1, \kappa_2 \in \bZ_{\ge 2} $ and $ \kappa_1 \ge \kappa_2 $ 
such that $ \Pi_{\sigma} $ is irreducible.
There exists a $K$-homomorphism 
$$ \vp_{\sigma}: V_{(\kappa_1,\kappa_2,0)} \to {\rm Wh}(\Pi_{\sigma}, \psi_1)^{\rm mg} $$
whose radial part is given by  
\begin{align*}
 \varphi_{\sigma}(u_{l})(y) 
& = y_1^{3/2} y_2^2 y_3^{3/2} y_4^{2\nu_1+2\nu_2} \cdot (\sqrt{-1})^{-l_1+l_3-l_{13}+l_{24}} (-1)^{l_2+l_{14}+l_{23}} 
\\
&  \quad \times \frac{1}{(4\pi \sqrt{-1})^3} \int_{s_3} \int_{s_2} \int_{s_1}
   V_{\sigma, l}(s_1,s_2,s_3) \, y_1^{-s_1} y_2^{-s_2} y_3^{-s_3} 
  \,ds_1ds_2ds_3
\end{align*}
with $  l = (l_1, l_2, l_3, l_4, l_{12},l_{13},l_{14},l_{23},l_{24},l_{34}) \in S_{(\kappa_1,\kappa_2,0)} $. 
Here
\begin{align*}
& V_{\sigma, l}(s_1,s_2,s_3)
\\
& =(2\pi)^{-l_{13}-l_{24}}
    (s_2+\nu_1+\nu_2+\tfrac{\kappa_1+\kappa_2}{2}-l_{13}-l_{24}-1)_{l_{13}+l_{24}}  
    \GC(s_1+\nu_1+\tfrac{\kappa_1-1}{2} )\\
& \quad \times  
  \GR(s_2+2\nu_1 + l_3+l_4 +l_{12} + l_{34} )
 \GR(s_2+2\nu_2+   l_1+l_2 + l_{12} + l_{34} ) \GC(s_3+\nu_1+2\nu_2+\tfrac{\kappa_1-1}{2})  
\\ 
&  \quad \times  \sum_{i=0}^{l_{14}+l_{23}}  \binom{l_{14}+l_{23}}{i}  
 \frac{1}{4\pi \sqrt{-1}} \int_q 
   \frac{\GR(s_1-q+l_1+i) \GC(s_2-q + \nu_1+ \tfrac{\kappa_1-1}{2} -l_{13}-l_{24} )}{ 
      \GR(s_1+s_2-q+2\nu_1+l_1+l_3+l_4+l_{12}+l_{34}+i)} 
\\
&  \quad \times \frac{ \GR(s_3-q+2\nu_1 + l_4+l_{14}+l_{23}-i) \GC(q+ \nu_2+\tfrac{\kappa_2-1}{2} )  }{ 
   \GR(s_2+s_3-q+ 2\nu_1 + 2\nu_2 +l_1+l_2 +l_4 +l_{12} + l_{14} + l_{23} + l_{34} -i) }\,dq.
\end{align*}
\end{thm}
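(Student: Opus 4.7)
The plan is to identify the formula for $V_{\sigma,l}$ stated in Theorem \ref{thm:EF3} with the Mellin-Barnes kernel $\widehat{V}_l$ constructed in Lemmas \ref{lem:EF_step2}, \ref{lem:EF_kappa1>kappa2>1} and \ref{lem:kappa1=kappa_2_ge_2} (depending on whether $\kappa_1>\kappa_2$ or $\kappa_1=\kappa_2$), and then to invoke Theorem \ref{thm:whittaker_isom} to deduce that the associated $K$-homomorphism $\vp_\sigma$ lies in $\mathrm{Wh}(\Pi_\sigma,\psi_1)^{\mathrm{mg}}$ whenever $\Pi_\sigma$ is irreducible. Since the assumption (\ref{assumption}) is vacuous in Case 3, Theorem \ref{thm:whittaker_isom} applies without any additional genericity on $(\nu_1,\nu_2)$. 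The overall normalization $y_1^{3/2}y_2^2y_3^{3/2}y_4^{2\nu_1+2\nu_2}$ and the phases $(\sI)^{-l_1+l_3-l_{13}+l_{24}}(-1)^{l_2+l_{14}+l_{23}}$ match exactly those prescribed by (\ref{eqn:hat_varphi}), since in Case 3 one has $\gamma_1=2\nu_1+2\nu_2$ and $\kappa_2\ge 2$.

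The construction of $\widehat{V}_l$ proceeds in three stages. Lemma \ref{lem:EF_step1} together with Remark \ref{rem:EFstep1} determines $\hat{\vp}_{(\kappa_1-\kappa_2)e_4+l_{12}e_{12}+l_{34}e_{34}}^{\mathrm{mg}}$ from the class-one Mellin-Barnes kernel $U_0(s_1,s_2,s_3;r)$ supplied by Proposition \ref{prop:classone_MB}, with the two equations (\ref{eqn:e12_vs_e34}) and (\ref{eqn:e34_vs_e12}) ruling out the $l_{12}$-dependent scalar ambiguity. Lemma \ref{lem:EF_step2} then propagates this along the $l_1$-direction via the Dirac-Schmid equation (\ref{eqn:DS1:l-e_4+e_1}), yielding the kernel $U_{l_1}$. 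Finally, Lemmas \ref{lem:EF_kappa1>kappa2>1} and \ref{lem:kappa1=kappa_2_ge_2} combine the remaining Dirac-Schmid equations (\ref{PDE:DS1}), (\ref{PDE:DS4}), (\ref{PDE:DS12}), (\ref{PDE:DS34}) with the generator relations (\ref{eqn:vp_gen_relation_2})--(\ref{eqn:vp_gen_relation_5}) to reach the explicit formula of $\hat{\vp}_l$ for arbitrary $l\in S_{(\kappa_1,\kappa_2,0)}$. Moderate growth of each $\hat{\vp}_l$ is immediate from Stirling's asymptotics applied to the resulting Mellin-Barnes integrals.

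The principal technical obstacle is the inductive verification, in Lemma \ref{lem:kappa1=kappa_2_ge_2}, that the binomial combination $\sum_i\binom{l_{14}+l_{23}}{i}(\cdots)$ is forced by the quadratic generator relations (\ref{eqn:vp_gen_relation_4}) and (\ref{eqn:vp_gen_relation_5}). This rests on a delicate cancellation that uses the gamma-function functional equation (\ref{eqn:gamma_FE}), the Pascal identity together with its quadratic variant $\binom{n-2}{i}+2\binom{n-2}{i-1}+\binom{n-2}{i-2}=\binom{n}{i}$, and a reindexing $(q,i)\mapsto(q-1,i-1)$ inside the Mellin-Barnes integral. All other steps are formal manipulations via Barnes' first and second lemmas (Lemmas \ref{lem:Barnes1st}, \ref{lem:Barnes2nd}) and the duplication formula (\ref{eqn:gamma_dup}); identification of $\vp_\sigma$ with a nonzero scalar multiple of the Jacquet integral $\cJ_\sigma\circ\hat{\eta}_\sigma$ via Proposition \ref{prop:rel_Jacquet_varphi} then confirms that $\vp_\sigma$ is a nonzero element of $\Hom_K(V_{(\kappa_1,\kappa_2,0)},\mathrm{Wh}(\Pi_\sigma,\psi_1)^{\mathrm{mg}})$ for any irreducible $\Pi_\sigma$.
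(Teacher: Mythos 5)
Your proposal is correct and follows the paper's own route essentially verbatim: the corner kernel from Lemma \ref{lem:EF_step1} (fixed via Proposition \ref{prop:classone_MB} together with (\ref{eqn:e12_vs_e34}), (\ref{eqn:e34_vs_e12})), its propagation in Lemma \ref{lem:EF_step2} and the case-3 lemmas through the Dirac--Schmid equations and generator relations, and finally Theorem \ref{thm:whittaker_isom} (whose hypothesis is indeed vacuous in case 3, where $c=0$) together with the Jacquet-integral continuation of Proposition \ref{prop:rel_Jacquet_varphi} to cover every irreducible $\Pi_\sigma$. One bookkeeping caveat: the exponent of $y_3$ in (\ref{eqn:hat_varphi}) is $3/2-\kappa_2$, so the kernel in Theorem \ref{thm:EF3} is the shifted one, $V_{\sigma,l}(s_1,s_2,s_3)=\widehat{V}_l(s_1,s_2,s_3-\kappa_2)$, rather than $\widehat{V}_l$ itself as your ``match exactly'' phrasing suggests.
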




\section{Test vectors for archimedean Bump-Friedberg integrals}
\label{sec:BF_int}

Bump and Friedberg \cite{Bump_Friedberg_001} gave a zeta integral containing two complex variables 
which interpolates the standard and the exterior square $L$-functions on ${\rm GL}(n) $ simultaneously.
The aim of this section is to give test vectors for this zeta integrals.
The case of principal series is done in \cite{Ishii_001}.

\subsection{$L$- and $\varepsilon$-factors for the standard and the exterior square $L$-functions}
\label{subsec:L_and_varepsilon}

We recall the theory of finite dimensional semisimple representations of the Weil group $W_\bR$. 
The Weil group $W_{\bR}$ for the real field $\bR$ is given by 
$W_{\bR} ={\bC}^\times \cup (\bC^\times \,\mathtt{j})
\subset {\bH}^\times$. 
Here we regard $W_\bR$ as a subgroup of the multiplicative group ${\bH}^\times$ of the Hamilton quaternion algebra 
${\bH} =\bC \oplus ({\bC}\, \mathtt{j})$ ($\mathtt{j}^2=-1$ and $\mathtt{j}z\mathtt{j}^{-1}=\overline{z}$ for $z\in \bC$). 
We recall the irreducible representations of $W_\bR$ and the corresponding $L$- and $\varepsilon$-factors. 
\begin{enumerate}
\item 
{\textit{Characters.}} 
We define characters $\phi^\delta_\nu \ (\nu \in \bC ,\delta \in \{0,1\} )$ 
of $W_\bR$ by 
\begin{align*}
\phi^\delta_\nu (z)&=|z|^{2\nu} \ \ \ (z \in \bC),&
\phi^\delta_\nu (\mathtt{j})&=(-1)^\delta .
\end{align*}
We define the corresponding $L$- and $\varepsilon$-factors by 
\begin{align*}
 L(s,\phi^\delta_\nu ) 
& =\GR (s+\nu +\delta ),
&
\varepsilon (s,\phi^\delta_\nu ,\psi_{\bR} )&=(\sqrt{-1})^\delta.
\end{align*}
\item 
{\textit{Two dimensional representations.}} 
We define two dimensional representations 
$\phi_{\nu ,\kappa}\colon W_\bR \to {\rm GL}(2,\bC)\ 
(\nu \in \bC ,\kappa \in \bZ_{\geq 0})$ by 
\begin{align*}
\phi_{\nu ,\kappa}(re^{\sqrt{-1} \theta })
& =\left(
\begin{array}{cc}
r^{2\nu} e^{-\sqrt{-1} \kappa \theta}&0\\
0&r^{2\nu} e^{\sqrt{-1} \kappa \theta}
\end{array}
\right) 
\ \ \ (r\in \bR_+,\ \theta \in \bR),\\
\phi_{\nu ,\kappa }(\mathtt{j})
& =\left(
\begin{array}{cc}
0&(-1)^\kappa \\
1&0
\end{array}
\right).
\end{align*}
The representation $\phi_{\nu ,\kappa }$ is 
irreducible for $\kappa >0$, while $\phi_{\nu ,0}$ is equivalent to 
a sum of two characters $\phi_{\nu }^0$ and $\phi_{\nu }^1$. 
For $\phi_{\nu ,\kappa }\ (\kappa >0)$, we define the corresponding 
$L$- and $ \varepsilon $-factors by 
\begin{align*}
 L(s,\phi_{\nu ,\kappa } ) & =\GC (s+\nu+\tfrac{\kappa}{2} ), 
& \varepsilon (s,\phi_{\nu ,\kappa } ,\psi_{\bR} )
&=(\sqrt{-1} )^{\kappa +1}.
\end{align*}
\end{enumerate}
The set of equivalence classes of 
irreducible representations of $W_\bR$ is 
exhausted by 
\[
\Sigma_\bR 
=\left\{\phi_{\nu }^{\delta}\mid 
\nu \in \bC ,\ \delta \in \{0,1\} \right\} \cup 
\left\{\phi_{\nu ,\kappa }\mid 
\nu \in \bC ,\ \kappa  \in \bZ_{\geq 1}\right\}.
\]
For a finite dimensional semisimple representation $\phi$ of $W_\bR$, 
we define the corresponding $L$- and  $ \varepsilon$-factors by 
\begin{align*}
L(s,\phi ) &=\prod_{i=1}^mL(s,\phi_i ), 
& \varepsilon(s,\phi,\psi_{\bR}) = \prod_{i=1}^m \varepsilon (s,\phi_i, \psi_{\bR}),
\end{align*}
where
$\phi \simeq \bigoplus_{i=1}^m\phi_i$ is the irreducible decomposition of $\phi$.

\medskip 

The local Langlands correspondence on ${\rm GL}(n,\bR) $ is  
a bijection between the set of infinitesimal 
equivalence classes of irreducible admissible representations 
of ${\rm GL}(n,\bR)$ and the set of equivalence classes of $n$-dimensional 
semisimple representations of $W_\bR$. 
For an irreducible admissible representation $\Pi $ of ${\rm GL}(n,\bR)$, 
the corresponding representation $\phi[\Pi] $ of $W_\bR$ is called Langlands parameter of $ \Pi. $  
See \cite{Knapp_001} (cf. \cite[section 9.1]{HIM}) for the precise. 
We define the local $L$- and $ \varepsilon$-factors 
$ L(s,\Pi) $ and $ \varepsilon (s,\Pi, \psi_{\bR}) $
for the standard $L$-function by   
\begin{align*}
 L(s,\Pi) & = L(s, \phi[\Pi]), & 
 \varepsilon(s,\Pi,\psi_{\bR}) & = \varepsilon(s, \phi[\Pi],\psi_{\bR}). 
\end{align*}
Let $ \wedge^2: {\rm GL}(n,\bC) \to {\rm GL}(\tfrac{n(n-1)}{2},\bC) $ be the exterior square representation.  
We define the local $L$- and $ \varepsilon$-factors
$L(s,\Pi,\wedge^2) $ and  $ \varepsilon(s,\Pi,\wedge^2, \psi_{\bR}) $ for the exterior square $L$-function by 
\begin{align*}
 L(s,\Pi, \wedge^2)  & = L(s, \wedge^2(\phi[\Pi])), & 
 \varepsilon(s,\Pi, \wedge^2, \psi_{\bR}) & = \varepsilon(s, \wedge^2(\phi[\Pi]), \psi_{\bR}).
\end{align*}

\medskip 

Let us describe $L$- and $ \varepsilon$-factors for the irreducible $ P_{\mn} $-principal series representation $ \Pi_{\sigma}  $.
The Langlands parameter $ \phi[\Pi_{\sigma}] $ is given by 
\begin{align*}
 \phi[\Pi_{\sigma}] 
 \cong \begin{cases}
   \oplus_{1 \le i \le 4} \phi_{\nu_i}^{\delta_i}
  & \text{if $\mn=(1,1,1,1) $}, \\
  \phi_{\nu_1, \kappa_1-1} \oplus \phi_{\nu_2}^{\delta_2} \oplus \phi_{\nu_3}^{\delta_3} 
  & \text{if $\mn=(2,1,1) $}, \\
  \phi_{\nu_1, \kappa_1-1} \oplus \phi_{\nu_2, \kappa_2-1} & \text{if $\mn=(2,2) $}.
\end{cases}
\end{align*}
We know
\begin{align*}
& \wedge^2  (\phi[ \Pi_{\sigma}])  \cong \begin{cases}
   \oplus_{1 \le i<j \le 4} \phi_{\nu_i+\nu_j}^{|\delta_i-\delta_j|}
  & \text{if $ \mn =(1,1,1,1) $}, \\
  \phi_{\nu_1+\nu_2, \kappa_1-1} \oplus \phi_{\nu_1+\nu_3, \kappa_1-1}  
  \oplus \phi_{2\nu_1}^{\delta_1} \oplus \phi_{\nu_2 + \nu_3}^{|\delta_2-\delta_3|} 
  & \text{if $\mn=(2,1,1)$}, \\
  \phi_{\nu_1+\nu_2, |\kappa_1-\kappa_2|} \oplus \phi_{\nu_1+\nu_2, \kappa_1+\kappa_2-2}
  \oplus \phi_{2\nu_1}^{\delta_1} \oplus \phi_{2\nu_2}^{\delta_2}  & \text{if $\mn=(2,2)$}.
\end{cases}
\end{align*}
Here the integers $ \delta_{1} \in \{0,1 \} $ ($\mn=(2,1,1)$) and  
$\delta_1, \delta_2  \in \{0,1 \}$ ($\mn=(2,2)$) are defined by 
$ \delta_i \equiv \kappa_i \pmod 2 $ $(i=1, 2)$ as in \S \ref{subsec;system_PDE}.
Then we have the following.
\begin{itemize}
\item
When $ \sigma = \chi_{(\nu_1,\delta_1)} \boxtimes \chi_{(\nu_2,\delta_2)} \boxtimes  
 \chi_{(\nu_3,\delta_3)} \boxtimes  \chi_{(\nu_4,\delta_4)} $, we have 
\begin{align*}
 L(s,\Pi_{\sigma}) 
& = \textstyle \prod_{1 \le i \le 4} \GR(s+\nu_i+\delta_i), 
\\
 L(s,\Pi_{\sigma}, \wedge^2) 
& = \textstyle \prod_{1 \le i<j \le 4} \GR(s + \nu_i + \nu_j + |\delta_i-\delta_j| ),
\\
  \varepsilon(s,\Pi_{\sigma}, \psi_{\bR}) 
& = (\sqrt{-1})^{\sum_{1 \le i \le 4} \delta_i}, 
\\
  \varepsilon(s,\Pi_{\sigma}, \wedge^2, \psi_{\bR}) 
& = (\sqrt{-1})^{\sum_{1 \le i<j \le 4} |\delta_i-\delta_j| }.
\end{align*}
\item
When $ \sigma = D_{(\nu_1,\kappa_1)} \boxtimes \chi_{(\nu_2,\delta_2)} \boxtimes  
 \chi_{(\nu_3,\delta_3)} $, we have 
\begin{align*}
 L(s,\Pi_{\sigma}) 
& = \GC(s+\nu_1+\tfrac{\kappa_1-1}{2}) \GR(s+\nu_2+\delta_2)\GR(s+\nu_3+\delta_3), 
\\
 L(s,\Pi_{\sigma}, \wedge^2) 
& = \GC(s+\nu_1+\nu_2+\tfrac{\kappa_1-1}{2})\GC(s+\nu_1+\nu_3+\tfrac{\kappa_1-1}{2}) \\
& \quad \times 
 \GR(s+2\nu_1+\delta_1)  \GR(s+\nu_2+\nu_3+ |\delta_2-\delta_3|),
\\
  \varepsilon(s,\Pi_{\sigma}, \psi_{\bR}) 
& = (\sqrt{-1})^{\kappa_1+\delta_2+\delta_3}, 
\\
  \varepsilon(s,\Pi_{\sigma}, \wedge^2, \psi_{\bR}) 
& = (\sqrt{-1})^{2\kappa_1+ \delta_1+|\delta_2-\delta_3|}.
\end{align*}
\item
When $ \sigma = D_{(\nu_1,\kappa_1)} \boxtimes D_{(\nu_2,\kappa_2)} $, we have 
\begin{align*}
 L(s,\Pi_{\sigma}) 
& = \GC(s+\nu_1+\tfrac{\kappa_1-1}{2}) \GC(s+\nu_2+\tfrac{\kappa_2-1}{2}) , 
\\
 L(s,\Pi_{\sigma}, \wedge^2) 
& = \GC(s+\nu_1+\nu_2+\tfrac{|\kappa_1-\kappa_2|}{2})\GC(s+\nu_1+\nu_2+\tfrac{\kappa_1+\kappa_2-2}{2})  
 \GR(s+2\nu_1+\delta_1)\GR(s+2\nu_2+\delta_2), 
\\
  \varepsilon(s,\Pi_{\sigma}, \psi_{\bR}) 
& = (\sqrt{-1})^{\kappa_1+\kappa_2}, 
\\
  \varepsilon(s,\Pi_{\sigma}, \wedge^2, \psi_{\bR}) 
& = (\sqrt{-1})^{2\kappa_1+ \delta_1+\delta_{2}}.
\end{align*}
\end{itemize}


\subsection{Archimedean Bump-Friedberg integrals}
\label{subsec:BF_int}

We recall the archimedean part of Bump-Friedberg integrals. 
As in \S \ref{subsec:group_algebra},
we set $ G_2 = {\rm GL}(2,\bR) $,  
$ N_2 = \{ ( \begin{smallmatrix} 1 & x_{1,2} \\ 0 & 1 \end{smallmatrix}) \mid x_{1,2} \in \bR \} $, $ K_2 = {\rm O}(2) $
and $ \rk_{\theta}^{(2)} = 
 \left(\begin{smallmatrix} \cos \theta & \sin \theta \\  
  -\sin \theta & \cos \theta \end{smallmatrix} \right) \in K_2. $ 
We define an embedding $ \tilde{\iota} : G_2 \times G_2 \to G$ by 
\begin{align*}
 \left( g_1 = \begin{pmatrix} a_1 & b_1 \\ c_1 & d_1 \end{pmatrix}, 
        g_2 = \begin{pmatrix} a_2 & b_2 \\ c_2 & d_2 \end{pmatrix} \right) 
  \mapsto 
  \tilde{\iota}(g_1,g_2) := \begin{pmatrix} 
    a_1 & & b_1 & \\ & a_2 & & b_2 \\ c_1 & & d_1 & \\ & c_2 & & d_2 
   \end{pmatrix}.       
\end{align*}
Let $ {\mathcal S}(\bR^2) $ be the space of Schwartz-Bruhat functions on $ \bR^2 $. 
For $ s_1,s_2 \in \bC $, $ \Phi \in {\mathcal S}(\bR^2) $ and $ W \in {\rm Wh}(\Pi_{\sigma}, \psi_{1})^{\rm mg} $, 
we consider the following archimedean zeta integral:
\begin{align*}
 Z(s_1,s_2, W,\Phi) 
& = \int_{N_2 \backslash G_2 } \int_{ N_2 \backslash G_2} 
    W( \tilde{\iota}(g_1,g_2) ) \Phi((0,1)g_2) 
   |\det g_1|^{s_1-\frac12} |\det g_2|^{-s_1+s_2+\frac12} \,d\dot{g}_1 d\dot{g}_2.
\end{align*}
Here 
$ d\dot{g}$ is the right $ G_2 $-invariant measure on 
$ N_2 \backslash G_2 $ normalized so that 
\begin{align*}
  \int_{N_2 \backslash G_2} f(g) \,d\dot{g} 
&  = \int_{0}^{\infty} \! \int_{0}^{\infty} \int_{K_2}
    f \bigl( {\rm diag}( y_1y_2, y_2) k \bigr) \, dk 
    \dfrac{2dy_1}{y_1^2} \dfrac{2dy_2}{y_2}
\\
& = 2 \sum_{ \varepsilon \in \{\pm 1\} } 
      \int_{0}^{\infty} \! \int_{0}^{\infty} \! \int_{0}^{2\pi}  
     f \bigl( {\rm diag}( y_1y_2, y_2) \, {\rm diag}( \varepsilon,1) \, \rk_{\theta}^{(2)} \bigr) \,  
     \dfrac{d\theta}{2\pi} \dfrac{dy_1}{y_1^2} \dfrac{dy_2}{y_2}
\end{align*}
for any compactly supported continuous function $f$ on $ N_2 \backslash G_2. $
Here $ dk $ is the normalized Haar measure on $K_2 $ such that $ \int_{K_2} dk = 1$.
For $ \Phi \in \mathcal{S}(\bR^2) $ we define the Fourier transform $ \widehat{\Phi} $ of $ \Phi $ by 
\begin{align*}
 \widehat{\Phi}(x_1,x_2) 
 = \int_{\bR^2} \Phi(y_1,y_2) \psi_{\bR} (x_1y_1+x_2 y_2) \,dy_1 dy_2.
\end{align*} 
For $ a,b \in \bZ_{\ge 0} $ we put
\begin{align*}
   \Phi_{(a,b)}(x_1,x_2)  = (-\sqrt{-1}x_1+x_2)^a (\sqrt{-1}x_1+x_2)^b \exp \{ -\pi (x_1^2+x_2^2) \}.
\end{align*}
We write $ g_i \in N_2 \backslash G_2 $ $(i=1,2)$ as 
$$ 
  g_i = \begin{pmatrix} y_{i1} y_{i2} & \\ & y_{i2} \end{pmatrix}
        \begin{pmatrix} \varepsilon_i & \\ & 1 \end{pmatrix}  \rk_{\theta_i}^{(2)}
$$
with $ y_{i1}, y_{i2} \in \bR_+ $, $ \varepsilon_i \in \{\pm 1\} $ and 
$ 0 \le \theta_i < 2\pi $, 
to find that    
\begin{align*} 
& Z(s_1,s_2,W,\Phi_{(a,b)}) 
\\
& = 2^2 
     \sum_{ \varepsilon_1,\varepsilon_2 \in \{\pm 1 \}} 
     \int_{0}^{\infty} \! \int_{0}^{\infty} \! \int_{0}^{\infty} \! \int_{0}^{\infty} \!
     \int_{0}^{2\pi} \! \int_{0}^{2\pi} 
    W \bigl( {\rm diag}(y_{11}y_{12}, y_{21}y_{22}, y_{12}, y_{22}) 
   \, {\rm diag} (\varepsilon_1, \varepsilon_2, 1, 1) 
   \, \tilde{\iota} (\rk_{\theta_1}^{(2)}, \rk_{\theta_2}^{(2)})  \bigr)
\\
& \phantom{=}  \times \Phi_{(a,b)} (-y_{22}\sin \theta_2, y_{22} \cos \theta_2) 
  \, (y_{11}y_{12}^2)^{s_1-\frac12} (y_{21} y_{22}^2)^{-s_1+s_2+\frac12} 
  \,  \frac{d\theta_1}{2\pi} \frac{d\theta_2}{2\pi}
  \frac{dy_{11}}{y_{11}^2}  \frac{dy_{12}}{y_{12}}  \frac{dy_{21}}{y_{21}^2} \frac{dy_{22}}{y_{22}}.
\end{align*}
Now we change the variables $ (y_{11},y_{12},y_{21},y_{22}) \to (y_1,y_2,y_3,y_4) $ by
\begin{align*}
 y_1 &= \frac{y_{11} y_{12}}{y_{21} y_{22}}, 
&  y_2 &= \frac{y_{21} y_{22}}{y_{12}}, 
& y_3 &= \frac{y_{12}}{y_{22}}, 
& y_4 &= y_{22}
\end{align*}
and write 
\begin{align*}
 y & = {\rm diag}(y_1y_2y_3y_4, y_2y_3y_4, y_3y_4,y_4) \in A, & 
 \hat{y} & = {\rm diag}(y_1y_2y_3, y_2y_3, y_3,1) \in A.
\end{align*}
Then we have
\begin{align*}
& Z(s_1,s_2,W,\Phi_{(a,b)} ) \\
& = 2^2 
    \sum_{ \varepsilon_1,\varepsilon_2 \in \{\pm 1 \}} 
    \int_{0}^{\infty} \! \int_{0}^{\infty} \! \int_{0}^{\infty} \! \int_{0}^{\infty} \!
    \int_{0}^{2\pi} \! \int_{0}^{2\pi} 
  W \bigl(  {\rm diag} (\varepsilon_1, \varepsilon_2, 1, 1) 
   \, y \, \tilde{\iota}( \rk_{\theta_1}^{2)} ,\rk_{\theta_2}^{(2)}) \bigr) 
\\
& \quad \times y_1^{s_1-\frac32} y_2^{s_2-2} y_3^{s_1+s_2-\frac32} y_4^{2s_2+a+b} 
  \exp(-\pi y_4^2) \exp\{ \sqrt{-1}(a-b) \theta_2\} 
\,  \frac{d\theta_1}{2\pi} \frac{d\theta_2}{2\pi}
  \frac{dy_1}{y_1} \frac{dy_2}{y_2} \frac{dy_3}{y_3} \frac{dy_4}{y_4}.
\end{align*}
Since 
\begin{align*}
  W \bigl( {\rm diag} (\varepsilon_1, \varepsilon_2, 1, 1) 
   \,y\, \tilde{\iota}( \rk_{\theta_1}^{2)} ,\rk_{\theta_2}^{(2)}) \bigr)
= y_4^{\gamma_1} 
  W \bigl( {\rm diag} (\varepsilon_1, \varepsilon_2, 1, 1) 
   \, \hat{y} \, \tilde{\iota}( \rk_{\theta_1}^{2)} ,\rk_{\theta_2}^{(2)}) \bigr), 
\end{align*}
the formula 
\begin{align*}
 \int_{0}^{\infty} \exp(-\pi x^2) \, x^s \frac{dx}{x} 
= 2^{-1} \GR(s)
\end{align*}
implies that 
\begin{align*}
 Z(s_1,s_2, W, \Phi_{(a,b)})
& = 2 \GR(2s_2+\gamma_1+a+b) 
   \sum_{0 \le i \le 3} 
   \int_{0}^{\infty} \! \int_{0}^{\infty} \! \int_{0}^{\infty} \! \int_0^{2\pi} \! \int_0^{2\pi}
   W \bigl( m_i \, \hat{y} \, \tilde{\iota} (\rk_{\theta_1}^{(2)}, \rk_{\theta_2}^{(2)} ) \bigr)\\
& \quad \times 
   \exp\{ \sqrt{-1}(a-b) \theta_2\}  
   \, y_1^{s_1-\frac32} y_2^{s_2-2} y_3^{s_1+s_2-\frac32}
  \frac{d\theta_1}{2\pi} \frac{d\theta_2}{2\pi}
  \frac{dy_1}{y_1} \frac{dy_2}{y_2} \frac{dy_3}{y_3}.
\end{align*}
Here we defined $ m_i $ $ (0\le i \le 3) $ by 
\begin{align*}
 m_0 &= 1_4, 
& m_1 &= {\rm diag}(-1,1,1,1),
& m_2 &= {\rm diag}(1,-1,1,1),  
& m_3 &= m_1 m_2.
\end{align*}

\begin{lem} \label{lem:W'}
Retain the notation.
Let
\begin{align*}
& W'(\hat{y}) = 2^{-2}  
  \sum_{0 \le i \le 3} 
  \int_0^{2\pi} \! \int_0^{2\pi}
   W \bigl( m_i \, \hat{y} \, \tilde{\iota}(\rk_{\theta_1}^{(2)}, \rk_{\theta_2}^{(2)} ) \bigr) 
   \exp\{ \sqrt{-1}(a-b) \theta_2\} \, \frac{d\theta_1}{2\pi} \frac{d\theta_2}{2\pi}.
\end{align*}
Assume that $W'$ admits the Mellin-Barnes integral representaiton
$$
 W'( \hat{y} ) = y_1^{3/2} y_2^{2} y_3^{3/2} \cdot \dfrac{1}{(4\pi \sqrt{-1})^3} 
  \int_{s_3} \int_{s_2} \int_{s_1} 
  V'(s_1,s_2,s_3) \,y_1^{-s_1} y_2^{-s_2} y_3^{-s_3} \,ds_1 ds_2 ds_3,
$$
where the path of integration
$ \int_{s_i} $ is the vertical line from 
$ {\rm Re}(s_i)-\sqrt{-1}\infty $ to $ {\rm Re}(s_i) + \sqrt{-1} \infty $ with sufficiently large real part
to keep the poles of $ V'(s_1,s_2,s_3) $ on its left. 
Then we have 
\begin{align*}
 Z(s_1,s_2, W, \Phi_{(a.b)})  =  \GR(2s_2+\gamma_1+a+b) V'(s_1,s_2,s_1+s_2).
\end{align*}
\end{lem}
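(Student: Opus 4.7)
My plan is to express $Z(s_1,s_2,W,\Phi_{(a,b)})$ as a Mellin transform of $W'$ and then apply the hypothesized Mellin--Barnes representation.

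First I would take the formula for $Z(s_1,s_2,W,\Phi_{(a,b)})$ displayed immediately before the statement of the lemma. By the very definition of $W'(\hat{y})$, the sum $\sum_{0\leq i\leq 3}$ over the four diagonal sign matrices $m_i$, together with the two angular integrals and the phase $\exp\{\sqrt{-1}(a-b)\theta_2\}$, packages as $4W'(\hat{y})$. Combined with the prefactor $2$, this reduces the expression to
\begin{align*}
Z(s_1,s_2,W,\Phi_{(a,b)}) = 8\,\GR(2s_2+\gamma_1+a+b)\int_0^\infty\!\!\int_0^\infty\!\!\int_0^\infty W'(\hat{y})\, y_1^{s_1-\tfrac{3}{2}} y_2^{s_2-2} y_3^{s_1+s_2-\tfrac{3}{2}}\,\frac{dy_1\,dy_2\,dy_3}{y_1y_2y_3}.
\end{align*}

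Next I would substitute the hypothesized Mellin--Barnes representation of $W'$ into this triple integral, interchange the order of integration, and evaluate the three $dy_i$-integrals by Mellin inversion. In the paper's $(4\pi\sqrt{-1})^{-1}$ normalization, each one-dimensional Mellin inversion contributes a factor $1/2$ relative to the standard $(2\pi\sqrt{-1})^{-1}$ version (as used already in \cite[Lemma 8.4]{HIM}), so the three-fold inversion substitutes $t_1=s_1$, $t_2=s_2$, $t_3=s_1+s_2$ and produces an overall factor of $1/8$. Explicitly,
\begin{align*}
\int_0^\infty\!\!\int_0^\infty\!\!\int_0^\infty W'(\hat{y})\, y_1^{s_1-\tfrac{3}{2}} y_2^{s_2-2} y_3^{s_1+s_2-\tfrac{3}{2}}\,\frac{dy_1\,dy_2\,dy_3}{y_1y_2y_3} = \frac{1}{8}\,V'(s_1,s_2,s_1+s_2),
\end{align*}
and multiplication by $8\,\GR(2s_2+\gamma_1+a+b)$ then yields the claim; note that the factor $8$ coming from the packaging step and the factor $1/8$ from the inversion cancel exactly, which is the clean arithmetic that makes the final identity free of extraneous constants.

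The main technical point will be to justify the swap of the three $dy_i$-integrals with the contour integrals in the definition of $W'$. Absolute convergence there is secured by two inputs: the Schwartz-type decay of $V'(t_1,t_2,t_3)$ along vertical lines (a consequence of Stirling's formula applied to the $\GR$- and $\GC$-factors built from the explicit Whittaker formulas of $\S\ref{sec:EF}$), and the fact that $\mathrm{Re}(s_1)$ and $\mathrm{Re}(s_2)$ may be chosen sufficiently large so that all Mellin integrals converge in a common half-space. Beyond this bookkeeping the argument is mechanical, since all the representation-theoretic content has already been absorbed into the construction of $W'$ and its kernel $V'$.
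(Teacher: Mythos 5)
Your proposal is correct and is essentially the paper's own argument: the paper disposes of this lemma in one line ("immediate from Mellin inversion, cf.\ \cite[Lemma 8.4]{HIM}"), and your write-up simply fills in the details, correctly recognizing the angular/sign sum as $4W'(\hat{y})$ and pairing the resulting factor $2\cdot 2^2=8$ against the factor $2^{-3}$ that the $(4\pi\sqrt{-1})^{-3}$ normalization produces in the triple Mellin inversion, so that the constants cancel exactly as in the stated identity $Z(s_1,s_2,W,\Phi_{(a,b)})=\GR(2s_2+\gamma_1+a+b)\,V'(s_1,s_2,s_1+s_2)$. The convergence/interchange justification you sketch (Stirling decay of the kernel and sufficiently large $\mathrm{Re}(s_i)$) is exactly what the hypothesis of the lemma is designed to provide, so nothing further is needed.
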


\begin{proof} 
It is immediate from Mellin inversion
(see \cite[Lemma 8.4]{HIM}).
\end{proof}

We  use the following formula to evaluate archimedean zeta integrals.

\begin{lem} \label{lem:Saal}
For $ a,b,c \in \bC $ and $ m \in \bZ_{\ge 0} $
such that $ {\rm Re}(a), {\rm Re}(b)>0 $ and $ {\rm Re}(c) > 2m $, it holds that 
\begin{align*} 
\begin{split}
& \sum_{0 \le j \le m} \binom{m}{j} \dfrac{\GR(a+2j) \GR(b+2j) \GR(c-2j) }{ \GR(a+b+c+2j-2m)} 
 =    \dfrac{ \GR(a) \GR(b) \GR(a+c) \GR(b+c) \GR(c-2m) }{ \GR(a+b+c) \GR(a+c-2m) \GR(b+c-2m)}.
\end{split}
\end{align*}
\end{lem}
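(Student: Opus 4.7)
The plan is to recognize the left-hand side as a terminating Saalschützian ${}_3F_2$ at unity and invoke the Pfaff--Saalschütz identity. First I would clear the $\pi$-factors: writing $\GR(s)=\pi^{-s/2}\Gamma(s/2)$ and counting exponents on both sides shows each is $\pi^{-m}$, independent of $j$ on the left. Setting $A=a/2$, $B=b/2$, $C=c/2$, this reduces the claim to the pure Gamma identity
\begin{equation*}
\sum_{j=0}^{m}\binom{m}{j}\,\frac{\Gamma(A+j)\Gamma(B+j)\Gamma(C-j)}{\Gamma(A+B+C+j-m)}
=\frac{\Gamma(A)\Gamma(B)\Gamma(A+C)\Gamma(B+C)\Gamma(C-m)}{\Gamma(A+B+C)\Gamma(A+C-m)\Gamma(B+C-m)}.
\end{equation*}

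Next I would convert the summand into Pochhammer symbols using
$\Gamma(A+j)=(A)_j\Gamma(A)$, $\Gamma(B+j)=(B)_j\Gamma(B)$,
$\Gamma(C-j)=(-1)^j\Gamma(C)/(1-C)_j$, $\Gamma(A+B+C+j-m)=(A+B+C-m)_j\Gamma(A+B+C-m)$, and
$\binom{m}{j}=(-1)^j(-m)_j/j!$. The two factors of $(-1)^j$ cancel and the sum becomes
\begin{equation*}
\frac{\Gamma(A)\Gamma(B)\Gamma(C)}{\Gamma(A+B+C-m)}\;
{}_3F_2\!\left(\begin{matrix}-m,\,A,\,B\\ 1-C,\,A+B+C-m\end{matrix};\,1\right).
\end{equation*}
The Saalschütz balance condition is satisfied: the sum of the lower parameters minus the sum of the finite upper parameters equals $(1-C)+(A+B+C-m)-A-B=1-m-(-m)\cdot 0=1+(-(-m))$, i.e.\ the series is Saalschützian, so the Pfaff--Saalschütz theorem evaluates it as
\begin{equation*}
\frac{(1-C-A)_m\,(1-C-B)_m}{(1-C)_m\,(1-C-A-B)_m}.
\end{equation*}

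Finally I would translate each Pochhammer back using $(1-x)_m=(-1)^m\Gamma(x)/\Gamma(x-m)$. The four resulting factors of $(-1)^m$ cancel in pairs, and after plugging in with $x\in\{A+C,B+C,C,A+B+C\}$ and multiplying by $\Gamma(A)\Gamma(B)\Gamma(C)/\Gamma(A+B+C-m)$, the factor $\Gamma(C)$ cancels against $\Gamma(C)$ in the denominator and the factor $\Gamma(A+B+C-m)$ cancels similarly, yielding exactly the right-hand side of the displayed Gamma identity. Reinstating the common $\pi^{-m}$ then gives the statement of Lemma~\ref{lem:Saal}. The only real obstacle is the Pochhammer bookkeeping and checking the Saalschütz balance; once that is in place, the classical summation does all the work.
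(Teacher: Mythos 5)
Your proposal is correct and follows essentially the same route as the paper: rewrite the summand via Pochhammer symbols as the terminating Saalschützian series $\,{}_3F_2(-m,\tfrac a2,\tfrac b2;1-\tfrac c2,\tfrac{a+b+c}{2}-m;1)$, apply the Pfaff--Saalschütz theorem, and translate the Pochhammer symbols back into Gamma (here $\GR$) factors; your preliminary step of stripping the common factor $\pi^{-m}$ is only a cosmetic variant of the paper's use of the identity $(z)_j=\GR(2z+2j)/(\pi^{-j}\GR(2z))$. The only blemish is that your written check of the Saalschütz balance condition is garbled arithmetic, but the series is indeed balanced, so the argument stands.
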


\begin{proof}
Using the equalities  
\begin{align} \label{eqn:Pochhammer} 
 (z)_j & = \frac{\GR(2z+2j)}{ \pi^{-j} \GR(2z)} = (-1)^j \frac{ \pi^j \GR(2-2z)}{ \GR(2-2z-2j)} 
& (z \in \bC, \, j \in \bZ_{\ge 0}), 
\end{align}
and 
$ \binom{m}{j} =  (-1)^j  (-m)_j / j!$,
we have 
\begin{align*} 
\begin{split}
& \sum_{0 \le j \le m} \binom{m}{j} \dfrac{\GR(a+2j) \GR(b+2j) \GR(c-2j) }{ \GR(a+b+c+2j-2m)} 
   = \frac{\GR(a) \GR(b) \GR(c) }{ \GR(a+b+c-2m)}
   \, {}_3F_2\left(\begin{array}{c}
-m,\ \frac{a}{2}, \ \frac{b}{2} \smallskip \\
1-\frac{c}{2}, \ \frac{a+b+c}{2}-m 
\end{array};1\right)
\end{split}
\end{align*}
with the generalized hypergeometric series 
\begin{align*}
{}_3F_2\left(\begin{array}{c}
a_1,\ a_2, \ a_3 \\
b_1, \ b_2
\end{array};z\right)=\sum_{j=0}^\infty \frac{(a_1)_j(a_2)_j(a_3)_j}{(b_1)_j(b_2)_j}
\frac{z^j}{j!}.
\end{align*}
Applying the Saalsch\"utz's theorem (\cite[2.2 (1)]{Bailey})
\begin{align*}
{}_3F_2\left(\begin{array}{c}
-m,\ a, \ b \\
c, \ 1+a+b-c-m \end{array};1\right)
& = \frac{ (c-a)_m (c-b)_m}{(c)_m (c-a-b)_m} & (m \in \bZ_{\ge 0})
\end{align*}
and (\ref{eqn:Pochhammer}), we obtain the assertion.
\end{proof}


\subsection{Contragredient Whittaker functions}
\label{subsec:contra_Wh}

Let $ \widetilde{\Pi}_{\sigma} $ be the contragredient representation of $ \Pi_{\sigma} $. 
We have 
\begin{align*}
  {\rm Wh}(\widetilde{\Pi}_{\sigma}, \psi_{-1})^{\rm mg} = 
 \{ \widetilde{W} \mid W \in {\rm Wh}(\Pi_{\sigma}, \psi_1)^{\rm mg}  \},
\end{align*}
where the contragredient Whittaker function $ \widetilde{W} $ is defined by 
\begin{align*}
 \widetilde{W}(g) &:= W( w \,{}^t\!g^{-1}), & w = \begin{pmatrix} & & & 1 \\ & & 1 & \\ & 1 &  & \\ 1 & & & \end{pmatrix}.
\end{align*}
Let $ \tilde{\sigma} $ be a representation of $ M_{\mn} $ defined by replacing $ \nu_i $ to $-\nu_i $:
\begin{align*}
 \tilde{\sigma}
& = \begin{cases}
    \chi_{(-\nu_1,\delta_1)} \boxtimes \chi_{(-\nu_2,\delta_2)} \boxtimes 
    \chi_{(-\nu_3,\delta_3)} \boxtimes \chi_{(-\nu_4,\delta_4)} &  \text{if $\mn=(1,1,1,1) $}, \\
   D_{(-\nu_1, \kappa_1)} \boxtimes \chi_{(-\nu_2,\delta_2)} \boxtimes \chi_{(-\nu_3,\delta_3)} & \text{if $\mn=(2,1,1)$}, \\
   D_{(-\nu_1, \kappa_1)} \boxtimes D_{(-\nu_2, \kappa_2)} & \text{if $\mn=(2,2)$}.
    \end{cases}
\end{align*}
Then we know $ \widetilde{\Pi}_{\sigma} \cong \Pi_{\tilde{\sigma}} $
and the following:

\begin{prop} 
\label{prop:contra_radial}
For the $K$-homomorphisms $ \vp_{\sigma}: V_{(\kappa_1,\kappa_2,\delta_3)} \to {\rm Wh}(\Pi_{\sigma}, \psi_{1})^{\rm mg} $ 
given in Theorems \ref{thm:EF1}, \ref{thm:EF2} and \ref{thm:EF3}, 
let $ \widetilde{\varphi}_{\sigma}: V_{(\kappa_1,\kappa_2,\delta_3)} 
\to {\rm Wh}(\widetilde{\Pi}_{\sigma}, \psi_{-1})^{\rm mg} $ 
be a $ K$-homomorphism defined by
\begin{align*}
 \widetilde{\varphi}_{\sigma} (v)(g) & = \varphi_{\sigma}(v) ( w \,{}^t\!g^{-1})
 & (v \in V_{(\kappa_1,\kappa_2,\delta_3)}).  
\end{align*}
Then the radial part of $ \widetilde{\varphi}_{\sigma} $ is given by 
\begin{align*}
  \widetilde{\varphi}_{\sigma} (u_l)(y) 
& = y_1^{3/2} y_2^2 y_3^{3/2} y_4^{-\gamma_1} 
     \cdot (\sqrt{-1})^{l_2-l_4+l_{13}-l_{24}} (-1)^{\kappa_2 + l_3+l_{14}+l_{23}} 
\\
&  \quad \times \frac{1}{(4\pi \sqrt{-1})^3} \int_{s_3} \int_{s_2} \int_{s_1}
   V_{\tilde{\sigma}, l}(s_1,s_2,s_3) \, y_1^{-s_1} y_2^{-s_2} y_3^{-s_3} 
  \,ds_1ds_2ds_3
\end{align*}
with $  l = (l_1, l_2, l_3, l_4, l_{12},l_{13},l_{14},l_{23},l_{24},l_{34}) \in S_{(\kappa_1,\kappa_2,\delta_3)} $. 
\end{prop}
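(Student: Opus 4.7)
The plan is to compute the radial part of $\widetilde{\varphi}_{\sigma}$ directly from the Iwasawa decomposition of $w\, y^{-1}$ together with the explicit formulas of Theorems \ref{thm:EF1}, \ref{thm:EF2} and \ref{thm:EF3}. First one verifies that $\widetilde{\varphi}_{\sigma}$ is a $K$-homomorphism into $\mathrm{Wh}(\widetilde{\Pi}_{\sigma}, \psi_{-1})^{\mathrm{mg}}$: the $N$-transformation property follows from $w\, {}^t(xg)^{-1} = (w\, {}^t x^{-1} w^{-1})\, w\, {}^t g^{-1}$ combined with the short computation $\psi_1(w\, {}^t x^{-1} w^{-1}) = \psi_{-1}(x)$ (using that $w$ reverses the order of simple roots and that $x \mapsto {}^t x^{-1}$ negates the off-diagonal entries of $x \in N$), while moderate growth is preserved under $g \mapsto w\, {}^t g^{-1}$.

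For $y \in A$ the decomposition $w\, y^{-1} = y'\cdot w$ with $y' := w\, y^{-1} w^{-1}$ is a trivial-$N$ Iwasawa decomposition in which $y'$ has the $A$-coordinates $(y_1', y_2', y_3', y_4') = (y_3, y_2, y_1, (y_1 y_2 y_3 y_4)^{-1})$. Using the $K$-equivariance of $\varphi_{\sigma}$,
\[
 \widetilde{\varphi}_{\sigma}(u_l)(y) = \varphi_{\sigma}\bigl(\tau_{(\kappa_1,\kappa_2,\delta_3)}(w)\, u_l\bigr)(y').
\]
Since $w$ acts on $V_{\mathrm{st}}$ by the (even) involution $i \mapsto 5-i$, the rules $w\xi_i = \xi_{5-i}$ and $w \xi_{jk} = -\xi_{5-k,5-j}$ for $j<k$ yield, after tracking the $\kappa_2$ signs from the $\xi_{jk}$-factors,
\[
 \tau_{(\kappa_1,\kappa_2,\delta_3)}(w)\, u_l = (-1)^{\kappa_2}\, u_{l^*},
 \qquad l^* := (l_4, l_3, l_2, l_1, l_{34}, l_{24}, l_{14}, l_{23}, l_{13}, l_{12}).
\]
Substituting the Mellin-Barnes expression for $\varphi_{\sigma}(u_{l^*})(y')$ and performing the change of outer integration variables $(s_1, s_2, s_3) \mapsto (s_3 - \gamma_1, s_2 - \gamma_1, s_1 - \gamma_1)$---chosen precisely so that the factor $(y_4')^{\gamma_1} = (y_1 y_2 y_3 y_4)^{-\gamma_1}$, together with the coordinate swap $(y_1', y_3') = (y_3, y_1)$, is absorbed into the target monomial $y_1^{-s_1} y_2^{-s_2} y_3^{-s_3}\cdot y_4^{-\gamma_1}$---reduces the proposition to the identity
\[
 V_{\tilde{\sigma}, l}(s_1, s_2, s_3) = V_{\sigma, l^*}(s_3 - \gamma_1, s_2 - \gamma_1, s_1 - \gamma_1).
\]

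The main obstacle is this last identity, which must be verified case by case against Theorems \ref{thm:EF1}, \ref{thm:EF2} and \ref{thm:EF3}. The six outer $\GR$-factors match immediately once one uses $\tilde{\gamma}_1 = -\gamma_1$ and the invariance of the elementary symmetric polynomials in $\{\nu_i\}$ under the relabelling $l \mapsto l^*$. The inner $q$-integral, however, requires an auxiliary substitution $q \mapsto -q + \gamma_1$ (combined in cases 1-(iii), 2-(ii) and 3 with a swap of the summation index of the form $i \mapsto l_{14}+l_{23}-i$) in order to convert each factor $\GR(q + \nu_i)$ into $\GR(q - \nu_i)$ and to exchange the appropriate numerator and denominator gamma functions; the verification is long but purely mechanical. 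Alternatively one may bypass the direct computation by invoking multiplicity one: by Lemma \ref{lem:Jacquet_property} (iii) both sides of the proposition lie in the one-dimensional space $\Hom_K(V_{(\kappa_1,\kappa_2,\delta_3)}, \mathrm{Wh}(\widetilde{\Pi}_{\sigma}, \psi_{-1})^{\mathrm{mg}})$, so equality need only be checked at a single value of $y$ and a single $u_l$.
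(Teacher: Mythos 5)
Your proposal is correct and takes essentially the same route as the paper: both arguments reduce the statement to the computation $\tau_{(\kappa_1,\kappa_2,\delta_3)}(w)u_l=(-1)^{\kappa_2}u_{l^*}$, the identification of $w\,{}^t y^{-1}w^{-1}$ with the torus element of coordinates $(y_3,y_2,y_1,(y_1y_2y_3y_4)^{-1})$, and the kernel identity $V_{\tilde{\sigma},l}(s_1,s_2,s_3)=V_{\sigma,l^*}(s_3-\gamma_1,s_2-\gamma_1,s_1-\gamma_1)$, which the paper simply reads off from the explicit formulas of Theorems \ref{thm:EF1}--\ref{thm:EF3}. Two small remarks: your tuple $l^*$ (which fixes the entries $l_{14}$ and $l_{23}$) is the correct image of $l$ under $w$, whereas the paper's printed $\tilde{l}$ swaps these two entries (a harmless misprint that does not affect the final formula); and the auxiliary substitution needed to verify the kernel identity in the inner $q$-integral is a translation of $q$ (for instance $q\mapsto q-\nu_3-\nu_4$ in case 1-(iii) and $q\mapsto q-2\nu_2$ in case 3, combined with the index change $i\mapsto l_{14}+l_{23}-i$ where relevant), not the reflection $q\mapsto -q+\gamma_1$ you describe, since a reflection would interchange the four factors of the form $\GR(s_j-q+\cdot)$ with the two factors $\GR(q+\cdot)$ and so cannot preserve the shape of the kernel.
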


\begin{proof}
For $  l = (l_1, l_2, l_3, l_4, l_{12},l_{13},l_{14},l_{23},l_{24},l_{34}) \in S_{(\kappa_1,\kappa_2,\delta_3)} $, 
if we set 
\begin{align*}
  \tilde{ l} = (l_4, l_3, l_2, l_1, l_{34},l_{24},l_{23},l_{14},l_{13},l_{12}) \in S_{(\kappa_1,\kappa_2,\delta_3)}, 
\end{align*}
then we have 
\begin{align*}
  \widetilde{\varphi}_{\sigma} (u_l)(y) 
&  =  \varphi_{{\sigma}}(u_l)(w\,{}^t y^{-1} w^{-1} \cdot w ) \\
&  = \varphi_{\sigma} (\tau_{(\kappa_1,\kappa_2,\delta_3)}(w) u_l) (  w\,{}^t y^{-1} w^{-1} ) \\
&  = (-1)^{\kappa_2} \varphi_{\sigma} ( u_{\tilde{l}} ) 
   \left(  {\rm diag} ( y_4^{-1}, (y_3y_4)^{-1}, (y_2y_3y_4)^{-1}, (y_1y_2y_3y_4)^{-1} ) \right).
\end{align*}
Since the explicit formulas of $ V_{\sigma, l}(s_1,s_2,s_3) $ 
in Theorems \ref{thm:EF1}, \ref{thm:EF2} and \ref{thm:EF3} tell us that 
\begin{align*}
 V_{\sigma, \tilde{l}}(s_3-\gamma_1,  s_2-\gamma_1, s_1-\gamma_1)
& = V_{\tilde{\sigma}, l}(s_1,s_2,s_3),
\end{align*}
our claim follows.
\end{proof}

\begin{lem}
\label{lem:contra}
Retain the notation. Let  $ v \in V_{(\kappa_1,\kappa_2,\delta_3)} $. \\
\noindent 
(i) If $ W = \varphi_{\sigma}(v) $, then we have 
\begin{align*}
   \widetilde{W}(gk) & = 
 \widetilde{\varphi}_{\sigma} ( \tau_{(\kappa_1, \kappa_2, \delta_3)}(k) v)(g)
\end{align*}
for  $ (g,k) \in G\times K$, and 
\begin{align*}
  \widetilde{W}(m_py) & = \widetilde{\varphi}_{\sigma} ( \tau_{(\kappa_1, \kappa_2, \delta_3)}(m_p) v)(y)
\end{align*}
for $ 0 \le p \le 3 $ and  $y \in A $. \medskip 

\noindent
(ii)
If $ W = R(E_{i,j}^{\gp}) \varphi_{\sigma}(v) $ $(1\le i < j \le 4) $, then we have 
\begin{align*}
   \widetilde{W}(gk) & = - R( {\rm Ad}(k) (E_{i,j}^{\gp}))
 \widetilde{\varphi}_{\sigma} ( \tau_{(\kappa_1, \kappa_2, \delta_3)}(k) v)(g) 
\end{align*}
for $ (g,k) \in G\times K $, and
\begin{align*}
  \widetilde{W}(m_py) & = - R( {\rm Ad}(m_p) (E_{i,j}^{\gp}))
 \widetilde{\varphi}_{\sigma} ( \tau_{(\kappa_1, \kappa_2, \delta_3)}(m_p) v)(y), 
\\
   \widetilde{W}(y)
 &  = \begin{cases}
   4 \pi \sqrt{-1} y_i  \widetilde{\varphi}_{\sigma} (v) (y)
    +  \widetilde{\varphi}_{\sigma} (\tau_{(\kappa_1,\kappa_2,\delta_3)}( E_{i,j}^{\gk}) v) (y)
   & \text{if $ j=i+1 $}, \\
    \widetilde{\varphi}_{\sigma} (\tau_{(\kappa_1,\kappa_2,\delta_3)}( E_{i,j}^{\gk}) v) (y)
   & \text{if $ j>i+1 $} \end{cases}
\end{align*}
for $ 0 \le p \le 3 $ and $ y = {\rm diag}(y_1y_2y_3y_4, y_2y_3y_4, y_3y_4, y_4) \in A $.
\end{lem}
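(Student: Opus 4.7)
The plan is to derive everything directly from the defining involution $\widetilde{W}(g)=W(wg^{-T})$ (with $g^{-T}:={}^tg^{-1}$) and the right $K$-equivariance (\ref{eqn:Fn_whitt_ngk}) of $\varphi_\sigma$. Since $k\in K=\mathrm{O}(4)$ satisfies $k^{-T}=k$, one has $(gk)^{-T}=g^{-T}k$, so for (i)
\[
\widetilde{W}(gk)=\varphi_\sigma(v)(wg^{-T}k)=\varphi_\sigma(\tau_{(\kappa_1,\kappa_2,\delta_3)}(k)v)(wg^{-T})=\widetilde{\varphi}_\sigma(\tau_{(\kappa_1,\kappa_2,\delta_3)}(k)v)(g),
\]
the middle equality being (\ref{eqn:Fn_whitt_ngk}). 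The second assertion of (i) follows by taking $g=y$ and $k=m_p$: since $m_p$ is diagonal it commutes with $y\in A$, so $m_py=ym_p$.

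For (ii), I would start with $W=R(E_{i,j}^{\gp})\varphi_\sigma(v)$ and apply the same reduction before differentiating. The identity $k\exp(tE_{i,j}^{\gp})=\exp(t\Ad(k)E_{i,j}^{\gp})k$ together with right $K$-equivariance yields
\[
\widetilde{W}(gk)=\frac{d}{dt}\Big|_{t=0}\varphi_\sigma(\tau_{(\kappa_1,\kappa_2,\delta_3)}(k)v)\bigl(wg^{-T}\exp(t\Ad(k)E_{i,j}^{\gp})\bigr).
\]
Because $\Ad(k)E_{i,j}^{\gp}\in\gp$ is symmetric, the identity $wg^{-T}\exp(tY)=w(g\exp(-tY))^{-T}$, valid for any $Y=Y^T$, converts this to a right translation on $g$ with an extra minus sign, giving
$\widetilde{W}(gk)=-R(\Ad(k)E_{i,j}^{\gp})\widetilde{\varphi}_\sigma(\tau_{(\kappa_1,\kappa_2,\delta_3)}(k)v)(g)$ as claimed. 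The radial specialization at $m_py=ym_p$ is immediate.

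The last, most explicit identity in (ii) is the case $k=1$, $g=y$. Decomposing $E_{i,j}^{\gp}=2E_{i,j}-E_{i,j}^{\gk}$ and using right $K$-equivariance to handle the $\gk$-part, one obtains
\[
\widetilde{W}(y)=-2R(E_{i,j})\widetilde{\varphi}_\sigma(v)(y)+\widetilde{\varphi}_\sigma(\tau_{(\kappa_1,\kappa_2,\delta_3)}(E_{i,j}^{\gk})v)(y).
\]
The $\psi_{-1}$-analogue of Lemma~\ref{lem:Eij_radial} (the same calculation works for any $\psi_c$, producing the factor $2\pi\sqrt{-1}\,c\,y_i$ when $j=i+1$ and zero when $j>i+1$) then evaluates the first term to $4\pi\sqrt{-1}y_i\widetilde{\varphi}_\sigma(v)(y)$ when $j=i+1$ and zero otherwise; before invoking it one must verify $\widetilde{\varphi}_\sigma(v)\in C^\infty(N\backslash G;\psi_{-1})$, which reduces to the elementary identity $\psi_1(wn^{-T}w^{-1})=\psi_{-1}(n)$ on $N$ (checked on simple-root entries, using that $w$ reverses the diagonal order).

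The main obstacle is bookkeeping: the conversion from right translation by $X\in\gp$ on $W$ to right translation by $X$ on $\widetilde{W}$ through the involution $g\mapsto wg^{-T}$ introduces both a minus sign and a transpose, and only because $X$ is symmetric does this survive as a right translation rather than mutating into a left translation or a non-invariant vector field. Likewise, the $\Ad(k)$ appears in the claim only if one moves $k$ past $\exp(tE_{i,j}^{\gp})$ \emph{before} performing the transpose substitution, rather than after.
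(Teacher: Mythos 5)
Your argument is correct and follows essentially the same route as the paper: the involution $g\mapsto w\,{}^t g^{-1}$ together with right $K$-equivariance, the sign flip coming from the symmetry of $\Ad(k)E_{i,j}^{\gp}$, and the $\psi_{-1}$-version of Lemma \ref{lem:Eij_radial} for the radial identity (the paper only differs in bookkeeping, writing $E_{i,j}^{\gp}=2E_{j,i}+E_{i,j}^{\gk}$ and pushing it through the involution, whereas you apply the involution first and then split $E_{i,j}^{\gp}=2E_{i,j}-E_{i,j}^{\gk}$ on the $\widetilde{\varphi}_{\sigma}$ side — both yield the same intermediate formula). Your explicit check that $\widetilde{\varphi}_{\sigma}(v)\in C^{\infty}(N\backslash G;\psi_{-1})$ via $\psi_1(w\,{}^t n^{-1}w^{-1})=\psi_{-1}(n)$ is a point the paper leaves implicit.
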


\begin{proof}
Our statement (i) is obvious.
Let us show (ii). 
We have
\begin{align*}
 \widetilde{W}(gk) 
&= W( w \, {}^t\! g^{-1} k) \\
& = \frac{d}{dt}\biggl|_{t=0} \varphi_{\sigma}(v) \big( w\, {}^t\! g^{-1} k \exp(t E_{i,j}^{\gp} ) k^{-1} \cdot k \big) \\
& = \frac{d}{dt}\biggl|_{t=0} \varphi_{\sigma}(\tau_{(\kappa_1,\kappa_2,\delta_3)}(k)v) 
    \big( w\, {}^t ( g k \exp( -t E_{i,j}^{\gp}) k^{-1} )^{-1}   \big) \\
& = - \frac{d}{dt}\biggl|_{t=0} \varphi_{\sigma}(\tau_{(\kappa_1,\kappa_2,\delta_3)}(k)v) 
    \big( w\, {}^t ( g  k \exp( t E_{i,j}^{\gp}) k^{-1} )^{-1}   \big) \\
& = - \frac{d}{dt}\biggl|_{t=0}  \widetilde{\varphi}_{\sigma}  (\tau_{(\kappa_1,\kappa_2,\delta_3)}(k)v) 
    \big(  g  k \exp( t E_{i,j}^{\gp}) k^{-1}   \big) \\
& = - R( {\rm Ad}(k) (E_{i,j}^{\gp}))
    \widetilde{\varphi}_{\sigma}  ( \tau_{(\kappa_1, \kappa_2, \delta_3)}(k) v)(g).
\end{align*}
Our claim for $ \widetilde{W}(m_p y) $ readily follows from the above. 
As for the radial part, in view of $ E_{i,j}^{\gp} = 2E_{j,i} + E_{i,j}^{\gk} $, we have 
\begin{align*}
 \widetilde{W}(y) 
&= 2 \frac{d}{dt} \biggl|_{t=0} 
    \varphi_{\sigma} (v) \bigl(w\,{}^t y^{-1} \exp (t E_{j,i}) \bigr) 
   +  \widetilde{\varphi}_{\sigma}  ( \tau_{(\kappa_1,\kappa_2,\delta_3)} (E_{i,j}^{\gk}) v)(y)  \\
& = 2 \frac{d}{dt} \biggl|_{t=0} 
    \varphi_{\sigma} (v) \bigl(w\,{}^t ( y \exp (-t E_{i,j}) )^{-1} \bigr) 
   +  \widetilde{\varphi}_{\sigma}  ( \tau_{(\kappa_1,\kappa_2,\delta_3)} (E_{i,j}^{\gk}) v)(y) \\
& = -2 \frac{d}{dt} \biggl|_{t=0} 
     \widetilde{\varphi}_{\sigma}  (v) (  y \exp (t E_{i,j}) )
   +  \widetilde{\varphi}_{\sigma} ( \tau_{(\kappa_1,\kappa_2,\delta_3)} (E_{i,j}^{\gk}) v)(y) \\
& = -2 R(E_{i,j})  \widetilde{\varphi}_{\sigma}  (v)(y) 
  +  \widetilde{\varphi}_{\sigma} ( \tau_{(\kappa_1,\kappa_2,\delta_3)} (E_{i,j}^{\gk}) v)(y)
\end{align*}
to get the assertion.
\end{proof}


\subsection{Test vectors and main results}
\label{subsec:test_and_main}

To compute $ W'(\hat{y}) $ in Lemma \ref{lem:W'}, 
we consider the actions of $ E_{1,3}^{\gk} $,  $ E_{2,4}^{\gk} $ 
and $ m_i $ $(i=0,1,2,3) $ on $ U(\gp_{\bC}) \otimes V_{\lambda} $.
For 
$ w = \sum_{i} c_i E_{p_i,q_i}^{\gp} \otimes u_{l_i} \in \gp_{\bC} \otimes V_{\lambda} $
$(c_i \in \bC, \, {l_i} \in S_{\lambda}) $
we write
$ \overline{w} = \sum_{i} \overline{c_i} E_{p_i,q_i}^{\gp} \otimes u_{l_i}. $

\begin{lem}[{\cite[Proposition 2.4]{Ishii_001}}]
 \label{lem:test1}
For $ \delta_1,\delta_2,\delta_3,\delta_4 \in \{0,1\} $ with 
$ \delta_1 \ge \delta_2 \ge \delta_3 \ge \delta_4 $,
we set $ \lambda = (\delta_1-\delta_4, \delta_2-\delta_3, \delta_3) $.
\begin{itemize}
\item[(a)] 
When $ (\delta_1,\delta_2,\delta_3,\delta_4) = (0,0,0,0) $, if we set 
$$ w = u_{{\bf 0}} \in V_{\lambda}, $$
then we have
\begin{align*}
  \tau_{\lambda}(E_{1,3}^{\gk}) w& = \tau_{\lambda}(E_{2,4}^{\gk}) w = 0, 
&  
 \tau_{\lambda}(m_i) w & = w \quad (0 \le i \le 3). 
\end{align*}
\item[(b)] 
When $ (\delta_1,\delta_2,\delta_3,\delta_4) = (1,0,0,0) $, if we set 
$$ w = u_{e_2} + \sqrt{-1} u_{e_4} \in V_{\lambda}, $$
then we have
\begin{align*}
 \tau_{\lambda}(E_{1,3}^{\gk}) w &= 0, &
 \tau_{\lambda}(E_{2,4}^{\gk}) w &= \sqrt{-1} w, & 
 \tau_{\lambda}(m_i) w & = 
  \begin{cases} w  & \mbox{ if } i=0,1, \\  -\overline{w} & \mbox{ if } i=2,3. \end{cases}
\end{align*}
\item[(c)] 
When $ (\delta_1,\delta_2,\delta_3,\delta_4) = (1,1,0,0) $, if we set
$$ 
 w = E_{1,2}^{\gp} \otimes u_{e_{12}} - E_{2,3}^{\gp} \otimes u_{e_{23}}
     +E_{3,4}^{\gp} \otimes u_{e_{34}} + E_{1,4}^{\gp} \otimes u_{e_{14}} \in 
    \gp_{\bC} \otimes V_{\lambda},
$$ 
then we have 
\begin{align*}
 ({\rm ad}\otimes \tau_{\lambda})(E_{1,3}^{\gp}) w 
& = ({\rm ad}\otimes \tau_{\lambda})(E_{2,4}^{\gp}) w = 0, &
 ({\rm Ad}\otimes \tau_{\lambda})(m_i) w = w \quad (0 \le i \le 3).
\end{align*}
\item[(d)]  
When $ (\delta_1,\delta_2,\delta_3,\delta_4) = (1,1,1,0) $, if we set 
$$
 w = (-E_{3,4}^{\gp}+\sqrt{-1} E_{2,3}^{\gp}) \otimes u_{e_1} + (E_{1,4}^{\gp} - \sqrt{-1} E_{1,2}^{\gp}) \otimes u_{e_3}
  \in \gp_{\bC} \otimes V_{\lambda},
$$
then we have 
\begin{gather*}
 ({\rm ad} \otimes \tau_{\lambda})(E_{1,3}^{\gk}) w = 0,  \quad 
 ({\rm ad} \otimes \tau_{\lambda})(E_{2,4}^{\gk}) w = \sqrt{-1} w,  \quad 
  ({\rm Ad} \otimes \tau_{\lambda})(m_i) w = 
  \begin{cases} w  & \mbox{ if } i=0,1, \\  -\overline{w} & \mbox{ if } i=2,3. \end{cases}
\end{gather*}
\item[(e)] 
When $ (\delta_1,\delta_2,\delta_3,\delta_4) = (1,1,1,1) $, if we set 
$$
 w = (E_{1,4}^{\gp} E_{2,3}^{\gp}-E_{1,2}^{\gp} E_{3,4}^{\gp} ) \otimes u_{\bf 0} \in U(\gp_{\bC}) \otimes V_{\lambda},
$$
then we have 
\begin{align*}
 ({\rm ad}\otimes \tau_{\lambda})(E_{1,3}^{\gp}) w 
= ({\rm ad}\otimes \tau_{\lambda})(E_{2,4}^{\gp}) w = 0, \quad 
 ({\rm Ad}\otimes \tau_{\lambda})(m_i) w = w \quad (0 \le i \le 3).
\end{align*}
\end{itemize}
\end{lem}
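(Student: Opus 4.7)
The plan is to verify each of the five assertions (a)--(e) by direct computation, relying on the explicit action formulas in Lemma~\ref{lem:Kact_ul_vl}(i) for diagonal matrices and for $E_{i,j}^{\gk}$ on the generators $u_l$, together with the bracket relations $[E_{a,b}^{\gk},E_{p,q}^{\gp}] = \delta_{b,p}E_{a,q}^{\gp}+\delta_{b,q}E_{a,p}^{\gp}-\delta_{a,p}E_{b,q}^{\gp}-\delta_{a,q}E_{b,p}^{\gp}$ in $\gl(4,\bC)$ that encode the adjoint action of $\gk_{\bC}$ on $\gp_{\bC}$. In each case the vector $w$ is a weight vector (or a combination of two weight vectors matching under conjugation), so the main work is bookkeeping, and the verification of the $m_i$-action follows from the pattern of signs produced by Lemma~\ref{lem:Kact_ul_vl}(i) applied to $\diag(\varepsilon_1,\varepsilon_2,1,1)$.

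First I would dispose of case (a), which is immediate because $\lambda=(0,0,0)$ gives the trivial representation on $\bC u_{\bf 0}$. For case (b), I would apply the $E^{\gk}$-action formula to the two generators: one finds $\tau_\lambda(E_{1,3}^{\gk})u_{e_2}=\tau_\lambda(E_{1,3}^{\gk})u_{e_4}=0$ (all relevant coefficients vanish), while $\tau_\lambda(E_{2,4}^{\gk})u_{e_2}=-u_{e_4}$ and $\tau_\lambda(E_{2,4}^{\gk})u_{e_4}=u_{e_2}$, so the linear combination $w=u_{e_2}+\sqrt{-1}u_{e_4}$ has $E_{2,4}^{\gk}$-eigenvalue $\sqrt{-1}$. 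The diagonal action sends $u_{e_2}\mapsto \varepsilon_2 u_{e_2}$ and $u_{e_4}\mapsto \varepsilon_4 u_{e_4}$, which makes $m_0,m_1$ fix $w$ and $m_2,m_3$ send $w$ to $-\overline{w}$. Case (d) is structurally identical once one decomposes $w$ as a sum of two weight vectors of $\gp_{\bC}\otimes V_\lambda$ and applies the adjoint action formulas to $E_{1,4}^{\gp},E_{2,3}^{\gp},E_{1,2}^{\gp},E_{3,4}^{\gp}$ together with the $V_\lambda$-action on $u_{e_1}$ and $u_{e_3}$.

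For cases (c) and (e), I would use the Leibniz rule: the action of $X\in\gk_\bC$ on $\gp_\bC\otimes V_\lambda$ and on $U(\gp_\bC)\otimes V_\lambda$ is $\adj(X)\otimes 1+1\otimes\tau_\lambda(X)$, together with $\Ad(m_i)E_{p,q}^{\gp}=\varepsilon_p\varepsilon_q E_{p,q}^{\gp}$ for $m_i=\diag(\varepsilon_1,\varepsilon_2,1,1)$. In case (c), the four terms in $w$ are chosen so that the bracket contributions from $\adj$ cancel against the $V_\lambda$-contributions from $\tau_\lambda$: one checks term-by-term that $\adj(E_{1,3}^{\gk})E_{i,j}^{\gp}\otimes u_{e_{ij}}+E_{i,j}^{\gp}\otimes\tau_\lambda(E_{1,3}^{\gk})u_{e_{ij}}$ sums to zero, invoking the relation $u_{e_i+e_{jk}}-u_{e_j+e_{ik}}+u_{e_k+e_{ij}}=0$ from Lemma~\ref{lem:rel_ul}(ii) to collapse terms. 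Case (e) is parallel but quadratic in $\gp_\bC$: one uses $[X,YZ]=[X,Y]Z+Y[X,Z]$ within $U(\gp_\bC)$ to expand $\adj(X)(E_{1,4}^{\gp}E_{2,3}^{\gp}-E_{1,2}^{\gp}E_{3,4}^{\gp})$ and checks the sum vanishes on the nose.

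The main obstacle I anticipate is the bookkeeping in case (e): the adjoint action on the quadratic expression produces eight terms per generator of $\gk_\bC$, and one must repeatedly simplify using $[\gk,\gp]\subset\gp$ and then observe that the resulting linear combination of rank-two monomials in $U(\gp_\bC)$ is identically zero as an element of the enveloping algebra, not merely in a quotient. The $m_i$-invariance, by contrast, is immediate from the sign pattern $\Ad(m_i)(E_{1,4}^{\gp}E_{2,3}^{\gp})=\varepsilon_1\varepsilon_2\varepsilon_3\varepsilon_4 E_{1,4}^{\gp}E_{2,3}^{\gp}=\Ad(m_i)(E_{1,2}^{\gp}E_{3,4}^{\gp})$ since all four $\varepsilon$'s appear with multiplicity one.
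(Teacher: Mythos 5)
Your proposal is correct and follows essentially the same route as the paper's proof, which likewise reduces everything to Lemma \ref{lem:Kact_ul_vl} (i) together with the explicit formulas for $\adj(E_{1,3}^{\gk})$, $\adj(E_{2,4}^{\gk})$ and $\Ad(m_i)$ on the $E_{p,q}^{\gp}$. Two small remarks: in case (c) the relation from Lemma \ref{lem:rel_ul} (ii) is not actually needed (for $\lambda=(1,1,0)$ the eight terms cancel pairwise), and in case (e) the cancellation under $E_{1,3}^{\gk}$ rests on $[E_{2,3}^{\gp},E_{3,4}^{\gp}]=[E_{1,2}^{\gp},E_{1,4}^{\gp}]=E_{2,4}^{\gk}$ rather than a monomial-by-monomial cancellation, while the $m_i$-invariance also uses the twist $\tau_{\lambda}(m_i)u_{\bf 0}=\det(m_i)\,u_{\bf 0}$ coming from $\lambda_3=1$, which is built into Lemma \ref{lem:Kact_ul_vl} (i).
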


\begin{proof} 
Use Lemma \ref{lem:Kact_ul_vl} (i) and the formulas
\begin{align*}
 {\rm ad}(E_{1,3}^{\gk}) E_{p,q}^{\gp}
 &  = -\delta_{1,p} E_{3,q}^{\gp} - \delta_{1,q} E_{3,p}^{\gp} 
  + \delta_{3,p} E_{1,q}^{\gp} + \delta_{3,q} E_{1,p}^{\gp},
\\ 
 {\rm ad}(E_{2,4}^{\gk}) E_{p,q}^{\gp}
 &  = -\delta_{2,p} E_{4,q}^{\gp} - \delta_{2,q} E_{4,p}^{\gp} 
  + \delta_{4,p} E_{2,q}^{\gp} + \delta_{4,q} E_{2,p}^{\gp}, 
\\
 {\rm Ad}(m_i) E_{p,q}^{\gp}
 & = (-1)^{\delta_{i,p} + \delta_{i,q}} E_{p,q}^{\gp} \qquad (i=1,2) 
\end{align*}
for $ 1 \le p,q \le 4. $
\end{proof}

\begin{lem} \label{lem:test2}
For $ \kappa_1 \in \bZ_{\ge 2} $ and 
$ \delta_2, \delta_3 \in \{0,1\} $ with $ \delta_2 \ge \delta_3 $, 
we set
$ \lambda = (\kappa_1, \delta_2-\delta_3, \delta_3) $.
For $ 1 \le p,q,r \le 4 $ we define $ w_0, w_p, w_{p,q}, w_{pq}, w_{p,qr} \in V_{\lambda} $ 
as follows:
\begin{itemize}
\item
When $ \kappa_1 $ is even and $ \delta_2 = \delta_3 $, we put 
\begin{align*}
w_{0} 
:=  \rrq_\cR \bigl( (( \xi_1)^2 + (\xi_3)^2)^{\kappa_1/2} \bigr),  \quad 
 w_{p,q} 
: = \rrq_\cR \bigl( ( (\xi_1)^2 + (\xi_3)^2)^{(\kappa_1-2)/2} \xi_{p} \xi_{q} \bigr).
\end{align*}
\item 
When $ \kappa_1 $ is even and $ (\delta_2,\delta_3)=(1,0) $, we put 
\begin{align*}
w_{p, qr} := \rrq_\cR \bigl( ((\xi_1)^2 + (\xi_3)^2)^{(\kappa_1-2)/2} \xi_{p}\xi_{qr} \bigr). 
\end{align*}
\item
When $ \kappa_1 $ is odd and $ \delta_2 = \delta_3 $, we put 
\begin{align*}
w_{p} 
& := \rrq_\cR \bigl( ( (\xi_1)^2 + (\xi_3)^2 \bigr)^{(\kappa_1-1)/2} \xi_{p}).
\end{align*}
\item 
When $ \kappa_1 $ is odd and $ (\delta_2,\delta_3)=(1,0) $, we put 
\begin{align*}
 w_{pq}
& :=  \rrq_\cR \bigl(( (\xi_1)^2 + (\xi_3)^2)^{(\kappa_1-1)/2} \xi_{pq} \bigr).
\end{align*}
\end{itemize}
Then we have the following.
\begin{itemize}
\item[(a)]
When $ \kappa_1 $ is even and $ (\delta_2,\delta_3) = (0,0) $, if we set 
$$ w = w_0 \in V_{\lambda}, $$
then we have
\begin{align*}
  \tau_{\lambda}(E_{1,3}^{\gk}) w & = \tau_{\lambda}(E_{2,4}^{\gk}) w = 0, 
 &      
 \tau_{\lambda}(m_i) w & = w \quad (0 \le i \le 3). 
\end{align*}
\item[(b)]
When $ \kappa_1 $ is odd and $ (\delta_2,\delta_3) = (0,0) $, if we set 
$$ w = w_2+\sqrt{-1} w_4 \in V_{\lambda}, $$
then we have
\begin{align*}
 \tau_{\lambda}(E_{1,3}^{\gk}) w &= 0, & 
 \tau_{\lambda}(E_{2,4}^{\gk}) w &= \sqrt{-1} w, &
 \tau_{\lambda}(m_i) w & = 
  \begin{cases} w  & \mbox{ if } i=0,1, \\  -\overline{w} & \mbox{ if } i=2,3. \end{cases}
\end{align*}
\item[(c)] 
When $ \kappa_1 $ is even and $ (\delta_2,\delta_3) = (1,1) $, if we set 
$$ 
 w = E_{1,2}^{\gp} \otimes w_{3,4} - E_{2,3}^{\gp} \otimes w_{1,4}
     +E_{3,4}^{\gp} \otimes w_{1,2} - E_{1,4}^{\gp} \otimes w_{2,3} \in 
    \gp_{\bC} \otimes V_{\lambda},
$$ 
then we have
\begin{align*}
  ({\rm ad} \otimes \tau_{\lambda})(E_{1,3}^{\gk}) w 
&= ({\rm ad} \otimes \tau_{\lambda})(E_{2,4}^{\gk}) w = 0, 
& 
 ({\rm Ad} \otimes \tau_{\lambda})(m_i) w &= w \quad (0 \le i \le 3). 
\end{align*}
\item[(d)] 
When $ \kappa_1 $ is odd and $ (\delta_2,\delta_3) = (1,1) $, if we set 
$$
 w = (-E_{3,4}^{\gp}+\sqrt{-1} E_{2,3}^{\gp}) \otimes w_1 + (E_{1,4}^{\gp} - \sqrt{-1} E_{1,2}^{\gp}) w_3
  \in \gp_{\bC} \otimes V_{\lambda},
$$
then we have 
\begin{gather*}
 ({\rm ad} \otimes \tau_{\lambda})(E_{1,3}^{\gk}) w = 0, \quad  
 ({\rm ad} \otimes \tau_{\lambda})(E_{2,4}^{\gk}) w = \sqrt{-1} w, \quad 
 ({\rm Ad} \otimes \tau_{\lambda})(m_i) w  = 
  \begin{cases} w  & \mbox{ if } i=0,1, \\  -\overline{w} & \mbox{ if } i=2,3. \end{cases}
\end{gather*}
\item[(e)] 
When $ \kappa_1 $ is even and $ (\delta_2,\delta_3) = (1,0) $, if we set  
$$
 w = - \sqrt{-1} w_{2,24} + w_{4,24} \in V_{\lambda}, 
$$
then we have 
\begin{align*}
 \tau_{\lambda}(E_{1,3}^{\gk}) w &= 0, & 
 \tau_{\lambda}(E_{2,4}^{\gk}) w &= \sqrt{-1} w, &
 \tau_{\lambda}(m_i) w & = 
  \begin{cases} w  & \mbox{ if } i=0,1, \\  -\overline{w} & \mbox{ if } i=2,3. \end{cases}
\end{align*}
\item[(f)]
When $ \kappa_1 $ is odd and $ (\delta_2,\delta_3) = (1,0) $, if we set 
$$ 
 w = E_{1,2}^{\gp} \otimes w_{12} - E_{2,3}^{\gp} \otimes w_{23}
     +E_{3,4}^{\gp} \otimes w_{34} + E_{1,4}^{\gp} \otimes w_{14} \in 
    \gp_{\bC} \otimes V_{\lambda},
$$ 
then we have
\begin{align*}
  ({\rm ad} \otimes \tau_{\lambda})(E_{1,3}^{\gk}) w 
&= ({\rm ad} \otimes \tau_{\lambda})(E_{2,4}^{\gk}) w = 0, 
& 
 ({\rm Ad} \otimes \tau_{\lambda})(m_i) w &= w \quad (0 \le i \le 3). 
\end{align*}
\end{itemize}
\end{lem}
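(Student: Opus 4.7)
The plan is a "shell-and-tip" decomposition: each vector $w$ in the lemma factors as $(\xi_1^2+\xi_3^2)^m$ times a low-degree tip, and the operators in the statement interact cleanly with the shell.

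First I would verify, using the formulas for $\cT(E^\gk_{a,b})\xi_i$ and $\cT(E^\gk_{a,b})\xi_{ij}$ from $\S\ref{subsec:irred_Krep}$, that $\cT(E_{1,3}^\gk)(\xi_1^2+\xi_3^2) = \cT(E_{2,4}^\gk)(\xi_1^2+\xi_3^2) = 0$, and note that each $m_i$ ($0 \le i \le 3$) is diagonal with $\pm 1$ entries, so $\cT(m_i)(\xi_1^2+\xi_3^2) = \xi_1^2+\xi_3^2$. By the Leibniz rule for the derivations $\cT(E^\gk_{a,b})$ and multiplicativity of $\cT(m_i)$, the shell $(\xi_1^2+\xi_3^2)^m$ is then annihilated by $\cT(E_{1,3}^\gk)$, $\cT(E_{2,4}^\gk)$ and fixed by each $\cT(m_i)$ for every $m \ge 0$. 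Since $\det m_i = 1$, the prefactor $(\det k)^{\lambda_3}$ in the definition of $\tau_\lambda$ is trivial on the matrices in the statement, so every claimed identity reduces to the corresponding statement on the tip alone.

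For parts (a)--(d), the tips are exactly the vectors of Lemma \ref{lem:test1} (with $\xi_p$ in place of $u_{e_p}$ and $\xi_{pq}$ in place of $u_{e_{pq}}$), and the verifications are formally identical. Parts (e) and (f) require short new computations at the tip. For (e), with tip $(-\sqrt{-1}\xi_2+\xi_4)\xi_{24}$, the identity $\cT(E_{2,4}^\gk)\xi_{24}=0$ means the derivation only hits the linear factor, and a direct computation using $\cT(E_{2,4}^\gk)\xi_2 = -\xi_4$, $\cT(E_{2,4}^\gk)\xi_4 = \xi_2$ gives $\cT(E_{2,4}^\gk)\bigl((-\sqrt{-1}\xi_2+\xi_4)\xi_{24}\bigr) = \sqrt{-1}(-\sqrt{-1}\xi_2+\xi_4)\xi_{24}$; the $E_{1,3}^\gk$ vanishing and the four $m_i$ assertions follow by the same diagonal-sign analysis (in particular $\cT(m_2)\xi_{24} = -\xi_{24}$ produces the complex conjugate with the expected extra minus sign). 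For (f), the tip $E_{1,2}^\gp \otimes \xi_{12} - E_{2,3}^\gp \otimes \xi_{23} + E_{3,4}^\gp \otimes \xi_{34} + E_{1,4}^\gp \otimes \xi_{14}$ is the $\gp_\bC \otimes V_{(1,1,0)}$-analogue of the tip in Lemma \ref{lem:test1} (c), and since $\Ad(m_i)E^\gp_{p,q}$ and $\cT(m_i)\xi_{pq}$ carry the same sign $(-1)^{\delta_{i,p}+\delta_{i,q}}$, the computation is formally identical.

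The main obstacle is not conceptual but combinatorial: in the $\gp_\bC \otimes V_\lambda$ cases (c), (d), (f), one must check that the cross-terms generated by Leibniz rule for $E_{1,3}^\gk$ and $E_{2,4}^\gk$ — each pairing $\adj(E^\gk_{a,b})$ on one $\gp_\bC$-factor with the remaining unchanged $V_\lambda$-factor, or vice versa — cancel in pairs across the four summands. The specific signs in the linear combinations chosen in (c), (d), (f) are designed precisely for this cancellation, so the verification is routine but requires careful sign bookkeeping; apart from that, the entire proof reduces to direct application of Lemma \ref{lem:Kact_ul_vl} (i) together with the explicit $\cT$-action formulas of \S\ref{subsec:irred_Krep}.
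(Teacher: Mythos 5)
Your shell-and-tip strategy is essentially the paper's: the paper also observes that $E_{1,3}^{\gk}$ and $E_{2,4}^{\gk}$ kill the factor $((\xi_1)^2+(\xi_3)^2)^m$, records the resulting action formulas on $w_0,w_p,w_{p,q},w_{pq},w_{p,24}$, and then reduces to the computations in the proof of Lemma \ref{lem:test1}. Your treatment of (a), (b), (e), (f) is sound (in (f) the tip really is the Lemma \ref{lem:test1} (c) vector with $u_{e_{pq}}$ replaced by $\xi_{pq}$, and $\lambda_3=0$ in both). But two of your steps are wrong, and they fail exactly in cases (c) and (d).

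First, $\det m_1=\det m_2=-1$, not $1$, so your blanket claim that the prefactor $(\det k)^{\lambda_3}$ is trivial is false; it is trivial only when $\lambda_3=\delta_3=0$, i.e.\ in (a), (b), (e), (f). In (c) and (d) one has $\lambda_3=1$, and the twist is indispensable: in (d), for instance, $\Ad(m_1)$ fixes $-E_{3,4}^{\gp}+\sqrt{-1}E_{2,3}^{\gp}$ while $\widehat{\cT}(m_1)$ sends the tip $\xi_1\mapsto-\xi_1$, and $\Ad(m_1)$ negates $E_{1,4}^{\gp}-\sqrt{-1}E_{1,2}^{\gp}$ while $\widehat{\cT}(m_1)$ fixes $\xi_3$, so dropping the determinant factor yields $-w$ instead of the asserted $w$; the same defect occurs in (c). Second, the tip in (c) is \emph{not} the Lemma \ref{lem:test1} (c) vector under $u_{e_{pq}}\mapsto\xi_{pq}$ (that substitution produces the tip of (f)): in (c) each $E_{p,q}^{\gp}$ is paired with the symmetric product on the \emph{complementary} index pair ($E_{1,2}^{\gp}\otimes w_{3,4}$, etc.), with sign pattern $+,-,+,-$ rather than $+,-,+,+$, and the Leibniz cancellations only work for this pairing --- e.g.\ $\adj(E_{1,3}^{\gk})E_{1,2}^{\gp}\otimes w_{3,4}=-E_{2,3}^{\gp}\otimes w_{3,4}$ is cancelled by $-E_{2,3}^{\gp}\otimes\tau_{\lambda}(E_{1,3}^{\gk})w_{1,4}=+E_{2,3}^{\gp}\otimes w_{3,4}$, using $\tau_{\lambda}(E_{1,3}^{\gk})w_{1,4}=-w_{3,4}$. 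So (c) is not ``formally identical'' to Lemma \ref{lem:test1} (c); you need the action formulas on the symmetric tips, namely $\tau_{\lambda}(E_{1,3}^{\gk})w_{p,q}=\delta_{3,p}w_{1,q}-\delta_{1,p}w_{3,q}+\delta_{3,q}w_{p,1}-\delta_{1,q}w_{p,3}$ and its $E_{2,4}^{\gk}$ analogue (exactly what the paper records), a fresh check of the four-term cancellation for this complementary pairing, and the $m_i$-invariance computed \emph{with} the determinant twist coming from $\lambda_3=1$.
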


\begin{proof}
Since  
$ \cT(E_{1,3}^{\gk}) (( (\xi_1)^2 + (\xi_3)^2)^n ) = \cT(E_{2,4}^{\gk}) (( (\xi_1)^2 + (\xi_3)^2)^n )  = 0 $
$ (n\in \bZ_{\ge 0}) $,
we know that 
\begin{align*}
 \tau_{(\kappa_1,0,\delta_3)}(E_{1,3}^{\gk}) w_0 &= \tau_{(\kappa_1,0,\delta_3)}(E_{2,4}^{\gk}) w_0  = 0, \\
 \tau_{(\kappa_1,0,\delta_3)}(E_{1,3}^{\gk}) w_p &= \delta_{3,p} w_1 - \delta_{1,p} w_3, \\
 \tau_{(\kappa_1,0,\delta_3)}(E_{2,4}^{\gk}) w_p &= \delta_{4,p} w_2 - \delta_{2,p} w_4, \\
 \tau_{(\kappa_1,0,\delta_3)}(E_{1,3}^{\gk}) w_{p,q}
 &= \delta_{3,p} w_{1,q} -\delta_{1,p} w_{3,q} + \delta_{3,q} w_{p,1} - \delta_{1,q} w_{p,3}, \\
 \tau_{(\kappa_1,0,\delta_3)}(E_{2,4}^{\gk}) w_{p,q}
 &= \delta_{4,p} w_{2,q} -\delta_{2,p} w_{4,q} + \delta_{4,q} w_{p,2} - \delta_{2,q} w_{p,4}, \\
 \tau_{(\kappa_1,1,0)}(E_{1,3}^{\gk}) w_{2,24} &= \tau_{(\kappa_1,1,0)}(E_{1,3}^{\gk}) w_{4,24}  = 0, \\ 
 \tau_{(\kappa_1,1,0)}(E_{2,4}^{\gk}) w_{2,24} &=  -w_{4,24}, \ \ 
 \tau_{(\kappa_1,1,0)}(E_{2,4}^{\gk}) w_{4,24}  = w_{2,24}, \\ 
 \tau_{(\kappa_1,1,0)}(E_{1,3}^{\gk}) w_{pq} 
 & = \delta_{3,p} w_{1q} - \delta_{1,p} w_{3q} + \delta_{3,q} w_{p1} - \delta_{1,q} w_{p3},	\\
 \tau_{(\kappa_1,1,0)}(E_{2,4}^{\gk}) w_{pq} 
 & = \delta_{4,p} w_{2q} - \delta_{2,p} w_{4q} + \delta_{4,q} w_{p2} - \delta_{2,q} w_{p4}.
\end{align*}
Note that $ w_{p,q}= w_{q,p} $ and $ w_{pq} = -w_{qp}. $ 
Combined with the formulas in the proof of Lemma \ref{lem:test1}, we can get the assertion.
\end{proof}

\begin{lem} \label{lem:test3}
For $ \kappa_1, \kappa_2 \in \bZ_{\ge 2} $ with $ \kappa_1 \ge \kappa_2 $, 
we set $ \lambda = (\kappa_1, \kappa_2,0) $.
For $ 1 \le p,q \le 4 $ we define $ w_0, w_p, w_{pq} \in V_{\lambda} $ as follows:
\begin{itemize}
\item
When $ \kappa_1 - \kappa_2 $ is even,  we put
\begin{align*}
 w_0 
 := \rrq_\cR \bigl( ((\xi_1)^2+ (\xi_3)^2)^{(\kappa_1-\kappa_2)/2} \xi_{24}^{\kappa_2} \bigr),
\quad w_{pq} 
:= \rrq_\cR \bigl( ((\xi_1)^2+ (\xi_3)^2)^{(\kappa_1-\kappa_2)/2} \xi_{pq} \xi_{24}^{\kappa_2-1} \bigr).
\end{align*}
\item
When $ \kappa_1 - \kappa_2$ is odd, we put
\begin{align*}
w_p
& := \rrq_\cR \bigl(  ((\xi_1)^2+(\xi_3)^2)^{(\kappa_1-\kappa_2-1)/2} \xi_p \xi_{24}^{\kappa_2} \bigr).
\end{align*}
\end{itemize}
Then we have the following.
\begin{itemize}
\item[(a)] 
When $ \kappa_1 $ and $ \kappa_2 $ are even, if we set
$$ w = w_0 \in V_{\lambda}, $$
then we have 
\begin{align*}
  \tau_{\lambda}(E_{1,3}^{\gk}) w & = \tau_{\lambda}(E_{2,4}^{\gk}) w = 0, 
& \tau_{\lambda}(m_i) w = w \quad (0 \le i \le 3). 
\end{align*}
\item[(b)] 
When $ \kappa_1 $ and $ \kappa_2 $ are odd, if we set
$$ 
 w = E_{1,2}^{\gp} \otimes w_{{12}} - E_{2,3}^{\gp} \otimes w_{{23}}
     +E_{3,4}^{\gp} \otimes w_{{34}} + E_{1,4}^{\gp} \otimes w_{{14}} \in 
    \gp_{\bC} \otimes V_{\lambda},
$$ 
then we have
\begin{align*}
  ({\rm ad}\otimes\tau_{\lambda})(E_{1,3}^{\gk}) w & 
  = ({\rm ad}\otimes\tau_{\lambda})(E_{2,4}^{\gk}) w = 0,  &
 ({\rm Ad} \otimes \tau_{\lambda})(m_i) w = w \quad (0 \le i \le 3). 
\end{align*}
\item[(c)]
When $ \kappa_1-\kappa_2 $ is odd, if we set
$$ 
 w =  w_2 + \sqrt{-1} w_4 \in V_{\lambda},
$$ 
then we have
\begin{align*}
 \tau_{\lambda}(E_{1,3}^{\gk}) w &= 0, & 
 \tau_{\lambda}(E_{2,4}^{\gk}) w &= \sqrt{-1} w, &
 \tau_{\lambda}(m_i) w & = 
  \begin{cases} w  & \mbox{ if } i=0,1, \\  (-1)^{\kappa_1} \overline{w} & \mbox{ if } i=2,3. \end{cases}
\end{align*}
\end{itemize}
\end{lem}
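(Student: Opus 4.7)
The plan is to prove Lemma \ref{lem:test3} by direct computation, in exactly the same spirit as Lemmas \ref{lem:test1} and \ref{lem:test2}. The entire argument rests on a handful of elementary identities for the $K$-action on generators, which I will establish first. Using the matrix-element formulas from \S\ref{subsec:irred_Krep}, I compute $\cT(E_{1,3}^{\gk})\xi_1=-\xi_3$, $\cT(E_{1,3}^{\gk})\xi_3=\xi_1$, whence by the Leibniz rule $\cT(E_{1,3}^{\gk})((\xi_1)^2+(\xi_3)^2)=0$; the operator $\cT(E_{2,4}^{\gk})$ annihilates $(\xi_1)^2+(\xi_3)^2$ trivially since it does not touch $\xi_1,\xi_3$. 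The formulas for $\cT(E^{\gk}_{a,b})\xi_{ij}$ likewise give $\cT(E_{1,3}^{\gk})\xi_{24}=\cT(E_{2,4}^{\gk})\xi_{24}=0$, so both building blocks of the vectors $w_0$, $w_p$, $w_{pq}$ are simultaneously killed by $E_{1,3}^{\gk}$ and $E_{2,4}^{\gk}$. For the $m_i$-action I record: $(\xi_1)^2+(\xi_3)^2$ is always $m_i$-invariant, while $\cT(m_i)\xi_{24}=(-1)^{\delta_{i,2}+\delta_{i,4}}\xi_{24}$, and each $\xi_p$ and $\xi_{pq}$ transforms by the appropriate sign $(-1)^{\delta_{i,p}}$ resp.\ $(-1)^{\delta_{i,p}+\delta_{i,q}}$.

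With these preparations, case (a) is immediate: the Leibniz rule gives $\tau_\lambda(E_{1,3}^{\gk})w_0=\tau_\lambda(E_{2,4}^{\gk})w_0=0$, and the $m_i$-invariance holds because $\xi_{24}^{\kappa_2}$ contributes a sign $(-1)^{\kappa_2(\delta_{i,2}+\delta_{i,4})}=1$ for $\kappa_2$ even. Case (c) is the analogue of Lemma \ref{lem:test1}(b) and Lemma \ref{lem:test2}(b),(e). One computes $\tau_\lambda(E_{1,3}^{\gk})w_p=0$ for $p\in\{2,4\}$ (since $\cT(E_{1,3}^{\gk})\xi_2=\cT(E_{1,3}^{\gk})\xi_4=0$), and $\tau_\lambda(E_{2,4}^{\gk})w_2=-w_4$, $\tau_\lambda(E_{2,4}^{\gk})w_4=w_2$. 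Combining, $w=w_2+\sqrt{-1}w_4$ satisfies $\tau_\lambda(E_{2,4}^{\gk})w=\sqrt{-1}w$. For the $m_i$-action, $m_1w=w$ is clear, and $m_2w=(-1)^{\kappa_2+1}w_2+\sqrt{-1}(-1)^{\kappa_2}w_4=(-1)^{\kappa_2+1}\overline{w}$; since $\kappa_1-\kappa_2$ is odd we have $(-1)^{\kappa_2+1}=(-1)^{\kappa_1}$, which is the stated formula, and $m_3=m_1m_2$ yields the same.

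Case (b) is the most involved step, and will be the main obstacle (though more bookkeeping than conceptual difficulty). Here $w\in\gp_\bC\otimes V_\lambda$, so I must track simultaneously the adjoint action on the $\gp_\bC$-factor and the $\tau_\lambda$-action on the $V_\lambda$-factor. Using the identities $\cT(E_{1,3}^{\gk})\xi_{12}=\xi_{23}$, $\cT(E_{1,3}^{\gk})\xi_{23}=-\xi_{12}$, $\cT(E_{1,3}^{\gk})\xi_{34}=\xi_{14}$, $\cT(E_{1,3}^{\gk})\xi_{14}=-\xi_{34}$ together with ${\rm ad}(E_{1,3}^{\gk})E_{1,2}^{\gp}=-E_{2,3}^{\gp}$, ${\rm ad}(E_{1,3}^{\gk})E_{2,3}^{\gp}=E_{1,2}^{\gp}$, ${\rm ad}(E_{1,3}^{\gk})E_{3,4}^{\gp}=E_{1,4}^{\gp}$, ${\rm ad}(E_{1,3}^{\gk})E_{1,4}^{\gp}=-E_{3,4}^{\gp}$, the eight resulting terms pair up and cancel, yielding $({\rm ad}\otimes\tau_\lambda)(E_{1,3}^{\gk})w=0$; the $E_{2,4}^{\gk}$-case is symmetric (interchanging the pairs $(12,34)$ and $(14,23)$ appropriately). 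For $m_i$-invariance I verify each summand separately: the $\gp$-factor contributes $(-1)^{\delta_{i,p}+\delta_{i,q}}$, the $\xi_{pq}$ inside $w_{pq}$ contributes the same sign, and $\xi_{24}^{\kappa_2-1}$ is invariant because $\kappa_2-1$ is even; hence each of the four pieces is individually $m_i$-fixed. Once this bookkeeping is completed, the lemma is established.
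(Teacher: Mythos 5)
Your proposal is correct and follows essentially the same route as the paper: both arguments reduce everything to the observation that $(\xi_1)^2+(\xi_3)^2$ and $\xi_{24}$ are annihilated by $\cT(E_{1,3}^{\gk})$ and $\cT(E_{2,4}^{\gk})$, then compute the action of $E_{1,3}^{\gk}$, $E_{2,4}^{\gk}$ (and, for part (b), $\mathrm{ad}$ on the $\gp_{\bC}$-factor) on the remaining generator $\xi_p$ or $\xi_{pq}$, and finish with the sign bookkeeping for $\tau_{\lambda}(m_i)$ exactly as in Lemmas \ref{lem:test1} and \ref{lem:test2}. The paper's proof merely records the resulting identities for $\tau_{\lambda}(E_{1,3}^{\gk})w_p$, $\tau_{\lambda}(E_{2,4}^{\gk})w_p$, $\tau_{\lambda}(E_{1,3}^{\gk})w_{pq}$, $\tau_{\lambda}(E_{2,4}^{\gk})w_{pq}$ and leaves the remaining cancellations and $m_i$-checks implicit, so your write-up is just a more detailed version of the same computation.
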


\begin{proof}
Note that 
\begin{align*}
 \tau_{(\kappa_1,\kappa_2,0)}(E_{1,3}^{\gk}) w_p 
& = \delta_{3,p} w_1 - \delta_{1,p} w_3,
\\
\tau_{(\kappa_1,\kappa_2,0)}(E_{2,4}^{\gk}) w_p 
& = \delta_{4,p} w_2 - \delta_{2,p} w_4,
\\
 \tau_{(\kappa_1,\kappa_2,0)}(E_{1,3}^{\gk}) w_{pq} 
& =   \delta_{3,p} w_{1q} - \delta_{1,p} w_{3q} + \delta_{3,q} w_{p1} - \delta_{1,q} w_{p3},
\\
  \tau_{(\kappa_1,\kappa_2,0)}(E_{2,4}^{\gk}) w_{pq} 
& =   \delta_{4,p} w_{2q} - \delta_{2,p} w_{4q} + \delta_{4,q} w_{p2} - \delta_{2,q} w_{p4}.
\end{align*}
\end{proof}

Now we can state our main results for archimedean zeta integrals.

\begin{thm}[{\cite[Theorem 4.1]{Ishii_001}}] \label{thm:BF1} 
Let $ \sigma = \chi_{(\nu_1,\delta_1)} \boxtimes \chi_{(\nu_2,\delta_2)} 
  \boxtimes \chi_{(\nu_3,\delta_3)} \boxtimes \chi_{(\nu_4,\delta_4)} $ 
with $ \nu_1,\nu_2,\nu_3,\nu_4 \in \bC $, 
$ \delta_1,\delta_2,\delta_3,\delta_4 \in \{0,1\} $ and 
$ \delta_1 \ge \delta_2 \ge \delta_3 \ge \delta_4 $ 
such that $ \Pi_{\sigma} $ is irreducible.
Let $ \varphi_{\sigma}: V_{(\delta_1-\delta_4, \delta_2-\delta_3, \delta_3)} \to {\rm Wh}(\Pi_{\sigma}, \psi_1)^{\rm mg} $
be the $K$-homomorphism given in Theorem \ref{thm:EF1}.
We define $ W \in {\rm Wh}(\Pi_{\sigma}, \psi_1)^{\rm mg} $ and $ \Phi \in \mathcal{S}(\bR^2) $ 
as follows.
\begin{itemize}
\item Case 1-(a): 
When $ (\delta_1,\delta_2,\delta_3,\delta_4) = (0,0,0,0) $, we set
$$ W = \varphi_{\sigma}(u_{\bf 0}). $$
\item Case 1-(b): 
When $ (\delta_1,\delta_2,\delta_3,\delta_4) = (1,0,0,0) $, we set
$$ W = (-\sqrt{-1}) \varphi_{\sigma}( u_{e_2} +\sqrt{-1} u_{e_4}). $$
\item Case 1-(c): 
When $ (\delta_1,\delta_2,\delta_3,\delta_4) = (1,1,0,0) $, we set
\begin{align*}
 W & = (-\sqrt{-1}) (4\pi)^{-1}
    \{ R(E_{1,2}^{\gp}) \vp_{\sigma}(u_{e_{12}})  - R( E_{2,3}^{\gp}) \vp_{\sigma}(u_{e_{23}}) 
   + R(E_{3,4}^{\gp}) \vp_{\sigma}(u_{e_{34}}) + R(E_{1,4}^{\gp}) \vp_{\sigma}( u_{e_{14}}) \}.
\end{align*}
\item Case 1-(d): 
When $ (\delta_1,\delta_2,\delta_3,\delta_4) = (1,1,1,0) $, we set
\begin{align*}
 W & = (-\sqrt{-1}) (4\pi)^{-1}
    \{ R(-E_{3,4}^{\gp}+\sqrt{-1} E_{2,3}^{\gp}) \vp_{\sigma}(u_{e_1})  
  + R(E_{1,4}^{\gp} - \sqrt{-1} E_{1,2}^{\gp}) \vp_{\sigma}(u_{e_3}) \}.
\end{align*}
\item Case 1-(e):
When $ (\delta_1,\delta_2,\delta_3,\delta_4) = (1,1,1,1) $, we set
$$
  W = (4 \pi)^{-2} R(E_{1,4}^{\gp} E_{2,3}^{\gp}-E_{1,2}^{\gp} E_{3,4}^{\gp}) \vp_{\sigma}(u_{\bf 0}).
$$
\end{itemize}
Here $ R $ is the right differential.
Further we choose $ b \in \{0,1\} $ such that 
$ b \equiv \delta_1+\delta_2+\delta_3+ \delta_4 \pmod{2}$,  
and set $ \Phi = \Phi_{(0,b)} $.
Then we have 
\begin{align*}
  Z(s_1,s_2,W,\Phi) &= L(s_1,\Pi_{\sigma}) L(s_2,\Pi_{\sigma}, \wedge^2),
\\
 Z(1-s_1,1-s_2,\widetilde{W}, \widehat{\Phi}) & = 
 \varepsilon(s_1,\Pi_{\sigma},\psi_{\bR}) \varepsilon(s_2, \Pi_{\sigma},\wedge^2, \psi_{\bR}) 
 L(1-s_1, \widetilde{\Pi}_{\sigma}) L(1-s_2, \widetilde{\Pi}_{\sigma}, \wedge^2).  
\end{align*}
\end{thm}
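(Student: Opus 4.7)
The plan is to apply Lemma \ref{lem:W'} to reduce $Z(s_1,s_2,W,\Phi_{(0,b)})$ to evaluating $\GR(2s_2+\gamma_1+b)\,V'(s_1,s_2,s_1+s_2)$, where $V'(s_1,s_2,s_3)$ is the Mellin--Barnes kernel of the function $W'(\hat y)$ obtained from $W$ by averaging over $K_2\times K_2$ and summing over the four sign matrices $m_0,\dots,m_3$. Writing $W = \vp_\sigma(v)$ in cases (a), (b), $W = R(X)\vp_\sigma(v)$ with $X\in\gp_{\bC}$ in cases (c), (d), and $W = (4\pi)^{-2} R(X)\vp_\sigma(u_{\mathbf 0})$ with $X\in U(\gp_{\bC})$ in case (e), I would use the $K$-equivariance $\vp_\sigma(v)(gk)=\vp_\sigma(\tau_\lambda(k)v)(g)$ together with the identification of $\tilde\iota(\rk_{\theta_1}^{(2)},\rk_{\theta_2}^{(2)})$ as the one-parameter subgroup generated by $\theta_1 E_{1,3}^{\gk}+\theta_2 E_{2,4}^{\gk}$. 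By Lemma \ref{lem:test1}, our chosen $v$ is a simultaneous eigenvector of $\tau_\lambda(E_{1,3}^{\gk})$ and $\tau_\lambda(E_{2,4}^{\gk})$ with eigenvalues in $\{0,\sqrt{-1}\}$, and $\tau_\lambda(m_i)v\in\{v,-\overline v\}$; the prescription $b\equiv\delta_1+\delta_2+\delta_3+\delta_4\pmod 2$ is precisely what makes the Fourier integral $\int_0^{2\pi} e^{\sqrt{-1}(c_2-b)\theta_2}\,d\theta_2/(2\pi)$ equal to $1$. Cases (c)--(e) additionally require converting $R(E_{i,j}^{\gp})=2R(E_{i,j})-R(E_{i,j}^{\gk})$ into explicit actions via Lemma \ref{lem:Eij_radial} and Lemma \ref{lem:Kact_ul_vl}.

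Having reduced $W'(\hat y)$ to an explicit linear combination of $\vp_\sigma(u_l)(\hat y)$ (and derivatives $\pa_i$ of these in the differentiated cases), I would read off $V'(s_1,s_2,s_3)$ from the formulas of Theorem \ref{thm:EF1} and substitute $s_3=s_1+s_2$. At this specialization, the prefactor $\GR(2s_2+\gamma_1+b)$ combines with four of the outer Gamma factors of $V_{\sigma,l}$ to reconstruct the six-factor product $L(s_2,\Pi_\sigma,\wedge^2)=\prod_{1\le i<j\le 4}\GR(s_2+\nu_i+\nu_j+|\delta_i-\delta_j|)$ from Section \ref{subsec:L_and_varepsilon}. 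The remaining $q$-integral is of Barnes type, and Lemma \ref{lem:Barnes1st} (or Lemma \ref{lem:Barnes2nd}, when a Pochhammer prefactor binds two numerator Gamma's to one denominator Gamma) evaluates it to $L(s_1,\Pi_\sigma)=\prod_i\GR(s_1+\nu_i+\delta_i)$. In cases (c)--(e), the extra polynomial factors introduced by the $\gp$-differentials produce binomial-type sums over internal indices that I expect to collapse via the Saalsch\"utz-type identity in Lemma \ref{lem:Saal}.

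The contragredient identity for $Z(1-s_1,1-s_2,\widetilde W,\widehat\Phi)$ follows by repeating the preceding computation with $\vp_\sigma$ replaced by $\widetilde\vp_\sigma$ through Proposition \ref{prop:contra_radial}; Lemma \ref{lem:contra} translates the right $K$-translation and the differential operators $R(E_{i,j}^{\gp})$ acting on $\widetilde W$ into the corresponding radial operations on $\widetilde\vp_\sigma$. Since $\widetilde\Pi_\sigma\cong\Pi_{\tilde\sigma}$ with $\nu_i\mapsto-\nu_i$, the Barnes evaluations immediately yield $L(1-s_1,\widetilde\Pi_\sigma)L(1-s_2,\widetilde\Pi_\sigma,\wedge^2)$. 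The scalar $\varepsilon(s_1,\Pi_\sigma,\psi_\bR)\,\varepsilon(s_2,\Pi_\sigma,\wedge^2,\psi_\bR)=(\sqrt{-1})^{\sum_i\delta_i+\sum_{i<j}|\delta_i-\delta_j|}$ emerges from the interplay between the signs $\tau_\lambda(m_i)v\in\{v,-\overline v\}$ in Lemma \ref{lem:test1} and the Fourier transform of $\Phi_{(0,b)}$, which contributes a factor $(-\sqrt{-1})^b$ on passing from $\Phi$ to $\widehat\Phi$. The main obstacle will be the $q$-integral in cases (c)--(e), where right-differentiation by $\gp$-elements introduces polynomial pre-factors inside the Mellin--Barnes kernel; the delicate algebraic cancellation that removes any spurious polynomial $P(s_1,s_2)$ and leaves exactly a product of Gamma functions is precisely what distinguishes our chosen test vector, and verifying this requires careful bookkeeping of the relations in Lemma \ref{lem:rel_ul} together with orchestrated applications of Barnes' second lemma with parameters shifted appropriately.
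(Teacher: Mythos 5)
Your proposal follows essentially the same route as the paper's proof in \S \ref{subsec:BF_case1}: compute $W'$ via Lemma \ref{lem:test1} and the conversion $E_{i,j}^{\gp}=2E_{i,j}-E_{i,j}^{\gk}$ with Lemma \ref{lem:Eij_radial}, reduce to $\GR(2s_2+\gamma_1+b)V'(s_1,s_2,s_1+s_2)$ by Lemma \ref{lem:W'}, read off the kernel from Theorem \ref{thm:EF1}, recombine the shifted kernels inside the $q$-integral and apply Barnes' second lemma (packaged as Lemma \ref{lem:BFlem1}), and treat the dual integral via Proposition \ref{prop:contra_radial} and Lemma \ref{lem:contra}. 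Two minor corrections: in case 1 no binomial sums arise, so Lemma \ref{lem:Saal} is not needed (that step only enters in Theorems \ref{thm:BF2} and \ref{thm:BF3}), and the Fourier transform gives $\widehat{\Phi}_{(0,b)}=(\sqrt{-1})^{b}\Phi_{(0,b)}$, not a factor $(-\sqrt{-1})^{b}$.
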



\begin{thm} \label{thm:BF2}
Let $ \sigma = D_{(\nu_1,\kappa_1)} \boxtimes \chi_{(\nu_2,\delta_2)} 
  \boxtimes \chi_{(\nu_3,\delta_3)}  $ 
with $ \nu_1,\nu_2,\nu_3\in \bC $, $ \kappa_1 \in \bZ_{\ge 2} $, 
$ \delta_2, \delta_3 \in \{0,1\} $ and 
$ \delta_2 \ge \delta_3 $ 
such that $ \Pi_{\sigma} $ is irreducible.
We define $ \delta_1 \in \{0, 1\} $ by $ \delta_1 \equiv \kappa_1 \pmod 2 $.
Let $ \varphi_{\sigma} : V_{(\kappa_1, \delta_2-\delta_3, \delta_3)} \to {\rm Wh}(\Pi_{\sigma}, \psi_1)^{\rm mg} $
be the $K$-homomorphism given in Theorem \ref{thm:EF2}.
Under the notation in Lemma \ref{lem:test2}, 
we define $ W \in {\rm Wh}(\Pi_{\sigma}, \psi_1)^{\rm mg} $ and $ \Phi \in \mathcal{S}(\bR^2) $ 
as follows.
\begin{itemize}
\item
Case 2-(a): 
When $ (\delta_1, \delta_2,\delta_3) = (0, 0,0) $, we set
$$ W  = (-\sqrt{-1})^{\kappa_1} \vp_{\sigma} (w_0).   $$
\item 
Case 2-(b): 
When $ (\delta_1, \delta_2,\delta_3) = (1, 0,0) $, we set
$$ W = (-\sqrt{-1})^{\kappa_1} \vp_{\sigma} (w_2+\sqrt{-1} w_4). $$
\item 
Case 2-(c): 
When $ (\delta_1, \delta_2,\delta_3) = (0,1,1) $, we set
\begin{align*}
 W & = (-\sqrt{-1})^{\kappa_1} (4\pi)^{-1}
     \{ R(E_{1,2}^{\gp}) \vp_{\sigma}(w_{3,4}) - R(E_{2,3}^{\gp}) \vp_{\sigma} (w_{1,4})
   +R(E_{3,4}^{\gp}) \vp_{\sigma}(w_{1,2}) - R(E_{1,4}^{\gp}) \vp_{\sigma}(w_{2,3}) \}.
\end{align*}
\item 
Case 2-(d): 
When $ (\delta_1, \delta_2,\delta_3) = (1, 1,1) $, we set
\begin{align*}
 W & =  (-\sqrt{-1})^{\kappa_1} (4\pi)^{-1}
 \{ R(-E_{3,4}^{\gp}+\sqrt{-1} E_{2,3}^{\gp}) \vp_{\sigma}( w_1)  
   + R(E_{1,4}^{\gp} - \sqrt{-1} E_{1,2}^{\gp}) \vp_{\sigma}(w_3) \}.
\end{align*}
\item 
Case 2-(e):
When $ (\delta_1, \delta_2,\delta_3) = (0, 1,0) $, we set
\begin{align*}
 W & = (-\sqrt{-1})^{\kappa_1} \vp_{\sigma} (- \sqrt{-1} w_{2,24} + w_{4,24}).
\end{align*}
\item 
Case 2-(f): 
When $ (\delta_1, \delta_2,\delta_3) = (1,1,0) $, we set
\begin{align*} 
 W & =  (-\sqrt{-1})^{\kappa_1} (4\pi)^{-1}  \{ R(E_{1,2}^{\gp}) \vp_{\sigma}(w_{12}) - R(E_{2,3}^{\gp}) \vp_{\sigma}(w_{23}) 
   +R(E_{3,4}^{\gp}) \vp_{\sigma}(w_{34}) +R(E_{1,4}^{\gp}) \vp_{\sigma}(w_{14}) \}.
\end{align*}
\end{itemize}
Here $ R $ is the right differential.
Further we choose $ b \in \{0,1\} $ such that 
$ b \equiv \delta_1+\delta_2+\delta_3 \pmod{2}$, 
and set $ \Phi = \Phi_{(0,b)} $. Then we have
\begin{align*}
  Z(s_1,s_2,W,\Phi) &= L(s_1,\Pi_{\sigma}) L(s_2,\Pi_{\sigma}, \wedge^2),
\\
 Z(1-s_1,1-s_2,\widetilde{W}, \widehat{\Phi}) & = 
 \varepsilon(s_1,\Pi_{\sigma},\psi_{\bR}) \varepsilon(s_2, \Pi_{\sigma},\wedge^2, \psi_{\bR}) 
 L(1-s_1, \widetilde{\Pi}_{\sigma}) L(1-s_2, \widetilde{\Pi}_{\sigma}, \wedge^2).  
\end{align*}
\end{thm}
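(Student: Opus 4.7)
The plan is to mirror the strategy of Theorem \ref{thm:BF1} (already treated in \cite{Ishii_001}) but using the explicit formula in Theorem \ref{thm:EF2} and the generator-based choices of test vectors supplied by Lemma \ref{lem:test2}. The starting point in each of cases 2-(a)--(f) is the observation that the vector $w \in V_{(\kappa_1,\delta_2-\delta_3,\delta_3)}$, respectively $w \in \gp_{\bC} \otimes V_{(\kappa_1,\delta_2-\delta_3,\delta_3)}$, produced in Lemma \ref{lem:test2} is a simultaneous eigenvector for $E_{1,3}^{\gk}$ and $E_{2,4}^{\gk}$ and transforms in a controlled way under the matrices $m_0,\dots,m_3$. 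This is precisely the $\mathrm{K}_2\times \mathrm{K}_2$-covariance needed to collapse the integration $\int_0^{2\pi}\!\int_0^{2\pi}$ and the sum over $(\varepsilon_1,\varepsilon_2)$ in the definition of $W'(\hat{y})$ (Lemma \ref{lem:W'}) to a constant multiple of the radial part, after choosing $\Phi=\Phi_{(0,b)}$ with the parity condition on $b$ so that the character $\exp\{\sqrt{-1}(a-b)\theta_2\}$ matches the $E_{2,4}^{\gk}$-eigenvalue of $w$.

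First I would compute $W'(\hat{y})$ case by case. In the ``vector'' cases 2-(a), (b), (e) no differential operator appears and $W'(\hat{y})$ is directly a linear combination of the radial parts $\varphi_{\sigma}(u_l)(\hat{y})$ from Theorem \ref{thm:EF2}, weighted by scalars arising from the expansion of $w_0,w_p,w_{p,q},w_{p,qr}$ in the $u_l$-basis using the definitions in \S\ref{subsec:irred_Krep}. In the ``$\gp_{\bC}\otimes V_\lambda$'' cases 2-(c), (d), (f) the operator $R(E_{i,j}^{\gp})$ acts on the radial part by Lemma \ref{lem:Eij_radial}; the cross terms involving $E_{1,3}^{\gp}$, $E_{1,4}^{\gp}$, $E_{2,4}^{\gp}$ after conjugation by $\tilde{\iota}(\rk_{\theta_1}^{(2)},\rk_{\theta_2}^{(2)})$ vanish upon integration against $d\theta_1\,d\theta_2$, leaving only contributions from $E_{1,2}^{\gp}$, $E_{2,3}^{\gp}$, $E_{3,4}^{\gp}$. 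Then Lemma \ref{lem:W'} converts the resulting Mellin kernel $V'(s_1,s_2,s_3)$ at $s_3=s_1+s_2$ into the zeta integral, multiplied by $\Gamma_\bR(2s_2+\gamma_1+b)$.

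Second, I would evaluate the specialisation $V_{\sigma,l}(s_1,s_2,s_1+s_2)$ using Theorem \ref{thm:EF2}. Barnes' first lemma (Lemma \ref{lem:Barnes1st}) is tailored precisely to collapse the $q$-integral in that formula once $s_3=s_1+s_2$, and in cases 2-(c), (d), (f) where finite sums $\sum_{i_{14}}\sum_{i_{23}}$ appear, the Saalsch\"utzian identity in Lemma \ref{lem:Saal} telescopes them. The key cancellation to aim for is that the shifts in the gamma factors produced by the differential operators $R(E_{i,i+1}^{\gp})=2\pi\sqrt{-1}\, y_i+\cdots$ must combine with the normalising factor $\Gamma_\bR(2s_2+\gamma_1+b)$ to give exactly
\[
L(s_1,\Pi_\sigma)L(s_2,\Pi_\sigma,\wedge^2),
\]
as described in \S\ref{subsec:L_and_varepsilon} for this $\sigma$. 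The scalar $(-\sqrt{-1})^{\kappa_1}$ in front of $W$ is the normalisation that makes the overall constant equal to $1$; it will be read off from the computation.

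For the functional-equation half, I would use Proposition \ref{prop:contra_radial} together with Lemma \ref{lem:contra}. Lemma \ref{lem:contra}(ii) shows that $\widetilde{W}$ arises from the same combination of generators applied to $\widetilde{\varphi}_\sigma$, up to an overall sign and a contribution from the Cartan part of $E_{i,i+1}^{\gp}$ that only shifts parameters. A short check shows $\widehat{\Phi_{(0,b)}}=(-\sqrt{-1})^b\Phi_{(0,b)}$, so $Z(1-s_1,1-s_2,\widetilde{W},\widehat{\Phi})$ is then computed by the very same manipulation applied to $\tilde{\sigma}=\sigma$ with $\nu_i\mapsto -\nu_i$, and yields $L(1-s_1,\widetilde{\Pi}_\sigma)L(1-s_2,\widetilde{\Pi}_\sigma,\wedge^2)$ up to the archimedean epsilon factors $\varepsilon(s_1,\Pi_\sigma,\psi_\bR)\varepsilon(s_2,\Pi_\sigma,\wedge^2,\psi_\bR)=(\sqrt{-1})^{\kappa_1+\delta_2+\delta_3}\cdot(\sqrt{-1})^{2\kappa_1+\delta_1+|\delta_2-\delta_3|}$, which matches the explicit power of $\sqrt{-1}$ collected along the way.

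The hard part will be case 2-(f) (and symmetrically 2-(c)), where four differential operators act on distinct vectors $w_{pq}$ and each radial part already carries a nontrivial double sum $\sum_{i_{14}}\sum_{i_{23}}$. Lining up the four contributions so that the shifts $s_i\to s_i+1$ and the variable $q$-shifts combine into a single Mellin--Barnes integral to which Barnes' second lemma (Lemma \ref{lem:Barnes2nd}) applies, and then checking that the polynomial prefactor $P(s_1,s_2)$ equals $1$ rather than some nontrivial polynomial, is the genuine computational obstruction. As noted in the introduction, there is no a priori reason why $P(s_1,s_2)=1$; the specific choice of generator combinations in Lemma \ref{lem:test2} is precisely what should force the cancellation, and verifying this is where the use of generators pays off over a Gelfand--Tsetlin basis.
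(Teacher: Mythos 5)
Your plan follows the paper's route exactly: average over $K_2\times K_2$ and the $m_i$ using the covariance in Lemma \ref{lem:test2}, convert to the Mellin kernel at $s_3=s_1+s_2$ via Lemma \ref{lem:W'}, feed in Theorem \ref{thm:EF2}, evaluate with the Barnes lemmas and Lemma \ref{lem:Saal}, and treat the contragredient side with Proposition \ref{prop:contra_radial} and Lemma \ref{lem:contra}. However, as a proof it has a genuine gap: the decisive step --- showing that the polynomial prefactor is identically $1$ in each of the six cases --- is exactly what you defer (``verifying this is where the use of generators pays off''), and this is the entire content of the theorem; nothing in your outline forces the cancellation, and the paper's proof consists precisely of the term-by-term gamma-identity manipulations you omit.

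Two structural claims in the outline are also inaccurate and would derail the computation if followed literally. First, in cases 2-(c), (d), (f) it is not true that the contributions of $E_{1,4}^{\gp}$ (or of ``cross terms'') vanish after the $\theta$-integration: writing $E_{i,j}^{\gp}=2E_{i,j}-E_{i,j}^{\gk}$, only the pure radial part of $R(E_{1,4})$ dies by Lemma \ref{lem:Eij_radial}; the $\gk$-part $\tau_{\lambda}(E_{1,4}^{\gk})w_{2,3}$ (resp.\ $\tau_{\lambda}(E_{1,4}^{\gk})w_{14}$) survives and is essential --- in case 2-(c) it cancels the $\tau_{\lambda}(E_{1,2}^{\gk})w_{3,4}$ term, and in case 2-(f) all four $\gk$-terms are combined, using the relations among the generators $u_l$ from Lemma \ref{lem:rel_ul}, into a single sum with coefficient $2(\kappa_1-2j-1)$ whose presence is needed to make the final polynomial factor equal to $1$. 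Second, the Saalsch\"utz identity (Lemma \ref{lem:Saal}) is applied to the binomial sum over $j$ arising from expanding $((\xi_1)^2+(\xi_3)^2)^m$ in the test vectors of Lemma \ref{lem:test2}, not to the sums $\sum_{i_{14}}\sum_{i_{23}}$ in Theorem \ref{thm:EF2} (which are trivial for the chosen vectors, since $l_{14}=l_{23}=0$ there); the $q$-integral at $s_3=s_1+s_2$ is closed by Barnes' \emph{second} lemma together with the duplication formula, not the first. Finally, the constant in the Fourier transform should be $\widehat{\Phi}_{(0,b)}=(\sqrt{-1})^{b}\Phi_{(0,b)}$, not $(-\sqrt{-1})^{b}$; with your sign the $\varepsilon$-factor identity would come out wrong for odd $b$.
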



\begin{thm} \label{thm:BF3}
Let $ \sigma = D_{(\nu_1,\kappa_1)} \boxtimes D_{(\nu_2,\kappa_2)} $ 
with $ \nu_1,\nu_2  \in \bC $, $ \kappa_1, \kappa_2 \in \bZ_{\ge 2} $
and $ \kappa_1 \ge \kappa_2 $ 
such that $ \Pi_{\sigma} $ is irreducible.
We define $ \delta_1, \delta_2 \in \{0,1\} $ by $ \delta_1 \equiv \kappa_1 $,  $\delta_2 \equiv \kappa_2 \pmod 2$. 
Let $ \varphi_{\sigma} : V_{(\kappa_1, \kappa_2, 0)} \to {\rm Wh}(\Pi_{\sigma}, \psi_1)^{\rm mg} $
be the $K$-homomorphism given in Theorem \ref{thm:EF3}.
Under the notation in Lemma \ref{lem:test3}, 
we define $ W \in {\rm Wh}(\Pi_{\sigma}, \psi_1)^{\rm mg} $ and $ \Phi \in \mathcal{S}(\bR^2) $ 
as follows.
\begin{itemize}
\item 
Case 3-(a): 
When $(\delta_1,\delta_2)=(0,0) $, we set
\begin{align*}
 W = (-\sqrt{-1})^{\kappa_1} \vp_{\sigma}(w_0).
\end{align*}
\item 
Case 3-(b):
When $(\delta_1,\delta_2)=(1,1) $, we set
\begin{align*}
 W & = (-\sqrt{-1})^{\kappa_1}  (4\pi)^{-1} \{R( E_{1,2}^{\gp}) \vp_{\sigma}(w_{12}) - R(E_{2,3}^{\gp}) \vp_{\sigma}(w_{23}) 
      +R( E_{3,4}^{\gp} ) \vp_{\sigma}(w_{34}) +R( E_{1,4}^{\gp}) \vp_{\sigma}(w_{14}) \}.
\end{align*}
\item
Case 3-(c):
When $\delta_1 \neq \delta_2 $, we set
\begin{align*}
  W=  (-\sqrt{-1})^{\kappa_1+\delta_2} \vp_{\sigma}(w_2 + \sqrt{-1} w_4).
\end{align*}
\end{itemize}
Here $ R $ is the right differential.
Further we choose $ b \in \{0,1\} $ such that 
$ b \equiv \kappa_1+\kappa_2 \pmod{2} $, 
and set $ \Phi = \Phi_{(0,b)} $. Then we have
\begin{align*}
  Z(s_1,s_2,W,\Phi) &= L(s_1,\Pi_{\sigma}) L(s_2,\Pi_{\sigma}, \wedge^2),
\\
 Z(1-s_1,1-s_2,\widetilde{W}, \widehat{\Phi}) & = 
 \varepsilon(s_1,\Pi_{\sigma},\psi_{\bR}) \varepsilon(s_2, \Pi_{\sigma},\wedge^2, \psi_{\bR}) 
  L(1-s_1, \widetilde{\Pi}_{\sigma}) L(1-s_2, \widetilde{\Pi}_{\sigma}, \wedge^2).  
\end{align*}
\end{thm}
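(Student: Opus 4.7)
The plan is to compute $Z(s_1,s_2,W,\Phi_{(0,b)})$ via the reduction in Lemma~\ref{lem:W'}, which identifies $Z(s_1,s_2,W,\Phi_{(0,b)})$ with $\GR(2s_2+\gamma_1+b)\cdot V'(s_1,s_2,s_1+s_2)$, where $V'$ is the Mellin–Barnes kernel of the ${\rm O}(2)\times{\rm O}(2)$-averaged radial function $W'(\hat y)$. The main work then splits into (i) performing the $(\theta_1,\theta_2)$ and $\varepsilon_1,\varepsilon_2$ averages explicitly, (ii) substituting the Mellin kernel from Theorem~\ref{thm:EF3}, and (iii) collapsing the resulting inner $q$-integrals and binomial sums.

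For step (i), I would exploit the identification $\tilde{\iota}(\rk_{\theta_1}^{(2)},\rk_{\theta_2}^{(2)})=\exp(\theta_1 E_{1,3}^{\gk}+\theta_2 E_{2,4}^{\gk})$, which is precisely the one-parameter subgroup relative to which the test vectors of Lemma~\ref{lem:test3} were engineered. In Cases~3-(a) and~3-(b) the vector (or $\gp_{\bC}$-tensor) $w$ is annihilated by both $E_{1,3}^{\gk}$ and $E_{2,4}^{\gk}$, so the angular integration is trivial and matches $b=0$; in Case~3-(c), $w$ is a weight-$1$ eigenvector for $E_{2,4}^{\gk}$, and the Fourier mode $e^{\sqrt{-1}(a-b)\theta_2}$ picks it out precisely when $(a,b)=(0,1)$. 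The $m_i$-transformation records in Lemma~\ref{lem:test3} then collapse $\sum_{p=0}^{3}$ to twice $\vp_\sigma(w)(\hat y)$, or $\vp_\sigma(w)+\vp_\sigma(\overline{w})$ in Case~(c), and the normalizing constants $(-\sqrt{-1})^{\kappa_1}$ and $(-\sqrt{-1})^{\kappa_1+\delta_2}$ in the definition of $W$ are inserted exactly to cancel the phases $(\sqrt{-1})^{-l_1+l_3-l_{13}+l_{24}}(-1)^{l_2+l_{14}+l_{23}}$ built into the Mellin kernels of Theorem~\ref{thm:EF3}.

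For steps (ii)–(iii): in Case~3-(a) I would expand $w_0=\rrq_\cR(((\xi_1)^2+(\xi_3)^2)^{(\kappa_1-\kappa_2)/2}\xi_{24}^{\kappa_2})$ in the basis $\{u_l\}$ via the binomial theorem and the relations of Lemma~\ref{lem:rel_ul}, substitute into Theorem~\ref{thm:EF3}, close the inner $q$-integral with Barnes' first lemma (Lemma~\ref{lem:Barnes1st}), and collapse the binomial summation using Lemma~\ref{lem:Saal}. Case~3-(c) is analogous, with the weight-$1$ structure producing a half-integer shift. Case~3-(b) first requires differentiation: by Lemma~\ref{lem:Eij_radial}, each $R(E_{i,i+1}^{\gp})\vp_\sigma(w_{i,i+1})$ produces $4\pi\sqrt{-1}\,y_i\vp_\sigma(w_{i,i+1})+\vp_\sigma(\tau(E_{i,i+1}^{\gk})w_{i,i+1})$ — a shift $s_i\mapsto s_i-1$ in Mellin coordinates plus a $K$-type correction — while $R(E_{1,4}^{\gp})\vp_\sigma(w_{14})$ contributes only through its $K$-component, since $R(E_{1,4})$ annihilates $A$-radial functions. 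Barnes' second lemma (Lemma~\ref{lem:Barnes2nd}) then closes the modified $q$-integral.

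The main obstacle is the combinatorial cancellation in Case~3-(b): one must show that the four short-root contributions $\pm R(E_{i,i+1}^{\gp})\vp_\sigma(w_{i,i+1})$ together with the long-root $R(E_{1,4}^{\gp})$-term collapse exactly to $\GR(2s_2+\gamma_1)^{-1}L(s_1,\Pi_\sigma)L(s_2,\Pi_\sigma,\wedge^2)$, with no residual polynomial in $(s_1,s_2)$. As emphasized in the introduction, such a cancellation is not a priori guaranteed, and the precise linear combination with its specific signs in the definition of $W$ must be discovered by direct coefficient-matching against the expected product of $L$-factors. The dual identity $Z(1-s_1,1-s_2,\widetilde{W},\widehat{\Phi})=\varepsilon(s_1,\Pi_\sigma,\psi_{\bR})\,\varepsilon(s_2,\Pi_\sigma,\wedge^2,\psi_{\bR})L(1-s_1,\widetilde{\Pi}_\sigma)L(1-s_2,\widetilde{\Pi}_\sigma,\wedge^2)$ then follows from Proposition~\ref{prop:contra_radial}, the explicit Fourier transform $\widehat{\Phi_{(0,b)}}=(-\sqrt{-1})^b\Phi_{(b,0)}$, and the $\nu_i\mapsto-\nu_i$ symmetry $V_{\sigma,\tilde l}(s_3-\gamma_1,s_2-\gamma_1,s_1-\gamma_1)=V_{\tilde\sigma,l}(s_1,s_2,s_3)$, with the accumulated factors of $(-\sqrt{-1})$ assembling into the $\varepsilon$-factors computed in \S\ref{subsec:L_and_varepsilon}.
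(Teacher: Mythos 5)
Your outline follows the paper's own route almost step for step: reduce via Lemma \ref{lem:W'}, project onto the $\mathrm{O}(2)\times\mathrm{O}(2)$-isotypic component using the test vectors of Lemma \ref{lem:test3}, substitute the Mellin--Barnes kernels of Theorem \ref{thm:EF3} at $s_3=s_1+s_2$, collapse the binomial sum by Saalsch\"utz (Lemma \ref{lem:Saal}) and the $q$-integral by a Barnes lemma, treat case 3-(b) through $E_{i,j}^{\gp}=2E_{i,j}-E_{i,j}^{\gk}$ and Lemma \ref{lem:Eij_radial} with an explicit multi-term cancellation, and deduce the dual identity from Proposition \ref{prop:contra_radial}. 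This is exactly how the paper proceeds (its Lemma \ref{lem:BFlem3} packages the Saalsch\"utz step and the Barnes step), so the plan is sound.

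Three concrete statements in your sketch need correction, though none changes the strategy. First, in case 3-(a) the inner $q$-integral cannot be closed by Barnes' \emph{first} lemma: after the Saalsch\"utz collapse the integrand has the shape $\GR(s_1-q+\cdot)\,\GC(s_2-q+\cdot)\,\GC(q+\cdot)/\GR(s_1+2s_2-q+\cdot)$, i.e.\ three $\GR$'s in $-q$, two in $+q$ and one denominator, which is the balanced configuration of Barnes' \emph{second} lemma (Lemma \ref{lem:Barnes2nd}) combined with the duplication formula; moreover the extra ratio $\GR(s_1+s_2-q+\cdot)/\GR(s_1+s_2-q+\cdot)$ present before summation is absorbed inside the Saalsch\"utz step, so the order ``sum first, then Barnes II'' is forced. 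Second, the $K$-correction enters with a minus sign: $R(E_{i,i+1}^{\gp})\vp_{\sigma}(w)(y)=4\pi\sqrt{-1}\,y_i\,\vp_{\sigma}(w)(y)-\vp_{\sigma}(\tau_{(\kappa_1,\kappa_2,0)}(E_{i,i+1}^{\gk})w)(y)$, and these signs are precisely what makes the eight-term cancellation in case 3-(b) close up, so they cannot be left to chance. Third, your Fourier-transform identity is wrong under the paper's normalization: one has $\widehat{\Phi}_{(0,b)}=(\sqrt{-1})^{b}\,\Phi_{(0,b)}$ (the function is a Fourier eigenfunction; it is not sent to $(-\sqrt{-1})^{b}\Phi_{(b,0)}$), and this constant is exactly what assembles, together with the phases from Proposition \ref{prop:contra_radial} and Lemma \ref{lem:contra}, into the asserted $\varepsilon$-factors, so using your formula would yield the wrong constant in the second displayed identity.
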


In the next three subsections we prove Theorems \ref{thm:BF1}, \ref{thm:BF2} and \ref{thm:BF3}. 


\subsection{Proof of Theorem \ref{thm:BF1}} 
\label{subsec:BF_case1}

By Lemma \ref{lem:test1}, we know that
\begin{align*}
W'(\hat{y}) 
& = 2^{-2} \sum_{0 \le i \le 3} 
  \int_0^{2\pi} \! \int_0^{2\pi}
   W \bigl( m_i \, \hat{y} \, \tilde{\iota}(\rk_{\theta_1}^{(2)}, \rk_{\theta_2}^{(2)} ) \bigr) 
   \exp(- \sqrt{-1} b\, \theta_2) \, \frac{d\theta_1}{2\pi} \frac{d\theta_2}{2\pi}
\\
& = 
\begin{cases}
   W(\hat{y}) & \mbox{cases 1-(a), (c), (e)}, \\
   \varphi_{\sigma} (u_{e_4})(\hat{y}) & \mbox{case 1-(b)}, \\
   (4\pi)^{-1} \{ R(E_{2,3}^{\gp}) \varphi_{\sigma}(u_{e_1})(\hat{y}) 
    -R(E_{1,2}^{\gp}) \varphi_{\sigma}(u_{e_3})(\hat{y}) \} & \mbox{case 1-(d)}.
 \end{cases}
\end{align*}
Then Lemma \ref{lem:W'} implies that  
\begin{align} \label{eqn:lem_W'_case1}
 Z(s_1,s_2,W, \Phi_{(0,b)}) 
= \GR(2s_2+\gamma_1+b) V'(s_1,s_2,s_1+s_2).
\end{align}
Here we use the notation $ \gamma_1 = \nu_1+\nu_2+\nu_3+\nu_4. $
We express the Mellin-Barnes kernel $ V'(s_1,s_2,s_3) $ of $   W(\hat{y}) $ 
in terms of $ V_{\sigma, l}(s_1,s_2,s_3) $ given in Theorem \ref{thm:EF1}, and compute the zeta integral
by using the following lemma.

\begin{lem} \label{lem:BFlem1}
For $ s_1,s_2 \in \bC $ and $ \mu = (\mu_1,\mu_2,\mu_3,\mu_4) \in \bC^4 $, we set 
\begin{align*}
 A(a_1,a_2,a_3,a_4,a_5,a_6,a_7;\mu) 
& = \GR(2s_2+\mu_1+\mu_2+\mu_3+\mu_4+a_1) \GR(s_1+\mu_1+a_2) \GR(s_1+\mu_2+a_3)   
\\
& \quad \times 
  \GR(s_2+\mu_1+\mu_2+a_4) \GR(s_2+\mu_3+\mu_4+a_5) \\
& \quad \times \GR(s_1+s_2+\mu_1+\mu_3+\mu_4+a_6) \GR(s_1+s_2+\mu_2+\mu_3+\mu_4+a_7),
\end{align*}
\begin{align*}
 B_0(b_1,b_2,b_3,b_4,b_5,b_6; b_7,b_8; \mu) 
& = \frac{\GR(s_1-q+b_1)\GR(s_2-q+\mu_1+b_2) \GR(s_2-q+\mu_2+b_3) }
   {\GR(s_1+s_2-q+\mu_1+\mu_2+b_7) \GR(s_1+2s_2-q+\mu_1+\mu_2+\mu_3+\mu_4+b_8)} 
\\
& \quad \times \GR(s_1+s_2-q+\mu_1+\mu_2+b_4) \GR(q+\mu_3+b_5) \GR(q+\mu_4+b_6)
\end{align*}
and 
\begin{align*}
 {B}(b_1,b_2,b_3,b_4,b_5,b_6; b_7,b_8;\mu) 
& =\frac{1}{4\pi \sqrt{-1}} \int_q   B_0(b_1,b_2,b_3,b_4,b_5,b_6;b_7, b_8;\mu) \,dq.
\end{align*}
When $ b_4 = b_7$, 
$ B_0(b_1,b_2,b_3,b_4,b_5,b_6; b_7,b_8; \mu)  $ and 
$ {B}(b_1,b_2,b_3,b_4,b_5,b_6; b_7,b_8; \mu)  $ are denoted
by $ B_0(b_1,b_2,b_3, b_5,b_6; b_8; \mu)  $ and $ B(b_1,b_2,b_3, b_5,b_6; b_8; \mu), $ respectively.
If $ b_8 = b_1+b_2+b_3+b_5+b_6 $, $ a_6 = b_1+b_2+b_5+b_6$ and 
$ a_7 = b_1 + b_3+ b_5 +b_6 $, then we have 
\begin{align*}
& A(a_1,a_2,a_3,a_4,a_5,a_6,a_7;\mu)   {B} (b_1,b_2,b_3,b_5,b_6; b_8;\mu)
\\
& = \frac{\GR(2s_2+\mu_1+\mu_2+\mu_3+\mu_4+a_1)}{\GR(2s_2+\mu_1+\mu_2+\mu_3+\mu_4+b_2+b_3+b_5+b_6)}
\\
& \quad \times L_1(a_2, a_3,b_1+b_5,b_1+b_6;\mu) L_2(a_4,b_2+b_5,b_2+b_6,b_3+b_5,b_3+b_6, a_5;\mu),
\end{align*}
where 
\begin{align*}
 L_1(\alpha_1,\alpha_2,\alpha_3,\alpha_4;\mu) &= \prod_{1 \le i \le 4}  \GR(s_1+ \mu_i+\alpha_i ), \\
 L_2(\alpha_{1,2},\alpha_{1,3},\alpha_{1,4},\alpha_{2,3},\alpha_{2,4}, \alpha_{3,4};\mu) &= \prod_{1 \le i <j \le 4} 
  \GR(s_2+\mu_i + \mu_j + \alpha_{i,j}).
\end{align*}
\end{lem}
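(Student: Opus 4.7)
The plan is to recognize the integral $B(b_1,b_2,b_3,b_5,b_6;b_8;\mu)$ as an instance of Barnes' second lemma (Lemma~\ref{lem:Barnes2nd}), evaluate it in closed form, and then observe that the hypotheses on $a_1, a_6, a_7$ are precisely what is needed to clean up the resulting expression.

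\medskip

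First, I would view the integrand of $B$ as a function of $q$: two $\GR$-factors involve $+q$, namely $\GR(q+\mu_3+b_5)$ and $\GR(q+\mu_4+b_6)$, and three $\GR$-factors in the numerator involve $-q$, namely $\GR(s_1-q+b_1)$, $\GR(s_2-q+\mu_1+b_2)$, $\GR(s_2-q+\mu_2+b_3)$, while the denominator contains the single factor $\GR(s_1+2s_2-q+\mu_1+\mu_2+\mu_3+\mu_4+b_8)$. Setting $z=q$ in Lemma~\ref{lem:Barnes2nd} and choosing
\[
 a_1 = \mu_3+b_5, \quad a_2 = \mu_4+b_6, \quad b_1' = s_1+b_1, \quad b_2' = s_2+\mu_1+b_2, \quad b_3' = s_2+\mu_2+b_3,
\]
the ``balancing'' condition $a_1+a_2+b_1'+b_2'+b_3' = s_1+2s_2+\mu_1+\mu_2+\mu_3+\mu_4+(b_1+b_2+b_3+b_5+b_6)$ matches the denominator $\GR(s_1+2s_2-q+\mu_1+\mu_2+\mu_3+\mu_4+b_8)$ exactly under the hypothesis $b_8 = b_1+b_2+b_3+b_5+b_6$.

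\medskip

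Applying Barnes' second lemma then yields
\begin{align*}
 B &= \frac{\GR(s_1+\mu_3+b_1+b_5)\GR(s_1+\mu_4+b_1+b_6)\GR(s_2+\mu_1+\mu_3+b_2+b_5)\GR(s_2+\mu_1+\mu_4+b_2+b_6)}
   {\GR(s_1+s_2+\mu_1+\mu_3+\mu_4+b_1+b_2+b_5+b_6)\GR(s_1+s_2+\mu_2+\mu_3+\mu_4+b_1+b_3+b_5+b_6)} \\
 &\quad \times \frac{\GR(s_2+\mu_2+\mu_3+b_3+b_5)\GR(s_2+\mu_2+\mu_4+b_3+b_6)}{\GR(2s_2+\mu_1+\mu_2+\mu_3+\mu_4+b_2+b_3+b_5+b_6)}.
\end{align*}
The hypotheses $a_6 = b_1+b_2+b_5+b_6$ and $a_7 = b_1+b_3+b_5+b_6$ mean that the two factors $\GR(s_1+s_2+\mu_1+\mu_3+\mu_4+a_6)$ and $\GR(s_1+s_2+\mu_2+\mu_3+\mu_4+a_7)$ appearing in $A$ cancel exactly against the two $s_1+s_2$-factors in the denominator above.

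\medskip

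After this cancellation, I would collect the surviving $\GR$-factors into the products $L_1$ and $L_2$. The four factors depending on $s_1$, namely $\GR(s_1+\mu_1+a_2)$, $\GR(s_1+\mu_2+a_3)$, $\GR(s_1+\mu_3+b_1+b_5)$, $\GR(s_1+\mu_4+b_1+b_6)$, assemble into $L_1(a_2,a_3,b_1+b_5,b_1+b_6;\mu)$; the six factors depending on $s_2$ (but not $s_1$), namely $\GR(s_2+\mu_1+\mu_2+a_4)$, $\GR(s_2+\mu_1+\mu_3+b_2+b_5)$, $\GR(s_2+\mu_1+\mu_4+b_2+b_6)$, $\GR(s_2+\mu_2+\mu_3+b_3+b_5)$, $\GR(s_2+\mu_2+\mu_4+b_3+b_6)$, $\GR(s_2+\mu_3+\mu_4+a_5)$, assemble into $L_2(a_4,b_2+b_5,b_2+b_6,b_3+b_5,b_3+b_6,a_5;\mu)$. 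The only remaining $\GR$-factors are $\GR(2s_2+\mu_1+\mu_2+\mu_3+\mu_4+a_1)$ from $A$ and $\GR(2s_2+\mu_1+\mu_2+\mu_3+\mu_4+b_2+b_3+b_5+b_6)$ from the denominator of $B$, which combine into the prefactor in the stated identity. This is purely bookkeeping; the only ``step'' requiring a substantive input is the application of Barnes' second lemma, so no real obstacle is expected.
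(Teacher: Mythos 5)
Your proposal is correct and is exactly the paper's argument: the paper proves this lemma by noting it is immediate from Barnes' second lemma (Lemma \ref{lem:Barnes2nd}), which is the single substantive step you carry out, with the stated conditions on $b_8$, $a_6$, $a_7$ ensuring the balanced form of the integrand and the cancellation of the two $s_1+s_2$ gamma factors. The remaining bookkeeping into $L_1$, $L_2$ and the $2s_2$-prefactor checks out.
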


\begin{proof}
It is immediate from Lemma \ref{lem:Barnes2nd}.
\end{proof}

Let us compute the zeta integral $ Z(s_1,s_2,W,\Phi). $
We write $ \nu = (\nu_1,\nu_2,\nu_3,\nu_4) $ and $ \nu' = (\nu_4,\nu_2,\nu_3,\nu_1) $.  
\medskip 

\noindent 
\underline{\bf Case 1-(a):} \
$ V'(s_1,s_2,s_3) = V_{\sigma, {\bf 0}}(s_1,s_2,s_3) $, 
(\ref{eqn:lem_W'_case1}), Theorem \ref{thm:EF1} (i) and Lemma \ref{lem:BFlem1} with $ \mu = \nu $ imply that 
\begin{align*}
 Z(s_1,s_2, W, \Phi) 
& = \GR(2s_2+\gamma_1) V_{\sigma, \bf 0}(s_1,s_2,s_1+s_2) \\
& = A(0,0,0,0,0,0,0;\nu) {B}(0,0,0,0,0; 0;\nu) \\
& = L_1(0,0,0,0;\nu) L_2(0,0,0,0,0,0;\nu).
\end{align*}

\medskip  

\noindent 
\underline{\bf Case 1-(b):} \
$ V'(s_1,s_2,s_3) = V_{\sigma, e_4}(s_1,s_2,s_3)$, 
(\ref{eqn:lem_W'_case1}),
Theorem \ref{thm:EF1} (ii) and Lemma \ref{lem:BFlem1} with $ \mu= \nu $ imply that 
\begin{align*}
 Z(s_1,s_2, W, \Phi) 
& = \GR(2s_2+\gamma_1+1) V_{\sigma, e_4}(s_1,s_2,s_1+s_2) \\
& = A(1,1,0,1,0,1,0;\nu) B(0,1,0,0,0; 1;\nu) \\
& = L_1(1,0,0,0;\nu) L_2(1,1,1,0,0,0;\nu).
\end{align*}

\medskip 

\noindent 
\underline{\bf Case 1-(c):} \
By using $ E_{i,j}^{\gp} = 2E_{i,j} - E_{i,j}^{\gk} $, we know 
\begin{align*}
 W'(\hat{y})
& = (4\pi \sqrt{-1})^{-1} 
\{ R(2E_{1,2}) \vp_{\sigma}(u_{e_{12}})(\hat{y}) 
   -\vp_{\sigma}(\tau_{(1,1,0)}(E_{1,2}^{\gk})u_{e_{12}})(\hat{y}) \\
 & \quad -R(2E_{2,3}) \vp_{\sigma}(u_{e_{23}})(\hat{y}) 
   +\vp_{\sigma}(\tau_{(1,1,0)}(E_{2,3}^{\gk})u_{e_{23}})(\hat{y}) \\
 & \quad +R(2E_{3,4}) \vp_{\sigma}(u_{e_{34}})(\hat{y})   
   -\vp_{\sigma}(\tau_{(1,1,0)}(E_{3,4}^{\gk})u_{e_{34}})(\hat{y}) \\
 & \quad +R(2E_{1,4}) \vp_{\sigma}(u_{e_{14}})(\hat{y}) 
   -\vp_{\sigma}(\tau_{(1,1,0)}(E_{1,4}^{\gk})u_{e_{14}})(\hat{y}) \}.
\end{align*} 
In view of Lemma \ref{lem:Eij_radial} and $ \tau_{(1,1,0)}(E_{i,j}^{\gk}) u_{e_{ij}} = 0$, we know 
\begin{align*}
 W'(\hat{y}) 
  = y_1 \vp_{\sigma}(u_{e_{12}})(\hat{y}) - y_2 \vp_{\sigma}(u_{e_{23}})(\hat{y})
   + y_3 \vp_{\sigma}(u_{e_{34}})(\hat{y}),
\end{align*}
that is, 
\begin{align*}
 V'(s_1,s_2,s_3) = 
 V_{\sigma, e_{12}}(s_1+1,s_2,s_3) + V_{\sigma, e_{23}} (s_1,s_2+1,s_3) + V_{\sigma, e_{34}} (s_1,s_2,s_3+1).
\end{align*}
Then (\ref{eqn:lem_W'_case1}) and Theorem \ref{thm:EF1} (iii) imply that  
\begin{align*}
  Z(s_1,s_2, W, \Phi) 
& = \GR(2s_2+\gamma_1)
   \{ V_{\sigma, e_{12}}(s_1+1,s_2, s_1+s_2) 
\\
&  \quad + V_{\sigma, e_{23}} (s_1,s_2+1,s_1+s_2) + V_{\sigma, e_{34}} (s_1,s_2,s_1+s_2+1) \} \\
& = A(0,1,1,0,2,1,1;\nu)  { B}(2,1,1,0,0,0; 2,2;\nu) \\
& \quad  + A(0,1,1,2,2,1,1;\nu) { B}(0,1,1,0,0,0; 2,2;\nu) \\
& \quad + A(0,1,1,2,0,1,1;\nu) { B}(0,1,1,0,0; 2;\nu).
\end{align*}
Here we used the expression
$ V_{\sigma, e_{34}} (s_1,s_2,s_3) = U_0(s_1,s_2,s_3-1; \nu_1+1,\nu_2+1,\nu_3,\nu_4) $ (Lemma \ref{lem:VvsU} (ii)).
In view of (\ref{eqn:gamma_FE}), we know
\begin{align*}
& A(0,1,1,0,2,1,1;\nu)B_0(2,1,1,0,0,0; 2,2;\nu) + A(0,1,1,2,2,1,1;\nu) B_0(0,1,1,0,0,0; 2,2;\nu) 
\\
& \phantom{=} + A(0,1,1,2,0,1,1;\nu)  B_0(0,1,1,0,0; 2;\nu)
\\
& = {(2\pi)}^{-1}  \{ (s_2+\nu_1+\nu_2) + (s_1- q) \} A(0,1,1,0,2,1,1;\nu)B_0(0,1,1,0,0,0; 2,2;\nu) \\
& \phantom{=}
    + A(0,1,1,2,0,1,1;\nu)  B_0(0,1,1,0,0; 2;\nu)
\\
& = \{ A(0,1,1,0,2,1,1;\nu) + A(0,1,1,2,0,1,1;\nu)  \} B_0(0,1,1,0,0; 2;\nu)
\\
& = (2\pi)^{-1} (2s_2+\gamma_1) \cdot A(0,1,1,0,0,1,1;\nu)  B_0(0,1,1,0,0; 2;\nu).
\end{align*}
Then Lemma \ref{lem:BFlem1} with $ \mu = \nu $ leads us that 
\begin{align*}
  Z(s_1,s_2, W, \Phi) 
& =(2\pi)^{-1} (2s_2+\gamma_1) 
  A(0,1,1,0,0,1,1;\nu) { B}(0,1,1,0,0; 2;\nu) \\
& =  \frac{2s_2+\gamma_1}{2\pi} \cdot \frac{\GR(2s_2+\gamma_1)}{\GR(2s_2+\gamma_1+2)} 
   \cdot L_1(1,1,0,0;\nu) L_2(0,1,1,1,1,0;\nu) 
\\
& = L_1(1,1,0,0;\nu) L_2(0,1,1,1,1,0;\nu).
\end{align*}

\medskip

\noindent 
\underline{\bf Case 1-(d):} \
As is the case 1-(c), we know
\begin{align*}
 W'(\hat{y})
& = (4\pi)^{-1} \{ 
      R(2E_{2,3}) \vp_{\sigma}(u_{e_1})(\hat{y}) - \vp_{\sigma}(\tau_{(1,0,1)}(E_{2,3}^{\gk}) u_{e_1})(\hat{y}) \\
&\quad   
   - R(2E_{1,2}) \vp_{\sigma}(u_{e_3})(\hat{y}) + \vp_{\sigma}(\tau_{(1,0,1)}(E_{1,2}^{\gk}) u_{e_3})(\hat{y}) \}
\\
& = \sqrt{-1} y_2 \vp_{\sigma}(u_{e_1})(\hat{y}) - \sqrt{-1} y_1 \vp_{\sigma}(u_{e_3})(\hat{y}),
\end{align*}
that is,
$$   V'(s_1,s_2,s_3) = V_{\sigma, e_1}(s_1,s_2+1,s_3) + V_{\sigma, e_3}(s_1+1,s_2,s_3). $$
Then (\ref{eqn:lem_W'_case1}) and Theorem \ref{thm:EF1} (iv) imply that
\begin{align*}
Z(s_1,s_2, W, \Phi) 
& =  A(1,0,1,1,2,0,1;\nu') { B}(1,1,2,0,0,0; 2,2;\nu') \\
&\quad    + A(1,2,1,1,0,0,1 ;\nu')  { B}(1,1,0,0,0,0; 2,0 ;\nu').
\end{align*}
In view of 
\begin{align*}
&  A(1,0,1,1,2,0,1;\nu') B_0(1,1,2,0,0,0; 2,2;\nu')  + A(1,2,1,1,0,0,1 ;\nu') B_0(1,1,0,0,0,0; 2,0 ;\nu')
\\
& = A(1,0,1,1, 0,0,1;\nu') B_0(1,1,0,0,0,0; 2,2;\nu') \\
& \quad \times (2\pi)^{-2} \{ (s_2+\nu_3+\nu_1)(s_2-q+\nu_2)  + (s_1+\nu_4)(s_1+2s_2-q+\gamma_1)\} 
\\
& = A(1,0,1,1, 0,0,1;\nu') B_0(1,1,0,0,0,0; 2,2;\nu')  \\
& \quad \times 
(2\pi)^{-2} (s_1+s_2+\nu_1+\nu_3+\nu_4) (s_1+s_2-q+\nu_4+\nu_2)  
\\
& = A(1,0,1,1, 0,2,1 ;\nu') B_0(1,1,0,0,0; 2 ;\nu'),
\end{align*}
Lemma \ref{lem:BFlem1} with $ \mu = \nu'  $ leads us that 
\begin{align*}
Z(s_1,s_2,W,\Phi) 
& = L_1(0,1,1,1;\nu' ) L_2(1,1,1,0,0,0 ;\nu') 
= L_1(1,1,1,0; \nu) L_2(0,0,0,1,1,1; \nu).
\end{align*}

\medskip

\noindent 
\underline{\bf Case 1-(e):} \ Since 
\begin{align*}
  W'(\hat{y}) = (4 \pi \sqrt{-1})^{-2} R(4 E_{1,2} E_{3,4}) \vp_{\sigma}(u_{\bf 0})(\hat{y})
     = y_1 y_3 \vp_{\sigma}(u_{\bf 0})(\hat{y}),
\end{align*} 
we have 
$  V'(s_1,s_2,s_3) = V_{\bf 0}(s_1+1, s_2, s_3+1). $
Then we can get the assertion as is the case 1-(a).


\medskip 

As for the contragredient zeta integral $ Z(s_1,s_2,\widetilde{W},\widehat{\Phi}) $,
our claim follows from 
Proposition \ref{prop:contra_radial}, Lemma \ref{lem:contra} and 
$ \widehat{\Phi}_{(0,b)} = (\sqrt{-1})^{b} \Phi_{(0,b)} $ $ (b \in \bZ_{\ge 0}) $.
See also \cite[\S 4.4]{Ishii_001}.

\subsection{Proof of Theorem \ref{thm:BF2}} 
\label{subsec:BF_case2}

By Lemma \ref{lem:test2}, we know that
\begin{align*}
 W'(\hat{y})  
& = 2^{-2} \sum_{0 \le i \le 3} 
  \int_0^{2\pi} \! \int_0^{2\pi}
   W \bigl( m_i \, \hat{y} \, \tilde{\iota}(\rk_{\theta_1}^{(2)}, \rk_{\theta_2}^{(2)} ) \bigr) 
   \exp(- \sqrt{-1} b\, \theta_2) \, \frac{d\theta_1}{2\pi} \frac{d\theta_2}{2\pi}
\\
&  = \begin{cases} 
   W(\hat{y}) & \mbox{cases 2-(a),(c),(f)}, \\
   (-\sqrt{-1})^{\kappa_1-1} \varphi_{\sigma}(w_4)(\hat{y}) & \mbox{case 2-(b)}, \\
   (-\sqrt{-1})^{\kappa_1-1} (4\pi)^{-1} 
     \{ R(E_{2,3}^{\gp}) \varphi_{\sigma}(w_1)(\hat{y}) 
    - R(E_{1,2}^{\gp}) \varphi_{\sigma}(w_3) (\hat{y})\} & \mbox{case 2-(d)}, \\ 
  (-\sqrt{-1})^{\kappa_1+1} \varphi_{\sigma}(w_{2,24}') (\hat{y}) & \mbox{case 2-(e)}.
   \end{cases} 
\end{align*}
Then Lemma \ref{lem:W'} implies that  
\begin{align} \label{eqn:lem_W'_case2}
 Z(s_1,s_2,W, \Phi_{(0,b)}) 
= \GR(2s_2+ \gamma_1+b) V'(s_1,s_2,s_1+s_2).
\end{align}
Here we use the notation $ \gamma_1 = 2\nu_1+\nu_2+\nu_3. $ 

\begin{lem} \label{lem:BFlem2}
For $ s_1,s_2 \in \bC$, and 
$ a_i, b_j, c_k,d_l \in  \bC$ $(1 \le i,k, l\le 5, 1\le j \le 7) $, 
we set
\begin{align*}
 A(a_1,a_2,a_3,a_4,a_5) 
&= \GR(2s_2+\gamma_1+a_1) \GC(s_1+\nu_1+\tfrac{\kappa_1-1}{2} +a_2) \GR(s_2+2\nu_1+\kappa_1-2j+a_3) 
\\
& \quad \times  \GR(s_2+\nu_2+\nu_3+2j+a_4)  
   \GC(s_1+s_2+\nu_1+\nu_2+\nu_3+\tfrac{\kappa_1-1}{2}+a_5), 
\end{align*}
\begin{align*}
B_0(b_1,b_2,b_3,b_4,b_5; b_6,b_7)
& = \frac{ \GR(s_1-q+2j+b_1) \GC(s_2-q+\nu_1+\tfrac{\kappa_1-1}{2}+b_2) \GR(s_1+s_2-q+2\nu_1+b_3) }{ 
   \GR(s_1+s_2-q+2\nu_1+\kappa_1+b_6) \GR(s_1+2s_2-q+2\nu_1+\nu_2+\nu_3+2j+b_7)} \\
 & \quad \times \GR(q+\nu_2+b_4) \GR(q+\nu_3+b_5),
\end{align*}
\begin{align*}
 C(c_1,c_2,c_3,c_4,c_5) 
& = \GC(s_1+\nu_1+\tfrac{\kappa_1-1}{2}+c_1)
 \GC(s_1+s_2+\nu_1+\nu_2+\nu_3+\tfrac{\kappa_1-1}{2} +c_2) \\
& \quad \times \GR(s_2+\nu_2+\nu_3+c_3) 
   \GR(2s_2+2\nu_1+\nu_2+\nu_3+\kappa_1+c_4)  \GR(s_2+2\nu_1+c_5),
\end{align*}
\begin{align*}
 D_0(d_1,d_2,d_3,d_4;d_5) 
& = \frac{\GR(s_1-q+d_1)\GC(s_2-q+\nu_1+\tfrac{\kappa_1-1}{2}+d_2) \GR(q+\nu_2+d_3)\GR(q+\nu_3+d_4) }
 {  \GR(s_1+2s_2-q+2\nu_1+\nu_2+\nu_3+\kappa_1+d_5)}
\end{align*}
and 
\begin{align*}
{B}(b_1,b_2,b_3,b_4,b_5; b_6,b_7)
& =   \frac{1}{4\pi \sqrt{-1}} \int_q B_0(b_1,b_2,b_3,b_4,b_5; b_6,b_7) \,dq,
\\
  {D}(d_1,d_2,d_3,d_4; d_5) 
&= 
  \frac{1}{4\pi \sqrt{-1}} \int_q D_0(d_1,d_2,d_3,d_4; d_5)\, dq.
\end{align*}

\medskip 

\noindent 
(i)
Let $\delta \in \bZ_{\ge 0} $ such that $ \kappa_1-\delta \in 2\bZ_{\ge 0} $. 
If $ a_3+a_4 +b_1+\delta = b_7 $, $ a_3+b_1+\delta = b_3 $ and $ a_3+b_1 = b_6 $, then we have 
\begin{align*}
&  \sum_{0 \le j \le \frac{\kappa_1-\delta}{2}} \binom{ \frac{\kappa_1-\delta}{2}}{j} 
    A(a_1,a_2,a_3,a_4,a_5) B(b_1,b_2,b_3,b_4,b_5; b_6,b_7) \\
& = \frac{ \GR(2s_2+\gamma_1+a_1)}{\GR(2s_2+\gamma_1+a_3+a_4+\delta)} 
   \cdot 
   C(a_2, a_5, a_4, a_3+a_4, a_3+\delta) 
   D(b_1, b_2, b_4, b_5;  a_3+a_4+b_1).
\end{align*}
\noindent
(ii)
If $ d_5 = d_1+2d_2+d_3+d_4 $, $ c_2 = d_1+d_2+d_3+d_4 $ and 
$ c_4 = 2d_2+d_3+d_4 $,
then we have 
\begin{align*}
  C(c_1,c_2,c_3,c_4,c_5) {D}(d_1,d_2,d_3,d_4; d_5)
& = L_1(c_1, d_1+d_3, d_1+d_4) L_2(d_2+d_3, d_2+d_4, c_5, c_3),
\end{align*}
where
\begin{align*}
 L_1(\alpha_1,\alpha_2,\alpha_3)
& = \GC(s_1+\nu_1+\tfrac{\kappa_1-1}{2}+\alpha_1) \GR(s_1+\nu_2+\alpha_2) \GR(s_1+\nu_3+\alpha_3), 
\\
 L_2(\beta_1,\beta_2,\beta_3,\beta_4) 
& = \GC(s_2+\nu_1+\nu_2+\tfrac{\kappa_1-1}{2}+\beta_1) 
   \GC(s_2+\nu_1+\nu_3+\tfrac{\kappa_1-1}{2}+\beta_2) \\ 
& \quad \times \GR(s_2+2\nu_1+\beta_3)\GR(s_2+\nu_2+\nu_3+\beta_4).
\end{align*}
\end{lem}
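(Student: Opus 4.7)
The plan is to treat the two parts of Lemma~\ref{lem:BFlem2} separately: part~(ii) is a direct application of Barnes' second lemma (Lemma~\ref{lem:Barnes2nd}) after invoking the duplication formula, while part~(i) combines Lemma~\ref{lem:Saal} (the Saalschütz consequence) applied inside the contour integral with the compatibility conditions on the parameters.

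For part~(ii), the key observation is that only five factors in the integrand of $D$ depend on $q$. Using $\GC(z)=\GR(z)\GR(z+1)$, I would rewrite
\[
\GC(s_2-q+\nu_1+\tfrac{\kappa_1-1}{2}+d_2)=\GR(s_2-q+\nu_1+\tfrac{\kappa_1-1}{2}+d_2)\GR(s_2-q+\nu_1+\tfrac{\kappa_1+1}{2}+d_2),
\]
so that the integrand of $D$ has the form required by Lemma~\ref{lem:Barnes2nd} with $a_1=\nu_2+d_3$, $a_2=\nu_3+d_4$, $b_1=s_1+d_1$, $b_2=s_2+\nu_1+\tfrac{\kappa_1-1}{2}+d_2$, $b_3=s_2+\nu_1+\tfrac{\kappa_1+1}{2}+d_2$. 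The hypothesis $d_5=d_1+2d_2+d_3+d_4$ is exactly what is needed so that the denominator of the integrand matches $\GR(-q+a_1+a_2+b_1+b_2+b_3)$. Applying Barnes' second lemma produces six $\GR$-factors in the numerator and three in the denominator; repeated use of the duplication formula collapses pairs such as $\GR(s_2+\nu_1+\nu_2+\tfrac{\kappa_1-1}{2}+d_2+d_3)\GR(s_2+\nu_1+\nu_2+\tfrac{\kappa_1+1}{2}+d_2+d_3)=\GC(s_2+\nu_1+\nu_2+\tfrac{\kappa_1-1}{2}+d_2+d_3)$. Multiplying by $C$, the assumptions $c_2=d_1+d_2+d_3+d_4$ and $c_4=2d_2+d_3+d_4$ force two factors of $C$ to cancel exactly against two factors of the denominator produced by Barnes, leaving the claimed product $L_1\cdot L_2$.

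For part~(i), the strategy is to push the finite sum inside the contour integral. Among the factors of $A(a_1,\dots,a_5)\,B_0(b_1,\dots,b_5;b_6,b_7)$, only four gamma factors depend on $j$: two in $A$, namely $\GR(s_2+2\nu_1+\kappa_1-2j+a_3)$ and $\GR(s_2+\nu_2+\nu_3+2j+a_4)$, and two in $B_0$, namely $\GR(s_1-q+2j+b_1)$ in the numerator and $\GR(s_1+2s_2-q+2\nu_1+\nu_2+\nu_3+2j+b_7)$ in the denominator. Setting $a=s_1-q+b_1$, $b=s_2+\nu_2+\nu_3+a_4$, $c=s_2+2\nu_1+\kappa_1+a_3$, and $m=(\kappa_1-\delta)/2$, the relation $b_7=a_3+a_4+b_1+\delta$ is precisely the condition $a+b+c+2j-2m=s_1+2s_2-q+2\nu_1+\nu_2+\nu_3+2j+b_7$, so Lemma~\ref{lem:Saal} evaluates the $j$-sum in closed form. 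The resulting replacement introduces new factors $\GR(a+c)$, $\GR(b+c)$, $\GR(c-2m)$, $\GR(a+c-2m)^{-1}$, $\GR(b+c-2m)^{-1}$; the remaining two hypotheses $b_3=a_3+b_1+\delta$ and $b_6=a_3+b_1$ make $\GR(a+c-2m)$ cancel against $\GR(s_1+s_2-q+2\nu_1+b_3)$ in $B_0$ and $\GR(a+c)$ cancel against $\GR(s_1+s_2-q+2\nu_1+\kappa_1+b_6)$. After pulling the $q$-independent factors $\GR(b+c)$, $\GR(c-2m)$, $\GR(b+c-2m)^{-1}$ out of the integral, what remains under the integral sign is exactly the integrand of $D(b_1,b_2,b_4,b_5;a_3+a_4+b_1)$, and the prefactor is precisely $\GR(2s_2+\gamma_1+a_1)/\GR(2s_2+\gamma_1+a_3+a_4+\delta)$ times $C(a_2,a_5,a_4,a_3+a_4,a_3+\delta)$.

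The main obstacle is purely bookkeeping: one must align the nine free parameters $a_i,b_j$ carefully with the quantities $a,b,c,m$ of Lemma~\ref{lem:Saal} and verify that the three hypotheses on $(b_3,b_6,b_7)$ simultaneously (a) make Saalschütz's denominator match $B_0$, and (b) allow two of the five Saalschütz-generated gamma factors to cancel against factors already present in $B_0$. Once that alignment is checked, no genuine analytic content beyond Lemmas~\ref{lem:Barnes2nd} and~\ref{lem:Saal} is required, and the identities reduce to comparing symbolic lists of gamma arguments.
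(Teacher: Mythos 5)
Your proposal is correct and follows the paper's own route: the paper proves (i) by exactly this application of Lemma \ref{lem:Saal} to the $j$-dependent gamma factors under the $q$-integral, and (ii) by Barnes' second lemma (Lemma \ref{lem:Barnes2nd}) combined with the duplication formula, just as you do. Your parameter bookkeeping (the identifications $a=s_1-q+b_1$, $b=s_2+\nu_2+\nu_3+a_4$, $c=s_2+2\nu_1+\kappa_1+a_3$, $m=\tfrac{\kappa_1-\delta}{2}$ and the resulting cancellations) checks out.
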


\begin{proof}
The first claim (i) follows from Lemma \ref{lem:Saal}. 
The latter is immediate from Lemma \ref{lem:Barnes2nd} together with the duplication formula 
(\ref{eqn:gamma_dup}). 
\end{proof}

\medskip

\noindent 
\underline{\bf Case 2-(a):} \ 
Since $  W'(\hat{y}) = W(\hat{y}) $ can be written as 
\begin{align*}
  (-\sqrt{-1})^{\kappa_1} \varphi_{\sigma} \bigl(  \rrq_\cR  ((\xi_1)^2+(\xi_3)^2)^{\kappa_1/2} \bigr) (y) 
= (-\sqrt{-1})^{\kappa_1} \sum_{0 \le j \le \frac{\kappa_1}{2} } \binom{\frac{\kappa_1}{2}}{j}
    \varphi_{\sigma} \bigl(  \rrq_\cR  ( (\xi_1)^j (\xi_3)^{\kappa_1-2j} ) \bigr) (y), 
\end{align*}
we have
\begin{align*}
 V'(s_1,s_2,s_3)
& = (-\sqrt{-1})^{\kappa_1} \!
 \sum_{0 \le j  \le \frac{\kappa_1}{2}} \binom{\frac{\kappa_1}{2}}{j}
  (\sqrt{-1})^{-2j + \kappa_1-2j} 
     V_{\sigma, 2je_1+(\kappa_1-2j)e_3} (s_1,s_2,s_3)
\\
& =  
  \sum_{0 \le j  \le \frac{\kappa_1}{2}} \binom{\frac{\kappa_1}{2}}{j} V_{\sigma, 2je_1+(\kappa_1-2j)e_3} (s_1,s_2,s_3).
\end{align*}
Then (\ref{eqn:lem_W'_case2}), Theorem \ref{thm:EF2} (i) and \ref{lem:BFlem2} imply that 
\begin{align*}
 Z(s_1,s_2, W, \Phi)
& = \GR(2s_2+\gamma_1) 
      \sum_{0 \le j  \le \frac{\kappa_1}{2}} \binom{ \frac{\kappa_1}{2} }{j} V_{\sigma, 2je_1+(\kappa_1-2j)e_3}(s_1,s_2,s_1+s_2) \\
& = \sum_{0 \le j  \le \frac{\kappa_1}{2}} \binom{ \frac{\kappa_1}{2} }{j} 
      A(0, 0, 0, 0, 0) B(0, 0, 0, 0, 0; 0, 0) \\
& = C(0,0,0,0,0) D (0,0,0,0 ; 0) \\
& = L_1(0,0,0) L_2(0,0,0,0).
\end{align*}

\medskip  

\noindent 
\underline{\bf Case 2-(b):} \ 
We can see 
$$ V'(s_1,s_2,s_3) =  
\sum_{0 \le j \le \frac{\kappa_1-1}{2}} \binom{\frac{\kappa_1-1}{2}}{j} 
  V_{\sigma, 2je_1+(\kappa_1-2j-1)e_3+ e_4} (s_1,s_2,s_3). $$
Then (\ref{eqn:lem_W'_case2}), Theorem \ref{thm:EF2} (i) and \ref{lem:BFlem2} imply that 
\begin{align*}
 Z(s_1,s_2, W, \Phi) 
& = \GR(2s_2+\gamma_1+1) \sum_{0 \le j \le \frac{\kappa_1-1}{2}} 
   \binom{\frac{\kappa_1-1}{2}}{j} V_{\sigma, 2je_1+(\kappa_1-2j-1)e_3+ e_4} (s_1,s_2,s_1+s_2)
\\
& =  \sum_{0 \le j \le \frac{\kappa_1-1}{2}}  \binom{\frac{\kappa_1-1}{2}}{j} 
     A(1, 0, 0, 0, 0) B(0, 0, 1, 0, 0; 0, 1) 
\\
& =  C(0,0,0,0,1) D(0,0,0,0; 0) 
\\
& = L_1(0, 0, 0) L_2(0, 0, 1, 0).
\end{align*}

\medskip 

\noindent 
\underline{\bf Case 2-(c):} \ 
As is the case 1-(c), we know 
\begin{align*}
 W'(\hat{y}) 
& = 
  (-\sqrt{-1})^{\kappa_1} (4\pi)^{-1} \{ R(2E_{1,2}) \varphi_{\sigma}( w_{3,4} ) (\hat{y}) 
      - \varphi_{\sigma} (\tau_{(\kappa_1,0,1)}(E_{1,2}^{\gk})  w_{3,4}  ) (\hat{y}) \\
& \qquad \qquad - R(2E_{2,3}) \varphi_{\sigma} (w_{1,4})(\hat{y}) 
  + \varphi(\tau_{(\kappa_1,0,1)}(E_{2,3}^{\gk})w_{1,4}) (\hat{y})  \\
& \qquad \qquad + R(2E_{3,4}) \varphi_{\sigma}(w_{1,2})(\hat{y}) 
   - \varphi_{\sigma} (\tau_{(\kappa_1,0,1)}(E_{3,4}^{\gk})w_{1,2}) (\hat{y})  \\
& \qquad \qquad  - R(2E_{1,4}) \varphi_{\sigma}(w_{2,3})(\hat{y}) 
  + \varphi_{\sigma} (\tau_{(\kappa_1,0,1)}(E_{1,4}^{\gk})w_{2,3}) (\hat{y})  \}.
\end{align*}
Since we can see that 
$  \tau_{(\kappa_1,0,1)}(E_{1,2}^{\gk})w_{3,4} = \tau_{(\kappa_1,0,1)}(E_{1,4}^{\gk})  w_{2,3}  $
and 
\begin{align*}
  \tau_{(\kappa_1,0,1)}(E_{2,3}^{\gk})  w_{1,4}  
& =  - \tau_{(\kappa_1,0,1)}(E_{3,4}^{\gk})  w_{1,2} 
\\
& = \sum_{0 \le j \le \frac{\kappa_1-2}{2}} 
    \binom{\frac{\kappa_1-2}{2}}{j} (\kappa_1-2j-2) u_{(2j+1)e_1+e_2+ (\kappa_1-2j-3)e_3+e_4},
\end{align*}
we have 
\begin{align*}
 W'(\hat{y} ) 
& = (-\sqrt{-1})^{\kappa_1}  
     \sum_{0 \le j \le \frac{\kappa_1-2}{2} } \binom{\frac{\kappa_1-2}{2}}{j} 
       \{  \sqrt{-1} y_1 \varphi_{\sigma} ( u_{2je_1+(\kappa_1-2j-1)e_3 + e_4})(\hat{y})  \\
& \qquad - \sqrt{-1} y_2  \varphi_{\sigma} ( u_{(2j+1)e_1+(\kappa_1-2j-2)e_3 + e_4})(\hat{y})   
     + \sqrt{-1} y_3  \varphi_{\sigma} ( u_{(2j+1)e_1+e_2+(\kappa_1-2j-2)e_3 })(\hat{y})   \\
& \qquad + (2\pi)^{-1} (\kappa_1-2j-2) \varphi_{\sigma} (u_{(2j+1)e_1+e_2+(\kappa_1-2j-3)e_3+e_4} )(\hat{y}) \}, 
\end{align*}
that is, 
\begin{align*}
V'(s_1,s_2,s_3)
& = \sum_{0 \le j \le \frac{\kappa_1-2}{2}} \binom{\frac{\kappa_1-2}{2}}{j} 
    \{ V_{\sigma,  2je_1+(\kappa_1-2j-1)e_3 + e_4}(s_1+1,s_2,s_3)   \\
& \qquad  + V_{\sigma, (2j+1)e_1+(\kappa_1-2j-2)e_3 + e_4}(s_1, s_2+1, s_3)  
   +  V_{\sigma,  (2j+1)e_1+e_2+ (\kappa_1-2j-2)e_3}(s_1, s_2, s_3+1)   \\
& \qquad -  (2\pi)^{-1} (\kappa_1-2j-2) V_{\sigma, (2j+1)e_1+e_2+(\kappa_1-2j-3)e_3+e_4} (s_1,s_2,s_3) \}.
\end{align*}
Then (\ref{eqn:lem_W'_case2}) and Theorem \ref{thm:EF2} (i) imply that 
\begin{align*}
 Z(s_1,s_2, W, \Phi)
& = \sum_{0 \le j \le \frac{\kappa_1-2}{2}} \binom{\frac{\kappa_1-2}{2}}{j}  
  \{ A(0, 1, 0, 0, 0) B(1, 0,1,0,0; 1, 1) \\
& \quad 
    + A(0, 0,0,2,0) B(1,1,1,0,0; 1,3) + A(0,0, -2,2,1) B(1,0,1,0,0; -1,3) 
\\
& \quad - (2\pi)^{-1} (\kappa_1-2j-2)   A(0,0,-2,2,0) B(1,0,1,0,0; -1,3) \}.
\end{align*}
In view of 
\begin{align*}
& A(0, 1, 0, 0, 0) B_0(1, 0,1,0,0; 1, 1)  + A(0, 0,0,2,0) B_0(1,1,1,0,0; 1,3) \\
& \quad - (2\pi)^{-1} (\kappa_1-2j-2)  A(0,0,-2,2,0) B_0(1,0,1,0,0; -1,3) \\
& = A(0, 0, -2, 0, 0) B_0(1, 0, 1, 0, 0; 1, 3) \cdot   {(2\pi)^{-3}}  
\\
& \quad \times 
 \{ (s_1+\nu_1+\tfrac{\kappa_1-1}{2})(s_2+2\nu_1+\kappa_1-2j-2)  (s_1+2s_2-q+2\nu_1+\nu_2+\nu_3+2j+1) 
\\
& \qquad 
+ (s_2+2\nu_1+\kappa_1-2j-2)(s_2+\nu_2+\nu_3+2j)(s_2-q+\nu_1+\tfrac{\kappa_1-1}{2}) \\
& \qquad -(\kappa_1-2j-2)(s_2+\nu_2+\nu_3+2j)(s_1+s_2-q+2\nu_1+\kappa_1-1) \} 
\\
& = A(0, 0, -2, 0, 0) B_0(1, 0, 1, 0, 0; 1, 3) \cdot {(2\pi)^{-3}}
\\
& \quad \times  \{ (s_1+\nu_1+\tfrac{\kappa_1-1}{2})(s_1+s_2-q+2\nu_1+1)(s_2+2\nu_1+\kappa_1^2j-2)
\\
& \qquad + (s_2+2\nu_1) (s_2+\nu_2+\nu_3+2j)(s_1+s_2-q+2\nu_1+\kappa_1-1) \} 
\\
& =  A(0,1,0,0,0) B_0(1,0,3,0,0; 1,3) 
 + (2\pi)^{-1} (s_1+2\nu_1)\, A(0,0,-2,2,0) B_0(1,0,1,0,0; -1,3),
\end{align*}
we can use Lemma \ref{lem:BFlem2} (i) to get
\begin{align*}
& Z(s_1,s_2, W, \Phi) 
\\
& = \sum_{0 \le j \le \frac{\kappa_1-2}{2}} \binom{\frac{\kappa_1-2}{2}}{j}  
  \{ A(0,0, -2,2,1) B(1,0,1,0,0; -1,3) +A(0,1,0,0,0) B(1,0,3,0,0; 1,3)
\\
& \quad  
    + (2\pi)^{-1} (s_1+2\nu_1 ) A(0,0,-2,2,0) B(1,0,1,0,0; -1,3)\} 
\\
& = \frac{\GR(2s_2+\gamma_1)}{\GR(2s_2+\gamma_1+2)} 
 \Bigl\{ C(0,1,2,0,0)  + C(1,0,0,0,2) + \frac{s_1+2\nu_1}{2\pi} \, C(0,0,2,0,0) \Bigr\}
  D (1,0,0,0; 1).
\end{align*}
Since 
\begin{align*}
& C(0,1,2,0,0) + C(1,0,0,0,2) + (2\pi)^{-1} (s_1+2\nu_1) \, C(0,0,2,0,0) 
= (2\pi)^{-1} (2s_2+\gamma_1) \, C(0,1,0,0,0),   
\end{align*}
Lemma \ref{lem:BFlem2} (ii) leads us that 
\begin{align*}
 Z(s_1,s_2, W, \Phi) 
& = C(0,1,0,0,0) {D}(1,0,0,0; 1) \\
& = L_1(0,1,1) L_2(0,0,0,0).
\end{align*}

\medskip

\noindent 
\underline{\bf Case 2-(d):} \ 
As is the case 1-(d), we know 
\begin{align*}
W'(\hat{y}) 
& = (-\sqrt{-1})^{\kappa_1-1} (4 \pi)^{-1} 
  \{R(2E_{2,3}) \varphi( w_1 )(\hat{y}) - \varphi(\tau_{(\kappa_1,0,1)}(E_{2,3}^{\gk})w_1 ) (\hat{y}) \\
& \qquad 
  - R(2E_{1,2}) \varphi( w_3 )(\hat{y}) + \varphi(\tau_{(\kappa_1,0,1)}(E_{1,2}^{\gk})w_3) (\hat{y}) \}.
\end{align*}
Since
\begin{align*}
& \tau_{(\kappa_1,0,1)}(E_{2,3}^{\gk})w_1 = - \tau_{(\kappa_1,0,1)}(E_{1,2}^{\gk})w_3
 =  \sum_{0 \le j \le \frac{\kappa_1-1}{2}} 2j \binom{ \tfrac{\kappa_1-1}{2}}{j} u_{(2j-1) e_1+(\kappa_1-2j)e_3+e_2},
\end{align*}
we have
\begin{align*}
 W'(\hat{y}) 
& =  (-\sqrt{-1})^{\kappa_1-1} 
  \sum_{0 \le j \le \frac{\kappa_1-1}{2}}  \binom{ \frac{\kappa_1-1}{2}}{j}  
 \{ \sqrt{-1}y_2  \varphi( u_{(2j+1)e_1 + (\kappa_1-2j-1) e_3} )(\hat{y}) 
\\
& \quad 
   - \sqrt{-1} y_1  \varphi( u_{2j e_1 + (\kappa_1-2j) e_3}) (\hat{y})
  - (2\pi)^{-1} (2j)   \varphi(u_{(2j-1) e_1+(\kappa_1-2j)e_3+e_2})(\hat{y})  \},
\end{align*}
that is,
\begin{align*}
 V'(s_1,s_2,s_3) 
& = \sum_{0 \le j \le \frac{\kappa_1-1}{2}}  \binom{ \frac{\kappa_1-1}{2}}{j} 
    \{ V_{\sigma, (2j+1)e_1 + (\kappa_1-2j-1) e_3}(s_1,s_2+1,s_3) \\
& \quad + V_{\sigma, 2j e_1 + (\kappa_1-2j) e_3} (s_1+1, s_2, s_3)  
   - (2\pi)^{-1} (2j)  V_{\sigma, (2j-1) e_1+e_2+ (\kappa_1-2j)e_3} (s_1,s_2,s_3)   \}.
\end{align*}
Then (\ref{eqn:lem_W'_case2}) and Theorem \ref{thm:EF2} (i) imply that 
\begin{align*}
 Z(s_1,s_2, W, \Phi)
& = \sum_{0 \le j \le \frac{\kappa_1-1}{2}}  \binom{ \frac{\kappa_1-1}{2}}{j}
 \{ A(1, 0,0,2,0) B(1,1,0,0,0; 1,2) \\
& \quad + A(1,1,0,0,0) B(1,0,0,0,0; 1,0)  - (2\pi)^{-1} (2j) A(1,0,0,0,0) B(-1,0,0,0,0;-1,0) \}.
\end{align*}
In view of 
\begin{align*}
&  A(1, 0,0,2,0) B_0(1,1,0,0,0; 1,2) + A(1,1,0,0,0) B_0(1,0,0,0,0; 1,0) \\
& -  (2\pi)^{-1} (2j)  A(1,0,0,0,0)B_0(-1,0,0,0,0;-1,0) 
\\
& = 
 A(1,0,0,0,0) B_0(-1,0,0,0,0; 1,2) \cdot {(2\pi)^{-3}} 
\\
& \quad \times  \{ (s_2+\nu_2+\nu_3+2j) (s_1-q+2j-1)(s_2-q+\nu_1+\tfrac{\kappa_1-1}{2}) 
\\
& \qquad + (s_1+\nu_1+\tfrac{\kappa_1-1}{2})(s_1-q+2j-1)(s_1+2s_2-q+2\nu_1+\nu_2+\nu_3+2j) 
\\
& \qquad -2j(s_1+s_2-q+2\nu_1+\kappa_1-1)(s_1+2s_2-q+2\nu_1+\nu_2+\nu_3+2j) \} 
\\
& =  A(1,0,0,0,0) B_0(-1,0,0,0,0; 1,2) \cdot  {(2\pi)^{-3}}
\\
& \quad \times \{ (s_1-q-1)(s_1+s_2-q+2\nu_1+\kappa_1-1)(s_1+2s_2-q+2\nu_1+\nu_2+\nu_3+2j) 
\\
& \qquad -(s_1-q+2j-1)(s_2-q+\nu_1+\tfrac{\kappa_1-1}{2})(s_1+s_2-q+2\nu_1) \} 
\\
& = 
     (2\pi)^{-1}(s_1-q-1) \, A(1,0,0,0,0) B_0(-1,0,0,0,0; -1,0)  - A(1,0,0,0,0) B_0(1,1,2,0,0; 1,2), 
\end{align*}
we can use Lemma \ref{lem:BFlem2} (i) to get 
\begin{align*}
& Z(s_1,s_2, W, \Phi) =  C(0,0,0,0,1) \{  {D}(1,0,0,0; -1) - {D}(1,1,0,0; 1) \}.
\end{align*}
Since
\begin{align*}
& C(0,0,0,0,1)   \{ D_0(1,0,0,0; -1) - D_0(1,1,0,0; 1) \} 
\\
& = C(0,0,0,0,1) D_0(1,0,0,0; 1) 
\cdot (2\pi)^{-1}
   \{ (s_1+2s_2-q+2\nu_1+\nu_2+\nu_3+\kappa_1-1) - (s_2-q+\nu_1+\tfrac{\kappa_1-1}{2}) \} 
\\
& = C(1,0,0,0,1)  D_0(1,0,0,0; 1),
\end{align*}
Lemma \ref{lem:BFlem2} (ii) leads us that 
$$ Z(s_1,s_2, W, \Phi) = L_1(0,1,1) L_2(0,0,1,0). $$

\medskip

\noindent 
\underline{\bf Case 2-(e):} \ 
We know 
\begin{align*}
 V'(s_1,s_2,s_3) 
& = \sum_{0 \le j  \le \frac{\kappa_1-2}{2}} 
   \binom{ \frac{\kappa_1-2}{2}}{j} V_{\sigma, 2je_1+e_2+(\kappa_1-2j-2)e_3+e_{24}}(s_1,s_2,s_3).
\end{align*}
Then (\ref{eqn:lem_W'_case2}), Theorem \ref{thm:EF2} (ii) and Lemma \ref{lem:BFlem2} implies that 
\begin{align*}
 Z(s_1,s_2, W, \Phi)
& = (2\pi)^{-1} (s_2+\nu_1+\nu_3+\tfrac{\kappa_1-3}{2})   
    \sum_{0 \le j \le \frac{\kappa_1-2}{2}} \binom{ \frac{\kappa_1-2}{2}}{j} 
   A(1, 0, -2, 1, 0) { B}(0,-1, 0, 1, 0; -2, 1) 
\\
& = (2\pi)^{-1} (s_2+\nu_1+\nu_3+\tfrac{\kappa_1-3}{2})   
   C(0,0,1,-1,0) { D}(0,-1,1,0; -1) 
\\
& = (2\pi)^{-1} (s_2+\nu_1+\nu_3+\tfrac{\kappa_1-3}{2}) \, L_1(0,1,0) L_2(0,-1,0,1) 
\\
& = L_1(0,1,0) L_2(0,0,0,1).
\end{align*}

\medskip

\noindent 
\underline{\bf Case 2-(f):} \ 
As is the case 1-(c), we know
\begin{align*}
 W'(\hat{y}) 
& = (-\sqrt{-1})^{\kappa_1} (4\pi)^{-1} \{ R(2E_{1,2}) \varphi( w_{12} )(\hat{y}) 
   - \varphi(\tau_{(\kappa_1,0,1)}(E_{1,2}^{\gk})  w_{12} ) (\hat{y}) \\
& \quad - R(2E_{2,3}) \varphi(w_{23})(\hat{y}) + \varphi(\tau_{(\kappa_1,0,1)}(E_{2,3}^{\gk})w_{23}) (\hat{y})  \\
& \quad + R(2E_{3,4}) \varphi(w_{34})(\hat{y}) - \varphi(\tau_{(\kappa_1,0,1)}(E_{3,4}^{\gk})w_{34}) (\hat{y})  \\
& \quad  + R(2E_{1,4}) \varphi(w_{14})(\hat{y}) - \varphi(\tau_{(\kappa_1,0,1)}(E_{1,4}^{\gk})w_{14}) (\hat{y})  \}.
\end{align*} 
Since 
\begin{align*}
& -\tau_{(\kappa_1,0,1)}(E_{1,2}^{\gk}) w_{12}
   + \tau_{(\kappa_1,0,1)}(E_{2,3}^{\gk}) w_{23} 
   - \tau_{(\kappa_1,0,1)}(E_{3,4}^{\gk} ) w_{34} -\tau_{(\kappa_1,0,1)}(E_{1,4}^{\gk})w_{14}
\\
& = \sum_{0 \le j \le \frac{\kappa_1-1}{2}} \binom{\frac{\kappa_1-1}{2}}{ j} 
     \{ 2j u_{(2j-1)e_1+e_2+(\kappa_1-2j-1)e_3+e_{12}} 
  +  (\kappa_1-2j-1) u_{2je_1+e_2+(\kappa_1-2j-2)e_3+e_{23}}
\\
& \qquad + (\kappa_1-2j-1) u_{2je_1+(\kappa_1-2j-2)e_3+e_4+e_{23}}  
   + 2j u_{(2j-1)e_1+(\kappa_1-2j-1)e_3+e_4+e_{14}} \},
\end{align*}
the relations
\begin{align*}
 u_{l+e_2+e_{12}} + u_{l+e_4+e_{14}} &= - u_{l+e_3+e_{13}}, & 
 u_{l+e_4+e_{34}} &= u_{l+e_2+e_{23}} + u_{l+e_1+e_{13}} 
\end{align*}
for $ l \in S_{(\kappa_1-1, 0,0)} $, and $ (j+1) \binom{m}{j+1} =(m-j) \binom{m}{j} $ for $ 0 \le j \le m-1 $
imply that 
\begin{align*}
& -\tau_{(\kappa_1,0,1)}(E_{1,2}^{\gk}) w_{12}
   + \tau_{(\kappa_1,0,1)}(E_{2,3}^{\gk}) w_{23} 
   - \tau_{(\kappa_1,0,1)}(E_{3,4}^{\gk} ) w_{34} -\tau_{(\kappa_1,0,1)}(E_{1,4}^{\gk})w_{14}
\\  
& = \sum_{0 \le j \le \frac{\kappa_1-1}{2}}  2(\kappa_1-2j-1) 
   \binom{\frac{\kappa_1-1}{2}}{j}  u_{2j e_1+e_2+ (\kappa_1-2j-2)e_3+e_{23}}.
\end{align*}
Then we get 
\begin{align*}
V'(s_1,s_2,s_3)
& = \sum_{0 \le j \le \frac{\kappa_1-1}{2}} \binom{\frac{\kappa_1-1}{2}}{j} 
\{  V_{\sigma, 2je_1+(\kappa_1-2j-1)e_3+e_{12}} (s_1+1,s_2,s_3) \\
& \quad  + V_{\sigma, 2je_1+(\kappa_1-2j-1)e_3+e_{23}} (s_1,s_2+1,s_3) 
  +   V_{\sigma, 2je_1+(\kappa_1-2j-1)e_3+e_{34}} (s_1,s_2,s_3+1) 
\\
& \quad - (2\pi)^{-1} (\kappa_1-2j-1)  V_{\sigma, 2je_1+e_2+(\kappa_1-2j-2)e_3+e_{23}} (s_1,s_2,s_3) \},
\end{align*}
and (\ref{eqn:lem_W'_case2}), Theorem \ref{thm:EF2} (ii) imply that 
$$
   Z(s_1,s_2,W,\Phi) = Z_1+Z_{2,1}+Z_{2,2}+Z_3+Z_{4,1}+Z_{4,2}, 
$$
where
\begin{align*}
 Z_1 & = \sum_{0 \le j \le \frac{\kappa_1-1}{2}} \binom{\frac{\kappa_1-1}{2}}{j}   A(0, 1, 0, 1, 0) B(1,0,0,0,1; 1,1), 
\\
 Z_{2,1} &= \sum_{0 \le j \le \frac{\kappa_1-1}{2}} \binom{\frac{\kappa_1-1}{2}}{j} 
       A(0,0,0,1,0) {B}(0,1,1,1,0; 0,2), 
\\
 Z_{2,2} &= \sum_{0 \le j \le \frac{\kappa_1-1}{2}} \binom{\frac{\kappa_1-1}{2}}{j}   
   A(0,0,0,1,0) {B} (1,1,0,0,1; 1,1), 
\\
 Z_3& = \sum_{0 \le j \le \frac{\kappa_1-1}{2}} \binom{\frac{\kappa_1-1}{2}}{j} A(0,0,0,1,1)
     {B}(0,0,1,1,0; 0,2), 
\\ 
 Z_{4,1} & = \frac{-\kappa_1+1}{2\pi}  \sum_{0 \le j \le \frac{\kappa_1-3}{2}} \binom{\frac{\kappa_1-3}{2}}{j} 
    A(0, 0,-2,1,0) {B}(0,0,1,1,0; -2,2)  
\\
 Z_{4,2} & = \sum_{0 \le j \le \frac{\kappa_1-1}{2}} \binom{\frac{\kappa_1-1}{2}}{j}  
    \frac{-\kappa_1+2j+1}{2\pi} 
    A(0,0,-2,1,0) {B}(1,0,0,0,1;-1,1).
\end{align*}
Here we used $ (-\kappa_1+2j+1) \binom{\frac{\kappa_1-1}{2}}{j} 
= (-\kappa_1+1) \binom{\frac{\kappa_1-3}{2}}{j} $ in $ Z_{4,1} $.
We use Lemma \ref{lem:BFlem1} (i) to find 
\begin{align*}
 Z_{2,1} & = 
 \frac{\GR(2s_2+\gamma_1)}{\GR(2s_2+\gamma_1+2)} 
  \cdot C(0,0,1,1,1) { D}(0,1,1,0; 1), 
\\
  Z_3 & = \frac{\GR(2s_2+\gamma_1)}{\GR(2s_2+\gamma_1+2)} 
  \cdot C(0,1,1,1,1) {D}(0,0,1,0; 1), 
\\
  Z_{4,1} & = \frac{-\kappa_1+1}{2\pi} \cdot
   \frac{\GR(2s_2+\gamma_1)}{\GR(2s_2+\gamma_1+2)} \cdot
   C(0,0,1,-1,1) {D}(0,0,1,0;  -1).
\end{align*}
In view of  
\begin{align*}
&  C(0,0,1,1,1) D(0,1,1,0; 1) + C(0,1,1,1,1) D(0,0,1,0;  1) \\
& \phantom{=} + (2\pi)^{-1}(-\kappa_1+1) \, C(0,0,1,-1,1)D(0,0,1,0;  -1) \\
& = C(0,0,1,1,1) D(0,0,1,0; -1)  + (2\pi)^{-1}(-\kappa_1+1) \, C(0,0,1,-1,1)D(0,0,1,0;  -1) \\
& = (2\pi)^{-1}( 2s_2+2\nu_1+\nu_2+\nu_3) \, C(0,0,1,-1,1)  D(0,0,1,0;  -1), 
\end{align*}
we know that 
\begin{align*} 
 Z_{2,1} + Z_3 +Z_{4,1}
& = C(0,0,1,-1,1) {D}(0,0,1,0; -1).
\end{align*}
Let us consider the sum $ Z_1 + Z_{2,2}+  Z_{4,2} $.
Since 
\begin{align*}
&  A(0,1,0,1,0) B(1,0,0,0,1; 1,1)
 +A(0,0,0,1,0) B(1,1,0,0,1; 1,1) 
\\
& + (2\pi)^{-1}(-\kappa_1+2j+1)\, A(0,0,-2,1,0) B(1,0,0,0,1; -1,1)
\\
& = A(0,0,0,1,0) B(1,0,0,0,1; -1,1) + (2\pi)^{-1} (-\kappa_1+2j+1)\, A(0,0,-2,1,0) B(1,0,0,0,1; -1,1)
\\
& = (2\pi)^{-1} (s_2+2\nu_1-1) \,A(0,0,-2,1,0) B(1,0,0,0,1; -1,1), 
\end{align*}
Lemma \ref{lem:BFlem1} (i) implies that
\begin{align*}
 Z_1+ Z_{2,2}+Z_{4,2} 
& = (2\pi)^{-1}(s_2+2\nu_1-1) \,C(0,0,1,-1,-1) {D}(1,0,0,1; 0) 
\\
& = C(0,0,1,-1,1) {D}(0,-1,1,2; -1).
\end{align*}
Here we substituted $ q \to q+1 $.
In view of 
\begin{align*}
   D(0,0,1,0; -1) + D(0,-1,1,2; -1)
& = (2\pi)^{-1} (s_2+\nu_1+\nu_3+\tfrac{\kappa_1-1}{2}) D(0,-1,1,0; -1),
\end{align*}
Lemma \ref{lem:BFlem1} (ii) leads us that 
\begin{align*}
 Z(s_1,s_2,W,\Phi)
& = (2\pi)^{-1} (s_2+\nu_1+\nu_3+\tfrac{\kappa_1-1}{2}) 
   \, C(0,0,1,-1,1) { D}(0,-1,1,0; -1)
\\
& = (2\pi)^{-1} (s_2+\nu_1+\nu_3+\tfrac{\kappa_1-1}{2})  
     \, L_1(0,1,0) L_2(0,-1,1,1) 
\\
& = L_1(0,1,0) L_2(0,0,1,1).
\end{align*} 

\medskip


Let us compute the contragredient zeta integral $ Z(s_1,s_2, \widetilde{W}, \widehat{\Phi}) $.
We first treat the case 2-(a). 
Using 
Proposition \ref{prop:contra_radial} and $ \widehat{\Phi} = \Phi $, we have 
\begin{align*}
 Z(s_1,s_2, \widetilde{W}, \widehat{\Phi}) 
& = \GR(2s_2-\gamma_1) \cdot (-\sqrt{-1})^{\kappa_1} 
     \sum_{0 \le j  \le \frac{\kappa_1}{2}} \binom{ \frac{\kappa_1}{2} }{j} 
    (-1)^{\kappa_1-2j} V_{\tilde{\sigma}, 2je_1+(\kappa_1-2j)e_3}(s_1,s_2,s_1+s_2). 
\end{align*}  
Thus in the same way as in the evaluation of $ Z(s_1,s_2, W, \Phi) $, we know that 
\begin{align*}
 Z(s_1,s_2, \widetilde{W}, \widehat{\Phi}) 
= (\sqrt{-1})^{\kappa_1}   L(s_1, \Pi_{\tilde{\sigma}}) L(s_2, \Pi_{\tilde{\sigma}}, \wedge^2) 
\end{align*}
as desired. 
The cases 2-(b) and (e) can be similarly done.

Let us consider the case 2-(c). 
Using Lemma \ref{lem:contra}, Proposition \ref{prop:contra_radial} and $ \widehat{\Phi} = \Phi $, we have
\begin{align*}
 Z(s_1,s_2, \widetilde{W}, \widehat{\Phi}) 
& = \GR(2s_2-\gamma_1) \cdot (-\sqrt{-1})^{\kappa_1}  
    \sum_{0 \le j \le \frac{\kappa_1-2}{2}} \binom{\frac{\kappa_1-2}{2}}{j} 
\\
& \quad  \times \{\sqrt{-1} \cdot \sqrt{-1}^{-1} (-1)^{\kappa_1-2j-1} 
    V_{\tilde{\sigma},  2je_1+(\kappa_1-2j-1)e_3 + e_4}(s_1+1,s_2,s_3)   \\
& \qquad -\sqrt{-1} \cdot \sqrt{-1}^{-1} (-1)^{\kappa_1-2j-2}
     V_{\tilde{\sigma}, (2j+1)e_1+(\kappa_1-2j-2)e_3 + e_4}(s_1, s_2+1, s_3)  \\
& \qquad + \sqrt{-1} \cdot \sqrt{-1}  (-1)^{\kappa_1-2j-2}
      V_{\tilde{\sigma},  (2j+1)e_1+e_2+ (\kappa_1-2j-2)e_3}(s_1, s_2, s_3+1)   \\
& \qquad -  (2\pi)^{-1} (\kappa_1-2j-2)  (-1)^{\kappa_1-2j-3}
      V_{\tilde{\sigma}, (2j+1)e_1+e_2+(\kappa_1-2j-3)e_3+e_4} (s_1,s_2,s_3) \}.
\end{align*}
Thus in the same way as in the evaluation of $ Z(s_1,s_2, W, \Phi) $, we know that 
\begin{align*}
 Z(s_1,s_2, \widetilde{W}, \widehat{\Phi}) 
= (\sqrt{-1})^{\kappa_1+2}  L(s_1, \Pi_{\tilde{\sigma}}) L(s_2, \Pi_{\tilde{\sigma}}, \wedge^2) 
\end{align*}
as desired.
The cases 2-(d) and (f) can be similarly done.

\subsection{Proof of Theorem \ref{thm:BF3}}

By Lemma \ref{lem:test3}, we know that
\begin{align*}
 W'(\hat{y}) 
& = 2^{-2} \sum_{0 \le i \le 3} 
  \int_0^{2\pi} \! \int_0^{2\pi}
   W \bigl( m_i \, \hat{y} \, \tilde{\iota}(\rk_{\theta_1}^{(2)}, \rk_{\theta_2}^{(2)} ) \bigr) 
   \exp(- \sqrt{-1} b\, \theta_2) \, \frac{d\theta_1}{2\pi} \frac{d\theta_2}{2\pi}
\\
& = \begin{cases}
   W( \hat{y}) & \mbox{cases 3-(a), (b)}, \\
  (-\sqrt{-1})^{\kappa_1+\delta_2-\delta_1} \varphi_{\sigma} (\delta_2 w_{2}+\delta_1 w_4) (\hat{y}) & \mbox{case 3-(c)}.
   \end{cases}
\end{align*}
Then Lemma \ref{lem:W'} implies that  
\begin{align} \label{eqn:lem_W'_case3}
 Z(s_1,s_2,W, \Phi_{(0,b)}) 
= \GR(2s_2+\gamma_1+b) V'(s_1,s_2,s_1+s_2).
\end{align}
Here we use the notation $ \gamma_1 = 2\nu_1+2\nu_2. $

\begin{lem} \label{lem:BFlem3}
For $ s_1,s_2 \in \bC $, and 
$ a_i, b_j, c_k, d_l \in \bC $
$ (1 \le i,k \le 7, 1 \le j \le 6, 1 \le l \le 4) $, we set 
\begin{align*}
& A(a_1,a_2,a_3,a_4,a_5,a_6; a_7) 
\\
&= \GR(2s_2+\gamma_1+a_1) \GC(s_1+\nu_1+\tfrac{\kappa_1-1}{2}+a_2) 
   \GR(s_2+2\nu_1+\kappa_1-\kappa_2-2j+ a_3)  \GR(s_2+2\nu_2+2j + a_4) 
 \\
& \quad \times 
 \frac{
  \GC(s_1+s_2+\nu_1+2\nu_2+\tfrac{\kappa_1-1}{2}+a_5)
  \GC(s_2+\nu_1+\nu_2+\tfrac{\kappa_1+\kappa_2}{2}-1+a_6)}
  { \GC(s_2+\nu_1+\nu_2+ \tfrac{\kappa_1-\kappa_2}{2}-1+a_7) },
\end{align*}
\begin{align*}
& B_0(b_1,b_2,b_3,b_4; b_5,b_6)\\
& =\frac{ \GR(s_1-q+2j+b_1) \GC(s_2-q+\nu_1+\tfrac{\kappa_1-1}{2}-\kappa_2+b_2) 
   \GR(s_1+s_2-q+2\nu_1+b_3) \GC(q+\nu_2+\tfrac{\kappa_2-1}{2}+b_4) }
     { \GR(s_1+s_2-q+2\nu_1+\kappa_1-\kappa_2+b_5)   \GR(s_1+2s_2-q+2\nu_1+2\nu_2+2j+b_6)},
\end{align*}
\begin{align*}
& C(c_1,c_2,c_3,c_4,c_5,c_6; c_7) \\
& = \GC(s_1+\nu_1+\tfrac{\kappa_1-1}{2}+c_1)  \GC(s_1+s_2+\nu_1+2\nu_2+\tfrac{\kappa_1-1}{2}+c_2)
    \GR(2s_2+2\nu_1+2\nu_2+\kappa_1-\kappa_2+c_3 )
\\
& \quad \times  
    \GR(s_2+2\nu_1+c_4 )\GR(s_2+2\nu_2+c_5 )
 \cdot \frac{ \GC(s_2+\nu_1+\nu_2+\tfrac{\kappa_1+\kappa_2}{2}-1+c_6)}{ \GC(s_2+\nu_1+\nu_2+ \tfrac{\kappa_1-\kappa_2}{2}-1+c_7) },
\end{align*}
\begin{align*}
 D_0(d_1,d_2,d_3; d_4)
& =
   \frac{ \GR(s_1-q+d_1) \GC(s_2-q+\nu_1+\tfrac{\kappa_1-1}{2}-\kappa_2 +d_2) \GC(q+\nu_2+\tfrac{\kappa_2-1}{2}+d_3)}{ 
   \GR(s_1+2s_2-q+2\nu_1+2\nu_2+\kappa_1-\kappa_2+d_4)}
\end{align*}
and 
\begin{align*}
 {B}(b_1,b_2,b_3,b_4;b_5,b_6) 
& = \frac{1}{4\pi \sqrt{-1}} \int_q B_0(b_1,b_2,b_3,b_4; b_5,b_6) \,dq,
\\
{D}(d_1,d_2,d_3; d_4) 
& = \frac{1}{4\pi \sqrt{-1}} \int_q D_0(d_1,d_2,d_3; d_4)\,dq.
\end{align*}

\medskip 
\noindent
(i) Let $ \delta \in \bZ_{\ge 0} $ such that $ \kappa_1-\kappa_2-\delta \in 2 \bZ_{\ge 0} $. 
If $ a_3+a_4+b_1+\delta = b_6 $, $ a_3+b_1+\delta = b_3 $ and $a_3+b_1=b_5 $, then we have 
\begin{align*}
& \sum_{0 \le j  \le \tfrac{\kappa_1-\kappa_2-\delta}{2} }\binom{ \tfrac{\kappa_1-\kappa_2-\delta}{2}}{j} 
   A(a_1,a_2,a_3,a_4,a_5,a_6; a_7) B(b_1,b_2,b_3,b_4; b_5,b_6)
\\
& = \frac{ \GR(2s_2+\gamma_1+a_1)}{ \GR(2s_2+\gamma_1+a_3+a_4+\delta) } 
 \cdot C(a_2,a_5,a_3+a_4, a_3+\delta, a_4,a_6;  a_7) 
  D(b_1,b_2,b_4; a_3+a_4+b_1). 
\end{align*}

\noindent
(ii) 
If $ d_4 = d_1+2d_2+2d_3  $, $ c_2 = d_1+d_2+2d_3 $, $ c_3= 2(d_2+d_3) $ and $ c_7 =d_2+d_3 $, 
then we have 
\begin{align*}
 C(c_1,c_2,c_3,c_4,c_5,c_6,c_7) {D}(d_1,d_2,d_3; d_4)
&= L_1(c_1, d_1+d_3) L_2( d_2+d_3, c_6, c_4,c_5).
\end{align*}
where
\begin{align*}
 L_1(\alpha_1,\alpha_2) & = \GC(s_1+\nu_1+\tfrac{\kappa_1-1}{2}+\alpha_1) \GC(s_1+\nu_2+\tfrac{\kappa_1-1}{2}+\alpha_2), 
\\
 L_2(\beta_1,\beta_2,\beta_3,\beta_4)
& = \GC(s_2+\nu_1+\nu_2+\tfrac{\kappa_1-\kappa_2}{2}+\beta_1)
    \GC(s_2+\nu_1+\nu_2+\tfrac{\kappa_1+\kappa_2-2}{2}+\beta_2) 
\\
& \quad \times \GR(s_2+2\nu_1+\beta_3) \GR(s_2+2\nu_2+\beta_4).
\end{align*} 
\end{lem}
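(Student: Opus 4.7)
The proof will have two parts, one for each statement.

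For part (i), the plan is to follow the same strategy as in Lemma \ref{lem:BFlem2} (i): bring the $j$-sum inside the $q$-integration and apply the Saalschütz identity (Lemma \ref{lem:Saal}). Concretely, after interchanging the finite sum and the contour integral, the only $j$-dependent Gamma factors come from
$$\GR(s_2+2\nu_1+\kappa_1-\kappa_2-2j+a_3),\ \GR(s_2+2\nu_2+2j+a_4),\ \GR(s_1-q+2j+b_1),\ \GR(s_1+2s_2-q+2\nu_1+2\nu_2+2j+b_6)^{-1},$$
and the three compatibility assumptions $a_3+b_1+\delta=b_3$, $a_3+b_1=b_5$, $a_3+a_4+b_1+\delta=b_6$ are exactly what is needed so that the $j$-sum has the form appearing in Lemma~\ref{lem:Saal} with $m=(\kappa_1-\kappa_2-\delta)/2$, $a=s_2+2\nu_2+a_4$, $b=s_1-q+b_1$ and $c=s_2+2\nu_1+\kappa_1-\kappa_2+a_3-(s_1-q+b_1)$. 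Applying Saalschütz collapses the $j$-sum into a ratio of six Gammas, of which one factor $\GR(2s_2+\gamma_1+a_3+a_4+\delta)^{-1}$ is independent of $q$ and accounts for the quotient $\GR(2s_2+\gamma_1+a_1)/\GR(2s_2+\gamma_1+a_3+a_4+\delta)$, while the remaining factors reorganize precisely into $C(a_2,a_5,a_3+a_4,a_3+\delta,a_4,a_6;a_7)\,D(b_1,b_2,b_4;a_3+a_4+b_1)$. This is a routine rearrangement once the Saalschütz step is carried out.

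For part (ii), the plan is to reduce the identity to Barnes' second lemma (Lemma \ref{lem:Barnes2nd}) by first splitting the two complex Gammas in $D_0$ via the duplication formula (\ref{eqn:gamma_dup}):
$$\GC(s_2-q+\nu_1+\tfrac{\kappa_1-1}{2}-\kappa_2+d_2)=\GR(s_2-q+\nu_1+\tfrac{\kappa_1-1}{2}-\kappa_2+d_2)\GR(s_2-q+\nu_1+\tfrac{\kappa_1+1}{2}-\kappa_2+d_2),$$
and similarly for $\GC(q+\nu_2+\tfrac{\kappa_2-1}{2}+d_3)$. After this splitting the integrand of $D$ has exactly two $\GR$ factors involving $q$ linearly with a plus sign, three factors with a minus sign, and one in the denominator whose argument is the sum of the five numerator arguments shifted by the constants. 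The compatibility condition $d_4=d_1+2d_2+2d_3$ is precisely what guarantees this sum relation, so Lemma \ref{lem:Barnes2nd} applies and evaluates $D$ as a product of six $\GR$ factors divided by three. One now multiplies by $C$, uses duplication again in reverse to recombine pairs of $\GR$ factors into the $\GC$ factors appearing in $L_1$ and $L_2$, and checks that the three denominator Gammas from Barnes' second lemma cancel against factors produced by $C$. The conditions $c_2=d_1+d_2+2d_3$, $c_3=2(d_2+d_3)$ and $c_7=d_2+d_3$ are precisely what is needed for this cancellation and for the recombination into $\GC$ to succeed.

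The main obstacle, as in Lemma \ref{lem:BFlem2}, is not conceptual but bookkeeping: ensuring that all the shift parameters $a_i,b_j,c_k,d_l$ match up in the duplication/recombination step after Saalschütz or Barnes. I expect to verify these by direct substitution rather than any clever manipulation. No further ingredients beyond Lemmas \ref{lem:Barnes2nd}, \ref{lem:Saal} and the duplication formula (\ref{eqn:gamma_dup}) are needed.
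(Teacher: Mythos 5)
Your proposal is correct and is essentially the paper's own (very terse) proof: the paper disposes of (i) by Lemma \ref{lem:Saal} and of (ii) by Lemma \ref{lem:Barnes2nd} together with the duplication formula (\ref{eqn:gamma_dup}), exactly the route you describe. Two small corrections to your bookkeeping. First, in (i) the Saalsch\"utz parameter should be $c=s_2+2\nu_1+\kappa_1-\kappa_2+a_3$ (the argument of the factor $\GR(s_2+2\nu_1+\kappa_1-\kappa_2-2j+a_3)$ at $j=0$), not that quantity minus $(s_1-q+b_1)$; with this $c$, together with $a=s_2+2\nu_2+a_4$ and $b=s_1-q+b_1$, the denominator of Lemma \ref{lem:Saal} matches the denominator factor of $B_0$ precisely because $b_6=a_3+a_4+b_1+\delta$, the factors $\GR(b+c)$ and $\GR(b+c-2m)$ cancel against the two denominator/numerator factors of $B_0$ by $b_5=a_3+b_1$ and $b_3=a_3+b_1+\delta$, and the rest assembles into the stated $C\cdot D$ as you say. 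Second, when you actually carry out the recombination in (ii), the pair coming from $\GR(s_1-q+d_1)$ and the split of $\GC(q+\nu_2+\tfrac{\kappa_2-1}{2}+d_3)$ recombines to $\GC(s_1+\nu_2+\tfrac{\kappa_2-1}{2}+d_1+d_3)$; thus the second factor of $L_1$ should carry $\tfrac{\kappa_2-1}{2}$ rather than $\tfrac{\kappa_1-1}{2}$ (as is also forced by its use in Theorem \ref{thm:BF3}, where $L(s_1,\Pi_\sigma)=\GC(s_1+\nu_1+\tfrac{\kappa_1-1}{2})\GC(s_1+\nu_2+\tfrac{\kappa_2-1}{2})$) --- this is a typo in the printed statement, not a defect of your argument.
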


\begin{proof} 
As is Lemma \ref{lem:BFlem2}, our claim follows from Lemmas \ref{lem:Barnes2nd} and \ref{lem:Saal}.
\end{proof}

\medskip

\noindent 
\underline{\bf Case 3-(a):} \ 
We know
\begin{align*} 
V'(s_1,s_2,s_3) 
& =\sum_{0 \le j  \le \frac{\kappa_1-\kappa_2}{2}} 
   \binom{\frac{\kappa_1-\kappa_2}{2}}{j} V_{\sigma, 2je_1+(\kappa_1-\kappa_2-2j)e_3+\kappa_2 e_{24}}(s_1, s_2,s_3).
\end{align*}
Then (\ref{eqn:lem_W'_case3}), Theorem \ref{thm:EF3} and Lemma \ref{lem:BFlem3} imply that  
\begin{align*}
   Z(s_1,s_2, W, \Phi)
& =  \sum_{0 \le j  \le \frac{\kappa_1-\kappa_2}{2}} \binom{ \frac{\kappa_1-\kappa_2}{2} }{j} A(0, 0, 0, 0, 0, 0; 0)  B(0, 0, 0, 0; 0, 0)
\\
& = C(0,0,0,0,0,0; 0)  D(0,0,0; 0) \\
& = L_1(0,0) L_2(0,0,0,0).
\end{align*}

\medskip 

\noindent 
\underline{\bf Case 3-(b):} \ 
As is the case 2-(f), we know
\begin{align*}
 W'(\hat{y}) 
& = (-\sqrt{-1})^{\kappa_1} (4\pi)^{-1} \{ R(2E_{1,2}) \varphi( w_{12} )(\hat{y}) 
 - \varphi(\tau_{(\kappa_1,\kappa_2,0)}(E_{1,2}^{\gk})  w_{12}  ) (y) \\
& \quad- R(2E_{2,3}) \varphi(w_{23} )(\hat{y}) + \varphi(\tau_{(\kappa_1,\kappa_2,0)}(E_{2,3}^{\gk})w_{23} ) (\hat{y})  \\
& \quad + R(2E_{3,4}) \varphi(w_{34} )(\hat{y}) - \varphi(\tau_{(\kappa_1,\kappa_2,0)}(E_{3,4}^{\gk})w_{34} ) (\hat{y})  \\
& \quad + R(2E_{1,4}) \varphi(w_{14} )(\hat{y}) - \varphi(\tau_{(\kappa_1,\kappa_2,0)}(E_{1,4}^{\gk})w_{14} ) (\hat{y})  \}.
\end{align*} 
Since
\begin{align*}
& -\tau_{(\kappa_1,\kappa_2,0)}(E_{1,2}^{\gk})  w_{12}  + \tau_{(\kappa_1,\kappa_2,0)}(E_{2,3}^{\gk})w_{23} 
 - \tau_{(\kappa_1,\kappa_2,0)}(E_{3,4}^{\gk}) w_{34}  - \tau_{(\kappa_1,\kappa_2,0)}(E_{1,4}^{\gk})w_{14}
\\
& = \sum_{0 \le j  \le \frac{\kappa_1-\kappa_2}{2}} \binom{\frac{\kappa_1-\kappa_2}{2}}{j}
  \{ 2(\kappa_1-\kappa_2-2j) u_{2j e_1+e_2+(\kappa_1-\kappa_2-2j-1)e_3+(\kappa_2-1) e_{24}+e_{23}} 
\\
& \qquad -2(\kappa_2-1) u_{2j e_1+(\kappa_1-\kappa_2-2j)e_3+(\kappa_2-2) e_{24}+e_{23}+e_{34}} \},
\end{align*}
we get 
\begin{align*}
 V'(s_1,s_2,s_3) 
& = \sum_{0 \le j  \le \frac{\kappa_1-\kappa_2}{2}}  \binom{\frac{\kappa_1-\kappa_2}{2}}{j}
\{  V_{\sigma, 2j e_1+(\kappa_1-\kappa_2-2j)e_3+(\kappa_2-1) e_{24} +e_{12}} (s_1+1,s_2,s_3) 
\\
& \quad + V_{\sigma,  2j e_1+(\kappa_1-\kappa_2-2j)e_3+(\kappa_2-1) e_{24} +e_{23}} (s_1,s_2+1,s_3)
\\
&  \quad  + V_{\sigma,  2j e_1+(\kappa_1-\kappa_2-2j)e_3+(\kappa_2-1) e_{24} +e_{34}}(s_1,s_2,s_3+1)
\\
&  \quad -  (2\pi)^{-1} (\kappa_1-\kappa_2-2j) 
    V_{\sigma,  2j e_1+e_2+(\kappa_1-\kappa_2-2j-1)e_3+(\kappa_2-1) e_{24} +e_{23}} (s_1,s_2,s_3)
\\
&  \quad  - (2\pi)^{-1} (\kappa_2-1)  
    V_{\sigma,  2j e_1+(\kappa_1-\kappa_2-2j)e_3+(\kappa_2-2) e_{24}  +e_{23}+e_{34}} ( s_1,s_2,s_3) \}.
\end{align*}
Then (\ref{eqn:lem_W'_case3}) and Theorem \ref{thm:EF3} imply that
\begin{align*}
 Z(s_1,s_2,W, \Phi)
& = Z_1 + Z_{2,1} + Z_{2,2} + Z_{3} + Z_{4,1} + Z_{4,2} + Z_{5,1} + Z_{5,2},
\end{align*}
where
\begin{align*}
 Z_1 & = \sum_{0 \le j  \le \frac{\kappa_1-\kappa_2}{2}}  \binom{\frac{\kappa_1-\kappa_2}{2}}{j}
 A(0, 1, 1, 1, 0, 0; 1) {B}(1,1,0,0; 2,1), 
\\
 Z_{2,1} & = \sum_{0 \le j  \le \frac{\kappa_1-\kappa_2}{2}}  \binom{\frac{\kappa_1-\kappa_2}{2}}{j}
  A(0,0,1,1,0,1;2) {B}(0,2,1,0; 1,2), 
\\
 Z_{2,2} & = \sum_{0 \le j  \le \frac{\kappa_1-\kappa_2}{2}}   \binom{\frac{\kappa_1-\kappa_2}{2}}{j}
   A(0,0,1,1,0,1;2) {B}(1,2,0,0;2,1), 
\\
 Z_3 & =  \sum_{0 \le j  \le \frac{\kappa_1-\kappa_2}{2}}   \binom{\frac{\kappa_1-\kappa_2}{2}}{j}
     A(0,0,1,1,1,0; 1) {B} (0,1,1,0;1,2), 
\\
 Z_{4,1} & = \frac{-\kappa_1+\kappa_2}{2\pi}
    \sum_{0 \le j  \le \frac{\kappa_1-\kappa_2-2}{2}}   \binom{\frac{\kappa_1-\kappa_2-2}{2}}{j} 
     A(0,0,-1,1,0,0; 1) {B}(0,1,1,0;-1,2), 
\\
 Z_{4,2} & =  \frac{-\kappa_1+\kappa_2}{2\pi}
   \sum_{0 \le j  \le \frac{\kappa_1-\kappa_2-2}{2}}   \binom{\frac{\kappa_1-\kappa_2-2}{2}}{j}   
     A(0,0,-1,1,0,0; 1) {B}(1,1,0,0; 0,1), 
\\
 Z_{5,1} & = \frac{-\kappa_2+1}{2\pi} 
    \sum_{0 \le j  \le \frac{\kappa_1-\kappa_2}{2}}  \binom{\frac{\kappa_1-\kappa_2}{2}}{j} 
    A(0,0,1,1,0,0;2) {B} (0,2,1,0;1,2), 
\\
 Z_{5,2} & = \frac{-\kappa_2+1}{2\pi}
  \sum_{0 \le j  \le \frac{\kappa_1-\kappa_2}{2}}   \binom{\frac{\kappa_1-\kappa_2}{2}}{j} 
   A(0,0,1,1,0,0;2) {B}(1,2,0,0;2,1). 
\end{align*}
Here we used 
$ ( -\kappa_1+\kappa_2+2j) \binom{\frac{\kappa_1-\kappa_2}{2}}{j}
  = (-\kappa_1+\kappa_2) \binom{\frac{\kappa_1-\kappa_2-2}{2}}{j}  $
in $ Z_{4,1} $ and $ Z_{4,2} $. 
Then Lemma \ref{lem:BFlem3} (i) implies that 
\begin{align*}
 Z_{2,1} & = \frac{\GR(2s_2+\gamma_1)}{\GR(2s_2+\gamma_1+2)} \cdot 
      C(0,0,2,1,1,1; 2) {D}(0,2,1; 2), 
\\
  Z_3 & = \frac{\GR(2s_2+\gamma_1)}{\GR(2s_2+\gamma_1+2)} \cdot 
    C(0,1,2,1,1,0; 1)   {D}(0,1,0; 2), 
\\
 Z_{4,1} & = \frac{\GR(2s_2+\gamma_1)}{\GR(2s_2+\gamma_1+2)} \cdot 
 \frac{ -\kappa_1+\kappa_2}{2\pi } \cdot C(0,0,0,1,1,0; 1) { D}(0,1,0; 0), 
\\
 Z_{5,1} & = \frac{\GR(2s_2+\gamma_1)}{\GR(2s_2+\gamma_1+2)} \cdot \frac{-\kappa_2+1}{2\pi}  
   \cdot  C(0,0,2,1,1,0; 2) {D}(0,2,1; 2).
\end{align*}
As is the case 2-(f) we can see that 
\begin{align*}
 Z_{2,1}+ Z_3 + Z_{4,1}+  Z_{5,1}
& = C(0,0,0,1,1,0;1) { D}(0,1,0;0).
\end{align*}
For the sum $ Z_{1} + Z_{2,2} + Z_{3} + Z_{4,2} + Z_{5,2} $, in view of
\begin{align*}
& A(0,1,1,1,0,0;1) B(1,1,0,0; 2,1) + A(0,0,1,1,0,1;2) B(1,2,0,0; 2,1)
\\
& + (2\pi)^{-1} (-\kappa_1+\kappa_2+2j) A(0,0,-1,1,0,0; 1) B(1,1,0,0; 0,1)  \\
& + (2\pi)^{-1} (-\kappa_2+1) A(0,0,1,1,0,0; 2) B(0,2,1,0; 1,2) \\
& = (2\pi)^{-1} (s_2+2\nu_1-1) A(0,0,-1,1,0,0; 1) B(1,1,0,0; 0,1),
\end{align*}
we can use Lemma \ref{lem:BFlem3} (i) to get
\begin{align*}
  Z_{1}+Z_{2,2}+ Z_{4,2}+ Z_{5,2}
& = (2\pi)^{-1} (s_2+2\nu_1-1) C(0,0,0,-1,1,0; 1) {D}(1,1,0; 1) \\
& = C(0,0,0,1,1,0; 1)  {D}(0,0,1; 0).
\end{align*}
Here we substituted $ q \to q+1 $.
Therefore Lemma \ref{lem:BFlem3} (ii) implies that
\begin{align*}
 Z(s_1,s_2,W,\Phi) 
&=  C(0,0,0,1,1,0;1)  \{ {D}(0,1,0; 0) + {D}(0,0,1; 0) \}  
\\
& = C(0,0,0,1,1,0; 0) { D}(0,0,0; 0) \\
& =  L_1(0,0) L_2(0,0,1,1).
\end{align*}

\medskip 

\noindent 
\underline{\bf Case 3-(c):} \ 
We know
\begin{align*}
 V'(s_1,s_2,s_3) 
& =  \sum_{0 \le j \le \frac{\kappa_1-\kappa_2-1}{2} } \binom{\frac{\kappa_1-\kappa_2-1}{2}}{j} 
     V_{\sigma,  2je_1+\delta_2 e_2 + (\kappa_1-\kappa_2-2j-1)e_3  + \delta_1 e_4+\kappa_2 e_{24} }(s_1,s_2,s_3).
\end{align*}
Then (\ref{eqn:lem_W'_case3}) and Theorem \ref{thm:EF3} and Lemma \ref{lem:BFlem3} lead us that
\begin{align*}
Z(s_1,s_2, W, \Phi)
& = \sum_{0 \le j \le \frac{\kappa_1-\kappa_2-1}{2} }  
    \binom{ \frac{\kappa_1-\kappa_2-1}{2} }{j}  A(1, 0, -\delta_2, \delta_2, 0, 0; 0)
   {B} (0, 0, \delta_1, 0; -\delta_2, 1) 
\\
& =  C(0, 0, 0, \delta_1, \delta_2, 0; 0) {D}(0,0,0; 0) \\
& = L_1(0,0) L_2(0,0, \delta_1,\delta_2).
\end{align*}


\medskip 

The evaluation of the contragredient zeta integral $Z(s_1,s_2, \widetilde{W}, \widehat{\Phi}) $ can be 
done in the same way as in the case 2. For example, in the case 3-(c), we have 
\begin{align*}
 Z(s_1,s_2, \widetilde{W}, \widehat{\Phi})
& = \GR(2s_2-\gamma_1) \cdot (-\sqrt{-1})^{\kappa_1+\delta_2-\delta_1} (\sqrt{-1})
\\
& \quad \times \sum_{0 \le j \le \frac{\kappa_1-\kappa_2-1}{2} } \binom{\frac{\kappa_1-\kappa_2-1}{2}}{j} 
     (\sqrt{-1})^{\delta_2-\delta_1-\kappa_2} (-1)^{\kappa_2+(\kappa_1-\kappa_2-2j-1)}
\\
& \quad \times 
     V_{\tilde{\sigma},  2je_1+\delta_2 e_2 + (\kappa_1-\kappa_2-2j-1)e_3  + \delta_1 e_4+\kappa_2 e_{24} }(s_1,s_2,s_3)
\\
& = (\sqrt{-1})^{\kappa_1 + \kappa_2} \cdot (\sqrt{-1})^{2\kappa_1+1} 
   L(s_1, \Pi_{\tilde{\sigma}}) L(s_2, \Pi_{\tilde{\sigma}}, \wedge^2)
\end{align*}
as desired. 

\def\cprime{$'$}


\end{document}